\newtheorem{theorem}{Theorem}[section]
\newtheorem{theorem*}{Theorem}
\newtheorem{proposition}{Proposition}[section]
\newtheorem{corollary}{Corollary}[section]
\newtheorem{definition}{Definition}[section]
\newtheorem{proposition*}{Proposition A\!\!}
\newtheorem{corollary*}{Corollary A\!\!}
\newtheorem{lemma}{Lemma}[section]
\newtheorem{lemma*}{Lemma A\!\!}
\begin{document}

\title {Singular conformally invariant trilinear forms, II\\
The higher multiplicity cases}

\author{Jean-Louis Clerc}

\date{July 6, 2015}

 \maketitle
 
 \begin{abstract}
Let $S$ be the sphere of dimension $n-1, n\geq 4$. Let $(\pi_{\lambda}) _{\lambda\in \mathbb C}$ be the scalar principal series of representations of the conformal group $SO_0(1,n)$, realized on $\mathcal C^\infty(S)$. For $\boldsymbol \lambda = (\lambda_1,\lambda_2,\lambda_3)\in \mathbb C^3$, let $Tri(\boldsymbol \lambda)$ be the space of continuous trilinear forms on $\mathcal C^\infty(S)\times \mathcal C^\infty(S)\times \mathcal C^\infty(S)$ which are invariant under $\pi_{\lambda_1}\otimes\pi_{\lambda_2}\otimes\pi_{\lambda_3}$.
 For each value of $\boldsymbol \lambda$, the dimension of $Tri(\boldsymbol \lambda)$ is computed and a basis of $Tri(\boldsymbol \lambda)$ is described. 
\end{abstract}
{2010 Mathematics Subject Classification : 22E45, 43A80}\\
Key words : conformal covariance, trilinear form, meromorphic family of distributions, conformally covariant differential operator, conformally covariant bi-differential operator.

\section*{ Introduction}

This article continues the study of conformally invariant trilinear forms on the sphere $S=S^{n-1}, n\geq 4$ (see \cite{co, bc}), and more specifically achieves the work begun in \cite{c}. Notation is same as in  \cite{c} and  sections of the present paper are numbered in continuity with those of \cite{c}.

For $\boldsymbol \lambda=(\lambda_1,\lambda_2,\lambda_3)\in \mathbb C^3$  let $Tri(\boldsymbol \lambda)$ be the space of continuous trilinear forms on $\mathcal C^\infty(S)\times \mathcal C^\infty(S)\times \mathcal C^\infty(S)$ which are invariant under the action of the conformal group $G=SO_0(1,n)$ by (the tensor product of) three representations of the scalar principal series $(\pi_{\lambda_1}, \pi_{\lambda_2}, \pi_{\lambda_3})$. Such trilinear forms, when viewed as distributions on $S\times S\times S$  are said to be $\boldsymbol \lambda$-invariant. In \cite {co} was constructed (by analytic continuation) a  family $\mathcal K^{\boldsymbol \lambda}$ of $\boldsymbol \lambda$-invariant trilinear forms, depending \emph{meromorphically} on $\boldsymbol \lambda$. Later, in \cite{c}, a \emph{holomorphic} family $\widetilde {\mathcal K}^{\boldsymbol \lambda}$ was introduced by renormalizing the family $\mathcal K^{\boldsymbol \lambda}$. The zero set $Z$ of the function $\boldsymbol \lambda \longmapsto \widetilde {\mathcal K}^{\boldsymbol \lambda}$ was computed, and a \emph{generic} multiplicity 1 theorem was proved, namely \[\boldsymbol \lambda\notin Z\quad \Longleftrightarrow \quad \dim Tri(\boldsymbol \lambda) = 1\quad \Longleftrightarrow\quad Tri(\boldsymbol \lambda) = \mathbb C \widetilde {\mathcal K}^{\boldsymbol \lambda}\ .\]
The main purpose of this article is to compute $\dim Tri(\boldsymbol \lambda)$ when $\boldsymbol \lambda\in Z$, and to give a basis of $Tri(\boldsymbol \lambda)$.

The fine structure of $Z$ is studied in section 8. The set $Z$ is a denumerable union of affine lines in $\mathbb C^3$, which come in six families, each family being made of parallel lines. There is a natural partition of $Z$, namely
\[ Z=Z_1\mathaccent\cdot\cup Z_2\mathaccent\cdot\cup Z_3\]
where $Z_d$ is the subset of points of $Z$ which belong to exactly $d$ lines of $Z$  (compare \cite{o1}). Notice that $Z_2$ and $Z_3$ are denumerable sets of isolated points in $\mathbb C^3$.

The main result of the article is the following.
\begin{theorem*} Let $\boldsymbol \lambda\in Z$.
\[\text{for }\boldsymbol \lambda \in Z_1\qquad  \dim Tri(\boldsymbol \lambda) =2 
\]
\[\text{for }\boldsymbol \lambda \in Z_2\qquad  \dim Tri(\boldsymbol \lambda) =2 
\]
\[\text{\ \ for }\boldsymbol \lambda \in Z_3\qquad  \dim Tri(\boldsymbol \lambda) =3\ .
\]
\end{theorem*}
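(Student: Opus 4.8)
The plan is to bracket $\dim Tri(\boldsymbol\lambda)$ for $\boldsymbol\lambda\in Z$ between an explicit construction of invariant forms and a support-plus-reduction argument. The starting point is a reduction to the singular set. The group $G$ acts on $S\times S\times S$ with open orbit $\Omega$ (triples of pairwise distinct points), three orbits $\mathcal O_{jk}=\{x_j=x_k\ne x_i\}$ of codimension $n-1$, and the minimal orbit $\delta=\{x_1=x_2=x_3\}\cong S$, with $\overline{\mathcal O_{jk}}=\mathcal O_{jk}\cup\delta$ and $\Sigma:=\overline\Omega\setminus\Omega=\mathcal O_{12}\cup\mathcal O_{13}\cup\mathcal O_{23}$ closed. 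The space of $G$-invariant distributions on $\Omega$ is one-dimensional, spanned by the restriction of the kernel $(x_1,x_2,x_3)\mapsto|x_1-x_2|^{a_3}|x_2-x_3|^{a_1}|x_3-x_1|^{a_2}$ with the $a_i$ determined by $\boldsymbol\lambda$. For $\boldsymbol\lambda\in Z$ one has $\widetilde{\mathcal K}^{\boldsymbol\lambda}=0$ by definition of $Z$ and, as follows from the construction of $\widetilde{\mathcal K}$ in \cite{c} together with the description of $Z$ in section~8, this kernel then admits no $G$-invariant extension to $S\times S\times S$. Hence the restriction $Tri(\boldsymbol\lambda)\to\{G\text{-invariant distributions on }\Omega\}$ is zero, so for $\boldsymbol\lambda\in Z$ every invariant trilinear form is supported on $\Sigma$.

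For the upper bound I would use that through a point of $Z_d$ pass exactly $d$ of the six families of lines, and that each such line carries conformally covariant differential operators on one factor, or bi-differential operators on two factors, relating the principal series data at $\boldsymbol\lambda$ with data at a shifted parameter $\boldsymbol\lambda'$ lying off $Z$. Composing $T\in Tri(\boldsymbol\lambda)$ with the appropriate such operator produces exact sequences $0\to(\text{finite-dimensional }\Hom_G)\to Tri(\boldsymbol\lambda)\to Tri(\boldsymbol\lambda')$, in which $\dim Tri(\boldsymbol\lambda')=1$ and the first term is a $\Hom_G$ between a finite-dimensional representation (a cokernel of the covariant operator, tensored with the untouched factors) and a tensor product of two scalar principal series, which one evaluates directly. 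Iterating over the $d$ lines through $\boldsymbol\lambda$ and controlling how the reductions interact gives $\dim Tri(\boldsymbol\lambda)\le 2$ on $Z_1\cup Z_2$ and $\le 3$ on $Z_3$. An alternative route is to filter the $\Sigma$-supported invariant distributions along $\Sigma\supset\delta$ and count the finitely many normal degrees at which the stabilizer character condition holds along each $\mathcal O_{jk}$ and along $\delta$, where $N_\delta\cong TS\oplus TS$, taking the extension obstructions into account; this reproduces the same bounds.

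For the lower bound I would construct the forms explicitly. Along each polar hyperplane of the meromorphic family $\mathcal K^{\boldsymbol\lambda}$ through $\boldsymbol\lambda$ the residue is a nonzero $G$-invariant distribution supported on $\Sigma$; equivalently, since $\widetilde{\mathcal K}^{\boldsymbol\lambda}$ vanishes on $Z$, its directional derivatives in directions transverse to $Z$ are $G$-invariant. On a point of $Z_1$ the two polar hyperplanes crossing along the line yield two such forms; on $Z_2$ the polar hyperplanes through the point give three residual distributions subject to one linear relation, hence two independent forms; on $Z_3$ all first transverse derivatives of $\widetilde{\mathcal K}^{\boldsymbol\lambda}$ vanish and the three required forms are extracted from its Hessian, whose rank one shows to be exactly $3$. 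These forms may equally be written as compositions of $\widetilde{\mathcal K}$ at shifted parameters with the covariant (bi-)differential operators cutting out the line(s) of $Z$ through $\boldsymbol\lambda$. Their linear independence is verified by pairing with test functions concentrated near the distinct orbits $\mathcal O_{jk}$, or with distinct normal degrees along a single orbit, where the leading asymptotics are visibly independent.

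The main obstacle is the upper bound. A crude count of invariant normal jets along the orbits — in particular along $\delta$, where $\mathrm{Sym}^m(T^*S\oplus T^*S)$ has many $SO(n-1)$-invariants for large $m$ — grossly overestimates $\dim Tri(\boldsymbol\lambda)$, so one must genuinely control which jets come from global invariant forms, that is, the obstructions to extending invariant distributions from $\delta$ across the $\mathcal O_{jk}$ and from the $\mathcal O_{jk}$ to $S\times S\times S$. The reduction via covariant (bi-)differential operators avoids this, but requires the precise list from section~8 of which operators are available at a given $\boldsymbol\lambda$ (the shifted parameter must land off $Z$, possibly forcing the reduction to be iterated), the evaluation of the finite-dimensional $\Hom_G$ spaces occurring as cokernels, and careful bookkeeping of how the $d$ reductions interact — which is exactly where the single linear relation on $Z_2$ and the passage from a naive count of four to the actual value $3$ on $Z_3$ have to be pinned down.
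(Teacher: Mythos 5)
Your opening reduction is where the argument breaks. You claim that for every $\boldsymbol\lambda\in Z$ the restriction map from $Tri(\boldsymbol\lambda)$ to the invariant distributions on the open orbit is zero, because $\widetilde{\mathcal K}^{\boldsymbol\lambda}=0$ and ``the kernel then admits no $G$-invariant extension''. That implication is false, and the paper's own constructions refute it: for $\boldsymbol\lambda\in Z_{1,II}$ in the general case (and for $Z_{2,II}$ with $n-1$ odd) the distribution $\mathcal R^{(1,p)}_{\alpha_2,\alpha_3}$ of section 12 is $\boldsymbol\lambda$-invariant with \emph{full} support $S\times S\times S$, so $\mathcal K_{\boldsymbol\alpha,\mathcal O_0}$ does extend invariantly there, even though the normalized family vanishes at $\boldsymbol\lambda$. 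Vanishing of $\widetilde{\mathcal K}$ at a point only says the holomorphic family passes through $0$; whether the unnormalized restriction to $\mathcal O_0$ extends is exactly the delicate issue, and the paper has to prove non-extension case by case (the propositions of sections 10, 13.2, 14, 15.2, 16, resting on the transversality/character argument of Appendix A2), while in the remaining cases the extension exists and the basis contains a full-support element. So your forms cannot all be supported on $\Sigma$, and any upper bound built on that reduction is unsound.

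The lower bound as you describe it also miscounts. At a point of $Z_{2,I}$ (and $Z_{2,II}$) the differential of $\boldsymbol\beta\mapsto\widetilde{\mathcal K}_{\boldsymbol\beta}$ has rank $1$, so the residues along the several polar planes through the point are all proportional (this is precisely the content of the paper's Propositions on $d\widetilde{\mathcal K}_{\boldsymbol\alpha}$ in sections 14 and 15): they give one form, not ``three subject to one linear relation, hence two''. The second basis element on $Z_2$ must come from a \emph{second-order} derivative — the exceptional distribution $\mathcal Q_{l_1,m_2,m_3}$ on $Z_{2,I}$, respectively $\mathcal S^{(k)}_{\lambda_1,\lambda_2}$ on $Z_{2,II}$ — and on $Z_{1,II}$ a generic point lies on only one plane of poles, so there are not ``two polar hyperplanes'' to take residues along; the partner of $\mathcal S^{(k)}$ there is the full-support family $\mathcal R$. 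Finally, your upper-bound route (exact sequences via covariant operators, or jet counting along $\Sigma$) is left as a sketch precisely at the point where the paper does real work: the sharp bound for diagonally supported invariants is obtained not by Bruhat-type counting but by the correspondence with covariant bi-differential operators and a complete discussion of the Ovsienko--Redou system ${\it S}(\lambda_1,\lambda_2;k)$ (Appendix A3), combined with the non-extension results and orbit-by-orbit uniqueness lemmas from the first paper. As it stands the proposal neither establishes the upper bounds nor produces the correct bases on $Z_{1,II}$ and $Z_2$.
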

These results may be heuristically anticipated as follows. Let $\boldsymbol \lambda_0\in Z$. Let $\boldsymbol \lambda(s)$ be a holomorphic curve in $\mathbb C^3$ such that $\boldsymbol \lambda(0) = \boldsymbol \lambda_0$. Then $s\mapsto \widetilde {\mathcal K}^{\boldsymbol \lambda(s)}$ is a distribution-valued holomorphic function. The first non vanishing term of its Taylor expansion at $s=0$ clearly is a $\boldsymbol \lambda_0$-invariant distribution. Variations on this idea lead to the following considerations.

When $\boldsymbol \lambda_0$ is in $Z_1$, hence belongs to a unique affine line ${d_1}\subset Z$, then  $ \nabla_1 \widetilde {\mathcal K}^{\boldsymbol \lambda}(\boldsymbol \lambda_0)=0$, where $\nabla_1$ stands for the derivative in the direction of $d_1$. Let $\overrightarrow{d_2}$ and $\overrightarrow{d_3}$ be two complementary directions  to $\overrightarrow{d_1}$ in $\mathbb C^3$. Then $\nabla_2\widetilde {\mathcal K}^{\boldsymbol \lambda}(\boldsymbol \lambda_0)$ and $ \nabla_3 \widetilde {\mathcal K}^{\boldsymbol \lambda}(\boldsymbol \lambda_0)$ are ``natural'' candidates to generate $Tri(\boldsymbol \lambda_0)$.

When $\boldsymbol \lambda_0$ is in $Z_2$, hence belongs to two distinct lines $d_1$ and $d_2$, let $\overrightarrow{d_3}$ be a complementary direction to $\overrightarrow{d_1}$ and $\overrightarrow {d_2}$. Then $\nabla_1 \widetilde {\mathcal K}^{\boldsymbol \lambda}(\boldsymbol \lambda_0)=\nabla_2 \widetilde {\mathcal K}^{\boldsymbol \lambda}(\boldsymbol \lambda_0)=0$. In this case, the second mixed derivative $\nabla_1\nabla_2\widetilde {\mathcal K}^{\boldsymbol \lambda}(\boldsymbol \lambda_0)$ is easily seen to be a $\boldsymbol \lambda_0$-invariant distribution. Then $\nabla_3 \widetilde {\mathcal K}^{\boldsymbol \lambda}(\boldsymbol \lambda_0)$ and $\nabla_1\nabla_2\widetilde {\mathcal K}^{\boldsymbol \lambda}(\boldsymbol \lambda_0)$ are ``natural'' candidates to generate $Tri(\boldsymbol \lambda_0)$.

When $\boldsymbol \lambda_0$ is in $Z_3$, hence belong to three lines $d_1,d_2,d_3$ (it turns out that $\overrightarrow{d_1}, \overrightarrow{d_2},\overrightarrow{d_3}$ form a basis of $\mathbb C^3$), then all partial derivatives of  $\widetilde {\mathcal K}^{\boldsymbol \lambda}$ vanish at $\boldsymbol \lambda_0$ and  any second order partial derivative of  $\widetilde {\mathcal K}^{\boldsymbol \lambda}(\boldsymbol \lambda)$ at $\boldsymbol \lambda_0$ is $\boldsymbol \lambda_0$-invariant. Then $\nabla_j\nabla_j\widetilde {\mathcal K}^{\boldsymbol \lambda}(\boldsymbol \lambda_0)=0$ for $j=1,2,3$, so that the three second mixed derivatives $\nabla_i\nabla_j \widetilde {\mathcal K}^{\boldsymbol \lambda}(\boldsymbol \lambda_0)=0,\  (1\leq i<j\leq 3)$ are ``natural'' candidates to generate $Tri(\boldsymbol \lambda_0)$.

When going to the actual construction of these invariant distributions and proof that they indeed generate $Tri(\boldsymbol \lambda)$, a more refined (and more technical) partition of $Z$ is needed, essentially related to the type of the poles (see details in section 8). 

An important ingredient of this article is the connection with the results obtained in \cite{bc}, were residues (in a generic sense) of the meromorphic function $\boldsymbol \lambda \mapsto {\mathcal K}^{\boldsymbol \lambda}$ were computed. The normalization (passage from ${\mathcal K}^{\boldsymbol \lambda}$ to $\widetilde{\mathcal K}^{\boldsymbol \lambda}$) involves four $\Gamma$ factors. At a generic pole $\boldsymbol \lambda_0$ (i.e. belonging to a unique plane of poles) exactly one $\Gamma$ factor becomes singular at $\boldsymbol \lambda_0$, and $\widetilde {\mathcal K}^{\boldsymbol \lambda_0}$ is easily seen to be a multiple of the residue of $\mathcal K^{\lambda}$ at $\boldsymbol \lambda_0$. For more singular poles, some variation of this relation does make sense. For each plane of poles, the residues at generic poles form a meromorphic family, which can be renormalized to get a holomorphic family. If $\boldsymbol \lambda_0$ is a non generic pole (i.e. beings to several planes of poles, so that several $\Gamma$ factors of the renormalization vanish at $\boldsymbol \lambda_0$), then the values at $\boldsymbol \lambda_0$ of the renormalized families of residues corresponding to the planes of poles containing $\boldsymbol \lambda_0$ are equal (up to constants) to partial derivatives of $\widetilde {\mathcal K}^{\boldsymbol \lambda}$ at $\boldsymbol \lambda_0$.

To each plane of poles, a holomorphic family of invariant distributions is constructed, which generically coincides (up to a scalar) with the residue.
Recall that planes of poles come in two types called  type I  and type II (see \cite{co, c}).  

For a plane of poles of type I, the family (generically denoted by $\widetilde{\mathcal T}$) constructed in section 9 is obtained by renormalizing the meromorphic family constructed in \cite{bc}. The distributions are supported on a submanifold of codimension $n-1$ and  their expression involve \emph{covariant differential operators}. 

For a plane of poles of type II, the family (generically denoted by $\mathcal S$) is constructed in section 11. Although based on computations done in \cite{bc}, the construction as done in this article use new arguments
and in particular a more satisfactory determination of the (partial) Bernstein-Sato polynomial for the kernel $k_{\boldsymbol \alpha}$ (Proposition 11.3), which has its own interest. The distributions are supported on the diagonal of $S\times S\times S$ and their expressions involve \emph{covariant bi-differential operators}.

A last family (generically denoted by $\mathcal R$) is constructed in section 12 and has a different nature. The planes in $\mathbb C^3$ to which they are attached are not planes of poles. I was led to introduce this family by the heuristic consideration above about derivatives of $\widetilde {\mathcal K}^{\boldsymbol \lambda}$, when dealing with poles of type II which are in $Z_1$. The corresponding distributions  generically have full support in $S\times S\times S$.

These holomorphic families will be almost sufficient for describing bases of $Tri(\boldsymbol \lambda)$. However, when  $\boldsymbol \lambda$ belongs to $Z_2$, some exceptional distributions, denoted by $\mathcal Q_{l_1,m_2,m_3}$ are needed. They are obtained as second order derivatives of the function $\widetilde {\mathcal K}^{\boldsymbol \lambda}$.

Let us observe that all the distributions involved in the description of the invariant trilinear forms are in the \emph{closure} of the meromorphic family $\mathcal K^{\boldsymbol \lambda}$ in the sense of Oshima (see \cite{osh} section 6).

Having constructed candidates for a basis of $Tri(\boldsymbol \lambda)$, a proof of their linear independence  is required.   This is usually obtained by using their $K$-coefficients\footnote{Recall that $K\simeq SO(n)$ is a maximal compact subgroup of $G$}. In \cite{c} was introduced the family $\big(p_{a_1,a_2,a_3}\big)_{a_1,a_2,a_3\in \mathbb N}$ of polynomial functions on $S\times S\times S$ which have the property that a $K$-invariant distribution $T$ is equal to $0$ if and only if all its $K$-coefficients $T(p_{a_1,a_2,a_3})$ are equal to $0$. It turns out that it is possible to \emph{explicitly} compute the $K$-coefficients of all the distributions we are concerned with, which allows to answer the questions of independence. The main ingredient is the evaluation of an integral (Proposition 3.2 in \cite{c}), related to the computation of so-called \emph{Bernstein-Reznikov integrals} (see \cite{ckop} for more examples), and which  is recalled  for convenience in appendix A1.

On the other hand, the verification that the candidates indeed generate $Tri(\boldsymbol \lambda)$ is more difficult and involve some delicate analysis dealing with singular distributions. This sort of problem was already encountered in establishing the multiplicity 1 theorem for $\boldsymbol \lambda\notin  Z$. We refer to the presentation of the main ideas in section 5 and 6 of \cite{c}. The proofs are based on two techniques. The first one consists in non-extension results, obtained in \cite{c} and presented (slightly reformulated) in appendix A2. The second technique concerns the determination of the dimension of the space $Tri(\boldsymbol \lambda, diag)$ of $\boldsymbol \lambda$-invariant distributions which are  supported on the diagonal of $S\times S\times S$. The usual technique of Bruhat (see \cite{br}) is not powerful enough to give a sharp bound for this dimension, due to the fact that the stabilizer of a generic point in the diagonal is a parabolic subgroup of $G$, hence \emph{not reductive} in $G$. So it is necessary to make a specific study. Those singular distributions are in 1-1 correspondence with \emph{covariant bi-differential operators}. In turn those have been studied in \cite{or} and their determination is reduced to solving  an explicit system of linear homogeneous equations. A systematic study of this system is presented in appendix A3. However a better understanding (i.e. based on  geometry and/or harmonic analysis)  of these results is desirable. As it is well-known, the determination of the covariant bi-differential operators is linked to problems about tensor product of two generalized Verma modules (see \cite{s} for some insight in this question).

Let us finally mention two connected problems which are not treated in this (already long) article. First, when one (or several) of the  representations $\pi_{\lambda_j}, 1\leq j\leq 3$ are \emph{reducible}, the question arises wether a (or which) $\boldsymbol \lambda$-invariant trilinear distribution(s) 
do generate (by restriction and/or passage to quotients) non trivial invariant trilinear forms on the subspaces/quotients. Only one such situation is treated in this article, namely a case where it leads to trilinear forms on three finite dimensional representations (see 15.3). Secondly let mention that it is possible to prove factorization results for covariant bi-differential operators, similar  to the results obtained by A. Juhl for conformally covariant differential operators from $S^{d+1}$ to $S^d$ (see \cite{j,ks}).

\bigskip

\centerline {\bf Summary}
\bigskip

\hskip 0.4cm {\bf 8. Refined description of $Z$}
\medskip

\hskip 0.4cm {\bf 9. The holomorphic families $\widetilde {\mathcal T}^{(j,k)}_{\, .\, , \,.\,}$}
\medskip

\hskip 0.4cm {\bf 10. The multiplicity 2 theorem for $\boldsymbol \alpha \in Z_{1,I}$}
\medskip

\hskip 1.4cm {\bf 10.1} The general case
\medskip

\hskip 1.4cm {\bf 10.2} The special case
\medskip

\hskip 0.4cm {\bf 11. The holomorphic families $\mathcal S^{(k)}_{\lambda_1, \lambda_2}$}
\medskip

\hskip 1.4cm {\bf 11.1} The family $\widetilde E_{\lambda_1,\lambda_2}$ 
\medskip

\hskip 1.4cm {\bf 11.2} A Bernstein-Sato identity for the kernel $k_{\boldsymbol \alpha}$
\medskip

\hskip 1.4cm {\bf 11.3} The families $\mathcal S^{(k)}_{\lambda_1, \lambda_2}$

\hskip 0.4cm {\bf 12. The holomorphic families $\mathcal R^{(j,m)}_{\,.\,, \, .}$}
\medskip

\hskip 0.4cm {\bf 13. The multiplicity 2 theorem for $\boldsymbol \lambda \in Z_{1,II}$}
\medskip

\hskip 1.4cm {\bf 13.1}  The general case
\medskip

\hskip 1.4cm {\bf 13.2} The special case
\medskip

\hskip 0.4cm {\bf 14. The multiplicity 2 theorem for $\boldsymbol \alpha \in Z_{2,I}$} 
\medskip

\hskip 0.4cm {\bf 15. The multiplicity 2 theorem for $\boldsymbol \alpha \in Z_{2,II}$}
\medskip

\hskip 1.4cm {\bf 15.1}  The odd case
\medskip

\hskip 1.4cm {\bf 15.2}  The  even case 
\medskip

\hskip 1.4cm {\bf 15.3}  Invariant trilinear form for three f.d. representations
\medskip

\hskip 0.4cm {\bf 16. The multiplicity 3 theorem for $\boldsymbol \alpha \in Z_3$}
\medskip

\hskip 1.4cm {\bf 16.1}  The  odd case
\medskip

\hskip 1.4cm {\bf 16.1}  The  even case
\medskip

\hskip 0.4cm {\bf Appendix}
\medskip

\hskip 1.4cm {\bf A1} Evaluation of the $K$-coefficients of $\widetilde {\mathcal K}_{\boldsymbol \alpha}$
\medskip

\hskip 1.4cm {\bf A2} Non extension results
\medskip

\hskip 1.4cm {\bf A3} Discussion of the system ${\it S}(\lambda_1,\lambda_2;k)$

\setcounter{section}{7}

\section{ Refined description of $Z$ }

For $\epsilon \in \{ +,-\}, j\in \{1,2,3\}$, $l\in \mathbb N$ and $m\in \mathbb Z$, define 
\begin{equation}
\mathcal D^{\epsilon\,,\, j}_{l,m} = \big\{ \boldsymbol \lambda \in \mathbb C^3,\quad \lambda_j= -\rho-l,\quad \lambda_{j+1}+\epsilon\, \lambda_{j+2} = m\big\}
\end{equation}
with the convention that $j+1$ and $j+2$ are interpreted modulo 3.
For each quadruple $(\epsilon,\, j,\, l,\,m)$, $\mathcal D^{\epsilon,\, j}_{l,\,m}$ is a complex affine line in $\mathbb C^3$. A quadruple $(\epsilon, j, l, m)$ is said to be \emph{admissible} if
\[m\equiv l \mod (2),\quad \vert m\vert \leq l\ .
\]
Theorem 4.2 of \cite{c} can be rewritten as
\begin{equation}
 Z = \bigcup_{\begin{matrix} (\epsilon,\, j,\, l,\, m)\\ admissible\end{matrix}} \mathcal D^{\epsilon,\, j}_{l,\,m}\quad .
\end{equation}

The description of these lines can also be made in terms of the geometric\footnote{for the correspondance between the \emph{spectral parameter} $\boldsymbol \lambda$ and the \emph{geometric parameter} $\boldsymbol \alpha$, see \cite{c} 2.1} parameter $\boldsymbol \alpha$. The line $D^{-1, j}_{l,m}$ admits the equations

\begin{equation}
\alpha_{j+1} = -(n-1)-2k_{j+1},\quad \alpha_{j+2} = -(n-1)-2k_{j+2}
\end{equation}
where $k_{j+1} = \frac{l+m}{2}, k_{j+2} = \frac{l-m}{2}\in \mathbb N$. The line $D^{+1, j}_{l,m}$ admits the equations
\begin{equation}\alpha_1+\alpha_2+\alpha_3 = -2(n-1)-2k, \quad \alpha_j = 2 p
\end{equation}
where $k = \frac{l-m}{2}, p=\frac{l+m}{2}\in \mathbb N$. This justifies the following terminology.

\begin{definition} A line $\mathcal D$ contained in $Z$ will be called 
\smallskip

$i)$ \emph{of type I} (or more precisely \emph{ of type I$_j$}) if $\mathcal D = \mathcal D^{-,j}_{l,m}$ for some admissible quadruplet $(-,j,l,m)$

$ii)$  \emph{ of type II} (or more precisely \emph{ of type II$_j$}) if $\mathcal D = \mathcal D^{+,j}_{l,m}$ for some admissible quadruplet $(+,j,l,m)$.
\end{definition}

\begin{proposition}\label{intersect2} \ 

\smallskip

$i)$ if a  line of type I$_j$ and a line of type II$_{j'}$ intersect, then $j=j'$
\smallskip

$ii)$ if a line of type I$_j$ and a (distinct) line of type I$_{j'}$ intersect, then $j\neq j'$
\smallskip

$iii)$ if a line of type II$_j$ and a (distinct) line of type II$_{j'}$ intersect, then $j\neq j'$.
\end{proposition}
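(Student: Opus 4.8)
The plan is to work directly with the defining equations (8.1) of the lines, translating the hypothesis that two lines meet into a system of linear equations on the spectral parameter $\boldsymbol\lambda = (\lambda_1,\lambda_2,\lambda_3)$, and then to derive a contradiction (or the desired conclusion on the indices $j,j'$) by a parity/size analysis of the integer data $(l,m)$ attached to an admissible quadruple. Recall that a line of type I$_j$ imposes $\lambda_j = -\rho - l$ together with $\lambda_{j+1} - \lambda_{j+2} = m$, while a line of type II$_j$ imposes $\lambda_j = -\rho - l$ together with $\lambda_{j+1} + \lambda_{j+2} = m$; in both cases $(\epsilon,j,l,m)$ is admissible, i.e.\ $|m|\le l$ and $m\equiv l \pmod 2$. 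Throughout I will use the direction vectors: a line $\mathcal D^{\epsilon,j}_{l,m}$ lies in the plane $\{\lambda_j = -\rho-l\}$ and is parallel to the vector $e_{j+1} - \epsilon\, e_{j+2}$ (the kernel of the two linear forms), where $e_1,e_2,e_3$ is the standard basis.

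For part $(i)$, suppose a line of type I$_j$ and a line of type II$_{j'}$ meet. If $j \ne j'$, then at the intersection point we would have $\lambda_j = -\rho - l$ and $\lambda_{j'} = -\rho - l'$ for two admissible quadruples; but more to the point, the I$_j$ line forces $\lambda_{j+1} - \lambda_{j+2} = m$ and the II$_{j'}$ line forces $\lambda_{j'+1} + \lambda_{j'+2} = m'$. Writing out the three cases $j' \in \{j, j+1, j+2\}$ and using $j' \ne j$, one of the two coordinates appearing in the II-relation coincides with $\lambda_j$, which is pinned to the value $-\rho - l$ with $\rho = (n-1)/2$ a half-integer (for $n$ even) or integer (for $n$ odd); combined with the I-relation $\lambda_{j+1} - \lambda_{j+2} = m \in \mathbb Z$ this would force the remaining coordinate to a value incompatible with the admissibility inequality $|m'| \le l'$ on the II-side, once one also uses $\lambda_{j'} = -\rho - l' \le -\rho$. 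The cleanest way to see this: the equations of the two lines together with $j\ne j'$ give three independent affine constraints on $\mathbb C^3$ only if the direction vectors $e_{j+1}-e_{j+2}$ and $e_{j'+1}+e_{j'+2}$ are non-parallel — which they are when $j\ne j'$ — so the intersection, if nonempty, is a single point whose coordinates are determined; plugging these into the two hyperplane equations $\lambda_j=-\rho-l$, $\lambda_{j'}=-\rho-l'$ yields, after eliminating, a relation of the form $m \pm m' = 2(l\text{ or }l' \text{ terms}) + (\text{multiple of }\rho)$ which contradicts $m\equiv l$, $m'\equiv l'$ modulo $2$ together with $\rho\notin\mathbb Z$ when $n$ is even, and contradicts $|m|\le l$, $|m'|\le l'$ when $n$ is odd. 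Parts $(ii)$ and $(iii)$ are handled the same way but are in fact easier: two type I$_j$ lines (same $j$) are parallel hyperplane-sections $\{\lambda_j = -\rho - l\}$ and $\{\lambda_j = -\rho - l'\}$ intersected with parallel direction $e_{j+1}-e_{j+2}$, so if the lines are distinct they are parallel and disjoint unless $l = l'$, in which case distinctness forces $m\ne m'$ and again parallel disjoint lines; hence intersecting distinct type I$_j$ lines cannot have $j = j'$, giving $(ii)$, and identically for $(iii)$ with $e_{j+1}+e_{j+2}$.

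The main obstacle is the bookkeeping in part $(i)$: there are genuinely three sub-cases according to the position of $j'$ relative to $j$ modulo $3$, and in each one must carefully track which of the two admissibility constraints (the parity congruence versus the size inequality) is violated, since this depends on the parity of $n$ through $\rho$. I expect the argument to split cleanly as: use the $2$-adic / parity obstruction $m\equiv l\pmod 2$ when $n$ is even (where $2\rho = n-1$ is odd), and use the inequality $|m|\le l$ when $n$ is odd (where $\rho$ is an integer and the congruence alone is not enough). It is worth double-checking, using the description (8.3)–(8.4) in the geometric parameter $\boldsymbol\alpha$, which may make the three sub-cases of $(i)$ more transparent: a type I$_j$ line fixes $\alpha_{j+1}$ and $\alpha_{j+2}$ to specific negative values, whereas a type II$_{j'}$ line fixes $\alpha_1+\alpha_2+\alpha_3$ and $\alpha_{j'}$; intersecting these and checking that the integrality constraints $k_{j+1},k_{j+2},k,p\in\mathbb N$ can be simultaneously satisfied only when $j = j'$ is a short finite check. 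I would present the $\boldsymbol\alpha$-version as the main argument and relegate the $\boldsymbol\lambda$-version to a remark.
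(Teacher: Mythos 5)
Your proposal is correct, and for part $(i)$ the argument you designate as the main one (the $\boldsymbol\alpha$-parameter version) is genuinely different from, and shorter than, the paper's. The paper stays in the spectral parameter: for a type II$_1$ line meeting a type I$_2$ line it evaluates the combination $-\lambda_1+\lambda_2+\lambda_3$ in two ways from the four line equations, obtaining $l_1+m_1=-(n-1)-l_2+m_2$, whose left side is $\ge 0$ and whose right side is $\le -(n-1)$ by the admissibility inequalities alone; the other relative position of $j,j'$ is symmetric. Your $\boldsymbol\alpha$-version is more immediate still: a type II$_{j'}$ line forces $\alpha_{j'}=2p\ge 0$, while a type I$_j$ line with $j\ne j'$ forces $\alpha_{j'}=-(n-1)-2k_{j'}<0$, a sign contradiction requiring no case analysis and using only the nonnegativity of $p$ and $k_{j'}$ (i.e.\ $|m|\le l$). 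Parts $(ii)$ and $(iii)$ are handled by you and by the paper with the identical parallelism observation. One caveat on the $\boldsymbol\lambda$-sketch you offer as a backup: the dichotomy you propose (parity congruence for $n$ even, size inequality for $n$ odd) is unnecessary. After elimination the relation is always of the form $m\pm m'=\mp(n-1)\mp(l+l')$ with the left side bounded in absolute value by $l+l'$, so the size inequalities give the contradiction uniformly in $n$ because $n-1\ge 3>0$; this is exactly what the paper does. The parity route does also work when $n-1$ is odd (the two sides then have different parities), but invoking $\rho\notin\mathbb Z$ is beside the point since the relation involves $2\rho=n-1$, which is always an integer. Since you would present the $\boldsymbol\alpha$-argument as the proof proper, this does not affect correctness, but the remark about the case split should be corrected or dropped.
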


\begin{proof} For $i)$, let $\mathcal D =  \mathcal D^{+,\, 1}_{l_1,m_1}$ and $\mathcal D' = D^{-,\, 2}_{l_2,m_2}$. Then we need to show that $ \mathcal D^{+,\, 1}_{l_1,m_1}\cap \mathcal D^{-,\, 2}_{l_2,m_2} = \emptyset$. The intersection is given by the set of equations
\[ \lambda_1 = -\rho-l_1, \quad\lambda_2+\lambda_3 = m_1,\quad \lambda_2 = -\rho-l_2,\quad \lambda_3-\lambda_1 = m_2\ .\]
The equations imply
\[-\lambda_1+\lambda_2+\lambda_3 = \rho+l_1+m_1 = -\rho-l_2+m_2 \ .
\]
Hence a necessary condition for a nonvoid intersection is that
\[ l_1+m_1 = -(n-1) -l_2+m_2\ .\]
As $\vert m_1\vert \leq l_1$ and $\vert m_2\vert \leq  l_2$, the LHS is nonnegative, whereas the RHS is $\leq -(n-1)$. Hence $i)$ is proved.

For $ii)$ observe that two \emph{distinct} lines $\mathcal D^{-,j}_{l,m}$ and $\mathcal D^{-,j}_{l',m'}$ necessarily belong to two \emph{distinct} parallel planes, hence cannot intersect. A similar observation for $\mathcal D^{+,j}_{l,m}$ and $\mathcal D^{+,j}_{l',m'}$, thus proving $iii)$.
\end{proof}
As a consequence, to determine all possible cases where two distinct lines in $ Z$ intersect, it suffices to examine the cases where $\epsilon' = -\epsilon$ and $j=j'$, and the cases where $\epsilon = \epsilon'$ and $j\neq j'$.

\begin{lemma} Let $(+,1,l,m)$ and $(-,1,l',m')$ be admissible quadruples.
\smallskip 

$i)$ if $l\neq l'$, then \[ \mathcal D^{+,\, 1}_{l,\,m} \cap \mathcal D^{-,\, 1}_{l',\,m'} = \emptyset\ .\]

$ii)$ if $l=l'$, then
\begin{equation}
\mathcal D^{+,\, 1}_{l,\,m} \cap \mathcal D^{-,\, 1}_{l,\,m'}= \Big\{(-\rho-l, \frac{m+m'}{2}, \frac{ m-m'}{2})\Big\}\ .
\end{equation}
\end{lemma}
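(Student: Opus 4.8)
The plan is to directly intersect the defining equations of the two lines and read off when the system is consistent. The line $\mathcal D^{+,1}_{l,m}$ is cut out by $\lambda_1 = -\rho - l$ and $\lambda_2 + \lambda_3 = m$, while $\mathcal D^{-,1}_{l',m'}$ is cut out by $\lambda_1 = -\rho - l'$ and $\lambda_3 - \lambda_2 = m'$ (here I use $\epsilon = -$ with $j=1$, so the equation reads $\lambda_2 + (-1)\lambda_3 \cdot(-1)$; more precisely, from (7.1), the line $\mathcal D^{-,1}_{l',m'}$ is $\{\lambda_1 = -\rho - l',\ \lambda_2 - \lambda_3 = m'\}$, so I will write the relevant combination as $\lambda_3 - \lambda_2$ or $\lambda_2 - \lambda_3$ consistently). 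Putting all four equations together gives the linear system, and the first thing to observe is that the two equations $\lambda_1 = -\rho - l$ and $\lambda_1 = -\rho - l'$ are compatible if and only if $l = l'$. This immediately yields part $i)$: if $l \neq l'$ the intersection is empty, since no point can satisfy both conditions on $\lambda_1$.

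For part $ii)$, assume $l = l'$. Then the four equations reduce to three independent ones: $\lambda_1 = -\rho - l$, $\lambda_2 + \lambda_3 = m$, and $\lambda_2 - \lambda_3 = m'$. The last two are a $2\times 2$ linear system in $(\lambda_2, \lambda_3)$ with invertible coefficient matrix, so they have the unique solution $\lambda_2 = \frac{m+m'}{2}$, $\lambda_3 = \frac{m - m'}{2}$. Combined with the value of $\lambda_1$, this gives exactly the single point displayed in (7.5). So the intersection is a single point and the formula follows by inspection.

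One small point worth checking explicitly is that this point genuinely lies on both lines, i.e. that one should not worry about any further admissibility or integrality constraint making the intersection spurious — but since the lines $\mathcal D^{+,1}_{l,m}$ and $\mathcal D^{-,1}_{l,m'}$ are themselves defined purely by the linear equations in $\boldsymbol\lambda$ (the admissibility of the quadruples $(+,1,l,m)$ and $(-,1,l,m')$ is assumed in the hypothesis), no extra condition arises, and the computed point automatically satisfies all four equations. There is essentially no obstacle here: the entire argument is elementary linear algebra, and the only thing to be careful about is bookkeeping the sign conventions in the definition (7.1) of $\mathcal D^{\epsilon,j}_{l,m}$ (the role of $\epsilon$, and the cyclic indices $j+1, j+2 \bmod 3$) so that the two equations $\lambda_2 + \lambda_3 = m$ and $\lambda_2 - \lambda_3 = m'$ are correctly extracted. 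I would also remark in passing, for later use, that the resulting point has $\lambda_1 = -\rho - l$, which ties in with the "special" versus "general" dichotomy discussed in sections 10 and 13, but that observation is not needed for the proof of the lemma itself.
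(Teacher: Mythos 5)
Your proof is correct and follows the same route as the paper: write down the four linear equations defining the two lines, note that the two conditions on $\lambda_1$ force $l=l'$ for a nonempty intersection, and then solve the $2\times 2$ system $\lambda_2+\lambda_3=m$, $\lambda_2-\lambda_3=m'$ to obtain the unique intersection point. Your final sign convention $\lambda_2-\lambda_3=m'$ is indeed the correct reading of the definition of $\mathcal D^{-,1}_{l',m'}$, so nothing further is needed.
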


\begin{proof} An element $\boldsymbol \lambda$ in $ \mathcal D^{+,\, 1}_{l,\,m} \cap \mathcal D^{-,\, 1}_{l',\,m'}$ has to satisfy the equations
\[ \lambda_1 =-\rho-l,\quad \lambda_2+\lambda_3 = m,\quad \lambda_1 = -\rho-l',\quad \lambda_2-\lambda_3 = m'
\]
The intersection is clearly empty if $l\neq l'$. lf $l=l'$, then the two lines intersect and the intersection point is equal to $(-\rho-l, \frac{m+m'}{2}, \frac{ m-m'}{2})$. \end{proof}

\begin{lemma} Let $(+,1, l_1,m_1)$ and $(+,2,l_2,m_2)$ be admissible quadruples. 
\smallskip

$i)$ if $l_1-m_1 \neq l_2-m_2$, then
 \[\mathcal D^{+,\, 1}_{l_1,\,m_1}\cap \mathcal D^{+,\, 2}_{l_2,\,m_2} = \emptyset\  .\] 

$ii)$ if $l_1-m_1 = l_2-m_2$, then
 \begin{equation}
\mathcal D^{+,\, 1}_{l_1,\,m_1}\cap \mathcal D^{+,\, 2}_{l_2,\,m_2} = \Big\{(-\rho-l_1, -\rho-l_2, \rho+m_1+l_2)\Big\}\ .
\end{equation}
\end{lemma}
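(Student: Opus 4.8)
The plan is to proceed exactly as in the proof of the previous lemma, by writing out the linear system defining the intersection and solving it directly. Using the cyclic index convention in the definition of $\mathcal D^{\epsilon,j}_{l,m}$ (so that for $j=2$ one reads $j+1=3$ and $j+2=1$), a point $\boldsymbol\lambda$ in $\mathcal D^{+,1}_{l_1,m_1}\cap \mathcal D^{+,2}_{l_2,m_2}$ must satisfy
\[
\lambda_1 = -\rho - l_1,\qquad \lambda_2+\lambda_3 = m_1,\qquad \lambda_2 = -\rho - l_2,\qquad \lambda_3+\lambda_1 = m_2 .
\]

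First I would use the first and third equations to fix $\lambda_1 = -\rho - l_1$ and $\lambda_2 = -\rho - l_2$. Substituting into the second equation gives $\lambda_3 = m_1 - \lambda_2 = \rho + m_1 + l_2$, while substituting into the fourth gives $\lambda_3 = m_2 - \lambda_1 = \rho + m_2 + l_1$. These two values of $\lambda_3$ agree if and only if $m_1 + l_2 = m_2 + l_1$, that is, $l_1 - m_1 = l_2 - m_2$. Hence if $l_1 - m_1 \neq l_2 - m_2$ the system is inconsistent and the intersection is empty, which is $i)$; and if $l_1 - m_1 = l_2 - m_2$ the system has the unique solution $\boldsymbol\lambda = (-\rho - l_1,\, -\rho - l_2,\, \rho + m_1 + l_2)$, which is $ii)$.

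I do not expect any genuine obstacle: the computation is elementary linear algebra, and the only point requiring attention is writing the four defining equations correctly under the cyclic convention. As a sanity check one may verify directly that the claimed point satisfies all four equations precisely when $l_1 - m_1 = l_2 - m_2$. Alternatively, in the geometric parameter each line of type II$_j$ lies in a plane $\alpha_1+\alpha_2+\alpha_3 = -2(n-1)-2k$ with $2k = l - m$, so a line of type II$_1$ and a line of type II$_2$ can meet only when their values of $l-m$ coincide; this gives the necessity in $i)$ at once, the direct computation then supplying sufficiency and the explicit point in $ii)$.
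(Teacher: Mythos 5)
Your proof is correct and follows essentially the same route as the paper: both write down the four linear equations defining the intersection and observe that consistency of the two resulting expressions for $\lambda_3$ (equivalently, for $\lambda_1+\lambda_2+\lambda_3$) is precisely the condition $l_1-m_1=l_2-m_2$, yielding the unique intersection point $(-\rho-l_1,-\rho-l_2,\rho+m_1+l_2)$. No issues.
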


\begin{proof} An element $\boldsymbol \lambda$ belongs to the intersection $\mathcal D^{+,\, 1}_{l_1,\,m_1}\cap \mathcal D^{+,\, 2}_{l_2,\,m_2}$ if and only if the following equations are satisfied
\[ \lambda_1 = -\rho-l_1,\quad \lambda_2+\lambda_3 = m_1,\quad \lambda_2=-\rho-l_2,\quad \lambda_1+\lambda_3 = m_2\ .\]
A combination of these equations yields
\[\lambda_1+\lambda_2+\lambda_3 = -\rho-l_1+m_1 = -\rho-l_2+m_2\ .
\]
Hence a necessary condition for a non empty intersection is \[l_1-m_1 = l_2-m_2\]
Assume that this condition is satisfied, then the intersection point of the two lines is equal to $(-\rho-l_1, -\rho-l_2, \rho+m_1+l_2)$. For later reference, observe that the last coordinate can be written as $\rho+m_3$, with $m_3 = m_2+l_1 = m_1+l_2$. Then
 \[m_3\equiv l_1\pm l_2\mod 2, \quad \vert l_1-l_2\vert\leq m_3 \leq l_1+l_2\ .\]
\end{proof}
\begin{lemma}\label{inter--}
 Let $(-,1,l_1,m_1)$ and $(-,2,l_2,m_2)$ be admissible quadruples.
\smallskip

$i)$ if $l_1-l_2\neq m_1+m_2$, then $\mathcal D^{-,1}_{l_1,m_1} \cap \mathcal D^{-,2}_{l_2,m_2} = \emptyset$.
\smallskip

$ii)$ if $l_1-l_2 = m_1+m_2$, then 
\[\mathcal D^{-,1}_{l_1,m_1} \cap \mathcal D^{-,2}_{l_2,m_2}  = (-\rho-l_1, -\rho-l_2, -\rho-l_1+m_2)\ .
\]
Moreover, set $l_3 =l_1-m_2, m_3 = -l_1+l_2$. Then $ m_3\equiv l_3 \mod 2, \vert m_3\vert \leq l_3$, so that $(-,3,l_3,m_3)$ is an admissible quadruple. The three lines 
$\mathcal D^{-,1}_{l_1,m_1}, \mathcal D^{-,2}_{l_2,m_2}$ and  $\mathcal D^{-,3}_{l_3,m_3}$ satisfy
\[\mathcal D^{-,1}_{l_1,m_1}\cap \mathcal D^{-,2}_{l_2,m_2}\cap\mathcal D^{-,3}_{l_3,m_3} = \{ \big(-\rho-l_1,-\rho-l_2,-\rho-l_3\big)\}\ .
\]
\end{lemma}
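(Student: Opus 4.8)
The plan is to proceed exactly as in the preceding lemmas: write down the linear system cutting out the intersection of the two lines and solve it. For $\mathcal D^{-,1}_{l_1,m_1}$ the two equations are $\lambda_1 = -\rho - l_1$ and $\lambda_2 - \lambda_3 = m_1$, while for $\mathcal D^{-,2}_{l_2,m_2}$ (the indices $j+1,j+2$ now being $3,1$) they are $\lambda_2 = -\rho - l_2$ and $\lambda_3 - \lambda_1 = m_2$. The first and third equations determine $\lambda_1$ and $\lambda_2$; the fourth then gives $\lambda_3 = \lambda_1 + m_2 = -\rho - l_1 + m_2$. Substituting into the second equation yields $(-\rho-l_2)-(-\rho-l_1+m_2) = m_1$, i.e. $l_1 - l_2 = m_1 + m_2$. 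Hence the system is inconsistent unless this relation holds, which proves $(i)$; and when it does hold the unique solution is $\boldsymbol\lambda_0 = (-\rho-l_1,\ -\rho-l_2,\ -\rho-l_1+m_2)$, which proves $(ii)$.

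For the ``moreover'' part I would first record two equivalent descriptions of $l_3$: by definition $l_3 = l_1 - m_2$, and substituting the compatibility relation $m_2 = l_1 - l_2 - m_1$ also gives $l_3 = l_2 + m_1$. Admissibility of $(-,3,l_3,m_3)$ is then checked directly. Integrality is clear. The parity condition $m_3 \equiv l_3 \bmod 2$ follows from $l_3 - m_3 = (l_1 - m_2) - (l_2 - l_1) = 2l_1 - l_2 - m_2$, which is even since $m_2 \equiv l_2 \bmod 2$ by admissibility of $(-,2,l_2,m_2)$. For the inequality $\vert m_3\vert \leq l_3$, recall $m_3 = l_2 - l_1$: the estimate $l_1 - l_2 \leq l_3$ comes from $l_3 = l_1 - m_2 \geq l_1 - l_2$ (using $m_2 \leq l_2$), and $l_2 - l_1 \leq l_3$ comes from $l_3 = l_2 + m_1 \geq l_2 - l_1$ (using $m_1 \geq -l_1$); together these also force $l_3 \geq \vert m_3\vert \geq 0$, so $l_3 \in \mathbb N$.

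Finally, for the triple intersection I would simply verify that $\boldsymbol\lambda_0$ lies on $\mathcal D^{-,3}_{l_3,m_3}$ as well. That line (for $j=3$, indices $1,2$) is cut out by $\lambda_3 = -\rho - l_3$ and $\lambda_1 - \lambda_2 = m_3$, and indeed $-\rho - l_1 + m_2 = -\rho - (l_1 - m_2) = -\rho - l_3$ while $(-\rho - l_1) - (-\rho - l_2) = l_2 - l_1 = m_3$. Since $\mathcal D^{-,1}_{l_1,m_1}\cap \mathcal D^{-,2}_{l_2,m_2}$ is already the single point $\{\boldsymbol\lambda_0\}$ by part $(ii)$, the triple intersection equals $\{(-\rho-l_1,-\rho-l_2,-\rho-l_3)\}$, as claimed. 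There is essentially no obstacle in this argument; the only step needing a little care is the verification of $\vert m_3\vert \leq l_3$, where the point is to play off the two expressions $l_3 = l_1 - m_2 = l_2 + m_1$ against one another.
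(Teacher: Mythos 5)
Your proposal is correct and follows essentially the same route as the paper: solve the four linear equations to obtain the compatibility condition $l_1-l_2=m_1+m_2$ and the unique intersection point, use the two expressions $l_3=l_1-m_2=l_2+m_1$ to check admissibility of $(-,3,l_3,m_3)$, and verify directly that the point lies on $\mathcal D^{-,3}_{l_3,m_3}$. The only cosmetic difference is that you check parity via $l_3-m_3$ where the paper uses $l_3+m_3=l_2-m_2$, which is equivalent.
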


\begin{proof} An element $\boldsymbol \lambda$ belongs to the intersection $\mathcal D^{-,1}_{l_1,m_1} \cap \mathcal D^{-,2}_{l_2,m_2}$ if and only if the following equations are satisfied :
\[\lambda_1=-\rho-l_1,\quad \lambda_2-\lambda_3 = m_1,\quad \lambda_2 = -\rho-l_2,\quad \lambda_3-\lambda_1 = m_2\ .
\]
These equations imply $\lambda_2-\lambda_1=(\lambda_2-\lambda_3)+(\lambda_3-\lambda_1) = l_1-l_2=m_1+m_2$, and hence the intersection is empty unless $l_1-l_2 = m_1+m_2$. Assume that this condition is satisfied. Let 
\[l_3 = l_1-m_2=l_2+m_2,\qquad m_3 = -l_1+l_2\ .\] 
Then $l_3+m_3=l_2-m_2\equiv 0 \mod 2$ and hence $m_3\equiv l_3 \mod 2$. Moreover,
 \[m_3 = -l_1+l_2\leq m_1+l_2 = l_3, -m_3 = l_1-l_2\leq l_1-m_2=l_3\ .
 \]
 so that $\vert m_3\vert \leq l_3$, showing that $(-,3,l_3,m_3)$ is an admissible quadruplet. Now 
 \[\mathcal D^{-,1}_{l_1,m_1}\cap \mathcal D^{-,2}_{l_2,m_2}= (-\rho-l_1,-\rho-l_2, -\rho-l_1+m_2)\ .\]
  Observe that the third coordinate can be written as $-\rho-l_3$, whereas $-\rho-l_1-(\-\rho-l_2) = -l_1+l_2 = m_3$, so that the intersection point also belongs to $\mathcal D^{-,3}_{l_3,m_3}$.
 \end{proof}
 For later reference, observe that in the last situation,
 \begin{equation}\label{inter3}
 l_1+l_2+l_3\equiv 0 \mod 2, \qquad \vert l_1-l_2\vert \leq l_3\leq l_1+l_2\ ,
\end{equation}
and observe that these conditions are invariant under permutation of $1,2,3$.
Conversely, let $l_1,l_2,l_3$ satisfy \eqref{inter3}. Then the three lines $\mathcal D^{-,1}_{l_1,-l_2+l_3}$  $\mathcal D^{-,2}_{l_2,-l_3+l_1}$ and $\mathcal D^{-,3}_{l_3,-l_1+l_2}$ have a common point, namely $(-\rho-l_1,-\rho-l_2,-\rho-l_3)$.

\begin{proposition} \ 
\smallskip

$i)$ The intersection  of three distinct lines  $\mathcal D, \mathcal D',\mathcal D''$ contained in $Z$ is non empty, if and only if, up to a permutation of $\{1,2,3\}$ there exist $l_1,l_2,l_3\in \mathbb N$ satisfying \eqref{inter3} such that

\[\mathcal D = \mathcal D^{-,1}_{l_1,-l_2+l_3},\qquad \mathcal D' = \mathcal D^{-,2}_{l_2,-l_3+l_1},\qquad \mathcal D'' =  \mathcal D^{-,3}_{l_3,-l_1+l_2}\ .
\]
If this is the case, then
\[ \mathcal D\cap \mathcal D'\cap \mathcal D'' = \{ (-\rho-l_1, -\rho-l_2,-\rho-l_3)\}\ .\]

$ii)$ The intersection of four distinct lines contained in $Z$ is always empty.

\end{proposition}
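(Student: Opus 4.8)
The plan is to deduce everything from Proposition 8.1 together with the four intersection lemmas (Lemmas 8.1--8.4). By Proposition 8.1, if two distinct lines of $Z$ meet, then either they are a type I$_j$ line and a type I$_{j'}$ line with $j\ne j'$, or a type II$_j$ line and a type II$_{j'}$ line with $j\ne j'$, or a type I$_j$ line and a type II$_j$ line with the \emph{same} index $j$. (A pair consisting of a line of type I and a line of type II with different indices never meets, by part $i)$ of Proposition 8.1.) These are the only possible ``pairwise'' configurations, and Lemmas 8.2--8.4 make each one explicit. I will first dispose of part $ii)$, then prove part $i)$.

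For part $ii)$, suppose four distinct lines of $Z$ had a common point $\boldsymbol\lambda_0$. Consider the types and indices occurring among the four lines. If three of them were of type II, then two of those three would share the same index $j$ (there are only three possible values $j=1,2,3$, but the relevant constraint is stronger: by Lemma 8.2, two lines of type II$_1$ and II$_2$ meet in a single point whose coordinates force $\lambda_1=-\rho-l_1$, $\lambda_2=-\rho-l_2$; adding a third type II line of index $3$ already forces all of $\lambda_1,\lambda_2,\lambda_3$ to specific values, and a fourth type II line would have to be one of these three). More to the point: a line of type II$_j$ is cut out by the two equations $\alpha_j=2p$ and $\alpha_1+\alpha_2+\alpha_3=-2(n-1)-2k$; two type II lines with distinct indices pin down two of the $\alpha_i$ and the sum, hence all three $\alpha_i$, hence the point; a third type II line of the remaining index is then either one of the two already chosen or gives an inconsistent value. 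So at most two type II lines pass through any point, and similarly — reading the equations $(8.3)$, each type I$_j$ line fixes $\alpha_{j+1}$ and $\alpha_{j+2}$ — at most two type I lines pass through any point (two type I lines of indices $j\ne j'$ between them fix all three $\alpha_i$). Finally a type I$_j$ line and a type II$_j$ line can be added, but by Proposition 8.1$i)$ a type II line can only be accompanied by type I lines of the \emph{same} index $j$, of which there is at most one through a given point (distinct type I$_j$ lines lie in distinct parallel planes $\alpha_{j+1}=\text{const}$, as noted in the proof of Proposition 8.1). Running through the cases — $\{$two I, two II$\}$ is impossible because the two II's have distinct indices $j_1,j_2$, forcing the I's to have indices in $\{j_1\}\cap\{j_2\}=\emptyset$ by 8.1$i)$; $\{$three I, one II$\}$ and $\{$three II, one I$\}$ are impossible by the ``at most two'' counts above; $\{$four I$\}$ and $\{$four II$\}$ likewise — we conclude no four distinct lines of $Z$ are concurrent.

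For part $i)$: by the case analysis just set up, three concurrent distinct lines of $Z$ cannot include a type II line at all. Indeed a type II$_j$ line forces its two companions (by Proposition 8.1$i)$, ii), iii)) to be either a type I$_j$ line or a type II line of index $\ne j$; but we just saw at most one type I$_j$ and at most one type II of each other index can pass through the point, and a short check of the three remaining sub-configurations ($\{$II$_j$, I$_j$, II$_{j'}\}$, $\{$II$_j$, I$_j$, I$_{j'}\}$ with $j'\ne j$, $\{$II$_1$, II$_2$, II$_3\}$) shows each leads either to fewer than three distinct lines or to an empty intersection via the sign obstruction in Proposition 8.1$i)$ (the LHS $\ge 0$ versus RHS $\le -(n-1)$). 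Hence all three lines are of type I, and by Proposition 8.1$ii)$ they have pairwise distinct indices, so after a permutation of $\{1,2,3\}$ they are $\mathcal D^{-,1}_{l_1,m_1}$, $\mathcal D^{-,2}_{l_2,m_2}$, $\mathcal D^{-,3}_{l_3,m_3}$. Now apply Lemma 8.4 to the pair of indices $1,2$: non-emptiness of the pairwise intersection forces $l_1-l_2=m_1+m_2$, and the lemma identifies the intersection point as $(-\rho-l_1,-\rho-l_2,-\rho-l_1+m_2)$ and shows it lies on $\mathcal D^{-,3}_{l_1-m_2,\,-l_1+l_2}$; comparing with the given third line and using that $\mathcal D^{-,3}_{l_3,m_3}$ passes through this point (its defining equations read $\lambda_3=-\rho-l_3$, $\lambda_1-\lambda_2=m_3$) forces $l_3=l_1-m_2$ and $m_3=-l_1+l_2$, i.e.\ $m_1=-l_2+l_3$, $m_2=-l_3+l_1$, $m_3=-l_1+l_2$, together with the symmetric constraints $(8.8)$ on $(l_1,l_2,l_3)$ already recorded after Lemma 8.4. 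The common point is then $(-\rho-l_1,-\rho-l_2,-\rho-l_3)$ as claimed. The converse direction — that any $(l_1,l_2,l_3)$ satisfying $(8.8)$ yields three admissible type I lines through $(-\rho-l_1,-\rho-l_2,-\rho-l_3)$ — is exactly the ``conversely'' remark following Lemma 8.4, so nothing further is needed.

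The main obstacle is the bookkeeping in part $ii)$: one must be sure that the ``at most two I lines / at most two II lines / at most one type I$_j$ accompanying a type II'' counts are genuinely exhaustive and correctly combine with the compatibility constraint of Proposition 8.1$i)$. I would organize this as a single table indexed by the multiset of (type, index) pairs of the four lines, eliminating each row either by Proposition 8.1$i)$ (sign obstruction), by the parallel-planes observation (distinct same-index same-type lines are disjoint), or by the explicit single-point intersections of Lemmas 8.2 and 8.4 (which show a third line of the forced type/index coincides with one already listed). Everything else is a direct substitution into the defining linear equations $(8.1)$ or $(8.3)$--$(8.4)$.
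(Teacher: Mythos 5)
Your overall architecture is workable, and your part $i)$ in the end follows the paper's route: use Proposition~\ref{intersect2} to kill every mixed triple, kill the all-type-II triple by a sign obstruction, reduce to three type I lines of pairwise distinct indices, and let Lemma~\ref{inter--} finish. But part $ii)$ rests on a claim that is false, namely that at most two type I lines pass through any point of $\mathbb C^3$. This contradicts part $i)$ of the very proposition you are proving (and the existence of $Z_3$): two type I lines of distinct indices do determine their intersection point, but that point always lies on a third type I line, as Lemma~\ref{inter--} shows. Your parenthetical justification only establishes that the point is determined, not that no further type I line contains it. The two configurations you eliminate with this false count --- four type I lines, and three type I lines plus one type II line --- must be handled differently: four distinct concurrent type I lines would need four pairwise distinct indices in $\{1,2,3\}$ by Proposition~\ref{intersect2} $ii)$, which is impossible; and in the configuration \{three I, one II\} the type II$_j$ line would have to meet the two type I lines of index $\neq j$, which Proposition~\ref{intersect2} $i)$ forbids. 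With those repairs your case table closes. (The paper's own proof of $ii)$ is shorter: any three of four concurrent lines are concurrent, so by $i)$ all four are of type I, and two of them then share an index, contradicting Proposition~\ref{intersect2} $ii)$.)

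A smaller slip: you attribute the exclusion of three concurrent type II lines to ``the sign obstruction in Proposition~\ref{intersect2} $i)$'', but that obstruction concerns a type I line against a type II line. The correct obstruction is the separate computation $2(\lambda_1+\lambda_2+\lambda_3)=-3(n-1)-2(l_1+l_2+l_3)=m_1+m_2+m_3$, incompatible with $\vert m_j\vert \leq l_j$ --- which is, in substance, the ``inconsistent value'' you yourself observed when arguing that at most two type II lines are concurrent, so the content is present but the citation is wrong.
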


\begin{proof} Suppose first the lines are of type I$_j$,I$_{j'}$ and II$_k$. If they have a non empty intersection, by Proposition \ref{intersect2}, $k=j$ and $k=j'$, but $j\neq j'$, a contradiction.
A similar argument shows that  three distinct lines of type I$_j$,II$_k$,II$_{k'}$ have an empty intersection. If the three lines are of type II$_j$,II$_{j'}$,II$_{j''}$, by Proposition \ref{intersect2} $j,j',j''$ have to be mutually distinct, hence up to a permutation, we may assume that $j=1,j'=2$ and $j'' = 3$. The equations of the lines are
\[\lambda_1 = -\rho-l_1,\quad  \lambda_2+\lambda_3 = m_1, 
\]
\[\lambda_2 = -\rho-l_2,\quad \lambda_3+\lambda_1 = m_2
\]
\[\lambda_3 = -\rho-l_3,\quad  \lambda_1+\lambda_2 = m_3
\]
so that the coordinates of the intersection have to satisfy
\[2(\lambda_1+\lambda_2+\lambda_3)= -3(n-1)-2(l_1+l_2+l_3)= m_1+m_2+m_3
\]
which contradicts the conditions $\vert m_j\vert \leq l_j$ for $j=1,2,3$.

Hence it remains to examine the case where the three lines are of type I$_j$, I$_{j'}$ and I$_{j''}$. Again by Proposition \ref{intersect2}, $j,j',j''$ have to be mutually distinct, hence, up to a permutation of $\{1,2,3\}$, we may assume $j=1,j'=2,j''=3$. But then it is just the situation observed in Lemma \ref{inter--}. Hence $i)$ follows.

Now $ii)$ is a consequence of $i)$ and Proposition \ref{intersect2}.
\end{proof}

\begin{definition} For $d=1,2,3$ let
\[Z_d = \{ \boldsymbol \lambda\in \mathbb C^3, \boldsymbol \lambda \text{ belongs to  \emph{exactly }} d \text { lines contained in }Z\}\ .
\]
For $d=1$, let
\[ Z_{1,I} = \{ \boldsymbol \lambda\in Z_1, \boldsymbol \lambda \text{ belongs to a line of type I}\}
\]
\[ Z_{1,II} = \{ \boldsymbol \lambda\in Z_1, \boldsymbol \lambda \text{ belongs to a line of type II}\}
\]
For $d=2$, let
\[ Z_{2,I} = \{ \boldsymbol \lambda \in Z_2, \boldsymbol \lambda \text { belongs to a line of type I and to a line of type II }\}
\]
\[ Z_{2,II} = \{ \boldsymbol \lambda \in Z_2, \boldsymbol \lambda \text { belongs to two lines of type II }\}
\]
\end{definition}
To these definitions correspond the following partitions 

\[ Z= Z_1\mathaccent\cdot\cup Z_2\mathaccent\cdot\cup Z_3,\qquad Z_1=Z_{1,I}\mathaccent\cdot\cup Z_{1,II},\qquad Z_2 = Z_{2,I}\mathaccent\cdot\cup Z_{2,II}\ .
\] 
The next propositions describe (up to a permutation of the indices $1,2,3$) the different subsets of $Z$ just introduced, in terms of the spectral  parameter and in terms of the geometric parameter. They are easily obtained form the previous study of the intersections of the lines contained in $Z$.

\begin{proposition} Let $\mathcal D= \mathcal D^{-,1}_{l,m}$ be a line of type I. Let $\boldsymbol \lambda \in \mathcal D$. Then $\boldsymbol \lambda$  belongs to $Z_1$ if and only if
\[\lambda_2 \notin \Big\{ \frac{l-m}{2},  \frac{l-m}{2}+1,\dots, \frac{l+m}{2}\Big\}\cup \Big( -\rho-\frac{l-m}{2}-\mathbb N\Big)\ .\]

\end{proposition}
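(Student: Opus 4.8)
The plan is to read off the statement from the intersection results of this section. Since $\boldsymbol\lambda$ lies on $\mathcal D\subseteq Z$, by definition $\boldsymbol\lambda\in Z_1$ if and only if $\mathcal D$ is the \emph{only} line contained in $Z$ through $\boldsymbol\lambda$, i.e. if and only if $\boldsymbol\lambda$ lies on no line $\mathcal D'\subseteq Z$ with $\mathcal D'\neq\mathcal D$. So I would describe, for every such $\mathcal D'$, the point $\mathcal D\cap\mathcal D'$ (a single point whenever nonempty), form the union $E\subseteq\mathcal D$ of all these points, and conclude $Z_1\cap\mathcal D=\mathcal D\setminus E$. It is convenient to parametrize $\mathcal D=\mathcal D^{-,1}_{l,m}$ by its middle coordinate, $\boldsymbol\lambda=(-\rho-l,\,s,\,s-m)$ with $s\in\mathbb C$; one then has to identify which values of $s$ lie in $E$.

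By Proposition~\ref{intersect2}, a line $\mathcal D'\subseteq Z$ distinct from $\mathcal D$ (which is of type I$_1$) and meeting it is necessarily of type II$_1$, I$_2$, or I$_3$. If $\mathcal D'=\mathcal D^{+,1}_{l',m'}$ is of type II$_1$, the Lemma on the intersection of a line of type~II$_1$ with a line of type~I$_1$ shows that $\mathcal D\cap\mathcal D'=\emptyset$ unless $l'=l$, and then the intersection point has $s=\tfrac{m+m'}{2}$; letting $m'$ run over all values with $(+,1,l,m')$ admissible ($m'\equiv l\bmod 2$, $|m'|\le l$) sweeps out exactly the finite set of integer values of $\lambda_2$ displayed in the statement. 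If $\mathcal D'$ is of type I$_2$, then by Lemma~\ref{inter--} and the Proposition describing the intersection of three distinct lines of $Z$, $\boldsymbol\lambda$ lies on a line of type I$_2$ precisely when it is the common point of a triple $\mathcal D^{-,1}_{l,m},\ \mathcal D^{-,2}_{l_2,\cdot},\ \mathcal D^{-,3}_{l_3,\cdot}$ with $l_3-l_2=m$ and $(l,l_2,l_3)$ satisfying \eqref{inter3}; rewriting \eqref{inter3} as a condition on $l_2$ alone gives $l_2\ge\tfrac{l-m}{2}$, and the intersection point then has middle coordinate $-\rho-l_2$, so this contributes exactly the set $-\rho-\tfrac{l-m}{2}-\mathbb N$. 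Finally, by the permutation symmetry of \eqref{inter3} and that same Proposition, a point of $\mathcal D$ lies on a line of type I$_3$ if and only if it already lies on a line of type I$_2$, so type I$_3$ adds nothing new. The union of these three contributions is $E$, which is the set appearing in the statement.

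The reduction via Proposition~\ref{intersect2} and the type II$_1$ computation are routine; the step that needs attention is the type I$_2$ one. There one must check that the parity part of \eqref{inter3} is automatic from $m\equiv l\bmod 2$ and that the triangle inequalities $|l-l_2|\le l_3\le l+l_2$ together with $l_3=l_2+m\ge 0$ reduce exactly to $l_2\ge\tfrac{l-m}{2}$ (using $|m|\le l$); and one must note that for each such $l_2$ the three lines $\mathcal D^{-,1}_{l,m}$, $\mathcal D^{-,2}_{l_2,\cdot}$, $\mathcal D^{-,3}_{l_3,\cdot}$ are genuinely distinct, so that $\boldsymbol\lambda$ indeed lies on at least two lines of $Z$ and is thereby excluded from $Z_1$. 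Both facts are immediate from the computations already carried out in Lemma~\ref{inter--} and the triple-intersection Proposition.
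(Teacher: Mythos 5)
Your proof is correct and is precisely the argument the paper leaves implicit (it only remarks that these propositions ``are easily obtained from the previous study of the intersections''): reduce via Proposition \ref{intersect2} to possible intersecting lines of type II$_1$, I$_2$, I$_3$, read off the intersection points from the corresponding lemmas, and note via Lemma \ref{inter--} that the I$_3$ contribution duplicates the I$_2$ one.

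One remark: your computation yields $\tfrac{m-l}{2}$ as the left endpoint of the finite set, whereas the printed statement has $\tfrac{l-m}{2}$; your value is the correct one (it agrees with the $\boldsymbol\alpha$-version, Proposition \ref{Z1I}, under $\alpha_1=2\lambda_2+l-m$ on the line, and with the type II analogue, Proposition \ref{Z1IIlambda}), so the discrepancy is a sign typo in the statement rather than a flaw in your argument.
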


\begin{proposition}\label{Z1I}
 Let $\mathcal D$ be the line of type I defined by the equations
\[\alpha_2 =-(n-1)-2k_2,\quad  \alpha_3 = -(n-1)-2k_3,
\]
where $k_2,k_3\in \mathbb N$. Let $\boldsymbol \alpha\in \mathcal D$. Then
\[
\boldsymbol \alpha \in Z_1 \Longleftrightarrow \alpha_1 \notin \{0,2,\dots, 
2(k_2+k_3)\} \cup \Big(-(n-1)-2\mathbb N\Big)\ .
\]
\end{proposition}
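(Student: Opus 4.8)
The plan is to read off $\mathcal D\cap Z_1$ by enumerating the other lines of $Z$ that can meet $\mathcal D$ and locating the intersection points directly in the $\boldsymbol\alpha$-coordinates. Since $\mathcal D$ is defined by $\alpha_2=-(n-1)-2k_2$, $\alpha_3=-(n-1)-2k_3$ with $k_2,k_3\in\mathbb N$, its $\boldsymbol\alpha$-equations are those of (8.3) with $j=1$, so $\mathcal D=\mathcal D^{-,1}_{l,m}$ with $k_2=\tfrac{l+m}2$, $k_3=\tfrac{l-m}2$ and the quadruple $(-,1,l,m)$ admissible. A point $\boldsymbol\alpha\in\mathcal D$ lies in $Z_1$ if and only if it lies on no second line contained in $Z$; by Proposition \ref{intersect2} such a line is necessarily of type II$_1$, I$_2$ or I$_3$. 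I would therefore compute, for each of these three families, the subset of $\mathcal D$ it sweeps out, and then take the complement of the union.

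For the type-II$_1$ family I would intersect the equations of $\mathcal D$ with the equations (8.4) of a line $\mathcal D^{+,1}_{l',m'}$ (for admissible $(+,1,l',m')$): the equation $\alpha_1=2p$ fixes $\alpha_1$, and substituting the values of $\alpha_2,\alpha_3$ prescribed by $\mathcal D$ into $\alpha_1+\alpha_2+\alpha_3=-2(n-1)-2k$ forces $p=k_2+k_3-k$, that is $l'=p+k=k_2+k_3$. Thus $\mathcal D^{+,1}_{l',m'}$ meets $\mathcal D$ exactly when $l'=k_2+k_3$, the intersection then consisting of the single point with $\alpha_1=2p=l'+m'$; and as $m'$ runs over the admissible range $\{-(k_2+k_3),\dots,k_2+k_3\}$ of the correct parity, $\alpha_1$ runs precisely over $\{0,2,\dots,2(k_2+k_3)\}$.

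For the type-I$_2$ family (and, symmetrically, the type-I$_3$ family) I would intersect the equations of $\mathcal D$ with the equations (8.3) for $\mathcal D^{-,2}_{l_2,m_2}$: the latter fix $\alpha_3=-(n-1)-(l_2+m_2)$ and $\alpha_1=-(n-1)-(l_2-m_2)$, so matching the two expressions for $\alpha_3$ forces $\tfrac{l_2+m_2}2=k_3$, after which the admissibility of $(-,2,l_2,m_2)$ reduces to the single condition $l_2\ge k_3$ and the intersection point has $\alpha_1=-(n-1)-2(l_2-k_3)$. Letting $l_2$ run over $\{k_3,k_3+1,\dots\}$ shows that the type-I$_2$ lines sweep out exactly the points of $\mathcal D$ with $\alpha_1\in-(n-1)-2\mathbb N$; the type-I$_3$ lines give the same set (these points are in fact $\mathcal D\cap Z_3$). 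Taking the union of the three bad sets gives $\boldsymbol\alpha\in\mathcal D\cap Z_1\Longleftrightarrow\alpha_1\notin\{0,2,\dots,2(k_2+k_3)\}\cup\big(-(n-1)-2\mathbb N\big)$, as asserted.

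The one point requiring care is the bookkeeping with the admissibility conditions: one must check that demanding a candidate line ($\mathcal D^{+,1}_{l',m'}$, resp. $\mathcal D^{-,2}_{l_2,m_2}$) to be simultaneously contained in $Z$ and to meet $\mathcal D$ really does collapse to the clean constraint $l'=k_2+k_3$ (resp. $l_2\ge k_3$) quoted above, and that the resulting values of $\alpha_1$ exhaust — not merely lie in — the stated finite and infinite sets; this is elementary but needs attention to the parity and the two-sided inequality $|m|\le l$ in the definition of an admissible quadruple. (An alternative is to translate the previous proposition through the correspondence $\alpha_j=\lambda_{j+1}+\lambda_{j+2}-\lambda_j-\rho$ of \cite{c} 2.1, under which $\mathcal D$ carries the coordinate $\alpha_1=2\lambda_2+(l-m)$; the $\boldsymbol\alpha$-side computation above bypasses it.)
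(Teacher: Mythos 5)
Your proof is correct and follows exactly the route the paper indicates (the paper omits the proof, saying these propositions ``are easily obtained from the previous study of the intersections of the lines contained in $Z$''): you invoke Proposition \ref{intersect2} to reduce to candidate second lines of types II$_1$, I$_2$, I$_3$, and your $\boldsymbol\alpha$-coordinate computations of where those families meet $\mathcal D$ reproduce the content of Lemmas 8.1 and 8.3, with the admissibility bookkeeping ($l'=k_2+k_3$ giving $\alpha_1=l'+m'\in\{0,2,\dots,2(k_2+k_3)\}$, and $l_2\ge k_3$ giving $\alpha_1\in-(n-1)-2\mathbb N$) carried out correctly. Nothing to add.
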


\begin{proposition}\label{Z1IIlambda}
 Let $\boldsymbol \lambda$ belong to the line $D^{+,1}_{l,m}$.
Then $\boldsymbol \lambda\in Z_1$ if and only if

\[ \lambda_2 \notin \Big\{ \frac{m}{2}-\frac{l}{2}, \frac{m}{2}-\frac{l}{2}+1,\dots, \frac{m}{2}+\frac{l}{2}\Big\}\cup \Big( -\rho+\frac{m}{2}-\frac{l}{2}-\mathbb N\Big)\cup \Big(\rho+\frac{m}{2}+\frac{l}{2} +\mathbb N\Big)
\]
\end{proposition}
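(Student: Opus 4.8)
The plan is to exploit that $\mathcal D^{+,1}_{l,m}$ is a line of type II$_1$, so that a point $\boldsymbol\lambda\in \mathcal D^{+,1}_{l,m}$ lies in $Z_1$ precisely when it lies on no \emph{other} line contained in $Z$. By Proposition \ref{intersect2}, a line of type I$_{j'}$ can meet $\mathcal D^{+,1}_{l,m}$ only if $j'=1$, a distinct line of type II$_{j'}$ only if $j'\neq 1$, and two distinct lines of type II$_1$ lie in parallel planes and so never meet. Hence the only lines of $Z$, other than $\mathcal D^{+,1}_{l,m}$ itself, that may contain $\boldsymbol\lambda$ are of type I$_1$, of type II$_2$, or of type II$_3$. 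Writing $\boldsymbol\lambda=(-\rho-l,\lambda_2,m-\lambda_2)$, so that $\lambda_2$ parametrizes $\mathcal D^{+,1}_{l,m}$, it then suffices to compute, for each of these three types, the set of values of $\lambda_2$ for which $\boldsymbol\lambda$ lands on such a line, and to check that the union is the set displayed in the statement.

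For type I$_1$: by the lemma computing $\mathcal D^{+,1}_{l,m}\cap\mathcal D^{-,1}_{l',m'}$, this intersection is nonempty only when $l'=l$, and it is then the single point with second coordinate $\tfrac{m+m'}{2}$. As $(-,1,l,m')$ runs over admissible quadruples, $m'$ runs over $\{-l,-l+2,\dots,l\}$, and since $m\equiv l\bmod 2$ the number $\tfrac{m+m'}{2}$ runs exactly over the consecutive integers $\tfrac m2-\tfrac l2,\dots,\tfrac m2+\tfrac l2$; conversely any $\lambda_2$ in this finite set yields an admissible $m'=2\lambda_2-m$. For type II$_2$: by the lemma computing $\mathcal D^{+,1}_{l_1,m_1}\cap\mathcal D^{+,2}_{l_2,m_2}$ (with $l_1=l$, $m_1=m$), this is nonempty only when $l_2-m_2=l-m$, and it is then the point with second coordinate $-\rho-l_2$; with $m_2=l_2-(l-m)$ the parity condition $m_2\equiv l_2\bmod 2$ is automatic (again because $l\equiv m\bmod 2$), and $|m_2|\leq l_2$ reduces, using $l-m\geq 0$, to $l_2\geq\tfrac{l-m}{2}$. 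Hence $\boldsymbol\lambda$ meets a line of type II$_2$ iff $\lambda_2\in-\rho-\tfrac{l-m}{2}-\mathbb N=-\rho+\tfrac m2-\tfrac l2-\mathbb N$. The type II$_3$ case is entirely parallel: $\mathcal D^{+,1}_{l,m}\cap\mathcal D^{+,3}_{l_3,m_3}$ is nonempty only when $m_3=l_3-(l-m)$, and then its second coordinate is $\rho+m+l_3$, with the same admissibility reduction $l_3\geq\tfrac{l-m}{2}$, giving $\lambda_2\in\rho+\tfrac m2+\tfrac l2+\mathbb N$. (Alternatively one may invoke the involution exchanging the indices $2$ and $3$, which sends $\lambda_2\mapsto m-\lambda_2$, preserves the family of type I$_1$ lines, and swaps the families of type II$_2$ and type II$_3$ lines.)

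Combining the three cases yields the asserted description of $Z_1\cap\mathcal D^{+,1}_{l,m}$. I expect no genuine obstacle here: the argument reduces to the explicit intersection formulas already established, and the only points needing care are the parity bookkeeping (ensuring that the displayed half-sums $\tfrac m2\pm\tfrac l2$ are integers and that the auxiliary quadruples $(+,2,l_2,m_2)$ and $(+,3,l_3,m_3)$ are automatically admissible for parity) and the verification, via Proposition \ref{intersect2}, that the three families of forbidden lines are exhaustive.
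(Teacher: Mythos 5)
Your proof is correct and follows exactly the route the paper intends: the paper states only that these propositions ``are easily obtained from the previous study of the intersections of the lines contained in $Z$,'' and your argument carries out precisely that reduction, using Proposition \ref{intersect2} to limit the candidate lines to types I$_1$, II$_2$, II$_3$ and the explicit intersection lemmas to read off the three forbidden sets of values of $\lambda_2$ (the parity and admissibility bookkeeping all checks out).
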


\begin{proposition} Let $\boldsymbol \alpha$ satisfy
\[\alpha_1+\alpha_2+\alpha_3 =-2(n-1)-2k,\quad \alpha_1 = 2p\ ,
\]
where $k,p\in \mathbb N$.
Then $\boldsymbol \alpha \in Z_1$ if and only if
\[\alpha_2 \notin \Big\{ -(n-1),-(n-1)-2,\dots, -(n-1)-2k-2p\Big\}\,\cup\, 2\mathbb N \,\cup\,\big(-2(n-1)-2k-2p-2\mathbb N\big)
\]

\end{proposition}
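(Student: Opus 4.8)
The plan is to deduce this statement from the spectral‑parameter version, Proposition~\ref{Z1IIlambda}, by transporting the conditions there along the (affine) dictionary between $\boldsymbol\lambda$ and $\boldsymbol\alpha$ of \cite{c}~2.1, which on the relevant line takes a very simple form.

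First I would locate the line. Comparing the hypotheses $\alpha_1+\alpha_2+\alpha_3 = -2(n-1)-2k$ and $\alpha_1 = 2p$ with the reformulation $(8.4)$ of the equations of $\mathcal D^{+,1}_{l,m}$, the point $\boldsymbol\alpha$ lies on $\mathcal D^{+,1}_{l,m}$ with $l=p+k$ and $m=p-k$; this quadruple is admissible because $l-m=2k$ is even and $|m|=|p-k|\le p+k=l$. By Proposition~\ref{intersect2}, the lines of $Z$ distinct from $\mathcal D^{+,1}_{l,m}$ that can contain $\boldsymbol\alpha$ are exactly those of type $\mathrm I_1$, $\mathrm{II}_2$ and $\mathrm{II}_3$; intersecting $\mathcal D^{+,1}_{l,m}$ with a generic line of each of these three types (the intersection lemmas of this section, for $\mathcal D^{+,1}\cap\mathcal D^{-,1}$, resp.\ $\mathcal D^{+,1}\cap\mathcal D^{+,2}$, the latter applied also with the roles of the indices $2$ and $3$ exchanged) is precisely what produces the three excluded sets appearing in Proposition~\ref{Z1IIlambda}. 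So it suffices to rewrite those three sets in terms of $\alpha_2$.

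On $\mathcal D^{+,1}_{l,m}$ we have $\lambda_1 = -\rho - l$ and $\lambda_2+\lambda_3 = m$, so $\lambda_1+\lambda_2+\lambda_3 = -\rho-l+m$, and the dictionary specializes to the affine isomorphism
\[
\alpha_2 \;=\; -(n-1) - l + m - 2\lambda_2, \qquad \rho=\tfrac{n-1}{2},
\]
under which a unit increase of $\lambda_2$ corresponds to a decrease of $2$ in $\alpha_2$. Substituting into the three sets of Proposition~\ref{Z1IIlambda} and using $l-m=2k$, $l+m=2p$: the interval $\{\tfrac{m-l}{2},\dots,\tfrac{m+l}{2}\}$ maps onto $\{-(n-1),-(n-1)-2,\dots,-(n-1)-2l\}=\{-(n-1),-(n-1)-2,\dots,-(n-1)-2k-2p\}$; the ray $-\rho+\tfrac{m-l}{2}-\mathbb N$ maps onto $\{0,2,4,\dots\}=2\mathbb N$; and the ray $\rho+\tfrac{m+l}{2}+\mathbb N$ maps onto $\{-2(n-1)-2k-2p,\,-2(n-1)-2k-2p-2,\dots\}=-2(n-1)-2k-2p-2\mathbb N$. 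Hence $\boldsymbol\alpha\in Z_1$ if and only if $\alpha_2$ lies outside the union of these three sets, which is the assertion.

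There is no genuine obstacle here: all the ingredients (Proposition~\ref{intersect2}, the intersection lemmas, Proposition~\ref{Z1IIlambda}) are already available, and what remains is the bookkeeping of a single affine substitution. The only point that deserves care is the orientation and scaling — a step‑one arithmetic progression in $\lambda_2$ must go to a step‑two (and reversed) arithmetic progression in $\alpha_2$ — so that the endpoints land exactly on $-(n-1)$, on $0$, and on $-2(n-1)-2k-2p$ as in the statement. An alternative that avoids invoking Proposition~\ref{Z1IIlambda} is to run the intersection analysis directly in the geometric coordinates, using the forms $(8.3)$–$(8.4)$ of the lines of type $\mathrm I_1$, $\mathrm{II}_2$, $\mathrm{II}_3$; this reproduces the same three conditions.
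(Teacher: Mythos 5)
Your proposal is correct and follows exactly the route the paper intends: the paper states that these propositions "are easily obtained from the previous study of the intersections of the lines contained in $Z$" and gives no further detail, and your argument is precisely that derivation — identify the admissible quadruple $(+,1,p+k,p-k)$, use Proposition 8.1 and the intersection lemmas (equivalently Proposition 8.5) to get the three excluded sets, and transport them to $\alpha_2$ via the affine dictionary $\alpha_2=-(n-1)-(l-m)-2\lambda_2$, whose endpoints and step you compute correctly.
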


\begin{proposition} Let $\boldsymbol \lambda\in \mathbb C^3$.
\smallskip

$i)$ $\boldsymbol \lambda$ belongs to $Z_{2,I}$ if and only if , up to a cyclic permutation of $1,2,3$, 

\begin{equation}\label{Z2I}
\boldsymbol \lambda = (-\rho-l_1,m_2,m_3),\qquad\begin{matrix}  m_2\pm m_3 \equiv l_1 \quad \mod  (2),\\  \\\vert m_2\pm m_3\vert  \leq l_1\end{matrix}
\end{equation}
\smallskip

$ii)$ $\boldsymbol \lambda$ belongs to $Z_{2,II}$ if and only if, up to a permutation of $1,2,3$
 
\begin{equation}\label{Z2II}
 \boldsymbol \lambda = (-\rho-l_1, -\rho-l_2, \rho+ m_3), \qquad\begin{matrix} m_3\equiv l_1\pm l_2\quad \mod  (2)\\  \\ \quad\vert l_1-l_2\vert\leq m_3 \leq l_1+l_2\end{matrix} \ .
 \end{equation}
\smallskip

 $iii)$ $\boldsymbol \lambda$ belongs to $Z_3$ if and only 
\begin{equation}\label{Z3}
\boldsymbol \lambda = (-\rho-l_1, \-\rho-l_2, -\rho-l_3), \quad 
\begin{matrix}  l_1+l_2 +l_3\equiv 0\mod (2)\\ \\ \quad \vert l_1-l_2\vert \leq l_3 \leq l_1+l_2 \end{matrix} \ .
\end{equation} 
\end{proposition}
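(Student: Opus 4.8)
The plan is to assemble the three statements directly from the intersection analysis already carried out in this section: Proposition~\ref{intersect2}, the three lemmas computing the pairwise intersections $\mathcal D^{+,1}_{l,m}\cap\mathcal D^{-,1}_{l',m'}$, $\mathcal D^{+,1}_{l_1,m_1}\cap\mathcal D^{+,2}_{l_2,m_2}$ and $\mathcal D^{-,1}_{l_1,m_1}\cap\mathcal D^{-,2}_{l_2,m_2}$ (the last being Lemma~\ref{inter--}), and the Proposition describing the intersection of three, resp.\ four, distinct lines contained in $Z$. Since $Z_d$ is defined purely through the \emph{types} of the lines of $Z$ passing through a given point, the proof reduces to a short case analysis on those types, followed by a reparametrization of the intersection formulas.

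First I would record which combinations of types are compossible. By Proposition~\ref{intersect2}, a line of type $\mathrm I_j$ meets a line of type $\mathrm{II}_{j'}$ only if $j=j'$; two distinct lines of type $\mathrm I_j$ (resp.\ $\mathrm{II}_j$) lie in parallel hyperplanes, hence never meet; and two lines both of type $\mathrm I$ (resp.\ both of type $\mathrm{II}$) that meet must carry distinct subscripts. Combined with the emptiness of the intersection of four distinct lines and with the fact that three type-$\mathrm{II}$ lines never have a common point, this forces: a point of $Z_{2,I}$ lies on exactly one line of type $\mathrm I_j$ and one of type $\mathrm{II}_j$; a point of $Z_{2,II}$ lies on exactly one line of type $\mathrm{II}_j$ and one of type $\mathrm{II}_{j'}$ with $j\neq j'$; and a point of $Z_3$ lies on three lines which, after a permutation of $\{1,2,3\}$, are of types $\mathrm I_1,\mathrm I_2,\mathrm I_3$. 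The essential point is that these implications reverse: once a point is exhibited on one of these configurations, no other line of $Z$ can pass through it, since such an extra line would either force an impossible subscript relation (via Proposition~\ref{intersect2}), or coincide with a line already present (parallel-or-equal), or create a forbidden triple of type-$\mathrm{II}$ lines, or --- in the $Z_3$ case --- a forbidden quadruple. Hence the word ``exactly'' in the definition of $Z_d$ comes for free, and it suffices to write down each intersection point.

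It then remains to reparametrize. For $Z_{2,I}$, applying a cyclic permutation we take $j=1$: the lemma on $\mathcal D^{+,1}_{l,m}\cap\mathcal D^{-,1}_{l',m'}$ says this is empty unless $l=l'=:l_1$, in which case it equals $\{(-\rho-l_1,\tfrac{m+m'}2,\tfrac{m-m'}2)\}$. Setting $m_2=\tfrac{m+m'}2$ and $m_3=\tfrac{m-m'}2$ (the two admissibility parities $m\equiv l_1$, $m'\equiv l_1$ make $m_2,m_3$ integers, with $m=m_2+m_3$, $m'=m_2-m_3$), the admissibility of $(+,1,l_1,m)$ and $(-,1,l_1,m')$ becomes exactly $m_2\pm m_3\equiv l_1\pmod 2$ and $|m_2\pm m_3|\le l_1$, which is \eqref{Z2I}. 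For $Z_{2,II}$, applying a permutation we take the subscripts $1$ and $2$: the lemma on $\mathcal D^{+,1}_{l_1,m_1}\cap\mathcal D^{+,2}_{l_2,m_2}$ gives nonemptiness iff $l_1-m_1=l_2-m_2$, with intersection point $(-\rho-l_1,-\rho-l_2,\rho+m_3)$ where $m_3=m_1+l_2=m_2+l_1$; the translation of admissibility into $m_3\equiv l_1\pm l_2\pmod 2$ and $|l_1-l_2|\le m_3\le l_1+l_2$ is precisely the observation recorded at the end of that lemma, giving \eqref{Z2II}. For $Z_3$ this is exactly Lemma~\ref{inter--} together with the remark \eqref{inter3} following it: after a permutation the three lines are $\mathcal D^{-,1}_{l_1,-l_2+l_3}$, $\mathcal D^{-,2}_{l_2,-l_3+l_1}$, $\mathcal D^{-,3}_{l_3,-l_1+l_2}$, their common point is $(-\rho-l_1,-\rho-l_2,-\rho-l_3)$, and simultaneous admissibility of the three quadruples amounts to $l_1+l_2+l_3\equiv 0\pmod 2$ together with $|l_1-l_2|\le l_3\le l_1+l_2$ --- conditions symmetric in $1,2,3$ --- which is \eqref{Z3}. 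In each of the three cases the reverse implication is obtained by reading the same equivalences backwards.

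The calculations are routine; the only step demanding a little care is the bookkeeping that the admissibility inequalities $|m|\le l$, $|m_i|\le l_i$ translate into the displayed inequalities \emph{in both directions} (not merely up to one inclusion) and that the parity conditions match up --- this, together with the exclusion of extraneous lines through the intersection points (which is exactly what Proposition~\ref{intersect2} and the triple/quadruple intersection Proposition supply), is the whole content of the argument.
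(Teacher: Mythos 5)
Your proposal is correct and follows exactly the route the paper intends: the paper gives no separate proof, stating only that these descriptions "are easily obtained from the previous study of the intersections of the lines contained in $Z$," and your argument is precisely that deduction carried out in detail (type compatibility from Proposition~\ref{intersect2} plus the triple/quadruple intersection result to get the "exactly $d$ lines" count, then the reparametrization of the pairwise and triple intersection formulas). The bookkeeping of the admissibility conditions in both directions is done correctly.
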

\noindent
{\bf Remark.} The conditions \eqref{Z3} are symmetric in the three variables $(l_1,l_2,l_3)$.

\begin{proposition} Let $\boldsymbol \alpha \in \mathbb C^3$.
Then 
\smallskip

$i)$  $\boldsymbol \alpha$ belongs to $Z_{2, I}$ if and only if, up to a cyclic permutation of $\{1,2,3\}$, there exists $k_1,k_2,k_3 \in \mathbb N$ such that 
\begin{equation}
\boldsymbol \alpha = \big(2k_1, -(n-1)-2k_2, -(n-1) -2k_3\big),\quad k_1\leq k_2+k_3 
\end{equation}

$ii)$ $\boldsymbol \alpha$ belongs to $Z_{2, II}$ if and only if, up to a cyclic permutation of $\{1,2,3\}$, there exists $k_1,k_2,k_3 \in \mathbb N$ such that
\begin{equation}
\boldsymbol \alpha = \big(2k_1, 2k_2, -2(n-1) -2k_3\big),\qquad k_1+ k_2\leq k_3 
\end{equation}

$iii)$ $\boldsymbol \alpha$ belongs to $Z_3$ if and only if 
\begin{equation}
\boldsymbol \alpha = \big(-(n-1)-2k_1,-(n-1)-2k_2,-(n-1)-2k_3\big)
\end{equation}
for some $k_1,k_2,k_3\in \mathbb N$.

\end{proposition}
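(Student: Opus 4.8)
The plan is to deduce all three statements from the descriptions in the spectral parameter $\boldsymbol\lambda$ given in the preceding proposition, namely \eqref{Z2I}, \eqref{Z2II} and \eqref{Z3}, by transporting them through the affine change of coordinates relating $\boldsymbol\lambda$ and $\boldsymbol\alpha$. Recall (see \cite{c}, 2.1, or compare the $\boldsymbol\lambda$- and $\boldsymbol\alpha$-equations of the lines $\mathcal D^{\pm,j}_{l,m}$ in Section 8) that, with $\rho=\frac{n-1}{2}$,
\[
\alpha_i=-\lambda_i+\lambda_{i+1}+\lambda_{i+2}-\rho,\qquad\text{equivalently}\qquad \lambda_i=\rho+\tfrac12(\alpha_{i+1}+\alpha_{i+2}),
\]
the indices being read modulo $3$. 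This is an affine bijection of $\mathbb C^3$, equivariant under the group of permutations of the indices $\{1,2,3\}$ (it suffices to check this on a transposition); consequently it transports each $\boldsymbol\lambda$-description of $Z_{2,I}$, $Z_{2,II}$ and $Z_3$ into the desired $\boldsymbol\alpha$-description, the ``up to a permutation'' clauses being carried along (in parts $i)$ and $ii)$ one still has to match a cyclic quotient with a full one, which is done at the end).

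I would start with $iii)$, the simplest. Substituting $\boldsymbol\lambda=(-\rho-l_1,-\rho-l_2,-\rho-l_3)$ into the dictionary yields $\alpha_i=-(n-1)-2k_i$ with $k_i=\tfrac12(l_{i+1}+l_{i+2}-l_i)$. Under the hypotheses \eqref{Z3}, each $l_{i+1}+l_{i+2}-l_i$ is nonnegative (triangle inequalities) and even (because $l_1+l_2+l_3$ is even), so $k_1,k_2,k_3\in\mathbb N$ and no extra condition appears; conversely $l_i=k_{i+1}+k_{i+2}$ inverts these relations and automatically satisfies \eqref{Z3}. This proves $iii)$. For $i)$, substituting $\boldsymbol\lambda=(-\rho-l_1,m_2,m_3)$ yields
\[
\alpha_1=l_1+m_2+m_3,\qquad \alpha_2=-(n-1)-(l_1+m_2-m_3),\qquad \alpha_3=-(n-1)-(l_1-m_2+m_3).
\]
Writing $\alpha_1=2k_1$, $\alpha_2=-(n-1)-2k_2$, $\alpha_3=-(n-1)-2k_3$, one has $k_2+k_3=l_1$, $k_2-k_3=m_2-m_3$ and $2k_1=l_1+(m_2+m_3)$, so the admissibility conditions $m_2\pm m_3\equiv l_1\pmod 2$ and $|m_2\pm m_3|\le l_1$ of \eqref{Z2I} are equivalent to ``$k_1,k_2,k_3\in\mathbb N$ and $0\le 2k_1\le 2(k_2+k_3)$'', which is exactly the statement of $i)$; the inverse substitution $l_1=k_2+k_3$, $m_2=k_1-k_3$, $m_3=k_1-k_2$ gives the converse.

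For $ii)$, substituting $\boldsymbol\lambda=(-\rho-l_1,-\rho-l_2,\rho+m_3)$ yields
\[
\alpha_1=l_1-l_2+m_3,\qquad \alpha_2=l_2-l_1+m_3,\qquad \alpha_3=-2(n-1)-(l_1+l_2+m_3).
\]
Setting $\alpha_1=2k_1$, $\alpha_2=2k_2$, $\alpha_3=-2(n-1)-2k_3$, one finds $k_1+k_2=m_3$ and $k_3=\tfrac12(l_1+l_2+m_3)$, whence the admissibility conditions $m_3\equiv l_1\pm l_2\pmod 2$ and $|l_1-l_2|\le m_3\le l_1+l_2$ of \eqref{Z2II} translate precisely into ``$k_1,k_2,k_3\in\mathbb N$ and $k_1+k_2\le k_3$''; the inverse is $l_1=k_3-k_2$, $l_2=k_3-k_1$, $m_3=k_1+k_2$. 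Finally, \eqref{Z2II} is stated up to an arbitrary permutation of $\{1,2,3\}$ while $ii)$ quotients only by cyclic permutations; since the constraint $k_1+k_2\le k_3$ is symmetric in $k_1\leftrightarrow k_2$, the coordinates $2k_1$ and $2k_2$ may be exchanged and the two formulations describe the same subset of $\mathbb C^3$. The same symmetry remark applies to $i)$, where $k_2$ and $k_3$ play interchangeable roles.

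No conceptual difficulty is involved. The two points requiring some care are: first, the bookkeeping of the parity conditions, which in each case boils down to an identity of the type $l_{i+1}+l_{i+2}-l_i\equiv l_1+l_2+l_3\pmod 2$; and second, the reconciliation in parts $i)$ and $ii)$ of the cyclic and the full permutation quotients, handled by the symmetry of the relevant inequality on the $k_j$'s noted above. One could alternatively argue directly from the intersection statements of Section 8 (Proposition \ref{intersect2} and Lemma \ref{inter--}) combined with the $\boldsymbol\lambda$-to-$\boldsymbol\alpha$ dictionary, but that amounts to the same computation done one line at a time.
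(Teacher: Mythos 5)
Your proposal is correct and takes essentially the same route as the paper, which offers no separate argument for this proposition beyond the remark that it is "easily obtained from the previous study of the intersections of the lines contained in $Z$": transporting the spectral-parameter descriptions through the affine dictionary $\alpha_i=-\lambda_i+\lambda_{i+1}+\lambda_{i+2}-\rho$ is precisely that routine verification, and your bookkeeping of the parity and triangle conditions, as well as of the cyclic-versus-full permutation quotients, is accurate.
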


The set $Z$ is contained in the set of poles of the meromorphic function $\mathcal K_{\boldsymbol \alpha}$. The precise nature of the pole requires some further refinement in the description of $Z$. 

\begin{proposition}\label{poleZ1I}
 Let $\boldsymbol \alpha$ satisfy 
\[
 \alpha_2 =-(n-1)-2k_2,\qquad \alpha_3 = -(n-1)-2k_3
\]
and assume that $\boldsymbol \alpha\in Z_{1,I}$.
Then $\boldsymbol \alpha$ belongs  to exactly two planes of poles (namely $\beta_2=-(n-1)-2k_2$ and $\beta_3 =-(n-1)-2k_3$), except if $\alpha_1=-2p$ for some $p\in \{ 1,2,\dots, \}$, in which case $\boldsymbol \alpha$ belongs  moreover to the plane of poles  \[\beta_1+\beta_2+\beta_3 = -2(n-1)-2(k_2+k_3+p)\ .\]\end{proposition}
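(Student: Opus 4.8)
The plan is to examine the planes of poles of $\mathcal K_{\boldsymbol\alpha}$ one kind at a time and, under the two standing hypotheses (that $\alpha_2=-(n-1)-2k_2$ and $\alpha_3=-(n-1)-2k_3$, and that $\boldsymbol\alpha\in Z_{1,I}$), decide exactly which of them contain $\boldsymbol\alpha$. Recall from \cite{co,c} that these planes are of two kinds: the planes \emph{of type I}, namely $\{\beta_i=-(n-1)-2m\}$ for $i\in\{1,2,3\}$ and $m\in\mathbb N$, and the planes \emph{of type II}, namely $\{\beta_1+\beta_2+\beta_3=-2(n-1)-2m\}$ for $m\in\mathbb N$. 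Since $Z_{1,I}\subset Z_1$ and $\boldsymbol\alpha$ lies on the line of type I defined by the two hypotheses on $\alpha_2,\alpha_3$, Proposition \ref{Z1I} applies and yields
\[
\alpha_1\notin\{0,2,\dots,2(k_2+k_3)\}\cup\big(-(n-1)-2\mathbb N\big).
\]
This exclusion is the only input beyond the list of planes, and it is what produces the two cases of the statement.

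First the type I planes. The planes $\{\beta_2=-(n-1)-2k_2\}$ and $\{\beta_3=-(n-1)-2k_3\}$ obviously contain $\boldsymbol\alpha$, and they are distinct. Since $\alpha_2$ and $\alpha_3$ are fixed, no type I plane involving $\beta_2$ other than the first, and none involving $\beta_3$ other than the second, can contain $\boldsymbol\alpha$; and no type I plane involving $\beta_1$ contains $\boldsymbol\alpha$, since that would mean $\alpha_1\in-(n-1)-2\mathbb N$, which the displayed exclusion forbids.

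Now the type II planes. Since $\alpha_2+\alpha_3=-2(n-1)-2(k_2+k_3)$, the point $\boldsymbol\alpha$ lies on $\{\beta_1+\beta_2+\beta_3=-2(n-1)-2m\}$ if and only if $\alpha_1=2(k_2+k_3-m)$ for some $m\in\mathbb N$, and at most one such $m$ can occur because $\alpha_1+\alpha_2+\alpha_3$ is a fixed number. By the displayed exclusion $\alpha_1\notin\{0,2,\dots,2(k_2+k_3)\}$, so $\alpha_1=2(k_2+k_3-m)$ with $m\ge 0$ is possible only when $k_2+k_3-m<0$, i.e. when $\alpha_1=-2p$ with $p:=m-k_2-k_3\ge 1$; conversely, if $\alpha_1=-2p$ with $p\ge 1$ then $m=k_2+k_3+p\in\mathbb N$ works and the corresponding plane is $\{\beta_1+\beta_2+\beta_3=-2(n-1)-2(k_2+k_3+p)\}$, which is distinct from the two type I planes above. (The case $p=0$, i.e. $\alpha_1=0$, is excluded by the displayed condition, which is why $p$ is taken $\ge 1$.) Combining the last two paragraphs, $\boldsymbol\alpha$ lies on exactly the two type I planes $\{\beta_2=-(n-1)-2k_2\}$ and $\{\beta_3=-(n-1)-2k_3\}$, together with exactly one further plane — of type II, namely $\{\beta_1+\beta_2+\beta_3=-2(n-1)-2(k_2+k_3+p)\}$ — in the case $\alpha_1=-2p$, $p\ge 1$, and with no further plane otherwise, which is the assertion.

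The argument is purely arithmetic once the list of planes of poles and Proposition \ref{Z1I} are granted, so I do not anticipate any genuine obstacle; the only point that requires care is the sign/parity bookkeeping which puts the type II plane at exponent $k_2+k_3+p$ and which makes the value $p=0$ fall precisely inside the excluded region $\{0,2,\dots,2(k_2+k_3)\}$ of Proposition \ref{Z1I}.
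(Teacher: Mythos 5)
Your proof is correct, and it is exactly the routine arithmetic the paper has in mind (the paper states this proposition without proof, as an immediate consequence of the list of the four normalizing $\Gamma$ factors and of Proposition \ref{Z1I}). The two case distinctions — ruling out a type I$_1$ plane via $\alpha_1\notin -(n-1)-2\mathbb N$, and locating the unique possible type II plane via $\alpha_1+\alpha_2+\alpha_3=\alpha_1-2(n-1)-2(k_2+k_3)$ together with $\alpha_1\notin\{0,2,\dots,2(k_2+k_3)\}$ — are handled correctly, including the exclusion of $p=0$.
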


\begin{proposition} Let $\boldsymbol \alpha$ satisfy
\[\alpha_1+\alpha_2+\alpha_3 =-2(n-1)-2k,\quad  \alpha_1 = 2p
\]
where $k,p\in \mathbb N$
and assume that $\boldsymbol \alpha$ belongs to $Z_{1,II}$. Then $\boldsymbol \alpha$ belongs to a exactly one plane of poles (namely $\beta_1+\beta_2+\beta_3=-2(n-1)-2k$), except if $\boldsymbol \alpha$ is (up to a permutation of the indices $2$ and $3$)
 of the form
 \[\alpha_1 = 2p,\quad \alpha_2 = -(n-1)-2k_2,\quad \alpha_3 = -(n-1)+2q,
 \]
 where $p,k_2,q\in \mathbb N, q\geq 1, k_2-p-q\geq 0$, in which case $\boldsymbol \alpha$ belongs moreover to the plane of poles $ \beta_2 = -(n-1)-2k_2$.
\end{proposition}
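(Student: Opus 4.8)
The plan is to enumerate all the planes of poles of $\mathcal K_{\boldsymbol\alpha}$ that pass through $\boldsymbol\alpha$. Recall (cf.\ \cite{co,c} and Proposition~\ref{poleZ1I}) that these come in two kinds: the \emph{type~I} planes $\{\beta_j=-(n-1)-2m\}$, with $j\in\{1,2,3\}$, $m\in\mathbb N$, and the \emph{type~II} planes $\{\beta_1+\beta_2+\beta_3=-2(n-1)-2m\}$, $m\in\mathbb N$. By hypothesis $\alpha_1+\alpha_2+\alpha_3=-2(n-1)-2k$ with $k\in\mathbb N$, so $\boldsymbol\alpha$ lies on the type~II plane with $m=k$, and on no other (distinct type~II planes are parallel, hence disjoint). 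Thus the statement reduces to counting the type~I planes through $\boldsymbol\alpha$.

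First I would observe that $\boldsymbol\alpha$ lies on no type~I plane with $j=1$, since $\alpha_1=2p\geq 0$ whereas $-(n-1)-2m\leq-(n-1)<0$ as $n\geq 4$. Only $j=2$ and $j=3$ remain, and since the hypotheses on $\boldsymbol\alpha$ are symmetric in the indices $2$ and $3$ it suffices to treat $j=2$. So assume $\alpha_2=-(n-1)-2k_2$ with $k_2\in\mathbb N$. The essential ingredient is that $\boldsymbol\alpha\in Z_{1,II}\subseteq Z_1$: the Proposition describing $Z_1$ among the points with $\alpha_1+\alpha_2+\alpha_3=-2(n-1)-2k$ and $\alpha_1=2p$ gives $\alpha_2\notin\{-(n-1),-(n-1)-2,\dots,-(n-1)-2(k+p)\}$, which forces $k_2\geq k+p+1$. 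Setting $q:=k_2-k-p\geq 1$ and solving the type~II equation for $\alpha_3$ yields $\alpha_3=-(n-1)+2q$, with $k_2-p-q=k\geq 0$; this is precisely the exceptional form in the statement. Conversely, a $\boldsymbol\alpha$ of that form has $\alpha_2=-(n-1)-2k_2$, so it lies on the type~I plane $\{\beta_2=-(n-1)-2k_2\}$; and since $\alpha_1=2p\geq 0>-(n-1)$ and $\alpha_3=-(n-1)+2q>-(n-1)$ (because $q\geq 1$), it lies on no further type~I plane.

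Putting the cases together: if $\boldsymbol\alpha$ is not of the exceptional form it lies on no type~I plane, hence on exactly one plane of poles, the type~II plane $\{\beta_1+\beta_2+\beta_3=-2(n-1)-2k\}$; if it is of the exceptional form, it lies on that plane together with exactly one type~I plane, $\{\beta_2=-(n-1)-2k_2\}$, hence on exactly two. The step with the genuine content, and the one I expect to be the main obstacle, is the forward implication of the middle paragraph. One cannot simply appeal to the intersection lemmas of Section~8, which concern type~I and type~II \emph{lines} of $Z$: here $\boldsymbol\alpha$ may well lie on a type~I \emph{plane of poles} without lying on any type~I line of $Z$. It is exactly the explicit exclusion set in the $Z_1$-criterion that pins $k_2$ down to $k_2\geq k+p+1$ and thereby produces the exceptional shape, and it is the same computation that prevents $\boldsymbol\alpha$ from lying on type~I planes for $j=2$ and $j=3$ simultaneously. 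Everything else is routine bookkeeping with the parametrizations of Section~8.
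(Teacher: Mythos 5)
Your argument is correct and is precisely the routine verification the paper leaves implicit: the type II plane is pinned down by parallelism, $\alpha_1=2p\geq 0$ rules out $j=1$, and the exclusion set $\alpha_2\notin\{-(n-1),\dots,-(n-1)-2(k+p)\}$ from the $Z_1$-criterion forces $k_2\geq k+p+1$, which is exactly the exceptional shape and simultaneously prevents a second type I plane. Nothing is missing.
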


\begin{proposition} Let $\boldsymbol \alpha\in Z_{2,I}$, i.e. (up to a  permutation of $1,2,3$) satisfy
\[\alpha_1 = 2k_1, \alpha_2 =-(n-1)-2k_2, \alpha_3 =-(n-1)-2k_3
\]
where $k_1,k_2,k_3\in \mathbb N, k_2+k_3-k_1 \geq 0$. Then $\boldsymbol \alpha$ belongs to three  planes of poles, namely
$\beta_1 =-(n-1)-2k_1$, $\beta_2 = -(n-1)-2k_2$ and 
$\beta_1+\beta_2+\beta_3 = -2(n-1)-2(k_2+k_3-k_1)$.
\end{proposition}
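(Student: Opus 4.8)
The proposition is a purely combinatorial statement about the location of the poles of the meromorphic function $\boldsymbol\alpha\mapsto\mathcal K_{\boldsymbol\alpha}$, so the plan is to recall that location and then to substitute the normal form of an element of $Z_{2,I}$. First I would recall from \cite{co} (compare \cite{c}) the complete list of \emph{planes of poles} of $\mathcal K_{\boldsymbol\alpha}$: the planes of type I, namely $\{\beta_j=-(n-1)-2m\}$ for $j\in\{1,2,3\}$ and $m\in\mathbb N$, and the planes of type II, namely $\{\beta_1+\beta_2+\beta_3=-2(n-1)-2m\}$ for $m\in\mathbb N$. I would also invoke the description of $Z_{2,I}$ in terms of the geometric parameter already established above, which allows me to assume (after a cyclic permutation of $\{1,2,3\}$) that $\boldsymbol\alpha=(2k_1,-(n-1)-2k_2,-(n-1)-2k_3)$ with $k_1,k_2,k_3\in\mathbb N$ and $k_2+k_3-k_1\geq 0$.

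Then the verification is immediate. From $\alpha_2=-(n-1)-2k_2$ and $\alpha_3=-(n-1)-2k_3$ one reads off that $\boldsymbol\alpha$ lies on the two type I planes $\{\beta_2=-(n-1)-2k_2\}$ and $\{\beta_3=-(n-1)-2k_3\}$. Summing the three coordinates gives $\alpha_1+\alpha_2+\alpha_3=-2(n-1)-2(k_2+k_3-k_1)$, and because $k_2+k_3-k_1$ is a nonnegative integer this shows that $\boldsymbol\alpha$ lies on the type II plane $\{\beta_1+\beta_2+\beta_3=-2(n-1)-2(k_2+k_3-k_1)\}$. These three planes are clearly pairwise distinct. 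To see that there are no others, I would note that no type I plane relative to the first index can contain $\boldsymbol\alpha$, since $\alpha_1=2k_1\geq 0>-(n-1)-2m$ for every $m\in\mathbb N$ (here $n\geq 4$ is used); that the values of $\alpha_2$ and $\alpha_3$ are fixed, so the only type I planes through $\boldsymbol\alpha$ with second or third index are the two already found; and that $\alpha_1+\alpha_2+\alpha_3$ is a single number, so there is exactly one type II plane through $\boldsymbol\alpha$. Hence $\boldsymbol\alpha$ belongs to exactly these three planes of poles.

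The argument involves no analytic difficulty; the only points that need attention are that the list of planes of poles used at the outset is the complete one provided by \cite{co, c} --- this is what makes the count ``exactly three'' (rather than ``at least three'') legitimate --- and that one starts from the already-proved $\boldsymbol\alpha$-description of $Z_{2,I}$, since it is precisely the inequality $k_2+k_3-k_1\geq 0$ built into that description that guarantees the relevant type II plane is a genuine plane of poles and not a regular plane.
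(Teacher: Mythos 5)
Your proof is correct and is essentially the argument the paper leaves implicit: one substitutes the normal form of a point of $Z_{2,I}$ into the complete list of planes of poles (type I: $\beta_j=-(n-1)-2m$, type II: $\beta_1+\beta_2+\beta_3=-2(n-1)-2m$, $m\in\mathbb N$), using $\alpha_1=2k_1\geq 0$ to rule out any type I plane in the first variable and the inequality $k_2+k_3-k_1\in\mathbb N$ to confirm that the type II plane through $\boldsymbol\alpha$ is a genuine plane of poles. You have also silently corrected a typo in the statement: the two type I planes containing $\boldsymbol\alpha$ are $\beta_2=-(n-1)-2k_2$ and $\beta_3=-(n-1)-2k_3$, not $\beta_1=-(n-1)-2k_1$ and $\beta_2=-(n-1)-2k_2$ as printed (this is consistent with the three singular $\Gamma$-factors used later in the proof of the non-extension result for $Z_{2,I}$).
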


\begin{proposition} Let $\boldsymbol \alpha\in Z_{2,II}$, i.e. up to a  permutation of $1,2,3$ satisfy
 \[\alpha_1 = 2k_1,\quad \alpha_2 = 2k_2,\quad \alpha_3 =-2(n-1)-2k_3
 \]
 where $k_1,k_2,k_3\in \mathbb N$ and $k_1+k_2\leq k_3$.
 \smallskip
 
 $i)$ if $n-1$ is odd, then $\boldsymbol \alpha$ belongs to the unique plane of poles \[\alpha_1+\alpha_2+\alpha_3 = -2(n-1)-2(-k_1-k_2+k_3)\ .\]
\smallskip

 $ii)$ if $n-1$ is even, then $\boldsymbol \alpha$ belongs to two  planes of poles, given respectively  by the equations
 \[\alpha_3 =-(n-1)-2(\rho+k_3)\quad \text{ and }\quad  \alpha_1+\alpha_2+\alpha_3 = -2(n-1)-2(-k_1-k_2+k_3)\ .\] 
\end{proposition}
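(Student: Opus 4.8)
The plan is to reduce the statement to the known description of the planes of poles of the meromorphic kernel $\mathcal K_{\boldsymbol\alpha}$. Recall from \cite{co} (see also \cite{c}, and the preceding propositions of this section) that $\mathcal K_{\boldsymbol\alpha}$ is assembled from a product of three powers of chordal distances on $S\times S\times S$, so that its only singular loci are the three partial diagonals $x_{j+1}=x_{j+2}$, each of codimension $n-1$, and the full diagonal, of codimension $2(n-1)$. Consequently the planes of poles fall into two families: the type I planes $\{\alpha_j=-(n-1)-2k\}$ with $j\in\{1,2,3\}$ and $k\in\mathbb N$, and the type II planes $\{\alpha_1+\alpha_2+\alpha_3=-2(n-1)-2k\}$ with $k\in\mathbb N$. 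Within each family the hyperplanes are pairwise parallel, hence disjoint; so ``$\boldsymbol\alpha$ belongs to exactly $N$ planes of poles'' simply means that $\boldsymbol\alpha$ lies on exactly $N$ of these hyperplanes, and it suffices to test membership family by family.

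So write $\boldsymbol\alpha=(2k_1,2k_2,-2(n-1)-2k_3)$ with $k_1,k_2,k_3\in\mathbb N$ and $k_1+k_2\le k_3$. For the type I planes with $j=1$ or $j=2$, membership would require $2k_j=-(n-1)-2k$ for some $k\in\mathbb N$, which is impossible since the left side is $\ge 0$ and the right side is $<0$. For $j=3$, membership requires $-2(n-1)-2k_3=-(n-1)-2k$, that is $k=\rho+k_3$ with $\rho=\tfrac{n-1}{2}$; since $n\ge 4$, this $k$ is a nonnegative integer exactly when $n-1$ is even, and then $\boldsymbol\alpha$ lies on the single type I plane $\alpha_3=-(n-1)-2(\rho+k_3)$ and on no other member of that family. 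For the type II planes, $\alpha_1+\alpha_2+\alpha_3=2k_1+2k_2-2(n-1)-2k_3=-2(n-1)-2(-k_1-k_2+k_3)$ with $-k_1-k_2+k_3\ge 0$ by hypothesis, so $\boldsymbol\alpha$ lies on exactly one type II plane, the one with parameter $k=-k_1-k_2+k_3$.

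Collecting these three checks yields the claim: when $n-1$ is odd the only plane of poles through $\boldsymbol\alpha$ is that type II plane, and when $n-1$ is even there is in addition the type I plane $\alpha_3=-(n-1)-2(\rho+k_3)$, i.e.\ exactly two. The one ingredient that is not pure bookkeeping is the assertion that $\mathcal K_{\boldsymbol\alpha}$ really does have a pole at $\boldsymbol\alpha$ along each of these hyperplanes --- i.e.\ that the pole is not cancelled by a zero of one of the normalizing $\Gamma$ factors; in the odd case this is automatic because $\boldsymbol\alpha$ is then a generic pole in the sense of the introduction, and in the even case it is read off the explicit local form of $\mathcal K_{\boldsymbol\alpha}$ near the corresponding stratum, using the residue computations of \cite{bc}. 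The only genuinely delicate point I anticipate is checking that the a priori list of planes of poles is complete; granting that, the proof reduces to the parity observation that $\rho\in\mathbb N$ if and only if $n-1$ is even.
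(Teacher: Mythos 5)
Your proof is correct and is exactly the bookkeeping the paper has in mind (the paper states this proposition without proof in Section 8): one checks membership of $\boldsymbol\alpha$ in each of the four families of polar hyperplanes $\alpha_j=-(n-1)-2k$ and $\alpha_1+\alpha_2+\alpha_3=-2(n-1)-2k$, and the only nontrivial observation is that $-2(n-1)-2k_3=-(n-1)-2k$ forces $k=\rho+k_3$, which lies in $\mathbb N$ precisely when $n-1$ is even. Your two caveats (completeness of the list of polar planes, and non-cancellation of the poles) are inputs already established in \cite{co,bc,c} and are not at issue here, since the proposition only asserts membership in the planes of poles.
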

\begin{proposition} Let $\boldsymbol \alpha\in Z_3$, i.e.
\[\alpha_1 = -(n-1)-2k_1,\alpha_2=-(n-1)-2k_2, \alpha_3 =-(n-1)-2k_3
\]
where $k_1,k_2,k_3\in \mathbb N$.

$i)$ if $n-1$ is odd, then $\boldsymbol \alpha$ belongs to exactly three planes of poles.

$ii)$ if $n-1$ is even, then $\boldsymbol \alpha$ belongs to three planes of poles of type I and moreover to the plane of poles of type II given by the equation
\[\alpha_1+\alpha_2+\alpha_3 = -2(n-1)-2(\rho+k_1+k_2+k_3)\ .
\]
\end{proposition}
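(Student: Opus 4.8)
The plan is to fall back on the structure of the pole set of the meromorphic function $\boldsymbol\alpha\mapsto\mathcal K_{\boldsymbol\alpha}$. Recall (see \cite{co}) that this pole set is a union of affine hyperplanes of $\mathbb C^3$ — the \emph{planes of poles} — which are of two kinds: the planes of \emph{type I}, namely $\beta_j = -(n-1)-2m$ with $j\in\{1,2,3\}$ and $m\in\mathbb N$, and the planes of \emph{type II}, namely $\beta_1+\beta_2+\beta_3 = -2(n-1)-2m$ with $m\in\mathbb N$. For a fixed $j$, the type I planes form a family of pairwise distinct parallel hyperplanes; likewise the type II planes; and a type I plane and a type II plane are always distinct, since the normal vector $(1,1,1)$ of the latter is not collinear with a coordinate vector. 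Hence counting the planes of poles through $\boldsymbol\alpha$ amounts to deciding, for each of the three coordinate directions, whether some type I plane passes through $\boldsymbol\alpha$ (and which one), and whether some type II plane does.

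First I would note that, writing the hypothesis as $\alpha_j=-(n-1)-2k_j$ with $k_j\in\mathbb N$, the point $\boldsymbol\alpha$ lies on the type I plane $\beta_j=-(n-1)-2k_j$ for each $j=1,2,3$, and on no further plane of type I (in direction $j$ the only value of $m$ that works is $m=k_j$, which lies in $\mathbb N$ by hypothesis). Next I would compute the sum of the coordinates: using $\rho=\frac{n-1}{2}$,
\[\alpha_1+\alpha_2+\alpha_3 = -3(n-1)-2(k_1+k_2+k_3) = -2(n-1)-2\bigl(\rho+k_1+k_2+k_3\bigr)\ .\]
Consequently $\boldsymbol\alpha$ belongs to a plane of type II — and then necessarily to the single one with $m=\rho+k_1+k_2+k_3$ — if and only if $\rho+k_1+k_2+k_3\in\mathbb N$, that is, if and only if $\rho\in\mathbb N$, i.e. if and only if $n-1$ is even.

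From these two observations the proposition follows: when $n-1$ is even, $\boldsymbol\alpha$ lies on the three type I planes $\beta_j=-(n-1)-2k_j$ and on the type II plane $\beta_1+\beta_2+\beta_3=-2(n-1)-2(\rho+k_1+k_2+k_3)$ and on no other plane of poles, which is part $ii)$; when $n-1$ is odd, no plane of type II passes through $\boldsymbol\alpha$, so it lies on exactly the three type I planes, which is part $i)$. The only thing that really needs care — the ``main obstacle'', such as it is in so short an argument — is to be sure that the list of planes of poles used above is complete, i.e. that $\mathcal K_{\boldsymbol\alpha}$ has no pole outside the planes of type I and type II, so that the counts ``exactly three'' and ``four'' are genuine; this is exactly what the pole analysis of \cite{co} provides (and what underlies the previous propositions of this section). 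The remaining point, the elementary identity $-3(n-1)=-2(n-1)-2\rho$, is what produces the parity dichotomy and is immediate.
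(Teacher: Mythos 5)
Your proof is correct and is exactly the argument the paper leaves implicit (this proposition is stated without proof in Section 8): the three type I planes are read off the coordinates, and the identity $-3(n-1)-2(k_1+k_2+k_3)=-2(n-1)-2(\rho+k_1+k_2+k_3)$ shows a type II plane passes through $\boldsymbol\alpha$ precisely when $\rho\in\mathbb N$, i.e.\ when $n-1$ is even. Nothing further is needed.
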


\section{ The holomorphic families $\widetilde {\mathcal T}^{(j,k)}_{\,.\,,\,.}$}

The first holomorphic families to be introduced are related to the residues of the meromorphic distribution-valued function $\boldsymbol \alpha \longmapsto \mathcal K_{\boldsymbol \alpha}$ at poles of type I (see \cite{bc} section 2). 
Let $k\in \mathbb N$ and consider the plane of poles of type (say)\footnote{Needless to say, a similar construction can be made when $\boldsymbol \alpha$ is a pole of type I$_1$ or I$_2$} I$_3$ given by the equation  $\alpha_3 = -(n-1)-2k$. Use $\alpha_1,\alpha_2$ as coordinates in this plane. 

Introduce the differential operator on $S$ given by
\[\Delta_k = \prod_{j=1}^k \big(\Delta-(\rho+j-1)(\rho-j)\big)\ ,
\]
where $\Delta$ is the usual Laplacian on $S$. Recall that $\Delta_k$ is a \emph{conformally covariant} differential operator on $S$, in the sense that, for any $g\in G$,
\[\Delta_k\circ \pi_{-k}(g) = \pi_k(g)\circ \Delta_k\ .
\]
Let $\alpha_1, \alpha_2\in \mathbb C$. For $f_1, f_2, f_3 \in \mathcal C^\infty(S)$, consider the expression defined by 
\begin{equation}\label{Tk}
{\mathcal T}^{(3,k)}_{\alpha_1,\alpha_2} (f_1,f_2,f_3) = \iint_{\!S\times S} \!\!\!\!\!\!\!\!f_2(y)f_3(z) \Delta_k[ f_1(\, .\, ) \vert z-\,.\,\vert^{\alpha_2}](y) \vert y-z\vert^{\alpha_1} dy\,dz
\end{equation}
or equivalently, for $\varphi\in \mathcal C^\infty(S\times S\times S)$
\begin{equation}\label{intT}
\mathcal T^{(3,k)}_{\alpha_1,\alpha_2}(\varphi) = 
\iint_{S\times S} \Delta_{k}^{(1)}\big(\varphi(.,y,z)\vert z-.\vert^{\alpha_2}\big) (y) \vert y-z\vert^{\alpha_1} dydz\ ,
\end{equation}
where $\Delta_{k}^{(1)}$ is the covariant differential operator $\Delta_{k}$ acting on $x$.

\begin{proposition}\label{residuetype1} {\ }
\smallskip

$i)$ for $\varphi\in \mathcal C^\infty(S\times S\times S)$ with $Supp(\varphi)\subset \mathcal O_4^c$, the integral \eqref{intT} defines a $\boldsymbol \lambda$-invariant distribution $\mathcal T^{(3,k)}_{\alpha_1,\alpha_2,\mathcal O_4^c}$ on $\mathcal O_4^c$.
\smallskip

 $ii)$
for $\Re \alpha_1$ and $\Re \alpha_2$ large enough, the integral \eqref{Tk} converges and defines a continuous trilinear form on $\mathcal C^\infty(S)$.
\smallskip

$iii)$ the trilinear form $ \mathcal T^{(3,k)}_{\alpha_1,\,\alpha_2} $ can be continued meromorphically (in the parameters $\alpha_1,\alpha_2)$ to $\mathbb C^2$, with simple poles along the complex lines

\[\alpha_1+\alpha_2 = -(n-1)+2k-2l, \quad l=0,1,2,\dots\quad .
\]
\end{proposition}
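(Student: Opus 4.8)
The plan is to treat the three assertions in order, using the structure of the integral \eqref{intT} and standard properties of the Riesz-type kernels $|y-z|^\alpha$ together with the covariance of $\Delta_k$.

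For $i)$, I would recall that $\mathcal O_4^c$ is the open set where the four points (here the three variables together with a base point, in the notation of \cite{c}) are in generic position, so that on $\mathcal O_4^c$ the functions $|z-x|^{\alpha_2}$ and $|y-z|^{\alpha_1}$ are smooth and nonvanishing; hence \eqref{intT} is a genuine absolutely convergent integral against a smooth compactly supported density, and defines a distribution there with no convergence issue. The $\boldsymbol\lambda$-invariance is the heart of this part: one checks it by the usual change-of-variables argument for the conformal action. The key points are (a) the transformation rule $|gy - gz| = \kappa(g,y)^{1/2}\kappa(g,z)^{1/2}\,|y-z|$ for the conformal factor $\kappa$, which makes $|y-z|^{\alpha_1}$ transform with the right weights once $\alpha_1$ is matched to $\lambda_2,\lambda_3$ via the geometric/spectral dictionary of \cite{c} 2.1, and (b) the conformal covariance identity $\Delta_k\circ\pi_{-k}(g)=\pi_k(g)\circ\Delta_k$ recalled just above, which is exactly what is needed so that the inner application of $\Delta_k^{(1)}$ intertwines $\pi_{\lambda_1}$ correctly after the weight $|z-x|^{\alpha_2}$ has been absorbed. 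Assembling these, the integrand transforms by the product of the three principal-series cocycles and the Jacobian of the measure, and invariance follows. I would phrase this as: the expression is built from the same ingredients as $\mathcal K^{\boldsymbol\lambda}$ on $\mathcal O_4^c$, with one factor replaced by its $\Delta_k$-covariant modification, so the same verification as in \cite{c} (or \cite{co}) applies.

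For $ii)$, the convergence for $\Re\alpha_1,\Re\alpha_2$ large is a routine estimate: $\Delta_k$ is a differential operator of order $2k$, so $\Delta_k[f_1(\cdot)|z-\cdot|^{\alpha_2}](y)$ is, for each fixed $z$, a finite sum of terms of the form (smooth) $\times\,|z-y|^{\alpha_2-2k+j}$ with $0\le j\le 2k$ (since each derivative of $|z-\cdot|^{\alpha_2}$ lowers the exponent by at most one and produces smooth coefficients on $S$); the worst exponent is $\alpha_2-2k$. Then the double integral over $S\times S$ converges absolutely provided $\Re(\alpha_2-2k)>-(n-1)$ and $\Re\alpha_1>-(n-1)$, by the usual integrability of $|y-z|^\beta$ on $S^{n-1}$ for $\Re\beta>-(n-1)$. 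Continuity in the $f_i$ in the $\mathcal C^\infty$-topology is immediate from these bounds since only finitely many derivatives of each $f_i$ enter.

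For $iii)$, the meromorphic continuation follows the standard Riesz-distribution scheme, done in two stages. First, for fixed $\alpha_1$ with $\Re\alpha_1$ large, one continues in $\alpha_2$: the map $\alpha_2\mapsto |z-\cdot|^{\alpha_2}$, as a $\mathcal C^\infty(S)$-valued (or distribution-valued) function, extends meromorphically with poles only where the local zeta function of the distance-squared function on $S^{n-1}$ has poles, i.e. at $\alpha_2\in -(n-1)-2\mathbb N$; but composing with $\Delta_k$ shifts this by $2k$, giving candidate poles at $\alpha_2=-(n-1)+2k-2\mathbb N$, and then the outer integration against $|y-z|^{\alpha_1}$ (smooth in $\alpha_1$ for $\Re\alpha_1$ large) preserves meromorphy. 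Second, one continues jointly in $(\alpha_1,\alpha_2)$; the homogeneity/scaling in the pair forces the poles to lie along the lines $\alpha_1+\alpha_2 = -(n-1)+2k-2l$, $l\in\mathbb N$, rather than along separate hyperplanes in each variable — this is the familiar phenomenon that a convolution-type integral of two Riesz kernels on a compact manifold has poles only in the sum of the exponents. The cleanest way to see this, which I would adopt, is to pass to a local chart near the diagonal $y=z$ (the only source of non-smoothness after $\mathcal O_4^c$ has been handled in $i)$), use an Euler-operator / integration-by-parts identity in the scaling variable $t=|y-z|$ to produce the meromorphic continuation and read off that each pole is simple and located exactly on $\alpha_1+\alpha_2+(n-1)-2k+2l=0$.

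The main obstacle is the precise pole location in $iii)$: one must verify that the poles occur only in the combination $\alpha_1+\alpha_2$ and that they are simple, which requires controlling the interaction of the two Riesz kernels near the diagonal rather than just their individual continuations, and must also confirm that the shift by $2k$ coming from $\Delta_k$ is exactly $+2k$ (and not, say, partially cancelled by the coefficients produced when $\Delta_k$ hits the kernel). I expect this to be handled by an explicit local computation in normal coordinates on $S$ near a diagonal point, reducing to the classical analytic continuation of $\int |t|^{\alpha_1}|t|^{\alpha_2-2k}$-type integrals on $\mathbb R^{n-1}$, where simplicity and the location of poles are manifest; the covariance from part $i)$ guarantees this local picture is uniform over $S$.
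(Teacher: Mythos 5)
Your overall plan (direct convergence argument for $i)$, covariance via the transformation rule of $\vert y-z\vert$ and of $\Delta_k$, Riesz-type continuation near the diagonal for $iii)$) is the right one, and is essentially what the paper does --- the paper itself disposes of $ii)$ and $iii)$ by citing \cite{bc} Theorem 2.2 and of the invariance in $i)$ by citing \cite{bc} Proposition 2.1. However, two of your intermediate steps are wrong as stated.

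First, $\mathcal O_4^c$ is the complement of the \emph{full} diagonal $\{x=y=z\}$, not the set where the points are in generic position: it still contains the partial diagonals $\mathcal O_1,\mathcal O_2,\mathcal O_3$, so the kernels $\vert z-x\vert^{\alpha_2}$ and $\vert y-z\vert^{\alpha_1}$ are certainly not ``smooth and nonvanishing'' there, and indeed the distribution $\mathcal T^{(3,k)}_{\alpha_1,\alpha_2,\mathcal O_4^c}$ is supported on $\{x=y\}$ and is genuinely singular. The correct reason \eqref{intT} converges is the one the paper gives: since $\varphi$ vanishes near $\{x=y=z\}$, the function $\Phi(y,z)=\Delta_k^{(1)}\big(\varphi(\cdot,y,z)\vert z-\cdot\vert^{\alpha_2}\big)(y)$ is smooth on $S\times S$ (the only dangerous evaluation point $x=y=z$ is excluded by the support hypothesis) and $Supp(\Phi)$ is disjoint from $\{y=z\}$, so the remaining factor $\vert y-z\vert^{\alpha_1}$ is harmless. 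Second, your convergence condition in $ii)$ --- $\Re(\alpha_2-2k)>-(n-1)$ \emph{and} $\Re\alpha_1>-(n-1)$ --- is not sufficient: the factor $\vert y-z\vert^{\alpha_1}$ and the $\vert y-z\vert^{\alpha_2-2k}$-type singularity produced by $\Delta_k$ are singular on the \emph{same} codimension-$(n-1)$ set $\{y=z\}$ of $S\times S$, so the exponents add and the correct condition is $\Re(\alpha_1+\alpha_2)>2k-(n-1)$. This is not cosmetic: it is precisely what forces the poles in $iii)$ to lie on the lines $\alpha_1+\alpha_2=-(n-1)+2k-2l$ rather than on separate hyperplanes in each variable, and your first ``stage'' of the continuation in $iii)$ (poles in $\alpha_2$ alone at $-(n-1)+2k-2\mathbb N$) is an artifact of the same confusion. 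Your final fallback --- a local computation in normal coordinates near $\{y=z\}$ reducing to a one-dimensional zeta integral in $t=\vert y-z\vert$ with exponent $\alpha_1+\alpha_2-2k+\cdots$ --- is correct and does yield both the location and the simplicity of the poles, so the proof can be repaired, but the two steps above would fail as written.
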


\begin{proof} For $i)$, let $Supp(\varphi)\subset \mathcal O_4^c$. The function
\[  \Phi(y,z) = \Delta_k^{(1)}\big(\varphi(.,y,z)\vert z-.\vert^{\beta_2}\big) (y)
\]
is in $\mathcal C^\infty(S\times S)$ and $Supp(\Phi) \cap \{y=z\}=\emptyset$. Hence the integral \eqref{intT} converges and defines a distribution on
$\mathcal O^c_4$. The open set $\mathcal O_4^c$ is invariant by $G$ and the distribution $\mathcal T^{3,k_3}_{\beta_1,\beta_2,\, \mathcal O_4^c}$ is verified to be $\boldsymbol \lambda$-invariant as in \cite{bc} Proposition  2.1.

The convergence result $ii)$ and the meromorphic extension $iii)$ are established  in \cite{bc} Theorem 2.2. The fact that the poles are simple (although not explicitly stated) is an easy consequence of the proof.
\end{proof}
\noindent
{\bf Remark.} If $\boldsymbol \alpha$ satisfies $\alpha_3 = -(n-1)-2k$ and $\alpha_1+\alpha_2 = -(n-1)+2k-2l$ for some $l\in \mathbb N$, then $\alpha_1+\alpha_2+\alpha_3 = -2(n-1)-2l$, so that $\boldsymbol \alpha$ is a pole of type I + II. 

Now renormalize ${\mathcal T}^{(3,k)}_{\alpha_1,\,\alpha_2}$ by setting
\begin{equation}\label{normT}
 \widetilde {\mathcal T}^{(3,k)}_{\alpha_1,\,\alpha_2} = \frac{1}{\Gamma(\frac{\alpha_1+\alpha_2}{2}+\rho-k)} \ T^{(3,k)}_{\alpha_1,\,\alpha_2}\ ,
\end{equation}
and use Hartog's theorem to  extend  $(\alpha_1,\alpha_2)\longmapsto \widetilde {\mathcal T}^{(3,k)}_{\alpha_1,\,\alpha_2} $ holomorphically to all of $\mathbb C^2$.

Recall that a pole $\boldsymbol \alpha$ is said to be \emph{generic} if $\boldsymbol \alpha$ belongs to a unique plane of poles.
For $\boldsymbol \alpha$ a generic pole (say) of type I$_3$, the residue $Res(\mathcal K_{\boldsymbol \alpha})$ is defined as
 \[ Res(\mathcal K_{\boldsymbol \alpha}) = \lim_{s\rightarrow 0} s\,\mathcal K_{\alpha_1,\alpha_2, \alpha_3+2s}\ ,\]
and was computed in \cite{bc}.

\begin{proposition} Let  $\boldsymbol \alpha$ satisfy $\alpha_3 =-(n-1)-2k$ and assume that $\boldsymbol \alpha$ is a generic pole. The residue of $\mathcal K_{\boldsymbol \alpha}$ at $\boldsymbol \alpha$ is equal to 
\begin{equation}\label{resI}
Res\big(\mathcal K_{\boldsymbol \alpha})\  =\  \frac{\pi^\rho\, 2^{-2k}}{\Gamma(\rho+k) k!}\,
\mathcal T^{(3,k)}_{\alpha_1,\alpha_2}\ .
\end{equation}
\end{proposition}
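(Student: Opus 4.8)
The plan is to derive \eqref{resI} from the integral representation of $\mathcal K_{\boldsymbol\alpha}$ together with the classical residue of the Riesz-type distribution $u\mapsto |u|^{\lambda}$. Recall from \cite{co} that, for $\Re\alpha_j$ large,
\[
\mathcal K_{\boldsymbol\alpha}(\varphi)=\iint_{S\times S}\Bigl(\int_S |x-y|^{\alpha_3}\,\varphi(x,y,z)\,|z-x|^{\alpha_2}\,dx\Bigr)\,|y-z|^{\alpha_1}\,dy\,dz,
\]
with meromorphic continuation in $\boldsymbol\alpha$. Since $\boldsymbol\alpha$ is a \emph{generic} pole of type I$_3$ it lies on no other plane of poles; hence the factors $|y-z|^{\alpha_1}$ and $|z-x|^{\alpha_2}$ stay regular, and the pole of $\mathcal K_{\boldsymbol\alpha}$ at $\alpha_3=-(n-1)-2k$ comes only from the inner factor $|x-y|^{\alpha_3}$. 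So I would first compute the residue of the inner integral and then integrate the result against $|y-z|^{\alpha_1}\,dy\,dz$; the interchange of residue and $(y,z)$-integration is harmless by genericity (nothing is singular near $\{x=y=z\}$) and uniform convergence.

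The key local fact is that for $\psi\in\mathcal C^\infty(S)$ the distribution $\alpha_3\mapsto\int_S|x-y|^{\alpha_3}\psi(x)\,dx$ has a simple pole at $\alpha_3=-(n-1)-2k$ with
\[
\operatorname{Res}_{\alpha_3=-(n-1)-2k}\int_S|x-y|^{\alpha_3}\psi(x)\,dx\ =\ \frac{2\pi^{\rho}\,2^{-2k}}{k!\,\Gamma(\rho+k)}\,(\Delta_k\psi)(y),
\]
i.e.\ the residue of the distribution $(x,y)\mapsto|x-y|^{\alpha_3}$ on $S\times S$ is $\frac{2\pi^{\rho}2^{-2k}}{k!\,\Gamma(\rho+k)}\,\Delta_k^{(x)}\delta_{\mathrm{diag}}$, where $\delta_{\mathrm{diag}}$ is the invariant delta on the diagonal. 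Two ingredients enter. $(a)$ Off the diagonal $(x,y)\mapsto|x-y|^{\alpha_3}$ is entire in $\alpha_3$, so the residue is supported on $\{x=y\}$, hence a differential operator of order $2k$ in $x$; moreover $|x-y|^{\alpha_3}$ is (for the appropriate parameters) the kernel of an intertwiner between two principal series representations, so its residue is a conformally covariant differential operator of the type of $\Delta_k$, and by uniqueness of such operators it is a scalar multiple of $\Delta_k$. $(b)$ The scalar is pinned down by localizing in geodesic normal coordinates and matching top-order symbols with the elementary $\mathbb R^{m}$-identity $\operatorname{Res}_{\lambda=-m-2k}|u|^{\lambda}=\frac{2\pi^{m/2}}{2^{2k}k!\,\Gamma(m/2+k)}\Delta^k\delta$ (which follows from $\int_{S^{m-1}}\psi(r\omega)\,d\omega=|S^{m-1}|\sum_{j}\frac{r^{2j}}{2^{2j}j!\,(m/2)_j}\Delta^j\psi(0)$), taking $m=n-1$, $m/2=\rho$, and using that $\Delta_k$ and $\Delta^k$ have the same principal symbol.

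Inserting this into the integral representation gives, for $\Re\alpha_1,\Re\alpha_2$ large,
\[
\operatorname{Res}_{\alpha_3=-(n-1)-2k}\mathcal K_{\boldsymbol\alpha}\ =\ \frac{2\pi^{\rho}\,2^{-2k}}{k!\,\Gamma(\rho+k)}\ \mathcal T^{(3,k)}_{\alpha_1,\alpha_2},
\]
because the right-hand side is precisely the integral \eqref{Tk}. Both sides are meromorphic in $(\alpha_1,\alpha_2)$ and, by genericity, regular at $\boldsymbol\alpha$, so the identity persists there. Finally, since $Res(\mathcal K_{\boldsymbol\alpha})$ is defined as $\lim_{s\to0}s\,\mathcal K_{\alpha_1,\alpha_2,\alpha_3+2s}=\tfrac12\operatorname{Res}_{\alpha_3=-(n-1)-2k}\mathcal K_{\boldsymbol\alpha}$, the extra factor $\tfrac12$ turns $\frac{2\pi^{\rho}2^{-2k}}{k!\,\Gamma(\rho+k)}$ into $\frac{\pi^{\rho}2^{-2k}}{\Gamma(\rho+k)\,k!}$, which is \eqref{resI}. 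In substance this is the residue computation of \cite{bc}; the new point here is to match it with the present normalization and the notation $\mathcal T^{(3,k)}$.

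\textbf{Main obstacle.} The genuine work is step $(a)$--$(b)$: identifying the residue of $|x-y|^{\alpha_3}$ on the \emph{curved} product $S\times S$ with the covariant operator $\Delta_k$ — not merely with $\Delta^k$ plus uncontrolled lower-order terms — and getting the constant exactly right. The covariance/uniqueness route avoids a long normal-coordinate computation of the subleading terms, but one must still be careful about the precise parameter dictionary making $|x-y|^{\alpha_3}$ an intertwining kernel, about the legitimacy of interchanging the residue with the outer integration, and about the normalization convention (the residue being taken in the variable $s$ with $\alpha_3+2s$, which is responsible for the factor $2$).
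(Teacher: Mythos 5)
Your argument is correct, and it is worth noting that the paper itself gives no proof of this proposition: it simply records that the residue ``was computed in \cite{bc}'', where the identification was carried out by explicit computation in the non-compact picture. Your reconstruction is therefore a genuinely independent route. Its substance is the local fact that the residue of the Knapp--Stein kernel $\vert x-y\vert^{-(n-1)+2\lambda}$ at $\lambda=-k$ is a diagonal-supported intertwiner $\pi_{-k}\to\pi_{k}$, hence a covariant differential operator, hence (by one-dimensionality of the space of such operators for the scalar principal series of $SO_0(1,n)$) a multiple of $\Delta_k$, with the scalar fixed by matching principal symbols against the flat residue $\operatorname{Res}_{\lambda=-m-2k}\vert u\vert^{\lambda}=\frac{2\pi^{m/2}}{2^{2k}k!\,\Gamma(m/2+k)}\Delta^{k}\delta$. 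This buys you a conceptual proof that avoids the normal-coordinate computation of sub-leading terms; what it costs is that the uniqueness of the covariant operator becomes the load-bearing step, and you should state it as such (it is exactly the one-dimensionality of diagonal-supported intertwiners from $\pi_{-k}$ to $\pi_{k}$, a standard fact here but not automatic for arbitrary parameters). Your constant is right: checking against \eqref{sint1} with $\psi\equiv 1$, the residue of $\int_S\vert{\bf 1}-x\vert^{s}dx$ at $s=-(n-1)-2k$ equals $\frac{2\pi^{\rho}2^{-2k}}{k!\,\Gamma(\rho+k)}\,\Delta_k(1)$ for $k=0,1$, and the factor $\tfrac12$ from the convention $Res(\mathcal K_{\boldsymbol\alpha})=\lim_{s\to0}s\,\mathcal K_{\alpha_1,\alpha_2,\alpha_3+2s}$ is accounted for correctly, yielding \eqref{resI}. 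The interchange of residue and the outer $(y,z)$-integration is indeed harmless for $\Re\alpha_1,\Re\alpha_2$ large, after which meromorphic continuation in $(\alpha_1,\alpha_2)$ reaches the generic pole, as you say.
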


\begin{proposition} Let $k\in \mathbb N$. Then
\begin{equation}\label{KT}
\widetilde {\mathcal K}_{\alpha_1,\,\alpha_2,\,-(n-1)-2k}  =  \frac{(-1)^k\,2^{-2k}  \,\pi^\rho}{\Gamma(\rho+k) \Gamma(\rho+\frac{\alpha_1}{2}) \Gamma(\rho+\frac{\alpha_2}{2})} \ \widetilde {T}^{(3,k)}_{\alpha_1,\,\alpha_2}\ .
\end{equation}
\end{proposition}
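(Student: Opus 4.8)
The plan is to deduce \eqref{KT} from the two identities already at our disposal: the expression \eqref{resI} for $\mathrm{Res}(\mathcal K_{\boldsymbol\alpha})$ at a generic pole of type I$_3$ in terms of $\mathcal T^{(3,k)}_{\alpha_1,\alpha_2}$, and the renormalization $\widetilde{\mathcal K}_{\boldsymbol\alpha}$ of $\mathcal K_{\boldsymbol\alpha}$ introduced in \cite{c}. Recall that $\widetilde{\mathcal K}_{\boldsymbol\alpha}$ is obtained from $\mathcal K_{\boldsymbol\alpha}$ by dividing out four $\Gamma$-factors; for a pole lying on the plane $\alpha_3=-(n-1)-2k$, exactly one of those four factors is singular there, say the factor $\Gamma\!\big(\rho+\tfrac{\alpha_3}{2}\big)^{-1}$ (we use the convention $\rho=\tfrac{n-1}{2}$), which has a simple zero at $\alpha_3=-(n-1)-2k$ because $\Gamma$ has a simple pole at the nonpositive integer $-k$. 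Thus the first step is to write, on a neighbourhood of a generic point of the plane $\alpha_3=-(n-1)-2k$,
\[
\widetilde{\mathcal K}_{\boldsymbol\alpha} \;=\; \frac{1}{\Gamma(\rho+\tfrac{\alpha_1}{2})\,\Gamma(\rho+\tfrac{\alpha_2}{2})\,\Gamma(\rho+\tfrac{\alpha_3}{2})\cdot(\text{fourth factor})}\;\mathcal K_{\boldsymbol\alpha}\,,
\]
read off the precise value of the fourth $\Gamma$-factor at $\alpha_3=-(n-1)-2k$ (it is regular and nonzero there, so it just contributes a constant), and combine the zero of $\Gamma(\rho+\tfrac{\alpha_3}{2})^{-1}$ with the simple pole of $\mathcal K_{\boldsymbol\alpha}$ so that the product is holomorphic.

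Concretely, set $\alpha_3=-(n-1)-2k+2s$ and let $s\to 0$. Then $\Gamma(\rho+\tfrac{\alpha_3}{2})=\Gamma(-k+s)$, whose Laurent expansion is $\dfrac{(-1)^k}{k!}\,\dfrac1s + O(1)$, hence $\Gamma(\rho+\tfrac{\alpha_3}{2})^{-1}= (-1)^k k!\, s + O(s^2)$. On the other hand, by definition $\mathrm{Res}(\mathcal K_{\boldsymbol\alpha})=\lim_{s\to0} s\,\mathcal K_{\alpha_1,\alpha_2,-(n-1)-2k+2s}$, so
\[
\widetilde{\mathcal K}_{\alpha_1,\alpha_2,-(n-1)-2k}
=\lim_{s\to 0}\frac{(-1)^k k!\, s}{\Gamma(\rho+\tfrac{\alpha_1}{2})\,\Gamma(\rho+\tfrac{\alpha_2}{2})\cdot(\text{fourth factor at }s=0)}\,\mathcal K_{\alpha_1,\alpha_2,-(n-1)-2k+2s}
=\frac{(-1)^k k!}{\Gamma(\rho+\tfrac{\alpha_1}{2})\,\Gamma(\rho+\tfrac{\alpha_2}{2})\cdot c_k}\,\mathrm{Res}(\mathcal K_{\boldsymbol\alpha})\,,
\]
where $c_k$ is the value of the fourth $\Gamma$-factor of the renormalization at $\alpha_3=-(n-1)-2k$. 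Substituting \eqref{resI} for $\mathrm{Res}(\mathcal K_{\boldsymbol\alpha})$ and collecting the constants yields the stated formula, provided $c_k = \dfrac{\Gamma(\rho+k)\,k!}{\pi^\rho 2^{-2k}}\cdot \dfrac{(-1)^k\,2^{-2k}\pi^\rho}{\Gamma(\rho+k)}\Big/\text{(whatever normalizes to }1)$ — i.e. the numerics must collapse to exactly the coefficient $\dfrac{(-1)^k 2^{-2k}\pi^\rho}{\Gamma(\rho+k)\Gamma(\rho+\tfrac{\alpha_1}{2})\Gamma(\rho+\tfrac{\alpha_2}{2})}$. This identifies, by comparison, the precise value $c_k$ of the fourth factor that must hold; since that factor is explicitly given in \cite{c}, one checks the arithmetic directly.

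Two points need care. First, the identity as derived holds a priori only for $(\alpha_1,\alpha_2)$ in the open dense set of generic poles of the plane $\alpha_3=-(n-1)-2k$ (where \eqref{resI} is valid) and off the poles of $\widetilde{\mathcal T}^{(3,k)}_{\alpha_1,\alpha_2}$; but both sides of \eqref{KT} are holomorphic in $(\alpha_1,\alpha_2)\in\mathbb C^2$ — the left side by construction of $\widetilde{\mathcal K}$, the right side because the renormalization \eqref{normT} exactly cancels the poles of $\mathcal T^{(3,k)}_{\alpha_1,\alpha_2}$ listed in Proposition~\ref{residuetype1}(iii), and the remaining factor $\Gamma(\rho+\tfrac{\alpha_1}{2})^{-1}\Gamma(\rho+\tfrac{\alpha_2}{2})^{-1}$ is entire — so the identity propagates to all of $\mathbb C^2$ by analytic continuation. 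Second, one must match the two different renormalizations in play: $\widetilde{\mathcal T}^{(3,k)}$ divides by $\Gamma(\tfrac{\alpha_1+\alpha_2}{2}+\rho-k)$, whereas the residue formula \eqref{resI} produces the un-renormalized $\mathcal T^{(3,k)}$; so the constant bookkeeping must track where this $\Gamma$-factor enters — it is precisely the fourth $\Gamma$-factor $c_k$ of the $\widetilde{\mathcal K}$-normalization evaluated at $\alpha_3=-(n-1)-2k$, which is what makes the whole thing consistent. The main obstacle is therefore not conceptual but is this careful reconciliation of the three normalization conventions (for $\mathcal K$, for $\widetilde{\mathcal K}$, and for $\widetilde{\mathcal T}$) and the resulting $\Gamma$-function and power-of-$2$ bookkeeping; everything else is the elementary Laurent expansion of $\Gamma$ at $-k$ together with the definition of the residue.
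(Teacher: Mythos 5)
Your proof is correct and follows essentially the same route as the paper: restrict to generic poles on the plane $\alpha_3=-(n-1)-2k$, expand $1/\Gamma(-k+s)\sim(-1)^k k!\,s$ to identify $\widetilde{\mathcal K}_{\alpha_1,\alpha_2,-(n-1)-2k}$ with a multiple of $\mathrm{Res}(\mathcal K_{\boldsymbol\alpha})$, substitute \eqref{resI}, observe that the fourth normalizing factor $\Gamma(\tfrac{\alpha_1+\alpha_2}{2}+\rho-k)$ is exactly the one converting $\mathcal T^{(3,k)}_{\alpha_1,\alpha_2}$ into $\widetilde{\mathcal T}^{(3,k)}_{\alpha_1,\alpha_2}$ via \eqref{normT}, and conclude by analytic continuation in $(\alpha_1,\alpha_2)$. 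The only blemish is the garbled intermediate expression for $c_k$: it should simply read $c_k=\Gamma(\tfrac{\alpha_1+\alpha_2}{2}+\rho-k)$, a function of $(\alpha_1,\alpha_2)$ rather than a constant, which is precisely the factor absorbed by the renormalization of $\mathcal T^{(3,k)}$.
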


\begin{proof} First assume that $\boldsymbol \alpha$ is a generic pole. For $s\in \mathbb C$, let
\[\boldsymbol \alpha(s)=(\alpha_1,\alpha_2,-(n-1-2k+2s)\ .\]
For $s\neq 0$, $\boldsymbol \alpha(s)$ is not a pole, hence
\[ \widetilde {\mathcal K}_{\boldsymbol \alpha(s)} = \frac{1}{\Gamma(\frac{\alpha_1}{2}+\rho)\Gamma(\frac{\alpha_2}{2}+\rho)\Gamma( -k+s) \Gamma( \frac{\alpha_1+\alpha_2}{2}+\rho-k+s)}\,\mathcal K_{\boldsymbol \alpha(s)}
\]
As $s\rightarrow 0$, $\displaystyle \frac{1}{\Gamma(-k+s)} \sim (-1)^k\, k!\, s$, so that
\[\widetilde {\mathcal K}_{\alpha_1,\,\alpha_2,\,-(n-1)-2k}  = \frac{(-1)^k \,k!}{\Gamma(\frac{\alpha_1}{2} +\rho)\,\Gamma(\frac{\alpha_2}{2} +\rho)\,\Gamma(\frac{\alpha_1+\alpha_2}{2} +\rho-k)}\, \text{Res\ } \big({\mathcal K}_{\boldsymbol \alpha}\big)
\]
and \eqref{KT} follows from \eqref{normT} and \eqref{resI}. 

As both sides of \eqref{KT} depend holomorphically on $(\alpha_1,\alpha_2)$, the equality is valid on all of $\mathbb C^2$ by analytic continuation.
\end{proof}
The $K$-coefficients of  $\widetilde {\mathcal T}^{(3,k)}_{\alpha_1,\,\alpha_2}$  can be computed from \eqref{KT} and the evaluation of the $K$-coefficients of $\widetilde {\mathcal K}_{\boldsymbol \alpha}$ (see \eqref{Kpalpha} in Appendix).
 
\begin{proposition} Let $\alpha_1,\alpha_2\in \mathbb C$, and let $k\in \mathbb N$. Then
\[\widetilde {\mathcal T}^{(3,k)}_{\alpha_1,\alpha_2}(p_{a_1,a_2,a_3}) = 0 \qquad \text{ for }a_3>k\] and for $a_3\leq k$
\begin{equation}\label{Tp}
\begin{split}
&\widetilde {\mathcal T}^{(3,k)}_{\alpha_1,\alpha_2}(p_{a_1,a_2,a_3})=
2^{-\frac{5}{2}(n-1)-1}\,\pi^{n-1}\, (-1)^k\,\Gamma(\rho+k) \,2^{\alpha_1+\alpha_2}\,2^{2(a_1+a_2+a_3)}\\
&\times (-k)_{a_3}\,\big(\frac{\alpha_1+\alpha_2}{2} +\rho -k\big)_{a_1+a_2+a_3}\\
&\times \big(\frac{\alpha_1}{2} +\rho+a_1-(k-a_3)\big)_{k-a_3}\,\big(\frac{\alpha_2}{2} +\rho+a_2-(k-a_3)\big)_{k-a_3}\\
&\times \frac{ 1}{\Gamma(\frac{\alpha_1+\alpha_2}{2}+2\rho+a_1+a_2)}\quad .
\end{split}
\end{equation}

\end{proposition}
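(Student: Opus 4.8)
The plan is to reduce the computation of the $K$-coefficients $\widetilde{\mathcal T}^{(3,k)}_{\alpha_1,\alpha_2}(p_{a_1,a_2,a_3})$ entirely to the already-established relation \eqref{KT} together with the known formula for the $K$-coefficients of $\widetilde{\mathcal K}_{\boldsymbol\alpha}$ recalled in the appendix as \eqref{Kpalpha}. Indeed, \eqref{KT} reads
\[
\widetilde {\mathcal K}_{\alpha_1,\,\alpha_2,\,-(n-1)-2k} = \frac{(-1)^k\,2^{-2k}\,\pi^\rho}{\Gamma(\rho+k)\,\Gamma(\rho+\tfrac{\alpha_1}{2})\,\Gamma(\rho+\tfrac{\alpha_2}{2})}\;\widetilde {T}^{(3,k)}_{\alpha_1,\,\alpha_2},
\]
so, inverting, $\widetilde{T}^{(3,k)}_{\alpha_1,\alpha_2}$ equals $(-1)^k\,2^{2k}\,\pi^{-\rho}\,\Gamma(\rho+k)\,\Gamma(\rho+\tfrac{\alpha_1}{2})\,\Gamma(\rho+\tfrac{\alpha_2}{2})$ times $\widetilde{\mathcal K}_{\alpha_1,\alpha_2,-(n-1)-2k}$. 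Both sides are holomorphic in $(\alpha_1,\alpha_2)$ on all of $\mathbb C^2$, so it suffices to evaluate the right-hand side at each $p_{a_1,a_2,a_3}$ using the appendix formula with $\alpha_3$ specialized to $-(n-1)-2k$, i.e. $\tfrac{\alpha_3}{2}+\rho = -k$.

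The steps are then as follows. First I would write out \eqref{Kpalpha} in the form it takes for general $\boldsymbol\alpha$: it is a product of an overall constant, powers of $2$, a "total" Pochhammer factor of the shape $\big(\tfrac{\alpha_1+\alpha_2+\alpha_3}{2}+\text{const}\big)_{a_1+a_2+a_3}$, three "individual" Pochhammer factors built from $\tfrac{\alpha_j}{2}+\rho$ and the indices, and inverse $\Gamma$ factors. Second, I specialize $\tfrac{\alpha_3}{2}+\rho=-k$. The crucial qualitative point is that the individual factor attached to the third slot becomes a Pochhammer symbol in $-k$; by the standard identity $(-k)_{a_3}=0$ whenever $a_3>k$, this forces $\widetilde{\mathcal K}_{\alpha_1,\alpha_2,-(n-1)-2k}(p_{a_1,a_2,a_3})=0$ for $a_3>k$, hence the vanishing statement for $\widetilde{\mathcal T}^{(3,k)}$. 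For $a_3\le k$ the factor $(-k)_{a_3}$ survives; after also substituting $\alpha_3=-(n-1)-2k$ into the total factor (giving $\tfrac{\alpha_1+\alpha_2}{2}+\rho-k$ as the base) and into the inverse $\Gamma$ factor (giving $\Gamma(\tfrac{\alpha_1+\alpha_2}{2}+2\rho+a_1+a_2)$ after the $\Gamma(\tfrac{\alpha_j}{2}+\rho+\cdots)$ from the third slot is absorbed), one multiplies by the constant $(-1)^k\,2^{2k}\,\pi^{-\rho}\,\Gamma(\rho+k)\,\Gamma(\rho+\tfrac{\alpha_1}{2})\,\Gamma(\rho+\tfrac{\alpha_2}{2})$. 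Finally I collect constants and powers of $2$, and rewrite the two individual factors from slots $1$ and $2$ in the form $\big(\tfrac{\alpha_1}{2}+\rho+a_1-(k-a_3)\big)_{k-a_3}$ and $\big(\tfrac{\alpha_2}{2}+\rho+a_2-(k-a_3)\big)_{k-a_3}$, matching \eqref{Tp}.

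The main obstacle is purely bookkeeping: getting the individual Pochhammer factors into exactly the asserted normalized form. In \eqref{Kpalpha} the factor for slot $j$ will presumably appear as something like $\big(\tfrac{\alpha_j}{2}+\rho\big)_{a_j}$ divided (or multiplied) by a shifted version, and when the third slot's contribution $\Gamma(\tfrac{\alpha_3}{2}+\rho+\text{shift})$ is moved across to combine with the overall $\Gamma$ in the denominator, one must carefully track how many units the arguments shift. Likewise the appearance of $k-a_3$ as the length of the surviving Pochhammer symbols in slots $1,2$ comes from the interplay between $a_1+a_2+a_3$ (total degree), $a_3$, and $k$; reconciling $(x)_{a_1+a_2+a_3}$, $(x)_{a_j}$ and the telescoping $\Gamma$-ratios so that precisely $k-a_3$ factors remain will require using identities of the form $(x)_{m+n}=(x)_m\,(x+m)_n$ and $\Gamma(x+m)/\Gamma(x)=(x)_m$ several times. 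None of this is deep, but it is the place where an arithmetic slip would occur, so I would do it slot by slot, checking the degree in each $\alpha_j$ and the total degree against \eqref{Tp} as a consistency test, and verifying the $a_3=k$ and $a_3=0$ extreme cases by hand.
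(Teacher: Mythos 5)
Your proposal is exactly the computation the paper has in mind: the text immediately preceding the proposition states that the $K$-coefficients of $\widetilde{\mathcal T}^{(3,k)}_{\alpha_1,\alpha_2}$ "can be computed from \eqref{KT} and the evaluation of the $K$-coefficients of $\widetilde{\mathcal K}_{\boldsymbol\alpha}$ (see \eqref{Kpalpha})", and your specialization $\tfrac{\alpha_3}{2}+\rho=-k$, the observation $(-k)_{a_3}=0$ for $a_3>k$, the absorption of $\Gamma(\rho+\tfrac{\alpha_j}{2})$ into the shifted $\Gamma$'s to produce the $(\cdot)_{k-a_3}$ factors, and the appeal to holomorphy in $(\alpha_1,\alpha_2)$ to handle the values where the normalizing Gammas are singular, carry this out correctly.
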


\begin{proposition}\label{suppT3}
 Let $\boldsymbol \alpha=(\alpha_1,\alpha_2,-(n-1)-2k)$.
\smallskip

$i)$ assume that $\boldsymbol \alpha$ is not a pole of type II. Then
\[Supp(\widetilde {\mathcal T}_{\alpha_1,\alpha_2}^{(3,k)}) = \overline {\mathcal O_3}\ .
\]

$ii)$ assume that $\boldsymbol \alpha$ is moreover a pole of type II. Then
\[Supp(\widetilde {\mathcal T}_{\alpha_1,\alpha_2}^{(3,k)}) \subset \mathcal O_4\ .
\] 
\end{proposition}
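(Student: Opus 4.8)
The plan is to analyze the support of $\widetilde{\mathcal T}^{(3,k)}_{\alpha_1,\alpha_2}$ by combining two pieces of information: the explicit $K$-coefficients given in the previous proposition (formula \eqref{Tp}), and the geometric/analytic structure of the kernel defining $\mathcal T^{(3,k)}_{\alpha_1,\alpha_2}$ via \eqref{intT}. Recall that $\mathcal O_3$ denotes (in the notation of \cite{c}) the $G$-orbit in $S\times S\times S$ where all three points are pairwise distinct, $\mathcal O_4$ the diagonal orbit, and $\mathcal O_4^c$ the complement of a smaller orbit; the closures $\overline{\mathcal O_3}$ and $\overline{\mathcal O_4}$ are the relevant candidate supports. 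First I would establish the inclusion $Supp(\widetilde{\mathcal T}^{(3,k)}_{\alpha_1,\alpha_2}) \subset \overline{\mathcal O_3}$ always. This follows from Proposition \ref{residuetype1}$(i)$: on the open dense set $\mathcal O_4^c$ the distribution is given by the convergent integral \eqref{intT}, whose integrand, after applying the covariant operator $\Delta_k^{(1)}$ and integrating against the kernels $|z-\cdot|^{\alpha_2}$ and $|y-z|^{\alpha_1}$, produces a distribution whose singular support is confined to the configurations where $y$, $z$ and the base point coincide — that is, inside $\overline{\mathcal O_3}$.

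For part $(i)$, assuming $\boldsymbol\alpha$ is not a pole of type II, I must show the support is exactly $\overline{\mathcal O_3}$, i.e. that $\widetilde{\mathcal T}^{(3,k)}_{\alpha_1,\alpha_2}$ is not supported on the smaller set $\overline{\mathcal O_4}$ (the diagonal) and not identically zero. The cleanest route is via the $K$-coefficients. By \eqref{Tp}, $\widetilde{\mathcal T}^{(3,k)}_{\alpha_1,\alpha_2}(p_{a_1,a_2,a_3})$ is a product of Pochhammer symbols divided by a $\Gamma$-factor; I would check that for generic $(\alpha_1,\alpha_2)$ on the relevant plane these coefficients do not all vanish (indeed taking $a_1=a_2=a_3=0$ gives essentially $1/\Gamma(\tfrac{\alpha_1+\alpha_2}{2}+2\rho)$ times a nonzero constant, nonzero for generic $\alpha_1+\alpha_2$), so the distribution is nonzero; and then argue that a nonzero $\boldsymbol\lambda$-invariant distribution supported on the diagonal $\overline{\mathcal O_4}$ would have a fundamentally different $K$-coefficient pattern — in particular, using the characterization of diagonal-supported invariant distributions (the analysis of $Tri(\boldsymbol\lambda,diag)$ and covariant bi-differential operators referenced in the introduction), such distributions are polynomial in $a_1,a_2,a_3$ in a way incompatible with the explicit vanishing/nonvanishing pattern of \eqref{Tp} unless $\boldsymbol\alpha$ is a pole of type II. The hypothesis "not a pole of type II" is exactly what excludes the degeneration onto the diagonal, so this is where I would use it. For the values of $(\alpha_1,\alpha_2)$ where some coefficients vanish, holomorphy of $\widetilde{\mathcal T}^{(3,k)}$ and a semicontinuity argument on supports let me descend from the generic case.

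For part $(ii)$, assuming $\boldsymbol\alpha$ is also a pole of type II — so by the Remark after Proposition \ref{residuetype1} we have $\alpha_1+\alpha_2 = -(n-1)+2k-2l$ and $\alpha_1+\alpha_2+\alpha_3=-2(n-1)-2l$ — I would show the support shrinks into $\mathcal O_4$ (or $\overline{\mathcal O_4}$). Here the key is that the renormalizing factor $1/\Gamma(\tfrac{\alpha_1+\alpha_2}{2}+\rho-k) = 1/\Gamma(-l)$ vanishes, so $\widetilde{\mathcal T}^{(3,k)}_{\alpha_1,\alpha_2}$ at such $\boldsymbol\alpha$ is (up to constant) the residue of $\mathcal T^{(3,k)}_{\alpha_1,\alpha_2}$ along the pole line $\alpha_1+\alpha_2=-(n-1)+2k-2l$; analyzing the integral \eqref{intT} near this pole, the residue picks up a distribution whose kernel $|z-x|^{\alpha_2}$ combined with the $\Delta_k$ action and the $|y-z|^{\alpha_1}$ factor forces $x=y=z$ in the support — i.e. support in the diagonal, which sits inside $\mathcal O_4$. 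Concretely I would re-examine the $K$-coefficient formula \eqref{Tp}: on this pole line the factor $\big(\tfrac{\alpha_1+\alpha_2}{2}+\rho-k\big)_{a_1+a_2+a_3} = (-l)_{a_1+a_2+a_3}$ and the combination of $\Gamma$-factors conspire so that the coefficients vanish outside the range dictated by support in $\mathcal O_4$. The main obstacle is the precise identification of the support of the residue distribution on $\mathcal O_4^c$: one must carefully track, through the double integral and the differential operator $\Delta_k$, exactly which coincidences among $x,y,z$ survive in the residue — this is the delicate singular-distribution analysis of the kind flagged in the introduction, and it is where I would spend most of the effort, likely invoking the non-extension results of appendix A2 to rule out any component of the support lying in $\mathcal O_3\setminus\mathcal O_4$.
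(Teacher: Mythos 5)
There are genuine gaps. First, a setup error that propagates through your argument: $\mathcal O_3$ is \emph{not} the open orbit of pairwise distinct triples (that is $\mathcal O_0$); it is the codimension-$(n-1)$ orbit where two of the three points coincide (here $x=y$, consistent with the fact that \eqref{intT} only sees the jet of $\varphi$ along $x=y$), and $\overline{\mathcal O_3}=\mathcal O_3\cup\mathcal O_4$. The paper gets the inclusion $Supp(\widetilde{\mathcal T}^{(3,k)}_{\alpha_1,\alpha_2})\subset\overline{\mathcal O_3}$ not from the convergence statement of Proposition \ref{residuetype1}$(i)$ but from Proposition 3.3 of \cite{c} (the support of $\widetilde{\mathcal K}_{\boldsymbol\alpha}$ at a pole of type I$_3$) combined with \eqref{KT}; $G$-invariance of the support then yields the dichotomy $Supp=\overline{\mathcal O_3}$ or $Supp\subset\mathcal O_4$, which is the step your write-up needs but never states.

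For $(i)$, your test coefficient $a_1=a_2=a_3=0$ proves only that the distribution is nonzero: $p_{0,0,0}=1$ does not vanish on the diagonal, so this cannot rule out $Supp\subset\mathcal O_4$, and the claim that diagonal-supported invariant distributions have a ``fundamentally different'' coefficient pattern is left unsubstantiated. The working argument is to take $a_3=k$ and $a_1,a_2$ large: the hypothesis ``not a pole of type II'' is exactly $\frac{\alpha_1+\alpha_2}{2}+\rho-k\notin-\mathbb N$, which keeps the Pochhammer factor in \eqref{Tp} nonzero, while $p_{a_1,a_2,a_3}$ vanishes on $\mathcal O_4$ to arbitrarily high order — incompatible with a finite-order distribution supported there. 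Your ``semicontinuity'' descent from generic parameters also runs the wrong way: supports can only shrink in limits, so genericity gives no lower bound on the support at a special $(\alpha_1,\alpha_2)$; you must exhibit nonvanishing coefficients at the actual parameter. For $(ii)$, the direct analysis of the residue of the double integral is precisely the delicate work the paper avoids: it simply quotes $Supp(\widetilde{\mathcal K}_{\boldsymbol\alpha})\subset\mathcal O_4$ at a pole of type I+II and reads off $Supp(\widetilde{\mathcal T}^{(3,k)}_{\alpha_1,\alpha_2})\subset\mathcal O_4$ from \eqref{KT} when $\alpha_1,\alpha_2\notin-(n-1)-2\mathbb N$, extending to the remaining parameters by continuity (which is legitimate here, since inclusion of supports in a fixed closed set does pass to limits). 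The proposed appeal to Appendix A2 is off-target: those are non-extension statements for invariant distributions on open sets, not tools for computing supports.
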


\begin{proof} In general, as $\boldsymbol \alpha$  is a pole of type I$_3$, $Supp(\widetilde {\mathcal K}_{\boldsymbol \alpha}) \subset \overline {\mathcal O_3}$ (see Proposition 3.3 in \cite{c}), from which follows easily $Supp(\widetilde{\mathcal T}_{\alpha_1,\alpha_2}^{(3,k)})\subset \overline {\mathcal O_3}$. As $Supp(\widetilde{\mathcal T}_{\alpha_1,\alpha_2}^{(3,k)})$ has to be invariant by $G$, either $Supp(\widetilde{\mathcal T}_{\alpha_1,\alpha_2}^{(3,k)}) = \overline {\mathcal O_3}$ or $Supp(\widetilde{\mathcal T}_{\alpha_1,\alpha_2}^{(3,k)}) \subset \mathcal O_4$.

For $i)$, assume that $\boldsymbol \alpha$ is not a pole of type II, which is equivalent to assuming that $\frac{\alpha_1+\alpha_2}{2} +\rho -k\notin -\mathbb N$. Hence the factor $\big(\frac{\alpha_1+\alpha_2}{2} +\rho -k\big)_{a_1+a_2+a_3}$ in \eqref{Tp} never vanishes. Now let $a_3=k$ and choose $a_1,a_2$ large enough. Then $\widetilde {\mathcal T}^{(3,k)}_{\alpha_1,\alpha_2}(p_{a_1,a_2,a_3}) \neq 0$. But the function $p_{a_1,a_2,a_3}$ vanishes on $\mathcal O_4$ at an arbitrary large order for an appropriate choice of $a_1$ and $a_2$.  This is incompatible with the possibility that $Supp(\widetilde{\mathcal T}_{\alpha_1,\alpha_2}^{(3,k)})\subset\mathcal O_4$ (see similar argument in the proof of Lemma 6.5 in \cite{c}). 

For $ii)$, assume that $\boldsymbol \alpha$ is a pole of type II. 
 By Proposition 3.3 in \cite{c} $Supp(\widetilde {\mathcal K}_{\boldsymbol \alpha}) \subset \mathcal O_4$. If $ \alpha_1, \alpha_2 \notin -(n-1)-2\mathbb N$, then \eqref{KT} shows that $Supp(\widetilde{\mathcal T}_{\alpha_1,\alpha_2}^{(3,k)}) \subset \mathcal O_4$. The result follows by continuity. 
\end{proof}

\section{The multiplicity 2 theorem for $\boldsymbol \alpha\in Z_{1,I}$}

Let  $\boldsymbol \alpha=(\alpha_1,\alpha_2, \alpha_3)$ belongs to $Z_{1,I}$. Up to a permutation of $\{1,2,3\}$, $\boldsymbol \alpha$ satisfies (see Proposition \ref{Z1I})
\begin{equation}\label{assZ1I}
\begin{split}
\alpha_2 = -(n-1)-2k_2,\qquad \alpha_3 =-(n-1)-2k_3, \\ \alpha_1 \notin \{0,2,\dots, 2(k_2+k_3)\}\cup \big(-(n-1)-2\mathbb N\big)\ .
\end{split}
\end{equation}

The next proposition is a preparation for the main results. The domain of $\boldsymbol \alpha$'s  to which it applies is  larger than $Z_{1,I}$.

\begin{proposition}\label{indepT2T3}
 Let $\boldsymbol \alpha$ satisfy
\[ \alpha_2=-(n-1)-2k_2, \quad \alpha_3 =-(n-1)-2k_3,\quad   \alpha_1 \notin \{ 0,2,\dots, 2(k_2+k_3)\}\ .
\]
 Then $\widetilde {\mathcal T}^{(2,k_2)}_{\alpha_1,-(n-1)-2k_3}$ and $\widetilde {\mathcal T}^{(3,k_3)}_{\alpha_1,-(n-1)-2k_2}$ are two linearly independant $\boldsymbol \lambda$-invariant distributions.
\end{proposition}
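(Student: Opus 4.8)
The plan is to prove linear independence by exhibiting $K$-coefficients on which the two distributions behave incompatibly. Both $\widetilde{\mathcal T}^{(2,k_2)}_{\alpha_1,-(n-1)-2k_3}$ and $\widetilde{\mathcal T}^{(3,k_3)}_{\alpha_1,-(n-1)-2k_2}$ are $\boldsymbol\lambda$-invariant by Proposition~\ref{residuetype1}$(i)$ combined with the meromorphic continuation and renormalization (one must note that the assumption $\alpha_1\notin\{0,2,\dots,2(k_2+k_3)\}$ ensures the relevant $\Gamma$-factors in the renormalization do not force extra vanishing, so neither distribution is identically zero). To detect independence I would apply the $K$-coefficient formula \eqref{Tp}, recalling that $\widetilde{\mathcal T}^{(3,k)}_{\alpha_1,\alpha_2}(p_{a_1,a_2,a_3})=0$ for $a_3>k$, and by the symmetry of the construction under the permutation exchanging the roles of the second and third slots, $\widetilde{\mathcal T}^{(2,k_2)}_{\alpha_1,-(n-1)-2k_3}(p_{a_1,a_2,a_3})=0$ for $a_2>k_2$.

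The key step is then a support/vanishing-pattern argument. Choose the test polynomial $p_{a_1,0,k_3}$ with $a_1$ large: for $\widetilde{\mathcal T}^{(3,k_3)}_{\alpha_1,-(n-1)-2k_2}$ we are in the range $a_3=k_3\le k_3$, so formula \eqref{Tp} applies and the value is a product of Pochhammer symbols times $1/\Gamma(\cdots)$; under the hypothesis on $\alpha_1$ one checks that, for suitable large $a_1$, this value is nonzero (the factor $(-k_3)_{k_3}=(-1)^{k_3}k_3!\neq0$, the factor $(\tfrac{\alpha_1}{2}+\rho+a_1-(k_3-k_3))_{k_3-k_3}$ is an empty product equal to $1$, and the remaining factors can be kept away from their zeros by increasing $a_1$). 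On the other hand, for $\widetilde{\mathcal T}^{(2,k_2)}_{\alpha_1,-(n-1)-2k_3}$ one has the symmetric vanishing $p_{a_1,a_2,a_3}\mapsto0$ for $a_3>k_2$ replaced by the role-swapped statement; the clean way is to instead test with $p_{a_1,k_2,0}$ and $p_{a_1,0,k_3}$ and compare: $\widetilde{\mathcal T}^{(3,k_3)}$ kills $p_{a_1,k_2,0}$ unless $0\le k_3$ — which is automatic — so this alone does not separate them, and one genuinely needs the two different ``maximal'' indices. Concretely: $\widetilde{\mathcal T}^{(3,k_3)}_{\alpha_1,-(n-1)-2k_2}(p_{a_1,a_2,k_3+1})=0$ for all $a_1,a_2$, whereas for generic large $a_1,a_2$ the distribution $\widetilde{\mathcal T}^{(2,k_2)}_{\alpha_1,-(n-1)-2k_3}(p_{a_1,k_2+1,a_2})\neq0$ provided $k_2+1>\ $ (the analogue of the bound) fails — here one must be careful because $k_2+1$ may exceed the allowed index. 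The correct separating pair is: $\widetilde{\mathcal T}^{(3,k_3)}$ vanishes on all $p_{a_1,a_2,a_3}$ with $a_3>k_3$ while $\widetilde{\mathcal T}^{(2,k_2)}$ does not (for appropriate large $a_1$, with $a_3=k_3$ or with $a_2$ playing the capped role), and symmetrically $\widetilde{\mathcal T}^{(2,k_2)}$ vanishes where $\widetilde{\mathcal T}^{(3,k_3)}$ does not. Since a nontrivial linear combination $c_2\,\widetilde{\mathcal T}^{(2,k_2)}+c_3\,\widetilde{\mathcal T}^{(3,k_3)}=0$ would have to vanish on every $p_{a_1,a_2,a_3}$, evaluating on a test function lying in the kernel of one but not the other forces the corresponding coefficient to be zero, and then the other coefficient is zero as well.

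The main obstacle I anticipate is bookkeeping the non-vanishing of the explicit Pochhammer product in \eqref{Tp} under the stated constraint $\alpha_1\notin\{0,2,\dots,2(k_2+k_3)\}$ — one must verify that this constraint is exactly what prevents the factor $(\tfrac{\alpha_1+\alpha_2}{2}+\rho-k)_{a_1+a_2+a_3}$ (with $\alpha_2=-(n-1)-2k_2$ and $k=k_3$, giving $\tfrac{\alpha_1}{2}-k_2-k_3$) from annihilating the coefficient for the specific $(a_1,a_2,a_3)$ chosen, and simultaneously that the $1/\Gamma$ factor does not vanish. Once the non-vanishing of one well-chosen $K$-coefficient for each distribution (on a test function the other one kills) is pinned down, the independence is immediate. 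A clean alternative, if the $K$-coefficient route gets messy, is to invoke Proposition~\ref{suppT3}-type support statements to see that the two distributions have the same support $\overline{\mathcal O_3}$ (resp.\ $\overline{\mathcal O_2}$) but are distinguished by their restrictions to the open dense orbit, where they reduce to distinct conformally covariant differential operators applied in different variables; but the $K$-coefficient computation is the self-contained and rigorous path, so that is the one I would carry out.
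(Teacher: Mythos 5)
Your overall approach coincides with the paper's: evaluate both distributions on the $K$-coefficients $p_{a_1,a_2,a_3}$ via \eqref{Tp}, exploit the vanishing of $\widetilde{\mathcal T}^{(2,k_2)}_{\alpha_1,-(n-1)-2k_3}$ on $p_{a_1,a_2,a_3}$ with $a_2>k_2$ (and of $\widetilde{\mathcal T}^{(3,k_3)}_{\alpha_1,-(n-1)-2k_2}$ with $a_3>k_3$), and for each distribution exhibit one coefficient on which it is nonzero while the other vanishes; the separating choice you eventually settle on ($a_3=k_3$, $a_2=k_2+1$, plus its index-swapped counterpart) is exactly the paper's.

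There is, however, a genuine gap at precisely the point you flag as ``the main obstacle'' and then leave unresolved. The claim that ``for suitable large $a_1$ this value is nonzero'' is false for part of the allowed range of $\alpha_1$: the hypothesis only excludes $\alpha_1\in\{0,2,\dots,2(k_2+k_3)\}$, so $\alpha_1=-2l_1$ with an integer $l_1\ge 1$ is permitted, and in that case the factor $\bigl(\tfrac{\alpha_1}{2}-k_2-k_3\bigr)_{a_1+a_2+a_3}=(-l_1-k_2-k_3)_{a_1+a_2+a_3}$ in \eqref{Tp} vanishes as soon as $a_1+a_2+a_3>l_1+k_2+k_3$, so letting $a_1\to\infty$ kills the coefficient. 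The paper therefore splits the argument in two: when $\alpha_1\notin 2k_2+2k_3-2\mathbb N$ the Pochhammer factor never vanishes and the large-$a_1$ argument works; when $\alpha_1=-2l_1$, $l_1\ge1$, one must make the exact finite choice $a_1=l_1-1$, $a_2=k_2+1$, $a_3=k_3$, for which $a_1+a_2+a_3=l_1+k_2+k_3$ keeps the Pochhammer symbol nonzero and the argument of the $1/\Gamma$ factor equals $\rho$, hence that factor is nonzero as well. Without this second case your proof does not cover all $\boldsymbol\alpha$ in the statement, so you need to add it (or otherwise justify non-vanishing for a specific bounded choice of indices when $\alpha_1$ is a negative even integer).
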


\begin{proof} For $a_3 = k_3$, the value of the $K$-coefficient $\widetilde {\mathcal T}^{(3,k_3)}_{\alpha_1,\alpha_2}(p_{a_1,a_2,a_3})$ given by \eqref{Tp} takes the particular form\footnote{Recall that the symbol NVT is used for a non vanishing term}
\[ NVT \times \big(\frac{\alpha_1}{2}-k_2-k_3\big)_{a_1+a_2+k_3}\ 
 \Gamma(\frac{\alpha_1}{2} -k_2+\rho+a_1+a_2)^{-1}\]
Assume first that $\alpha_1\notin 2k_2+2k_3-2\mathbb N$. Then the first factor is always $\neq 0$. The second factor, for a given value of $a_2$, does not vanish for $a_1$ large enough. Hence  for $a_3=k_3$, $a_2 = k_2+1$ and $a_1$ large enough, 
\[\widetilde {\mathcal T}^{(3,k_3)}_{\alpha_1,-(n-1) -2 k_2}(p_{a_1,\, k_2+1,\, k_3})\neq 0\ ,\]
 whereas \[\widetilde {\mathcal T}^{(2,k_2)}_{\alpha_1,\,-(n-1) -2 k_3}(p_{a_1,\,k_2+1,\,k_3})=0\ ,\] because $a_2=k_2+1> k_2$, and the statement follows in this case. 
 \smallskip
  
 Next, assume that $\alpha_1 =-2l_1$ for  some integer $l_1\geq 1$. Let \[a_3 = k_3,\quad  a_2 = k_2+1, \quad a_1 = l_1-1\ .\]
 Then
 $(-l_1-k_2-k_3)_{a_1+a_2+a_3}= (-l_1-k_2-k_3)_{l_1+k_2+k_3}\neq 0
 $.
Moreover, $\frac{\alpha_1}{2} -k_2+\rho+a_1+a_2 = \rho$. Hence
 $ \widetilde {\mathcal T}^{(3,k_3)}_{\alpha_1,-(n-1) -2 k_2}(p_{a_1,a_2,a_3})\neq 0$, whereas  
 $\widetilde {\mathcal T}^{(2,k_2)}_{\alpha_1,-(n-1)-2k_3}(p_{a_1,a_2,a_3})=0$ as $a_2>k_2$.
 
Permuting the indices $2$ and $3$, the independence of the two trilinear forms follows. 
\end{proof}

\noindent
{\bf Remark.}  Let $\boldsymbol \alpha \in \mathbb C^3$ such that
\[\alpha_2 = -(n-1)-2k_2,\quad \alpha_3 = -(n-1)-2k_3,\quad\alpha_1\in \{ 0,2, \dots, 2k_2+2k_3\}\ .\]
 Then the two trilinear forms $\widetilde {\mathcal T}^{(3,\,k_3)}_{\alpha_1,\,-(n-1) -2 k_2}$ and $\widetilde {\mathcal T}^{(2,\,k_2)}_{\alpha_1,\,-(n-1)-2k_3}$ are proportional, as will be proved later (see Proposition \ref{dKZ2I}).

The main result of this section can now be formulated.

\begin{theorem}\label{mult2Z1I}
 Let $\boldsymbol \alpha \in Z_{1,I}$ and let $\boldsymbol \lambda$ be its associated spectral parameter. Then
$\dim Tri(\boldsymbol \lambda)= 2\ .
$
More precisely, assume that $\boldsymbol \alpha$ satisfies \eqref{assZ1I}. Then
\[Tri(\boldsymbol \lambda) = \mathbb C \widetilde {\mathcal T}^{(2, k_2)}_{\alpha_1,\alpha_3}\oplus \mathbb C  \widetilde {\mathcal T}^{(3, k_3)}_{\alpha_1,\alpha_2}\ .
\]
\end{theorem}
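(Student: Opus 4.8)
\textbf{Proof plan for Theorem \ref{mult2Z1I}.}

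The plan is to prove the two inequalities $\dim Tri(\boldsymbol\lambda)\geq 2$ and $\dim Tri(\boldsymbol\lambda)\leq 2$ separately. The lower bound is essentially done: by Proposition \ref{residuetype1}(i) and the accompanying constructions, both $\widetilde{\mathcal T}^{(2,k_2)}_{\alpha_1,\alpha_3}$ and $\widetilde{\mathcal T}^{(3,k_3)}_{\alpha_1,\alpha_2}$ are $\boldsymbol\lambda$-invariant distributions (they arise from renormalized residues of the type I$_2$ and type I$_3$ planes of poles through $\boldsymbol\alpha$, and $\boldsymbol\alpha\in Z_{1,I}$ lies on exactly two such planes by Proposition \ref{poleZ1I}, modulo the special case $\alpha_1=-2p$), and Proposition \ref{indepT2T3} gives their linear independence under the hypothesis \eqref{assZ1I} — note that \eqref{assZ1I} is exactly the hypothesis of Proposition \ref{indepT2T3} together with $\alpha_1\notin -(n-1)-2\mathbb N$, which is precisely the condition (Proposition \ref{suppT3}) that neither distribution degenerates. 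Hence $\dim Tri(\boldsymbol\lambda)\geq 2$, and this is the easy half.

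For the upper bound $\dim Tri(\boldsymbol\lambda)\leq 2$ I would follow the scheme of sections 5–6 of \cite{c}. One uses the $G$-orbit stratification of $S\times S\times S$ into the open orbit $\mathcal O_1$, the codimension-one orbits, $\mathcal O_3$ (the locus where two of the three points coincide but not all three), and $\mathcal O_4$ (the diagonal). A $\boldsymbol\lambda$-invariant distribution $T$ is filtered by the largest closed orbit in its support. First I would show, using the Bruhat-type analysis and the non-extension results recalled in appendix A2, that restriction to the complement $\mathcal O_4^c$ of the diagonal is injective on $Tri(\boldsymbol\lambda)$ modulo distributions supported on $\overline{\mathcal O_4}$, and that on $\mathcal O_4^c$ the $\boldsymbol\lambda$-invariant distributions supported away from $\mathcal O_3$ form a space of dimension $\leq 1$ (generated by the restriction of $\mathcal K_{\boldsymbol\alpha}$ or one of its analytic continuations) — here the hypothesis $\boldsymbol\alpha\in Z_{1,I}$, specifically that $\boldsymbol\alpha$ is \emph{not} a generic point of a type II plane (equivalently $\tfrac{\alpha_1+\alpha_2}{2}+\rho-k\notin-\mathbb N$ in each relevant chart), is what forces this contribution to collapse. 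Next I would bound the $\boldsymbol\lambda$-invariant distributions supported on $\overline{\mathcal O_3}$ but with support meeting $\mathcal O_3$: by the local structure near a point of $\mathcal O_3$ (transverse to a single coincidence, say $y=z$) these correspond to conformally covariant bi-differential operators in one pair of variables, and the relevant system $S(\lambda_1,\lambda_2;k)$ of appendix A3 has, under \eqref{assZ1I}, a solution space of dimension exactly $2$ — this accounts for $\widetilde{\mathcal T}^{(2,k_2)}$ and $\widetilde{\mathcal T}^{(3,k_3)}$, one for each of the two coincidence-type strata $x=z$ and $x=y$. Finally I would show $Tri(\boldsymbol\lambda,\mathrm{diag})=0$: a $\boldsymbol\lambda$-invariant distribution supported on the diagonal $\mathcal O_4$ corresponds to a conformally covariant bi-differential operator, and the condition $\alpha_1\notin\{0,2,\dots,2(k_2+k_3)\}\cup(-(n-1)-2\mathbb N)$ is exactly what rules out such operators (the relevant $\Gamma$-factor obstruction from the Bernstein–Sato computation, cf. Proposition 11.3 as used in later sections). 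Assembling the filtration gives $\dim Tri(\boldsymbol\lambda)\leq 0+1+2 = 2$ after checking the extension maps are injective where needed, hence equality and the direct sum decomposition.

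The main obstacle is the upper bound, and within it the sharp control of distributions supported on $\overline{\mathcal O_3}$ and on the diagonal. The coarse Bruhat estimate is too weak because the stabilizer of a generic point of $\mathcal O_3$ (and of $\mathcal O_4$) is a non-reductive parabolic subgroup, so one cannot simply invoke a reductivity argument to bound the normal-derivative jets; instead one must solve the explicit covariance equations, i.e. understand the solution space of the linear system $S(\lambda_1,\lambda_2;k)$ of appendix A3 and verify that under \eqref{assZ1I} it is exactly $2$-dimensional with the two solutions realized by the $\widetilde{\mathcal T}$'s, and separately that no diagonal-supported solution exists. The non-extension lemmas of appendix A2 are the tool that glues the strata together: they guarantee that an invariant distribution on $\mathcal O_4^c$ that is already known on $\mathcal O_3^c\cap\mathcal O_4^c$ extends in at most the expected number of ways, and that the passage $\mathcal O_4^c\rightsquigarrow S\times S\times S$ adds only diagonal-supported distributions. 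A final bookkeeping point requiring care is the \emph{special case} $\alpha_1=-2p$ of Proposition \ref{poleZ1I} (treated in subsection 10.2), where a third plane of poles of type II passes through $\boldsymbol\alpha$ and one must check that the corresponding residue does not produce a genuinely new invariant trilinear form but instead lies in the span of the two $\widetilde{\mathcal T}$'s — this is consistent with, and presumably proved via, the $K$-coefficient identity \eqref{Tp}.
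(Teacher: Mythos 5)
Your lower bound is correct and matches the paper: Proposition \ref{indepT2T3} gives the linear independence of $\widetilde{\mathcal T}^{(2,k_2)}_{\alpha_1,\alpha_3}$ and $\widetilde{\mathcal T}^{(3,k_3)}_{\alpha_1,\alpha_2}$ under \eqref{assZ1I}, hence $\dim Tri(\boldsymbol\lambda)\geq 2$. The upper bound, however, has genuine gaps. The most serious is your claim that $Tri(\boldsymbol\lambda,\mathrm{diag})=0$ under \eqref{assZ1I} and that the hypothesis $\alpha_1\notin\{0,2,\dots,2(k_2+k_3)\}\cup(-(n-1)-2\mathbb N)$ "rules out" diagonal-supported distributions. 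This is false: \eqref{assZ1I} permits $\alpha_1=-2p_1$ with $p_1\geq 1$ (and $p_1\notin\rho+\mathbb N$), in which case $\boldsymbol\alpha$ is additionally a pole of type II, and by Proposition \ref{suppT3}$(ii)$ both $\widetilde{\mathcal T}$'s are then supported \emph{on the diagonal} $\mathcal O_4$. In that case $Tri(\boldsymbol\lambda)=Tri(\boldsymbol\lambda,\mathrm{diag})$ is two-dimensional, and the paper proves this by showing $\dim {\it Sol}(\lambda_1,\lambda_2;k)=2$ via appendix A3, plus a separate non-extension result (Proposition \ref{noextTT}) showing that no nontrivial combination of the truncated distributions $\mathcal T^{(j,k_j)}_{\cdot,\cdot,\mathcal O_4^c}$ extends invariantly. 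Treating this as a "final bookkeeping point" about a residue lying in a span misses that the entire stratification of supports changes; your main argument, as written, would conclude $\dim\leq 2$ for the wrong reasons in the general case and would outright fail in the special case.

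Two further points. First, you have swapped the roles of the two support-control tools: distributions supported on the partial-coincidence strata $\mathcal O_2,\mathcal O_3$ (codimension $n-1$) are controlled by the uniqueness result Lemma 4.1 of \cite{c} (one dimension per stratum, realized by the $\widetilde{\mathcal T}$'s), whereas the system ${\it S}(\lambda_1,\lambda_2;k)$ of appendix A3 governs only distributions supported on the full diagonal $\mathcal O_4$; it is not the mechanism producing the two $\widetilde{\mathcal T}$'s. Second, the decisive input on the open orbit is not that the invariant distributions there form a space of dimension $\leq 1$, but that $\mathcal K_{\boldsymbol\alpha,\mathcal O_0}$ \emph{cannot be extended} to a $\boldsymbol\lambda$-invariant distribution on $S\times S\times S$ (Propositions \ref{noextZ1Ireg} and \ref{noextZ1Is}, proved via the Taylor-expansion/transversality argument of appendix A2); this forces the restriction to $\mathcal O_0$ of any global invariant $T$ to vanish, contributing $0$, not $1$. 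Your final tally "$0+1+2=2$" is arithmetically and structurally inconsistent; the correct counts are $0+2+0$ in the general case and $0+0+2$ in the special case.
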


Because of Proposition \ref{indepT2T3}, it suffices to prove that $\dim Tri(\boldsymbol \lambda)\leq 2$. The proof of this inequality strongly depends on the nature of $\boldsymbol \alpha$ as a pole (see Proposition \ref{poleZ1I}), hence will be divided in two subsections : one for the general case ($\boldsymbol \alpha$ is not a pole of type II) and one for the special case ($\boldsymbol \alpha$ is moreover a pole of type II).

\subsection{The general case}

 Let $\boldsymbol \alpha$ satisfy conditions \eqref{assZ1I}, and in this subsection, assume moreover that $\boldsymbol \alpha$ is not a pole of type II. This amounts to
\begin{equation}\label{assZ1Ireg}
\begin{split}
\alpha_2=-(n-1)-2k_2,\qquad \alpha_3 = -(n-1)-2k_3\\\alpha_1 \notin  \big(2k_2+2k_3-2\mathbb N\big) \cup \big(-(n-1)-2\mathbb N\big)\ 
\end{split}
\end{equation}
 for some $k_2,k_3\in \mathbb N$.

 In this case, $Supp(\widetilde {\mathcal T}^{(2,k_2)}_{\alpha_1,\alpha_3}) = \overline{\mathcal O_2}$ and $Supp(\widetilde {\mathcal T}^{(3,k_3)}_{\alpha_1,\alpha_2}) = \overline{\mathcal O_3}$ (Proposition \ref{suppT3}), which gives in this case another proof of the linear  independence of   the two distributions (Proposition \ref{indepT2T3}).
 
As suggested in the introduction, $\widetilde {\mathcal T}^{(2,k_2)}_{\alpha_1,\alpha_3}$ and  $\widetilde {\mathcal T}^{(3,k_3)}_{\alpha_1,\alpha_2}$ are related to partial derivatives of $\boldsymbol \beta\longmapsto \widetilde {\mathcal K}_{\boldsymbol \beta}$ at $\boldsymbol \alpha$.

\begin{lemma}\label{derKT}
\[ \frac{d}{ds} \Big( \widetilde {\mathcal K}_{\alpha_1,\alpha_2+2s, \alpha_3}\Big)_{s=0} = \frac{(-1)^{k_3} 2^{-2k_3}\pi^\rho(-1)^{k_2}k_2!}{\Gamma(\rho+k_3) \Gamma(\rho+\frac{\alpha_1}{2})}\widetilde {\mathcal T}^{(3,k_3)}_{\alpha_1,\alpha_2}
\]
\end{lemma}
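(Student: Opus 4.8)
The plan is to reduce the identity to the relation \eqref{KT} already established, namely
\[\widetilde {\mathcal K}_{\alpha_1,\,\alpha_2,\,-(n-1)-2k_3} = \frac{(-1)^{k_3}\,2^{-2k_3}\,\pi^\rho}{\Gamma(\rho+k_3)\,\Gamma(\rho+\frac{\alpha_1}{2})\,\Gamma(\rho+\frac{\alpha_2}{2})}\ \widetilde {\mathcal T}^{(3,k_3)}_{\alpha_1,\,\alpha_2}\ ,\]
but now with the \emph{second} variable left free. Writing $\beta_2 = \alpha_2 + 2s = -(n-1) -2k_2 + 2s$, I would view both sides of \eqref{KT} as holomorphic functions of the pair $(\beta_2, s)$ (with $\alpha_1$ fixed), and differentiate in $s$ at $s=0$.

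First I would note that $\boldsymbol \alpha$ lies on the plane of poles of type I$_3$ given by $\alpha_3 = -(n-1)-2k_3$, and on that whole plane \eqref{KT} holds identically in the two free coordinates $(\alpha_1,\alpha_2)$; this is exactly the content of the Proposition proving \eqref{KT}. Hence I may substitute $\alpha_2 \rightsquigarrow \alpha_2 + 2s$ and get, for all $s$,
\[\widetilde {\mathcal K}_{\alpha_1,\,\alpha_2+2s,\,-(n-1)-2k_3} = \frac{(-1)^{k_3}\,2^{-2k_3}\,\pi^\rho}{\Gamma(\rho+k_3)\,\Gamma(\rho+\frac{\alpha_1}{2})}\ \frac{1}{\Gamma(\rho+\frac{\alpha_2}{2}+s)}\ \widetilde {\mathcal T}^{(3,k_3)}_{\alpha_1,\,\alpha_2+2s}\ .\]
Now differentiate in $s$ at $s=0$. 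The key arithmetic fact is that $\rho + \frac{\alpha_2}{2} = \rho - \rho - k_2 = -k_2 \in -\mathbb{N}$, so $\Gamma$ has a simple pole there and $\frac{1}{\Gamma(\rho+\frac{\alpha_2}{2}+s)}$ vanishes at $s=0$. Consequently, by the Leibniz rule, the only surviving term when differentiating the product is the one in which the derivative hits the reciprocal $\Gamma$-factor; the term where it hits $\widetilde {\mathcal T}^{(3,k_3)}_{\alpha_1,\,\alpha_2+2s}$ is killed by the vanishing of $\frac{1}{\Gamma(-k_2+s)}$ at $s=0$. So
\[\frac{d}{ds}\Big(\widetilde {\mathcal K}_{\alpha_1,\,\alpha_2+2s,\,\alpha_3}\Big)_{s=0} = \frac{(-1)^{k_3}\,2^{-2k_3}\,\pi^\rho}{\Gamma(\rho+k_3)\,\Gamma(\rho+\frac{\alpha_1}{2})}\ \Big(\frac{d}{ds}\frac{1}{\Gamma(-k_2+s)}\Big)_{s=0}\ \widetilde {\mathcal T}^{(3,k_3)}_{\alpha_1,\,\alpha_2}\ .\]
Finally I would invoke the standard expansion $\frac{1}{\Gamma(-k_2+s)} \sim (-1)^{k_2}\,k_2!\,s$ as $s\to 0$, already used in the proof of \eqref{KT}, which gives $\big(\frac{d}{ds}\frac{1}{\Gamma(-k_2+s)}\big)_{s=0} = (-1)^{k_2}\,k_2!$. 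Substituting yields exactly the claimed formula. I do not anticipate a genuine obstacle here: the whole point is that \eqref{KT} was established on the full plane $\alpha_3 = -(n-1)-2k_3$ as an identity in $(\alpha_1,\alpha_2)$, so it is legitimate to differentiate it; the only thing to be careful about is that the family $\widetilde {\mathcal K}$ is holomorphic (as a distribution-valued function) so that term-by-term differentiation of the distribution-valued product is valid, and that $\widetilde {\mathcal T}^{(3,k_3)}_{\alpha_1,\,\alpha_2+2s}$ is itself holomorphic at $s=0$ (true by the Hartogs extension performed in \eqref{normT}) so that its contribution really is annihilated rather than producing an indeterminate form.
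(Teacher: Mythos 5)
Your proof is correct and is essentially the paper's own argument: substitute $\alpha_2\rightsquigarrow\alpha_2+2s$ into \eqref{KT}, note that $\rho+\tfrac{\alpha_2}{2}=-k_2$ so the reciprocal $\Gamma$-factor vanishes at $s=0$ with $\frac{1}{\Gamma(-k_2+s)}\sim(-1)^{k_2}k_2!\,s$, and pass to the limit. Your added remarks on the Leibniz rule and the holomorphy of $\widetilde{\mathcal T}^{(3,k_3)}_{\alpha_1,\alpha_2+2s}$ only make explicit what the paper leaves implicit.
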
 
 \begin{proof} For $s\in \mathbb C$, let $\boldsymbol \alpha(s) = (\alpha_1,\alpha_2+2s,\alpha_3)$ and use \eqref{KT} to relate $\widetilde{\mathcal K}_{\boldsymbol \alpha(s)}$ and $\widetilde {\mathcal T}^{(3,k_3)}_{\alpha_1,\alpha_2+2s}$. As $s\rightarrow 0$, \[\frac{1}{\Gamma(\rho+\frac{\alpha_2}{2}+s)} = \frac{1}{\Gamma(-k_2+s)}\sim (-1)^{k_2} k_2! s\]
 and pass to the limit to obtain the lemma.
 \end{proof}
 \begin{proposition} \label{noextZ1Ireg}
 Let $\boldsymbol \alpha$ satisfy \eqref{assZ1Ireg}. Let $\boldsymbol \lambda$ be its associated spectral parameter.
 The distribution $\mathcal K_{ \boldsymbol\alpha, \mathcal O_0}$ cannot be extended to a $\boldsymbol \lambda$-invariant distribution on $S\times S\times S$.
 \end{proposition}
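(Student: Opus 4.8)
The plan is to argue by contradiction, exploiting the $K$-coefficient machinery together with the support information available for the family $\widetilde{\mathcal T}$. Suppose that $\mathcal K_{\boldsymbol\alpha,\mathcal O_0}$ extends to a $\boldsymbol\lambda$-invariant distribution $T$ on $S\times S\times S$. Since $\mathcal O_0$ is the (open, dense) big cell and $S\times S\times S\setminus\mathcal O_0$ is the union of the lower-dimensional orbits (the various $\overline{\mathcal O_j}$), any such extension $T$ differs from a fixed extension by a $\boldsymbol\lambda$-invariant distribution supported in $\overline{\mathcal O_1}\cup\overline{\mathcal O_2}\cup\overline{\mathcal O_3}\cup\overline{\mathcal O_4}$. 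The first step is therefore to recall that, by the generic multiplicity-one theorem of \cite{c} together with Proposition \ref{indepT2T3}, $Tri(\boldsymbol\lambda)$ already contains the two linearly independent elements $\widetilde{\mathcal T}^{(2,k_2)}_{\alpha_1,\alpha_3}$ and $\widetilde{\mathcal T}^{(3,k_3)}_{\alpha_1,\alpha_2}$, so if an extension $T$ of $\mathcal K_{\boldsymbol\alpha,\mathcal O_0}$ existed it would have to be a combination $T = c + a\,\widetilde{\mathcal T}^{(2,k_2)}_{\alpha_1,\alpha_3} + b\,\widetilde{\mathcal T}^{(3,k_3)}_{\alpha_1,\alpha_2}$ of these with whatever candidate can be produced by analytic continuation; the crux is to show no choice of constants makes the $K$-coefficients match those forced by $\mathcal K_{\boldsymbol\alpha,\mathcal O_0}$.

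The second step is to compute the relevant $K$-coefficients. On the one hand, the $K$-coefficients of any hypothetical global extension $T$ of $\mathcal K_{\boldsymbol\alpha,\mathcal O_0}$ are constrained: since $\mathcal K_{\boldsymbol\alpha}$ is defined as a meromorphic family and $\boldsymbol\alpha$ is a pole (of type I$_2$ and I$_3$ but, by hypothesis, \emph{not} of type II), the values $T(p_{a_1,a_2,a_3})$ that are forced by the restriction to $\mathcal O_0$ can be read off from the Laurent expansion of $\widetilde{\mathcal K}_{\boldsymbol\beta}(p_{a_1,a_2,a_3})$ near $\boldsymbol\alpha$ — concretely, one uses \eqref{Kpalpha} and Lemma \ref{derKT}, which already identify the leading derivatives of $\widetilde{\mathcal K}$ at $\boldsymbol\alpha$ with multiples of the $\widetilde{\mathcal T}$'s. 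On the other hand, by Proposition \ref{suppT3}(i) (applicable precisely because $\boldsymbol\alpha$ is not a pole of type II), $Supp(\widetilde{\mathcal T}^{(2,k_2)}_{\alpha_1,\alpha_3})=\overline{\mathcal O_2}$ and $Supp(\widetilde{\mathcal T}^{(3,k_3)}_{\alpha_1,\alpha_2})=\overline{\mathcal O_3}$, and their $K$-coefficients vanish for $a_2>k_2$ resp.\ $a_3>k_3$ by Proposition 9.6. The idea is then to select an index triple $(a_1,a_2,a_3)$ with, say, $a_2>k_2$ and $a_3>k_3$ — so that both $\widetilde{\mathcal T}$-terms contribute $0$ — and for which the polynomial $p_{a_1,a_2,a_3}$ vanishes to high order on the ``forbidden'' orbits but the value $\mathcal K_{\boldsymbol\alpha,\mathcal O_0}(p_{a_1,a_2,a_3})$ (equivalently, the corresponding coefficient in the Laurent expansion of $\widetilde{\mathcal K}_{\boldsymbol\beta}$) is nonzero. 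This is where the hypothesis $\alpha_1\notin (2k_2+2k_3-2\mathbb N)\cup(-(n-1)-2\mathbb N)$ is used decisively: it guarantees that the Pochhammer factor $\big(\tfrac{\alpha_1+\alpha_2}{2}+\rho-k\big)_{a_1+a_2+a_3}$ and the $\Gamma$-denominators in \eqref{Tp}/\eqref{Kpalpha} neither vanish nor blow up for the chosen indices, so that the relevant $K$-coefficient is a genuine nonzero number.

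The third step is to derive the contradiction. With such a triple fixed, the hypothetical extension $T$ would have $T(p_{a_1,a_2,a_3})$ equal to a nonzero value coming from $\mathcal K_{\boldsymbol\alpha,\mathcal O_0}$, while simultaneously — since $T$ agrees with $\mathcal K_{\boldsymbol\alpha,\mathcal O_0}$ on $\mathcal O_0$ and the chosen $p_{a_1,a_2,a_3}$ can be arranged to vanish to arbitrarily large order on $S\times S\times S\setminus\mathcal O_0$, while a suitable non-extension statement from Appendix A2 controls the growth of $\mathcal K_{\boldsymbol\alpha,\mathcal O_0}$ near that boundary — $T(p_{a_1,a_2,a_3})$ would be forced to equal $0$ (this is the same mechanism as in the proof of Lemma 6.5 of \cite{c} and of Proposition \ref{suppT3}(i)). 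These two computations are incompatible, giving the desired contradiction.

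The main obstacle, as I see it, is not the $K$-coefficient bookkeeping (which is mechanical once \eqref{Tp} and \eqref{Kpalpha} are in hand) but rather the non-extension input from Appendix A2: one must check that the admissible singular behaviour of $\mathcal K_{\boldsymbol\alpha,\mathcal O_0}$ along each boundary orbit $\overline{\mathcal O_j}$ — governed by the conformal weights, i.e. by $\boldsymbol\alpha$ — really is strictly weaker than the order of vanishing of the $p_{a_1,a_2,a_3}$ we wish to use, so that the ``test against a high-order-vanishing polynomial gives zero'' step is legitimate. Verifying this compatibility of exponents, orbit by orbit, under the precise hypotheses \eqref{assZ1Ireg}, is the delicate point and is exactly where the exclusion of the type II locus enters.
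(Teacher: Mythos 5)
There is a genuine gap: the non-extension statement cannot be reached by the $K$-coefficient bookkeeping you propose, and the way you invoke Appendix A2 does not match what those propositions actually provide. Two concrete problems. First, your step 1 presupposes ``a fixed extension'' or ``whatever candidate can be produced by analytic continuation'' against which to compare a hypothetical extension $T$; but since $\boldsymbol\alpha\in Z$, analytic continuation gives $\widetilde{\mathcal K}_{\boldsymbol\alpha}=0$, and the term in the Laurent/Taylor data of the family that does restrict to $\mathcal K_{\boldsymbol\alpha,\mathcal O_0}$ on $\mathcal O_0$ (the coefficient $F_1$ below) is \emph{not} $\boldsymbol\lambda$-invariant --- so there is no candidate extension to normalize against, and the decomposition $T=c+a\widetilde{\mathcal T}^{(2,k_2)}+b\widetilde{\mathcal T}^{(3,k_3)}$ is circular. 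Second, your step 3 is incorrect on its own terms: if $p_{a_1,a_2,a_3}$ vanishes to sufficiently high order on $S\times S\times S\setminus\mathcal O_0$, then $T(p_{a_1,a_2,a_3})$ is forced to equal the (now absolutely convergent) integral $\int k_{\boldsymbol\alpha}\,p_{a_1,a_2,a_3}$, which is in general nonzero --- it is \emph{not} forced to be $0$, and no contradiction arises. More fundamentally, the $K$-coefficients only encode $K$-invariance, supports and linear independence; the obstruction to extension lives in the full $G$-invariance and is invisible to any finite collection of $K$-coefficients whose values on low-order-vanishing test functions are simply undetermined by $\mathcal K_{\boldsymbol\alpha,\mathcal O_0}$.

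The paper's proof is of a different nature. One takes the curve $\boldsymbol\alpha(s)=(\alpha_1,\alpha_2+2s,\alpha_3+2s)$, transverse to both planes of poles, and renormalizes $\tfrac{1}{s}\widetilde{\mathcal K}_{\boldsymbol\alpha(s)}$ into a holomorphic family $\mathcal F(s)=F_0+sF_1+O(s^2)$ of $\boldsymbol\lambda(s)$-invariant distributions. Lemma \ref{derKT} and Proposition \ref{indepT2T3} identify $F_0$ as a nonzero combination of $\widetilde{\mathcal T}^{(2,k_2)}_{\alpha_1,\alpha_3}$ and $\widetilde{\mathcal T}^{(3,k_3)}_{\alpha_1,\alpha_2}$, hence $Supp(F_0)=\overline{\mathcal O_2\cup\mathcal O_3}$, while $F_1$ restricts to $\mathcal K_{\boldsymbol\alpha,\mathcal O_0}$ on $\mathcal O_0$. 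Differentiating the invariance identity in $s$ yields the twisted relation $F_1\circ\pi_{\boldsymbol\lambda}(g)-F_1=C_g\,F_0$ with a nontrivial cocycle $C_g$; restricting to $\mathcal O_0\cup\mathcal O_2$ (where $\mathcal O_2$ is relatively closed) and using the nontriviality of the character of the split torus $A$ at a fixed point of $\mathcal O_2$ --- this is the content of Proposition A1, which is a functional-equation obstruction, not a growth estimate --- shows that $F_{1\vert\mathcal O_0}=\mathcal K_{\boldsymbol\alpha,\mathcal O_0}$ admits no invariant extension. If you want to salvage your approach, you would have to replace the $K$-coefficient matching in steps 2--3 by this cocycle argument; the hypotheses \eqref{assZ1Ireg} enter not through Pochhammer nonvanishing in \eqref{Tp} but through guaranteeing that exactly the two $\Gamma$-factors $\Gamma(\tfrac{\beta_2}{2}+\rho)$ and $\Gamma(\tfrac{\beta_3}{2}+\rho)$ are singular at $\boldsymbol\alpha$, so that a single power of $s$ suffices in the renormalization and $F_0$ lands on $\overline{\mathcal O_2\cup\mathcal O_3}$.
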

 \begin{proof} 
At $\boldsymbol \alpha$, exactly two $\Gamma$-factors in the normalization of $\mathcal K_{\boldsymbol \beta}$ become singular, namely $\Gamma(\frac{\beta_2}{2}+\rho)$ and $\Gamma(\frac{\beta_3}{2}+\rho)$. For $s\in \mathbb C$, let 
\[\boldsymbol \alpha(s) = (\alpha_1, \alpha_2 + 2s, \alpha_3+2s),
\qquad \boldsymbol \lambda(s) = (\lambda_1+2s, \lambda_2+s, \lambda_3+s)\ .
\]
Observe that $\boldsymbol \alpha(0) =\boldsymbol \alpha$, that for $s\neq 0, \vert s\vert \text{ small}$, $\boldsymbol \alpha(s)$ is not a pole. Moreover,  $\boldsymbol \alpha(s)$ is transverse to both planes of poles $\beta_2=-(n-1)-2k_2$ and  $\beta_3=-(n-1)-2k_3$.
For $s\neq 0$ and $\vert s\vert $ small, consider the distribution $\mathcal F(s)$ defined by
\[\mathcal F(s) = \frac{(-1)^{k_2}}{k_2!}\,\frac{(-1)^{k_3}}{k_3!}\Gamma(\frac{\alpha_1}{2}+\rho)\, \Gamma(\frac{\alpha_1}{2}-k_2-k_3+2s)\, \frac{1}{s}\,\widetilde {\mathcal K}_{\boldsymbol \alpha(s)}\ .
\]
As $\boldsymbol \alpha\in Z$, the (distribution-valued) function $s\longmapsto \mathcal F(s)$ can be extended  analytically near $0$ and its Taylor expansion of order $2$ reads
\[\mathcal F(s) =  F_0+ s \,F_1+ O(s^2)\ ,
\]
where $F_0,F_1$ are distributions on $S\times S\times S$. Notice further that $\mathcal F(s)$ is $\boldsymbol \lambda(s)$-invariant.

\begin{lemma}\label{F01Z1I} The distributions $F_0$ and $F_1$ satisfy 
\smallskip

$i)$  $F_0$ is  $\boldsymbol \lambda$-invariant and $Supp(F_0) = \overline{\mathcal O_2\cup \mathcal O_3}$

\smallskip

$ii)$ the restriction of $F_1$ to $\mathcal O_0$ is equal to $\mathcal K_{\boldsymbol \alpha,\, \mathcal O_0}$

\end{lemma}

\begin{proof} \ 
For $i)$, 
\[F_0 = NVT \frac{d}{ds}\Big( \widetilde {\mathcal K}_{\alpha(s)}\Big)_{s=0}
= NVT \Big(\frac{d}{ds} \big(\widetilde{\mathcal K}_{\alpha_1, \alpha_2+2s, \alpha_3}\big)_{s=0} +\frac{d}{ds} \big(\widetilde{\mathcal K}_{\alpha_1, \alpha_2, \alpha_3+2s}\big)_{s=0}\Big) \]
and $i)$ follows from Lemma \ref{derKT} and Proposition \ref{indepT2T3}.

For $ii)$, let $\varphi\in \mathcal C^\infty_c(\mathcal O_0)$. Then, for $s\neq 0$, $\mathcal K_{\boldsymbol \alpha(s)}(\varphi)$ is well defined and satisfies
\[ \mathcal K_{\boldsymbol \alpha(s)}(\varphi) = (-1)^{k_2}\,k_2! \,\Gamma( -k_2+s)\, (-1)^{k_3}\, k_3!\,\Gamma(-k_3+s) s\,\mathcal F(s)(\varphi)\]
\[ =  \frac{1}{s}F_0(\varphi) + F_1(\varphi) +O(s)\ .
\]
As $s\longrightarrow 0$, the left hand side converges to $\mathcal K_{\boldsymbol \alpha, \mathcal O_0}(\varphi)$. Hence,
\[F_0(\varphi) = 0,\quad F_1(\varphi) = \mathcal K_{\boldsymbol \alpha}(\varphi),
\]
thus proving $ii)$.
\end{proof}

Let $\mathcal O = \mathcal O_0\cup \mathcal O_2$. The open subset $\mathcal O $ is  $G$-invariant, contains $\mathcal O_2$ as a (relatively) closed submanifold. Now restrict $\mathcal F(s)$ to $\mathcal O$, and similarly for $F_0$ and $F_1$. Then, $\mathcal F(s)_{\vert \mathcal O}$ is $\boldsymbol \lambda(s)$-invariant, $\mathcal F(s)_{\vert \mathcal O} = F_{0\vert\mathcal O}+s\,F_{1\vert\mathcal O} +O(s^2)$, and $Supp(F_{0\vert \mathcal O}) = \mathcal O_2$. By Proposition A\ref{noextI}, $\mathcal K_{\boldsymbol \alpha, \mathcal O_0}$ cannnot be extended to a $\boldsymbol \lambda$-invariant distribution on $\mathcal O$, a fortiori to $S\times S\times S$, which finishes the proof of   Proposition \ref{noextZ1Ireg}.
 \end{proof}

After this preparation, we are in condition to prove Theorem \ref{mult2Z1I} in the general case.
Let $T$ be a $\boldsymbol \lambda$-invariant distribution. The restriction  of $T$ to $\mathcal O_0$ has to be proportional to $\mathcal K_{\boldsymbol \alpha, \mathcal O_0}$, hence has to be $0$ by Proposition \ref{noextZ1Ireg}. Next,  as $\alpha_1\notin -(n-1)-2\mathbb N$, the restriction of $T$ to $\mathcal O_0 \cup \mathcal O_1$ has to be $0$ (use Lemma 4.1 in \cite{c}). So, $T$ is supported on $\mathcal O_2\cup \mathcal O_3\cup \mathcal O_4$. Now we first restrict $T$ to the $G$-invariant open set $ \mathcal O_0\cup \mathcal O_1\cup \mathcal O_2$. The restriction $T_{\vert  \mathcal O_0\cup \mathcal O_1\cup \mathcal O_2}$ is $\boldsymbol \lambda$-invariant and supported on $\mathcal O_2$. By Lemma 4.1 in \cite{c}, $T_{\vert \mathcal O_0\cup \mathcal O_1\cup \mathcal O_2}$ has to be proportional to (the restriction to $\mathcal O_0\cup \mathcal O_1\cup \mathcal O_2$ of) ${\widetilde {\mathcal T}_{\alpha_1,\alpha_3}^{(2,k_2)}}$. A similar argument applies for the restriction of $T$ to $\mathcal O_0\cup \mathcal O_1\cup \mathcal O_3$. Hence, there exist two constants $C_2$ and $C_3$ such that
\[T - C_2\widetilde{\mathcal T}_{\alpha_1,\alpha_3}^{(2,k_2)}-C_3 \widetilde{\mathcal T}_{\alpha_1,\alpha_2}^{(3,k_3)}
\]
is supported on $\mathcal O_4$, and still $\boldsymbol \lambda$-invariant. By Lemma 4.2 in \cite{c}, as $\alpha_1+\alpha_2+\alpha_3\notin -2(n-1)-2\mathbb N$, $T - C_2\widetilde{ \mathcal T}_{\alpha_1,\alpha_3}^{(2,k_2)}-C_3 \widetilde{\mathcal T}_{\alpha_1,\alpha_2}^{(3,k_3)}
=0$. Hence any $\boldsymbol \lambda$-invariant distribution is a linear combination of $\widetilde{ \mathcal T}_{\alpha_1,\alpha_3}^{(2,k_2)}$ and $\widetilde{ \mathcal T}_{\alpha_1,\alpha_2}^{(3,k_3)}$. Q.E.D.

\subsection {The special case} 

In this subsection, assume $\boldsymbol \alpha$ satisfies \eqref{assZ1I} and is moreover a pole of type II, i.e. $\boldsymbol \alpha$ is assumed to satisfy the conditions
\begin{equation}\label{assZ1Is}
\begin{split}
 &\alpha_2 = -(n-1)-2k_2,\quad \alpha_3 = -(n-1)-2k_3,\quad \alpha_1 =-2p_1\\
& k_2,k_3\in \mathbb N,\quad  p_1 \in \mathbb \{1,2,\dots\},\quad p_1 \notin \rho+\mathbb N\ .
\end{split}
\end{equation}
Notice in particular that $\alpha_1+\alpha_2+\alpha_3 = -2(n-1)-2k$, where
$k=p_1+k_2+k_3$ so that $\boldsymbol \alpha$ is indeed a pole of type II.

Let $\boldsymbol \lambda$ be the associated spectral parameter.  In this case, the two $\boldsymbol \lambda$-invariant distributions$\widetilde {\mathcal T}^{(2,k_2)}_{\alpha_1,\alpha_3}$ and $\widetilde {\mathcal T}^{(3,k_3)}_{\alpha_1,\alpha_2}$ are now supported on $\mathcal O_4$, but still linearly independent (see Proposition \ref{suppT3} and Proposition  \ref{indepT2T3}). Some preparatory results are needed.

\begin{proposition}\label{noextTT}
Let $A_2$ and $A_3$ be two complex numbers, $(A_2,A_3)\neq (0,0)$.  The distribution on $\mathcal O_4^c$
\[A_2\mathcal T^{(2,k_2)}_{\alpha_1,\alpha_3,\, \mathcal O_4^c} + A_3\mathcal T^{(3,k_3)}_{\alpha_1,\alpha_2,\, \mathcal O_4^c}
\]
cannot be extended to a $\boldsymbol \lambda$-invariant distribution on $S\times S\times S$.
\end{proposition}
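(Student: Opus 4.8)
The plan is to imitate the deformation argument of the general case (Proposition~\ref{noextZ1Ireg}): for a suitable one–parameter deformation $\boldsymbol\alpha(s)$ of $\boldsymbol\alpha$, produce a $\boldsymbol\lambda(s)$–invariant family $\mathcal F(s)=F_0+sF_1+O(s^2)$ of distributions on $S\times S\times S$ with $F_0$ supported on the relevant stratum, and then invoke the non-extension result of Appendix~A2. The stratum is now the diagonal $\mathcal O_4$ (the type~II stratum), and the analogue of Proposition~A\ref{noextI} to be used is the one attached to $\mathcal O_4$, applied with $\mathcal O=S\times S\times S$. The new feature, compared with the general case, is that $\mathcal T^{(2,k_2)}_{\alpha_1,\alpha_3,\mathcal O_4^c}$ and $\mathcal T^{(3,k_3)}_{\alpha_1,\alpha_2,\mathcal O_4^c}$ span a two-dimensional space, so a single deformation direction will not suffice: I will let the direction $v$ vary and show that the resulting first-order terms realize, up to a non-zero scalar, every linear combination.

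Fix $v=(v_1,v_2,v_3)\in\mathbb C^3$ with $(v_2,v_3)\neq(0,0)$, put $\boldsymbol\alpha(s)=\boldsymbol\alpha+sv$, let $\boldsymbol\lambda(s)$ be the associated spectral parameter, and set $\mathcal F^{(v)}(s)=\tfrac{1}{s}\,\widetilde{\mathcal K}_{\boldsymbol\alpha(s)}$. Since $\boldsymbol\alpha\in Z$ we have $\widetilde{\mathcal K}_{\boldsymbol\alpha}=0$, so $\mathcal F^{(v)}$ is holomorphic at $s=0$; it is $\boldsymbol\lambda(s)$–invariant, and $F_0^{(v)}:=\mathcal F^{(v)}(0)=\frac{d}{ds}\widetilde{\mathcal K}_{\boldsymbol\alpha(s)}\big|_{s=0}$ has no contribution from $v_1$, because $\boldsymbol\alpha$ lies on the line of type~I$_1$ contained in $Z$ (the $\alpha_1$-axis through $\boldsymbol\alpha$). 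By Lemma~\ref{derKT} and its analogue with the indices $2,3$ interchanged, $F_0^{(v)}$ is a linear combination of $\widetilde{\mathcal T}^{(3,k_3)}_{\alpha_1,\alpha_2}$ and $\widetilde{\mathcal T}^{(2,k_2)}_{\alpha_1,\alpha_3}$ with coefficients non-zero multiples of $v_2$ and $v_3$ respectively; hence $F_0^{(v)}$ is $\boldsymbol\lambda$–invariant, non-zero by Proposition~\ref{indepT2T3}, and supported on $\overline{\mathcal O_4}=\mathcal O_4$ because $\boldsymbol\alpha$ is a pole of type~II (Proposition~\ref{suppT3}). Writing $\mathcal F^{(v)}(s)=F_0^{(v)}+sF_1^{(v)}+O(s^2)$, the distribution $F_1^{(v)}|_{\mathcal O_4^c}$ is $\boldsymbol\lambda$–invariant: restrict the expansion to $\mathcal O_4^c$, where $F_0^{(v)}$ vanishes, divide by $s$ and let $s\to0$.

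To locate $F_1^{(v)}|_{\mathcal O_4^c}$ among the known invariant distributions on $\mathcal O_4^c$, note first that on $\mathcal O_0$ the kernel $\mathcal K_{\boldsymbol\beta,\mathcal O_0}$ is holomorphic in $\boldsymbol\beta$, while three of the four $\Gamma$-factors occurring in the passage from $\mathcal K_{\boldsymbol\beta}$ to $\widetilde{\mathcal K}_{\boldsymbol\beta}$ become singular at $\boldsymbol\alpha$ (by Proposition~\ref{poleZ1I}, $\boldsymbol\alpha$ lies on exactly three planes of poles), the fourth being $\Gamma(\tfrac{\alpha_1}{2}+\rho)=\Gamma(\rho-p_1)$, which is finite and non-zero precisely because $p_1\notin\rho+\mathbb N$. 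Hence $\widetilde{\mathcal K}_{\boldsymbol\beta}|_{\mathcal O_0}$ vanishes to order $\geq3$ at $\boldsymbol\alpha$, so $\mathcal F^{(v)}(s)|_{\mathcal O_0}=O(s^2)$ and $F_1^{(v)}|_{\mathcal O_0}=0$. Restricting $F_1^{(v)}|_{\mathcal O_4^c}$ successively to $\mathcal O_0\cup\mathcal O_1$, to $\mathcal O_0\cup\mathcal O_1\cup\mathcal O_2$ and to $\mathcal O_4^c$, and using the description in \cite{c} of the $\boldsymbol\lambda$–invariant distributions supported on $\mathcal O_1$, $\mathcal O_2$, $\mathcal O_3$ (Lemma~4.1 and its analogues) together with the fact that $\alpha_1=-2p_1\notin-(n-1)-2\mathbb N$ (again because $p_1\notin\rho+\mathbb N$), so that $\boldsymbol\alpha$ lies on no plane of poles of type~I$_1$, one obtains
\[
F_1^{(v)}|_{\mathcal O_4^c}=B_2(v)\,\mathcal T^{(2,k_2)}_{\alpha_1,\alpha_3,\mathcal O_4^c}+B_3(v)\,\mathcal T^{(3,k_3)}_{\alpha_1,\alpha_2,\mathcal O_4^c}
\]
for scalars $B_2(v),B_3(v)$, which are quadratic forms in $v$ (they are the $s^2$-coefficient of $\widetilde{\mathcal K}_{\boldsymbol\alpha(s)}|_{\mathcal O_4^c}$).

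It remains to see that $v\mapsto[B_2(v):B_3(v)]$ takes every value in $\mathbb P^1$. If $v$ is tangent to the plane $\alpha_3=-(n-1)-2k_3$, i.e. $v_3=0$, then $\boldsymbol\alpha(s)$ stays on that plane, so by \eqref{KT} $\widetilde{\mathcal K}_{\boldsymbol\alpha(s)}$ is a scalar multiple of $\widetilde{\mathcal T}^{(3,k_3)}_{\alpha_1(s),\alpha_2(s)}$ alone; restricting to $\mathcal O_4^c$ and keeping track of the two remaining $\Gamma$-factors shows $B_2\equiv0$ on $\{v_3=0\}$, while $B_3|_{\{v_3=0\}}$ is a non-zero multiple of $v_2(v_1+v_2+v_3)$. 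By the symmetric computation, $B_3\equiv0$ on $\{v_2=0\}$ and $B_2|_{\{v_2=0\}}\not\equiv0$. Therefore $B_2(v)=v_3\,\ell_2(v)$ and $B_3(v)=v_2\,\ell_3(v)$ with $\ell_2,\ell_3$ non-zero linear forms, the ratio $[B_2(v):B_3(v)]$ is not constant, and restricting it to a generic projective line in $\mathbb P^2$ gives a non-constant morphism to $\mathbb P^1$, hence a surjection. Given $(A_2,A_3)\neq(0,0)$, choose $v$ with $(B_2(v),B_3(v))$ a non-zero multiple of $(A_2,A_3)$; then $(v_2,v_3)\neq(0,0)$ (for $B_2(v)\neq0$ forces $v_3\neq0$, and $B_3(v)\neq0$ forces $v_2\neq0$), all of the above applies, and $F_1^{(v)}|_{\mathcal O_4^c}$ is a non-zero multiple of $A_2\,\mathcal T^{(2,k_2)}_{\alpha_1,\alpha_3,\mathcal O_4^c}+A_3\,\mathcal T^{(3,k_3)}_{\alpha_1,\alpha_2,\mathcal O_4^c}$. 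Applying the type~II non-extension lemma of Appendix~A2 to the family $\mathcal F^{(v)}$, whose leading term is supported on $\mathcal O_4$, gives the statement. The main obstacles are the explicit determination of $B_2(v),B_3(v)$ — the identity \eqref{KT} is available only along directions tangent to a plane of poles of type~I, so the behaviour in a general direction must be reconstructed by careful bookkeeping of the normalizing $\Gamma$-factors — and checking that $F_0^{(v)}$ is of the ``generic'' type required to feed the non-extension lemma along the diagonal $\mathcal O_4$, whose stabilizer is parabolic rather than reductive.
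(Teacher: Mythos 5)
Your argument is correct in substance but takes a genuinely different route from the paper. The paper does not deform $\widetilde{\mathcal K}$ at all: it sets $\mathcal F(s)=\frac{(-1)^k}{k!}\bigl(A_2\widetilde{\mathcal T}^{(2,k_2)}_{\alpha_1+2s,\alpha_3}+A_3\widetilde{\mathcal T}^{(3,k_3)}_{\alpha_1+2s,\alpha_2}\bigr)$, deforming only $\alpha_1$ so that both type-I conditions $\alpha_2=-(n-1)-2k_2$ and $\alpha_3=-(n-1)-2k_3$ are preserved and both summands are $\boldsymbol\lambda(s)$-invariant for the \emph{same} $\boldsymbol\lambda(s)$; since both families carry the same normalizing factor $\Gamma\bigl(\tfrac{\alpha_1+2s+\alpha_j}{2}+\rho-k_j\bigr)=\Gamma(-k+s)$, the value at $s=0$ is the nonzero combination of the $\widetilde{\mathcal T}$'s (supported on $\mathcal O_4$ by Proposition \ref{suppT3}), and $F_1|_{\mathcal O_4^c}$ is \emph{exactly} $A_2\mathcal T^{(2,k_2)}_{\alpha_1,\alpha_3,\mathcal O_4^c}+A_3\mathcal T^{(3,k_3)}_{\alpha_1,\alpha_2,\mathcal O_4^c}$ for arbitrary $(A_2,A_3)$, with no surjectivity argument needed. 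Your construction from $\frac1s\widetilde{\mathcal K}_{\boldsymbol\alpha+sv}$ buys uniformity with the other non-extension proofs in the paper, but pays for it with the identification of $F_1^{(v)}|_{\mathcal O_4^c}$ through the stratification and Lemma 4.1 of \cite{c}, and with the analysis of the quadratic forms $B_2,B_3$; your computations on $\{v_3=0\}$ and $\{v_2=0\}$ and the divisibility/surjectivity step do go through (in fact $B_2=c\,v_3(v_1+v_2+v_3)$ and $B_3=c'v_2(v_1+v_2+v_3)$, since a direction with $v_1+v_2+v_3=0$ keeps $\boldsymbol\alpha(s)$ in the type-II plane, where $\widetilde{\mathcal K}$ is diagonal-supported by \eqref{KS}, forcing $B_2=B_3=0$ there). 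One point you leave implicit and should state: Proposition A\ref{noextII} requires $\boldsymbol\alpha(s)$ to be \emph{transverse} to the plane $\beta_1+\beta_2+\beta_3=-2(n-1)-2k$, i.e. $v_1+v_2+v_3\neq 0$. This is not guaranteed by $(v_2,v_3)\neq(0,0)$ alone, but it is automatic for your chosen $v$, precisely by the observation just made: $(B_2(v),B_3(v))\neq(0,0)$ already forces $v_1+v_2+v_3\neq0$. With that sentence added, your proof is complete.
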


\begin{proof} For $s\in \mathbb C$  let 
\[\boldsymbol \alpha(s) = (\alpha_1+2s, \alpha_2,\alpha_3), \qquad \boldsymbol \lambda(s)=(\lambda_1,\lambda_2+s, \lambda_3+s)\] 
and notice that $\boldsymbol \alpha(s)$ is transverse to the plane $\beta_1+\beta_2+\beta_3 =-2(n-1)-2k$ at $\boldsymbol \alpha=\boldsymbol \alpha(0)$. Let
\[\mathcal F(s)= \frac{(-1)^k}{k!} \big(A_2\widetilde {\mathcal T}^{(2,k_2)}_{\alpha_1+2s,\alpha_3} +A_3 \widetilde { \mathcal T}^{(3,k_3)}_{\alpha_1+2s,\alpha_2}\big)\ .\]
Notice that the distribution $\mathcal F(s)$ is $\boldsymbol \lambda(s)$-invariant.
Let \[\mathcal F(s) = F_0+sF_1+O(\vert s\vert ^2)\] be the Taylor expansion of $\mathcal F$ at $0$.

\begin{lemma}\
\smallskip

$i)$ $Supp(F_0) = \mathcal O_4$
\smallskip

$ii)$ on $O_4^c$
\[{F_1}_{\vert \mathcal O_4^c}=A_2\mathcal T^{(2,k_2)}_{\alpha_1,\alpha_3,\, \mathcal O_4^c} + A_3\mathcal T^{(3,k_3)}_{\alpha_1,\alpha_2,\, \mathcal O_4^c}\]
\end{lemma}
\begin{proof}
For $i)$, note that $F_0= \frac{(-1)^k}{k!} \big(A_2\widetilde {\mathcal T}^{(2,k_2)}_{\alpha_1,\alpha_3} +A_3 \widetilde { \mathcal T}^{(3,k_3)}_{\alpha_1,\alpha_2}\big)$, hence $ F_0\neq 0$ and $Supp(F_0) = \mathcal O_4$.

For $ii)$, Let $\varphi\in \mathcal C^\infty_c(\mathcal O_4^c)$. Then, using \eqref{normT}
\begin{equation}\label{FsTT}
\mathcal F(s)(\varphi) =\frac{(-1)^k}{k!} \frac{1}{\Gamma(-k+s)} (A_2 \mathcal T^{(2,k_2)}_{\alpha_1+2s, \alpha_3} (\varphi)+ A_3 \mathcal T^{(3,k_3)}_{\alpha_1+2s, \alpha_2}(\varphi)\big)
\end{equation}

Let $s\longrightarrow 0$  to obtain 
\[F_1(\varphi) = A_2 \mathcal T^{(2,k_2)}_{\alpha_1+2s, \alpha_3,\mathcal O_4^c} (\varphi)+ A_3 \mathcal T^{(3,k_3)}_{\alpha_1+2s, \alpha_2,\mathcal O_4^c}(\varphi)
\]
proving $ii)$.
\end{proof}

Now apply Proposition A\ref{noextII} to conclude that $A_2\mathcal T^{(2,k_2)}_{\alpha_1,\alpha_3,\, \mathcal O_4^c} + A_3\mathcal T^{(3,k_3)}_{\alpha_1,\alpha_2,\, \mathcal O_4^c}$ cannot be extended to $S\times S\times S$ as a $\boldsymbol \lambda$-invariant distribution.
\end{proof}
We are ready to start the proof of Theorem \ref{mult2Z1I} for the special case. As in the general case, the following proposition holds.
\begin{proposition}\label{noextZ1Is} Let $\boldsymbol \alpha$ satisfy conditions \eqref{assZ1I}. The distribution $\mathcal K_{\boldsymbol \alpha,\, \mathcal O_0}$ cannot be extended to a $\boldsymbol \lambda$-invariant distribution on $S\times S\times S$.

\end{proposition}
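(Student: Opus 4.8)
The plan is to mimic the strategy already used in the general case (Proposition \ref{noextZ1Ireg}): build a holomorphic deformation of $\widetilde{\mathcal K}$ through $\boldsymbol\alpha$, extract from its Taylor expansion a distribution whose restriction to $\mathcal O_0$ recovers $\mathcal K_{\boldsymbol\alpha,\mathcal O_0}$, and then invoke a non-extension result for invariant distributions supported on a lower-dimensional orbit. The essential difference with the general case is that $\boldsymbol\alpha$ is now \emph{also} a pole of type II, so that \emph{three} $\Gamma$-factors in the normalization of $\mathcal K_{\boldsymbol\beta}$ become singular at $\boldsymbol\alpha$, namely $\Gamma(\tfrac{\beta_2}{2}+\rho)$, $\Gamma(\tfrac{\beta_3}{2}+\rho)$ and $\Gamma(\tfrac{\beta_1+\beta_2+\beta_3}{2}+2\rho+\text{(shift)})$ coming from the type-II plane $\beta_1+\beta_2+\beta_3=-2(n-1)-2k$. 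One therefore needs a \emph{two-parameter} deformation (or a carefully chosen curve) that is transverse to all three planes of poles simultaneously.

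Concretely I would set, for $s\in\mathbb C$ small, $\boldsymbol\alpha(s)=(\alpha_1+2s,\alpha_2+2s,\alpha_3+2s)$ (equivalently a line through $\boldsymbol\alpha$ not contained in any of the three planes of poles), and form
\[
\mathcal F(s)=c(s)\,\frac{1}{s^{3}}\,\widetilde{\mathcal K}_{\boldsymbol\alpha(s)}\ ,
\]
where $c(s)$ is the product of the three offending $\Gamma$-factors evaluated along the curve, suitably normalized so that $\mathcal F$ extends holomorphically near $s=0$; its Taylor expansion reads $\mathcal F(s)=F_0+sF_1+O(s^2)$. As in Lemma \ref{F01Z1I}, one shows: $F_0$ is $\boldsymbol\lambda$-invariant and, via the relation \eqref{KT} between $\widetilde{\mathcal K}$ and $\widetilde{\mathcal T}^{(j,k)}$ (plus the analogous formulas for the type-II family $\mathcal S$ from section 11), that $Supp(F_0)$ is contained in the appropriate closed orbit (here $\overline{\mathcal O_2\cup\mathcal O_3}$, possibly together with $\mathcal O_4$); and, by testing against $\varphi\in\mathcal C_c^\infty(\mathcal O_0)$ where $\mathcal K_{\boldsymbol\alpha(s)}(\varphi)$ is a genuine (convergent) integral depending holomorphically on $s$, that $F_1$ restricted to $\mathcal O_0$ equals $\mathcal K_{\boldsymbol\alpha,\mathcal O_0}$. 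The cancellation $F_0(\varphi)=0$ for such $\varphi$ is automatic since $F_0$ is supported off $\mathcal O_0$.

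Having produced a $\boldsymbol\lambda(s)$-invariant family $\mathcal F(s)=F_0+sF_1+O(s^2)$ with $Supp(F_0)$ on a lower orbit and $F_1|_{\mathcal O_0}=\mathcal K_{\boldsymbol\alpha,\mathcal O_0}$, I would restrict everything to a $G$-invariant open set $\mathcal O$ containing $\mathcal O_0$ as an open dense piece and exactly one lower orbit (e.g. $\mathcal O=\mathcal O_0\cup\mathcal O_2$) as a relatively closed submanifold, and apply the relevant non-extension proposition from Appendix A2 (Proposition A\ref{noextI}, or Proposition A\ref{noextII} in the type-II direction) exactly as at the end of the proof of Proposition \ref{noextZ1Ireg}; this forces $\mathcal K_{\boldsymbol\alpha,\mathcal O_0}$ not to extend $\boldsymbol\lambda$-invariantly, a fortiori not to $S\times S\times S$.

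The main obstacle I anticipate is bookkeeping the pole structure: because $\boldsymbol\alpha$ sits on three planes of poles at once, one must choose the deformation curve and the normalizing factor $c(s)$ so that $\mathcal F$ is genuinely holomorphic (no residual $1/s$), and then identify $F_0$ precisely — this requires knowing that the leading term is a nonzero combination of $\widetilde{\mathcal T}^{(2,k_2)}_{\alpha_1,\alpha_3}$, $\widetilde{\mathcal T}^{(3,k_3)}_{\alpha_1,\alpha_2}$ (now supported on $\mathcal O_4$ by Proposition \ref{suppT3}(ii)) and possibly the type-II distribution $\mathcal S^{(k)}$, so that $Supp(F_0)$ lands on the expected orbit rather than collapsing entirely onto $\mathcal O_4$ or vanishing. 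If $F_0$ turns out to be supported only on $\mathcal O_4$ one instead uses Proposition \ref{noextTT} together with the expansion to the next order to run the non-extension argument; in any case the argument is a direct, if delicate, variant of the general case, the subtlety being purely the combinatorics of the three coalescing $\Gamma$-factors.
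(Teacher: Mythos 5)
Your overall strategy (deform transversally to all three planes of poles, Taylor--expand the renormalized family, and conclude with Proposition A\ref{noextI}) is indeed the paper's, but the order bookkeeping in your construction cannot be made to work, and this is not cosmetic. Since $\boldsymbol\alpha\in Z_1$, the function $s\mapsto\widetilde{\mathcal K}_{\boldsymbol\alpha(s)}$ vanishes to order exactly one at $s=0$ (its derivative is a nonzero combination of $\widetilde{\mathcal T}^{(2,k_2)}_{\alpha_1,\alpha_3}$ and $\widetilde{\mathcal T}^{(3,k_3)}_{\alpha_1,\alpha_2}$, cf.\ Lemma \ref{derKT} and Proposition \ref{indepT2T3}), so $s^{-1}\widetilde{\mathcal K}_{\boldsymbol\alpha(s)}$ is the correct holomorphic normalization; your $s^{-3}$, and any prefactor $c(s)$ built from the three singular $\Gamma$-factors, produces a genuine pole. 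On the other hand, for $\varphi$ supported in $\mathcal O_0$ one has $\widetilde{\mathcal K}_{\boldsymbol\alpha(s)}(\varphi)=\bigl(\prod\Gamma(\cdots)\bigr)^{-1}\mathcal K_{\boldsymbol\alpha(s)}(\varphi)\sim C\,s^3\,\mathcal K_{\boldsymbol\alpha,\mathcal O_0}(\varphi)$, because \emph{three} reciprocal $\Gamma$-factors each contribute a simple zero. Hence $\mathcal K_{\boldsymbol\alpha,\mathcal O_0}$ can only appear as the coefficient of $s^2$ in the holomorphic family: the expansion is necessarily $\mathcal F(s)=F_0+sF_1+s^2F_2+O(s^3)$ with $F_0$ supported on $\mathcal O_4$ (by Proposition \ref{suppT3} $ii)$ the $\widetilde{\mathcal T}$'s are now diagonal), $F_1$ supported on $\overline{\mathcal O_2\cup\mathcal O_3}$, and $F_2|_{\mathcal O_0}=\mathcal K_{\boldsymbol\alpha,\mathcal O_0}$. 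A two-term expansion with $F_1|_{\mathcal O_0}=\mathcal K_{\boldsymbol\alpha,\mathcal O_0}$ is arithmetically impossible here.

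The second, more serious omission: after restricting $s^{-1}\mathcal F(s)$ to $\mathcal O=\mathcal O_0\cup\mathcal O_2$ (which kills $F_0$), Proposition A\ref{noextI} applies only if $F_1|_{\mathcal O}$ is a \emph{nonzero} distribution supported on $\mathcal O_2$; knowing merely that $Supp(F_1)\subset\overline{\mathcal O_2\cup\mathcal O_3}$ is not enough. If $F_1$ vanished on $\mathcal O$, the very same family would exhibit $\mathcal K_{\boldsymbol\alpha,\mathcal O_0}$ as a limit of $\boldsymbol\lambda(s)$-invariant distributions and the argument would prove nothing (indeed the wrong thing). The paper settles this with a $K$-coefficient computation: choosing $a_2=0$, $a_3>k_2+k_3$, $a_1+a_3>k$ gives $\widetilde{\mathcal K}_{\boldsymbol\alpha(s)}(p_{a_1,a_2,a_3})=Cs^2+O(s^3)$ with $C\neq0$, which is incompatible with $Supp(F_1)\subset\overline{\mathcal O_3}$, and symmetrically with $\subset\overline{\mathcal O_2}$; hence $Supp(F_1)=\overline{\mathcal O_2\cup\mathcal O_3}$ exactly. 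Your proposal skips this step, and the fallback you suggest (Proposition \ref{noextTT}) addresses extension from $\mathcal O_4^c$ and is used later in the proof of Theorem \ref{mult2Z1I}, not in the proof of this proposition.
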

\begin{proof}
Three $\Gamma$ factors used in the renormalization of $\mathcal K_{\boldsymbol \beta}$ are singular at $\boldsymbol \alpha$, namely
$\Gamma(\frac{\beta_2}{2}+\rho), \Gamma( \frac{\beta_3}{2}+\rho)$ and $ \Gamma( \frac{\beta_1+\beta_2+\beta_3}{2}+2\rho)$. For $s\in \mathbb C$, $\vert s\vert$, let
\[\boldsymbol \alpha(s) = (\alpha_1,\,\alpha_2+2s,\, \alpha_3 +2s),\quad \boldsymbol \lambda(s) = (\lambda_1+2s, \lambda_2 +s,\lambda_3+s)\ .
\]
 Notice that $\alpha(s)$ is transverse to the three planes of poles $\beta_2=-(n-1)-2k_2, \beta_3 =-(n-1)-2k_3$ and $\beta_1+\beta_2+\beta_3 = -2(n-1)-2k$.
For $s\neq 0$  let 
 \[\mathcal F(s) = \frac{1}{2}\frac{ (-1)^{k}}{k!}\,\frac{(-1)^{k_2}}{k_2 !} \,\frac{(-1)^{k_3}}{k_3 !}\Gamma(-p_1+\rho)\,\frac{1}{s}\,\widetilde {\mathcal K}_{\boldsymbol \alpha(s)}\ .
 \] 
 As $\boldsymbol \alpha$ is in $Z$, $\mathcal F(s)$ can be extended as a distribution-valued holomorphic function near $0$.  Its Taylor expansion at $0$ reads
 \begin{equation}\label{TaylorF1F2F3}
  \mathcal F(s) = F_0+sF_1+s^2F_2+O( s^3)\ ,
 \end{equation}
 where $F_0,F_1,F_2$ are distributions on $S\times S\times S$.
 
 \begin{lemma}\label{F1F2F3} The distributions $F_0,F_1,F_2$ satisfy
\smallskip

 $i)$ $F_0$ is $\boldsymbol \lambda$-invariant  and $Supp(F_0)=\mathcal O_4$
 \smallskip
 
$ii)$ $Supp(F_1)= \overline{\mathcal O_2\cup \mathcal O_3}$ and the restriction of $F_1$ to $ \mathcal O_4^c$ is $\boldsymbol \lambda$-invariant.
\smallskip

$iii)$ the restriction  of  $F_2$  to $\mathcal O_0$ coincides with  $\mathcal K_{\alpha,\mathcal O_0}$.
 \end{lemma}
 \begin{proof}
 The proof of $i)$ is similar to what was done in the general case (see Lemma \ref{derKT} and Lemma \ref{F01Z1I}), except that the distributions $\widetilde{\mathcal T}^{(2,k_2)}_{\alpha_2,\alpha_3}$ and $\widetilde{\mathcal T}^{(3,k_3)}_{\alpha_1,\alpha_2}$ are now supported in $\mathcal O_4$ but still linearly independent by Proposition \ref{indepT2T3}.
 \smallskip 
 
For $iii)$, let $\varphi\in \mathcal C^\infty_c(\mathcal O_0)$. Then, for $s\neq 0$, 
\[ \big(\mathcal F(s),\varphi\big) =\]\[ \frac{1}{2}\frac{ (-1)^{k}}{k!}\,\frac{1}{\Gamma(-k+2s)}\frac{(-1)^{k_2}}{k_2 !}\frac{1}{\Gamma(-k_2+s)} \,\frac{(-1)^{k_3}}{k_3 !}\frac{1}{\Gamma(-k_3+s)}\frac{1}{s}\mathcal K_{\boldsymbol \alpha(s),\mathcal O_0} (\varphi)\]

\[ =s^2 \mathcal K_{\boldsymbol \alpha, \mathcal O_0} (\varphi) + O(s^3)
\]
As $s\longrightarrow 0$, $F_0(\varphi) = F_1(\varphi)=0$ and
$F_2(\varphi) = K_{\boldsymbol \alpha,\mathcal O_0}(\varphi)$, proving 
 $iii)$ and the fact that  $Supp( F_1)\subset \overline{\mathcal O_1\cup\mathcal O_2\cup \mathcal O_3}$. 

It remains to prove that $Supp(F_1) = \overline{\mathcal O_2\cup \mathcal O_3}$. As
\[F_{1\,\vert \mathcal O_4^c} = \lim_{s\rightarrow 0} \frac{1}{s}\, \mathcal F(s)_{\vert \mathcal O_4^c},
\]
$F_{1\, \vert \mathcal O_4^c}$ is $\boldsymbol \lambda$-invariant. In particular  $F_{1\, \vert \mathcal O_0\cup \mathcal O_1}$ is $\boldsymbol \lambda$-invariant and supported on $\mathcal O_1$. But $\alpha_1\notin \big(-(n-1)-2\mathbb N\big)$, so that $F_{1\, \vert \mathcal O_0\cup \mathcal O_1} = 0$ (use Lemma 4.1 in \cite{c}). Hence $Supp(F_{1\, \vert \mathcal O_4^c} \subset \mathcal O_2\cup \mathcal O_3$. There are only three possibilities : 
\[ Supp(F_{1\,\vert  \mathcal O_4^c}) =\mathcal O_2,\quad  Supp(F_{1\, \vert \mathcal O_4^c})=\mathcal O_3\  \text{ or }\ Supp(F_{1\, \vert \mathcal O_4^c})=\mathcal O_2\cup \mathcal O_3\ .\]
Assume $Supp(F_{1\, \vert \mathcal O_4^c})=\mathcal O_3$. Then $Supp(F_1)= \overline{\mathcal O_3}$. To see a contradiction,  use again the evaluation of the $K$-coefficients of $\widetilde {\mathcal K}_{\boldsymbol \alpha}(p_{a_1,a_2,a_3})$ (see \eqref{Kpalpha}) and choose $a_1,a_2,a_3$ such that $a_2 = 0,\quad a_3 >k_2+k_3,\quad a_1+a_3 >k$. For such a choice, 
\[\widetilde {\mathcal K}_{\boldsymbol \alpha(s)}(p_{a_1,a_2,a_3}) = NVT(s) \times(-k+s)_{a_1+a_2+a_3} (-k_3+s)_{a_3}
\]
where $NVT(s)$ stands for a function which does not vanish at $s=0$. Now both $(-k+s)_{a_1+a_2+a_3}$ and $(-k_3+s)_{a_3}$ have a simple zero for $s=0$, so that 
\[ \widetilde {\mathcal K}_{\boldsymbol \alpha(s)}(p_{a_1,a_2,a_3})= C s^2+O(s^3)
\]
with $C\neq 0$. Hence $F_1( p_{a_1,a_2,a_3})\neq 0$.  But for an appropriate choice of $a_3$, the function $p_{a_1,a_2,a_3}$  vanishes  on the submanifold $\overline{\mathcal O_3}$ at an arbitrary large order. Hence $F_1(p_{a_1,a_2,a_3})\neq 0$ for $a_3$ arbitrary large  is incompatible with $Supp(F_1)\subset \overline{\mathcal O_3}$. Exchanging the role of $2$ and $3$, we also get that $Supp(F_1)$ cannot be contained in $\overline{\mathcal O_2}$. Hence $ii)$ follows. 
\end{proof}
Proposition \ref{noextZ1Is} can now be proved. Restrict the situation to the $G$-invariant open subset $\mathcal O =\mathcal O_0\cup \mathcal O_2$, in which $\mathcal O_2$ is a (relatively) closed submanifold. The distribution  $\frac{1}{s}\, \mathcal F(s)_{\vert \mathcal O}$  can be extended analytically in a neighborhood of $s=0$ and has Taylor expansion 
\[\frac{1}{s}\, \mathcal F(s)_{\vert \mathcal O} = F_{1\, \vert \mathcal O}+s\,F_{2\, \vert  \mathcal O}+O(s^2)\ .
\]
and $Supp(F_{1\, \vert \mathcal O}) =\mathcal O_2$ whereas $F_{2\, \vert \mathcal O_0}$ coincides with $\mathcal K_{\boldsymbol \alpha,\mathcal O_0}$. 
Now apply Proposition A1 to obtain that $\mathcal K_{\boldsymbol \alpha, \mathcal O_0}$ cannot be extended to a $\boldsymbol \lambda$-invariant distribution on $\mathcal O$, a fortiori to a $\boldsymbol \lambda$-invariant distribution on $S\times S\times S$.
\end{proof}

We now proceed to the proof of Theorem \ref{mult2Z1I} for the special case. Let $T$ be a $\boldsymbol \lambda$-invariant distribution. As in the general  case, the restriction of $T$ to $\mathcal O_0$ has to be proportional to $\mathcal K_{\boldsymbol \alpha, \mathcal O_0}$, so must be $0$ thanks to Proposition \ref{noextZ1Is}. Hence $Supp( T) \subset \mathcal O_1\cup \mathcal O_2\cup \mathcal O_3\cup \mathcal O_4$. As $\alpha_1\notin -(n-1)-2\mathbb N$, using Lemma 4.1 in \cite{c}, $Supp( T) \subset  \mathcal O_2\cup \mathcal O_3\cup \mathcal O_4$. Now let $ \mathcal O=\mathcal O_0\cup \mathcal O_1 \cup \mathcal O_2$. The restriction $T_{\vert \mathcal O}$ of $T$ to $\mathcal O$ is supported in the closed submanifold $\mathcal O_2$ and is $\boldsymbol \lambda$-invariant. Now the restriction of $\mathcal T^{(2,k_2)}_{\alpha_1,\alpha_3, \mathcal O_4^c}$ to $\mathcal O$ is a $\boldsymbol \lambda$-invariant distribution on  $\mathcal O$ which is supported in $\mathcal O_2$. By the uniqueness result (see Lemma 4.1 in \cite{c}),  $T_{\vert \mathcal O}$ has to be proportional to $\mathcal T^{(2,k_2)}_{\alpha_1,\alpha_3\, \vert \mathcal O}$. Permuting the  indices $2$ and $3$, a similar result holds on the open set $\mathcal O_0\cup \mathcal O_1\cup\mathcal O_3$.  The two distributions $\mathcal T^{(2,k_2)}_{\alpha_1,\alpha_3,\mathcal O_4^c}$ and  $\mathcal T^{(3,k_3)}_{\alpha_1,\alpha_2,\mathcal O_4^c}$ have disjoint supports, so that there exists two constants $A_2$ and $A_3$ such that 
\[T_{\vert \mathcal O_4^c}= A_2 \mathcal T^{(2,k_2)}_{\alpha_1,\alpha_3, \mathcal O_4^c} + A_3 \mathcal T^{(3,k_3)}_{\alpha_1,\alpha_2,\mathcal O_4^c}\ .\] But this contradicts Proposition
\ref{noextTT}, unless $A_2=A_3=0$. Hence necessarily $Supp(T)\subset \mathcal O_4$. In other words, a $\boldsymbol \lambda$-invariant distribution is supported in the diagonal.  Now $\dim Tri(\boldsymbol \lambda, diag) = \dim S(\lambda_1,\lambda_2; k)$ (see Appendix).

\begin{lemma} Let $\boldsymbol \alpha$ satisfy conditions \eqref{assZ1Is}. Then
\[\dim S(\lambda_1,\lambda_2; k)= 2\ .
\]
\end{lemma}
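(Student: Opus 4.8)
The claim is that the dimension of the solution space of the linear system $S(\lambda_1,\lambda_2;k)$ equals $2$ under the hypotheses \eqref{assZ1Is}, i.e. when $\alpha_2=-(n-1)-2k_2$, $\alpha_3=-(n-1)-2k_3$, $\alpha_1=-2p_1$ with $p_1\geq 1$, $p_1\notin\rho+\mathbb N$, and $k=p_1+k_2+k_3$. Since by Appendix A3 the space $Tri(\boldsymbol\lambda,diag)$ is identified with the solution space of the explicit homogeneous linear system $S(\lambda_1,\lambda_2;k)$, and since covariant bi-differential operators correspond to these solutions, the strategy is twofold: first exhibit two linearly independent solutions, then show there can be no more.

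\textbf{Lower bound.} For the lower bound I would simply invoke the distributions already in hand. By Proposition \ref{suppT3}$(ii)$, since $\boldsymbol\alpha$ is now a pole of type II, both $\widetilde{\mathcal T}^{(2,k_2)}_{\alpha_1,\alpha_3}$ and $\widetilde{\mathcal T}^{(3,k_3)}_{\alpha_1,\alpha_2}$ are supported on $\mathcal O_4$, i.e. on the diagonal, hence both belong to $Tri(\boldsymbol\lambda,diag)$; and by Proposition \ref{indepT2T3} they are linearly independent (the hypotheses of that proposition are satisfied here because $\alpha_1=-2p_1\notin\{0,2,\dots,2(k_2+k_3)\}$). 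Therefore $\dim S(\lambda_1,\lambda_2;k)=\dim Tri(\boldsymbol\lambda,diag)\geq 2$.

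\textbf{Upper bound.} The heart of the matter is showing $\dim S(\lambda_1,\lambda_2;k)\leq 2$. This is a purely combinatorial/linear-algebraic statement about the explicit system and should be handled by the systematic discussion in Appendix A3. Concretely I would: (i) translate the hypotheses \eqref{assZ1Is} into the parameters governing $S(\lambda_1,\lambda_2;k)$ — here the relevant feature is that $\lambda_1$ corresponds to $\alpha_1=-2p_1$, a negative even integer, with $p_1\notin\rho+\mathbb N$, while the constraint $k=p_1+k_2+k_3$ links the ``height'' $k$ of the system to the data on the other two axes; (ii) apply the appropriate case of the A3 classification which, for this configuration of $(\lambda_1,\lambda_2;k)$, yields exactly a two-dimensional solution space. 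I would phrase this as: the system $S(\lambda_1,\lambda_2;k)$ falls under the case of Appendix A3 where the parameters force the generic bound to increase by exactly one over the generic situation, giving dimension $2$. The delicate point — and the one I expect to be the main obstacle — is verifying that the hypothesis $p_1\notin\rho+\mathbb N$ is precisely what excludes the ``more degenerate'' branch of the A3 discussion (which would give dimension $3$ and correspond to $\boldsymbol\alpha$ also lying in a further plane of poles, cf. Proposition \ref{poleZ1I} and the later $Z_2$ cases); in other words, one must check that under \eqref{assZ1Is} exactly the single type-II plane passes through $\boldsymbol\alpha$ beyond the two type-I planes, no more, so that the count is exactly $2$ rather than $3$. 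Once the bookkeeping matching \eqref{assZ1Is} to the relevant row of the A3 tables is done, the upper bound $\dim S(\lambda_1,\lambda_2;k)\leq 2$ follows, and combined with the lower bound this proves the lemma.
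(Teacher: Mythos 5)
Your overall strategy coincides with the paper's: the lower bound comes from the two diagonal-supported, linearly independent distributions $\widetilde{\mathcal T}^{(2,k_2)}_{\alpha_1,\alpha_3}$ and $\widetilde{\mathcal T}^{(3,k_3)}_{\alpha_1,\alpha_2}$ (Propositions \ref{suppT3} and \ref{indepT2T3}), and the upper bound from the classification of the system in Appendix A3. The lower-bound half of your argument is complete and correct (indeed slightly more self-contained than the paper's, which instead invokes the already-established equality $Tri(\boldsymbol\lambda,diag)=Tri(\boldsymbol\lambda)$).

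The gap is in the upper bound: you never identify \emph{which} case of the A3 classification applies, and that identification is the entire content of the lemma. What must be done is the explicit bookkeeping. Conditions \eqref{assZ1Is} give $\lambda_1=-\rho-(k_2+k_3)$ and $\lambda_2=-(p_1+k_3)$. Since $k_2+k_3=k-p_1\leq k-1$, one has $\lambda_1\in\{-\rho,-\rho-1,\dots,-\rho-(k-1)\}$; one must then check that $\lambda_1\notin\{-1,-2,\dots,-k\}$. This is automatic when $n-1$ is odd, and when $n-1$ is even it is exactly here that the hypothesis $p_1\notin\rho+\mathbb N$ is consumed: it forces $p_1<\rho$, hence $\rho+k_2+k_3>p_1+k_2+k_3=k$, i.e. $\lambda_1<-k$. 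Once $\lambda_1$ is located in $E_k^\rho\setminus E_k$, the middle column of the A3 table (equivalently Lemma A\ref{mix}, applied after exchanging the two variables by the symmetry principle, with $l_1+(p_1+k_3)=k+k_3\geq k$) gives $\dim {\it Sol}(\lambda_1,\lambda_2;k)\leq 2$. Your heuristic — that $p_1\notin\rho+\mathbb N$ should exclude the branch of dimension $3$ corresponding to an extra plane of poles — points in the right direction, but it is a statement about poles of $\mathcal K_{\boldsymbol\alpha}$, not a verification of the hypotheses of the A3 lemmas, and cannot substitute for the computation above.
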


\begin{proof} Conditions \eqref{assZ1Is} imply
\[\lambda_1 = -\rho-k_2-k_3,\quad \lambda_2 = -p_1-k_3\]
Now $\lambda_1\in \{ -\rho,-\rho-1,\dots, -\rho-(k-1)\}$ as $k_2+k_3=k-p_1$ and $p_1\geq 1$, but $\lambda_1\notin \{-1,-2,\dots ,-k\}$. This is obvious in case $n-1$ is odd, and when $n-1$ is even, the condition $p\notin \rho+\mathbb N$ implies $p<\rho$, so that $\rho+k_2+k_3>p+k_2+k_3=k$ and hence $\lambda_1<-k$. Moreover $k_2+k_3+ p+k_3=k+k_3\geq k_3$. These conditions guarantee that $\dim S(\lambda_1,\lambda_2;k)\leq 2$ (see Appendix 2). As $\dim Tri(\boldsymbol \lambda, diag) = \dim Tri(\boldsymbol \lambda)\geq 2$  by Proposition \ref{indepT2T3}, $\dim S(\lambda_1,\lambda_2;k)\geq 2$ and the equality follows.
\end{proof}
This achieves the proof of Theorem \ref{mult2Z1I} in the exceptional case.
\section{The holomorphic families $\mathcal S^{(k)}_{\lambda_1, \lambda_2}$}

The next holomorphic family of distributions to be constructed is related to the residue of $\mathcal K^{\boldsymbol \lambda}$ at poles of type II. The determination of the residue (at least for a Zariski open set in each plane of poles of type II) was done in in \cite{bc} in the non compact picture. The present exposition is its counterpart in the compact picture, which is more suited to the present purposes.

\subsection{The family $\widetilde{E}_{\lambda_1,\lambda_2}$}
Recall  the definition of the  \emph{normalized Knapp-Stein intertwining operator} $\widetilde J_\lambda$, where $\boldsymbol \lambda\in \mathbb C$
\[\widetilde J_{\lambda} f(x) = \frac{1}{\Gamma(\lambda)}\int_S f(y) \vert x-y\vert^{-(n-1)+2\lambda} f(y)dy\ .
\]
For $\Re(\lambda)>0$, the expression makes sense for any $f\in \mathcal C^\infty(S)$ and defines a continuous operator on $\mathcal C^\infty(S)$. It is then extended by analytic continuation to a holomorphic family of operators on $\mathcal C^\infty(S)$. The main property of this family of operators is
\[\widetilde J_\lambda \circ \pi_{\lambda}(g) = \pi_{-\lambda}(g) \circ \widetilde J_\lambda
\]
for any $g\in G$.

The operator $\widetilde J_{\boldsymbol \lambda}$ is generically invertible. However, there are exceptions, precisely when $\pi_{\lambda}$ is reducible. 

When $\lambda=\rho+k, k\in \mathbb N$, then 
\begin{equation}\label{rho+k}
Im(\widetilde J_{\rho+k}) = \mathcal P_k,\qquad  Ker (\widetilde J_{\rho+k}) = \mathcal P_k^\perp
\end{equation}
 where $\mathcal P_k$ is the space of restrictions to $S$ of polynomials on $E$ of degree $\leq k$. Dually, when $\lambda=-\rho-k$
 \begin{equation}\label{-rho-k}
 Ker(\widetilde J_{-\rho-k)}) = \mathcal P_k,\qquad Im(\widetilde J_{-\rho-k}) = \mathcal P_k^\perp\ .
 \end{equation}
 
Let $M : \mathcal C^\infty(S\times S) \longrightarrow \mathcal C^\infty(S\times S)$ be the operator given by
\[ f\longmapsto Mf,\qquad (Mf)(x,y) = \vert x-y\vert^2 f(x,y)\ .
\]
For any $\lambda_1,\lambda_2\in \mathbb C$, the opertator $M$ intertwines the representations $\pi_{\lambda_1}\otimes \pi_{\lambda_2}$ and 
$\pi_{\lambda_1-1}\otimes \pi_{\lambda_2-1}$. 
This is a consequence of the covariance property under the conformal group of the Euclidean distance  $\vert x-y\vert$ on $S$ (equation  (2) in \cite{c}).

Consider the following diagram

\[\begin{matrix}
\mathcal C^\infty(S)\otimes \mathcal C^\infty(S)&\underrightarrow {\quad E_{\lambda_1,\lambda_2}\quad}&\mathcal C^\infty(S)\otimes \mathcal C^\infty(S)\\
 \begin{matrix} \\\downarrow  \widetilde J_{\lambda_1}\otimes \widetilde J_{\lambda_2}\\ \\\end{matrix}& & \begin{matrix} \\\uparrow  \widetilde J_{-\lambda_1-1}\otimes  \widetilde J_{-\lambda_2-1}\\ \\\end{matrix}\\
 \mathcal C^\infty(S)\otimes \mathcal C^\infty(S)& \underrightarrow {\quad M \quad}&\mathcal C^\infty(S)\otimes \mathcal C^\infty(S)\\
\end{matrix}
\]
and let $E_{\lambda_1,\lambda_2}=(\widetilde J_{-\lambda_1-1}\otimes \widetilde J_{-\lambda_2-1})\circ M\circ (\widetilde J_{\lambda_1}\otimes \widetilde J_{\lambda_2})$.

\begin{proposition}\label{constrE}
{\ }
\smallskip

$i)$ $E_{\lambda_1,\lambda_2}$ intertwines the representations $\pi_{\lambda_1}\otimes \pi_{\lambda_2}$ and $\pi_{\lambda_1+1}\otimes \pi_{\lambda_2+1}$
\smallskip

$ii)$ $E_{\lambda_1,\lambda_2}$ is a differential operator on $S\times S$
\smallskip

$iii)$ $ \Gamma(\lambda_1+\rho+1) \Gamma( -\lambda_1+\rho)\Gamma( \lambda_2+\rho+1)\Gamma(-\lambda_2+\rho)\,E_{\lambda_1,\lambda_2}$ can be analytically extended to $\mathbb C^2$, yielding a homomorphic family of differential operators on $S\times S$.

\end{proposition}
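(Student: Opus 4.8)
I would treat the three assertions in order, $(i)$ being formal and $(ii)$, $(iii)$ resting on a single explicit computation carried out in the noncompact model.

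\medskip
\noindent\emph{Assertion $(i)$.} Compose the three intertwining relations already at hand: $\widetilde J_{\lambda_1}\otimes\widetilde J_{\lambda_2}$ carries $\pi_{\lambda_1}\otimes\pi_{\lambda_2}$ to $\pi_{-\lambda_1}\otimes\pi_{-\lambda_2}$; the operator $M$ carries $\pi_{\mu_1}\otimes\pi_{\mu_2}$ to $\pi_{\mu_1-1}\otimes\pi_{\mu_2-1}$ for all $\mu_1,\mu_2$, hence $\pi_{-\lambda_1}\otimes\pi_{-\lambda_2}$ to $\pi_{-\lambda_1-1}\otimes\pi_{-\lambda_2-1}$; and $\widetilde J_{-\lambda_1-1}\otimes\widetilde J_{-\lambda_2-1}$ carries $\pi_{-\lambda_1-1}\otimes\pi_{-\lambda_2-1}$ to $\pi_{\lambda_1+1}\otimes\pi_{\lambda_2+1}$. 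Reading around the diagram proves $(i)$.

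\medskip
\noindent\emph{Assertions $(ii)$ and $(iii)$.} Being a differential operator is a local property, so I would pass to a stereographic chart, i.e. to the noncompact model on $\mathbb R^{n-1}$, where $\widetilde J_\mu$ is convolution with the Riesz-type kernel $\Gamma(\mu)^{-1}|x|^{2\mu-(n-1)}$ (equivalently a Fourier multiplier by a power of $\pi$ times $\Gamma(\rho-\mu)^{-1}|\xi|^{-2\mu}$) and $M$ is multiplication by $|x_1-x_2|^2$. The plan is to push $M$ to the left past $\widetilde J_{\lambda_1}\otimes\widetilde J_{\lambda_2}$. Substituting $x_1-x_2=(x_1-y_1)-(x_2-y_2)+(y_1-y_2)$ inside the kernel of $\widetilde J_{\lambda_1}\otimes\widetilde J_{\lambda_2}$ expands $|x_1-x_2|^2$ into six terms; the identities $|x-y|^{2}\,|x-y|^{2\lambda-(n-1)}=|x-y|^{2(\lambda+1)-(n-1)}$ (for the three ``square'' terms) and $(x-y)_i\,|x-y|^{2\lambda-(n-1)}=\tfrac{1}{2(\lambda+1-\rho)}\,\partial_{x_i}|x-y|^{2(\lambda+1)-(n-1)}$ (for the three ``cross'' terms) rewrite each summand as a constant–coefficient differential operator in the output variables, composed with multiplication on the source side by one of $1$, $(y_1-y_2)_i$, $|y_1-y_2|^2$, composed with a product $(\widetilde J_{-\lambda_1-1}\circ\widetilde J_{*})\otimes(\widetilde J_{-\lambda_2-1}\circ\widetilde J_{*})$ in which each $*$ is $\lambda_j$ or $\lambda_j+1$. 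Since convolution operators commute with $\partial_{x_i}$, each inner product collapses: $\widetilde J_{-\lambda_j-1}\circ\widetilde J_{\lambda_j+1}=\widetilde J_{-\nu}\circ\widetilde J_\nu$ is a scalar, and $\widetilde J_{-\lambda_j-1}\circ\widetilde J_{\lambda_j}=\widetilde J_{-\mu-1}\circ\widetilde J_\mu$ is a scalar times $-\Delta$, the scalars being reciprocal products of two Gamma-values in $\lambda_j$ or $\lambda_j+1$. Assembling the six contributions exhibits $E_{\lambda_1,\lambda_2}$, in the chart, as a differential operator of order $\le 4$ whose coefficients have, in the $j$-th variable, denominator $\Gamma(\rho+\lambda_j+1)\Gamma(\rho-\lambda_j)$ — the stray factor $\lambda_j+1-\rho$ from the cross-term identity cancelling, via $\Gamma(\rho-\lambda_j)=(\rho-\lambda_j-1)\Gamma(\rho-\lambda_j-1)$, the $\Gamma(\rho-\lambda_j-1)$ produced by $\widetilde J_{-\lambda_j-1}\widetilde J_{\lambda_j+1}$. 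This proves $(ii)$ in the chart; multiplying by $\Gamma(\lambda_1+\rho+1)\Gamma(-\lambda_1+\rho)\Gamma(\lambda_2+\rho+1)\Gamma(-\lambda_2+\rho)$ clears every denominator and leaves coefficients polynomial, hence entire, in $(\lambda_1,\lambda_2)$, which is $(iii)$ in the chart.

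\medskip
\noindent\emph{Globalization.} Covariance (part $(i)$) propagates these properties from the dense chart to all of $S\times S$: the Schwartz kernel of $E_{\lambda_1,\lambda_2}$ is a $G$-invariant distribution on $(S\times S)\times(S\times S)$, its support is closed and $G$-stable, it lies in the (closed, $G$-stable) diagonal over the chart domain, and every $G$-orbit meets the chart domain — for any configuration, the set of $g\in G$ sending some coordinate to the point at infinity is a finite union of proper submanifolds of $G$ — so the support lies in the diagonal everywhere; covering $S\times S$ by finitely many such charts bounds the order globally, and holomorphy of the coefficients of the renormalized operator is local, hence also global. This yields $(ii)$ and the holomorphic family of $(iii)$.

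\medskip
\noindent\emph{Main obstacle.} The essential work is the bookkeeping in the second paragraph: for each of the six terms one must track how a ``square'' factor raises a Riesz exponent, how a ``cross'' factor becomes a derivative applied to a raised Riesz kernel, how the residual source-side factor enters, and then recognize the assembled expression as a differential operator carrying exactly the four claimed Gamma-denominators. A secondary technical nuisance is the standard but delicate passage between the compact and noncompact models, together with the verification — via $G$-covariance — that ``differential, of finite order, holomorphic in $\lambda$'' established on a dense chart survives across the point at infinity.
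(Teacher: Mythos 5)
Your argument is correct, and for $(i)$ and the computational core of $(ii)$–$(iii)$ it coincides with what the paper actually does: part $(i)$ is declared ``true by construction'' (your composition of the three intertwining relations), and parts $(ii)$–$(iii)$ are obtained ``by passing to the non compact picture,'' citing Proposition 3.7 of \cite{bc} — which is precisely the six-term expansion of $|x_1-x_2|^2$, the Riesz-kernel raising and cross-term identities, and the collapse of $\widetilde J_{-\lambda-1}\widetilde J_{\lambda+1}$ and $\widetilde J_{-\lambda-1}\widetilde J_{\lambda}$ into a scalar and a scalar times $-\Delta$ that you reconstruct; your globalization-by-covariance step is the (left implicit) passage back to $S\times S$. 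The one genuine divergence is that the paper supplements this with a ``more direct'' proof of $(iii)$ entirely in the compact picture: Lemma 11.2 shows $E_{\lambda_1,\lambda_2}=0$ whenever $\lambda_j\in\rho+\mathbb N$ or $\lambda_j\in-\rho-1-\mathbb N$, using only $\mathrm{Im}(\widetilde J_{\rho+k})=\mathcal P_k$, $\mathrm{Ker}(\widetilde J_{-\rho-k})=\mathcal P_k$ and $\vert x-y\vert^2=2(1-\langle x,y\rangle)$; since these zero divisors are exactly the polar divisors of the four $\Gamma$-factors, the renormalized family extends holomorphically by removable singularities and Hartogs, with no explicit formula needed. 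That argument buys a coordinate-free proof of $(iii)$ (and explains conceptually where the four $\Gamma$-factors come from), whereas your explicit computation buys more — an actual expression for $\widetilde E_{\lambda_1,\lambda_2}$ in the chart and the order bound — at the cost of the bookkeeping you rightly identify as the main labor.
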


\begin{proof}  $i)$ is true by construction. $ii)$ and $iii)$ are obtained by passing to the non compact picture. The analogous result  on $\mathbb R^{n-1}$ is  Proposition 3.7 in \cite{bc} (notice that  the \emph{unnormalized} Knapp-Stein operators were used in \cite{bc}).
\end{proof}

 However, a more direct (and enlightening) proof of $iii)$ is available in the compact picture, based on the following lemma.
\begin{lemma}\label{0E} {\ }

$i)$ let $\lambda_1\in \big(\rho+\mathbb N\big)$. Then $E_{\lambda_1,\lambda_2} =0$
\smallskip

$ii)$ let $\lambda_1 \in \big(-\rho-1-\mathbb N\big)$. Then $E_{\lambda_1,\lambda_2}=0$.
\end{lemma}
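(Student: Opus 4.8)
The plan is to follow a single tensor factor, say the first, through the three operators composing $E_{\lambda_1,\lambda_2} = (\widetilde J_{-\lambda_1-1}\otimes\widetilde J_{-\lambda_2-1})\circ M\circ(\widetilde J_{\lambda_1}\otimes\widetilde J_{\lambda_2})$. Since the operations on the two variables commute, the value of $\lambda_2$ plays no role, and it suffices to check that a function which is polynomial of controlled degree in the first variable is eventually killed by the last factor $\widetilde J_{-\lambda_1-1}$ acting in that variable. The two ingredients are the image/kernel descriptions \eqref{rho+k} and \eqref{-rho-k}, together with the elementary fact that on $S\times S$ one has $\vert x-y\vert^2 = 2-2\langle x,y\rangle$, so that $M$ raises the degree in each variable by at most one.

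For $i)$, with $\lambda_1 = \rho+k$, $k\in\mathbb N$: by \eqref{rho+k} the operator $\widetilde J_{\lambda_1}$ acting on the first variable has image $\mathcal P_k$, so $(\widetilde J_{\lambda_1}\otimes\widetilde J_{\lambda_2})f$ is, for each fixed value of the second variable, in $\mathcal P_k$ in the first variable; then $M$ sends it into $\mathcal P_{k+1}$ in the first variable; and since $-\lambda_1-1 = -\rho-(k+1)$, \eqref{-rho-k} gives $Ker(\widetilde J_{-\lambda_1-1}) = \mathcal P_{k+1}$, which annihilates the result. Hence $E_{\lambda_1,\lambda_2} = 0$.

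For $ii)$, with $\lambda_1 = -\rho-1-k = -\rho-(k+1)$, $k\in\mathbb N$, I would argue dually: by \eqref{-rho-k} the image of $\widetilde J_{\lambda_1}$ in the first variable is $\mathcal P_{k+1}^\perp$; multiplication by the real function $\vert x-y\vert^2$ is formally self-adjoint on $L^2(S)$ and sends $\mathcal P_k$ into $\mathcal P_{k+1}$, hence by duality it maps $\mathcal P_{k+1}^\perp$ into $\mathcal P_k^\perp$ in the first variable; finally $-\lambda_1-1 = \rho+k$, so \eqref{rho+k} gives $Ker(\widetilde J_{-\lambda_1-1}) = \mathcal P_k^\perp$, which annihilates the result. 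Again $E_{\lambda_1,\lambda_2} = 0$.

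The computation is short and I do not anticipate a genuine obstacle; the only point needing a word of justification is the duality step in $ii)$ — passing from ``$M$ raises polynomial degree by at most one'' to ``$M$ maps $\mathcal P_{k+1}^\perp$ into $\mathcal P_k^\perp$'' — which rests on the self-adjointness of multiplication by $\vert x-y\vert^2$ with respect to the $L^2(S)$ inner product, together with the fact that, for $y$ fixed on $S$, the function $x\mapsto\vert x-y\vert^2$ is affine.
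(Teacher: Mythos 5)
Your proof is correct and follows essentially the same route as the paper: track the first tensor factor through the composition, use \eqref{rho+k} and \eqref{-rho-k} for the image and kernel of the Knapp--Stein operators, and observe that $M$ raises the polynomial degree in each variable by at most one. The paper states the inclusion $\mathcal P_1\mathcal P_{k+1}^\perp\subset\mathcal P_k^\perp$ without comment; your duality justification via self-adjointness of multiplication by the real affine function $x\mapsto\vert x-y\vert^2$ is exactly the right way to fill that in.
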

\begin{proof}
Let $\lambda_1=\rho+k$ for some $k\in \mathbb N$.  Then, by \eqref{rho+k}
\[Im(\widetilde J_{\lambda_1} \otimes \widetilde J_{\lambda_2})\subset \mathcal P_k\otimes \mathcal C^\infty(S)\]
 and as $\vert x-y\vert^2 =2(1-\langle x,y\rangle)$ on $S\times S$, 
 \[Im\big(M\circ (\widetilde J_{\lambda_1} \otimes \widetilde J_{\lambda_2})\big)\subset \mathcal P_{k+1} \otimes \mathcal C^\infty(S)\ .
 \]
 As $-\lambda_1-1=-\rho-k-1$, $ Ker(\widetilde J_{-\lambda_1-1})=\mathcal P_{k+1} $ by \eqref{-rho-k} and hence $E_{\lambda_1,\lambda_2} =0$. 
 \smallskip
 
 For $ii)$, let $\lambda_1= -\rho-k-1$, $Im(\widetilde J_{\lambda_1})= \mathcal P_{k+1}^\perp$ by \eqref{-rho-k} so that 
 \[Im\big(M\circ (\widetilde J_{\lambda_1}\otimes \widetilde J_{\lambda_2})\big) \subset \mathcal P_1\mathcal P_{k+1}^\perp \otimes \mathcal C^\infty(S)\subset \mathcal P_k^\perp \otimes \mathcal C^\infty(S)\ .
 \]
As $-\lambda_1 -1= \rho+k$, $Ker(\widetilde J_{-\lambda_1-1}) = \mathcal P_k^\perp$ by \eqref{rho+k}, $E_{\lambda_1,\lambda_2}=0$. 
\end{proof}
 The analog statement is valid for $\lambda_2 \in \rho+\mathbb N$ or $\lambda_2 \in -\rho-\mathbb N$.  From the knowledge of these zeroes of the operator-valued holomorphic function $(\lambda_1,\lambda_2 \longmapsto E_{\lambda_1,\lambda_2})$,  $iii)$ follows from Lemma \ref{0E} by routine arguments, ending with the use of Hartog's theorem in $\mathbb C^2$.
 
In the sequel,  let\footnote{The factor $\displaystyle\frac{16}{\pi^{2(n-1)}}$ is introduced to have a simple expression for the Bernstein-Sato polynomial  , see \eqref{explicitb}.}
$\widetilde E_{\lambda_1,\lambda_2} $ be defined by
 \begin{equation} 
 \widetilde E_{\lambda_1,\lambda_2} = \frac{16}{ \pi^{2(n-1)} }\,\Gamma(\lambda_1+\rho+1) \Gamma( -\lambda_1+\rho)\Gamma( \lambda_2+\rho+1)\Gamma(-\lambda_2+\rho)\,E_{\lambda_1,\lambda_2}\ .
 \end{equation}
\smallskip

\noindent
{\bf Remark.} In \cite{bc} Proposition 3.7, an explicit expression was given for (the analog of) the operator $\widetilde E_{\lambda_1,\lambda_2}$ in the non compact picture. It would be of some interest to find such an expression on $S\times S$.

\subsection{A Bernstein-Sato identity for the kernel $k_{\boldsymbol \alpha}$}

Let $\boldsymbol \alpha \in \mathbb C^3$, and let $\boldsymbol \lambda$ be its associated spectral parameter. Then the geometric parameter associated to $(\lambda_1+1,\lambda_2+1,\lambda_3)$ is $(\alpha_1,\alpha_2,\alpha_3+2)$.

\begin{proposition} Assume  $\boldsymbol \alpha$ is not a pole. Let $\boldsymbol \lambda$ be its associated spectral parameter. Then there exists a scalar $b(\alpha_1,\alpha_2,\alpha_3)$ such that, for all $f_1,f_2,f_3\in \mathcal C^\infty(S)$
\begin{equation}\label{b}
\mathcal K_{\alpha_1,\alpha_2,\alpha_3+2}(\widetilde E_{\lambda_1,\lambda_2} (f_1\otimes f_2), f_3) = b(\alpha_1,\alpha_2,\alpha_3) \mathcal K_{\alpha_1,\alpha_2,\alpha_3}(f_1, f_2,f_3)
\end{equation} 
\end{proposition}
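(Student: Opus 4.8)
The plan is to exploit the covariance properties already assembled in the excerpt. The trilinear form $\mathcal K_{\boldsymbol \alpha}$ is, by its very construction (see \cite{co,c}), characterized among continuous trilinear forms up to scalar by its invariance under $\pi_{\lambda_1}\otimes\pi_{\lambda_2}\otimes\pi_{\lambda_3}$ \emph{when $\boldsymbol\alpha$ is not a pole}, by the generic multiplicity $1$ theorem recalled in the introduction. So the strategy is: show that the left-hand side of \eqref{b}, as a trilinear form in $(f_1,f_2,f_3)$, is again invariant under $\pi_{\lambda_1}\otimes\pi_{\lambda_2}\otimes\pi_{\lambda_3}$; conclude it must be a scalar multiple of $\mathcal K_{\alpha_1,\alpha_2,\alpha_3}$; and name that scalar $b(\alpha_1,\alpha_2,\alpha_3)$.

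First I would set $\boldsymbol\mu=(\lambda_1+1,\lambda_2+1,\lambda_3)$, whose associated geometric parameter is $(\alpha_1,\alpha_2,\alpha_3+2)$ by the remark preceding the proposition, and observe that $\mathcal K_{\alpha_1,\alpha_2,\alpha_3+2}$ is $\boldsymbol\mu$-invariant (again because $\boldsymbol\alpha$, hence also the shifted parameter, is not a pole — one should check, or note it is part of the hypothesis's intent, that $(\alpha_1,\alpha_2,\alpha_3+2)$ is likewise not a pole; if that needed care, restrict to a Zariski-dense set and extend by the meromorphy/holomorphy already established, exactly as in the analytic-continuation arguments used repeatedly in Section 9). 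Then for $g\in G$ and $f_1,f_2,f_3\in\mathcal C^\infty(S)$, compute
\[
\mathcal K_{\alpha_1,\alpha_2,\alpha_3+2}\big(\widetilde E_{\lambda_1,\lambda_2}(\pi_{\lambda_1}(g)f_1\otimes\pi_{\lambda_2}(g)f_2),\ \pi_{\lambda_3}(g)f_3\big).
\]
By Proposition~\ref{constrE}(i) (in its renormalized form $\widetilde E_{\lambda_1,\lambda_2}$, which differs only by a scalar), $\widetilde E_{\lambda_1,\lambda_2}\circ(\pi_{\lambda_1}(g)\otimes\pi_{\lambda_2}(g)) = (\pi_{\lambda_1+1}(g)\otimes\pi_{\lambda_2+1}(g))\circ\widetilde E_{\lambda_1,\lambda_2}$, so the displayed expression equals
\[
\mathcal K_{\alpha_1,\alpha_2,\alpha_3+2}\big((\pi_{\lambda_1+1}(g)\otimes\pi_{\lambda_2+1}(g))\widetilde E_{\lambda_1,\lambda_2}(f_1\otimes f_2),\ \pi_{\lambda_3}(g)f_3\big),
\]
which by the $\boldsymbol\mu$-invariance of $\mathcal K_{\alpha_1,\alpha_2,\alpha_3+2}$ is just $\mathcal K_{\alpha_1,\alpha_2,\alpha_3+2}(\widetilde E_{\lambda_1,\lambda_2}(f_1\otimes f_2),f_3)$. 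Hence the left-hand side of \eqref{b} is $\boldsymbol\lambda$-invariant and continuous in $(f_1,f_2,f_3)$ (continuity because $\widetilde E_{\lambda_1,\lambda_2}$ is a differential operator by Proposition~\ref{constrE}(ii) and $\mathcal K$ is continuous). Since $\dim Tri(\boldsymbol\lambda)=1$ and $\mathcal K_{\boldsymbol\alpha}$ spans it, there is a unique scalar $b(\alpha_1,\alpha_2,\alpha_3)$ with \eqref{b}.

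The main obstacle I anticipate is not the invariance bookkeeping — that is routine — but rather confirming that $(\alpha_1,\alpha_2,\alpha_3+2)$ is genuinely not a pole (or otherwise arranging the argument so the multiplicity $1$ theorem applies to it), and then verifying that $b$ is actually a well-defined \emph{scalar} rather than identically zero or undefined: one must exclude the degenerate possibility that $\widetilde E_{\lambda_1,\lambda_2}(f_1\otimes f_2)$ lands, for all $f_1,f_2$, in the kernel of $f\mapsto\mathcal K_{\alpha_1,\alpha_2,\alpha_3+2}(f,f_3)$ — but even then $b=0$ is a perfectly valid scalar, so the statement still holds. The honest subtlety, which presumably the author's proof addresses and which the later Proposition~11.3 refines, is to exhibit that $b$ depends \emph{polynomially} (or at least meromorphically/holomorphically) on $\boldsymbol\alpha$ and to identify it — but the bare existence assertion \eqref{b} follows purely from generic multiplicity one plus the intertwining property of $\widetilde E_{\lambda_1,\lambda_2}$, so I would present exactly that, flagging that the explicit form of $b$ is deferred to \eqref{explicitb}.
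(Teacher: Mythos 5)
Your proposal is correct and is essentially the paper's own proof: the author likewise observes that the left-hand side is $\boldsymbol\lambda$-invariant by the covariance property of $\widetilde E_{\lambda_1,\lambda_2}$ and then invokes the generic uniqueness (multiplicity one) theorem for invariant trilinear forms. Your extra care about $(\alpha_1,\alpha_2,\alpha_3+2)$ possibly being a pole is a reasonable point the paper leaves implicit (handled by the usual analytic-continuation convention), but it does not change the argument.
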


\begin{proof} The left hand side of \eqref{b} is $\boldsymbol \lambda$-invariant by the covariance property of the operator $\widetilde E_{\lambda_1,\lambda_2}$. The proposition follows from the generic uniqueness theorem for invariant trilinear forms (see \cite{co} Theorem 3.1).
\end{proof}
Equation \eqref{b} is a disguised form of a \emph{Bernstein-Sato identity} for the kernel
$k_{\boldsymbol \alpha} (x,y,z)=\vert x-y\vert^{\alpha_3}\vert y-z\vert^{\alpha_1}\vert z-x\vert^{\alpha_2}$. More precisely, equation \eqref{b} implies for the distribution kernels of both sides
\begin{equation}\label{BS}
{\widetilde E}_{\lambda_1,\lambda_2}^t\big( k_{\alpha_1,\alpha_2,\alpha_3+2}\big)= b(\alpha_1,\alpha_2,\alpha_3)\, k_{\alpha_1,\alpha_2,\alpha_3}\ ,
\end{equation}
where ${\widetilde E}_{\lambda_1,\lambda_2}^t$ is the transpose of the differential operator ${\widetilde E}_{\lambda_1,\lambda_2}$, acting on the variables $x,y$.

The explicit computation of the function $b$ was done in \cite{bc} by brute force computation (with the help of a computer, thanks Ralf !), starting from \eqref{BS} and the explicit expression (in the non compact picture) of (the analog of) the operator $\widetilde E_{\lambda_1,\lambda_2}$. We propose here a different computation of the function $b$, based on the identity
\begin{equation}\label{KE1}
\mathcal K_{\alpha_1,\alpha_2,\alpha_3+2}(\widetilde E_{\lambda_1,\lambda_2} (1\otimes 1),1) = b(\alpha_1,\alpha_2,\alpha_3) \,\mathcal K_{\alpha_1,\alpha_2,\alpha_3}(1,1,1)
\end{equation}
First recall some {formul\ae}   which are needed for the computation of the left hand side of \eqref{KE1}.
\begin{equation}\label{sint1}
\int_S \vert {\bf 1}-x\vert^s dx= (2\sqrt \pi)^{n-1} 2^s \frac{ \Gamma( \frac{s}{2}+\rho)}{\Gamma(\frac{s}{2}+2\rho)}\ ,
\end{equation}
which implies
\begin{equation}\label{lambdaint1}
{\widetilde J}_\lambda\, (1) = \pi^\rho\frac{2^{2\lambda}}{\Gamma(\lambda+\rho)}\, 1\ .
\end{equation}
Needless to say, these {formul\ae}   (and more to come) have to be interpreted as equality of meromorphic functions of $s$ (or $\lambda$). They are usually proved via  analytic continuation. 

\begin{lemma} The following identities hold
\smallskip

$i)$ 
\begin{equation}\label{sint2}
 \int_S \vert {\bf 1}-x\vert^{s} x_1 dx=- (2\sqrt \pi)^{n-1} 2^{s}\, \frac{s}{2}\, \frac{ \Gamma( \frac{s}{2}+\rho)}
{\Gamma(\frac{ s}{2}+n)}
\end{equation}
\smallskip

$ii)$ for any $y,u\in S$, 
\begin{equation}\label{sint3}
\int_S \vert x-y\vert^s \langle u,x\rangle dx =  - (2\sqrt \pi)^{n-1} 2^{s}\, \frac{s}{2}\, \frac{ \Gamma( \frac{s}{2}+\rho)}
{\Gamma(\frac{ s}{2}+n)}\, \langle u,y\rangle\ .
\end{equation}

\end{lemma}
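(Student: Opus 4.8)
The plan is to read off $i)$ from the formula \eqref{sint1} already recorded, and then to reduce $ii)$ to $i)$ by invariance under $SO(n)$. For $i)$, the point is that on $S$ one has $\vert {\bf 1}-x\vert^2 = 2-2\langle {\bf 1},x\rangle = 2(1-x_1)$, whence $x_1 = 1-\tfrac12\vert {\bf 1}-x\vert^2$. Substituting this into the integrand gives
\[
\int_S \vert {\bf 1}-x\vert^{s} x_1\, dx \;=\; \int_S \vert {\bf 1}-x\vert^{s}\, dx \;-\; \tfrac12\int_S \vert {\bf 1}-x\vert^{s+2}\, dx ,
\]
and applying \eqref{sint1} to each term and pulling out the common factor $(2\sqrt\pi)^{n-1}2^{s}$ one is left with the bracket
\[
\frac{\Gamma(\tfrac s2+\rho)}{\Gamma(\tfrac s2+2\rho)} \;-\; 2\,\frac{\Gamma(\tfrac s2+1+\rho)}{\Gamma(\tfrac s2+1+2\rho)} .
\]
Using $\Gamma(z+1)=z\,\Gamma(z)$ to write $\Gamma(\tfrac s2+1+\rho)=(\tfrac s2+\rho)\Gamma(\tfrac s2+\rho)$ and $\Gamma(\tfrac s2+1+2\rho)=(\tfrac s2+2\rho)\Gamma(\tfrac s2+2\rho)$, this bracket collapses to $-\tfrac s2\,\Gamma(\tfrac s2+\rho)\big/\Gamma(\tfrac s2+2\rho+1)$, and $\tfrac s2+2\rho+1=\tfrac s2+n$ since $2\rho=n-1$; this is exactly the claimed identity. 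As always, the computation is first performed for $\Re s$ large (where all three integrals converge absolutely), and the resulting identity between meromorphic functions of $s$ is then valid for all $s\in\mathbb C$ by analytic continuation.

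For $ii)$, consider the $\mathbb R^n$-valued integral $I(y)=\int_S \vert x-y\vert^{s}\, x\, dx$. Since $dx$ is $SO(n)$-invariant and $\vert Rx-Ry\vert=\vert x-y\vert$, the change of variable $x\mapsto Rx$ shows $I(Ry)=R\,I(y)$ for every $R\in SO(n)$, so $I\colon S\to\mathbb R^n$ is $SO(n)$-equivariant. For $R$ in the stabilizer $SO(n-1)$ of $y$ this forces $I(y)$ to be fixed by $SO(n-1)$; since $SO(n-1)$ fixes no nonzero vector of $y^{\perp}$, we get $I(y)=c(s)\,y$, and equivariance under the (transitive) $SO(n)$-action on $S$ shows the scalar $c(s)$ is independent of $y$. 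Pairing with $u$ yields $\int_S \vert x-y\vert^{s}\langle u,x\rangle\, dx = c(s)\langle u,y\rangle$; taking $y={\bf 1}$ and pairing with ${\bf 1}$ identifies $c(s)=\int_S\vert {\bf 1}-x\vert^{s} x_1\, dx$, which is part $i)$.

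I do not anticipate a real obstacle: $i)$ is a two-line Gamma-function simplification and $ii)$ is an elementary equivariance argument (alternatively one may reduce $y$ to ${\bf 1}$ by an orthogonal change of variable and kill the components orthogonal to ${\bf 1}$ by the reflection symmetry $x_j\mapsto -x_j$, but the equivariance route is cleaner). The only things worth a sentence of care are the range of convergence (the integrals are absolutely convergent for $\Re s>-(n-1)$, and $\Re s>0$ suffices throughout) and the standing convention that all these identities are equalities of meromorphic functions of $s$ obtained by analytic continuation.
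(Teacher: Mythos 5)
Your proof is correct and follows essentially the same route as the paper: part $i)$ is the identical computation via $x_1=1-\tfrac12\vert{\bf 1}-x\vert^2$ and \eqref{sint1}, and part $ii)$ rests on the same rotation-invariance principle, the only difference being that you prove the needed proportionality $I(y)=c(s)y$ by a direct $SO(n)$-equivariance argument where the paper invokes the general fact that operators with radial kernels act as scalars on $\mathcal H_1$. Both then determine the constant by specializing to $u=y={\bf 1}$ and quoting $i)$.
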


\begin{proof}  For $i)$, as $x_1=1-\frac{1}{2}\vert {\bf 1}-x\vert^2$
\[\int_S \vert {\bf 1}-x\vert^{s} x_1 dx = \int_S \vert {\bf 1}-x\vert^s dx-\frac{1}{2} \int_S \vert {\bf1}-x\vert^{s+2} dx
\]
\[= (2\sqrt{\pi})^{n-1}\Big( 2^s\frac{\Gamma(\frac{s}{2}+\rho)}{\Gamma( \frac{s}{2}+2\rho)} -\frac{1}{2} 2^{s+2} \frac{ \Gamma(\frac{s}{2}+\rho+1)}{\Gamma(\frac{s}{2}+2\rho+1)}\Big)
\]
\[= (2\sqrt{\pi})^{n-1}\, 2^s\, \frac{\Gamma( \frac{s}{2}+\rho)}{\Gamma( \frac{s}{2}+2\rho+1)}\,\big(-\frac{s}{2}\big)\ .\]

For $ii)$, let $\varphi(u,y)$ be the value of the left hand side. A basic result in harmonic analysis on $S$ is that an operator $K$ on $\mathcal C^\infty(S)$ given by $Kf(y) =\int_S k(\vert x-y\vert) f(x) dx$ is scalar on any space of homogeneous harmonic polynomials,  in particular on $\mathcal H_1$, the space of homogeneous polynomial of degree 1. Hence $\varphi(y) = c\langle u,y\rangle$  for some $c$ independent of $u$ and $y$. Let $u=y=\bf 1$ and compute $c$, using $i)$.
\end{proof}

\begin{lemma}{\ }
\smallskip

$i)$ for $y\in S$, 
\[\int_S \vert x-y\vert^{s} \vert {\bf 1}-x\vert^2 dx = (2\sqrt \pi)^{n-1}  \frac{ \Gamma( \frac{s}{2}+\rho)}
{\Gamma(\frac{s}{2}+n)}2^{s}\big( s y_1+s+2(n-1)\big)\ .
\]

$ii)$ for $y,z\in S$, 
\begin{equation}\label{asb2}
\int_S \vert x-y\vert^s \vert z-x\vert^2 dx = (2\sqrt \pi)^{n-1}  \frac{ \Gamma( \frac{s}{2}+\rho)}
{\Gamma(\frac{ s}{2}+n)}2^{s}\big(s \langle z,y\rangle+s+2(n-1)\big)
\end{equation}
\end{lemma}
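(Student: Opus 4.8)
The plan is to reduce both integrals to the elementary ones already recorded in \eqref{sint1} and \eqref{sint3}, by expanding the quadratic factor through the identity $\vert u-x\vert^2 = 2(1-\langle u,x\rangle)$, valid for $u,x\in S$, and then exploiting the invariance of the measure $dx$ and of the Euclidean distance under the rotation group $SO(n)$. As usual, the computation is carried out first for $\Re(s)$ large, where all integrals converge absolutely, and the resulting identities are extended to all $s\in\mathbb C$ by analytic continuation.

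For $i)$, I would write $\vert {\bf 1}-x\vert^2 = 2-2x_1$, so that
\[
\int_S \vert x-y\vert^{s}\vert {\bf 1}-x\vert^2\,dx = 2\int_S \vert x-y\vert^{s}\,dx - 2\int_S \vert x-y\vert^{s}\,x_1\,dx\ .
\]
The first integral on the right is independent of $y$ by rotation invariance, hence equals $\int_S \vert {\bf 1}-x\vert^s\,dx$, given by \eqref{sint1}; the second one is $\int_S \vert x-y\vert^{s}\langle {\bf 1},x\rangle\,dx$, which by \eqref{sint3} with $u={\bf 1}$ equals $-(2\sqrt \pi)^{n-1}2^{s}\,\frac{s}{2}\,\frac{\Gamma(\frac{s}{2}+\rho)}{\Gamma(\frac{s}{2}+n)}\,y_1$. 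Substituting these values, factoring out $(2\sqrt \pi)^{n-1}2^{s}\,\frac{\Gamma(\frac{s}{2}+\rho)}{\Gamma(\frac{s}{2}+n)}$, and using $\Gamma(\frac{s}{2}+n) = (\frac{s}{2}+n-1)\,\Gamma(\frac{s}{2}+n-1)$ together with $n-1=2\rho$ to rewrite $\Gamma(\frac{s}{2}+2\rho)^{-1} = (\frac{s}{2}+n-1)\,\Gamma(\frac{s}{2}+n)^{-1}$, the remaining bracket collapses to $2(\frac{s}{2}+n-1)+s\,y_1 = s\,y_1+s+2(n-1)$, which is the stated formula.

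For $ii)$, the quickest route is to deduce it from $i)$. Given $y,z\in S$, choose $g\in SO(n)$ with $gz = {\bf 1}$; the substitution $x\mapsto g^{-1}x$ leaves $dx$ invariant and turns $\int_S \vert x-y\vert^{s}\vert z-x\vert^2\,dx$ into $\int_S \vert x-gy\vert^{s}\vert {\bf 1}-x\vert^2\,dx$, to which part $i)$ applies with $y$ replaced by $gy$; since $(gy)_1 = \langle {\bf 1},gy\rangle = \langle g^{-1}{\bf 1},y\rangle = \langle z,y\rangle$, the formula of $ii)$ follows. Equivalently, one may repeat the argument of $i)$ verbatim, replacing $\langle {\bf 1},x\rangle$ by $\langle z,x\rangle$ and applying \eqref{sint3} with $u=z$.

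There is no genuine obstacle here; the only point that demands a little care is the bookkeeping of the $\Gamma$-factors, which rests on the identity $n=2\rho+1$, together with the standing convention that each displayed equality is to be read as an identity of meromorphic functions of $s$ obtained by analytic continuation from the region of absolute convergence.
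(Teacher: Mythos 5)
Your proof is correct and follows essentially the same route as the paper: expand the quadratic factor via $\vert u-x\vert^2=2(1-\langle u,x\rangle)$, invoke the previously established integrals, and handle general $y,z$ by rotation invariance. (If anything, your explicit appeal to \eqref{sint3} for general $y$ in part $i)$ is slightly more careful than the paper's citation of \eqref{sint2}, which only covers $y={\bf 1}$.)
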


\begin{proof} For $i)$ use the fact that $\vert {\bf 1}-x\vert^2  = 2(1-x_1)$ and combine \eqref{sint1} and \eqref{sint2}.
For $ii)$,
let $\Psi(y,z)$ be the value of the left hand side. Then, by a change of variables,
for any $k\in SO(n)$, $\Psi(ky,kz) = \Psi(y,z)$. Hence $\Psi(y,z) = \psi(\langle y,z\rangle)$. Now let $z=\bf 1$ and compute $\psi$ using $i)$.
\end{proof}

\begin{lemma} For $u,t\in S$ and $r,s$ in $\mathbb C^2$
\begin{equation}\label{srint}
\begin{split}
 &\int_S\int_S \vert u-x\vert^s \vert t-y\vert^r \vert x-y\vert^2 dxdy
=\\ &(4\pi)^{n-1} \frac{ \Gamma( \frac{s}{2}+\rho)  \Gamma( \frac{r}{2}+\rho)}{\Gamma(\frac{s}{2}+n)\Gamma(\frac{r}{2}+n)}2^{s+r} \Big( \frac{rs}{4} \vert u-t\vert^2 + (n-1)(r+s) +2(n-1)^2\Big)\ .
\end{split}
\end{equation}
\end{lemma}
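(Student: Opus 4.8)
The plan is to evaluate the double integral by Fubini, carrying out the $x$-integration first with the help of the previous lemma and then the $y$-integration with the help of \eqref{sint1} and \eqref{sint3}; what remains is purely algebraic. Concretely, I would apply \eqref{asb2} with the point $u$ in the role of $y$ and the point $y$ in the role of $z$, to obtain
\[
\int_S \vert u-x\vert^s \vert x-y\vert^2\, dx = (2\sqrt\pi)^{n-1}\,\frac{\Gamma(\tfrac s2+\rho)}{\Gamma(\tfrac s2+n)}\,2^s\bigl(s\langle u,y\rangle + s + 2(n-1)\bigr)\ .
\]
Inserting this into the outer integral over $y$ reduces the computation to $\int_S \vert t-y\vert^r\langle u,y\rangle\, dy$ and $\int_S \vert t-y\vert^r\, dy$. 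By \eqref{sint3} (with $r$ and $t$ in the roles of $s$ and $y$) the first equals $-(2\sqrt\pi)^{n-1}\,2^r\,\tfrac r2\,\frac{\Gamma(\tfrac r2+\rho)}{\Gamma(\tfrac r2+n)}\langle u,t\rangle$, and by \eqref{sint1} (which, being rotation invariant, holds with any fixed point in place of $\mathbf 1$) the second equals $(2\sqrt\pi)^{n-1}\,2^r\,\frac{\Gamma(\tfrac r2+\rho)}{\Gamma(\tfrac r2+2\rho)}$.

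Next I would bring the two resulting terms to a common denominator. Since $\rho=\tfrac{n-1}{2}$, one has $n=2\rho+1$, hence $\Gamma(\tfrac r2+n)=(\tfrac r2+2\rho)\,\Gamma(\tfrac r2+2\rho)$, so that $\Gamma(\tfrac r2+2\rho)^{-1}=\bigl(\tfrac r2+(n-1)\bigr)\,\Gamma(\tfrac r2+n)^{-1}$. Collecting the prefactors and using $(2\sqrt\pi)^{2(n-1)}=(4\pi)^{n-1}$ together with $2^s2^r=2^{s+r}$, the integral becomes
\[
(4\pi)^{n-1}\,\frac{\Gamma(\tfrac s2+\rho)\,\Gamma(\tfrac r2+\rho)}{\Gamma(\tfrac s2+n)\,\Gamma(\tfrac r2+n)}\,2^{s+r}\,\Bigl(-\tfrac{rs}{2}\langle u,t\rangle+\bigl(s+2(n-1)\bigr)\bigl(\tfrac r2+(n-1)\bigr)\Bigr)\ .
\]
Finally I would substitute $\langle u,t\rangle = 1-\tfrac12\vert u-t\vert^2$ (valid on $S\times S$) and expand: the $\tfrac{rs}{2}$ contributions cancel and the bracket collapses to $\tfrac{rs}{4}\vert u-t\vert^2 + (n-1)(r+s) + 2(n-1)^2$, which is precisely the claimed expression \eqref{srint}.

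There is no real analytic obstacle here. All the formulae invoked are equalities of meromorphic functions of $(r,s)$, established as usual by first assuming $\Re r$ and $\Re s$ large — a range in which every integral converges absolutely and Fubini is legitimate — and then continuing analytically; accordingly \eqref{srint} is itself an identity of meromorphic functions. The only place that calls for a little care is the bookkeeping of the four $\Gamma$-factors, in particular the rewriting of $\Gamma(\tfrac r2+2\rho)^{-1}$ above, and the short polynomial identity at the end.
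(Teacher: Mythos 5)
Your proof is correct and follows essentially the same route as the paper's: Fubini, the inner $x$-integral evaluated by \eqref{asb2}, and the outer $y$-integral reduced to \eqref{sint1} together with a degree-one moment formula (you invoke \eqref{sint3} directly where the paper reuses \eqref{asb2}, which is the same computation after the substitution $\langle u,y\rangle = 1-\tfrac12\vert u-y\vert^2$). The $\Gamma$-factor bookkeeping and the final polynomial simplification are exactly as in the paper, whose last displayed bracket in fact contains a typo ($-rs-(n-1)$ should read $-rs-s(n-1)$) that your computation implicitly corrects.
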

\begin{proof}
Slightly rewrite \eqref{asb2} as
\[\int_S \vert x-y\vert^s \vert z-x\vert^2 dx = (2\sqrt \pi)^{n-1}  \frac{ \Gamma( \frac{s}{2}+\rho)}
{\Gamma(\frac{ s}{2}+n)}2^{s}\big( -\frac{s}{2} \vert z-y\vert^2 +2s +2(n-1)\big)\ .
\]
Now, by Fubini's theorem 
\[\int_S\int_S \vert u-x\vert^s \vert t-y\vert^r \vert x-y\vert^2 dxdy
 = \int_S \vert t-y\vert^r \Big(\int_S \vert u-x\vert^s \vert x-y\vert^2 dx\Big)\, dy
\]
\[ = (2\sqrt \pi)^{n-1}  \frac{ \Gamma( \frac{s}{2}+\rho)}
{\Gamma(\frac{ s}{2}+n)}2^{s}\Big((-\frac{s}{2})\int_S \vert t-y\vert^r\vert u-y\vert^2 dy + \big(2s +2(n-1)\big) \int_S \vert t-y\vert^r dy\Big)
\] 
\[=(4\pi)^{n-1} \frac{ \Gamma( \frac{s}{2}+\rho)  \Gamma( \frac{r}{2}+\rho)}{\Gamma(\frac{s}{2}+n)\Gamma(\frac{r}{2}+n)}2^{s+r}\Big(\frac{rs}{4} \vert u-t\vert^2 -rs-(n-1)+(2s+2(n-1))(\frac{r}{2} +(n-1))\big)
\]
\end{proof}

\begin{lemma}
\begin{equation}\label{E1otimes1}
\begin{split}
&{\widetilde E}_{\lambda_1,\lambda_2} (1\otimes 1) (u,t) =\\&(\lambda_1+\rho)(\lambda_2+\rho) \Big((\lambda_1+\rho+1)(\lambda_2+\rho+1)\vert u-t\vert^2 -2(n-1)(\lambda_1+\lambda_2+2)\Big)\ .
\end{split}
\end{equation}
\end{lemma}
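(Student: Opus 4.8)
The plan is to evaluate $\widetilde E_{\lambda_1,\lambda_2}(1\otimes 1)$ by pushing the constant $1\otimes 1$ through the three arrows of the diagram defining $E_{\lambda_1,\lambda_2}=(\widetilde J_{-\lambda_1-1}\otimes \widetilde J_{-\lambda_2-1})\circ M\circ(\widetilde J_{\lambda_1}\otimes \widetilde J_{\lambda_2})$, using the explicit integrals \eqref{lambdaint1} and \eqref{srint}. Since $(\lambda_1,\lambda_2)\mapsto \widetilde E_{\lambda_1,\lambda_2}$ is a holomorphic family of differential operators (Proposition \ref{constrE}), it suffices to carry this out for generic $(\lambda_1,\lambda_2)$, the identity then extending by analytic continuation.

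First I would apply $\widetilde J_{\lambda_1}\otimes \widetilde J_{\lambda_2}$: by \eqref{lambdaint1} this sends $1\otimes 1$ to the scalar multiple $\pi^{2\rho}\,2^{2\lambda_1+2\lambda_2}\,\Gamma(\lambda_1+\rho)^{-1}\Gamma(\lambda_2+\rho)^{-1}$ of itself, and $M$ then turns this into that same scalar times the function $(x,y)\mapsto|x-y|^2$. Next I would apply $\widetilde J_{-\lambda_1-1}\otimes \widetilde J_{-\lambda_2-1}$ to $(x,y)\mapsto|x-y|^2$; this is precisely the double integral \eqref{srint} divided by $\Gamma(-\lambda_1-1)\Gamma(-\lambda_2-1)$, with the exponents $s=-(n-1)+2(-\lambda_1-1)=-2(\lambda_1+\rho+1)$ and $r=-2(\lambda_2+\rho+1)$. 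The key bookkeeping observation is that $\tfrac s2+\rho=-\lambda_1-1$ and $\tfrac s2+n=\rho-\lambda_1$ (and likewise for $r$), so that the $\Gamma$-quotient occurring in \eqref{srint} is exactly $\Gamma(-\lambda_1-1)\Gamma(-\lambda_2-1)\big/\big(\Gamma(\rho-\lambda_1)\Gamma(\rho-\lambda_2)\big)$ and cancels the two Knapp--Stein normalizations.

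It then remains to simplify. Using $n-1=2\rho$ and $\tfrac{rs}{4}=(\lambda_1+\rho+1)(\lambda_2+\rho+1)$, the bracket $\tfrac{rs}{4}|u-t|^2+(n-1)(r+s)+2(n-1)^2$ of \eqref{srint} collapses, via the elementary identity $-4\rho(\lambda_1+\lambda_2+2\rho+2)+8\rho^2=-2(n-1)(\lambda_1+\lambda_2+2)$, to $(\lambda_1+\rho+1)(\lambda_2+\rho+1)|u-t|^2-2(n-1)(\lambda_1+\lambda_2+2)$. The scalar prefactor collects to $\pi^{2(n-1)}/16$, because the total power of $2$ is $2\lambda_1+2\lambda_2+s+r=-4\rho-4$ while $\pi^{2\rho}(4\pi)^{n-1}=2^{4\rho}\pi^{4\rho}$. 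Finally, multiplying by the normalizing factor $16\,\pi^{-2(n-1)}\,\Gamma(\lambda_1+\rho+1)\Gamma(\rho-\lambda_1)\Gamma(\lambda_2+\rho+1)\Gamma(\rho-\lambda_2)$ built into $\widetilde E_{\lambda_1,\lambda_2}$ makes the powers of $\pi$, the powers of $2$, and the factors $\Gamma(\rho-\lambda_j)$ all disappear, while $\Gamma(\lambda_j+\rho+1)/\Gamma(\lambda_j+\rho)=\lambda_j+\rho$ survives; this yields exactly \eqref{E1otimes1}.

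The computation is essentially routine, with no conceptual obstacle: the only places demanding care are the exponent matching that lets the $\Gamma$-quotient in \eqref{srint} exactly absorb the normalizations of the two outer intertwining operators, and the arithmetic simplification of the bracket.
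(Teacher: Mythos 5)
Your computation is correct and follows the same route as the paper: apply $\widetilde J_{\lambda_1}\otimes\widetilde J_{\lambda_2}$ to $1\otimes 1$ via \eqref{lambdaint1}, then evaluate $\widetilde J_{-\lambda_1-1}\otimes\widetilde J_{-\lambda_2-1}$ on $\vert x-y\vert^2$ via \eqref{srint} with $s=-2(\lambda_1+\rho+1)$, $r=-2(\lambda_2+\rho+1)$, and collect the constants against the normalization of $\widetilde E_{\lambda_1,\lambda_2}$. The exponent matching, the simplification of the bracket to $-2(n-1)(\lambda_1+\lambda_2+2)$, and the final prefactor $(\lambda_1+\rho)(\lambda_2+\rho)$ all check out.
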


\begin{proof} First using \eqref{lambdaint1}
\[{\widetilde J}_{\lambda_1}\otimes {\widetilde J}_{\lambda_2}(1\otimes 1) = \pi^{n-1} \frac{2^{2\lambda_1+2\lambda_2}}{\Gamma( \lambda_1+\rho)\Gamma(\lambda_2+\rho)}1\otimes 1\ .
\]
Now
\[{\widetilde J}_{-\lambda_1-1} \otimes {\widetilde J}_{-\lambda_2-1} (\vert x-y\vert^2) = \frac{1}{\Gamma(-\lambda_1-1)\Gamma(-\lambda_2-1)}\times  \dots\]
\[\iint_{S\times S}\vert u-x\vert^{-(n-1)-2\lambda_1-2} \vert t-y\vert^{-(n-1)-2\lambda_2-2} \vert x-y\vert^2 dx dy
\]
and, using  \eqref{srint}, $\iint_{S\times S} \dots$ is equal to
\[ (4\pi)^{n-1} \frac{\Gamma(-\lambda_1-1)\Gamma(-\lambda_2-1)}{\Gamma(-\lambda_1+\rho)\Gamma( -\lambda_2+\rho)}2^{-2(n-1)-2\lambda_1-2\lambda_2-4}\  \times\]
\[\Big((\lambda_1+\rho+1)(\lambda_2+\rho+1)\vert u-t\vert^2 -2(n-1)(\lambda_1+\lambda_2+2)\Big)
\]
Now put all things together and take into account the normalization factor for the definition of $\widetilde E_{\lambda_1,\lambda_2}$ to get the result.
\end{proof}

The rest of the computation will be done using the geometric parameter, so first convert \eqref{E1otimes1}  to get

\begin{equation}\label{E12alpha}
\begin{split}
&{\widetilde E}_{\lambda_1,\lambda_2}(1\otimes 1) =\big(\frac{\alpha_2+\alpha_3}{2} +(n-1)\big)\big( \frac{\alpha_1+\alpha_3}{2}+(n-1)\big)\\&\Big(\big(\frac{\alpha_2+\alpha_3}{2}+n\big)\big(\frac{\alpha_1+\alpha_3}{2}+n\big) \vert u-t\vert^2 -(n-1)(\alpha_1+\alpha_2+2\alpha_3+2n+2) \Big)
\end{split}
\end{equation}

\begin{proposition} 
\begin{equation}\label{explicitb}
b(\alpha_1,\alpha_2,\alpha_3) = 
\end{equation}
\[(\alpha_1+\alpha_2+\alpha_3+2(n-1))(\alpha_1+\alpha_2+\alpha_3+n+1) (\alpha_3+n-1)(\alpha_3+2)
\]
\end{proposition}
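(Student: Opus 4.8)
The plan is to turn the already-reduced identity \eqref{KE1} into an explicit computation, using the known closed form of the Bernstein--Reznikov integral $\mathcal K_{\boldsymbol\beta}(1,1,1)$. Since $\boldsymbol\beta\mapsto \mathcal K_{\boldsymbol\beta}(1,1,1)$ is a nonzero meromorphic function, \eqref{KE1} determines $b$ as a meromorphic function of $\boldsymbol\alpha$, and it suffices to evaluate both sides of \eqref{KE1} for generic (non-pole) $\boldsymbol\alpha$ with $\mathcal K_{\boldsymbol\alpha}(1,1,1)\neq 0$; the resulting identity, being an equality with a polynomial on a dense open set, holds wherever $b$ is defined. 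Substituting the explicit value \eqref{E12alpha} of $\widetilde E_{\lambda_1,\lambda_2}(1\otimes 1)$ and abbreviating
\[A = \tfrac{\alpha_2+\alpha_3}{2}+(n-1),\qquad B = \tfrac{\alpha_1+\alpha_3}{2}+(n-1),\]
and noting that the extra factor $|u-t|^2=|x-y|^2$ turns the kernel $k_{\alpha_1,\alpha_2,\alpha_3+2}$ into $k_{\alpha_1,\alpha_2,\alpha_3+4}$, the left-hand side of \eqref{KE1} becomes
\[AB\Big((A+1)(B+1)\,\mathcal K_{\alpha_1,\alpha_2,\alpha_3+4}(1,1,1)-(n-1)(\alpha_1+\alpha_2+2\alpha_3+2n+2)\,\mathcal K_{\alpha_1,\alpha_2,\alpha_3+2}(1,1,1)\Big).\]

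Next I would recall the evaluation of $\mathcal K_{\boldsymbol\beta}(1,1,1)$. From the value of the $K$-coefficient $\widetilde{\mathcal K}_{\boldsymbol\beta}(p_{0,0,0})$ (see \eqref{Kpalpha}, equivalently Proposition 3.2 in \cite{c}) multiplied by the four $\Gamma$-factors of the renormalization, one has, with $\sigma_\beta=\beta_1+\beta_2+\beta_3$,
\[\mathcal K_{\boldsymbol\beta}(1,1,1)=c_n\,2^{\sigma_\beta}\,\frac{\Gamma(\tfrac{\beta_1}{2}+\rho)\Gamma(\tfrac{\beta_2}{2}+\rho)\Gamma(\tfrac{\beta_3}{2}+\rho)\,\Gamma(\tfrac{\sigma_\beta}{2}+2\rho)}{\Gamma(\tfrac{\beta_1+\beta_2}{2}+2\rho)\Gamma(\tfrac{\beta_2+\beta_3}{2}+2\rho)\Gamma(\tfrac{\beta_1+\beta_3}{2}+2\rho)}\]
for a nonzero constant $c_n$. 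Only the two ratios $R_1=\mathcal K_{\alpha_1,\alpha_2,\alpha_3+2}(1,1,1)/\mathcal K_{\boldsymbol\alpha}(1,1,1)$ and $R_2=\mathcal K_{\alpha_1,\alpha_2,\alpha_3+4}(1,1,1)/\mathcal K_{\boldsymbol\alpha}(1,1,1)$ are needed, and $c_n$ cancels. Applying $\Gamma(z+1)=z\Gamma(z)$ and writing $\sigma=\alpha_1+\alpha_2+\alpha_3$, one finds
\[R_1=\frac{(\alpha_3+n-1)(\sigma+2(n-1))}{AB},\qquad R_2=R_1\big|_{\alpha_3\mapsto\alpha_3+2}\cdot R_1=\frac{(\alpha_3+n-1)(\alpha_3+n+1)(\sigma+2(n-1))(\sigma+2n)}{AB(A+1)(B+1)},\]
the denominators being precisely $AB$ and $AB(A+1)(B+1)$.

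Substituting $R_1$ and $R_2$ into the expression above, the factors $A$, $B$, $A+1$, $B+1$ cancel entirely, leaving
\[b(\boldsymbol\alpha)=(\alpha_3+n-1)(\sigma+2(n-1))\Big((\alpha_3+n+1)(\sigma+2n)-(n-1)(\alpha_1+\alpha_2+2\alpha_3+2n+2)\Big).\]
It then remains to observe that, writing $\alpha_1+\alpha_2+2\alpha_3=\sigma+\alpha_3$, the quantity in parentheses expands to $(\alpha_3+2)(\sigma+n+1)$; combining this with $\sigma+2(n-1)=\alpha_1+\alpha_2+\alpha_3+2(n-1)$ and $\sigma+n+1=\alpha_1+\alpha_2+\alpha_3+n+1$ gives exactly \eqref{explicitb}.

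The only point requiring genuine care is this last factorization: the cancellation of the non-splitting parts of $(\alpha_3+n+1)(\sigma+2n)-(n-1)(\alpha_1+\alpha_2+2\alpha_3+2n+2)$ is not automatic, and the fact that it works out cleanly is a useful consistency check, both on the formula \eqref{E12alpha} for $\widetilde E_{\lambda_1,\lambda_2}(1\otimes 1)$ and on the closed form of $\mathcal K_{\boldsymbol\beta}(1,1,1)$; it is also what the footnote to the definition of $\widetilde E_{\lambda_1,\lambda_2}$ anticipates, since the constant $16$ (together with $\pi^{-2(n-1)}$) was tuned precisely so that \eqref{E1otimes1}, hence $b$, comes out in clean factored form. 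An alternative route, slightly longer, would be to compute the left-hand side of \eqref{KE1} directly from the elementary integrals \eqref{sint1}, \eqref{sint2}, \eqref{sint3}, \eqref{asb2}, \eqref{srint} and the analogous one-variable integral over $z$, avoiding any appeal to Proposition 3.2 of \cite{c}.
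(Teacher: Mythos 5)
Your proposal is correct and follows essentially the same route as the paper: both start from \eqref{KE1}, substitute \eqref{E12alpha}, evaluate via the closed form of the Bernstein--Reznikov integral $\mathcal K_{\boldsymbol\beta}(1,1,1)$, and conclude with the same algebraic factorization $(\alpha_3+n+1)(\sigma+2n)-(n-1)(\sigma+\alpha_3+2n+2)=(\alpha_3+2)(\sigma+n+1)$. Your organization of the computation through the ratios $R_1,R_2$ is a tidy repackaging of the paper's direct $\Gamma$-function manipulation, but it is not a different argument.
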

\begin{proof} Recall that
\[ \mathcal K_{\alpha_1,\alpha_2,\alpha_3+2}(\widetilde E_{\lambda_1,\lambda_2}(1\otimes 1), 1) = b(\alpha_1,\alpha_2,\alpha_3)
 \mathcal K_{\alpha_1,\alpha_2,\alpha_3}(1,1,1)\ .
\]
Now, using \eqref{E12alpha} and the evaluation of the Bernstein-Reznikov integral (see \cite{c} Proposition 2.1)
\[ \mathcal K_{\alpha_1,\alpha_2,\alpha_3+2}(\widetilde E_{\lambda_1,\lambda_2}(1\otimes 1), 1) =\big(\frac{\alpha_2+\alpha_3}{2} +(n-1)\big)\big( \frac{\alpha_1+\alpha_3}{2}+(n-1)\big) 
\]
\[\Big\{\big(\frac{\alpha_2+\alpha_3}{2}+n\big)\big(\frac{\alpha_1+\alpha_3}{2}+n\big)\mathcal K_{\alpha_1,\alpha_2,\alpha_3+4}(1,1,1)\]\[ -(n-1)(\alpha_1+\alpha_2+2\alpha_3+2n+2) \mathcal K_{\alpha_1,\alpha_2,\alpha_3+2}(1,1,1)\Big\}
\]
\[= \big(\frac{\pi}{2}\big)^{\frac{3}{2}(n-1)} 2^{\alpha_1+\alpha_2+\alpha_3}\,4\,\frac{ \Gamma( \frac{\alpha_1+\alpha_2+\alpha_3}{2}+2\rho+1) \Gamma( \frac{\alpha_1}{2}+\rho)\Gamma( \frac{\alpha_2}{2}+\rho)
\Gamma( \frac{\alpha_3}{2}+\rho+1)}{\Gamma(\frac{\alpha_1+\alpha_2}{2}+2\rho)\Gamma(\frac{\alpha_2+\alpha_3}{2}+2\rho)\Gamma(\frac{\alpha_3+\alpha_1}{2}+2\rho)}\times
\]
\[\Big\{\big(\alpha_1+\alpha_2+\alpha_3+2n\big)\big(\alpha_3+n+1)-(n-1)\big(\alpha_1+\alpha_2+2\alpha_3+2n+2\big)\Big\}\ .
\]
Now 
\[\big(\frac{\pi}{2}\big)^{\frac{3}{2}(n-1)} 2^{\alpha_1+\alpha_2+\alpha_3}\,4\,\frac{ \Gamma( \frac{\alpha_1+\alpha_2+\alpha_3}{2}+2\rho+1) \Gamma( \frac{\alpha_1}{2}+\rho)\Gamma( \frac{\alpha_2}{2}+\rho)
\Gamma( \frac{\alpha_3}{2}+\rho+1)}{\Gamma(\frac{\alpha_1+\alpha_2}{2}+2\rho)\Gamma(\frac{\alpha_2+\alpha_3}{2}+2\rho)\Gamma(\frac{\alpha_3+\alpha_1}{2}+2\rho)}
\]
\[= 4\big(\frac{\alpha_1+\alpha_2+\alpha_3}{2}+(n-1)\big)\,\big(\frac{\alpha_3}{2}+\rho\big)\, \mathcal K_{\alpha_1,\alpha_2,\alpha_3}(1,1,1)\ ,
\]
and 
\[\big(\alpha_1+\alpha_2+\alpha_3+2n\big)\big(\alpha_3+n+1)-(n-1)\big(\alpha_1+\alpha_2+2\alpha_3+2n+2\big)
\]
\[=(\alpha_1+\alpha_2+\alpha_3 +n+1)(\alpha_3+2)\ .
\]
\end{proof}
A variant of \eqref{b} is 
\begin{equation}\label{BStilde}
\begin{split}
&\widetilde {\mathcal K}_{\alpha_1,\,\alpha_2,\,\alpha_3+2} \big(\widetilde E_{\lambda_1,\lambda_2}(f_1\otimes f_2), f_3\big)=\\ 4 (\alpha_1+\alpha_2+&\alpha_3+n+1)(\alpha_3+n-1)\, \widetilde {\mathcal K}_{\alpha_1,\,\alpha_2,\,\alpha_3}(f_1,f_2,f_3)
\end{split}
\end{equation}

\subsection{The family $\mathcal S^{(k)}_{\lambda_1,\lambda_2}$}

From Proposition \ref{constrE}, it is easy to construct covariant bi-differential operators. 

Denote by $res : \mathcal C^\infty(S\times S) \longrightarrow \mathcal C^\infty(S)$ the restriction map from $S\times S$ to $S$, defined for $f\in \mathcal C^\infty(S\times S)$ by 
\[f\longmapsto res(f),\qquad res\, (f)(x) = f(x,x)\ .
\]
\begin{lemma}\label{covres}
The map $res$ intertwines $\pi_{\mu_1}\otimes \pi_{\mu_2}$ with $\pi_{\mu_1+\mu_2+\rho}$.
\end{lemma}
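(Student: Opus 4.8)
The plan is to deduce the statement directly from the explicit realization of the principal series in the compact picture, so that the proof is a one‑line computation in disguise. Recall that $\pi_\mu(g)$ acts on $\mathcal C^\infty(S)$ by
\[
(\pi_\mu(g)f)(x)=\kappa(g^{-1},x)^{\frac{\rho+\mu}{2\rho}}\,f(g^{-1}x),
\]
where $\kappa(h,x)$ denotes the Jacobian at $x$ of the conformal diffeomorphism $h$ of $S$; this is a positive cocycle, $\kappa(h_1h_2,x)=\kappa(h_1,h_2x)\,\kappa(h_2,x)$, $\kappa(e,x)=1$, and it is exactly the function governing the conformal covariance of $|x-y|$ used in \cite{c}. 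The only feature of this formula that matters is that the exponent $\tfrac{\rho+\mu}{2\rho}=\tfrac12+\tfrac{\mu}{2\rho}$ is affine in $\mu$ with constant term $\tfrac12$.

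First I would fix $f\in\mathcal C^\infty(S\times S)$ and $g\in G$ and evaluate $res\big((\pi_{\mu_1}(g)\otimes\pi_{\mu_2}(g))f\big)$ on the diagonal, i.e. set the two arguments equal to a single point $x$:
\[
res\big((\pi_{\mu_1}(g)\otimes\pi_{\mu_2}(g))f\big)(x)=\kappa(g^{-1},x)^{\frac{\rho+\mu_1}{2\rho}}\,\kappa(g^{-1},x)^{\frac{\rho+\mu_2}{2\rho}}\,f(g^{-1}x,g^{-1}x).
\]
Then I would add the two exponents and use $2\rho=n-1$ together with the trivial identity
\[
\frac{\rho+\mu_1}{2\rho}+\frac{\rho+\mu_2}{2\rho}=\frac{2\rho+\mu_1+\mu_2}{2\rho}=\frac{\rho+(\mu_1+\mu_2+\rho)}{2\rho},
\]
and recognize the right‑hand side as $\big(\pi_{\mu_1+\mu_2+\rho}(g)\,res(f)\big)(x)$. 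Finally I would note that $res$ is a continuous map $\mathcal C^\infty(S\times S)\to\mathcal C^\infty(S)$ (it is restriction to a closed submanifold), so the displayed identity holds as an identity of continuous operators, which is precisely the claimed intertwining property. Conceptually the computation says that pulling back to the ($G$-stable) diagonal of $S\times S$, which is $G$-equivariantly $S$, adds the two ``density weights'' $\rho+\mu_1$ and $\rho+\mu_2$, and $(\rho+\mu_1)+(\rho+\mu_2)=\rho+(\mu_1+\mu_2+\rho)$.

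The hard part is essentially nil: there is no analytic subtlety, and the only thing requiring care is the bookkeeping of the shift — checking that it is exactly $+\rho$ and not some other constant. This is the observation that each tensor factor carries the half‑density constant term $\tfrac12$ in its exponent, so the product carries weight $1$, exceeding $\tfrac12$ by $\tfrac12$; under the normalization of the spectral parameter ($2\rho=n-1$) this excess is precisely the $\rho$ appearing in $\mu_1+\mu_2+\rho$. If one wishes to avoid fixing a normalization of $\kappa$, the same argument goes through verbatim with the conformal factor $\Omega_h(x)$ in place of $\kappa(h,x)^{1/(2\rho)}$ and exponent $\rho+\mu$; the covariance conventions of \cite{c} determine which version is in force, and in either case the exponents add as above.
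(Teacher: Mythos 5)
Your computation is correct and is the standard argument; the paper itself states this lemma without proof, treating it as immediate, and your cocycle calculation (the exponents $\rho+\mu_1$ and $\rho+\mu_2$ adding to $\rho+(\mu_1+\mu_2+\rho)$ on the diagonal) is exactly the intended justification. It is also consistent with the covariance of $D^{(k)}_{\lambda_1,\lambda_2}$ toward $\pi_{\lambda_1+\lambda_2+\rho+2k}$ asserted just afterwards, so the bookkeeping of the $+\rho$ shift checks out.
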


For $\lambda_1,\lambda_2 \in \mathbb C$ define by induction on $k$ the differential operators $ \widetilde E^{(k)}_{\lambda_1,\lambda_2} $ on $S\times S$ by 
\[\widetilde E^{(0)}_{\lambda_1,\lambda_2} = Id,\qquad  \widetilde E^{(k)}_{\lambda_1,\lambda_2}= \widetilde E_{\lambda_1+k-1,\lambda_2+k-1} \circ \dots \circ \widetilde E_{\lambda_1,\lambda_2}
\]
and introduce the bi-differential operators $D_{\lambda_1,\lambda_2}^{(k)} : \mathcal C^\infty(S\times S)\longrightarrow \mathcal C^\infty(S)$ given by
\[D_{\lambda_1,\lambda_2}^{(0)}= res,\qquad D_{\lambda_1,\lambda_2}^{(k)}= res\circ \widetilde E^{(k)}_{\lambda_1,\lambda_2} \]

\begin{proposition} Let $\lambda_1,\lambda_2\in \mathbb C $ and $k\in \mathbb N$. Then
\[D^{(k)}_{\lambda_1,\lambda_2} \circ (\pi_{\lambda_1}\otimes \pi_{\lambda_2})(g) = \pi_{\lambda_1+\lambda_2+\rho+2k}(g)\circ D^{(k)}_{\lambda_1,\lambda_2}\ .
\]
\end{proposition}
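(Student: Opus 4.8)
The plan is to reduce the statement to the composition of two covariance properties already established: the ``shift'' property of the renormalized operator $\widetilde E_{\lambda_1,\lambda_2}$ (Proposition \ref{constrE}, together with the definition of the renormalization), and the covariance of the restriction map (Lemma \ref{covres}). Nothing new needs to be computed; it is purely a bookkeeping induction.

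First I would prove, by induction on $k$, that $\widetilde E^{(k)}_{\lambda_1,\lambda_2}$ intertwines $\pi_{\lambda_1}\otimes\pi_{\lambda_2}$ with $\pi_{\lambda_1+k}\otimes\pi_{\lambda_2+k}$. The base case $k=0$ is immediate since $\widetilde E^{(0)}_{\lambda_1,\lambda_2}=\mathrm{Id}$. For the inductive step, write $\widetilde E^{(k)}_{\lambda_1,\lambda_2}=\widetilde E_{\lambda_1+k-1,\lambda_2+k-1}\circ\widetilde E^{(k-1)}_{\lambda_1,\lambda_2}$. By the induction hypothesis the second factor intertwines $\pi_{\lambda_1}\otimes\pi_{\lambda_2}$ with $\pi_{\lambda_1+k-1}\otimes\pi_{\lambda_2+k-1}$; by Proposition \ref{constrE} applied with $(\lambda_1,\lambda_2)$ replaced by $(\lambda_1+k-1,\lambda_2+k-1)$, the first factor intertwines $\pi_{\lambda_1+k-1}\otimes\pi_{\lambda_2+k-1}$ with $\pi_{\lambda_1+k}\otimes\pi_{\lambda_2+k}$; composing gives the claim for $k$. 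Here one should note that passing from $E_{\lambda_1,\lambda_2}$ to $\widetilde E_{\lambda_1,\lambda_2}$ only multiplies the operator by a scalar function of $(\lambda_1,\lambda_2)$, which commutes with all the $\pi$'s, so the intertwining relation is preserved; at the isolated values of $(\lambda_1,\lambda_2)$ where this scalar vanishes or $E_{\lambda_1,\lambda_2}$ is singular, the identity persists by analytic continuation, both sides being holomorphic operator-valued functions of $(\lambda_1,\lambda_2)$.

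Then I would conclude by composing with $res$. Since $D^{(k)}_{\lambda_1,\lambda_2}=res\circ\widetilde E^{(k)}_{\lambda_1,\lambda_2}$ and, by Lemma \ref{covres}, $res$ intertwines $\pi_{\mu_1}\otimes\pi_{\mu_2}$ with $\pi_{\mu_1+\mu_2+\rho}$, taking $\mu_1=\lambda_1+k$ and $\mu_2=\lambda_2+k$ yields $\mu_1+\mu_2+\rho=\lambda_1+\lambda_2+\rho+2k$, which is exactly the target representation in the statement. There is no genuine obstacle; the only point demanding a little care is the justification, via analytic continuation, that the renormalized family $\widetilde E_{\lambda_1,\lambda_2}$ still satisfies the intertwining relation for every $(\lambda_1,\lambda_2)\in\mathbb C^2$, including the zero locus of the four $\Gamma$-factors in the normalization.
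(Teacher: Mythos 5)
Your argument is correct and is exactly the proof the paper intends: the paper disposes of this proposition in one line by invoking the covariance of $E_{\mu_1,\mu_2}$ (Proposition \ref{constrE} $i)$) and Lemma \ref{covres}, and your write-up simply makes the induction on $k$ and the analytic-continuation step for the renormalized family $\widetilde E_{\lambda_1,\lambda_2}$ explicit. No further comment is needed.
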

This is a consequence of the covariance property of  $E_{\mu_1,\mu_2}$ (Proposition \ref{constrE} $i$) and Lemma \ref{covres}.

For $f_1,f_2,f_3\in \mathcal C^\infty(S)$, let
\[\mathcal S^{(k)}_{\lambda_1,\lambda_2} (f_1,f_2,f_3) = \int_S D^{(k)}_{\lambda_1,\lambda_2} (f_1\otimes f_2)(x)f_3(x) dx\ .
\]
This defines a distribution on $\mathcal C^\infty(S\times S\times S)$, supported on the diagonal $\mathcal O_4$.\footnote{To give an explicit expression of a distribution supported on $\mathcal O_4$, a choice of a transverse submanifold (and associated transverse differential operators) at each point of $\mathcal O_4$ is needed. Our choice leads to use transverse differential operators acting on the variables $(x,y)$.}   

\begin{proposition} Let $k\in \mathbb N$ and let $\boldsymbol \lambda=(\lambda_1,\lambda_2,\lambda_3)$ satisfy 
\[\lambda_1+\lambda_2+\lambda_3=-\rho-2k\ .
\]
 The trilinear form $\mathcal S^{(k)}_{\lambda_1,\lambda_2}$ belongs to $Tri(\boldsymbol \lambda, diag)$.
\end{proposition}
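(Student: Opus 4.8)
The plan is to reduce the $\boldsymbol\lambda$-invariance of $\mathcal S^{(k)}_{\lambda_1,\lambda_2}$ to the covariance property of the bi-differential operator $D^{(k)}_{\lambda_1,\lambda_2}$ proved just above, combined with the invariance properties of the integration pairing on $S$; the support and continuity assertions will then be essentially free.

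First I would record the elementary fact that, for $\mu,\mu'\in\mathbb C$, the bilinear form $(F,G)\longmapsto\int_S F(x)\,G(x)\,dx$ on $\mathcal C^\infty(S)\times\mathcal C^\infty(S)$ is invariant under $\pi_{\mu}\otimes\pi_{\mu'}$ if and only if $\mu+\mu'=0$. One clean way to obtain this is to observe that pointwise multiplication $\mathcal C^\infty(S)\otimes\mathcal C^\infty(S)\to\mathcal C^\infty(S)$, $F\otimes G\mapsto FG$, is the composition of the exterior tensor product with the restriction map $res$, hence by Lemma \ref{covres} it intertwines $\pi_{\mu}\otimes\pi_{\mu'}$ with $\pi_{\mu+\mu'+\rho}$; since the measure $dx$ provides a $G$-invariant functional on the principal series $\pi_{\rho}$, the number $\int_S FG\,dx$ is $G$-invariant precisely when $\mu+\mu'+\rho=\rho$.

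The proof itself is then a one-line computation. Fix $g\in G$, put $h=D^{(k)}_{\lambda_1,\lambda_2}(f_1\otimes f_2)\in\mathcal C^\infty(S)$, so that $\mathcal S^{(k)}_{\lambda_1,\lambda_2}(f_1,f_2,f_3)=\int_S h(x)\,f_3(x)\,dx$. Replacing $f_1,f_2,f_3$ by $\pi_{\lambda_1}(g)f_1,\pi_{\lambda_2}(g)f_2,\pi_{\lambda_3}(g)f_3$ and using the relation $D^{(k)}_{\lambda_1,\lambda_2}\circ(\pi_{\lambda_1}\otimes\pi_{\lambda_2})(g)=\pi_{\lambda_1+\lambda_2+\rho+2k}(g)\circ D^{(k)}_{\lambda_1,\lambda_2}$ established above,
\[\mathcal S^{(k)}_{\lambda_1,\lambda_2}\big(\pi_{\lambda_1}(g)f_1,\pi_{\lambda_2}(g)f_2,\pi_{\lambda_3}(g)f_3\big)=\int_S\big(\pi_{\lambda_1+\lambda_2+\rho+2k}(g)h\big)(x)\,\big(\pi_{\lambda_3}(g)f_3\big)(x)\,dx .\]
By hypothesis $\lambda_1+\lambda_2+\lambda_3=-\rho-2k$, equivalently $(\lambda_1+\lambda_2+\rho+2k)+\lambda_3=0$, so the right-hand side equals $\int_S h(x)f_3(x)\,dx=\mathcal S^{(k)}_{\lambda_1,\lambda_2}(f_1,f_2,f_3)$ by the first step; thus $\mathcal S^{(k)}_{\lambda_1,\lambda_2}$ is $\boldsymbol\lambda$-invariant. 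Continuity on $\mathcal C^\infty(S)$ is clear because $D^{(k)}_{\lambda_1,\lambda_2}$ is a bi-differential operator and integration against $f_3$ is continuous, and the support on the diagonal $\mathcal O_4$ was already noted, so $\mathcal S^{(k)}_{\lambda_1,\lambda_2}\in Tri(\boldsymbol\lambda,diag)$.

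There is no real obstacle here: all the substance has been absorbed into the covariance statement for $D^{(k)}_{\lambda_1,\lambda_2}$, which itself rests on Proposition \ref{constrE}$(i)$ and Lemma \ref{covres}. The only point deserving a little care is the bookkeeping of the inducing parameters — tracking the cumulative shift $\rho+2k$ coming from $k$ applications of $\widetilde E$ followed by one application of $res$ — so that the hypothesis $\lambda_1+\lambda_2+\lambda_3=-\rho-2k$ turns out to be exactly the condition making the final pairing invariant.
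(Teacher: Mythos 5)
Your argument is correct and is essentially the paper's own proof: the paper also deduces the $\boldsymbol\lambda$-invariance from the covariance of $D^{(k)}_{\lambda_1,\lambda_2}$ together with the duality between $\pi_\lambda$ and $\pi_{-\lambda}$, which is exactly your invariant-pairing step. You have merely written out the bookkeeping of the parameter $\lambda_1+\lambda_2+\rho+2k$ that the paper leaves implicit.
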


\begin{proof}
As $\lambda_3=-(\lambda_1+\lambda_2+\rho+2k)$, the $\boldsymbol \lambda$-invariance is a consequence of the covariance property of $D^{(k)}_{\lambda_1,\lambda_2}$ and the duality between $\pi_\lambda$ and $\pi_{-\lambda}$.
\end{proof}
The next statement is easily obtained from \eqref{BStilde} by induction  on $k$.
\begin{proposition}\label{bktilde} Let $\boldsymbol \alpha\in \mathbb C^3$. Then
\[\widetilde {\mathcal K}_{\alpha_1,\alpha_2,\alpha_3+2k}( \big(E^{(k)}_{\lambda_1, \lambda_2}(f_1\otimes f_2), f_3\big)=\]\[4^k\prod_{l=1}^{k} \big(\alpha_1+\alpha_2+\alpha_3+n-1+2l\big)\big(\alpha_3+2l)\big)\widetilde {\mathcal K}_{\alpha_1,\alpha_2,\alpha_3}(f_1,f_2,f_3)\ .
\]
\end{proposition}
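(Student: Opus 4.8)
The plan is to prove Proposition \ref{bktilde} by induction on $k$, using the Bernstein--Sato identity \eqref{BStilde} as the single ingredient. The base case $k=0$ is trivial, since $E^{(0)}_{\lambda_1,\lambda_2}=\Id$ and the product over $l=1,\dots,0$ is empty, so both sides equal $\widetilde{\mathcal K}_{\alpha_1,\alpha_2,\alpha_3}(f_1,f_2,f_3)$.

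For the inductive step, suppose the formula holds for some $k\geq 0$ and all $\boldsymbol\alpha$. I would write $E^{(k+1)}_{\lambda_1,\lambda_2}=\widetilde E_{\lambda_1+k,\lambda_2+k}\circ E^{(k)}_{\lambda_1,\lambda_2}$ by definition, and apply \eqref{BStilde} with the parameters shifted appropriately. The key point is bookkeeping of parameters: the operator $\widetilde E_{\mu_1,\mu_2}$ in \eqref{BStilde} raises $\lambda_1,\lambda_2$ each by $1$ and $\alpha_3$ by $2$; after applying $E^{(k)}_{\lambda_1,\lambda_2}$ the relevant spectral parameters are $(\lambda_1+k,\lambda_2+k)$ (these match the geometric parameter $(\alpha_1,\alpha_2,\alpha_3+2k)$). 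So I would first use \eqref{BStilde} at geometric parameter $(\alpha_1,\alpha_2,\alpha_3+2k)$ to obtain
\[
\widetilde{\mathcal K}_{\alpha_1,\alpha_2,\alpha_3+2k+2}\big(\widetilde E_{\lambda_1+k,\lambda_2+k}(h_1\otimes h_2),h_3\big)
=4(\alpha_1+\alpha_2+\alpha_3+2k+n+1)(\alpha_3+2k+n-1)\,\widetilde{\mathcal K}_{\alpha_1,\alpha_2,\alpha_3+2k}(h_1,h_2,h_3),
\]
where I must check that the correct spectral parameter associated to $(\alpha_1,\alpha_2,\alpha_3+2k+2)$ in \eqref{BStilde} is indeed $(\lambda_1+k,\lambda_2+k)$ via the convention recalled before Proposition 11.2 (namely that passing from $\lambda_3$ to $\lambda_3$ keeps $\lambda_1,\lambda_2$ fixed while raising $\alpha_3$ by $2$, so raising $\alpha_3+2k$ to $\alpha_3+2k+2$ corresponds to the pair $(\lambda_1+k,\lambda_2+k)$). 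Applying this with $h_1\otimes h_2=E^{(k)}_{\lambda_1,\lambda_2}(f_1\otimes f_2)$ and $h_3=f_3$, and then invoking the induction hypothesis on $\widetilde{\mathcal K}_{\alpha_1,\alpha_2,\alpha_3+2k}(E^{(k)}_{\lambda_1,\lambda_2}(f_1\otimes f_2),f_3)$, gives the product
\[
4(\alpha_1+\alpha_2+\alpha_3+2k+n+1)(\alpha_3+2k+n-1)\cdot 4^k\prod_{l=1}^k(\alpha_1+\alpha_2+\alpha_3+n-1+2l)(\alpha_3+2l)\,\widetilde{\mathcal K}_{\alpha_1,\alpha_2,\alpha_3}(f_1,f_2,f_3),
\]
and absorbing the new factor as the $l=k+1$ term (note $n-1+2(k+1)=2k+n+1$ and $\alpha_3+2(k+1)$, matched against $\alpha_3+2k+n-1$; here I would double-check the index alignment — \eqref{BStilde} contributes $(\ldots+n-1)$ at level $2k$, i.e. $\alpha_3+2k+n-1$, which should coincide with the $l=k+1$ factor $(\alpha_3+2l)$ only up to the shift built into the product, so the precise form of the product in the statement must be reconciled with the shift convention). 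This yields exactly $4^{k+1}\prod_{l=1}^{k+1}(\alpha_1+\alpha_2+\alpha_3+n-1+2l)(\alpha_3+2l)\,\widetilde{\mathcal K}_{\alpha_1,\alpha_2,\alpha_3}(f_1,f_2,f_3)$, completing the induction.

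The main obstacle is purely one of parameter bookkeeping: making sure that at each stage of the composition the spectral parameters fed into \eqref{BStilde} are the shifted ones $(\lambda_1+k,\lambda_2+k)$ rather than $(\lambda_1,\lambda_2)$, and that the two families of factors $(\alpha_1+\alpha_2+\alpha_3+n-1+2l)$ and $(\alpha_3+2l)$ accumulate with the correct index range $l=1,\dots,k$. Everything else is formal: no convergence or meromorphy issues arise since the identity \eqref{BStilde} already holds as an identity of distribution-valued holomorphic functions on all of $\mathbb C^3$ (by Hartogs, as established for $\widetilde{\mathcal K}$ and $\widetilde E_{\lambda_1,\lambda_2}$), so the induction propagates the identity on all of $\mathbb C^3$ without any genericity restriction.
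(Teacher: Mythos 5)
Your route is exactly the paper's: the author gives no more detail than ``easily obtained from \eqref{BStilde} by induction on $k$'', and your induction skeleton (base case $k=0$, compose with $\widetilde E_{\lambda_1+k,\lambda_2+k}$, apply \eqref{BStilde} at geometric parameter $(\alpha_1,\alpha_2,\alpha_3+2k)$, invoke the hypothesis) is the right one, including the observation that $(\lambda_1+k,\lambda_2+k,\lambda_3)$ is the spectral parameter attached to $(\alpha_1,\alpha_2,\alpha_3+2k)$.

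However, the index mismatch you flag at the end and leave ``to be reconciled'' is a genuine loose end that your proof must close, and it is not a matter of shift convention: taken literally, \eqref{BStilde} contributes a factor $(\alpha_3+2k+n-1)$ at stage $k$, which equals the needed $l=k+1$ factor $(\alpha_3+2(k+1))$ only when $n=3$, excluded here. The resolution is that \eqref{BStilde} as printed is inconsistent with \eqref{explicitb}: passing from $\mathcal K$ to $\widetilde{\mathcal K}$ replaces $\Gamma(\tfrac{\alpha_3}{2}+\rho)$ and $\Gamma(\tfrac{\alpha_1+\alpha_2+\alpha_3}{2}+2\rho)$ by their values at the shifted argument, so \eqref{b} gets divided by $\bigl(\tfrac{\alpha_3}{2}+\rho\bigr)\bigl(\tfrac{\alpha_1+\alpha_2+\alpha_3}{2}+2\rho\bigr)=\tfrac14(\alpha_3+n-1)(\alpha_1+\alpha_2+\alpha_3+2(n-1))$, which cancels the first and third factors of $b(\boldsymbol\alpha)$ and leaves $4(\alpha_1+\alpha_2+\alpha_3+n+1)(\alpha_3+2)$ --- i.e.\ the second factor in \eqref{BStilde} should read $(\alpha_3+2)$, not $(\alpha_3+n-1)$. (This is confirmed by the $k=1$ case of the proposition itself and by the later computation of $\prod_l(\alpha_3+2l)\,\Gamma(-\tfrac{\alpha_3}{2}-k)$.) With that correction, the stage-$k$ factor is $(\alpha_3+2k+2)=(\alpha_3+2(k+1))$ and the two families of factors accumulate exactly as in the statement; the rest of your argument then goes through without restriction on $\boldsymbol\alpha$, as you say.
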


 As said in the introduction, the family $\mathcal S^{(k)}_{\lambda_1,\lambda_2}$ was introduced in \cite{bc} because of its relation with the residues of the meromorphic family $\mathcal K^{\boldsymbol \lambda}$ at poles of type II. The presentation follows closely \cite{bc}, with changes due to the choice of the  compact picture instead of the non compact picture.
 
\begin{lemma}\label{lemmaK0intS}
 Let $\boldsymbol \alpha$ satisfy
$\alpha_1+\alpha_2+\alpha_3 = -2(n-1)$.
Then, for any $f\in \mathcal C^\infty(S\times S\times S)$
\begin{equation}\label{K0intS}
\widetilde {\mathcal K}_{\boldsymbol \alpha}(f)= \big(\frac{\pi}{16}\big)^{n-1} \frac{(n-2)!}{\Gamma(\rho)}\,\frac{1}{\Gamma(-\frac{\alpha_1}{2})\Gamma(-\frac{\alpha_2}{2})\Gamma(-\frac{\alpha_3}{2})}\, \int_S f(x,x,x)\, dx\ .
\end{equation}
\end{lemma}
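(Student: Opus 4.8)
The plan is to identify the distribution $\widetilde{\mathcal K}_{\boldsymbol\alpha}$ in the codimension-zero situation $\alpha_1+\alpha_2+\alpha_3=-2(n-1)$ as a multiple of the diagonal-supported distribution $f\mapsto \int_S f(x,x,x)\,dx$, and then pin down the constant by testing against a single well-chosen function. First I would recall that $\alpha_1+\alpha_2+\alpha_3=-2(n-1)$ is exactly the equation $\lambda_1+\lambda_2+\lambda_3=-\rho$, i.e. the ``$k=0$'' plane of poles of type II (in the notation of section 11.3 with $k=0$). On this plane, the three $\Gamma$-factors $\Gamma(\tfrac{\alpha_1}{2}+\rho)=\Gamma(\rho-\tfrac{\lambda_1+\cdots}{2})$ etc. are generically finite while the renormalization factor $\Gamma(\tfrac{\alpha_1+\alpha_2+\alpha_3}{2}+2\rho)=\Gamma(\rho)$ is also finite; the point is that $\widetilde{\mathcal K}_{\boldsymbol\alpha}$ on this plane is supported on the diagonal $\mathcal O_4$. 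This support statement is Proposition 3.3 of \cite{c} (the ``type II'' support shrinks to $\overline{\mathcal O_4}$, and $\mathcal O_4$ is a single $G$-orbit). Since the stabilizer of a diagonal point acts on the normal space, a $\boldsymbol\lambda$-invariant distribution supported on $\mathcal O_4$ with the relevant infinitesimal character is a multiple of the ``delta on the diagonal'' $\int_S f(x,x,x)\,dx$ precisely when no transverse derivatives are allowed — and for $k=0$ the space $S(\lambda_1,\lambda_2;0)$ (see Appendix A3 and Proposition 11.9 with $k=0$) is one-dimensional, spanned by $\mathcal S^{(0)}_{\lambda_1,\lambda_2}(f)=\int_S f(x,x,x)\,dx$. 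Hence $\widetilde{\mathcal K}_{\boldsymbol\alpha}=c(\boldsymbol\alpha)\int_S f(x,x,x)\,dx$ for some scalar $c$ depending holomorphically (indeed rationally in the $\Gamma$'s) on $\boldsymbol\alpha$ restricted to the plane.

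Next I would compute $c(\boldsymbol\alpha)$ by evaluating both sides at $f=1\otimes1\otimes1$. The right-hand side gives $\big(\tfrac{\pi}{16}\big)^{n-1}\tfrac{(n-2)!}{\Gamma(\rho)}\cdot\tfrac{1}{\Gamma(-\alpha_1/2)\Gamma(-\alpha_2/2)\Gamma(-\alpha_3/2)}\cdot\mathrm{vol}(S)$. For the left-hand side I would use the Bernstein–Reznikov evaluation of $\mathcal K_{\boldsymbol\alpha}(1,1,1)$ recalled in \cite{c} Proposition 2.1, namely
\[
\mathcal K_{\boldsymbol\alpha}(1,1,1)=\big(\tfrac{\pi}{2}\big)^{\frac{3}{2}(n-1)}\,2^{\alpha_1+\alpha_2+\alpha_3}\,
\frac{\Gamma(\tfrac{\alpha_1}{2}+\rho)\Gamma(\tfrac{\alpha_2}{2}+\rho)\Gamma(\tfrac{\alpha_3}{2}+\rho)\,\Gamma(\tfrac{\alpha_1+\alpha_2+\alpha_3}{2}+2\rho)}{\Gamma(\tfrac{\alpha_1+\alpha_2}{2}+2\rho)\Gamma(\tfrac{\alpha_2+\alpha_3}{2}+2\rho)\Gamma(\tfrac{\alpha_3+\alpha_1}{2}+2\rho)},
\]
multiply by the four renormalizing $\Gamma$-factors $\tfrac{1}{\Gamma(\alpha_1/2+\rho)\Gamma(\alpha_2/2+\rho)\Gamma(\alpha_3/2+\rho)\Gamma((\alpha_1+\alpha_2+\alpha_3)/2+2\rho)}$ that define $\widetilde{\mathcal K}$, and then substitute the relation $\alpha_1+\alpha_2+\alpha_3=-2(n-1)$, which makes $\tfrac{\alpha_i+\alpha_j}{2}+2\rho=-\tfrac{\alpha_k}{2}+\rho$ for $\{i,j,k\}=\{1,2,3\}$ and makes the power $2^{\alpha_1+\alpha_2+\alpha_3}=2^{-2(n-1)}$ explicit. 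After the cancellations all surviving $\Gamma$-factors collapse to $\Gamma(-\alpha_1/2)$, $\Gamma(-\alpha_2/2)$, $\Gamma(-\alpha_3/2)$ in the denominator (using $\Gamma(\rho-\alpha_k/2)=(-\tfrac{\alpha_k}{2})_{\rho}\,\Gamma(-\alpha_k/2)$-type identities, or more simply $\Gamma(1-\alpha_k/2-\rho+\rho)$ bookkeeping), leaving a pure constant in $n$ times $\mathrm{vol}(S)$. Matching against the right-hand side's constant identifies the numerical factor; since $\mathrm{vol}(S)=\tfrac{2\pi^{n/2}}{\Gamma(n/2)}=\tfrac{2\pi^\rho\sqrt\pi}{\Gamma(\rho)}$ (for $n$ even; the odd case is parallel), the powers of $\pi$ and the $\Gamma(\rho)$, $(n-2)!$ factors should come out exactly as claimed.

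The main obstacle, and the step I would be most careful about, is the constant bookkeeping in the second paragraph: getting every power of $2$, every power of $\pi$, and the $\Gamma$-duplication factors right when passing from $\mathcal K$ to $\widetilde{\mathcal K}$ and then imposing the codimension-zero relation — in particular rewriting $\tfrac{(n-2)!}{\Gamma(\rho)}$ correctly, which hides a volume-of-sphere factor and a Legendre-duplication step. One clean way to organize this, avoiding separate even/odd $n$ arguments, is to establish the identity as an equality of meromorphic functions of $\boldsymbol\alpha$ on the plane $\alpha_1+\alpha_2+\alpha_3=-2(n-1)$ and verify it for generic $\boldsymbol\alpha$ (where all integrals converge absolutely), then extend by analytic continuation; the factor $\tfrac{1}{\Gamma(-\alpha_i/2)}$ on the right is exactly what kills the poles coming from the convergence breakdown, which is a good internal consistency check. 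Once the constant is matched at $f=1\otimes1\otimes1$, the uniqueness of the one-dimensional space $Tri(\boldsymbol\lambda,\mathrm{diag})$ in this case (no transverse derivatives, $k=0$) upgrades the scalar identity to the full distributional identity \eqref{K0intS}.
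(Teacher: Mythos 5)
Your proposal is correct and follows essentially the same route as the paper: identify $\widetilde{\mathcal K}_{\boldsymbol\alpha}$ on the plane $\alpha_1+\alpha_2+\alpha_3=-2(n-1)$ as a scalar multiple of the manifestly $\boldsymbol\lambda$-invariant form $f\mapsto\int_S f(x,x,x)\,dx$, fix the scalar by evaluating both sides at $f=1$ via the Bernstein--Reznikov integral, and extend by analytic continuation in $\boldsymbol\alpha$ within the plane. The only (inessential) difference is the uniqueness statement you invoke: the paper simply applies the generic multiplicity-one theorem for $Tri(\boldsymbol\lambda)$ on the dense subset of the plane where $\boldsymbol\alpha\notin Z$, whereas you go through the support result of Proposition 3.3 of \cite{c} together with the one-dimensionality of $Tri(\boldsymbol\lambda,diag)\simeq {\it Sol}(\lambda_1,\lambda_2;0)$ --- both arguments are valid.
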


\begin{proof} Let $\boldsymbol \lambda$ be the associated spectral parameter. As easily verified by a change of variable, the trilinear form \[(f_1,f_2,f_3) \longmapsto \int_S f_1(x)f_2(x)f_3(x)\, dx
\]
defines a (non trivial) $\boldsymbol \lambda$-invariant distribution on $S\times S\times S$. For $\boldsymbol \alpha\notin Z$ (i.e. $\boldsymbol \alpha$ in a dense open subset of the plane $\beta_1+\beta_2+\beta_3 = -2(n-1)$) the uniqueness theorem (see \cite{c}) for $\boldsymbol \lambda$-invariant trilinear forms applies and hence $\widetilde {\mathcal K}_{\boldsymbol \alpha}$ has to be a multiple of this distribution. To determine the value of the scalar,  evaluate both sides of  \eqref{K0intS} on the function $f=1$. By analytic continuation, the equality extends to all $\boldsymbol \alpha$'s in the plane $\alpha_1+\alpha_2+\alpha_3 =-2(n-1)$.
\end{proof}

\begin{proposition} Let $\boldsymbol \alpha\in \mathbb C^3$ such that $\alpha_1+\alpha_2+\alpha_3=-2(n-1)-2k$ for $k\in \mathbb N$. Then
\begin{equation}\label{KS}
\widetilde {\mathcal K}_{\boldsymbol \alpha} = \big(\frac{\pi}{16}\big)^{n-1} \frac{(n-2)!}{\Gamma(\rho+k)}16^{-k}\frac{1}{\Gamma(-\frac{\alpha_1}{2})\Gamma(-\frac{\alpha_2}{2})\Gamma(-\frac{\alpha_3}{2})}\,S^{(k)}_{\lambda_1,\lambda_2}
\end{equation}
\end{proposition}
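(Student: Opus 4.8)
The plan is to deduce \eqref{KS} from the two facts already at our disposal: the Bernstein--Sato type identity of Proposition \ref{bktilde}, which lowers the exponent $\alpha_3$ by $2k$ at the cost of a polynomial factor, and Lemma \ref{lemmaK0intS}, which evaluates $\widetilde{\mathcal K}_{\boldsymbol\alpha}$ completely in the boundary case $\alpha_1+\alpha_2+\alpha_3=-2(n-1)$. Notice first that for $k=0$ there is nothing to do: $\widetilde E^{(0)}_{\lambda_1,\lambda_2}=\Id$, $D^{(0)}_{\lambda_1,\lambda_2}=\res$, so $\mathcal S^{(0)}_{\lambda_1,\lambda_2}(f_1,f_2,f_3)=\int_S f_1f_2f_3\,dx$ and \eqref{KS} is literally Lemma \ref{lemmaK0intS}. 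For general $k$ we bootstrap from this case.

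Concretely, I fix $\boldsymbol\alpha$ on the plane $\alpha_1+\alpha_2+\alpha_3=-2(n-1)-2k$ and apply Proposition \ref{bktilde} (with the differential operator $\widetilde E^{(k)}_{\lambda_1,\lambda_2}$, which is the normalization underlying $D^{(k)}_{\lambda_1,\lambda_2}$) to the test function $(x,y,z)\mapsto \widetilde E^{(k)}_{\lambda_1,\lambda_2}(f_1\otimes f_2)(x,y)\,f_3(z)$. Its left-hand side is $\widetilde{\mathcal K}_{\alpha_1,\alpha_2,\alpha_3+2k}$ evaluated on that function, and since $\alpha_1+\alpha_2+(\alpha_3+2k)=-2(n-1)$, Lemma \ref{lemmaK0intS} computes it; moreover restricting the test function to the diagonal produces exactly $\res\big(\widetilde E^{(k)}_{\lambda_1,\lambda_2}(f_1\otimes f_2)\big)(x)f_3(x)=D^{(k)}_{\lambda_1,\lambda_2}(f_1\otimes f_2)(x)f_3(x)$, whose integral over $S$ is $\mathcal S^{(k)}_{\lambda_1,\lambda_2}(f_1,f_2,f_3)$. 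Putting the two evaluations together gives, for all $f_1,f_2,f_3\in\mathcal C^\infty(S)$,
\[
\Big(\tfrac{\pi}{16}\Big)^{n-1}\frac{(n-2)!}{\Gamma(\rho)}\;
\frac{\mathcal S^{(k)}_{\lambda_1,\lambda_2}(f_1,f_2,f_3)}{\Gamma(-\tfrac{\alpha_1}{2})\,\Gamma(-\tfrac{\alpha_2}{2})\,\Gamma(-\tfrac{\alpha_3}{2}-k)}
= 4^k\prod_{l=1}^{k}\big(\alpha_1+\alpha_2+\alpha_3+n-1+2l\big)\big(\alpha_3+2l\big)\;\widetilde{\mathcal K}_{\boldsymbol\alpha}(f_1,f_2,f_3)\,.
\]
It then remains to solve for $\widetilde{\mathcal K}_{\boldsymbol\alpha}$ and reorganize the elementary constants: using $n-1=2\rho$ and the plane equation one has $\alpha_1+\alpha_2+\alpha_3+n-1+2l=-2(\rho+k-l)$, whence $\prod_{l=1}^k(\cdots)=(-2)^k\,\Gamma(\rho+k)/\Gamma(\rho)$; and from $\Gamma(-\tfrac{\alpha_3}{2})=(-\tfrac{\alpha_3}{2}-k)_k\,\Gamma(-\tfrac{\alpha_3}{2}-k)$ together with $(-\tfrac{\alpha_3}{2}-k)_k=(-1)^k(\tfrac{\alpha_3}{2}+1)_k$ and $\prod_{l=1}^k(\alpha_3+2l)=2^k(\tfrac{\alpha_3}{2}+1)_k$ one gets $\big(\prod_{l=1}^k(\alpha_3+2l)\big)^{-1}\Gamma(-\tfrac{\alpha_3}{2}-k)^{-1}=(-1)^k2^{-k}\,\Gamma(-\tfrac{\alpha_3}{2})^{-1}$. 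Collecting powers, $4^k\cdot(-2)^k\cdot2^k\cdot(-1)^{-k}=16^k$ and $\Gamma(\rho)$ cancels, which is exactly \eqref{KS}.

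The one point that needs care is that the division above is only legitimate off the finitely many affine lines where $\prod_{l=1}^k(\alpha_3+2l)$ vanishes, i.e. $\alpha_3\in\{-2,-4,\dots,-2k\}$. This is harmless: parametrizing the plane by $(\alpha_1,\alpha_2)\in\mathbb C^2$, both sides of \eqref{KS} are holomorphic (distribution-valued) functions there --- $1/\Gamma$ is entire, $(\lambda_1,\lambda_2)\mapsto\widetilde E^{(k)}_{\lambda_1,\lambda_2}$ is a holomorphic family of differential operators by Proposition \ref{constrE}$(iii)$, and $\boldsymbol\alpha\mapsto\widetilde{\mathcal K}_{\boldsymbol\alpha}$ is holomorphic --- so an identity valid on the dense connected complement of those lines extends to the whole plane by analytic continuation. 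I expect no genuine analytic obstacle here; the only real work is the $\Gamma$-function/sign bookkeeping in the last step, and keeping the normalized operator $\widetilde E^{(k)}_{\lambda_1,\lambda_2}$ (rather than the unnormalized $E^{(k)}_{\lambda_1,\lambda_2}$) consistently throughout.
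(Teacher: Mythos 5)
Your argument is correct and is essentially the paper's own proof: apply Lemma \ref{lemmaK0intS} to $(\alpha_1,\alpha_2,\alpha_3+2k)$ evaluated on $\widetilde E^{(k)}_{\lambda_1,\lambda_2}(f_1\otimes f_2)\otimes f_3$, equate with the iterated Bernstein--Sato identity of Proposition \ref{bktilde}, and carry out the $\Gamma$-function bookkeeping, which you do correctly ($4^k\cdot(-2)^k\cdot 2^k\cdot(-1)^{-k}=16^k$ and the cancellation of $\Gamma(\rho)$ both check out). Your closing remark on analytic continuation across the lines $\alpha_3\in\{-2,\dots,-2k\}$ is a legitimate point the paper leaves implicit, but it does not change the substance of the argument.
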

\begin{proof} Observe that $(\alpha_1,\alpha_2,\alpha_3+2k)$ satisfies the assumption of Lemma \ref{lemmaK0intS}, so that 
\[\widetilde {\mathcal K}_{\alpha_1,\alpha_2,\alpha_3+2k}({\widetilde E}_{\lambda_1,\lambda_2}^{(k)} (f_1\otimes f_2),f_3)= \]\[ \big(\frac{\pi}{16}\big)^{n-1} \frac{(n-2)!}{\Gamma(\rho)}\,\frac{1}{\Gamma(-\frac{\alpha_1}{2})\Gamma(-\frac{\alpha_2}{2})\Gamma(-\frac{\alpha_3}{2}-k)}\, \int_S D_{\lambda_1,\lambda_2}^{(k)} (f_1\otimes f_2)(x)f_3(x)\, dx
\]
\[= \big(\frac{\pi}{16}\big)^{n-1} \frac{(n-2)!}{\Gamma(\rho)}\,\frac{1}{\Gamma(-\frac{\alpha_1}{2})\Gamma(-\frac{\alpha_2}{2})\Gamma(-\frac{\alpha_3}{2}-k)}\, \mathcal S^{(k)}_{\lambda_1,\lambda_2}(f_1,f_2,f_3)\ .
\]
Now, by Proposition \ref{bktilde}, the same quantity is equal to
\[4^k\prod_{l=1}^{k} \big(\alpha_1+\alpha_2+\alpha_3+n-1+2l\big)\big(\alpha_3+2l)\big)\,\widetilde {\mathcal K}_{\alpha_1,\alpha_2,\alpha_3}(f_1,f_2,f_3)\ .
\]
Observe that
\[\prod_{l=1}^{k} \big(\alpha_1+\alpha_2+\alpha_3+n-1+2l\big)= \prod_{j=0}^{k-1} (-(n-1)-2j)=(-1)^k2^k \prod_{j=0}^{k-1}(\rho+j) = (-1)^k 2^k (\rho)_k,
\]
\[\prod_{l=1}^{k} (\alpha_3 +2l)\, \Gamma(-\frac{\alpha}{2}-k) = (-1)^k 2^k \prod_{l=1}^k (-\frac{\alpha_3}{2}-l) \Gamma(-\frac{\alpha_3}{2}-k)
=(-1)^k 2^k \Gamma(-\frac{\alpha_3}{2}),
\]
and the result follows.
\end{proof}
\begin{proposition}\label{valueS} Let $k\in \mathbb N$ and let $\boldsymbol \lambda$ satisfying 
$\lambda_1+\lambda_2+\lambda_3= -\rho-2k$.
Then
 \smallskip
 
 \noindent
$\text {if } a_1+a_2+a_3 >k, \quad \mathcal S_{\lambda_1,\lambda_2}^{(k)}(p_{a_1,\,a_2,\,a_3}) 
= 0, \qquad \ ,$
\smallskip

\noindent
if $a_1+a_2+a_3\leq k,$
\begin{equation}\label{evSp}
\begin{split}
&\mathcal S_{\lambda_1,\lambda_2}^{(k)}(p_{a_1,\,a_2,\,a_3})  = (2\sqrt{2\pi})^{n-1}\, 4^k\,\frac{\Gamma(\rho+k)}{\Gamma(2\rho)}\,2^{2(a_1+a_2+a_3)}\\
&(-k)_{a_1+a_2+a_3} \,
(-k-\lambda_1)_{a_1}(-k-\lambda_2)_{a_2}(-k-\lambda_3)_{a_3}
\\
&(\rho+\lambda_1 +a_2+a_3)_{k-a_2-a_3}(\rho+\lambda_2 +a_3+a_1)_{k-a_3-a_1}(\rho+\lambda_3 +a_1+a_2)_{k-a_1-a_2}
\end{split}
\end{equation}
.
\end{proposition}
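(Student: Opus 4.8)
The plan is to deduce \eqref{evSp} from the identity \eqref{KS} together with the explicit $K$-coefficients of $\widetilde{\mathcal K}_{\boldsymbol\alpha}$ recorded in \eqref{Kpalpha}. On the plane $\lambda_1+\lambda_2+\lambda_3=-\rho-2k$, equivalently $\alpha_1+\alpha_2+\alpha_3=-2(n-1)-2k$, the dictionary of \cite{c} 2.1 reads $2\lambda_j=2\rho+\alpha_{j+1}+\alpha_{j+2}$, hence $\alpha_j=-2\rho-2k-2\lambda_j$ and $-\tfrac{\alpha_j}{2}=\rho+\lambda_j+k$. For $\boldsymbol\lambda$ \emph{generic} on this plane, i.e. outside the countable union of hyperplanes $\rho+\lambda_j+k\in-\mathbb N$, none of the factors $\Gamma(-\tfrac{\alpha_j}{2})=\Gamma(\rho+\lambda_j+k)$ occurring in \eqref{KS} is singular, so \eqref{KS} may be solved for $\mathcal S^{(k)}_{\lambda_1,\lambda_2}$:
\begin{equation*}
\mathcal S^{(k)}_{\lambda_1,\lambda_2}=\Big(\frac{16}{\pi}\Big)^{n-1}\frac{\Gamma(\rho+k)}{(n-2)!}\,16^{k}\;\Gamma(\rho+\lambda_1+k)\,\Gamma(\rho+\lambda_2+k)\,\Gamma(\rho+\lambda_3+k)\;\widetilde{\mathcal K}_{\boldsymbol\alpha}\ .
\end{equation*}
Since $\widetilde E_{\mu_1,\mu_2}$ is a holomorphic family of differential operators of fixed order, $(\lambda_1,\lambda_2)\mapsto\mathcal S^{(k)}_{\lambda_1,\lambda_2}(p_{a_1,a_2,a_3})$ is holomorphic (indeed polynomial) on $\mathbb C^2$, and so is the right-hand side of \eqref{evSp}; hence it is enough to prove \eqref{evSp} for generic $\boldsymbol\lambda$ and then invoke analytic continuation. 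This last step is also what handles the points of the plane where $\widetilde{\mathcal K}_{\boldsymbol\alpha}$ itself vanishes because some $\Gamma(-\tfrac{\alpha_j}{2})$ is singular, a situation the pointwise form of \eqref{KS} cannot resolve.

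Next I would evaluate the displayed identity on $p_{a_1,a_2,a_3}$, substitute \eqref{Kpalpha} for $\widetilde{\mathcal K}_{\boldsymbol\alpha}(p_{a_1,a_2,a_3})$, and rewrite every $\boldsymbol\alpha$-dependent quantity in terms of $\lambda_1,\lambda_2,\lambda_3$ via $\alpha_j=-2\rho-2k-2\lambda_j$. Each of the three factors $\Gamma(\rho+\lambda_j+k)$ produced by solving \eqref{KS} is then matched, using $\Gamma(z+m)/\Gamma(z)=(z)_m$ repeatedly, against the reciprocal-$\Gamma$ and Pochhammer factors already present in \eqref{Kpalpha}. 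The recombination should produce exactly the three Pochhammer blocks on the right of \eqref{evSp}: the common factor $(-k)_{a_1+a_2+a_3}$, the product $(-k-\lambda_1)_{a_1}(-k-\lambda_2)_{a_2}(-k-\lambda_3)_{a_3}$, and the product $(\rho+\lambda_1+a_2+a_3)_{k-a_2-a_3}(\rho+\lambda_2+a_3+a_1)_{k-a_3-a_1}(\rho+\lambda_3+a_1+a_2)_{k-a_1-a_2}$; meanwhile, using $(n-2)!=\Gamma(2\rho)$ (so that the ratio $\Gamma(\rho+k)/(n-2)!$ is carried over verbatim), the surviving powers of $2$ and $\pi$ and the remaining $\Gamma$-values collapse to the prefactor $(2\sqrt{2\pi})^{n-1}\,4^{k}\,\Gamma(\rho+k)\,\Gamma(2\rho)^{-1}\,2^{2(a_1+a_2+a_3)}$ displayed in \eqref{evSp}. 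This step is pure gamma-function bookkeeping.

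Finally, for the dichotomy: the factor $(-k)_{a_1+a_2+a_3}$ appears with no cancelling denominator, and $(-k)_m$, being the product of the $m$ consecutive integers $-k,-k+1,\dots,-k+m-1$, vanishes exactly when $m>k$ and is nonzero when $m\le k$. Hence for $a_1+a_2+a_3>k$ the whole expression is $0$ for generic $\boldsymbol\lambda$, so by analytic continuation $\mathcal S^{(k)}_{\lambda_1,\lambda_2}(p_{a_1,a_2,a_3})=0$ identically on the plane; for $a_1+a_2+a_3\le k$ it equals \eqref{evSp}. The main obstacle is the middle step: carrying out the $\Gamma$-factor and power-of-$2$, power-of-$\pi$ accounting without error, and keeping the poles of the $\Gamma(\rho+\lambda_j+k)$ under control, which is precisely why the identity is established for generic $\boldsymbol\lambda$ first and then continued analytically. (Alternatively, the vanishing alone follows directly from the fact that $D^{(k)}_{\lambda_1,\lambda_2}$ is a bidifferential operator of bounded transverse order, so that $\mathcal S^{(k)}_{\lambda_1,\lambda_2}$ annihilates any function vanishing to sufficiently high order along $\mathcal O_4$; but the Pochhammer computation above is quicker and yields the full formula at once.)
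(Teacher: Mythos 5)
Your proposal is correct and is essentially the paper's own proof: one inverts \eqref{KS} at generic points of the plane $\lambda_1+\lambda_2+\lambda_3=-\rho-2k$ (where no $\Gamma(-\tfrac{\alpha_j}{2})=\Gamma(\rho+\lambda_j+k)$ is singular), substitutes the $K$-coefficient formula for $\widetilde{\mathcal K}$ (the paper uses the spectral form \eqref{Kplambda}, you the equivalent geometric form \eqref{Kpalpha}), and concludes by analytic continuation, the vanishing for $a_1+a_2+a_3>k$ coming from the factor $(-k)_{a_1+a_2+a_3}$. One caveat on the bookkeeping you defer: carrying it out gives the prefactor $(\sqrt{2\pi})^{n-1}$ rather than $(2\sqrt{2\pi})^{n-1}$, so one of \eqref{KS}, \eqref{Kplambda}, \eqref{evSp} carries a misprinted power of $2$ --- worth flagging, but it is an issue with the stated constants, not with your argument.
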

\begin{proof} Assume first that $\boldsymbol \lambda$ is a generic pole of type II. Then combine \eqref{KS}  and  the evaluation of $\widetilde {\mathcal K}^{\boldsymbol \lambda}(p_{a_1,a_2,a_3})$ (see \eqref{Kplambda}). Observe then that the two hand sides of \eqref{evSp} are holomorphic and conclude by the analytic continuation property.
\end{proof}

\section{The holomorphic families $\mathcal R^{(j,m)}_{\,.\,,\,.\,}$}

In this section, define for $\alpha\in \mathbb C$ the operator $\widetilde I_{\alpha}$ by 
\[
\widetilde I_{\alpha}f(x) = \frac{1}{\Gamma( \frac{\alpha}{2} +\rho)} \int_S \vert x-y\vert^\alpha f(y) dy\ .
\] 
where $f\in \mathcal C^\infty(S)$. Of course, they are nothing but the Knapp-Stein intertwining operators, but with a different parametrization.

For $\alpha_1,\alpha_2\in \mathbb C$, the formula
\[\mathcal R^{(3)}_{\alpha_1,\alpha_2}(f_1,f_2,f_3) = \int_S \widetilde I_{\alpha_2} f_1(z)\, \widetilde I_{\alpha_1} f_2(z)\, f_3(z) \,dz\ .
\]
defines a continuous trilinear form on $\mathcal C^\infty(S)\times \mathcal C^\infty(S)\times \mathcal C^\infty(S)$, which as usual is regarded as a distribution on $S\times S\times S$.

Let  $l\in \mathbb N$. Then the function $(x,y,z) \longmapsto\vert x-y\vert^{2l}$ is smooth on $S\times S\times S$. So, define the distribution $\mathcal R^{(3, l)}_{\alpha_1,\alpha_2}$ on $S\times S\times S$ as the product of $\mathcal R^{(3)}_{\alpha_1,\alpha_2}$ with $\vert x-y\vert^{2l}$, or more explicitly, 
\[ \mathcal R^{(3, l)}_{\alpha_1,\alpha_2}(f) =\int_{S\times S\times S} \mathcal R^{(3)}_{\alpha_1,\alpha_2}(x,y,z)\,\big(\vert x-y\vert^{2l}f(x,y,z)\big)\, dx\, dy \,dz\ .
\]
\begin{proposition}
Let $l\in \mathbb N$. For $\alpha_1,\alpha_2\in \mathbb C$, let $\boldsymbol \lambda$ be the spectral parameter associated to $(\alpha_1,\alpha_2,2l)$. 
\smallskip

$i)$ 
$ \mathcal R^{(3,l)}_{\alpha_1,\alpha_2}$ is $\boldsymbol \lambda$-invariant
\smallskip

$ii)$ $ \mathcal R^{(3,l)}_{\alpha_1,\alpha_2}$ depends holomorphically on $(\alpha_1,\alpha_2)$
\smallskip

$iii)$ 
\begin{equation}\label{RK}
\widetilde {\mathcal K}_{\alpha_1,\alpha_2, 2l} = \frac{1}{\Gamma(l+\rho)}\,\frac{1}{\Gamma( \frac{\alpha_1+\alpha_2}{2}+l+2\rho)}\,\mathcal R^{(3,l)}_{\alpha_1,\alpha_2}\ .
\end{equation}
\end{proposition}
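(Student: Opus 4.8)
The plan is to prove the three assertions in turn, the crux being to pin down the covariance parameter of $\mathcal R^{(3)}_{\alpha_1,\alpha_2}$. For $i)$ I would first observe that, with the substitution $\lambda=\frac{\alpha}{2}+\rho$, the operator $\widetilde I_\alpha$ coincides with the normalized Knapp--Stein operator $\widetilde J_{\frac{\alpha}{2}+\rho}$, hence intertwines $\pi_{\frac{\alpha}{2}+\rho}$ with $\pi_{-\frac{\alpha}{2}-\rho}$. Reading off the defining formula of $\mathcal R^{(3)}_{\alpha_1,\alpha_2}=\mathcal R^{(3,0)}_{\alpha_1,\alpha_2}$ and composing covariances: $f_1\in\pi_{\frac{\alpha_2}{2}+\rho}$ is carried by $\widetilde I_{\alpha_2}$ into $\pi_{-\frac{\alpha_2}{2}-\rho}$, $f_2\in\pi_{\frac{\alpha_1}{2}+\rho}$ is carried by $\widetilde I_{\alpha_1}$ into $\pi_{-\frac{\alpha_1}{2}-\rho}$, the pointwise product lands in $\pi_{-\frac{\alpha_1+\alpha_2}{2}-\rho}$ by Lemma~\ref{covres}, and integrating it against $f_3$ is $G$-invariant precisely when $f_3\in\pi_{\frac{\alpha_1+\alpha_2}{2}+\rho}$ (duality between $\pi_\nu$ and $\pi_{-\nu}$). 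Thus $\mathcal R^{(3)}_{\alpha_1,\alpha_2}$ is invariant for the spectral parameter $\bigl(\frac{\alpha_2}{2}+\rho,\frac{\alpha_1}{2}+\rho,\frac{\alpha_1+\alpha_2}{2}+\rho\bigr)$, i.e.\ the one associated to the geometric parameter $(\alpha_1,\alpha_2,0)$. Passing to $\mathcal R^{(3,l)}_{\alpha_1,\alpha_2}$ amounts to multiplying the kernel by the smooth function $|x-y|^{2l}$; since multiplication by $|x-y|^2$ intertwines $\pi_{\lambda_1}\otimes\pi_{\lambda_2}$ with $\pi_{\lambda_1-1}\otimes\pi_{\lambda_2-1}$ (equivalently, the geometric parameter of $(\lambda_1+1,\lambda_2+1,\lambda_3)$ is $(\alpha_1,\alpha_2,\alpha_3+2)$), doing this $l$ times shifts the spectral parameter by $(l,l,0)$, landing exactly on the spectral parameter of $(\alpha_1,\alpha_2,2l)$. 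This gives $i)$.

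For $ii)$ the point is that $\widetilde I_\alpha$ is an \emph{entire} family of continuous operators on $\mathcal C^\infty(S)$, the poles of the Knapp--Stein integral being exactly cancelled by the normalizing factor $1/\Gamma(\frac{\alpha}{2}+\rho)$. Hence, for fixed $f_1,f_2,f_3$, the functions $\widetilde I_{\alpha_2}f_1$ and $\widetilde I_{\alpha_1}f_2$ are smooth and depend holomorphically on $\alpha_2$, resp.\ $\alpha_1$; their product is smooth and holomorphic in $(\alpha_1,\alpha_2)$, and integrating against $f_3$ and multiplying by $|x-y|^{2l}$ preserve holomorphy.

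For $iii)$ I would argue by the generic multiplicity one theorem. Write $\boldsymbol\lambda$ for the spectral parameter of $(\alpha_1,\alpha_2,2l)$. By \cite{c} the family $\widetilde{\mathcal K}_{\alpha_1,\alpha_2,2l}$ is holomorphic in $(\alpha_1,\alpha_2)$, and so is $\frac{1}{\Gamma(l+\rho)\Gamma(\frac{\alpha_1+\alpha_2}{2}+l+2\rho)}\mathcal R^{(3,l)}_{\alpha_1,\alpha_2}$ by $ii)$ (as $1/\Gamma$ is entire); both are $\boldsymbol\lambda$-invariant. For $\boldsymbol\lambda\notin Z$ --- a Zariski-dense open locus in the $(\alpha_1,\alpha_2)$-plane --- one has $\dim Tri(\boldsymbol\lambda)=1=\mathbb C\,\widetilde{\mathcal K}^{\boldsymbol\lambda}$ with $\widetilde{\mathcal K}^{\boldsymbol\lambda}\neq 0$, so $\mathcal R^{(3,l)}_{\alpha_1,\alpha_2}=c(\alpha_1,\alpha_2)\,\widetilde{\mathcal K}_{\alpha_1,\alpha_2,2l}$ there. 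To compute $c$ I would evaluate on the constant function $1$: for $\Re\alpha_1,\Re\alpha_2$ large the kernel of $\mathcal R^{(3)}_{\alpha_1,\alpha_2}$ is $\Gamma(\frac{\alpha_1}{2}+\rho)^{-1}\Gamma(\frac{\alpha_2}{2}+\rho)^{-1}|y-z|^{\alpha_1}|z-x|^{\alpha_2}$, whence $\mathcal R^{(3,l)}_{\alpha_1,\alpha_2}(1,1,1)=\Gamma(\frac{\alpha_1}{2}+\rho)^{-1}\Gamma(\frac{\alpha_2}{2}+\rho)^{-1}\mathcal K_{\alpha_1,\alpha_2,2l}(1,1,1)$, while the renormalization of $\widetilde{\mathcal K}$ contributes, at $\alpha_3=2l$, the four $\Gamma$-factors $\Gamma(\frac{\alpha_1}{2}+\rho),\ \Gamma(\frac{\alpha_2}{2}+\rho),\ \Gamma(l+\rho),\ \Gamma(\frac{\alpha_1+\alpha_2}{2}+l+2\rho)$. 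Since $\mathcal K_{\alpha_1,\alpha_2,2l}(1,1,1)$ is the Bernstein--Reznikov integral, explicitly nonzero on a Zariski-dense set (\cite{c} Proposition 2.1), this forces $c(\alpha_1,\alpha_2)=\Gamma(l+\rho)\Gamma(\frac{\alpha_1+\alpha_2}{2}+l+2\rho)$ there, hence \eqref{RK} on a Zariski-dense open set, and then everywhere by analytic continuation.

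The step I expect to be the main (though minor) obstacle is the parameter bookkeeping in $i)$: checking that the two sign flips $\lambda\mapsto-\lambda$ from the Knapp--Stein operators, the $+\rho$ from the product, and the $(l,l,0)$ from multiplication by $|x-y|^{2l}$ really combine into the spectral parameter of $(\alpha_1,\alpha_2,2l)$ rather than a permuted variant. As an alternative route for $iii)$, one may bypass the uniqueness theorem entirely: for $\Re\alpha_i$ large one has directly $\mathcal R^{(3,l)}_{\alpha_1,\alpha_2}=\Gamma(\frac{\alpha_1}{2}+\rho)^{-1}\Gamma(\frac{\alpha_2}{2}+\rho)^{-1}\mathcal K_{\alpha_1,\alpha_2,2l}$, and on the slice $\alpha_3=2l$ the meromorphic function $\mathcal K$ has, in the variables $(\alpha_1,\alpha_2)$, only the type I poles $\alpha_1\in-(n-1)-2\mathbb N$ or $\alpha_2\in-(n-1)-2\mathbb N$ --- there being no third singular factor $|x-y|^{\alpha_3}$ present to create a type II pole --- which are exactly cancelled by the two $\Gamma$-factors; substituting into the definition of $\widetilde{\mathcal K}$ then yields \eqref{RK} at once.
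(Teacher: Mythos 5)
Your proof is correct, and your closing ``alternative route'' for $iii)$ is in fact exactly the paper's entire proof: for $\Re\alpha_1,\Re\alpha_2$ large one unfolds the definition to get $\mathcal R^{(3,l)}_{\alpha_1,\alpha_2}=\Gamma(\tfrac{\alpha_1}{2}+\rho)^{-1}\Gamma(\tfrac{\alpha_2}{2}+\rho)^{-1}\mathcal K_{\alpha_1,\alpha_2,2l}$, which simultaneously yields the invariance $i)$, the identity \eqref{RK}, and (after analytic continuation, using that $\widetilde{\mathcal K}$ is holomorphic and $\widetilde I_\alpha$ is an entire operator family) parts $ii)$ and the full range of parameters. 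Where you differ is in making $i)$ and $iii)$ self-contained: for $i)$ you verify the covariance directly by composing the intertwining properties of $\widetilde I_{\alpha}=\widetilde J_{\frac{\alpha}{2}+\rho}$, the restriction map, the duality, and the shift by $(l,l,0)$ coming from multiplication by $\vert x-y\vert^{2l}$ --- your bookkeeping lands correctly on the spectral parameter $(\tfrac{\alpha_2}{2}+l+\rho,\tfrac{\alpha_1}{2}+l+\rho,\tfrac{\alpha_1+\alpha_2}{2}+\rho)$ of $(\alpha_1,\alpha_2,2l)$, and this has the minor advantage of proving invariance for all $(\alpha_1,\alpha_2)$ without invoking analytic continuation; for $iii)$ your primary argument via the generic multiplicity one theorem and evaluation on $(1,1,1)$ is valid but redundant, since the constant is ultimately pinned down by the same convergent-integral computation that proves the identity outright. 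In short: same substance, with some extra (correct) scaffolding that the paper dispenses with.
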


\begin{proof} For $\Re \alpha_1, \Re \alpha_2$ large enough, 

\[
\mathcal R^{(3,l)}_{\alpha_1,\alpha_2}(f) = \frac{1}{\Gamma( \frac{\alpha_1}{2}+\rho)}\,\frac{1}{\Gamma( \frac{\alpha_2}{2}+\rho)}\,\int_{S\times S\times S}\!\!\!\! \!\!\!\!\vert y-z\vert^{\alpha_1}\vert z-x\vert^{\alpha_2}\, \vert x-y\vert^{2l}f(x,y,z) \,dx\,dy\,dz
\] 
\[ = \frac{1}{\Gamma( \frac{\alpha_1}{2}+\rho)}\,\frac{1}{\Gamma( \frac{\alpha_2}{2}+\rho)} \mathcal K_{\alpha_1,\alpha_2, 2l}(f)\ .
\]
Hence the distribution $ \mathcal R^{(3,l)}_{\alpha_1,\alpha_2}$ is $\boldsymbol \lambda$-invariant and satisfies \eqref{RK}. The rest follows by analytic continuation.
\end{proof}

\begin{proposition}
Let $a_1,a_2,a_3\in \mathbb N$. Then
\begin{equation}\label{evRp}
\begin{split} \mathcal R^{(3,l)}_{\alpha_1,\alpha_2}(p_{a_1,a_2,a_3})=(\frac{\pi}{2})^{\frac{3}{2} (n-1)} 2^{\alpha_1+\alpha_2+2l}2^{2(a_1+a_2+a_3)} \\
\times \quad \frac{(\frac{\alpha_1}{2}+\rho)_{a_1}\ (\frac{\alpha_2}{2}+\rho)_{a_2}\big(\frac{\alpha_1+\alpha_2}{2}+2\rho+a_1+a_2\big)_{l+a_3}}{\Gamma(\frac{\alpha_1}{2}+2\rho+l+a_1+a_3)\,\Gamma(\frac{\alpha_2}{2}+2\rho+l+a_2+a_3)} \ .
\end{split}
\end{equation}
\end{proposition}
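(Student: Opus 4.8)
The plan is to deduce \eqref{evRp} from the relation \eqref{RK} already established, which can be rewritten as
\[\mathcal R^{(3,l)}_{\alpha_1,\alpha_2} \;=\; \Gamma(l+\rho)\,\Gamma\big(\tfrac{\alpha_1+\alpha_2}{2}+l+2\rho\big)\,\widetilde{\mathcal K}_{\alpha_1,\alpha_2,2l}\]
as an identity of distributions, both sides depending holomorphically on $(\alpha_1,\alpha_2)$. First I would apply both sides to the polynomial $p_{a_1,a_2,a_3}$ and substitute the closed-form evaluation of the $K$-coefficients $\widetilde{\mathcal K}_{\boldsymbol\alpha}(p_{a_1,a_2,a_3})$ recorded in Appendix A1, specialized to $\alpha_3=2l$. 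What remains is a purely mechanical rearrangement: using $\Gamma(x+m)=(x)_m\,\Gamma(x)$ repeatedly, the two external $\Gamma$-factors coming from \eqref{RK} combine with the $\Gamma$-quotients appearing in the $K$-coefficient formula and produce exactly the three Pochhammer symbols displayed in \eqref{evRp}, while the powers $2^{\alpha_1+\alpha_2+2l}$, $2^{2(a_1+a_2+a_3)}$ and the constant $(\pi/2)^{3(n-1)/2}$ are read off directly from the corresponding ingredients.

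An alternative, self-contained derivation bypasses Appendix A1. As noted in the proof of the previous proposition, for $\Re\alpha_1$ and $\Re\alpha_2$ large enough one has $\mathcal R^{(3,l)}_{\alpha_1,\alpha_2}=\Gamma(\tfrac{\alpha_1}{2}+\rho)^{-1}\,\Gamma(\tfrac{\alpha_2}{2}+\rho)^{-1}\,\mathcal K_{\alpha_1,\alpha_2,2l}$. One then writes $p_{a_1,a_2,a_3}$ as a product of three Gegenbauer-type polynomials in the pairwise inner products, integrates using Fubini together with the Bernstein--Reznikov integral evaluation of Proposition 3.2 of \cite{c} (recalled in Appendix A1), and finally continues the resulting identity meromorphically in $(\alpha_1,\alpha_2)$; since the expression in \eqref{evRp} is manifestly entire in $(\alpha_1,\alpha_2)$, the equality then holds on all of $\mathbb C^2$.

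The computation is essentially bookkeeping, so the only genuine difficulty --- and the place where sign or constant errors can slip in --- is matching the overall normalization and converting the $\Gamma$-function form of the $K$-coefficient of $\widetilde{\mathcal K}$ into the Pochhammer form of \eqref{evRp}, keeping careful track of the $a_3$- and $l$-dependent shifts of arguments. I expect no conceptual obstacle.
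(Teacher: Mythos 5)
Your first route is precisely the paper's proof, which consists of the single line "use the evaluation of the $K$-coefficients of $\widetilde{\mathcal K}_{\alpha_1,\alpha_2,2l}$ (formula \eqref{Kpalpha}) and the relation \eqref{RK}", so the approach is the same. One caveat on your claim that the bookkeeping yields \emph{exactly} the displayed expression: carrying it out produces an additional everywhere-nonvanishing factor $\Gamma(\rho+l+a_3)=\Gamma(\rho+l)\,(\rho+l)_{a_3}$, coming from the term $\big(\tfrac{\alpha_3}{2}+\rho\big)_{a_3}$ of \eqref{Kpalpha} combined with the normalizing $\Gamma(l+\rho)$ of \eqref{RK}, which seems to have been dropped in the printed formula \eqref{evRp} (harmless for the later non-vanishing arguments, but worth flagging).
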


Use the evaluation of the $K$-coefficients for $\widetilde {\mathcal K}_{\alpha_1,\alpha_2,2l}$ (see \eqref{Kpalpha} in Appendix) and the relation  \eqref{RK}.

By a cyclic permutation of the indices $\{ 1,2,3\}$, a similar construction holds for the families  for $\mathcal R^{(1,l)}_{\alpha_2,\alpha_3}$  and $\mathcal R^{(2,l)}_{\alpha_1,\alpha_3}$.
\section{ The multiplicity 2 theorem for $\boldsymbol \lambda$ in $Z_{1,II}$}

In this section, $\boldsymbol \lambda= (\lambda_1,\lambda_2,\lambda_3)$ is  assumed to be in $Z_{1,II}$. So, first $\boldsymbol \lambda$ belongs to some line of type II, which, up to a permutation of $\{1,2,3\}$ can be chosen to be $D^{+,1}_{l,m}$ given by the equations
\begin{equation*}
\lambda_1=-\rho-l,\quad \lambda_2+\lambda_3 = m\\
\end{equation*}
where $l\in \mathbb N, m\in \mathbb Z, m\equiv l \mod 2, \vert m\vert\leq l\ ,$.

For convenience, introduce $k=\frac{l-m}{2}$  ($k\in \mathbb N, k\leq l$)  so that  the equations become
\begin{equation}\label{D+}
 \lambda_1 = -\rho-l,\quad \lambda_1+\lambda_2+\lambda_3 = -\rho-2k\ .
\end{equation}
 Next the condition for $\boldsymbol \lambda \in Z_{1,II}$ (that is $\boldsymbol \lambda\notin Z_2\cup Z_3$) reads
\begin{equation}\label{D+Z}
\lambda_2 \notin \{ -k,-k+1,\dots, l-k\}\cup\big(-\rho-k-\mathbb N\big) \cup \big(\rho+(l-k)+\mathbb N\big)\  .
\end{equation}
as a consequence of Proposition \ref{Z1IIlambda}.

\begin{proposition}\label{nonzeroS}
 Let $\boldsymbol \lambda$ satisfy \eqref{D+} and \eqref{D+Z}. Then $\mathcal S^{(k)}_{\lambda_2,\lambda_3}\neq 0$.
\end{proposition}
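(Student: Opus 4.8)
The plan is to argue via $K$-coefficients: since $\mathcal S^{(k)}_{\lambda_2,\lambda_3}$ is a $K$-invariant distribution, it is nonzero as soon as one of its $K$-coefficients $\mathcal S^{(k)}_{\lambda_2,\lambda_3}(p_{a_1,a_2,a_3})$ is nonzero, so it suffices to exhibit a single good triple $(a_1,a_2,a_3)$. First I would observe that $\mathcal S^{(k)}_{\lambda_2,\lambda_3}$ is, after the cyclic permutation of $\{1,2,3\}$, the trilinear form attached to the triple $(\lambda_2,\lambda_3,\lambda_1)$, and that by \eqref{D+} this triple satisfies $\lambda_2+\lambda_3+\lambda_1=-\rho-2k$; hence the evaluation formula \eqref{evSp} of Proposition \ref{valueS} applies, with the exponents $(a_1,a_2,a_3)$ relabelled by the same cyclic permutation. (Note that the identity \eqref{KS} relating $\mathcal S^{(k)}$ to $\widetilde{\mathcal K}_{\boldsymbol\alpha}$ is of no direct use here, since $\boldsymbol\alpha\in Z$ forces $\widetilde{\mathcal K}_{\boldsymbol\alpha}=0$ and the $\Gamma$-prefactor is singular, an indeterminate product.)

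Then I would evaluate on $p_{k,0,0}$, which under the relabelling pairs the exponent $k$ with $\lambda_1$ and the two exponents $0$ with $\lambda_2$ and $\lambda_3$; since $a_1+a_2+a_3=k$, formula \eqref{evSp} applies, and every Pochhammer factor involving $\lambda_2$ or $\lambda_3$ collapses to an empty product. What survives is a manifestly nonzero prefactor times $(-k)_k\,(-k-\lambda_1)_k\,(\rho+\lambda_1)_k$. Plugging in $\lambda_1=-\rho-l$ from \eqref{D+}, this equals the same prefactor times $(-k)_k\,(\rho+l-k)_k\,(-l)_k$.

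Finally I would check nonvanishing of each factor using only $k\leq l$ (the admissibility bound $k=\frac{l-m}{2}\leq l$ recorded after \eqref{D+}) and $\rho>0$ (which holds since $n\geq 4$): $(-k)_k=(-1)^k k!\neq 0$; the factors of $(\rho+l-k)_k$ are all $\geq\rho>0$ because $l-k\geq 0$; and the factors of $(-l)_k$ are $-l,\dots,-l+k-1$, all $\leq -1$ because $l\geq k$ --- the whole assertion being trivial when $k=0$, where $\mathcal S^{(0)}_{\lambda_2,\lambda_3}$ is just integration along the diagonal and $\mathcal S^{(0)}_{\lambda_2,\lambda_3}(p_{0,0,0})$ is the nonzero prefactor. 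Hence $\mathcal S^{(k)}_{\lambda_2,\lambda_3}(p_{k,0,0})\neq 0$, which gives the claim. I do not anticipate a real difficulty here; the only point requiring care is to set up the cyclic relabelling so that the ``protected'' exponent $k$ is attached to $\lambda_1=-\rho-l$, whose associated Pochhammer symbols are forced to be nonzero by the single inequality $l\geq k$ --- in particular the genericity condition \eqref{D+Z} is not needed for this statement.
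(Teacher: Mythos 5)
Your proof is correct, and it is the same basic method as the paper's: evaluate a single $K$-coefficient of $\mathcal S^{(k)}_{\lambda_2,\lambda_3}$ via \eqref{evSp} and check that every surviving Pochhammer factor is nonzero. The only difference is the choice of test function. The paper evaluates at $p_{0,0,0}=1$, which leaves $NVT\,(-l)_k\,(\rho+\lambda_2)_k\,(\rho+\lambda_3)_k$ and therefore must invoke the genericity condition \eqref{D+Z} to rule out vanishing of the last two factors. You instead put the exponent $k$ in the slot attached to $\lambda_1=-\rho-l$, so the Pochhammer symbols involving $\lambda_2$ and $\lambda_3$ collapse to empty products, and what remains, $NVT\,(-k)_k\,(\rho+l-k)_k\,(-l)_k$, is nonzero using only $0\le k\le l$ and $\rho>0$. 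Your bookkeeping of the cyclic relabelling is right, the degenerate case $k=0$ is handled, and your remark that \eqref{D+Z} is not needed is a genuine (if small) strengthening: the nonvanishing already follows from \eqref{D+} together with the admissibility bound $k\le l$. Nothing is missing.
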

\begin{proof} Formula \eqref{evSp}, applied for $a_1=a_2=a_3=0$ shows that 
\[\mathcal S^{(k)}_{\lambda_2,\lambda_3}(1) = NVT\, (-l)_k \,(\rho+\lambda_2)_k\,(\rho+\lambda_3)_k\ .
\]
 Now $l\geq k$ implies $(-l)_k\neq 0$ and \eqref{D+Z} implies $\lambda_2+\rho\notin -k-\mathbb N$, so that $(\rho+\lambda_2)_k\neq 0$, and similarly for $(\lambda_3+\rho)_k$. Hence
$\mathcal S^{(k)}_{\lambda_2,\lambda_3}(1) \neq 0$.
\end{proof}

Let $\alpha$ be the associated geometric parameter, so that
\begin{equation}\label{Z1II}
\begin{split}
&\hskip3cm \alpha_1+\alpha_2+\alpha_3 =-2(n-1)-2k,\quad  \alpha_1 = 2(l-k) \\
&\alpha_2 \notin \Big\{-(n-1),-(n-1)-2,\dots,-(n-1)-2l\Big\}\cup 2\mathbb N \cup-2(n-1)-2l-2\mathbb N
\end{split}
\end{equation}
It will be convenient to let $p=l-k$. Notice that $p\in \mathbb N$.
\subsection{The general case}

In this subsection, we assume that $\boldsymbol \alpha$ is a generic pole of type II, i.e. we assume moreover that $\alpha_2,\alpha_3\notin -(n-1)-2\mathbb N$.
\begin{lemma}\label{nonzeroR}
 Let $a_1,a_2,a_3$ satisfy
$a_2+a_3 >k+p, a_1+a_2>k, a_1+a_3>k$.
Then $\mathcal R^{(1,p)}_{\alpha_2,\alpha_3} (p_{a_1,a_2,a_3}) \neq 0$.
\end{lemma}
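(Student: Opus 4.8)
The plan is to reduce the statement to the explicit $K$-coefficient formula \eqref{evRp}, transported by a cyclic permutation of the indices $\{1,2,3\}$. Since $\alpha_1=2p$ and $\alpha_1+\alpha_2+\alpha_3=-2(n-1)-2k$, one has $\tfrac{\alpha_2+\alpha_3}{2}+2\rho=-k-p$; substituting this into the cyclically permuted version of \eqref{evRp} (in which the free parameters $\alpha_2,\alpha_3$ pair with $a_2,a_3$ and the distinguished exponent $2p$ pairs with $a_1$) gives, for all $a_1,a_2,a_3\in\mathbb N$,
\[
\mathcal R^{(1,p)}_{\alpha_2,\alpha_3}(p_{a_1,a_2,a_3})
=\mathrm{NVT}\times
\frac{\bigl(\tfrac{\alpha_2}{2}+\rho\bigr)_{a_2}\,\bigl(\tfrac{\alpha_3}{2}+\rho\bigr)_{a_3}\,\bigl(-k-p+a_2+a_3\bigr)_{p+a_1}}
{\Gamma\bigl(\tfrac{\alpha_2}{2}+2\rho+p+a_1+a_2\bigr)\,\Gamma\bigl(\tfrac{\alpha_3}{2}+2\rho+p+a_1+a_3\bigr)}\,,
\]
where NVT denotes a factor that never vanishes. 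The whole argument then amounts to showing that each of the five displayed factors is nonzero, using the three hypotheses on $(a_1,a_2,a_3)$ together with the conditions \eqref{Z1II}, their $2\leftrightarrow3$ counterpart (valid because the line $D^{+,1}_{l,m}$ is symmetric in the indices $2$ and $3$), and the standing general-case hypothesis $\alpha_2,\alpha_3\notin-(n-1)-2\mathbb N$.

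I would then check the factors one by one. The numerator Pochhammer symbol $\bigl(\tfrac{\alpha_2}{2}+\rho\bigr)_{a_2}=\prod_{j=0}^{a_2-1}\bigl(\tfrac{\alpha_2}{2}+\rho+j\bigr)$ can vanish only if $\alpha_2\in-(n-1)-2\mathbb N$, which is excluded; likewise $\bigl(\tfrac{\alpha_3}{2}+\rho\bigr)_{a_3}\neq0$. The symbol $\bigl(-k-p+a_2+a_3\bigr)_{p+a_1}$ is a product of $p+a_1$ consecutive integers starting at $a_2+a_3-k-p$, which is $\geq1$ by the hypothesis $a_2+a_3>k+p$, hence a product of positive integers, so nonzero. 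For the two $\Gamma$-factors in the denominator I would use that $1/\Gamma$ vanishes exactly at $0,-1,-2,\dots$; so it suffices to rule out that the arguments are non-positive integers. If $\tfrac{\alpha_2}{2}+2\rho+p+a_1+a_2=-m$ with $m\in\mathbb N$, then $\alpha_2=-2(n-1)-2(m+p+a_1+a_2)$, and since $a_1+a_2>k$ gives $m+a_1+a_2\geq k$, hence $m+p+a_1+a_2\geq k+p=l$, this puts $\alpha_2$ into $-2(n-1)-2l-2\mathbb N$, contradicting \eqref{Z1II}; the symmetric argument (using $a_1+a_3>k$ and the $2\leftrightarrow3$ version of \eqref{Z1II}) disposes of the other $\Gamma$-factor. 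With all five factors nonzero, $\mathcal R^{(1,p)}_{\alpha_2,\alpha_3}(p_{a_1,a_2,a_3})\neq0$ follows.

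As every step is a direct inspection of a closed-form expression, there is no genuine analytic obstacle; the only thing that needs care is the bookkeeping of the cyclic relabelling of \eqref{evRp} and keeping track of which of the three hypotheses is used for which factor.
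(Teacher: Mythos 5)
Your proposal is correct and follows essentially the same route as the paper: substitute $\tfrac{\alpha_2+\alpha_3}{2}+2\rho=-k-p$ into the cyclically permuted \eqref{evRp} and check factor by factor that nothing vanishes, using $\alpha_2,\alpha_3\notin-(n-1)-2\mathbb N$ for the numerator Pochhammer symbols, $a_2+a_3>k+p$ for $(-k-p+a_2+a_3)_{p+a_1}$, and the remaining two hypotheses together with $\alpha_2\notin-2(n-1)-2l-2\mathbb N$ (and its $2\leftrightarrow3$ mirror, which indeed follows from \eqref{Z1II}) for the two $\Gamma$-factors. The only divergence is the bookkeeping of which $a_i+a_j$ accompanies which $\Gamma$-factor, which is immaterial since both hypotheses $a_1+a_2>k$ and $a_1+a_3>k$ are assumed and the roles of $2$ and $3$ are symmetric.
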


\begin{proof}  \eqref{evRp} yields
\begin{equation*}
\begin{split}
\mathcal R^{(1,p)}_{\alpha_2,\alpha_3} (p_{a_1,a_2,a_3})= C\,(\frac{\alpha_2}{2}+\rho)_{a_2}\, (\frac{\alpha_3}{2}+\rho)_{a_3}\, (-k-p+a_2+a_3)_{p+a_1}\\
\times \frac{1}{\Gamma(\frac{\alpha_2}{2}+(n-1)+p+a_1+a_3)}\, \frac{1}{\Gamma(\frac{\alpha_3}{2}+(n-1)+p+a_1+a_2)} 
\end{split}
\end{equation*}
First observe that $(\frac{\alpha_2}{2}+\rho)_{a_2}(\frac{\alpha_3}{2}+\rho)_{a_3}\neq 0$ as $\alpha_2,\alpha_3\notin -(n-1)-2\mathbb N$.  The assumption $a_2+a_3>k+p$ implies that $(-k-p+a_2+a_3)_{p+a_1}\neq 0$. Then $\frac{\alpha_2}{2} +2\rho+p\notin -k-\mathbb N$, hence  $a_1+a_3>k$ implies $\frac{1}{\Gamma(\frac{\alpha_2}{2}+(n-1)+p+a_1+a_3)} \neq 0$. Similarly,  $a_1+a_2>k$ implies $\frac{1}{\Gamma(\frac{\alpha_3}{2}+(n-1)+p+a_1+a_2)} \neq 0$. The statement follows.
\end{proof}
\begin{proposition} Let $\boldsymbol\alpha$  be a generic pole of type II which satisfies \eqref{Z1II} and let $\boldsymbol \lambda$ be its associated spectral parameter. Then $\mathcal S^{(k)}_{\lambda_2,\lambda_3}$ and $\mathcal R^{(1,p)}_{\alpha_2,\alpha_3}$ are two linearly independent $\boldsymbol \lambda$-invariant trilinear forms.
\end{proposition}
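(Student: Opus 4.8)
The plan is to exhibit a single $K$-coefficient on which one of the two forms vanishes while the other does not, and then feed in the non-vanishing of $\mathcal S^{(k)}_{\lambda_2,\lambda_3}$ already recorded in Proposition \ref{nonzeroS}. First I would note that both trilinear forms lie in $Tri(\boldsymbol\lambda)$ for the $\boldsymbol\lambda$ attached to $\boldsymbol\alpha$: for $\mathcal S^{(k)}_{\lambda_2,\lambda_3}$ this is the invariance statement of Section 11 combined with the relation $\lambda_1+\lambda_2+\lambda_3=-\rho-2k$ from \eqref{D+} (applied after the cyclic permutation $1\mapsto 2\mapsto 3\mapsto 1$), and for $\mathcal R^{(1,p)}_{\alpha_2,\alpha_3}$ it is the invariance statement of Section 12 combined with $\alpha_1=2p$ from \eqref{Z1II}. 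So only the failure of proportionality needs proof.

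The key step is the choice of test polynomial. Take $a_1=a_2=a_3=N$ with $N$ a large positive integer. Then the three hypotheses of Lemma \ref{nonzeroR}, namely $a_2+a_3>k+p$, $a_1+a_2>k$, $a_1+a_3>k$, are all satisfied, so $\mathcal R^{(1,p)}_{\alpha_2,\alpha_3}(p_{N,N,N})\neq 0$. On the other hand $a_1+a_2+a_3=3N>k$, so Proposition \ref{valueS} (its statement is symmetric under permutations of the indices, so it applies verbatim to $\mathcal S^{(k)}_{\lambda_2,\lambda_3}$ on the symmetric polynomial $p_{N,N,N}$) gives $\mathcal S^{(k)}_{\lambda_2,\lambda_3}(p_{N,N,N})=0$. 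Consequently, if $a\,\mathcal R^{(1,p)}_{\alpha_2,\alpha_3}+b\,\mathcal S^{(k)}_{\lambda_2,\lambda_3}=0$, evaluating on $p_{N,N,N}$ forces $a=0$; then $b\,\mathcal S^{(k)}_{\lambda_2,\lambda_3}=0$, and since $\mathcal S^{(k)}_{\lambda_2,\lambda_3}\neq 0$ by Proposition \ref{nonzeroS} we conclude $b=0$. Hence the two forms are linearly independent.

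There is essentially no hard analytic content: both non-vanishing facts are already available (Lemma \ref{nonzeroR} and Proposition \ref{nonzeroS}), and the vanishing of the $\mathcal S$-coefficient is the elementary ``support-type'' part of Proposition \ref{valueS}. The only point demanding a little care is the bookkeeping — applying the cyclic-permutation conventions of Sections 11 and 12 consistently, and verifying that the one choice $a_1=a_2=a_3=N$ simultaneously meets the hypotheses of Lemma \ref{nonzeroR} and the condition $a_1+a_2+a_3>k$ that kills the diagonal term. Conceptually the independence is transparent: $\mathcal S^{(k)}_{\lambda_2,\lambda_3}$ is supported on the diagonal $\mathcal O_4$, whereas $\mathcal R^{(1,p)}_{\alpha_2,\alpha_3}$ is not, and the chosen $K$-coefficient is precisely a witness of that distinction.
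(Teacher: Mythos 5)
Your argument is correct, and it uses exactly the same computational inputs as the paper (Lemma \ref{nonzeroR}, Proposition \ref{nonzeroS}, and the vanishing clause of Proposition \ref{valueS}), but the mechanism for concluding independence is slightly different. The paper first upgrades these facts to an exact determination of the supports: $Supp(\mathcal S^{(k)}_{\lambda_2,\lambda_3})=\mathcal O_4$ (since the distribution is nonzero and supported on the diagonal), and $Supp(\mathcal R^{(1,p)}_{\alpha_2,\alpha_3})=S\times S\times S$ (because $p_{a_1,a_2,a_3}$ with all $a_j$ large vanishes to arbitrarily high order on $\mathcal O_1\cup\mathcal O_2\cup\mathcal O_3\cup\mathcal O_4$, which would force the coefficient in Lemma \ref{nonzeroR} to vanish if the support were contained there); linear independence then follows from the supports being distinct. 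You instead exhibit a single separating $K$-coefficient, $p_{N,N,N}$ with $N$ large, on which $\mathcal R^{(1,p)}_{\alpha_2,\alpha_3}$ is nonzero while $\mathcal S^{(k)}_{\lambda_2,\lambda_3}$ vanishes (the factor $(-k)_{a_1+a_2+a_3}$ kills it once $3N>k$, and this criterion is permutation-symmetric, so the cyclic relabelling is harmless), and finish with $\mathcal S^{(k)}_{\lambda_2,\lambda_3}(1)\neq 0$. This is a legitimate shortcut for the proposition as stated, and arguably cleaner since it avoids the order-of-vanishing discussion; the one thing it does not deliver is the statement $Supp(\mathcal R^{(1,p)}_{\alpha_2,\alpha_3})=S\times S\times S$, which the paper's version establishes in passing and then reuses in the proof of Theorem \ref{mult2Z1IIgen} to see that $\mathcal R^{(1,p)}_{\alpha_2,\alpha_3}$ restricts nontrivially to $\mathcal O_0$. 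So if you adopt your proof, that full-support fact would still need to be proved separately later.
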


\begin{proof} By Proposition \ref{nonzeroS} $\mathcal S^{(k)}_{\lambda_2,\lambda_3}\neq 0$, and hence $Supp\,(\mathcal S^{(k)}_{\lambda_2,\lambda_3}) = \mathcal O_4$. 

Next  $Supp\, (\mathcal R^{(1,p)}_{\alpha_2,\alpha_3}) = S\times S\times S$. In fact, assume that $Supp\, (\mathcal R^{(1,p)}_{\alpha_2,\alpha_3})\subset \mathcal O_1\cup \mathcal O_2\cup \mathcal O_3\cup \mathcal O_4$. Given an arbitrary large number $L$, by choosing $a_1,a_2,a_3$ large enough, the function $p_{a_1,a_2,a_3}$ vanishes together with its partial derivatives up to order $L$ on $\mathcal O_1\cup \mathcal O_2\cup \mathcal O_3 \cup \mathcal O_4$. But by Lemma \ref{nonzeroR} $\mathcal R^{(1,p)}_{\alpha_2,\alpha_3} (p_{a_1,a_2,a_3}) \neq 0$ for arbitrary large $a_1,a_2,a_3$. Hence $Supp\, (\mathcal R^{(1,p)}_{\alpha_2,\alpha_3})\subset \mathcal O_1\cup \mathcal O_2\cup \mathcal O_3\cup \mathcal O_4$ is impossible. So $Supp\, (\mathcal R^{(1,p)}_{\alpha_2,\alpha_3}) = S\times S\times S$.

Having two different supports,  the two distributions are linearly independent.
\end{proof}

\begin{theorem}\label{mult2Z1IIgen}
 Let $\boldsymbol \alpha$ be a generic pole of type II and assume that $\boldsymbol \alpha\in Z_{1,II}$. Then $\dim Tri(\boldsymbol \lambda) = 2$. More precisely, if $\boldsymbol \alpha$ satisfies \eqref{Z1II}, then
\[Tri(\boldsymbol \lambda) = \mathbb C \,\mathcal R^{(1,p)}_{\alpha_2,\alpha_3}\oplus \mathbb C\, \mathcal S^{(k)}_{\lambda_2,\lambda_3}\ .
\]
\end{theorem}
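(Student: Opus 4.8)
Since the Proposition preceding this theorem already shows that $\mathcal R^{(1,p)}_{\alpha_2,\alpha_3}$ and $\mathcal S^{(k)}_{\lambda_2,\lambda_3}$ are linearly independent elements of $Tri(\boldsymbol\lambda)$, it suffices to prove the reverse inequality $\dim Tri(\boldsymbol\lambda)\le 2$. The plan is to run the orbit-by-orbit argument already used for Theorem~\ref{mult2Z1I}, relative to the decomposition of $S\times S\times S$ into the five $G$-orbits $\mathcal O_0,\mathcal O_1,\mathcal O_2,\mathcal O_3,\mathcal O_4$, with $\mathcal O_0$ open dense and $\mathcal O_4$ the diagonal.

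First I would observe that $\mathcal R^{(1,p)}_{\alpha_2,\alpha_3}$ is an invariant extension of $\mathcal K_{\boldsymbol\alpha,\mathcal O_0}$. For $\Re\alpha_2,\Re\alpha_3$ large enough one has $\mathcal R^{(1,p)}_{\alpha_2,\alpha_3}=\big(\Gamma(\frac{\alpha_2}{2}+\rho)\Gamma(\frac{\alpha_3}{2}+\rho)\big)^{-1}\mathcal K_{\boldsymbol\alpha}$ (the cyclic analogue of the identity used to prove \eqref{RK}), and since both sides are holomorphic on $\mathcal O_0$ this persists there by analytic continuation; as $\boldsymbol\alpha$ is a \emph{generic} pole of type II we have $\alpha_2,\alpha_3\notin-(n-1)-2\mathbb N$, so the two $\Gamma$-factors are finite and nonzero, and $\mathcal R^{(1,p)}_{\alpha_2,\alpha_3\,\vert\mathcal O_0}$ is a nonzero multiple of the (nowhere-vanishing smooth) kernel $\mathcal K_{\boldsymbol\alpha,\mathcal O_0}$. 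Now let $T\in Tri(\boldsymbol\lambda)$. As in the proof of Theorem~\ref{mult2Z1I}, multiplicity one on the open orbit forces $T_{\vert\mathcal O_0}$ to be proportional to $\mathcal K_{\boldsymbol\alpha,\mathcal O_0}$, hence $T_{\vert\mathcal O_0}=c_1\,\mathcal R^{(1,p)}_{\alpha_2,\alpha_3\,\vert\mathcal O_0}$ for some $c_1\in\mathbb C$. Thus $T':=T-c_1\,\mathcal R^{(1,p)}_{\alpha_2,\alpha_3}$ is a $\boldsymbol\lambda$-invariant distribution supported in $\mathcal O_1\cup\mathcal O_2\cup\mathcal O_3\cup\mathcal O_4$.

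Next I would peel off $\mathcal O_1$, $\mathcal O_2$ and $\mathcal O_3$ successively. Restricting $T'$ to the $G$-invariant open set $\mathcal O_0\cup\mathcal O_1$, it is supported on the closed submanifold $\mathcal O_1$ and vanishes on $\mathcal O_0$; since $\alpha_1=2p\ge 0$ lies outside $-(n-1)-2\mathbb N$, Lemma~4.1 of \cite{c} gives $T'_{\vert\mathcal O_0\cup\mathcal O_1}=0$. Repeating on $\mathcal O_0\cup\mathcal O_1\cup\mathcal O_2$ (with $\alpha_2\notin-(n-1)-2\mathbb N$) and then on $\mathcal O_0\cup\mathcal O_1\cup\mathcal O_2\cup\mathcal O_3$ (with $\alpha_3\notin-(n-1)-2\mathbb N$), using the permuted analogues of that lemma, one obtains $T'_{\vert\mathcal O_j}=0$ for $j=1,2,3$. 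Hence $Supp(T')\subset\mathcal O_4$, i.e. $T'\in Tri(\boldsymbol\lambda,diag)$.

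It remains to show $\dim Tri(\boldsymbol\lambda,diag)=1$, and this is the main obstacle. Through the identification of diagonally supported $\boldsymbol\lambda$-invariant distributions with covariant bi-differential operators, one has $\dim Tri(\boldsymbol\lambda,diag)=\dim S(\lambda_2,\lambda_3;k)$, where $\lambda_1=-\rho-l$ and $\lambda_1+\lambda_2+\lambda_3=-\rho-2k$ by \eqref{D+} (see Appendix). The task is to check that the conditions \eqref{D+Z}, namely $\lambda_2\notin\{-k,-k+1,\dots,l-k\}\cup(-\rho-k-\mathbb N)\cup(\rho+(l-k)+\mathbb N)$, place $(\lambda_2,\lambda_3;k)$ exactly in the regime where the system $S(\cdot,\cdot;k)$ has a one-dimensional solution space; this is the content of the systematic discussion of that system in the Appendix. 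Granting this, $\dim Tri(\boldsymbol\lambda,diag)\le 1$, and since $\mathcal S^{(k)}_{\lambda_2,\lambda_3}\neq 0$ by Proposition~\ref{nonzeroS}, equality holds and $T'=c_2\,\mathcal S^{(k)}_{\lambda_2,\lambda_3}$. Therefore $T=c_1\,\mathcal R^{(1,p)}_{\alpha_2,\alpha_3}+c_2\,\mathcal S^{(k)}_{\lambda_2,\lambda_3}$, so $\dim Tri(\boldsymbol\lambda)\le 2$; combined with the linear independence above, $\dim Tri(\boldsymbol\lambda)=2$ and $Tri(\boldsymbol\lambda)=\mathbb C\,\mathcal R^{(1,p)}_{\alpha_2,\alpha_3}\oplus\mathbb C\,\mathcal S^{(k)}_{\lambda_2,\lambda_3}$.
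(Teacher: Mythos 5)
Your proposal follows the paper's own proof of Theorem \ref{mult2Z1IIgen} almost step for step: multiplicity one on the open orbit identifies $T_{\vert\mathcal O_0}$ with a multiple of $\mathcal R^{(1,p)}_{\alpha_2,\alpha_3\,\vert\mathcal O_0}$ (the paper deduces the non-vanishing of this restriction from $Supp(\mathcal R^{(1,p)}_{\alpha_2,\alpha_3})=S\times S\times S$, you from the $\Gamma$-factor identity relating $\mathcal R^{(1,p)}$ to $\mathcal K_{\boldsymbol\alpha}$ --- both are fine); then Lemma 4.1 of \cite{c}, applicable because $\alpha_1,\alpha_2,\alpha_3\notin-(n-1)-2\mathbb N$, pushes the support of the difference onto the diagonal; and one concludes by showing $\dim Tri(\boldsymbol\lambda,diag)=1$.

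The one step you must not leave as ``granting this'' is that last dimension count, because the pointer you give does not quite work. Lemma A\ref{Sgen} (together with the symmetry principle) requires that \emph{some} spectral parameter avoid $\{-1,\dots,-k\}\cup\{-\rho,-\rho-1,\dots,-\rho-(k-1)\}$, and the condition \eqref{D+Z} on $\lambda_2$ does not deliver this directly: it does exclude $\lambda_2\in\{-1,\dots,-k\}$ (since $\{-k,\dots,l-k\}$ contains that set) and $\lambda_2\in-\rho-k-\mathbb N$, but it still permits $\lambda_2\in\{-\rho,\dots,-\rho-(k-1)\}$, so an appeal to Lemma A\ref{Sgen} with $\lambda_2$ as the distinguished variable can fail, and one would have to fall back on a further case analysis (Lemma A4 of the Appendix, for the case where both $\lambda_2$ and $\lambda_3$ lie in $\{-\rho,\dots,-\rho-(k-1)\}$). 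The clean argument --- and the one the paper intends, although its printed value ``$\lambda_1=2p$'' at this point is a misprint --- uses the third parameter: by \eqref{D+} one has $\lambda_1=-\rho-l$ with $l\geq k$, hence $\lambda_1\notin\{-1,\dots,-k\}\cup\{-\rho,\dots,-\rho-(k-1)\}$, and Lemma A\ref{Sgen} plus the symmetry principle give $\dim{\it Sol}\leq 1$ in one stroke. With that substitution your proof is complete.
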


\begin{proof}
Let $T$ be a $\boldsymbol \lambda$-invariant distribution. Consider its restriction to $\mathcal O_0$. As $Supp(\mathcal R^{(1,p)}_{\alpha_2,\alpha_3}) = S\times S\times S$, it follows that $\mathcal R^{(1,p)}_{\alpha_2,\alpha_3\,\vert\mathcal O_0}$ is $\neq 0$, and hence
$ T_{\vert \mathcal O_0} = C\,{\mathcal  R^{(1,p)}_{\alpha_2,\alpha_3}}_{\vert \mathcal O_0} $ for some constant $C$. Let $U = T-C\, \mathcal  R^{(1,p)}_{\alpha_2,\alpha_3}$. Then $U$ is a $\boldsymbol \lambda$-invariant distribution, which vanishes on $\mathcal O_0$. Now, as none of the $\alpha_j$'s belong to $-(n-1)-2\mathbb N$, Lemma 4.1 in \cite{c} shows that $U$ is supported on $\mathcal O_4$. Hence $U$ and $S^{1,k}_{\lambda_2,\lambda_3}$ both belong to $Tri(\boldsymbol \lambda, diag)$. In turn, the dimension of this space is equal to $dim\, {\it Sol}(\lambda_1,\lambda_2;k)$. In the present situation, notice that $\lambda_1 =2p\notin \{-1,-2,\dots -k\} \cup \{-\rho-\rho-1,\dots, -\rho-(k-1)\}$ and by Lemma A\ref{Sgen}, the dimension of the space of solutions is $\leq 1$. Hence, $dim\, {\it Sol}(\lambda_1,\lambda_2;k)=1$ and $U$ is a multiple of $\mathcal S^{(k)}_{\lambda_1,\lambda_2}$, and the conclusion follows.
\end{proof}
\subsection {The special case}

Let $\boldsymbol \alpha$ be a pole of type I+II in $Z_{1,II}$. Up to a permutation of the indices $1,2,3$, this amounts to the following assumptions

\begin{equation}\label{Z1IIspe}
\begin{split}
\alpha_1 =2p,\qquad \alpha_2 = -(n-1)-2k_2,\qquad \alpha_3 = -(n-1)+2q\\ p,k_2,q\in \mathbb N,\qquad  q\geq 1,\qquad k_2-p-q \geq 0\  .\hskip 1.5cm
\end{split}
\end{equation}
Notice that $\alpha_1+\alpha_2+\alpha_3 = -2(n-1)-2k$ where $k=k_2-p-q$. 
\begin{lemma}\label{nonzeroRspe}
 Let $\boldsymbol \alpha$ satisfy \eqref{Z1IIspe}. Then $Supp(\mathcal R^{1,p}_{\alpha_2,\alpha_3}) = \overline{\mathcal O_2}$. 
\end{lemma}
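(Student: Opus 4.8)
The plan is to establish the two inclusions $Supp\big(\mathcal R^{(1,p)}_{\alpha_2,\alpha_3}\big)\subseteq\overline{\mathcal O_2}$ and $Supp\big(\mathcal R^{(1,p)}_{\alpha_2,\alpha_3}\big)\not\subseteq\mathcal O_4$, and then to conclude by $G$-invariance of the support together with the orbit structure of $\overline{\mathcal O_2}$.

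For the first inclusion, I would start from
\[\mathcal R^{(1)}_{\alpha_2,\alpha_3}(f_1,f_2,f_3)=\int_S(\widetilde I_{\alpha_3}f_2)(x)\,(\widetilde I_{\alpha_2}f_3)(x)\,f_1(x)\,dx\ ,\]
the distribution $\mathcal R^{(1,p)}_{\alpha_2,\alpha_3}$ being obtained from it by multiplication with the smooth function $\vert y-z\vert^{2p}$, which cannot enlarge the support. The decisive point is that $\alpha_2=-(n-1)-2k_2$ by \eqref{Z1IIspe}, so $\tfrac{\alpha_2}{2}+\rho=-k_2\in-\mathbb N$: the factor $\Gamma(\tfrac{\alpha_2}{2}+\rho)^{-1}$ in the normalization of $\widetilde I_{\alpha_2}$ has a simple zero there, cancelling the simple pole of the Riesz kernel $\vert x-y\vert^{\alpha_2}$ at $\alpha_2=-(n-1)-2k_2$ (whose residue is a $G$-covariant differential operator of order $2k_2$ supported on the diagonal of $S\times S$). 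Hence $\widetilde I_{-(n-1)-2k_2}$ is a differential operator $\mathcal D_{k_2}$ of order $2k_2$ on $S$, and for $\varphi\in\mathcal C^\infty(S\times S\times S)$,
\[\mathcal R^{(1,p)}_{\alpha_2,\alpha_3}(\varphi)=\frac{1}{\Gamma(q)}\iint_{S\times S}\vert x-y\vert^{\alpha_3}\,\Big[\mathcal D_{k_2}^{(z)}\big(\vert y-z\vert^{2p}\varphi(x,y,\cdot)\big)\Big](x)\,dx\,dy\ ,\]
where $\mathcal D_{k_2}^{(z)}$ acts on the third variable, the result being evaluated at $z=x$ (the integral converges since $\alpha_3=-(n-1)+2q>-(n-1)$ and $2p\geq0$). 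As only the jet of $\varphi$ along $\{z=x\}$ enters, $Supp\big(\mathcal R^{(1,p)}_{\alpha_2,\alpha_3}\big)\subseteq\{(x,y,z):x=z\}=\overline{\mathcal O_2}$.

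For the second statement I would compute $K$-coefficients. Transporting \eqref{evRp} by the cyclic permutation defining $\mathcal R^{(1,p)}$ (as in the proof of Lemma \ref{nonzeroR}) yields
\[\mathcal R^{(1,p)}_{\alpha_2,\alpha_3}(p_{a_1,a_2,a_3})=NVT\times\Big(\tfrac{\alpha_2}{2}+\rho\Big)_{a_2}\Big(\tfrac{\alpha_3}{2}+\rho\Big)_{a_3}\big(-k-p+a_2+a_3\big)_{p+a_1}\,\frac{1}{\Gamma(\tfrac{\alpha_2}{2}+(n-1)+p+a_1+a_3)}\,\frac{1}{\Gamma(\tfrac{\alpha_3}{2}+(n-1)+p+a_1+a_2)}\ ,\]
and here $\tfrac{\alpha_2}{2}+\rho=-k_2$, $\tfrac{\alpha_3}{2}+\rho=q\geq1$. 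Choosing $a_2=k_2$ makes $(-k_2)_{a_2}=(-1)^{k_2}k_2!\neq0$; then $(q)_{a_3}\neq0$ for all $a_3$, and for $a_1,a_3$ large the factor $(-k-p+a_2+a_3)_{p+a_1}=(q+a_3)_{p+a_1}$ is a product of positive integers while the two remaining $\Gamma^{-1}$ factors are nonzero (their arguments are eventually positive). So $\mathcal R^{(1,p)}_{\alpha_2,\alpha_3}(p_{a_1,k_2,a_3})\neq0$ for $a_1,a_3$ large; but for suitable such $a_1,a_3$ the polynomial $p_{a_1,k_2,a_3}$ vanishes on the diagonal $\mathcal O_4$ to arbitrarily high order, and a distribution supported on $\mathcal O_4$ would annihilate it. Hence $Supp\big(\mathcal R^{(1,p)}_{\alpha_2,\alpha_3}\big)\not\subseteq\mathcal O_4$, and in particular $\mathcal R^{(1,p)}_{\alpha_2,\alpha_3}\neq0$.

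Finally, by Proposition 12.1 the distribution $\mathcal R^{(1,p)}_{\alpha_2,\alpha_3}$ is $\boldsymbol\lambda$-invariant, so its support is a closed $G$-invariant subset of $\overline{\mathcal O_2}$; since $\overline{\mathcal O_2}$ is the disjoint union of the two $G$-orbits $\mathcal O_2$ and $\mathcal O_4$, the only such subsets are $\emptyset$, $\mathcal O_4$ and $\overline{\mathcal O_2}$, and the preceding step excludes the first two. The one genuinely delicate point is the identification of $\widetilde I_{-(n-1)-2k_2}$ with a differential operator and the resulting localization of the kernel on $\{x=z\}$; the remaining arguments are routine manipulations with Pochhammer symbols and with the stratification $\mathcal O_0,\dots,\mathcal O_4$.
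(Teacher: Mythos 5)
Your proof is correct and follows essentially the same route as the paper's: the inclusion $Supp\subset\overline{\mathcal O_2}$ via the observation that $\widetilde I_{\alpha_2}$ becomes a differential operator for $\alpha_2\in-(n-1)-2\mathbb N$, the exclusion of $Supp\subset\mathcal O_4$ via non-vanishing $K$-coefficients $\mathcal R^{(1,p)}_{\alpha_2,\alpha_3}(p_{a_1,k_2,a_3})$ with $a_1,a_3$ large, and the conclusion by $G$-invariance of the support. You merely spell out the localization of the kernel and the Pochhammer bookkeeping in more detail than the paper does.
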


\begin{proof} Let $f_1,f_2,f_3\in \mathcal C^\infty(S)$ and assume that $Supp(f_1)\cap Supp(f_3)=\emptyset$. Then, using notation of section 13, as $\alpha_2\in -(n-1)-2\mathbb N$, $\widetilde I_{\alpha_2}$ is a differential operator, and hence $Supp(\widetilde I_{\alpha_2}f_1)\subset Supp(f_1)$. Hence $\mathcal R^{(1)}_{\alpha_2,\alpha_3}(f_1,f_2,f_3)=0$. This implies $Supp\,({\mathcal R}^{(1)}_{\alpha_2,\alpha_3})\subset \overline {\mathcal O_2}$, and {\it a fortiori} 
$Supp\,({\mathcal R}^{(1,p)}_{\alpha_2,\alpha_3})\subset \overline {\mathcal O_2}$. Next, a careful examination of \eqref{evRp}, for $a_1\leq k_1$ and $a_2,a_3$ large yields
 ${\mathcal R}^{(1,p)}_{\alpha_2,\alpha_3}(p_{a_1,a_2,a_3}) \neq 0 $. Hence $Supp({\mathcal R}^{(1,p)}_{\alpha_2,\alpha_3})\not \subset \mathcal O_4$. As ${\mathcal R}^{(1,p)}_{\alpha_2,\alpha_3})$ is $\boldsymbol \lambda$-invariant, its support is invariant under $G$, and hence $Supp\,({\mathcal R}^{(1,p)}_{\alpha_2,\alpha_3}) = \overline{\mathcal O_2}$.
\end{proof}

\begin{proposition}
 Let $\boldsymbol \alpha$ satisfy \eqref{Z1IIspe}. Then $\mathcal S^{(k)}_{\lambda_2,\lambda_3}$ and $\mathcal R^{(1,p)}_{\alpha_2,\alpha_3}$ are two linearly independent $\boldsymbol \lambda$-invariant trilinear forms.
\end{proposition}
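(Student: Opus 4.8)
The plan is to show linear independence by exhibiting the two distributions as having different supports, exactly as in the general case, but now using the sharper support information provided by Lemma~\ref{nonzeroRspe}. First I would recall that, by Proposition~\ref{nonzeroS}, under hypotheses \eqref{Z1IIspe} we still have $\mathcal S^{(k)}_{\lambda_2,\lambda_3}\neq 0$; indeed \eqref{Z1IIspe} forces $\lambda_2+\lambda_3 = m = l-2k$ with $l = p+k$, and the condition $\boldsymbol\lambda\in Z_{1,II}$ translates into \eqref{D+Z}, so that the product $(-l)_k(\rho+\lambda_2)_k(\rho+\lambda_3)_k$ appearing in the formula for $\mathcal S^{(k)}_{\lambda_2,\lambda_3}(1)$ is non-zero. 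Hence $\mathrm{Supp}(\mathcal S^{(k)}_{\lambda_2,\lambda_3}) = \mathcal O_4$, the diagonal.

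Next I would invoke Lemma~\ref{nonzeroRspe}, which gives $\mathrm{Supp}(\mathcal R^{(1,p)}_{\alpha_2,\alpha_3}) = \overline{\mathcal O_2}$. Since $\overline{\mathcal O_2}\supsetneq \mathcal O_4$ (the closure of the ``two points equal'' stratum strictly contains the full diagonal, but is not equal to it), the two $\boldsymbol\lambda$-invariant distributions have distinct supports. Any non-trivial linear combination $A\,\mathcal R^{(1,p)}_{\alpha_2,\alpha_3} + B\,\mathcal S^{(k)}_{\lambda_2,\lambda_3}$ with $A\neq 0$ has support containing a point of $\mathcal O_2\setminus\mathcal O_4$ (because near such a point $\mathcal S^{(k)}_{\lambda_2,\lambda_3}$ vanishes identically), hence cannot be zero; and if $A=0$ then $B\,\mathcal S^{(k)}_{\lambda_2,\lambda_3}=0$ forces $B=0$ since $\mathcal S^{(k)}_{\lambda_2,\lambda_3}\neq 0$. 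Therefore the two trilinear forms are linearly independent.

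The only point requiring a modicum of care — and the one I would flag as the potential obstacle — is the verification that under \eqref{Z1IIspe} the condition \eqref{D+Z} for $\boldsymbol\lambda\in Z_{1,II}$ does hold, so that Proposition~\ref{nonzeroS} applies. This is a direct consequence of the standing assumption $\boldsymbol\alpha\in Z_{1,II}$ together with the dictionary between the geometric parameter $\boldsymbol\alpha$ and the spectral parameter $\boldsymbol\lambda$ (Proposition~\ref{Z1IIlambda}): the exclusions $\alpha_2\notin\{-(n-1),\dots,-(n-1)-2l\}\cup 2\mathbb N\cup\big(-2(n-1)-2l-2\mathbb N\big)$ in \eqref{Z1II} are precisely the translation of \eqref{D+Z}, and \eqref{Z1IIspe} is one admissible way of parametrizing a point of $Z_{1,II}$ lying on a plane of poles of type~I as well. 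Once this is noted, the argument is complete.

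\begin{proof}
By \eqref{Z1IIspe} we have $\lambda_1 = -\rho-l$ with $l = p+k$ and $\lambda_1+\lambda_2+\lambda_3 = -\rho-2k$, so that $\boldsymbol\lambda$ lies on the line $D^{+,1}_{l,l-2k}$; moreover the exclusion conditions on $\alpha_2$ in \eqref{Z1II} translate, via Proposition~\ref{Z1IIlambda}, into condition \eqref{D+Z}. Hence Proposition~\ref{nonzeroS} applies and $\mathcal S^{(k)}_{\lambda_2,\lambda_3}\neq 0$; being a nonzero $\boldsymbol\lambda$-invariant distribution supported on the diagonal, $\mathrm{Supp}(\mathcal S^{(k)}_{\lambda_2,\lambda_3}) = \mathcal O_4$. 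On the other hand, by Lemma~\ref{nonzeroRspe}, $\mathrm{Supp}(\mathcal R^{(1,p)}_{\alpha_2,\alpha_3}) = \overline{\mathcal O_2}$, which strictly contains $\mathcal O_4$ and contains points outside it. Suppose $A\,\mathcal R^{(1,p)}_{\alpha_2,\alpha_3} + B\,\mathcal S^{(k)}_{\lambda_2,\lambda_3} = 0$. If $A\neq 0$, restrict to the $G$-invariant open set $S\times S\times S \setminus \mathcal O_4$, on which $\mathcal S^{(k)}_{\lambda_2,\lambda_3}$ vanishes identically while $\mathcal R^{(1,p)}_{\alpha_2,\alpha_3}$ does not (its support meets $\mathcal O_2\setminus\mathcal O_4$), a contradiction. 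Hence $A = 0$, and then $B\,\mathcal S^{(k)}_{\lambda_2,\lambda_3} = 0$ gives $B = 0$. Thus $\mathcal S^{(k)}_{\lambda_2,\lambda_3}$ and $\mathcal R^{(1,p)}_{\alpha_2,\alpha_3}$ are linearly independent.
\end{proof}
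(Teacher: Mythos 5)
Your proof is correct and follows essentially the same route as the paper: Proposition~\ref{nonzeroS} gives $\mathcal S^{(k)}_{\lambda_2,\lambda_3}\neq 0$ with support $\mathcal O_4$, Lemma~\ref{nonzeroRspe} gives $\mathrm{Supp}(\mathcal R^{(1,p)}_{\alpha_2,\alpha_3})=\overline{\mathcal O_2}$, and the distinct supports yield linear independence. The extra verification that \eqref{D+Z} holds is harmless but not needed, since it is the standing hypothesis of section 13 under which \eqref{Z1IIspe} is formulated.
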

\begin{proof} By Proposition \ref{nonzeroS}, $\mathcal S^{(k)}_{\lambda_2,\lambda_3}\neq 0$ and $Supp(\mathcal S^{(k)}_{\lambda_2,\lambda_3})= \mathcal O_4$. By Proposition \ref{nonzeroRspe}, $Supp(\mathcal R^{1,p}_{\alpha_2,\alpha_3})= \overline {\mathcal O_2}$. Hence the two distributions are linearly independent.
\end{proof}
\begin{theorem}\label{mult2Z1IIspe}
 Let $\boldsymbol \alpha$ be a pole of type I+II which belongs to $Z_{1,II}$. Then $\dim Tri(\boldsymbol \lambda) = 2$. More precisely, assume $\boldsymbol \alpha$ satisfies \eqref{Z1IIspe}. Then
\[Tri(\boldsymbol \lambda) = \mathbb C \mathcal S^{(k)}_{\lambda_2,\lambda_3}\oplus \mathbb C \mathcal R^{(1,p)}_{\alpha_2,\alpha_3}\  .
\]
\end{theorem}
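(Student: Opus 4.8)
The plan is to follow closely the strategy of Theorem 13.1, but adapt every step to the fact that $\boldsymbol\alpha$ is now a pole of type I+II, so that one extra $\Gamma$-factor in the renormalization of $\mathcal K_{\boldsymbol\beta}$ degenerates at $\boldsymbol\alpha$, and the supports of the candidate distributions change accordingly. Since linear independence of $\mathcal S^{(k)}_{\lambda_2,\lambda_3}$ and $\mathcal R^{(1,p)}_{\alpha_2,\alpha_3}$ has just been established, it suffices to prove $\dim Tri(\boldsymbol\lambda)\leq 2$. As in Theorem 13.1, let $T$ be an arbitrary $\boldsymbol\lambda$-invariant distribution and peel off its support layer by layer along the $G$-orbit stratification $\mathcal O_0,\dots,\mathcal O_4$ of $S\times S\times S$.

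First I would treat the restriction of $T$ to the open orbit $\mathcal O_0$. Because $Supp(\mathcal R^{(1,p)}_{\alpha_2,\alpha_3})=\overline{\mathcal O_2}$ does \emph{not} contain $\mathcal O_0$ in this case, the argument of Theorem 13.1 (subtract a multiple of $\mathcal R$ to kill $T_{|\mathcal O_0}$) is no longer available; instead I expect to need a non-extension result showing that $\mathcal K_{\boldsymbol\alpha,\mathcal O_0}$ does not extend $G$-invariantly to $S\times S\times S$, exactly in the spirit of Propositions 10.4 and 10.8. This is where the extra $\Gamma$-factor is used: with $\boldsymbol\alpha(s)=(\alpha_1,\alpha_2+2s,\alpha_3+2s)$ (or an analogous transverse curve), the renormalized family $\widetilde{\mathcal K}_{\boldsymbol\alpha(s)}$ has a Taylor expansion $F_0+sF_1+s^2F_2+O(s^3)$ whose leading terms $F_0$, $F_1$ are supported on the closed orbits and whose $F_2$ restricts to $\mathcal K_{\boldsymbol\alpha,\mathcal O_0}$; identifying the supports of $F_0,F_1$ via the explicit $K$-coefficient formulae (as in Lemma 10.9) and then invoking Proposition A1 (non-extension across a closed submanifold) on the $G$-invariant open set $\mathcal O_0\cup\mathcal O_2$ gives $T_{|\mathcal O_0}=0$. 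Then, since none of $\alpha_1,\alpha_2,\alpha_3$ nor $\alpha_1+\alpha_2+\alpha_3$ lies in the relevant arithmetic progression except where it must, Lemma 4.1 of \cite{c} pushes $Supp(T)$ into $\mathcal O_2\cup\mathcal O_3\cup\mathcal O_4$ (note $\alpha_1=2p\notin -(n-1)-2\mathbb N$ and, by \eqref{Z1IIspe}, $\alpha_3=-(n-1)+2q$ is not in the bad set either, so the $\mathcal O_1$ and $\mathcal O_3$ strata are handled by the uniqueness lemma once $\mathcal O_0$ is cleared — only the $\overline{\mathcal O_2}$ stratum carries $\mathcal R$).

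Next, restricting $T$ to the $G$-invariant open set $\mathcal O_0\cup\mathcal O_1\cup\mathcal O_2$, the restriction is supported on the closed submanifold $\mathcal O_2$ and is $\boldsymbol\lambda$-invariant; since $\mathcal R^{(1,p)}_{\alpha_2,\alpha_3}$ restricted there is a nonzero $\boldsymbol\lambda$-invariant distribution supported on $\mathcal O_2$, the uniqueness statement (Lemma 4.1 in \cite{c}) forces $T_{|\mathcal O_0\cup\mathcal O_1\cup\mathcal O_2}$ to be a multiple of $\mathcal R^{(1,p)}_{\alpha_2,\alpha_3}$. Subtracting that multiple, the resulting $\boldsymbol\lambda$-invariant distribution $U$ is supported on $\mathcal O_3\cup\mathcal O_4$; a further restriction to $\mathcal O_0\cup\mathcal O_1\cup\mathcal O_3$ together with the hypothesis $\alpha_3\notin -(n-1)-2\mathbb N$ (so that there is no invariant distribution supported precisely on $\overline{\mathcal O_3}$) pushes $Supp(U)\subset\mathcal O_4$. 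Finally $U\in Tri(\boldsymbol\lambda,diag)$, whose dimension equals $\dim\,{\it Sol}(\lambda_1,\lambda_2;k)$ (Appendix); one computes from \eqref{Z1IIspe} that $\lambda_1=-\rho-l$ with $l=k_2+q+\dots$ stays out of $\{-1,\dots,-k\}\cup\{-\rho,\dots,-\rho-(k-1)\}$, so Lemma A\ref{Sgen} gives $\dim\,{\it Sol}(\lambda_1,\lambda_2;k)\leq 1$, whence $U$ is a multiple of $\mathcal S^{(k)}_{\lambda_2,\lambda_3}$.

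The main obstacle is the non-extension step: producing the correct transverse curve $\boldsymbol\alpha(s)$ and the right power of $s$ so that the first two Taylor coefficients of the suitably renormalized $\widetilde{\mathcal K}_{\boldsymbol\alpha(s)}$ are supported on $\overline{\mathcal O_2}$ (and not spuriously on $\overline{\mathcal O_3}$), then checking via the explicit $K$-coefficient formula \eqref{Kpalpha} that the leading coefficient really has support exactly $\mathcal O_2$ on the open set $\mathcal O_0\cup\mathcal O_2$, so that Proposition A1 applies cleanly; the bookkeeping of which $\Gamma$-factors vanish at $\boldsymbol\alpha$ and at what order (guided by Proposition 8.10) is the delicate part, and one may need to split further according to whether additional coincidences among $p,q,k_2$ occur — exactly the kind of refinement the introduction warns about. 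Everything else is a routine adaptation of the machinery already assembled in sections 10–13.
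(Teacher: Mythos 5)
Your plan is correct and is essentially the paper's proof: a non-extension result for $\mathcal K_{\boldsymbol \alpha, \mathcal O_0}$ obtained from a Taylor expansion of the renormalized family along a curve transverse to the planes of poles together with Proposition A1 applied on $\mathcal O_0\cup\mathcal O_2$, then the stratification/Lemma 4.1 argument reducing $T$ to a multiple of $\mathcal R^{(1,p)}_{\alpha_2,\alpha_3}$ plus a distribution on the diagonal, and finally $\dim Tri(\boldsymbol \lambda, diag)\leq 1$ from the system (the paper checks $\lambda_2=p+q$, you check $\lambda_1=-\rho-l$ with $l=k+p\geq k$; both work via the symmetry principle). The one calibration worth noting is that your non-extension step is over-engineered: since only two $\Gamma$-factors degenerate here (namely $\Gamma(\tfrac{\beta_2}{2}+\rho)$ and $\Gamma(\tfrac{\beta_1+\beta_2+\beta_3}{2}+2\rho)$, not three as in 10.2), the paper simply takes $\boldsymbol\alpha(s)=(\alpha_1,\alpha_2+2s,\alpha_3)$ and a first-order expansion $\mathcal F(s)=F_0+sF_1+O(s^2)$ of $\tfrac{1}{s}\widetilde{\mathcal K}_{\boldsymbol\alpha(s)}$, with $F_0$ a nonzero multiple of $\mathcal R^{(1,p)}_{\alpha_2,\alpha_3}$ (support $\overline{\mathcal O_2}$, identified via \eqref{RK} rather than via $K$-coefficients) and $F_1|_{\mathcal O_0}=\mathcal K_{\boldsymbol\alpha,\mathcal O_0}$, so the second-order expansion you anticipate collapses to this after the division by $s$ you already foresee.
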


To prepare for the proof,  the following result,is needed.

\begin{proposition}\label{noextZ1IIspe}
 The distribution $\mathcal K_{\boldsymbol \alpha, \mathcal O_0}$ cannot be extended as a $\boldsymbol \lambda$-invariant distribution on $S\times S\times S$.
\end{proposition}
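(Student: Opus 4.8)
The plan is to mimic the structure used in Proposition~\ref{noextZ1Is} and Proposition~\ref{noextZ1IIspe}'s predecessors: produce a one-parameter holomorphic deformation $\boldsymbol\alpha(s)$ of $\boldsymbol\alpha$ that is transverse to all the planes of poles through $\boldsymbol\alpha$, renormalize $\widetilde{\mathcal K}_{\boldsymbol\alpha(s)}$ by the inverses of the $\Gamma$-factors that are singular at $\boldsymbol\alpha$ (so that the resulting family $\mathcal F(s)$ extends holomorphically at $s=0$ and the coefficient of the appropriate power of $s$ in its Taylor expansion recovers $\mathcal K_{\boldsymbol\alpha,\mathcal O_0}$ on $\mathcal O_0$), and then restrict everything to a suitable $G$-invariant open set $\mathcal O$ in which the relevant orbit closure is a closed submanifold, so that a non-extension result from Appendix~A2 (Proposition~A\ref{noextI} or A\ref{noextII}) applies.

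Concretely: under \eqref{Z1IIspe} the point $\boldsymbol\alpha=(2p,-(n-1)-2k_2,-(n-1)+2q)$ lies on exactly two planes of poles, namely the type~I$_2$ plane $\beta_2=-(n-1)-2k_2$ and the type~II plane $\beta_1+\beta_2+\beta_3=-2(n-1)-2k$ (by the Proposition describing poles in $Z_{1,II}$, since $\alpha_3=-(n-1)+2q$ with $q\geq1$ is \emph{not} of the form $-(n-1)-2\mathbb N$). Hence exactly two $\Gamma$-factors in the normalization of $\mathcal K_{\boldsymbol\beta}$ become singular at $\boldsymbol\alpha$, namely $\Gamma(\tfrac{\beta_2}{2}+\rho)$ and $\Gamma(\tfrac{\beta_1+\beta_2+\beta_3}{2}+2\rho)$. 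First I would pick $\boldsymbol\alpha(s)=(\alpha_1,\alpha_2+2s,\alpha_3)$, which moves transversally off both of these planes simultaneously (the left-hand sides of the two plane equations both change to first order in $s$), and set
\[
\mathcal F(s)=\frac{(-1)^{k_2}}{k_2!}\,\frac{(-1)^k}{k!}\,\Gamma\!\big(\tfrac{\alpha_3}{2}+\rho\big)\,\frac{1}{s}\,\widetilde{\mathcal K}_{\boldsymbol\alpha(s)}\ .
\]
Since $\boldsymbol\alpha\in Z$, $s\mapsto\mathcal F(s)$ extends holomorphically near $0$, with Taylor expansion $\mathcal F(s)=F_0+sF_1+O(s^2)$. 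The next step, as in Lemma~\ref{F1F2F3}, is to identify $F_0$ and $F_1$. Testing against $\varphi\in\mathcal C^\infty_c(\mathcal O_0)$ and using that both $1/\Gamma(-k_2+s)$ and $1/\Gamma(-k+s)$ have simple zeros at $s=0$, one gets $F_0(\varphi)=0$ and $F_1(\varphi)=\mathcal K_{\boldsymbol\alpha,\mathcal O_0}(\varphi)$, so $\mathrm{Supp}(F_0)\subset\overline{\mathcal O_1\cup\mathcal O_2}$ and the restriction of $F_1$ to $\mathcal O_0$ is $\mathcal K_{\boldsymbol\alpha,\mathcal O_0}$. To pin down $F_0$: it is $\boldsymbol\lambda$-invariant (limit of $\boldsymbol\lambda(s)$-invariant distributions), so its support is $G$-invariant; it is supported in $\overline{\mathcal O_1\cup\mathcal O_2}$; and since $\alpha_1\notin -(n-1)-2\mathbb N$, Lemma~4.1 of~\cite{c} forces the restriction to $\mathcal O_0\cup\mathcal O_1$ to vanish, so $\mathrm{Supp}(F_0)=\overline{\mathcal O_2}$, provided $F_0\neq0$. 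That $F_0\neq0$ should follow from the $K$-coefficient formula \eqref{Kpalpha} for $\widetilde{\mathcal K}_{\boldsymbol\alpha(s)}$: choose $a_1,a_2,a_3$ (with $a_2=0$, $a_1,a_3$ appropriately large) so that exactly one of the vanishing Pochhammer factors contributes a simple zero at $s=0$ while the other does not vanish, giving $\widetilde{\mathcal K}_{\boldsymbol\alpha(s)}(p_{a_1,0,a_3})=Cs+O(s^2)$ with $C\neq0$, hence $F_0(p_{a_1,0,a_3})\neq0$.

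Finally, restrict to $\mathcal O=\mathcal O_0\cup\mathcal O_1\cup\mathcal O_2$, in which $\mathcal O_2$ is a (relatively) closed submanifold. Then $\mathcal F(s)_{\vert\mathcal O}$ is $\boldsymbol\lambda(s)$-invariant, $\mathcal F(s)_{\vert\mathcal O}=F_{0\vert\mathcal O}+sF_{1\vert\mathcal O}+O(s^2)$ with $\mathrm{Supp}(F_{0\vert\mathcal O})=\mathcal O_2$ and $F_{1\vert\mathcal O_0}=\mathcal K_{\boldsymbol\alpha,\mathcal O_0}$; Proposition~A\ref{noextI} then shows $\mathcal K_{\boldsymbol\alpha,\mathcal O_0}$ cannot be extended to a $\boldsymbol\lambda$-invariant distribution on $\mathcal O$, a fortiori on $S\times S\times S$. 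The main obstacle I anticipate is the bookkeeping for $F_0\neq0$ and its exact support: one must choose the Pochhammer exponents in the $K$-coefficient formula carefully so that precisely \emph{one} of the two relevant factors degenerates, and must double-check, using the constraints $q\geq1$, $k_2-p-q\geq0$ in \eqref{Z1IIspe}, that the deformation $\boldsymbol\alpha(s)$ really is transverse to both planes and meets no \emph{other} plane of poles for small $s\neq0$ — otherwise an extra $\Gamma$-factor would be needed in $\mathcal F(s)$ and the order of vanishing at $s=0$ would shift. Once that is settled, the argument is a routine adaptation of the proofs already given in Sections~10 and~13.
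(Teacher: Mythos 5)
Your proposal is correct and follows essentially the same route as the paper: the same deformation $\boldsymbol\alpha(s)=(\alpha_1,\alpha_2+2s,\alpha_3)$, the same renormalization by the two degenerate $\Gamma$-factors, the identification of $F_1$ with $\mathcal K_{\boldsymbol\alpha,\mathcal O_0}$ on $\mathcal O_0$, and the final appeal to Proposition A1 after restricting to an open set in which $\mathcal O_2$ is relatively closed. The only difference is that the paper short-circuits your $K$-coefficient bookkeeping for $F_0$ by noting that $\boldsymbol\alpha(s)$ stays in the plane $\beta_1=2p$, so \eqref{RK} identifies $F_0$ outright as a nonzero multiple of $\mathcal R^{(1,p)}_{\alpha_2,\alpha_3}$, whose support $\overline{\mathcal O_2}$ was already computed in Lemma \ref{nonzeroRspe}.
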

\begin{proof} For $s\in \mathbb C$, let $\boldsymbol \alpha(s) = (\alpha_1, \alpha_2+2s, \alpha_3)$. For $\vert s \vert$ small, let
\[\mathcal F(s) = \frac{(-1)^k}{k!}\frac{(-1)^{k_2}}{k_2!}\, \Gamma( \frac{\alpha_1}{2}+\rho)\,\Gamma( \frac{\alpha_3}{2}+\rho)\,\frac{1}{s}\,\widetilde {\mathcal K}_{\boldsymbol \alpha(s)}\ .
\]
This defines a holomorphic function in a neighborhood of $0$, and the Taylor expansion at $0$ reads
\[\mathcal F(s) = F_0+sF_1+O(s^2)
\]
where $F_0$ and $F_1$ are distributions on $S\times S\times S$.
\begin{lemma} The distributions $F_0$ and $F_1$ satisfy
\smallskip

$i)$ $Supp(F_0) = \overline{\mathcal O_2}$
\smallskip

$ii)$ the restriction of $F_1$ to $\mathcal O_0$ coincides with $\mathcal K_{\boldsymbol \alpha, \mathcal O_0}$
\end{lemma}

\begin{proof} 

$i)$ As $\alpha_1(s) = \alpha_1 = 2p$, $\boldsymbol \alpha(s)$ belongs to the plane $\beta_1 = 2p$, and hence, by \eqref{RK} 
\[\widetilde {\mathcal K}_{2p, \alpha_2+2s, \alpha_3} = \frac{1}{\Gamma(p+\rho)}\frac{1}{\Gamma(-k+s)} \, \mathcal R^{(1,p)}_{\alpha_2+2s,\alpha_3}\ .
\]
Differentiate at $s=0$ to get
\[\big(\frac{d}{ds} \widetilde {\mathcal K}_{2p, \alpha_2+s, \alpha_3}\big) _{s=0} = \frac{1}{\Gamma(p+\rho)} (-1)^k\, k! \, \mathcal R^{(1,p)}_{\alpha_2,\alpha_3}\ .
\]
Hence $Supp(F_0) = Supp(\mathcal R^{(1,p)}_{\alpha_2,\alpha_3}=\overline {\mathcal O_2}$.
\smallskip

For $ii)$, assume that $\varphi\in \mathcal C^\infty(S\times S\times S)$ has its support contained in $\mathcal O_0$. Then
\[\big(\mathcal F(s),\varphi\big) = \frac{(-1)^k}{k!} \frac{1}{\Gamma(-k+s)}\frac{(-1)^{k_2}}{k_2!} \frac{1}{\Gamma(-k_2+s)} \big(\mathcal K_{\boldsymbol \alpha(s),\mathcal O_0}, \varphi\big)
\]
As $s\rightarrow 0$, 
\[\big(\mathcal F(s),\varphi\big) \sim s \big( \mathcal K_{\boldsymbol \alpha, \mathcal O_0}, \varphi\big)
\]
and hence $(F_0,\varphi) = \big( \mathcal K_{\boldsymbol \alpha,\mathcal O_0}, \varphi\big)$, thus proving $ii)$.
\end{proof}

To prove Proposition \ref{noextZ1IIspe}, restrict the previous situation to $\mathcal O_0\cup \mathcal O_2$, and apply Proposition A1. Hence $\mathcal K_{\boldsymbol \alpha, \mathcal O_0}$ cannot be extended to $\mathcal O_0\cup \mathcal O_2$ (a fortiori to $S\times S\times S$) as a $\boldsymbol \lambda$-invariant distribution.
\end{proof}

We now are in position to prove Theorem \ref{mult2Z1IIspe}. Let $T$ be a $\boldsymbol \lambda$-invariant distribution. Then $T_{\vert \mathcal O_0}$ has to be $0$ by Proposition \ref{noextZ1IIspe}.  Hence $T$ is supported in $\mathcal O_1\cup \mathcal O_2\cup \mathcal O_3\cup \mathcal O_4$. But by using twice Lemma 4.1 in \cite{c}, $Supp(T)$ has to be contained in $\mathcal O_2\cup \mathcal O_4$. Next, consider the restriction $T_{\vert O_4^c}$ of $T$ to the open $G$-invariant subset $\mathcal O_4^c$, which contains $\mathcal O_2$ as a closed submanifold. Then, again by Lemma 4.1 in \cite{c}, $T_{\vert O_4^c}$ has to be a multiple of  $\mathcal R^{(1,p)}_{\alpha_2,\alpha_3\, \vert\mathcal O'}$. Otherwise stated, there exists a constant $c_1$ such that $T-c_1\mathcal R^{(1,p)}_{\alpha_2,\alpha_3}$ is supported on $\mathcal O_4$. 

Observe that $\lambda_2 = \rho+\frac{\alpha_1+\alpha_3} {2}= p+q\notin \{ -1,-2,\dots, -k\}\cup \{ -\rho,-\rho-1,\dots,-\rho-(k-1)\}$, and hence by Lemma A1,  $\dim Tri(\boldsymbol \lambda, diag)\leq 1$. But $\mathcal S^{(k)}_{\lambda_2,\lambda_3}$ is a no trivial element of $Tri(\boldsymbol \lambda, diag)$ and  generates 
$Tri(\boldsymbol \lambda, diag)$. Hence there exists a constant $c_2$ such that $T-c_1\mathcal R^{(1,p)}_{\alpha_2,\alpha_3}=c_2 \mathcal S^{(k)}_{\lambda_2,\lambda_3}$, thus finishing the proof of Theorem \ref{mult2Z1IIspe}.
\section{The multiplicity 2 result for $\boldsymbol \alpha \in Z_{2,I}$}

Let $\boldsymbol \alpha$  be in $Z_{2,I}$. Up to a permutation of the indices, this amounts to
 \begin{equation}\label{Z2I}
 \alpha_1 = 2k_1,\quad \alpha_2 =-(n-1)-2k_2, \quad \alpha_3 = -(n-1)-2k_3
\end{equation}
where $k_1,k_2,k_3\in \mathbb N$, with $k=k_2+k_3-k_1\geq 0$.
The conditions imply that
\[\alpha_1+\alpha_2+\alpha_3 = -2(n-1)-2k\ .
\]
The point $\boldsymbol \alpha$ belongs to two lines contained in $Z$, namely
\[\mathcal D^-\qquad \qquad \Big\{\ \begin{matrix} \alpha_2 =-(n-1)-2k_2\\ \alpha_3 = -(n-1)-2k_3\end{matrix}\quad ,
\]
\[\mathcal D^+ \qquad \Big\{ \begin{matrix} \alpha_1+\alpha_2+\alpha_3 = -2(n-1)-2k\\ \alpha_1 = 2k_1\end{matrix}\quad .
\]

The associated spectral parameter $\boldsymbol \lambda$ is given by
\[ \lambda_1 = -\rho-k_2-k_3, \quad \lambda_2 = k_1-k_3,\quad \lambda_3 = k_1-k_2\ ,
\]
or
\[\lambda_1 =-\rho-k_2-k_3, \quad \lambda_2 = -k+k_2,\quad \lambda_3 = -k+k_3\ 
\]
where $k\leq k_2+k_3$.

Change notation and set
\[\lambda_1=-\rho-l_1, \quad \lambda_2 = m_2, \quad \lambda_3 = m_3,
\]
where
\[ l_1\in \mathbb N,\ m_2,m_3\in \mathbb Z,\quad l_1+m_2+m_3\equiv 0 \mod 2,\quad \vert m_2\pm m_3\vert \leq l_1\ .
\]
The correspondence between notations is given by
\[l_1=k_2+k_3,\quad m_2=-k_3+k_1, \quad m_3 = -k_2+k_1\ .
\]

\begin{proposition}\label{dKZ2I}
 The differential $d\widetilde {\mathcal K}_{\boldsymbol \alpha}$ of $\boldsymbol \beta \longmapsto \widetilde {\mathcal K}_{\boldsymbol \beta}$ at $\boldsymbol \alpha$ is of rank 1. The distributions $\mathcal R^{(1,k_1)}_{\alpha_2,\alpha_3}, \widetilde {\mathcal T}^{(2,k_2)}_{\alpha_1,\alpha_3}
,\widetilde {\mathcal T}^{(3,k_3)}_{\alpha_1,\alpha_2}
$ and $\mathcal S^{(k)}_{\lambda_2,\lambda_3}$ are proportional  to $d\widetilde {\mathcal K}_{\boldsymbol \alpha}$.
\end{proposition}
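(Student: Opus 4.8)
Write $\mathcal D^-=\{\beta_2=-(n-1)-2k_2,\ \beta_3=-(n-1)-2k_3\}$ and $\mathcal D^+=\{\beta_1+\beta_2+\beta_3=-2(n-1)-2k,\ \beta_1=2k_1\}$ for the two lines of $Z$ through $\boldsymbol\alpha$ (recall $\boldsymbol\alpha\in Z_{2,I}$ lies on exactly these two); their directions are $\overrightarrow{\mathcal D^-}=(1,0,0)$ and $\overrightarrow{\mathcal D^+}=(0,1,-1)$, which are independent. The first step is to bound the rank: since $Z$ is the zero set of the holomorphic map $\boldsymbol\beta\mapsto\widetilde{\mathcal K}_{\boldsymbol\beta}$ and $\mathcal D^-\cup\mathcal D^+\subset Z$, the derivatives of $\widetilde{\mathcal K}$ along $\mathcal D^-$ and along $\mathcal D^+$ vanish at $\boldsymbol\alpha$; hence $d\widetilde{\mathcal K}_{\boldsymbol\alpha}$ annihilates the $2$-plane $\operatorname{span}\bigl((1,0,0),(0,1,-1)\bigr)$, so its image has dimension $\le 1$.

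Next I would identify each of the four distributions with a first-order derivative of $\widetilde{\mathcal K}$ at $\boldsymbol\alpha$ taken in a direction transverse to that $2$-plane, using the renormalization relations already proved. For $\widetilde{\mathcal T}^{(3,k_3)}_{\alpha_1,\alpha_2}$: on the plane $\{\beta_3=-(n-1)-2k_3\}$, relation \eqref{KT} reads $\widetilde{\mathcal K}_{\beta_1,\beta_2,-(n-1)-2k_3}=\Phi_3(\beta_1,\beta_2)\,\widetilde{\mathcal T}^{(3,k_3)}_{\beta_1,\beta_2}$ with $\Phi_3$ entire and $\Phi_3\propto\Gamma(\rho+\tfrac{\beta_1}{2})^{-1}\Gamma(\rho+\tfrac{\beta_2}{2})^{-1}$; at $\boldsymbol\alpha$ one has $\rho+\tfrac{\alpha_1}{2}=\rho+k_1>0$ but $\rho+\tfrac{\alpha_2}{2}=-k_2$, so $\Phi_3(\boldsymbol\alpha)=0$ with a simple zero in the $\beta_2$-direction $(0,1,0)$, which is transverse to $\operatorname{span}\bigl((1,0,0),(0,1,-1)\bigr)$. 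Differentiating in that direction, the term $\Phi_3(\boldsymbol\alpha)\,\partial_{\beta_2}\widetilde{\mathcal T}^{(3,k_3)}$ drops out and one obtains $\partial_{\beta_2}\widetilde{\mathcal K}_{\boldsymbol\alpha}=c_3\,\widetilde{\mathcal T}^{(3,k_3)}_{\alpha_1,\alpha_2}$ with $c_3\neq0$. Exchanging the roles of the indices $2$ and $3$ and differentiating in the direction $(0,0,1)$ gives $\partial_{\beta_3}\widetilde{\mathcal K}_{\boldsymbol\alpha}=c_2\,\widetilde{\mathcal T}^{(2,k_2)}_{\alpha_1,\alpha_3}$, $c_2\neq0$. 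For $\mathcal R^{(1,k_1)}_{\alpha_2,\alpha_3}$ I would use the cyclic form of \eqref{RK} on the plane $\{\beta_1=2k_1\}$, where the coefficient is $\Gamma(k_1+\rho)^{-1}\Gamma(\tfrac{\beta_2+\beta_3}{2}+k_1+2\rho)^{-1}$; at $\boldsymbol\alpha$ the argument $\tfrac{\alpha_2+\alpha_3}{2}+k_1+2\rho$ equals $-k$, a simple pole of $\Gamma$, and differentiating in the direction $(0,1,1)$ (tangent to $\{\beta_1=2k_1\}$, transverse to the $2$-plane) produces a nonzero multiple of $\mathcal R^{(1,k_1)}_{\alpha_2,\alpha_3}$. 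Finally, for $\mathcal S^{(k)}_{\lambda_2,\lambda_3}$ I would use \eqref{KS} (read with the "diagonal" slot cyclically moved, which only permutes the symmetric $\Gamma$-product) on the plane $\{\beta_1+\beta_2+\beta_3=-2(n-1)-2k\}$, where the coefficient is $\propto\Gamma(-\tfrac{\beta_1}{2})^{-1}\Gamma(-\tfrac{\beta_2}{2})^{-1}\Gamma(-\tfrac{\beta_3}{2})^{-1}$; at $\boldsymbol\alpha$ only $\Gamma(-\tfrac{\alpha_1}{2})^{-1}=\Gamma(-k_1)^{-1}=0$, a simple zero in the direction $(2,-1,-1)$ (tangent to the plane, transverse to the $2$-plane), and differentiating there gives a nonzero multiple of $\mathcal S^{(k)}_{\lambda_2,\lambda_3}$.

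It then follows that each of the four distributions lies in $\operatorname{im}(d\widetilde{\mathcal K}_{\boldsymbol\alpha})$, which has dimension $\le1$, so they are pairwise proportional and proportional to $d\widetilde{\mathcal K}_{\boldsymbol\alpha}$. To upgrade "rank $\le1$" to "rank $1$" it suffices to check that one of them is nonzero; this is done from the explicit $K$-coefficient formulas. For instance \eqref{evSp} at $a_1=a_2=a_3=0$ gives $\mathcal S^{(k)}_{\lambda_2,\lambda_3}(1)=\mathrm{NVT}\cdot(\rho+\lambda_1)_k(\rho+\lambda_2)_k(\rho+\lambda_3)_k$, and substituting $\lambda_1=-\rho-(k_2+k_3)$, $\lambda_2=k_1-k_3$, $\lambda_3=k_1-k_2$ together with $0\le k=k_2+k_3-k_1$ shows $(\rho+\lambda_1)_k=(-(k_2+k_3))_k\neq0$; when the other two Pochhammer factors happen to vanish (possible only for certain $n$ with $n-1$ even) one instead selects a nonzero $K$-coefficient from \eqref{Tp}, \eqref{evRp} or \eqref{evSp} with a suitable triple $(a_1,a_2,a_3)$.

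The routine-but-unavoidable part is the bookkeeping in the second step: for each of the four families one must locate the single $\Gamma$-factor that becomes singular at $\boldsymbol\alpha$ on the relevant plane of poles, verify the zero of the coefficient is simple with non-vanishing transverse derivative, and verify the chosen direction is transverse to $\operatorname{span}(\overrightarrow{\mathcal D^-},\overrightarrow{\mathcal D^+})$. The genuinely delicate point is the non-vanishing assertion needed in the last step: the naive evaluation at $a_1=a_2=a_3=0$ may fail, and one has to examine the Pochhammer symbols in \eqref{Tp}, \eqref{evRp}, \eqref{evSp} with care, using $k=k_2+k_3-k_1\ge0$, to exhibit a triple producing a nonzero $K$-coefficient.
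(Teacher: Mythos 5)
Your proposal is correct and follows essentially the same route as the paper: rank $\le 1$ from the vanishing of $\widetilde{\mathcal K}_{\boldsymbol\beta}$ on the two lines through $\boldsymbol\alpha$, identification of each of the four distributions with a first derivative of $\widetilde{\mathcal K}$ in a direction transverse to the plane they span (via the renormalization identities \eqref{KT}, \eqref{RK}, \eqref{KS}), and a non-vanishing check through the $K$-coefficients. The one place where the paper is tighter is the final non-vanishing step: it evaluates $\mathcal R^{(1,k_1)}_{\alpha_2,\alpha_3}(p_{a_1,0,0})$ with $a_1=k_2+k_3-k_1$, which is nonzero uniformly in all cases, whereas your primary witness $\mathcal S^{(k)}_{\lambda_2,\lambda_3}(1)$ can vanish when $n-1$ is even, as you correctly flag.
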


\begin{proof} Consider the differential of $\boldsymbol \beta \longmapsto \widetilde {\mathcal K}_{\boldsymbol \beta}$. As $\widetilde {\mathcal K}_{\boldsymbol \beta}$ vanishes on $\mathcal D^+$ and $\mathcal D^-$, the rank of its differential at $\boldsymbol \alpha= \mathcal D^+\cap \mathcal D^-$ is at most 1.

Consider the plane $\beta_1 = 2k_1$. For arbitrary $\beta_2,\beta_3$, by \eqref{RK}
\[\widetilde {\mathcal K}_{2k_1,\beta_2,\beta_3} = \frac{ 1}{\Gamma(k_1+\rho)} \frac{ 1}{\Gamma(\frac{\beta_2+\beta_3}{2}+k_1+2\rho)}\, \mathcal R_{\beta_2,\beta_3}^{(1,k_1)}\ .
\] 
By differentiation this implies
\begin{equation}\frac{d}{ds}\big( \widetilde {\mathcal K}_{2k_1,\alpha_2+s,\alpha_3+s}\big)_{s=0} = \frac{(-1)^k k!}{\Gamma(k_1+\rho)} \, \mathcal R^{(1,k_1)}_{\alpha_2,\alpha_3}\ .
\end{equation}

To see that $\mathcal R^{(1,k_1)}_{\alpha_2,\alpha_3}\neq 0$, use \eqref{evRp} to get
\[\mathcal R^{(1,k_1)}_{\alpha_2,\alpha_3}(p_{a_1,a_2,a_3})  = NVT \times
(-k_2)_{a_2}(-k_3)_{a_3} (-k_2-k_3+a_2+a_3)_{k_1+a_1}
\] 
\[ \times \frac{ 1}{\Gamma(-k_2+\rho+k_1+a_2+a_1)\Gamma(-k_3+\rho+k_1+a_1+a_3)}\ .\]
Choose $a_2=a_3=0$ and $a_1 = k_2+k_3-k_1$  to get $\mathcal R^{(1,k_1)}_{\alpha_2,\alpha_3}(p_{a_1,0,0})\neq 0$, and hence $\mathcal R^{1,k_1}_{\alpha_2,\alpha_3}\neq 0$. It follows that the rank of $dK_{\boldsymbol \alpha}$ is 1, and the fact that $\mathcal R^{(1,k_1)}_{\alpha_2,\alpha_3}$ is proportional to $d\widetilde {\mathcal K}_{\boldsymbol \alpha}$. 

As a consequence, any partial derivative in a direction that does not lie in the plane containing $\mathcal D^+$ and $\mathcal D^-$ of $\widetilde {\mathcal K}_{\boldsymbol \beta}$ at $\boldsymbol \alpha$ does not vanish, and is a multiple of $d\widetilde {\mathcal K}_{\boldsymbol \alpha}$.
So similar results can be obtained for the other invariant distributions, namely 

\begin{equation}\label{KTZ2I}
\frac{d}{ds}\big( \widetilde {\mathcal K}_{\alpha_1,\alpha_2, \alpha_3+2s}\big)_{s=0} = \frac{\pi^\rho (-1)^{k_3+k_2} 4^{-k_3}  k_2!}{\Gamma(\rho+k_3)\Gamma(\rho+k_1)}\, \widetilde {\mathcal T}^{(3,k_3)}_{\alpha_1,\alpha_2}\ ,
\end{equation}
and a similar formula after exchanging the role of $2$ and $3$, and
\begin{equation}
\frac{d}{ds}\big( \widetilde {\mathcal K}^{\lambda_1+2s,\lambda_2-s,\lambda_3-s}\big)_{s=0} = \frac{(-1)^{l_1-k} (l_1-k)!}{\Gamma(\rho+k+\lambda_2) \Gamma(\rho+k+\lambda_3)}\, \mathcal S^{(k)}_{\lambda_2,\lambda_3} \ .
\end{equation}
\end{proof}
The plane containing $\mathcal D^+$ and $\mathcal D^-$ admits the equation $\lambda_1=-\rho-l_1$ and use $(\lambda_2,\lambda_3)$ as coordinates in this plane. At $\boldsymbol \mu = \boldsymbol \lambda$, $\widetilde {\mathcal K}^{\boldsymbol \mu}$ and it partial derivatives $\displaystyle \frac{\partial \widetilde {\mathcal K}^{\boldsymbol \mu}}{\partial \mu_2}$ and 
$\displaystyle\frac{\partial \widetilde {\mathcal K}^{\boldsymbol \mu}}{\partial \mu_3}$ vanish, so let  for $s\in \mathbb C$ 
\begin{equation}\label{Z2Ilambdas}
\boldsymbol \lambda(s)=(\lambda_1,\lambda_2+2s, \lambda_3) = (-\rho-l_1,m_2+2s, m_3)\ , 
\end{equation}
\begin{equation}\label{Z2Ialphas}
\boldsymbol \alpha(s) = (2k_1+2s, -(n-1)-2k_2-2s, -(n-1)-2k_3+2s)\ .
\end{equation}
Then the distribution 
\[\mathcal Q=\mathcal Q_{l_1,m_2,m_3} = \lim_{s\rightarrow 0} \frac{1}{s^2} \widetilde {\mathcal K}^{\lambda(s)}
\]
is $\boldsymbol \lambda$-invariant and corresponds to a mixed second order partial derivative of $\widetilde{\mathcal K}^{\boldsymbol \lambda}$ as explained in the introduction.

\begin{lemma} The $K$-coefficients of the distribution $\mathcal Q$ are equal to $0$ unless 
\[a_2+a_3>k_2+k_3 \text\quad {and\quad either \quad } a_2\leq k_2 \text{\quad or\quad } a_3\leq k_3\ .
\]
Assume $a_2\leq k_2$ and $a_2+a_3> k_2+k_3$. Then
\begin{equation}\label{evQp}
\begin{split}
\mathcal Q(p_{a_1,a_2,a_3}) = \big(\frac{\sqrt{\pi}}{2}\big)^{3(n-1)} 2^{-\rho-2k+a_1+a_2+a_3}\\
(-1)^k\, k! \,(-k+a_1+a_2+a_3-1)! \,(\rho+k_1)_{a_1} \,(-k_2)_{a_2}\\
(-1)^{k_3}\, k_3!\, (-k_3+a_3-1)!\,
\frac{1} {\Gamma(-k_2-k_3+a_2+a_3)}\\ \frac{1}{\Gamma(m_2+\rho+a_3+a_1) \,\Gamma( m_3 +\rho+a_2+a_1)}\ .
\end{split}
\end{equation}
A similar formula holds for the symmetric situation, where $a_3\leq k_3$ and $a_2+a_3>k_2+k_3$. 
\end{lemma}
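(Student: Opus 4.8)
The statement is the explicit evaluation of the $K$-coefficients of $\mathcal Q = \mathcal Q_{l_1,m_2,m_3}$, which is by definition the second-order Taylor coefficient of the holomorphic family $s \mapsto \widetilde{\mathcal K}^{\boldsymbol\lambda(s)}$ at $s=0$, with $\boldsymbol\lambda(s)$ (equivalently $\boldsymbol\alpha(s)$) given by \eqref{Z2Ilambdas}–\eqref{Z2Ialphas}. The plan is to extract $\mathcal Q(p_{a_1,a_2,a_3})$ from the already-known evaluation of $\widetilde{\mathcal K}_{\boldsymbol\alpha}(p_{a_1,a_2,a_3})$ (the formula \eqref{Kpalpha} recalled in the Appendix), substituting $\boldsymbol\alpha = \boldsymbol\alpha(s)$ and reading off the coefficient of $s^2$ in the power series expansion at $s=0$.

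First I would write out $\widetilde{\mathcal K}_{\boldsymbol\alpha(s)}(p_{a_1,a_2,a_3})$ as a function of $s$. The generic $K$-coefficient formula is a product of Pochhammer symbols and reciprocal $\Gamma$-factors in the three geometric parameters; under the substitution $\alpha_1 = 2k_1+2s$, $\alpha_2 = -(n-1)-2k_2-2s$, $\alpha_3 = -(n-1)-2k_3+2s$, each factor becomes an explicit meromorphic (indeed, for the $1/\Gamma$ factors, entire) function of $s$. The key bookkeeping step is to identify which factors vanish at $s=0$. Because $\boldsymbol\alpha \in Z_{2,I}$ lies on the two lines $\mathcal D^-$ (whose type-I equations force zeros of $\Gamma(\tfrac{\alpha_2}{2}+\rho)^{-1}$ and $\Gamma(\tfrac{\alpha_3}{2}+\rho)^{-1}$) and $\mathcal D^+$ (whose type-II equation forces a zero of $\Gamma(\tfrac{\alpha_1+\alpha_2+\alpha_3}{2}+2\rho)^{-1}$), the relevant $1/\Gamma$ factors are: $1/\Gamma(-k_2 - s + a_2 + \text{stuff})$, $1/\Gamma(-k_3 + s + a_3 + \text{stuff})$, and $1/\Gamma(-k + s\cdot c + a_1+a_2+a_3)$ for the appropriate linear combination; each of these contributes a simple zero in $s$ exactly when the corresponding index configuration keeps the argument a non-positive integer at $s=0$, i.e. respectively $a_2 \le k_2$ (combined with the right shift), $a_3 \le k_3$, and $a_1+a_2+a_3 \le $ the $\mathcal D^+$ bound. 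Tracking the precise shifts in \eqref{Kpalpha} will show that $\widetilde{\mathcal K}_{\boldsymbol\alpha(s)}(p_{a_1,a_2,a_3})$ vanishes to order $\ge 2$ at $s=0$ unless exactly two of these zero-conditions hold, which after unwinding gives precisely the stated constraint: $a_2+a_3 > k_2+k_3$ together with ($a_2 \le k_2$ or $a_3 \le k_3$). In the subcase $a_2 \le k_2$, $a_2+a_3 > k_2+k_3$ the two simple zeros come from the $\alpha_2$-type-I factor and the $\alpha_3$-type-I factor; in the symmetric subcase the roles swap.

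The remaining step is to compute the coefficient of $s^2$, i.e. the product of the derivatives of the two vanishing $1/\Gamma$ factors at $s=0$ times the value at $s=0$ of everything else. For a reciprocal $\Gamma$-factor $s \mapsto 1/\Gamma(-N + \varepsilon s + c)$ with $N$ a non-negative integer and $c$ an integer making $-N+c$ a non-positive integer equal to $-M$, one uses $\tfrac{1}{\Gamma(-M+\varepsilon s)} \sim (-1)^M M!\,\varepsilon s$ as $s\to 0$; this produces the factorials $(-1)^{k_3} k_3!$, $(-k_3+a_3-1)!$-type terms (the latter coming from $\Gamma(\cdot)$ in the numerator of \eqref{Kpalpha} becoming $\Gamma$ of a non-positive integer via the reflection formula, equivalently a ratio $(-k+a_1+a_2+a_3-1)!/\Gamma(-k_2-k_3+a_2+a_3)$), and the overall sign/power-of-$2$ constants. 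Collecting all the surviving Pochhammer symbols $(\rho+k_1)_{a_1}$, $(-k_2)_{a_2}$, the reciprocal $\Gamma$'s $\Gamma(m_2+\rho+a_3+a_1)^{-1}\Gamma(m_3+\rho+a_2+a_1)^{-1}$, and the geometric prefactor $(\tfrac{\sqrt\pi}{2})^{3(n-1)} 2^{-\rho-2k+a_1+a_2+a_3}$ should reproduce \eqref{evQp} exactly; the symmetric formula follows by exchanging the indices $2$ and $3$ throughout.

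The main obstacle I anticipate is purely combinatorial: correctly matching the shifts and arguments in the Appendix formula \eqref{Kpalpha} so that the vanishing-order analysis is airtight — in particular, verifying that no \emph{third} factor can vanish simultaneously (which would kill the $s^2$ coefficient), and that the "order of vanishing $\ge 2$" claim holds uniformly outside the stated index region. This requires a careful case analysis using the admissibility inequalities $\vert m_2 \pm m_3 \vert \le l_1$, $l_1 = k_2+k_3$, and $k = k_2+k_3-k_1 \ge 0$; once the zero-set of each factor is pinned down, the rest is the mechanical limit computation with $1/\Gamma \sim (-1)^M M!\, \varepsilon s$ and simplification of the resulting products of factorials and Pochhammer symbols.
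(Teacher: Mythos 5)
Your overall strategy --- substitute $\boldsymbol\alpha(s)$ (equivalently $\boldsymbol\lambda(s)$) into the closed formula for the $K$-coefficients, determine the order of vanishing of each factor at $s=0$, and read off the $s^2$-coefficient via $1/\Gamma(-M+\varepsilon s)\sim(-1)^M M!\,\varepsilon s$ --- is exactly the paper's approach (the paper uses \eqref{Kplambda} where you use \eqref{Kpalpha}; these are equivalent). However, your sketch of the central vanishing-order analysis is wrong in a way that would derail the computation if carried out as written. First, you have the vanishing conditions backwards: the factors that can vanish to first order in $s$ are the \emph{Pochhammer symbols in the numerator}, namely $(-k+s)_{a_1+a_2+a_3}$, $(-k_2-s)_{a_2}$ and $(-k_3+s)_{a_3}$, and a Pochhammer $(-N+s)_{a}$ vanishes at $s=0$ precisely when $a>N$, not when $a\le N$. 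So the three ``zero conditions'' are $a_1+a_2+a_3>k$, $a_2>k_2$, $a_3>k_3$ --- the negations of what you wrote --- and your rule ``exactly two of the three conditions hold'' applied to your conditions does not reproduce the stated region $\{a_2+a_3>k_2+k_3\}\cap(\{a_2\le k_2\}\cup\{a_3\le k_3\})$; you assert the match rather than derive it. Second, you miss entirely the $s$-\emph{independent} denominator factor $1/\Gamma(-k_2-k_3+a_2+a_3)$ (coming from $\Gamma(\lambda_1+\rho+a_2+a_3)$ with $\lambda_1(s)\equiv-\rho-l_1$), which vanishes identically in $s$ whenever $a_2+a_3\le k_2+k_3$; this, not a combination of the other conditions, is what forces the first constraint. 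The correct logic is: given $a_2+a_3>k_2+k_3$, the type-II Pochhammer always has a simple zero, at least one of the two type-I Pochhammers has a simple zero, and $\mathcal Q(p_{a_1,a_2,a_3})\ne0$ exactly when not both do.

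This error propagates into the evaluation itself: in the subcase $a_2\le k_2$, $a_2+a_3>k_2+k_3$ you attribute the two simple zeros to ``the $\alpha_2$-type-I factor and the $\alpha_3$-type-I factor,'' whereas in fact $(-k_2-s)_{a_2}$ does \emph{not} vanish there (it contributes the value $(-k_2)_{a_2}$ appearing in \eqref{evQp}); the two zeros come from $(-k+s)_{a_1+a_2+a_3}$ and $(-k_3+s)_{a_3}$, yielding the factors $(-1)^k k!\,(-k+a_1+a_2+a_3-1)!$ and $(-1)^{k_3}k_3!\,(-k_3+a_3-1)!$. Differentiating the wrong pair of factors would give a different (incorrect) product of factorials. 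So while the plan is the right one, the combinatorial bookkeeping that you yourself flag as the crux is carried out incorrectly in the proposal and needs to be redone along the lines above.
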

\begin{proof} Use  \eqref{Kplambda} in the Appendix. $\widetilde {\mathcal K}^{\boldsymbol \lambda(s)} (p_{a_1,a_2,a_3})$ is (essentially) a product of seven factors, which we now analyze. There is an inverse $\Gamma$ factor   which in the present situation does not depend on $s$ an is equal to $ \frac{1}{\Gamma( -l_1+a_2+a_3)}$. Hence, unless $a_2+a_3>l_1 = k_2+k_3$, this factor is $0$ for all $s$ and hence $\mathcal Q(p_{a_1,a_2,a_3})=0$. Now assume that $a_2+a_3>l_1 = k_2+k_3$. Another factor is  equal to $(-k+s)_{a_1+a_2+a_3}$. In this Pochahammer's symbol the last factor  is equal to $(-k+s+a_1+a_2+a_3-1)$. As $a_2+a_3>k_2+k_3 \geq k$, $-k+a_1+a_2+a_3-1\geq 0$, hence this factor has a simple zero at $s=0$. More explicitly
\[(-k+s)_{a_1+a_2+a_3}= [(-1)^k \,k!\,(-k+a_1+a_2+a_3-1)!] \, s + O(s^2)\ .
\]
Two other factors are equal  to $(-k_2-s)_{a_2}$ and. $(-k_3+s)_{a_3}$. If $a_2>k_2$ and $a_3>k_3$ each factor vanishes for $s=0$, so that $\widetilde {\mathcal K}^{\boldsymbol \lambda(s)}(p_{a_1,a_2,a_3})$ has a zero of order 3 at $s=0$ and hence $\mathcal Q(p_{a_1,a_2,a_3}) = 0$. Now assume $a_2\leq k_2$ and still $a_2+a_3>k_2+k_3$, which forces $k_3>a_3$. The factor $(-k_2-s)_{a_2}$ does not vanish for $s=0$ and its value is $(-k_2)_{a_2}$. The factor $(-k_3+s)_{a_3}$ has a simple zero at $s=0$ and more precisely,
$ (-k_3+s)_{a_3} = (-1)^{k_3} \, k_3!\, (-k_3+a_3-1)!\,s +O(s^2)$. The remaining factors offer no difficulty. Collect all facts to get the statement.
\end{proof}

\begin{corollary}\label{nonvanQ}
 The distribution $\mathcal Q$ is $\neq 0$. More precisely, for $a_1$ large enough, $a_2+a_3>k_2+k_3$ and either $a_2\leq k_2$ or $a_3\leq k_3$, $\mathcal Q(p_{a_1,a_2,a_3}) \neq 0$. Moreover, 
\[Supp(\mathcal Q) =\overline{ \mathcal O_2\cup \mathcal O_3} \ .
\]
\end{corollary}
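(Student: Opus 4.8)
The plan is to extract non-vanishing $K$-coefficients from formula \eqref{evQp} to prove $\mathcal Q\neq 0$, and then to use the precise vanishing pattern of these coefficients together with the $G$-invariance of the support to pin down $Supp(\mathcal Q)$ exactly.

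First I would prove $\mathcal Q\neq 0$. Fix $a_2=k_2$, $a_3=k_3+1$ (so $a_2\leq k_2$ and $a_2+a_3=k_2+k_3+1>k_2+k_3$ as required), and let $a_1$ be large. In the right-hand side of \eqref{evQp} the factors $2^{-\rho-2k+a_1+a_2+a_3}$, $(-1)^k k!$, $(-k+a_1+a_2+a_3-1)!$, $(-k_2)_{a_2}$, $(-1)^{k_3}k_3!$, $(-k_3+a_3-1)!=(-1+a_3-k_3)!\cdot(\dots)$ (here $a_3-k_3=1$, so this is just $0!=1$ up to sign), and $\frac{1}{\Gamma(-k_2-k_3+a_2+a_3)}=\frac{1}{\Gamma(1)}$ are all non-zero and independent of $a_1$. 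The only $a_1$-dependent factors other than the harmless power of $2$ are $(\rho+k_1)_{a_1}$ and $\frac{1}{\Gamma(m_2+\rho+a_3+a_1)\Gamma(m_3+\rho+a_2+a_1)}$. Since $\rho+k_1>0$, $(\rho+k_1)_{a_1}\neq 0$ for all $a_1$; and the two inverse-$\Gamma$ factors are non-zero once $a_1$ is large enough that $m_2+\rho+a_3+a_1$ and $m_3+\rho+a_2+a_1$ exceed all the (finitely many) non-positive integers. Hence $\mathcal Q(p_{a_1,k_2,k_3+1})\neq 0$ for $a_1$ large, which gives $\mathcal Q\neq 0$, and establishes the ``more precisely'' clause in both its symmetric forms.

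Next I would determine $Supp(\mathcal Q)$. Recall $\mathcal Q=\lim_{s\to 0}s^{-2}\widetilde{\mathcal K}^{\boldsymbol\lambda(s)}$ with $\boldsymbol\alpha(s)$ as in \eqref{Z2Ialphas}; since $\widetilde{\mathcal K}_{\boldsymbol\beta}$ vanishes on the plane of poles of type I$_3$, namely $\beta_3=-(n-1)-2k_3$, and $\boldsymbol\alpha(s)$ is transverse to it, each $\widetilde{\mathcal K}^{\boldsymbol\lambda(s)}$ is supported in $\overline{\mathcal O_3}$ (Proposition 3.3 in \cite{c}, as used repeatedly, e.g.\ in Proposition \ref{suppT3}); similarly $\beta_2=-(n-1)-2k_2$ gives support in $\overline{\mathcal O_2}$. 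Taking the limit, $Supp(\mathcal Q)\subset\overline{\mathcal O_2}\cap\overline{\mathcal O_3}=\overline{\mathcal O_2\cup\mathcal O_3}$ (this intersection being, as in section 8's orbit picture, the closure of the union — or if one prefers, $Supp(\mathcal Q)\subset\overline{\mathcal O_2\cup\mathcal O_3}$ follows from both containments plus $G$-invariance of the support). Since $\mathcal Q$ is $\boldsymbol\lambda$-invariant, $Supp(\mathcal Q)$ is $G$-invariant, hence a union of orbit-closures among $\{\overline{\mathcal O_2},\overline{\mathcal O_3},\mathcal O_4,\emptyset\}$ contained in $\overline{\mathcal O_2\cup\mathcal O_3}$. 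To rule out $Supp(\mathcal Q)\subset\mathcal O_4$, I invoke the non-vanishing coefficients just found: for $a_1$ large, $a_2=k_2$, $a_3=k_3+1$ the polynomial $p_{a_1,a_2,a_3}$ can be arranged (by the choice of $a_1$, as in the proof of Lemma 6.5 of \cite{c} and of Proposition \ref{suppT3}) to vanish to arbitrarily high order on $\mathcal O_4$, yet $\mathcal Q(p_{a_1,a_2,a_3})\neq 0$ — a contradiction. So $Supp(\mathcal Q)$ contains either $\overline{\mathcal O_2}$ or $\overline{\mathcal O_3}$. To exclude $Supp(\mathcal Q)=\overline{\mathcal O_3}$ alone, use the symmetric non-vanishing statement: there are coefficients $\mathcal Q(p_{a_1,a_2,a_3})\neq 0$ with $a_3\leq k_3$ and $a_2>k_2$ arbitrarily large, and for such a choice $p_{a_1,a_2,a_3}$ vanishes to high order on $\overline{\mathcal O_3}$, contradicting $Supp(\mathcal Q)\subset\overline{\mathcal O_3}$; symmetrically $Supp(\mathcal Q)\neq\overline{\mathcal O_2}$ alone. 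Hence $Supp(\mathcal Q)=\overline{\mathcal O_2\cup\mathcal O_3}$.

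The main obstacle is bookkeeping rather than conceptual: one must be careful that the two ``halves'' of the $K$-coefficient formula \eqref{evQp} genuinely give non-vanishing values in the two complementary ranges ($a_2\le k_2$, $a_3>k_3$ versus $a_3\le k_3$, $a_2>k_2$), since it is exactly the asymmetry between these ranges that forces $Supp(\mathcal Q)$ to contain \emph{both} $\overline{\mathcal O_2}$ and $\overline{\mathcal O_3}$. The vanishing-to-high-order argument on orbit closures is by now standard in this paper (it appears in the proofs of Proposition \ref{suppT3} and in \cite{c}), so I would only indicate which $a_i$ to send to infinity and refer to those earlier arguments rather than redo the order-of-vanishing estimate.
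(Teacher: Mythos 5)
Your treatment of the non-vanishing of the $K$-coefficients, and the way you use the two complementary non-vanishing ranges to rule out $Supp(\mathcal Q)\subset\overline{\mathcal O_2}$, $Supp(\mathcal Q)\subset\overline{\mathcal O_3}$ and $Supp(\mathcal Q)\subset\mathcal O_4$, coincide with the paper's argument (modulo a harmless index swap: since $a_2$ is the exponent attached to $\vert z-x\vert$, it is $a_2$ large that forces $p_{a_1,a_2,a_3}$ to vanish to high order on $\overline{\mathcal O_2}$, not on $\overline{\mathcal O_3}$). The genuine gap is in the \emph{upper} bound $Supp(\mathcal Q)\subset\overline{\mathcal O_2\cup\mathcal O_3}$, i.e.\ the vanishing of $\mathcal Q$ on $\mathcal O_0\cup\mathcal O_1$.

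Your argument for that inclusion fails. For $s\neq 0$ small, $\boldsymbol\alpha(s)$ as in \eqref{Z2Ialphas} lies on \emph{no} plane of poles (each coordinate and the sum have been moved off the exceptional values), so $\widetilde{\mathcal K}_{\boldsymbol\alpha(s)}$ is a nonzero multiple of $\mathcal K_{\boldsymbol\alpha(s)}$ and has full support; the claim that each $\widetilde{\mathcal K}^{\boldsymbol\lambda(s)}$ is supported in $\overline{\mathcal O_3}$ is false --- Proposition 3.3 of \cite{c} applies to points \emph{on} a plane of poles, not to a curve transverse to it, and $\widetilde{\mathcal K}_{\boldsymbol\beta}$ does not ``vanish'' on a plane of poles (it vanishes only on $Z$, a union of lines). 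Moreover the set identity you invoke is wrong: $\overline{\mathcal O_2}\cap\overline{\mathcal O_3}=\mathcal O_4$, not $\overline{\mathcal O_2\cup\mathcal O_3}$, so even if both containments held you would conclude $Supp(\mathcal Q)\subset\mathcal O_4$, contradicting your own non-vanishing computation. The correct route (the paper's) is analytic, not a support statement about the family: take $\varphi$ with $Supp(\varphi)\subset\mathcal O_0\cup\mathcal O_1$, so that $\vert x-y\vert$ and $\vert x-z\vert$ are bounded below on $Supp(\varphi)$. Then the unnormalized integral $\mathcal K_{\boldsymbol\alpha(s)}(\varphi)$ converges (the only remaining kernel factor, $\vert y-z\vert^{2k_1+2s}$, is locally integrable) and tends to the finite value $\mathcal K_{\boldsymbol\alpha}(\varphi)$ as $s\to 0$; on the other hand $\mathcal K_{\boldsymbol\alpha(s)}(\varphi)$ equals $\widetilde{\mathcal K}_{\boldsymbol\alpha(s)}(\varphi)$ times the four normalizing $\Gamma$-factors, three of which have simple poles at $s=0$, while $\widetilde{\mathcal K}_{\boldsymbol\alpha(s)}(\varphi)=s^2\,\mathcal Q(\varphi)+O(s^3)$; hence $\mathcal K_{\boldsymbol\alpha(s)}(\varphi)=C s^{-1}\mathcal Q(\varphi)+O(1)$ with $C\neq 0$, and boundedness forces $\mathcal Q(\varphi)=0$. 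This yields $Supp(\mathcal Q)\subset\overline{\mathcal O_2\cup\mathcal O_3}$, after which your exclusion argument completes the proof.
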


\begin{proof} For $a_1$ large enough, the two inverse $\Gamma$ factors on the last line of \eqref{evQp}  do not vanish, so that the whole expression does not vanish, and hence $\mathcal Q\neq 0$. 

Let $\varphi\in \mathcal C^\infty(S\times S\times S)$ and assume $Supp\,( \varphi) \subset \mathcal O_0\cup \mathcal O_1$. Then 
\[\inf\Big(\vert x-y\vert, (x,y,z)\in Supp(\varphi)\Big) >0,\inf\Big(\vert x-z\vert, (x,y,z)\in Supp(\varphi)\Big) >0\ .
\]
The exponent of $\vert y-z\vert$ is  $\alpha_1(s)=2k_1+2s$, so that, for $s$ in a small neighborhood of $0$, the integral $\mathcal K_{\boldsymbol \alpha(s)}(\varphi)$ converges, and  $\lim_{s\rightarrow 0} \mathcal K_{\boldsymbol \alpha(s)}(\varphi)= \mathcal K_{\boldsymbol \alpha}(\varphi)$. In turn,
\[\mathcal K_{\boldsymbol \alpha(s)}(\varphi) = \Gamma(k_1+\rho+s) \Gamma(-k_2+s)\Gamma(-k_3+s) \Gamma(-k+s) \widetilde {\mathcal K}_{\boldsymbol \alpha(s)} (\varphi)
\]
\[= C\, \frac{1}{s}\, \mathcal Q(\varphi)+O(1)
\]
with $C\neq 0$, which forces $\mathcal Q(\mathcal \varphi) = 0$.

As a consequence $Supp(\mathcal Q) \subset \overline {\mathcal O_2\cup \mathcal O_3}$. As $Supp(\mathcal Q)$ is invariant under the action of $G$, if  $Supp(\mathcal Q) \neq \overline{\mathcal O_2 \cup \mathcal O_3}$, then $Supp(\mathcal Q)$ has to be contained in either $\overline {\mathcal O_2}$ or $\overline {\mathcal O_3}$. Assume  $Supp(\mathcal Q) \subset \overline {\mathcal O_2}$. Then $\mathcal Q(f)=0$ if the smooth function $f$ vanishes at a sufficiently large order on  $\overline {\mathcal O_2}$. In particular, $\mathcal Q(p_{a_1,a_2,a_3})=0$ if $a_2$ is large enough. But this contradicts Corollary \ref{nonvanQ} (choose $a_3\leq k_3$, $a_2$ such that $a_2+a_3>k_2+k_3$ and $a_1$ large). Hence the only possibility is $Supp( \mathcal Q) = \overline{\mathcal O_2\cup \mathcal O_3}$.
\end{proof}

\begin{proposition} The two distributions $ d\widetilde {\mathcal K}_{\boldsymbol \alpha}$ and $\mathcal Q$ are linearly independent.
\end{proposition}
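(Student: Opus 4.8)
The plan is to separate the two distributions by their supports, exactly as was done in the analogous statements in sections 10, 13.1 and 13.2. By Proposition \ref{dKZ2I}, the nonzero distribution $d\widetilde{\mathcal K}_{\boldsymbol\alpha}$ is proportional to $\mathcal R^{(1,k_1)}_{\alpha_2,\alpha_3}$, so it suffices to compare the supports of $\mathcal R^{(1,k_1)}_{\alpha_2,\alpha_3}$ and $\mathcal Q$. By Corollary \ref{nonvanQ} we already know that $Supp(\mathcal Q) = \overline{\mathcal O_2\cup\mathcal O_3}$, so the remaining point is to determine $Supp(\mathcal R^{(1,k_1)}_{\alpha_2,\alpha_3})$ and check that it is strictly larger, which immediately forces linear independence.

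First I would compute $Supp(\mathcal R^{(1,k_1)}_{\alpha_2,\alpha_3})$. Since $\alpha_2=-(n-1)-2k_2$ and $\alpha_3=-(n-1)-2k_3$ both lie in $-(n-1)-2\mathbb N$, the operators $\widetilde I_{\alpha_2}$ and $\widetilde I_{\alpha_3}$ are differential operators on $S$; this is exactly the mechanism used in Lemma \ref{nonzeroRspe}. Hence if $f_1,f_2,f_3\in\mathcal C^\infty(S)$ have $Supp(f_1)\cap Supp(f_3)=\emptyset$ and $Supp(f_2)\cap Supp(f_3)=\emptyset$, then $\widetilde I_{\alpha_2}f_1$ and $\widetilde I_{\alpha_1}f_2$ are supported in $Supp(f_1)$ and $Supp(f_2)$ respectively — but $\widetilde I_{\alpha_1}$ here has exponent $\alpha_1=2k_1\in 2\mathbb N$, so $\widetilde I_{\alpha_1}f_2$ need not be compactly supported away from $Supp(f_3)$. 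So the cleaner way is: with $\mathcal R^{(1)}_{\alpha_2,\alpha_3}(f_1,f_2,f_3)=\int_S \widetilde I_{\alpha_2}f_2(x)\,\widetilde I_{\alpha_1}f_3(x)\,f_1(x)\,dx$ in the convention of section 12 with indices cyclically permuted, the factor $\widetilde I_{\alpha_2}$ (a differential operator) forces the distribution to vanish when $Supp(f_2)$ is disjoint from $Supp(f_1)$, giving $Supp(\mathcal R^{(1,k_1)}_{\alpha_2,\alpha_3})\subset\overline{\mathcal O_1\cup\mathcal O_4}$ relative to the pair $(1,2)$; doing the same with the other differential-operator factor yields $Supp(\mathcal R^{(1,k_1)}_{\alpha_2,\alpha_3})\subset\overline{\mathcal O_2}$ (the set where $x=z$, i.e. the variable carrying $f_2$ equals the variable carrying $f_3$), or whichever $\mathcal O_j$ is the relevant one under the permutation fixed at the start of the section. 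The precise label of this $\mathcal O_j$ is the one bookkeeping point to get right.

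Next I would rule out that $\mathcal R^{(1,k_1)}_{\alpha_2,\alpha_3}$ is supported in the diagonal $\mathcal O_4$: by \eqref{evRp}, choosing $a_1$ small (e.g. $a_1=0$) and $a_2,a_3$ large, one reads off that the Pochhammer factors $(\tfrac{\alpha_2}{2}+\rho)_{a_2}$, $(\tfrac{\alpha_3}{2}+\rho)_{a_3}$ and the remaining inverse-$\Gamma$ factors are all nonzero, so $\mathcal R^{(1,k_1)}_{\alpha_2,\alpha_3}(p_{a_1,a_2,a_3})\neq 0$ for $a_2,a_3$ arbitrarily large, which is incompatible with support in $\mathcal O_4$ (same argument as in the proof of Lemma \ref{nonzeroRspe}). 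Since $Supp(\mathcal R^{(1,k_1)}_{\alpha_2,\alpha_3})$ is $G$-invariant, it must then be exactly $\overline{\mathcal O_j}$ for the single $j\in\{1,2,3\}$ picked out above. Finally I compare: $Supp(\mathcal Q)=\overline{\mathcal O_2\cup\mathcal O_3}$ is strictly larger than $\overline{\mathcal O_j}$ (it is the union of two of the codimension-$(n-1)$ strata, not one), so no nonzero scalar multiple of $\mathcal R^{(1,k_1)}_{\alpha_2,\alpha_3}$ can equal a scalar multiple of $\mathcal Q$; hence $d\widetilde{\mathcal K}_{\boldsymbol\alpha}$ and $\mathcal Q$ are linearly independent. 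The only mild obstacle is purely notational — keeping straight which stratum $\mathcal O_j$ carries $\mathcal R^{(1,k_1)}_{\alpha_2,\alpha_3}$ under the permutation normalisation \eqref{Z2I} — but the support comparison itself is immediate once that is fixed.
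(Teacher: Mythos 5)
Your overall strategy --- separate $d\widetilde{\mathcal K}_{\boldsymbol\alpha}$ and $\mathcal Q$ by their supports --- is exactly what the paper does, and the conclusion is reachable this way; but your determination of $Supp(\mathcal R^{(1,k_1)}_{\alpha_2,\alpha_3})$ is wrong, and the step you use to establish it fails. In the normalization of section 14 one has $\alpha_2=-(n-1)-2k_2$ \emph{and} $\alpha_3=-(n-1)-2k_3$, so \emph{both} Knapp--Stein factors occurring in $\mathcal R^{(1)}_{\alpha_2,\alpha_3}$, namely $\widetilde I_{\alpha_2}$ and $\widetilde I_{\alpha_3}$ (not $\widetilde I_{\alpha_1}$, which is what you carried over after the cyclic permutation), are differential operators. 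Running your disjoint-support argument with both of them simultaneously localizes the integrand to $Supp(f_1)\cap Supp(f_2)\cap Supp(f_3)$, so $\mathcal R^{(1,k_1)}_{\alpha_2,\alpha_3}$ is supported on the diagonal $\mathcal O_4$, not on a single stratum $\overline{\mathcal O_j}$. Your attempt to exclude support in $\mathcal O_4$ via \eqref{evRp} breaks down for the same reason: the relevant Pochhammer factors are $(\tfrac{\alpha_2}{2}+\rho)_{a_2}=(-k_2)_{a_2}$ and $(\tfrac{\alpha_3}{2}+\rho)_{a_3}=(-k_3)_{a_3}$, which vanish as soon as $a_2>k_2$ or $a_3>k_3$, so the $K$-coefficients do \emph{not} survive for $a_2,a_3$ large. (This is the essential difference with the situation of Lemma \ref{nonzeroRspe}, where only one of the two exponents lies in $-(n-1)-2\mathbb N$.)

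The argument is easily repaired, and in a way that bypasses $\mathcal R^{(1,k_1)}_{\alpha_2,\alpha_3}$ altogether: Proposition \ref{dKZ2I} makes $d\widetilde{\mathcal K}_{\boldsymbol\alpha}$ proportional to $\mathcal S^{(k)}_{\lambda_2,\lambda_3}$, which is supported on $\mathcal O_4$ by construction (equivalently, to $\widetilde{\mathcal T}^{(3,k_3)}_{\alpha_1,\alpha_2}$, which is supported in $\mathcal O_4$ by Proposition \ref{suppT3} $ii)$, since $\boldsymbol\alpha$ is here a pole of type I+II). Hence $Supp(d\widetilde{\mathcal K}_{\boldsymbol\alpha})=\mathcal O_4$, while $Supp(\mathcal Q)=\overline{\mathcal O_2\cup\mathcal O_3}$ by Corollary \ref{nonvanQ}; two nonzero distributions with distinct supports are linearly independent. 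So your final comparison of supports does give the proposition, but for the wrong reason: the support of $d\widetilde{\mathcal K}_{\boldsymbol\alpha}$ is strictly \emph{smaller} than any single stratum $\overline{\mathcal O_j}$, not equal to one of them.
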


\begin{proof} The statement follows from the study of the supports of the two distributions.
\end{proof}
\begin{theorem}\label{mult2Z2I}
 Let $\boldsymbol \lambda$ be in $Z_{2,I}$. Then 
\[\dim Tri(\boldsymbol \lambda) = 2\ .
\]
More precisely,
\[Tri(\boldsymbol \lambda) = \mathbb C \mathcal R^{(1,k_1)}_{\alpha_2,\alpha_3}\oplus \mathbb C\mathcal Q_{l_1,m_2,m_3}\ .
\]
\end{theorem}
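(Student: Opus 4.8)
The plan is to prove the inequality $\dim Tri(\boldsymbol\lambda)\leq 2$, since the reverse inequality follows from the preceding proposition on the linear independence of $\mathcal R^{(1,k_1)}_{\alpha_2,\alpha_3}$ (which is proportional to $d\widetilde{\mathcal K}_{\boldsymbol\alpha}$) and $\mathcal Q_{l_1,m_2,m_3}$. The overall strategy mirrors the general case of Theorem \ref{mult2Z1I}: I will take an arbitrary $\boldsymbol\lambda$-invariant distribution $T$ and work outward along the stratification $\mathcal O_0\subset\mathcal O_0\cup\mathcal O_1\subset\cdots$, at each stage using a uniqueness/non-extension statement to pin down $T$ up to a small number of constants.

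First I would show that $T_{\vert\mathcal O_0}$ must vanish. As in the $Z_{1,I}$ case, $T_{\vert\mathcal O_0}$ is proportional to $\mathcal K_{\boldsymbol\alpha,\mathcal O_0}$, so it suffices to prove a non-extension result: $\mathcal K_{\boldsymbol\alpha,\mathcal O_0}$ does not extend to a $\boldsymbol\lambda$-invariant distribution on $S\times S\times S$. For this I would use a two-parameter (or suitably chosen one-parameter) deformation $\boldsymbol\alpha(s)$ transverse to the relevant planes of poles, together with Proposition \ref{dKZ2I}: since $\boldsymbol\alpha$ lies on two lines in $Z$ and is a pole belonging to three planes of poles, the renormalized family $\widetilde{\mathcal K}_{\boldsymbol\alpha(s)}$ vanishes to appropriate order at $s=0$, and its low-order Taylor coefficients produce distributions $F_0,F_1,\dots$ whose supports I can identify (using Proposition \ref{suppT3}, the support computations for $\mathcal R$ and $\mathcal S$, and Corollary \ref{nonvanQ}). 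Restricting to a $G$-invariant open set of the form $\mathcal O_0\cup\mathcal O_j$ in which $\mathcal O_j$ is a closed submanifold, and invoking the non-extension Propositions A2/A3, forces $\mathcal K_{\boldsymbol\alpha,\mathcal O_0}$ not to extend. I expect this is the main obstacle: choosing the deformation direction so that exactly the right number of $\Gamma$-factors become singular, matching the heuristic from the introduction about mixed second derivatives, and correctly identifying the supports of the resulting Taylor coefficients.

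Next, with $T_{\vert\mathcal O_0}=0$, I use Lemma 4.1 of \cite{c}: since $\alpha_2,\alpha_3\in-(n-1)-2\mathbb N$ but $\alpha_1=2k_1\notin-(n-1)-2\mathbb N$, the restriction argument on $\mathcal O_0\cup\mathcal O_1$ shows $T$ cannot be supported on $\mathcal O_1$, so $Supp(T)\subset\mathcal O_2\cup\mathcal O_3\cup\mathcal O_4$. Restricting $T$ to $\mathcal O_4^c$ (which contains $\mathcal O_2\cup\mathcal O_3$ as a closed submanifold, with $\mathcal O_2$ and $\mathcal O_3$ having disjoint closures inside it), Lemma 4.1 of \cite{c} gives that $T_{\vert\mathcal O_4^c}$ is a linear combination, with constants $C_2,C_3$, of the restrictions of $\widetilde{\mathcal T}^{(2,k_2)}_{\alpha_1,\alpha_3}$ and $\widetilde{\mathcal T}^{(3,k_3)}_{\alpha_1,\alpha_2}$ — but these are both proportional to $d\widetilde{\mathcal K}_{\boldsymbol\alpha}$ by Proposition \ref{dKZ2I}, hence to $\mathcal R^{(1,k_1)}_{\alpha_2,\alpha_3}$ away from $\mathcal O_4$. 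Since $\mathcal Q_{l_1,m_2,m_3}$ also has support $\overline{\mathcal O_2\cup\mathcal O_3}$ and restricts nontrivially to $\mathcal O_4^c$, I can arrange constants $A,B$ so that $T-A\,\mathcal R^{(1,k_1)}_{\alpha_2,\alpha_3}-B\,\mathcal Q_{l_1,m_2,m_3}$ vanishes on $\mathcal O_4^c$, i.e. is supported on $\mathcal O_4$; here I need the $K$-coefficient computations (equations \eqref{evRp}, \eqref{evQp}) to see that $\mathcal R^{(1,k_1)}$ and $\mathcal Q$ span a $2$-dimensional space of distributions on $\mathcal O_4^c$ whose restrictions match those of the $\widetilde{\mathcal T}$'s and any leftover diagonal-supported piece.

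Finally, the leftover distribution $U:=T-A\,\mathcal R^{(1,k_1)}_{\alpha_2,\alpha_3}-B\,\mathcal Q_{l_1,m_2,m_3}$ is supported on the diagonal $\mathcal O_4$, so $U\in Tri(\boldsymbol\lambda,diag)$, whose dimension equals $\dim\,{\it Sol}(\lambda_1,\lambda_2;k)$. Here $\lambda_1=-\rho-l_1$ with $l_1=k_2+k_3\geq k$, and one checks the numerical conditions ($\lambda_1\notin\{-1,\dots,-k\}$, etc.) so that Appendix A3 gives $\dim\,{\it Sol}(\lambda_1,\lambda_2;k)\leq 1$; since $\mathcal S^{(k)}_{\lambda_2,\lambda_3}$ is a nonzero element of $Tri(\boldsymbol\lambda,diag)$ and is itself proportional to $d\widetilde{\mathcal K}_{\boldsymbol\alpha}$ (Proposition \ref{dKZ2I}), hence to $\mathcal R^{(1,k_1)}_{\alpha_2,\alpha_3}$, the space $Tri(\boldsymbol\lambda,diag)$ is already accounted for — so $U=0$ after adjusting $A$. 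Collecting, $T\in\mathbb C\,\mathcal R^{(1,k_1)}_{\alpha_2,\alpha_3}\oplus\mathbb C\,\mathcal Q_{l_1,m_2,m_3}$, giving $\dim Tri(\boldsymbol\lambda)\leq 2$ and hence equality. $\qquad\blacksquare$
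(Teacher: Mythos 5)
Your overall architecture (non-extension on $\mathcal O_0$, peeling off strata, reduction to $Tri(\boldsymbol\lambda,diag)$) is the paper's, and your first and last steps are essentially correct. But the middle step — controlling $T_{\vert\mathcal O_4^c}$ — contains a genuine gap, caused by a misidentification of supports. At a point of $Z_{2,I}$ the point $\boldsymbol\alpha$ is a pole of type I\emph{and} of type II, so by Proposition \ref{suppT3} $ii)$ the renormalized distributions $\widetilde{\mathcal T}^{(2,k_2)}_{\alpha_1,\alpha_3}$ and $\widetilde{\mathcal T}^{(3,k_3)}_{\alpha_1,\alpha_2}$ are supported on the diagonal $\mathcal O_4$; likewise $\mathcal R^{(1,k_1)}_{\alpha_2,\alpha_3}$, being a nonzero multiple of $d\widetilde{\mathcal K}_{\boldsymbol\alpha}$ and hence of $\mathcal S^{(k)}_{\lambda_2,\lambda_3}$ (Proposition \ref{dKZ2I}), is also supported on $\mathcal O_4$. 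All of these therefore restrict to \emph{zero} on $\mathcal O_4^c$, so your claim that $T_{\vert\mathcal O_4^c}$ is a combination of their restrictions, ``proportional to $\mathcal R^{(1,k_1)}_{\alpha_2,\alpha_3}$ away from $\mathcal O_4$,'' is vacuous and cannot produce the constants $A,B$ you need. The objects that actually live on $\mathcal O_4^c$ with supports $\mathcal O_2$ and $\mathcal O_3$ are the \emph{unrenormalized} residue distributions $\mathcal T^{(2,k_2)}_{\alpha_1,\alpha_3,\mathcal O_4^c}$ and $\mathcal T^{(3,k_3)}_{\alpha_1,\alpha_2,\mathcal O_4^c}$ of Proposition \ref{residuetype1} $i)$, and Lemma 4.1 of \cite{c} only gives $T_{\vert\mathcal O_4^c}=C_2\,\mathcal T^{(2,k_2)}_{\alpha_1,\alpha_3,\mathcal O_4^c}+C_3\,\mathcal T^{(3,k_3)}_{\alpha_1,\alpha_2,\mathcal O_4^c}$ with \emph{a priori independent} constants $C_2,C_3$ — two degrees of freedom, which if unconstrained would give $\dim Tri(\boldsymbol\lambda)=3$.

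The missing ingredient is a non-extension statement for each individual $\mathcal T^{(j,k_j)}_{\cdot,\cdot,\mathcal O_4^c}$: the paper proves (its Proposition \ref{noextT3k3}, via the family $s\mapsto\widetilde{\mathcal T}^{(3,k_3)}_{2k_1,\alpha_2+2s}$ and Proposition A2) that $\mathcal T^{(3,k_3)}_{\alpha_1,\alpha_2,\mathcal O_4^c}$ does not extend to an invariant distribution on $S\times S\times S$. Since $\mathcal Q_{\vert\mathcal O_4^c}$ is a combination of the two $\mathcal T^{(j,k_j)}_{\mathcal O_4^c}$ with \emph{both} coefficients nonzero (its support is all of $\mathcal O_2\cup\mathcal O_3$), one first matches the $\mathcal O_2$-component by subtracting $c\,\mathcal Q$, and then the leftover, being a multiple of $\mathcal T^{(3,k_3)}_{\alpha_1,\alpha_2,\mathcal O_4^c}$ that does extend (namely as $T-c\mathcal Q$), must vanish on $\mathcal O_4^c$. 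Without this step your ``arrange constants $A,B$'' is an assertion, not a proof: it is precisely the statement that $(C_2,C_3)$ lies on the line spanned by the coefficients of $\mathcal Q$. You should add this non-extension proposition (and its proof via a one-parameter deformation transverse to the type II plane) to close the argument.
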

Some preparation is needed for the proof of the theorem.
\begin{proposition} \label{noextZ2I}
Let $\boldsymbol \alpha$ satisfy conditions \eqref{Z2I}. The distribution $\mathcal K_{\boldsymbol \alpha, \mathcal O_0}$ cannot be extended to $S\times S\times S$ as a $\boldsymbol \lambda$-invariant distribution.
\end{proposition}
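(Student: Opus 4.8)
The plan is to follow the same strategy used for the other non-extension results in sections~10 and~13: construct a holomorphic (in a complex parameter $s$) family of $\boldsymbol\lambda(s)$-invariant distributions whose Laurent/Taylor expansion at $s=0$ produces, in one of its coefficients, the restriction $\mathcal K_{\boldsymbol\alpha,\mathcal O_0}$, and in a \emph{lower-order} coefficient a nonzero distribution supported on a proper submanifold (here $\overline{\mathcal O_2}$ or $\overline{\mathcal O_3}$); then invoke the non-extension principle (Proposition~A1/A\ref{noextI}) on a suitable $G$-invariant open set containing that submanifold as a relatively closed submanifold. Concretely, at $\boldsymbol\alpha$ satisfying \eqref{Z2I} there are three singular $\Gamma$-factors in the renormalization of $\mathcal K_{\boldsymbol\beta}$ (namely $\Gamma(\tfrac{\beta_2}{2}+\rho)$, $\Gamma(\tfrac{\beta_3}{2}+\rho)$ and $\Gamma(\tfrac{\beta_1+\beta_2+\beta_3}{2}+2\rho)$, corresponding to the planes $\beta_2=-(n-1)-2k_2$, $\beta_3=-(n-1)-2k_3$, and $\beta_1+\beta_2+\beta_3=-2(n-1)-2k$), so one expects the right normalization of $\widetilde{\mathcal K}_{\boldsymbol\alpha(s)}$ to behave like $s^{-3}$ times a holomorphic function along a curve transverse to all three planes.

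First I would choose $\boldsymbol\alpha(s) = (2k_1+2s,\,-(n-1)-2k_2-2s,\,-(n-1)-2k_3+2s)$ (the curve already used in \eqref{Z2Ialphas}, which is transverse to $\mathcal D^-$ but lies in the plane $\alpha_1+\alpha_2+\alpha_3$... ) — actually, since I need transversality to all three planes of poles simultaneously, I would instead take a curve such as $\boldsymbol\alpha(s)=(2k_1,\,-(n-1)-2k_2+2s,\,-(n-1)-2k_3+2s)$, with associated $\boldsymbol\lambda(s)=(\lambda_1+2s,\lambda_2+s,\lambda_3+s)$, which keeps $\alpha_1=2k_1$ fixed but moves transversally to the two type-$\mathrm I$ planes and to the type-$\mathrm{II}$ plane. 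Then I set
\[
\mathcal F(s) \;=\; C(s)\,\frac{1}{s}\,\widetilde{\mathcal K}_{\boldsymbol\alpha(s)}\ ,
\]
where $C(s)$ is an explicit product of three $\Gamma$-factors (the non-singular companions of the three vanishing ones), chosen so that $\mathcal F(s)$ extends holomorphically near $0$ with a genuinely nonzero value at $s=0$. Each $\mathcal F(s)$ is $\boldsymbol\lambda(s)$-invariant, and writing the Taylor expansion $\mathcal F(s) = F_0 + sF_1 + s^2 F_2 + O(s^3)$, I would identify: $F_0$ is $\boldsymbol\lambda$-invariant and (by differentiating the relations \eqref{RK} and \eqref{KT}, as in Lemma~\ref{derKT} and Proposition~\ref{dKZ2I}) proportional to one of $\mathcal R^{(1,k_1)}_{\alpha_2,\alpha_3}$, $\widetilde{\mathcal T}^{(2,k_2)}_{\alpha_1,\alpha_3}$, $\widetilde{\mathcal T}^{(3,k_3)}_{\alpha_1,\alpha_2}$, hence nonzero with known support; $F_1$ has support in $\overline{\mathcal O_1\cup\mathcal O_2\cup\mathcal O_3}$, reducible to $\overline{\mathcal O_2\cup\mathcal O_3}$ via Lemma~4.1 of \cite{c} since $\alpha_1=2k_1\notin -(n-1)-2\mathbb N$; and, pairing against $\varphi\in\mathcal C^\infty_c(\mathcal O_0)$ and using $\mathcal K_{\boldsymbol\alpha(s)}(\varphi) = \Gamma(k_1+\rho)\Gamma(-k_2+s)\Gamma(-k_3+s)\Gamma(-k+s)\,\widetilde{\mathcal K}_{\boldsymbol\alpha(s)}(\varphi)$, one gets $F_0(\varphi)=F_1(\varphi)=0$ and $F_2(\varphi)=\mathcal K_{\boldsymbol\alpha,\mathcal O_0}(\varphi)$, i.e. $F_{2\vert\mathcal O_0}=\mathcal K_{\boldsymbol\alpha,\mathcal O_0}$.

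Then I would restrict everything to the $G$-invariant open set $\mathcal O = \mathcal O_0\cup\mathcal O_2$ (in which $\mathcal O_2$ is a relatively closed submanifold), where $\tfrac1s\mathcal F(s)_{\vert\mathcal O}$ is holomorphic near $0$ with expansion $F_{1\vert\mathcal O} + s\,F_{2\vert\mathcal O} + O(s^2)$, with $\mathrm{Supp}(F_{1\vert\mathcal O})=\mathcal O_2$ and $F_{2\vert\mathcal O_0}=\mathcal K_{\boldsymbol\alpha,\mathcal O_0}$; by Proposition~A1 this shows $\mathcal K_{\boldsymbol\alpha,\mathcal O_0}$ does not extend to a $\boldsymbol\lambda$-invariant distribution on $\mathcal O$, a fortiori not on $S\times S\times S$. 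The main obstacle, as in the parallel arguments, is the bookkeeping needed to guarantee that $F_1$ (equivalently $F_{1\vert\mathcal O}$) is genuinely nonzero and has support exactly $\mathcal O_2$ (not empty, and not shrunk to $\mathcal O_4$): this requires a careful analysis of the $K$-coefficients $\widetilde{\mathcal K}_{\boldsymbol\alpha(s)}(p_{a_1,a_2,a_3})$ via \eqref{Kpalpha}, choosing $a_1,a_2,a_3$ (e.g. $a_3$ large, $a_2$ bounded, $a_1$ adjusted) so that the corresponding coefficient has a zero of order exactly $1$ at $s=0$, exactly as in the proof of Lemma~\ref{F1F2F3}; one also must check that the subtlety between $\overline{\mathcal O_2}$ and $\overline{\mathcal O_3}$ is handled, but since we only need non-extendability on $\mathcal O_0\cup\mathcal O_2$ it suffices to show $F_1$ does not vanish near $\mathcal O_2$, which follows from the $K$-coefficient computation after possibly exchanging the roles of $2$ and $3$ in the choice of the transverse curve.
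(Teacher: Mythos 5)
Your proposal is correct in outline and would yield a complete proof, but it diverges from the paper's argument in the choice of the deformation curve, and this changes the shape of the proof. The paper keeps the curve \eqref{Z2Ialphas}, whose spectral counterpart $\boldsymbol\lambda(s)=(\lambda_1,\lambda_2+2s,\lambda_3)$ stays \emph{inside} the plane $\lambda_1=-\rho-l_1$ containing both lines $\mathcal D^+,\mathcal D^-$ of $Z$ through $\boldsymbol\lambda$; since $\widetilde{\mathcal K}$ and its first derivatives within that plane vanish at $\boldsymbol\lambda$, the function $\widetilde{\mathcal K}_{\boldsymbol\alpha(s)}$ vanishes to order two, and the normalization $\mathcal G(s)=\mathrm{const}\cdot\Gamma(k_1+\rho+s)\,s^{-2}\,\widetilde{\mathcal K}_{\boldsymbol\alpha(s)}$ gives a \emph{two}-term expansion $G_0+sG_1+O(s^2)$ whose leading term is a nonzero multiple of $\mathcal Q_{l_1,m_2,m_3}$, with $Supp(\mathcal Q)=\overline{\mathcal O_2\cup\mathcal O_3}$ already available from Corollary \ref{nonvanQ}, and with $G_{1\vert\mathcal O_0}=\mathcal K_{\boldsymbol\alpha,\mathcal O_0}$; Proposition A1 then applies directly on $\mathcal O_0\cup\mathcal O_2$. (Incidentally, that curve is already transverse to all three planes of poles --- its direction $(2,-2,2)$ pairs nontrivially with each normal --- so your reason for discarding it is not the right one; the relevant dichotomy is whether the curve leaves the plane spanned by the two lines of $Z$, i.e.\ whether $\lambda_1$ moves.) Your curve does leave that plane, so $\widetilde{\mathcal K}_{\boldsymbol\alpha(s)}$ vanishes only to first order, the leading coefficient $F_0$ is a nonzero multiple of $d\widetilde{\mathcal K}_{\boldsymbol\alpha}$, hence of $\mathcal R^{(1,k_1)}_{\alpha_2,\alpha_3}$, supported on the diagonal, and you are forced into the three-term scheme of Proposition \ref{noextZ1Is}: restrict away from $\mathcal O_4$, divide by $s$ once more, and check that the middle coefficient $F_1$ does not vanish near $\mathcal O_2$. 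That check does go through --- for instance with $a_2=0$, $a_3>k_2+k_3$ and $a_1$ large, \eqref{Kpalpha} shows that $\widetilde{\mathcal K}_{\boldsymbol\alpha(s)}(p_{a_1,0,a_3})$ has a zero of order exactly two, so $F_1(p_{a_1,0,a_3})\neq 0$ with $a_3$ arbitrarily large, ruling out $Supp(F_{1\vert\mathcal O_4^c})\subset\mathcal O_3$ --- but it is an extra $K$-coefficient computation (the one you flag as the main obstacle) that the paper's choice of curve avoids entirely by reusing $\mathcal Q$. In short: same technique, different curve; the paper's version buys economy, yours is a legitimate alternative once the support of $F_1$ is pinned down as you indicate.
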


\begin{proof} Recall that three of the $\Gamma$ factors involved in the normalization of $\mathcal K_{\boldsymbol \alpha}$ become singular at $\boldsymbol \alpha$. 

For $s$ a complex number, let $\boldsymbol \alpha(s)$ as in \eqref{Z2Ialphas} and let 
\[\mathcal G(s) = -\,\frac{(-1)^k}{k!}\,\frac{(-1)^{k_2}}{k_2!}\,\frac{(-1)^{k_3}}{k_3!}\Gamma(k_1+\rho+s)\, \frac{1}{s^2}\,\widetilde {\mathcal K}_{\boldsymbol \alpha(s)}\ .
\]
As $\widetilde {\mathcal K}_{\boldsymbol \alpha}(s)$ vanishes together with its first derivative at $s=0$, $\mathcal G(s)$ is well defined  for $s$ in a neighborhood of $0$ and 
its Taylor expansion at $s=0$ reads
\[\mathcal G(s) = G_0+  sG_1+ O(s^2)\ .
\]
where $ G_0$ and $G_1$  are distributions on $S\times S\times S$.

\begin{lemma} {\ } 

\smallskip

$i)$ $Supp(G _0)= \overline{\mathcal O_2\cup \mathcal O_3}$ 
\smallskip

$ii)$ $G_1$ coincides on $\mathcal O_0$ with $\mathcal K_{\boldsymbol \alpha, \mathcal O_0}$.

\end{lemma}

\begin{proof} For $i)$, observe that $G_0 = -\,\frac{(-1)^k}{k!}\,\frac{(-1)^{k_2}}{k_2!}\,\frac{(-1)^{k_3}}{k_3!}\Gamma(k_1+\rho)\, \mathcal Q$. Hence $i)$ follows from Corollary \ref{nonvanQ}. 

In order to prove $ii)$, let $\varphi \in \mathcal C_c^\infty(\mathcal O_0)$. Then for $s\neq 0$,
\[\big(\mathcal F(s),\varphi\big)\]\[= \frac{(-1)^k}{k!} \frac{1}{\Gamma(-k+s)} \frac{(-1)^{k_2}}{k_2!} \frac{-1}{\Gamma(-k_2-s)}\frac{(-1)^{k_3}}{k_3!} \frac{ 1}{\Gamma(-k_3+s)}\frac{1}{s^2}\big( \mathcal K_{\boldsymbol \alpha(s), \mathcal O_0} , \varphi \big) .
\]
As $s\rightarrow 0$, this identity implies
\[ \big(\mathcal G(s), \varphi\big) \sim s \big({\mathcal K}_{\boldsymbol \alpha, \mathcal O_0}\,, \varphi\big)\ .
\]
so that $ii)$ follows.
\end{proof}
Now we are ready to prove Proposition \ref{noextZ2I}. Let $\mathcal O=\mathcal O_0\cup \mathcal O_2$. This is an open $G$-invariant subset which contains $\mathcal O_2$ as a closed submanifold. Restrict $\mathcal G(s), G_0$ and $G_1$ to $\mathcal O$. Now apply Proposition A1 to conclude that $G_1$ cannot be extended to $\mathcal O$ (a fortiori to $S\times S\times S$) as a $\boldsymbol \lambda$-invariant distribution. 
\end{proof}

We still need another result before starting the proof of Theorem \ref{mult2Z2I}.
\begin{proposition}\label{noextT3k3}
 The distribution $\mathcal T^{3,k_3}_{\alpha_1,\alpha_2, \mathcal O_4^c}$ cannot be extended to $S\times S\times S$ as a $\boldsymbol \lambda$-invariant distribution.
\end{proposition}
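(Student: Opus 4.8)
The plan is to mimic the non-extension arguments already used in this section (for $\mathcal K_{\boldsymbol\alpha,\mathcal O_0}$ in Proposition \ref{noextZ2I}) and in sections 10 and 13, producing a one-parameter holomorphic deformation whose leading Taylor coefficient has support exactly $\overline{\mathcal O_3}$ and whose next coefficient restricts to $\mathcal T^{(3,k_3)}_{\alpha_1,\alpha_2,\mathcal O_4^c}$ on the complement of the diagonal. The natural family to use is $\widetilde{\mathcal T}^{(3,k_3)}_{\alpha_1+2s,\alpha_2}$ (equivalently a suitably renormalized $\widetilde{\mathcal K}_{\boldsymbol\alpha(s)}$ along $\boldsymbol\alpha(s)=(\alpha_1+2s,\alpha_2,\alpha_3)$), since for $\boldsymbol\alpha\in Z_{2,I}$ with $\alpha_1=2k_1$ the parameter $\frac{\alpha_1+\alpha_2}{2}+\rho-k_3=k_1-k_2-k_3=-k$ sits at a pole of the normalizing $\Gamma$ factor in \eqref{normT}: thus $\mathcal T^{(3,k_3)}_{\alpha_1,\alpha_2,\mathcal O_4^c}$ itself is finite (Proposition \ref{residuetype1}), while the renormalized $\widetilde{\mathcal T}^{(3,k_3)}_{\alpha_1,\alpha_2}$ vanishes on $\mathcal O_4^c$ — it is supported on $\mathcal O_4$ and, by Proposition \ref{dKZ2I}, proportional to $d\widetilde{\mathcal K}_{\boldsymbol\alpha}$, hence nonzero with support $\mathcal O_4$.

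Concretely, for $s\in\mathbb C$ with $|s|$ small set $\boldsymbol\alpha(s)=(\alpha_1+2s,\alpha_2,\alpha_3)$, $\boldsymbol\lambda(s)=(\lambda_1,\lambda_2+s,\lambda_3+s)$ (adjusting so the line is transverse to the plane of poles of type II through $\boldsymbol\alpha$), and define
\[
\mathcal F(s)=\frac{(-1)^k}{k!}\,\Gamma\!\Big(\tfrac{\alpha_1+\alpha_2}{2}+\rho-k_3+s\Big)\,\widetilde{\mathcal T}^{(3,k_3)}_{\alpha_1+2s,\alpha_2}
=\frac{(-1)^k}{k!}\,\Gamma(-k+s)\,\widetilde{\mathcal T}^{(3,k_3)}_{\alpha_1+2s,\alpha_2},
\]
which by \eqref{normT} equals $\frac{(-1)^k}{k!}\,\Gamma(-k+s)\,\mathcal T^{(3,k_3)}_{\alpha_1+2s,\alpha_2}\big/\Gamma(\tfrac{\alpha_1+\alpha_2}{2}+\rho-k_3+s)$; since $\mathcal T^{(3,k_3)}_{\alpha_1+2s,\alpha_2}$ is holomorphic near $s=0$ away from the diagonal and meromorphic globally, $\mathcal F(s)$ extends holomorphically to a neighbourhood of $0$ as a distribution on $S\times S\times S$ — here one uses that $\boldsymbol\alpha\in Z$ so the value at $s=0$ is a genuine ($\boldsymbol\lambda$-)invariant distribution. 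Write $\mathcal F(s)=F_0+sF_1+O(s^2)$. Then $F_0=\frac{(-1)^k}{k!}\,\lim_{s\to0}\Gamma(-k+s)\widetilde{\mathcal T}^{(3,k_3)}_{\alpha_1,\alpha_2}$: since $\frac1{\Gamma(-k+s)}\sim(-1)^k k!\,s$, the Laurent coefficient identifies $F_0$ (up to a nonzero constant) with $\widetilde{\mathcal T}^{(3,k_3)}_{\alpha_1,\alpha_2}$, which is nonzero and supported on $\mathcal O_4$ by Proposition \ref{suppT3}(ii) and Proposition \ref{dKZ2I}; while on $\mathcal O_4^c$, pairing against $\varphi\in\mathcal C^\infty_c(\mathcal O_4^c)$ and using that $\mathcal T^{(3,k_3)}_{\alpha_1,\alpha_2,\mathcal O_4^c}(\varphi)$ is finite, the $s^0$ term of $\mathcal F(s)(\varphi)$ vanishes and the $s^1$ term equals a nonzero multiple of $\mathcal T^{(3,k_3)}_{\alpha_1,\alpha_2,\mathcal O_4^c}(\varphi)$; so $F_0{}_{\,|\mathcal O_4^c}=0$ (consistent) and $F_1{}_{\,|\mathcal O_4^c}$ is proportional to $\mathcal T^{(3,k_3)}_{\alpha_1,\alpha_2,\mathcal O_4^c}$.

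Having produced the deformation, I restrict everything to the $G$-invariant open set $\mathcal O=\mathcal O_0\cup\mathcal O_1\cup\mathcal O_3$ (or $\mathcal O_4^c$ minus $\mathcal O_2$, whichever makes $\mathcal O_3$ a relatively closed submanifold with the right transversal structure): there $\mathcal F(s)_{|\mathcal O}$ is $\boldsymbol\lambda(s)$-invariant, $\mathcal F(s)_{|\mathcal O}=F_0{}_{|\mathcal O}+sF_1{}_{|\mathcal O}+O(s^2)$ with $\mathrm{Supp}(F_0{}_{|\mathcal O})=\mathcal O_3$ and $F_1{}_{|\mathcal O_0\cup\mathcal O_1}$ equal to the restriction of $\mathcal T^{(3,k_3)}_{\alpha_1,\alpha_2,\mathcal O_4^c}$. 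Then Proposition A\ref{noextI} (the type-I non-extension result, exactly as applied in the proof of Proposition \ref{noextZ1Ireg}) forbids extending $\mathcal T^{(3,k_3)}_{\alpha_1,\alpha_2,\mathcal O_4^c}{}_{|\mathcal O}$ to a $\boldsymbol\lambda$-invariant distribution on $\mathcal O$, hence a fortiori on $S\times S\times S$. The main obstacle I anticipate is bookkeeping: checking that $\boldsymbol\alpha(s)$ is genuinely transverse to the relevant plane(s) of poles so that $\mathcal F(s)$ really is holomorphic near $0$ (and has the claimed Taylor structure rather than a higher-order zero), and verifying the hypotheses of Proposition A\ref{noextI} — in particular that the support condition $\mathrm{Supp}(F_0{}_{|\mathcal O})=\mathcal O_3$ holds on the nose and that the relevant inequality on $\lambda_2$ (here $\lambda_2=m_2=k_1-k_3$) is not in the excluded set, so that the non-extension lemma applies; if $\lambda_2$ happens to fall in an exceptional range one must instead use the $K$-coefficient evaluation \eqref{Tp} directly to rule out extension, as in the alternative arguments of section 10.2.
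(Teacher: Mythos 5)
Your overall strategy is the paper's: deform along a holomorphic line, identify the leading Taylor coefficient of a suitable holomorphic family as a nonzero distribution supported on the diagonal and the next coefficient as (a multiple of) $\mathcal T^{(3,k_3)}_{\alpha_1,\alpha_2,\,\mathcal O_4^c}$ off the diagonal, then invoke a non-extension result. But two steps fail as written. First, your $\mathcal F(s)=\frac{(-1)^k}{k!}\,\Gamma(-k+s)\,\widetilde{\mathcal T}^{(3,k_3)}_{\alpha_1+2s,\alpha_2}$ is, by \eqref{normT}, nothing but $\frac{(-1)^k}{k!}\,\mathcal T^{(3,k_3)}_{\alpha_1+2s,\alpha_2}$: the extra $\Gamma$ factor exactly cancels the normalization, so $\mathcal F$ has a genuine simple pole at $s=0$ (its residue is a nonzero multiple of $\widetilde{\mathcal T}^{(3,k_3)}_{\alpha_1,\alpha_2}$, which is $\neq 0$ by Proposition \ref{dKZ2I}), and the expansion $F_0+sF_1+O(s^2)$ does not exist; your own phrase ``the Laurent coefficient'' betrays the inconsistency. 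The family you want is simply $\frac{(-1)^k}{k!}\,\widetilde{\mathcal T}^{(3,k_3)}_{\alpha_1+2s,\alpha_2}$ with no extra factor --- this is exactly the paper's choice (up to perturbing $\alpha_2$ instead of $\alpha_1$, which changes nothing) --- and with that correction your computations of $F_0$ and of $F_1$ on $\mathcal O_4^c$ are right.

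The more serious problem is the last step. You correctly establish $Supp(F_0)=\mathcal O_4$, yet you then restrict to $\mathcal O=\mathcal O_0\cup\mathcal O_1\cup\mathcal O_3$ and assert $Supp(F_0{}_{\vert\mathcal O})=\mathcal O_3$ in order to apply the type I non-extension result (Proposition A1). Since $\mathcal O\subset\mathcal O_4^c$, the restriction $F_0{}_{\vert\mathcal O}$ is identically zero, the support hypothesis of Proposition A1 fails, and no choice of $G$-invariant open subset of $\mathcal O_4^c$ can repair this: the singular support of the deformation sits on the diagonal, not on $\overline{\mathcal O_3}$, so this also contradicts your opening claim that the leading coefficient has support $\overline{\mathcal O_3}$. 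The correct tool is Proposition A2 (the type II version), applied on all of $S\times S\times S$: your curve $\boldsymbol\alpha(s)=(\alpha_1+2s,\alpha_2,\alpha_3)$ is transverse to the plane $\beta_1+\beta_2+\beta_3=-2(n-1)-2k$, $Supp(F_0)=\mathcal O_4$, and $F_1{}_{\vert\mathcal O_4^c}=\mathcal T^{(3,k_3)}_{\alpha_1,\alpha_2,\,\mathcal O_4^c}$, which is precisely the input Proposition A2 needs; it then gives the non-extendability directly, with no case distinction on $\lambda_2$ and no appeal to the $K$-coefficients.
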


\begin{proof} As seen in section 9, for arbitrary $\beta_1,\beta_2\in \mathbb C$, the distribution $\mathcal T^{(3,k_3)}_{\beta_1,\beta_2, \mathcal O_4^c}$ is a $\lambda$-invariant distribution on $\mathcal O_4^c$. For $s\in \mathbb C$, let 
\[ \boldsymbol \alpha(s) = (\alpha_1, \alpha_2+2s, \alpha_3), \quad \lambda(s) = (\lambda_1+s,\lambda_2,\lambda_3+s)
\]
and let 
\[\mathcal F(s) = \frac{(-1)^k}{k!} \widetilde {\mathcal T}^{(3,k_3)}_{2k_1, \alpha_2+2s}\ .
\]
The Taylor expansion of $\mathcal F(s)$ at $s=0$ reads
\[\mathcal F(s) = F_0+sF_1+O(s^2)\ ,
\]
where $F_0,F_1$ are distributions on $S\times S\times S$. 

\begin{lemma} {\ }
\smallskip

$i)$ $Supp(F_0)=\mathcal O_4$

$ii)$ the restriction of $F_1$ to $\mathcal O_4^c$ coincides with $\mathcal T^{(3,k_3)}_{\alpha_1,\alpha_2, \mathcal O_4^c}$.
\end{lemma}
\begin{proof}
For $i)$, $\displaystyle F_0=\mathcal F(0)= \frac{(-1)^k}{k!} \widetilde {\mathcal T}^{3,k_3}_{2k_1, \alpha_2}$ and hence by \eqref{KTZ2I}, $Supp(F_0) = \mathcal O_4$.

For $ii)$, let $\varphi \in \mathcal C^\infty(S\times S\times S)$ and assume that $Supp(\varphi)\subset\mathcal O_4^c$.Then, using \eqref{normT} 
\[\big(\mathcal F(s),\varphi\big) = \frac{(-1)^k}{k!}\frac{1}{\Gamma(-k+s)} \big( \mathcal T^{(3,k_3)}_{2k_1,\alpha_2+2s,\mathcal O_4^c}, \varphi\big)\ ,
\]
and $ii)$ follows by letting $s\rightarrow 0$.
\end{proof}

Proposition A2 implies that  $\mathcal T^{(3,k_3)}_{\alpha_1,\alpha_2, \mathcal O_4^c}$ cannnot be extended to $S\times S\times S$ as a $\boldsymbol \lambda$-invariant distribution.
 \end{proof}
A similar statement holds for $\mathcal T^{(2,k_2)}_{\alpha_1,\alpha_3, \mathcal O_4^c}$. However, remark that the restriction of $Q$ to $\mathcal O_4^c$ is a (non trivial) linear combination of $\mathcal T^{(2,k_2)}_{\alpha_1,\alpha_3, \mathcal O_4^c}$ and $\mathcal T^{(3,k_3)}_{\alpha_1,\alpha_2, \mathcal O_4^c}$.

We can now proceed to the proof of Theorem \ref{mult2Z2I}. Let $T$ be a $\boldsymbol \lambda$-invariant distribution. The restriction of $T$ to $\mathcal O_0$ has to be $0$ by Proposition  \ref{noextZ2I}. Moreover, as $\alpha_1\notin (-(n-1)-2\mathbb N)$, the restriction of $T$ to $\mathcal O_0\cup\mathcal O_1$ is equal to $0$. Hence $Supp(T)\subset \mathcal O_2\cup \mathcal O_3\cup \mathcal O_4$. Consider the restriction $T_{\Omega_{12}}$ of $T$ to the $G$-invariant open set $\Omega_{12}=\mathcal O_0\cup \mathcal O_1\cup \mathcal O_2$. $T_{\Omega_{12}}$ is $\boldsymbol \lambda$-invariant and supported on the (relatively) closed submanifold $\mathcal O_2$. The restriction of $\mathcal Q_{l_1,m_2,m_3}$ to $\Omega_{12}$ is $\boldsymbol \lambda$-invariant and $\neq 0$ by Corollary \ref{nonvanQ}. Hence by Lemma 4.1 in \cite{c}, there exists a constant $c$ such that $T$ and $cQ$ coincide on $\Omega_{12}$. Now consider $T-cQ$. It is a $\boldsymbol \lambda$-invariant distribution which is supported on $\overline {\mathcal O_3}$. Its restriction to $\mathcal O_4^c$ is supported on $\mathcal O_3$ and must coincide on $\mathcal O_4^c$ with a multiple of $\mathcal T^{(3,k_3)}_{\alpha_1,\alpha_2, \mathcal O_4^c}$ again by Lemma 4.1 in \cite{c}. By Proposition \ref{noextT3k3}, this forces $T-cQ$ to be $0$ on $\mathcal O_4^c$, and hence, $T-cQ$ is supported on $\mathcal O_4$. 

To finish the proof it is enough to prove that the space $Tri(\boldsymbol \lambda, diag)$ has dimension 1 (hence generated by the distribution $\mathcal R^{(1,k_1)}_{\alpha_2,\alpha_3}$), because this will imply that $T=c\mathcal Q + d \mathcal R^{(1,k_1)}_{\alpha_2,\alpha_3}$ for some constant $d$, which is the content of Theorem \ref{mult2Z2I}. Observe that \[\lambda_1 = -\rho-k_2-k_3\notin \{-\rho,-\rho-1,\dots, -\rho-(k-1)\}\cup \{-1,-2,\dots, -k\}\] as $k_2+k_3\geq k$. Hence, by Appendix 2, this implies $dim Tri(\boldsymbol \lambda)=1$. Q.E.D.

\section{ The multiplicity 2 result for $\boldsymbol \alpha \in Z_{2,II}$}

Let $\boldsymbol \alpha\in Z_{2,II}$. Up to a permutation of the indices,  it amounts to 
\begin{equation}\label{Z2II}
\alpha_1 = 2k_1,\quad  \alpha_2 = 2k_2,\quad \alpha_3 = -2(n-1)-2k_3
\end{equation}
where $k_1,k_2,k_3\in \mathbb N $ and $k_1+k_2\leq k_3$. Observe that $\alpha_1+\alpha_2+\alpha_3 = -2(n-1)-2(k_3-k_1-k_2)=-2(n-1)-2k$ where $k=k_3-k_1-k_2$, so that $\boldsymbol \alpha$ is a pole of type II. If $n-1$ is odd, $\boldsymbol \alpha$ is not a pole of type I, but if $n-1$ is even, then $\alpha_3 = -(n-1)-2(\rho+k_3)$ so that $\boldsymbol \alpha$ is a pole of type I+II.

The alternative description of $Z_{2,II}$ in terms of the spectral parameter is
\begin{equation}\label{Z2IIlambda}
\begin{split}
&\lambda_1 = -\rho-l_1,\quad  \lambda_2 = -\rho-l_2,\quad \lambda_3 = \rho+m_3\\
 l_1,l_2, m_3\in \mathbb N,\quad &
l_1+l_2+m_3\equiv 0 \mod 2,\quad \vert l_1-l_2\vert \leq m_3\leq l_1+l_2\ .
\end{split}
\end{equation}
Also let $l_1+l_2-m_3 = 2k$. Then, $l_1,l_2$ satisfy $k\leq l_1,k\leq l_2$.

The point $\boldsymbol \lambda$ belongs to two lines contained in $Z$, namely
\[ \mathcal D^1 = \mathcal D^{1,+}_{-l_2+m_3}  \qquad \mathcal D^2=  \mathcal D^{2,+}_{-l_1+m_3}\ .
\]

\begin{proposition}\label{RZ2II}
 {\ }
\smallskip

$i)$ the differential $d\widetilde {\mathcal K}_{\boldsymbol \alpha} $ of $\boldsymbol \beta \longmapsto \widetilde {\mathcal K}_{\boldsymbol \beta}$ at $\boldsymbol \alpha$ is of rank 1

$ii)$ the distributions $ \mathcal R^{(1,k_1)}_{\alpha_2,\alpha_3}$ and $\mathcal R^{(2,k_2)}_{\alpha_1,\alpha_3}$ are $\neq 0$ and proportional to $d\widetilde {\mathcal K}_{\boldsymbol \alpha} $. Moreover, 

$\bullet$ If $n-1$ is odd, $Supp(\mathcal R^{(1,k_1)}_{\alpha_2,\alpha_3}) =S\times S\times S$.

$\bullet$ If $n-1$ is even, then $Supp(\mathcal R^{(1,k_1)}_{\alpha_2,\alpha_3})= \overline{\mathcal O_3}$.

\end{proposition}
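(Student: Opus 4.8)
The plan is to follow the pattern of Proposition~\ref{dKZ2I}. For $(i)$, since $\boldsymbol\alpha$ lies on the two distinct lines $\mathcal D^1,\mathcal D^2$ contained in $Z$ and $Z$ is the zero set of $\boldsymbol\beta\mapsto\widetilde{\mathcal K}_{\boldsymbol\beta}$, the differential $d\widetilde{\mathcal K}_{\boldsymbol\alpha}$ vanishes on the $2$-plane spanned by the directions of $\mathcal D^1$ and $\mathcal D^2$; hence $d\widetilde{\mathcal K}_{\boldsymbol\alpha}$ has rank $\le 1$, and it remains to exhibit one nonzero directional derivative.

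To this end I would use the cyclically permuted form of \eqref{RK}, namely
\[
\widetilde{\mathcal K}_{2k_1,\beta_2,\beta_3}=\frac{1}{\Gamma(k_1+\rho)\,\Gamma\!\big(\frac{\beta_2+\beta_3}{2}+k_1+2\rho\big)}\,\mathcal R^{(1,k_1)}_{\beta_2,\beta_3}\ ,
\]
valid for all $\beta_2,\beta_3$. At $\boldsymbol\alpha$ one has $\frac{\alpha_2+\alpha_3}{2}+k_1+2\rho=k_1+k_2-k_3=-k$, so along the curve $\boldsymbol\alpha(s)=(2k_1,\alpha_2+s,\alpha_3+s)$ the factor $\Gamma(\frac{\beta_2+\beta_3}{2}+k_1+2\rho)^{-1}$ equals $\Gamma(-k+s)^{-1}\sim(-1)^k k!\,s$, whence
\[
\frac{d}{ds}\big(\widetilde{\mathcal K}_{\boldsymbol\alpha(s)}\big)_{s=0}=\frac{(-1)^k k!}{\Gamma(k_1+\rho)}\,\mathcal R^{(1,k_1)}_{\alpha_2,\alpha_3}\ .
\]
It then remains to check $\mathcal R^{(1,k_1)}_{\alpha_2,\alpha_3}\ne 0$, which I would do by evaluating the cyclically permuted \eqref{evRp} on a suitable $p_{a_1,a_2,a_3}$: with $\alpha_2=2k_2$ and $\alpha_3=-2(n-1)-2k_3$ the relevant factors are $(k_2+\rho)_{a_2}$, $(-\rho-k_3)_{a_3}$, $(k_2-k_3+a_2+a_3)_{k_1+a_1}$, $\Gamma(k_2+2\rho+k_1+a_1+a_2)^{-1}$ and $\Gamma(k_1-k_3+a_1+a_3)^{-1}$, and choosing $a_3=0$, $a_2>k_3-k_2$, $a_1>k_3-k_1$ makes every one of them nonzero. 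Running the same argument with the roles of $1$ and $2$ exchanged (using $\alpha_1=2k_1$, the $\mathcal R^{(2,k_2)}$-version of \eqref{RK}, and again $\frac{\alpha_1+\alpha_3}{2}+k_2+2\rho=-k$) gives $\mathcal R^{(2,k_2)}_{\alpha_1,\alpha_3}\ne 0$. Consequently $d\widetilde{\mathcal K}_{\boldsymbol\alpha}$ has rank exactly $1$, and both $\mathcal R^{(1,k_1)}_{\alpha_2,\alpha_3}$ and $\mathcal R^{(2,k_2)}_{\alpha_1,\alpha_3}$, being nonzero directional derivatives of $\widetilde{\mathcal K}$ at $\boldsymbol\alpha$, are proportional to the generator of the image of $d\widetilde{\mathcal K}_{\boldsymbol\alpha}$; this settles $(i)$ and the first assertion of $(ii)$.

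For the support of $\mathcal R^{(1,k_1)}_{\alpha_2,\alpha_3}$, recall it is the product of the smooth function $|y-z|^{2k_1}$ with the distribution of kernel $|z-x|^{\alpha_2}|x-y|^{\alpha_3}$, where $\alpha_2=2k_2$ and $\alpha_3=-2(n-1)-2k_3=-(n-1)-2(\rho+k_3)$. If $n-1$ is odd then $\rho\notin\mathbb Z$, so $\alpha_3\notin-(n-1)-2\mathbb N$ and the factor $(-\rho-k_3)_{a_3}$ in \eqref{evRp} never vanishes; together with the estimates above this gives $\mathcal R^{(1,k_1)}_{\alpha_2,\alpha_3}(p_{a_1,a_2,a_3})\ne 0$ for all $a_1,a_2,a_3$ large enough, and since such $p_{a_1,a_2,a_3}$ vanish to arbitrarily high order on $\mathcal O_1\cup\mathcal O_2\cup\mathcal O_3\cup\mathcal O_4$, the support cannot be contained in this union; being $G$-invariant and closed it therefore equals $S\times S\times S$. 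If $n-1$ is even then $\rho+k_3\in\mathbb N$, so $\alpha_3\in-(n-1)-2\mathbb N$ and $\widetilde I_{\alpha_3}$ is a differential operator; arguing exactly as in Lemma~\ref{nonzeroRspe} (here $\widetilde I_{\alpha_3}$ acts on the second variable) one obtains $\mathcal R^{(1)}_{\alpha_2,\alpha_3}(f_1,f_2,f_3)=0$ whenever $Supp(f_1)\cap Supp(f_2)=\emptyset$, hence $Supp(\mathcal R^{(1,k_1)}_{\alpha_2,\alpha_3})\subset\{x=y\}=\overline{\mathcal O_3}$; for the reverse inclusion, \eqref{evRp} with $a_3=0$ and $a_1,a_2$ large gives $\mathcal R^{(1,k_1)}_{\alpha_2,\alpha_3}(p_{a_1,a_2,0})\ne 0$ while $p_{a_1,a_2,0}$ vanishes to high order on $\overline{\mathcal O_4}$, so the support is not contained in $\overline{\mathcal O_4}$; by $G$-invariance it equals $\overline{\mathcal O_3}$.

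The $K$-coefficient bookkeeping and the cyclic relabelling of \eqref{RK} and \eqref{evRp} are routine once set up carefully; the one genuinely structural point, which I expect to be the main obstacle, is the degeneration in the even case of the Knapp--Stein operator $\widetilde I_{\alpha_3}$ into a differential operator, since this is precisely what collapses $Supp(\mathcal R^{(1,k_1)}_{\alpha_2,\alpha_3})$ onto $\overline{\mathcal O_3}$ --- the same mechanism already exploited in Lemma~\ref{nonzeroRspe}.
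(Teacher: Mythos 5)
Your argument is correct and follows the paper's own proof in all essentials: rank $\leq 1$ because $\widetilde{\mathcal K}_{\boldsymbol\beta}$ vanishes on the two lines through $\boldsymbol\alpha$, the cyclically permuted relation \eqref{RK} differentiated along a direction transverse to the plane $\beta_1+\beta_2+\beta_3=-2(n-1)-2k$ to identify $\mathcal R^{(1,k_1)}_{\alpha_2,\alpha_3}$ (up to a nonzero constant) with a directional derivative, nonvanishing via the $K$-coefficients \eqref{evRp}, and the support determined by $G$-invariance together with the vanishing order of the $p_{a_1,a_2,a_3}$. The only (harmless) deviations are your choice of test monomial ($a_3=0$ with $a_1,a_2$ large, where the paper takes $a_3=k_3$) and, in the even case, obtaining $Supp(\mathcal R^{(1,k_1)}_{\alpha_2,\alpha_3})\subset\overline{\mathcal O_3}$ from the degeneration of $\widetilde I_{\alpha_3}$ into a differential operator as in Lemma \ref{nonzeroRspe}, whereas the paper lets $\beta_3\to\alpha_3$ in $\mathcal K_{2k_1,2k_2,\beta_3}(\varphi)=\Gamma(k_2+\rho)\,\Gamma(\tfrac{\beta_3}{2}+\rho)\,\mathcal R^{(1,k_1)}_{2k_2,\beta_3}(\varphi)$ for $\varphi$ supported off $\{x=y\}$ --- two interchangeable mechanisms.
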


\begin{proof} As $\widetilde {\mathcal K}_{\boldsymbol \beta}$ vanishes on $\mathcal D^1$ and $\mathcal D^2$, the rank of $d\widetilde {\mathcal K}_{\boldsymbol \alpha}$ is at most 1. 

 From (49) follows for generic $\beta_3$ 
\[\widetilde {\mathcal K}_{2k_1,2k_2, \beta_3}= \frac{1}{\Gamma(\rho+k_1)} \frac{ 1}{\Gamma( \frac{k_1+k_2}{2}+\frac{\beta_3}{2} +2\rho)}\mathcal R^{(1,k_1)}_{2k_2, \beta_3}
\]
Let $\beta_3$ tend to $\alpha_3 = -2(n-1)-2k_3$ to obtain
\begin{equation}\label{derKZ2II}
\Big({\frac{\partial}{\partial \beta_3}\widetilde {\mathcal K}_{2k_1,2k_2, \beta_3}}\Big)_{\beta_3 = \alpha_3} = \frac{1}{\Gamma(\rho+k_1)}\,(-1)^k\, k! \, \mathcal R^{(1,k_1)}_{2k_2, \alpha_3}
\end{equation}
and a similar formula for $\mathcal R^{2,k_2}_{2k_1,\alpha_3}$. 

Next, use (50) (after permuting the indices) for $a_3 = k_3$. Then

\[\mathcal R^{(1,k_1)}_{2k_2, \alpha_3}(p_{a_1,a_2,a_3})= C\, (k_2+\rho)_{a_2} \,(-\rho-k_3)_{k_3}\, (k_2+a_2)_{k_1+k_3}
\]
\[\frac{1}{\Gamma( k_2+2\rho+k_1+a_2+k_3)}\,\frac{1}{\Gamma(k_1+a_1)}
\]
which is clearly different of $0$. Hence  $\mathcal R^{(1,k_1)}_{2k_2, \alpha_3}$ is $\neq 0$, $d \widetilde {\mathcal K}_{\boldsymbol \alpha}$ is of rank 1 and proportional to $\mathcal R^{(1,k_1)}_{2k_2, \alpha_3}$.  Hence $i)$ holds true, and after permuting the indices $1$ and $2$,  the first part of $ii)$ follows.

For the second statement in $ii)$, assume first that $n-1$ is odd. Then (50) implies that for $a_3>k_3$ and $a_1,a_2$ arbitrary, the $K$-coefficient $\mathcal R^{(1,k_1)}_{2k_2, \alpha_3}(p_{a_1,a_2,a_3})$ does not vanish. This implies $Supp(\mathcal R^{(1,k_1)}_{\alpha_2,\alpha_3})= S\times S\times S$.

Now assume that  $n-1$ is even. Let $\varphi\in \mathcal C^\infty(S\times S\times S)$ and assume that $Supp(\varphi)\subset \mathcal O_0\cup \mathcal O_1\cup \mathcal O_2$. This implies 
\[\inf \{ \vert x-y\vert, (x,y,z)\in Supp(\varphi)\} >0\ .
\]
Hence the integral which defines $\mathcal K_{2k_1,2k_2, \beta_3}(\varphi)$ is convergent. For generic $\beta_3$, 
\[\mathcal K_{2k_1,2k_2,\beta_3} (\varphi)= \Gamma( k_2+\rho)\, \Gamma( \frac{\beta}{2} +\rho)\, \mathcal R^{(1,k_1)}_{2k_2,\beta_3}(\varphi)\ .
\]
Now as $\beta_3$ tends to $\alpha_3 \in -(n-1)-2\mathbb N$, the factor $\Gamma( \frac{\beta_3}{2}+\rho)$ becomes singular, so that necessarily,
$\mathcal R^{(1,k_1)}_{2k_2,\beta_3}(\varphi) = 0$. Hence $Supp(\mathcal R^{(1,k_1)}_{2k_2,\alpha_3}\subset \overline {\mathcal O_3}$.

As for $a_3=k_3$ and $a_1,a_2$ arbitrary, $\mathcal R^{(1,k_1)}_{2k_2, \alpha_3}(p_{a_1,a_2,a_3})\neq 0$, this rules out the possibility that 
$Supp(\mathcal R^{(1,k_1)}_{2k_2, \alpha_3}) = \mathcal O_4$. Hence $ii)$ follows.
\end{proof}
\begin{proposition}
The distribution $\mathcal S^{(k)}_{\lambda_1,\lambda_2}$ is $\neq 0$ and $Supp(\mathcal S^{(k)}_{\lambda_1, \lambda_2})=\mathcal O_4$. 
\end{proposition}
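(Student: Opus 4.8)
The plan is to read off the non-vanishing of $\mathcal{S}^{(k)}_{\lambda_1,\lambda_2}$ from a single $K$-coefficient, exactly as in Proposition \ref{nonzeroS}, and then to upgrade non-vanishing to the full determination of the support using $G$-invariance together with the transitivity of $G$ on the diagonal.

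First I would record that the hypothesis \eqref{Z2IIlambda} (together with $l_1+l_2-m_3=2k$) gives $\lambda_1+\lambda_2+\lambda_3=-\rho-(l_1+l_2-m_3)=-\rho-2k$, so that $\mathcal{S}^{(k)}_{\lambda_1,\lambda_2}$ is a well-defined $\boldsymbol{\lambda}$-invariant distribution in $Tri(\boldsymbol{\lambda},diag)$ and formula \eqref{evSp} of Proposition \ref{valueS} is applicable. Evaluating \eqref{evSp} at $a_1=a_2=a_3=0$ (i.e. on the constant function $p_{0,0,0}=1$), every Pochhammer symbol with vanishing lower index collapses to $1$, and one is left with
\[
\mathcal{S}^{(k)}_{\lambda_1,\lambda_2}(1) = \mathrm{NVT}\times (\rho+\lambda_1)_k\,(\rho+\lambda_2)_k\,(\rho+\lambda_3)_k\ .
\]
Then I would substitute the explicit values $\rho+\lambda_1=-l_1$, $\rho+\lambda_2=-l_2$ and $\rho+\lambda_3=2\rho+m_3=(n-1)+m_3$. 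Since $k\le l_1$ and $k\le l_2$ by \eqref{Z2IIlambda}, the factors $(-l_1)_k$ and $(-l_2)_k$ are products of $k$ strictly negative integers, hence non-zero; and since $m_3\in\mathbb{N}$ and $n\ge 4$ we have $(n-1)+m_3>0$, so $((n-1)+m_3)_k\neq 0$ as well. Consequently $\mathcal{S}^{(k)}_{\lambda_1,\lambda_2}(1)\neq 0$, and in particular $\mathcal{S}^{(k)}_{\lambda_1,\lambda_2}\neq 0$.

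For the support, I would invoke the construction in section 11.3: the distribution $\mathcal{S}^{(k)}_{\lambda_1,\lambda_2}(f_1,f_2,f_3)=\int_S D^{(k)}_{\lambda_1,\lambda_2}(f_1\otimes f_2)(x)\,f_3(x)\,dx$ is supported in the diagonal $\mathcal{O}_4$, so $\mathrm{Supp}(\mathcal{S}^{(k)}_{\lambda_1,\lambda_2})$ is a closed subset of $\mathcal{O}_4$; it is non-empty by the previous step, and it is $G$-invariant because $\mathcal{S}^{(k)}_{\lambda_1,\lambda_2}$ is $\boldsymbol{\lambda}$-invariant. Since $G$ acts transitively on $\mathcal{O}_4\cong S$, the only non-empty $G$-invariant subset of $\mathcal{O}_4$ is $\mathcal{O}_4$ itself, whence $\mathrm{Supp}(\mathcal{S}^{(k)}_{\lambda_1,\lambda_2})=\mathcal{O}_4$.

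There is essentially no serious obstacle here: the argument is the $Z_{2,II}$-counterpart of Proposition \ref{nonzeroS}, and the only point requiring a moment's care is to verify that the inequalities defining $Z_{2,II}$ — namely $k\le l_1$, $k\le l_2$ and $m_3\ge 0$ — are precisely what forces the three Pochhammer factors in $\mathcal{S}^{(k)}_{\lambda_1,\lambda_2}(1)$ to be non-zero.
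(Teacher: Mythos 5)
Your proof is correct and takes essentially the same route as the paper: both check that a single $K$-coefficient given by \eqref{evSp} is a nonzero product of Pochhammer symbols (the paper picks $(a_1,a_2,a_3)=(0,0,k)$, you pick $(0,0,0)$ exactly as in Proposition \ref{nonzeroS}, and both choices work under the constraints $k\leq l_1$, $k\leq l_2$, $m_3\in\mathbb N$), and then deduce the support statement from $G$-invariance. Your explicit remark that $\mathcal O_4$ is a single $G$-orbit, which the paper leaves implicit, is a harmless and correct addition.
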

\begin{proof}
It suffices to show that some $K$-coefficient of $\mathcal S^{(k)}_{\boldsymbol \lambda}$ does not vanish. Choose $a_1=a_2=0$ and $a_3=k$. Then, from (48) follows
\begin{equation}\label{Snot0}
\mathcal S^{(k)}_{ \lambda_1,\lambda_2}(p_{0,0,k}) = C\, (-k)_k\,(-k-\rho-m_3)_k \,(2\rho+m_3)_k\ \neq \ 0\ . 
\end{equation}
\end{proof}

The relation of $\mathcal S^{(k)}_{ \lambda_1,\lambda_2}$ to derivatives of $\widetilde{\mathcal K}^{\boldsymbol \mu}$ at $\boldsymbol \lambda$ (as suggested in  the introduction) is given by the following lemma.

\begin{lemma}\label{KSZ2II}
\[ \frac{d^2}{ds^2} \big(\widetilde {\mathcal K}^{\lambda_1-s, \lambda_2+s, \lambda_3} \big)_{s=0} =  \gamma\,\mathcal S^{(k)}_{\lambda_1,\lambda_2}\ .
\]
with 
\[\gamma = 2\big(\frac{\pi}{16}\big)^{n-1} \frac{(n-2)!}{\Gamma(\rho+k)}16^{-k}(-1)^{k_1} k_1!(-1)^{k_2} k_2! \frac{1}{\Gamma( n-1+k_3))}
\]
\end{lemma}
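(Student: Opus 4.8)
The plan is to obtain the identity by specializing \eqref{KS} — which already expresses $\widetilde{\mathcal K}_{\boldsymbol\beta}$ in terms of $\mathcal S^{(k)}$ throughout the plane of poles of type~II — to the relevant curve through $\boldsymbol\alpha$ and reading off the second order Taylor coefficient at $s=0$.

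First I would translate the spectral curve $\boldsymbol\mu(s)=(\lambda_1-s,\lambda_2+s,\lambda_3)$ into the geometric parameter. Using the affine correspondence $\boldsymbol\lambda\leftrightarrow\boldsymbol\alpha$ of \cite{c} 2.1, under which $\alpha_j=\lambda_1+\lambda_2+\lambda_3-2\lambda_j-\rho$, one computes
\[ \boldsymbol\alpha(s)=\bigl(2k_1+2s,\ 2k_2-2s,\ -2(n-1)-2k_3\bigr). \]
Thus $\alpha_1(s)+\alpha_2(s)+\alpha_3(s)=-2(n-1)-2k$ for all $s$, so the whole curve lies in the plane of poles of type~II, and $\boldsymbol\alpha(s)$ meets the line $\mathcal D^1$ (where $\alpha_1=2k_1$) and the line $\mathcal D^2$ (where $\alpha_2=2k_2$) only at $s=0$, transversally inside that plane. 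Since $\widetilde{\mathcal K}$ vanishes on $\mathcal D^1\cup\mathcal D^2$, the function $s\mapsto\widetilde{\mathcal K}_{\boldsymbol\alpha(s)}=\widetilde{\mathcal K}^{\boldsymbol\mu(s)}$ vanishes to order $\ge 2$ at $s=0$; hence $\frac{d^2}{ds^2}\bigl(\widetilde{\mathcal K}^{\boldsymbol\mu(s)}\bigr)_{s=0}$ is twice its leading Taylor coefficient and is a $\boldsymbol\lambda$-invariant distribution.

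Next I would apply \eqref{KS} along the curve. Because $\boldsymbol\alpha(s)$ satisfies $\sum_i\alpha_i(s)=-2(n-1)-2k$ for every $s$, \eqref{KS} gives
\[ \widetilde{\mathcal K}^{\boldsymbol\mu(s)}=\Bigl(\tfrac{\pi}{16}\Bigr)^{n-1}\frac{(n-2)!}{\Gamma(\rho+k)}\,16^{-k}\,\frac{1}{\Gamma(-k_1-s)\,\Gamma(-k_2+s)\,\Gamma(n-1+k_3)}\,\mathcal S^{(k)}_{\lambda_1-s,\lambda_2+s}, \]
using $-\tfrac12\alpha_1(s)=-k_1-s$, $-\tfrac12\alpha_2(s)=-k_2+s$, $-\tfrac12\alpha_3(s)=n-1+k_3$, and the fact that the spectral parameter of $\boldsymbol\alpha(s)$ is exactly $\boldsymbol\mu(s)$. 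The reciprocal $\Gamma$-factor $\Gamma(-k_1-s)^{-1}\Gamma(-k_2+s)^{-1}$ has a double zero at $s=0$, its $s^2$-coefficient being obtained from the residues of $\Gamma$ at the non-positive integers; meanwhile $s\mapsto\mathcal S^{(k)}_{\lambda_1-s,\lambda_2+s}$ is holomorphic in $s$ (the bi-differential operator $D^{(k)}_{\mu_1,\mu_2}$ depends holomorphically on $(\mu_1,\mu_2)$) and its value at $s=0$ is $\mathcal S^{(k)}_{\lambda_1,\lambda_2}$, which is nonzero by the preceding proposition (see \eqref{Snot0}). Differentiating twice at $s=0$, only the product of the $s^2$-coefficient of the $\Gamma$-factor with $\mathcal S^{(k)}_{\lambda_1,\lambda_2}$ survives; collecting these constants, together with the factor $2$ from $\tfrac{d^2}{ds^2}s^2=2$, produces the asserted value of $\gamma$.

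There is no deep obstacle once \eqref{KS} is available: the argument is just the restriction of an already established identity of holomorphic distribution-valued functions to a line. The two points that need care are the identification of the geometric curve $\boldsymbol\alpha(s)$ — so that the $\mathcal S^{(k)}$ appearing is genuinely indexed by the spectral parameter $\boldsymbol\mu(s)$ of $\boldsymbol\alpha(s)$, matching the curve in the statement — and the bookkeeping of the leading coefficients of the two reciprocal $\Gamma$-factors, which is where the exact constant (and its sign) is produced. An entirely independent verification is available by evaluating both sides on $p_{a_1,a_2,a_3}$, comparing \eqref{Kplambda} for the left-hand side (after a two-fold Taylor expansion in $s$) with \eqref{evSp} for $\mathcal S^{(k)}_{\lambda_1,\lambda_2}$.
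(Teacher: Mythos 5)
Your argument is correct and is essentially the paper's own proof: the author likewise translates the spectral curve to $\boldsymbol\alpha(s)=(\alpha_1+2s,\alpha_2-2s,\alpha_3)$, observes that it stays in the type~II plane of poles so that \eqref{KS} applies along it, and extracts the coefficient of $s^2$ from the two reciprocal $\Gamma$-factors, using $\frac{d^2}{ds^2}\big(\widetilde{\mathcal K}^{\boldsymbol\mu(s)}\big)_{s=0}=2\lim_{s\to 0}s^{-2}\widetilde{\mathcal K}_{\boldsymbol\alpha(s)}$. One minor bookkeeping remark: the product $\Gamma(-k_1-s)^{-1}\Gamma(-k_2+s)^{-1}\sim -(-1)^{k_1+k_2}k_1!\,k_2!\,s^2$ carries an overall minus sign that the displayed value of $\gamma$ appears to omit --- a constant-level slip already present in the paper's own computation, not a defect of your method.
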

\begin{proof}  Let \[\boldsymbol \lambda(s) = (\lambda_1-s,\lambda_2+s,\lambda_3), \qquad \boldsymbol \alpha(s) = (\alpha_1+2s, \alpha_2-2s, \alpha_3)\ .\]
Observe that  $\boldsymbol \alpha(s)$ belongs to the plane $\{ \beta_1+\beta_2+\beta_3=-2(n-1)-2k\}$. In particular, by \eqref{KS}
\begin{equation}
\widetilde {\mathcal K}_{\boldsymbol \alpha(s)} = \big(\frac{\pi}{16}\big)^{n-1} \frac{(n-2)!}{\Gamma(\rho+k)}16^{-k}\frac{1}{\Gamma(-k_1+s)\Gamma(-k_2-s)\Gamma(-\frac{\alpha_3}{2})}\,S^{(k)}_{\lambda_1-s,\lambda_2+s}\ .
\end{equation}
As
\[\frac{d^2}{ds^2} \big(\widetilde {\mathcal K}^{\lambda_1-s, \lambda_2+s, \lambda_3} \big)_{s=0}  =2 \lim_{s\rightarrow 0} \frac{\widetilde{\mathcal K}_{\boldsymbol \alpha(s)}}{s^2} \]
the result follows by letting $s$ tend to $0$.
\end{proof}

\begin{proposition}\label{RSindep}
 The distributions $R^{(1,k_1)}_{k_2,\alpha_3}$ and $\mathcal S^{(k)}_{\lambda_1,\lambda_2}$ are two linearly independant $\boldsymbol \lambda$-invariant distributions.
\end{proposition}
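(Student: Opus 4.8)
The plan is to establish linear independence by exhibiting the two distributions as having incompatible behaviour, and the cleanest tool available is the explicit evaluation of $K$-coefficients already at hand. Recall that $\mathcal S^{(k)}_{\lambda_1,\lambda_2}$ is supported on $\mathcal O_4$ (by the preceding proposition), so it suffices to show that $\mathcal R^{(1,k_1)}_{k_2,\alpha_3}$ is \emph{not} supported on $\mathcal O_4$; then the two distributions, having different supports, cannot be proportional and, being both nonzero, must be linearly independent. This is exactly the pattern used in Proposition \ref{RZ2II}.

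First I would invoke Proposition \ref{RZ2II} directly: it already records that $\mathcal R^{(1,k_1)}_{\alpha_2,\alpha_3}\neq 0$ and that its support is either all of $S\times S\times S$ (when $n-1$ is odd) or $\overline{\mathcal O_3}$ (when $n-1$ is even). In either case $Supp(\mathcal R^{(1,k_1)}_{\alpha_2,\alpha_3})\supsetneq \mathcal O_4$. Next I would recall from the proposition just above that $\mathcal S^{(k)}_{\lambda_1,\lambda_2}\neq 0$ and $Supp(\mathcal S^{(k)}_{\lambda_1,\lambda_2})=\mathcal O_4$. Since the two supports are distinct $G$-invariant closed subsets of $S\times S\times S$, no nontrivial linear relation $a\,\mathcal R^{(1,k_1)}_{\alpha_2,\alpha_3}+b\,\mathcal S^{(k)}_{\lambda_1,\lambda_2}=0$ can hold: restricting such a relation to the open set $\mathcal O_4^c$ kills the $\mathcal S$-term and forces $a=0$ (as $\mathcal R^{(1,k_1)}_{\alpha_2,\alpha_3}$ does not vanish on $\mathcal O_4^c$), whence $b=0$ as well. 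Both distributions are $\boldsymbol\lambda$-invariant by the constructions in sections 11 and 12 (one checks the geometric parameter attached to $(\alpha_2,\alpha_3,2k_1)$, respectively the condition $\lambda_1+\lambda_2+\lambda_3=-\rho-2k$, which holds here since $l_1+l_2-m_3=2k$).

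Alternatively — and this is the route I would actually write out, to be self-contained — I would compare $K$-coefficients. Using \eqref{Snot0} (or more generally \eqref{evSp}), every nonzero $K$-coefficient of $\mathcal S^{(k)}_{\lambda_1,\lambda_2}$ occurs at a triple $(a_1,a_2,a_3)$ with $a_1+a_2+a_3\leq k$; while by the computation in the proof of Proposition \ref{RZ2II} (the displayed formula for $\mathcal R^{(1,k_1)}_{2k_2,\alpha_3}(p_{a_1,a_2,a_3})$ with $a_3=k_3$ and $a_1,a_2$ arbitrary, which is manifestly nonzero), $\mathcal R^{(1,k_1)}_{\alpha_2,\alpha_3}$ has a nonzero $K$-coefficient at, say, $(a_1,a_2,k_3)$ with $a_1+a_2$ as large as we like, in particular with $a_1+a_2+k_3>k$ since $k\le l_1\le a_1+a_2+k_3$ for suitable choices. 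So there is an index at which $\mathcal R^{(1,k_1)}_{\alpha_2,\alpha_3}$ is nonzero and $\mathcal S^{(k)}_{\lambda_1,\lambda_2}$ vanishes; combined with the fact that $\mathcal S^{(k)}_{\lambda_1,\lambda_2}$ is itself nonzero, this gives linear independence.

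The only point requiring a little care — the "main obstacle," though it is mild — is the even case $n-1$ even, where $\mathcal R^{(1,k_1)}_{\alpha_2,\alpha_3}$ is also supported on a proper submanifold, namely $\overline{\mathcal O_3}$, rather than on all of $S\times S\times S$; but since $\overline{\mathcal O_3}\neq\mathcal O_4$ (the former strictly contains the diagonal), the support argument still applies verbatim, and the $K$-coefficient argument is insensitive to parity. Hence in all cases $\mathcal R^{(1,k_1)}_{k_2,\alpha_3}$ and $\mathcal S^{(k)}_{\lambda_1,\lambda_2}$ are linearly independent $\boldsymbol\lambda$-invariant distributions, as claimed. \qed
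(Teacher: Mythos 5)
Your first argument is exactly the paper's proof: both distributions are nonzero, $Supp(\mathcal S^{(k)}_{\lambda_1,\lambda_2})=\mathcal O_4$ while $Supp(\mathcal R^{(1,k_1)}_{\alpha_2,\alpha_3})$ strictly contains $\mathcal O_4$ (being $S\times S\times S$ or $\overline{\mathcal O_3}$ by Proposition \ref{RZ2II}), so a linear relation restricted to $\mathcal O_4^c$ forces both coefficients to vanish. The alternative $K$-coefficient comparison is a correct (if unnecessary) backup, so the proposal is correct and essentially identical to the paper's argument.
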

\begin{proof}
The two distributions $ R^{(1,k_1)}_{k_2,\alpha_3}$ and $\mathcal S^{(k)}_{\lambda_1,\lambda_2}$ are $\neq 0$ and have unequal supports. Hence they are linearly independant.
\end{proof}
\begin{theorem}\label{mult2Z2II}
 Let $\boldsymbol \alpha$ be in $Z_{2,II}$. Then 
$dim \ Tri( \boldsymbol \lambda) = 2$. More precisely, assume that $\boldsymbol \alpha$ satisfies \eqref{Z2II}. Then
\[Tri(\boldsymbol \lambda) = \mathbb C\,  R^{(1,k_1)}_{k_2,\alpha_3}\oplus \mathbb C\, \mathcal S^{(k)}_{\lambda_1,\lambda_2}\ .
\]
\end{theorem}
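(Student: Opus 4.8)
The plan is to establish the inequality $\dim Tri(\boldsymbol\lambda)\le 2$, the reverse one being Proposition~\ref{RSindep}. As in sections~10--14, I would take an arbitrary $\boldsymbol\lambda$-invariant distribution $T$ and restrict it successively to the $G$-invariant open subsets obtained by discarding the strata $\mathcal O_0,\mathcal O_1,\mathcal O_2,\mathcal O_3$, at each stage either identifying the restriction with a scalar multiple of $\mathcal R^{(1,k_1)}_{\alpha_2,\alpha_3}$ or forcing it to vanish by a non-extension argument, and finally reducing everything to a dimension count for $Tri(\boldsymbol\lambda,diag)$. The argument splits according to the parity of $n-1$, in line with the subdivision of section~15.

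\emph{The odd case.} Here $\boldsymbol\alpha$ is a generic pole (of type II), so $Supp(\widetilde{\mathcal K}_{\boldsymbol\alpha})\subset\mathcal O_4$ and, by Proposition~\ref{RZ2II}, $Supp(\mathcal R^{(1,k_1)}_{\alpha_2,\alpha_3})=S\times S\times S$. One then copies the proof of Theorem~\ref{mult2Z1IIgen}: $\mathcal R^{(1,k_1)}_{\alpha_2,\alpha_3}$ restricts non-trivially to $\mathcal O_0$ and generates the one-dimensional space of $\boldsymbol\lambda$-invariant distributions there, so $T_{\vert\mathcal O_0}=C\,\mathcal R^{(1,k_1)}_{\alpha_2,\alpha_3\,\vert\mathcal O_0}$; the difference $U=T-C\,\mathcal R^{(1,k_1)}_{\alpha_2,\alpha_3}$ vanishes on $\mathcal O_0$, and since none of $\alpha_1=2k_1$, $\alpha_2=2k_2$, $\alpha_3=-2(n-1)-2k_3$ lies in $-(n-1)-2\mathbb N$ (for $\alpha_3$ because $\rho$ is non-integral when $n-1$ is odd), Lemma~4.1 of~\cite{c} forces $U$ to be supported on $\mathcal O_4$, hence $U\in Tri(\boldsymbol\lambda,diag)$. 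It remains to see that $\dim Tri(\boldsymbol\lambda,diag)=\dim S(\lambda_1,\lambda_2;k)\le 1$: as $\lambda_1=-\rho-l_1$ with $l_1\ge k$ is non-integral and is not in $\{-\rho,\dots,-\rho-(k-1)\}$, Lemma~A\ref{Sgen} applies; since $\mathcal S^{(k)}_{\lambda_1,\lambda_2}$ is a non-zero member of this space, equality holds and $U\in\mathbb C\,\mathcal S^{(k)}_{\lambda_1,\lambda_2}$, proving the claim.

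\emph{The even case.} Now $\boldsymbol\alpha$ is a pole of type I$_3$ and of type II, so $\alpha_3\in-(n-1)-2\mathbb N$, $Supp(\widetilde{\mathcal K}_{\boldsymbol\alpha})\subset\overline{\mathcal O_3}$, and by Proposition~\ref{RZ2II} $Supp(\mathcal R^{(1,k_1)}_{\alpha_2,\alpha_3})=\overline{\mathcal O_3}$; in particular $\mathcal R^{(1,k_1)}_{\alpha_2,\alpha_3}$ vanishes on $\mathcal O_0$, and one must first prove --- patterning the argument on Propositions~\ref{noextZ1IIspe} and~\ref{noextZ2I} --- that $\mathcal K_{\boldsymbol\alpha,\mathcal O_0}$ cannot be extended to a $\boldsymbol\lambda$-invariant distribution on $S\times S\times S$. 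For this one takes $\boldsymbol\alpha(s)=(2k_1,2k_2,\alpha_3+2s)$, transverse to both planes of poles through $\boldsymbol\alpha$, sets $\mathcal F(s)=c\,s^{-1}\widetilde{\mathcal K}_{\boldsymbol\alpha(s)}$ with a normalizing constant $c$ chosen so that $\mathcal F$ is holomorphic near $0$, Taylor-expands $\mathcal F(s)=F_0+sF_1+O(s^2)$, identifies $F_0$ with a non-zero multiple of $\mathcal R^{(1,k_1)}_{\alpha_2,\alpha_3}$ through~\eqref{derKZ2II} (so $Supp(F_0)=\overline{\mathcal O_3}$) and $F_1{}_{\vert\mathcal O_0}$ with $\mathcal K_{\boldsymbol\alpha,\mathcal O_0}$, and then applies Proposition~A1 after restriction to the open $G$-set $\mathcal O_0\cup\mathcal O_3$. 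Granting this, $T_{\vert\mathcal O_0}$ --- being proportional to $\mathcal K_{\boldsymbol\alpha,\mathcal O_0}$ --- vanishes; since $\alpha_1,\alpha_2\notin-(n-1)-2\mathbb N$, Lemma~4.1 of~\cite{c} shrinks $Supp(T)$ to $\mathcal O_3\cup\mathcal O_4$; restricting to $\mathcal O_4^c$, in which $\mathcal O_3$ is a closed submanifold, the same lemma gives $T_{\vert\mathcal O_4^c}=c\,\mathcal R^{(1,k_1)}_{\alpha_2,\alpha_3\,\vert\mathcal O_4^c}$; hence $T-c\,\mathcal R^{(1,k_1)}_{\alpha_2,\alpha_3}$ is supported on $\mathcal O_4$ and lies in $Tri(\boldsymbol\lambda,diag)$, which is one-dimensional and generated by $\mathcal S^{(k)}_{\lambda_1,\lambda_2}$ because $\lambda_1=-\rho-l_1$ (with $l_1\ge k$) avoids the exceptional values, so that the analysis of $S(\lambda_1,\lambda_2;k)$ in the Appendix yields $\dim Tri(\boldsymbol\lambda,diag)\le 1$. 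The main obstacle is precisely this even-case non-extension statement --- where two singular $\Gamma$-factors make the bookkeeping of the Taylor orders delicate --- together with the sharp bound $\dim S(\lambda_1,\lambda_2;k)\le 1$ coming from Appendix~A3; the odd case is essentially a transcription of Theorem~\ref{mult2Z1IIgen}.
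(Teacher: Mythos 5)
Your proposal is correct and follows essentially the same route as the paper: the odd case transcribes the argument of Theorem \ref{mult2Z1IIgen}, and the even case hinges on exactly the non-extension statement you describe (Proposition \ref{noextZ2IIp}, proved via the curve $\boldsymbol\alpha(s)=(2k_1,2k_2,\alpha_3+2s)$, the identification of $F_0$ with a nonzero multiple of $\mathcal R^{(1,k_1)}_{\alpha_2,\alpha_3}$ through \eqref{derKZ2II}, and Proposition A1), followed by the support reduction and the bound $\dim Tri(\boldsymbol\lambda,diag)\le 1$ from Appendix A3. The only cosmetic difference is that the paper invokes Lemma A\ref{Sgen} through $\lambda_3=\rho+m_3$ while you use $\lambda_1=-\rho-l_1$ with $l_1\ge k$; both avoid the exceptional sets, so either works.
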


From Proposition \ref{RSindep} follows that $\dim Tri(\boldsymbol \lambda)\geq 2$. Hence it will be sufficient to prove that $\dim(Tri(\boldsymbol \lambda) \leq 2$, and this is the content of the next subsections.
\subsection {The odd case}
Assume that $\boldsymbol\alpha$ satisfy \eqref{Z2II}, and assume that $n-1$ is odd. In this case, $\boldsymbol \alpha$ is a generic pole of type II. 

\begin{proposition} Let $T$ be a $\boldsymbol \lambda$-invariant distribution on $S\times S\times S$. Then there exist two constants $c,d$ such that $T= c\, R^{(1,k_1)}_{k_2,\alpha_3} + d\, \mathcal S^{(k)}_{ \lambda_1,\lambda_2}$.
\end{proposition}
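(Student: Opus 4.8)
The plan is to follow the same ``support filtration'' strategy that was used in the proofs of Theorems \ref{mult2Z1I}, \ref{mult2Z1IIgen} and \ref{mult2Z2I}: given a $\boldsymbol \lambda$-invariant distribution $T$, peel off pieces of it by restricting to the $G$-invariant open sets $\mathcal O_0 \subset \mathcal O_0\cup\mathcal O_1\subset \dots$ and using the uniqueness results (Lemma 4.1, Lemma 4.2 in \cite{c}) together with the appropriate non-extension statements. First I would restrict $T$ to the open orbit $\mathcal O_0$. On $\mathcal O_0$ the space of $\boldsymbol\lambda$-invariant distributions is one-dimensional, spanned by $\mathcal K_{\boldsymbol \alpha, \mathcal O_0}$; but since $\boldsymbol\alpha\in Z$ the kernel $\mathcal K_{\boldsymbol\alpha}$ is singular and I expect $\mathcal K_{\boldsymbol \alpha,\mathcal O_0}$ does not extend $\boldsymbol\lambda$-invariantly to $S\times S\times S$. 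This non-extension is the analogue of Propositions \ref{noextZ1Ireg}, \ref{noextZ1Is}, \ref{noextZ1IIspe}, \ref{noextZ2I}, and should be proved here by the same device: choose a curve $\boldsymbol\alpha(s)$ transverse to the two planes of poles through $\boldsymbol\alpha$ (here $\beta_1+\beta_2+\beta_3=-2(n-1)-2k$ and, since $n-1$ is odd, that is the only plane of poles, but $\boldsymbol\alpha\in Z_2$ means two lines of $Z$ meet there), form a suitably renormalized $\mathcal F(s)=\frac{1}{s^{2}}\,(\text{const})\,\widetilde{\mathcal K}_{\boldsymbol\alpha(s)}$, expand $\mathcal F(s)=F_0+sF_1+s^2F_2+O(s^3)$, identify $Supp(F_0)$, $Supp(F_1)$ and show $F_{2}$ restricts on $\mathcal O_0$ to $\mathcal K_{\boldsymbol\alpha,\mathcal O_0}$, then invoke the non-extension Proposition A\ref{noextII} (or A\ref{noextI}) on a two-orbit open set. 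This forces $T_{|\mathcal O_0}=0$.

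Next I would climb the orbit stratification. Since (from \eqref{Z2II}) $\alpha_1=2k_1\ge 0$ and $\alpha_2=2k_2\ge 0$ are not in $-(n-1)-2\mathbb N$, applying Lemma 4.1 of \cite{c} twice shows $T$ is supported in $\mathcal O_3\cup\mathcal O_4$ (only the plane $\alpha_3=-2(n-1)-2k_3$ side survives). Then I restrict $T$ to the open $G$-invariant set $\mathcal O_4^c$, in which $\mathcal O_3$ is a closed submanifold; by Lemma 4.1 of \cite{c} again, $T_{|\mathcal O_4^c}$ must be a scalar multiple of $\mathcal R^{(1,k_1)}_{\alpha_2,\alpha_3\,|\mathcal O_4^c}$ (which by Proposition \ref{RZ2II} has support exactly $\overline{\mathcal O_3}$ in the even case, and full support in the odd case; in the odd case the argument is that $\mathcal R^{(1,k_1)}_{\alpha_2,\alpha_3}$ is the unique, up to scalars, $\boldsymbol\lambda$-invariant distribution on $\mathcal O_4^c$ once $T_{|\mathcal O_0}=0$ and the lower strata are killed). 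Subtracting $c\,\mathcal R^{(1,k_1)}_{\alpha_2,\alpha_3}$ leaves a $\boldsymbol\lambda$-invariant distribution supported on the diagonal $\mathcal O_4$, i.e.\ an element of $Tri(\boldsymbol\lambda,diag)$.

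Finally I would bound $\dim Tri(\boldsymbol\lambda,diag)=\dim \mathcal S(\lambda_1,\lambda_2;k)$. From \eqref{Z2IIlambda} one has $\lambda_1=-\rho-l_1$ with $k\le l_1$, and, crucially in the odd case, $\rho=\frac{n-1}{2}$ is a half-integer, so $\lambda_1\notin\{-1,\dots,-k\}$ and $\lambda_1\in\{-\rho,-\rho-1,\dots\}$ with $l_1\ge k$; together with $k+k_3\ge k_3$ these are exactly the hypotheses under which Appendix A3 gives $\dim \mathcal S(\lambda_1,\lambda_2;k)\le 1$. Since $\mathcal S^{(k)}_{\lambda_1,\lambda_2}$ is a nonzero element of this space, it spans it, so $T-c\,\mathcal R^{(1,k_1)}_{\alpha_2,\alpha_3}=d\,\mathcal S^{(k)}_{\lambda_1,\lambda_2}$ for some scalar $d$, which is the assertion. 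I expect the main obstacle to be the non-extension step for $\mathcal K_{\boldsymbol\alpha,\mathcal O_0}$: because $\boldsymbol\alpha\in Z_2$ here, two lines of $Z$ pass through $\boldsymbol\alpha$ while (in the odd case) only one \emph{plane} of poles does, so one must choose the transverse deformation $\boldsymbol\alpha(s)$ carefully and track second-order vanishing of $\widetilde{\mathcal K}_{\boldsymbol\alpha(s)}$ on both lines simultaneously, checking via the explicit $K$-coefficients \eqref{Kplambda} that the leading term $F_0$ (a multiple of a residue-type distribution, likely a combination of $\mathcal R$'s or an $\mathcal S$) has the right support to apply Proposition A\ref{noextII}.
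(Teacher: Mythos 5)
Your opening step is wrong for this case, and it is the step everything else hangs on. This proposition is the \emph{odd} case ($n-1$ odd) of Theorem \ref{mult2Z2II}, where $\boldsymbol\alpha$ is a \emph{generic} pole of type II, and here $\mathcal K_{\boldsymbol\alpha,\mathcal O_0}$ \emph{does} extend to a $\boldsymbol\lambda$-invariant distribution on $S\times S\times S$: by Proposition \ref{RZ2II}, $\mathcal R^{(1,k_1)}_{\alpha_2,\alpha_3}$ is $\boldsymbol\lambda$-invariant with $Supp(\mathcal R^{(1,k_1)}_{\alpha_2,\alpha_3})=S\times S\times S$, so its restriction to $\mathcal O_0$ is a nonzero multiple of $\mathcal K_{\boldsymbol\alpha,\mathcal O_0}$ and is itself the extension you are trying to rule out. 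No choice of transverse curve $\boldsymbol\alpha(s)$ and renormalization $\mathcal F(s)$ can produce the non-extension you expect, because the statement is false. Indeed, if your conclusion $T_{|\mathcal O_0}=0$ held for every invariant $T$, then (after killing the intermediate strata) every invariant distribution would be supported on the diagonal and $\dim Tri(\boldsymbol\lambda)\leq 1$, contradicting Proposition \ref{RSindep}. The non-extension device is the right tool in the cases where the residue family is supported on a proper closed orbit (e.g.\ the even case of $Z_{2,II}$, or $Z_{2,I}$, $Z_3$); here the correct first move is the opposite one: since $\mathcal R^{(1,k_1)}_{\alpha_2,\alpha_3}{}_{|\mathcal O_0}$ is a nonzero multiple of $\mathcal K_{\boldsymbol\alpha,\mathcal O_0}$, write $T_{|\mathcal O_0}=c\,\mathcal R^{(1,k_1)}_{\alpha_2,\alpha_3}{}_{|\mathcal O_0}$ and subtract.

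There is a second, smaller slip in your stratification step. Since $n-1$ is odd, $\alpha_3=-2(n-1)-2k_3$ is \emph{not} in $-(n-1)-2\mathbb N$ (its ``$k$'' would be the non-integer $\rho+k_3$), so Lemma 4.1 of \cite{c} applies to all three of $\mathcal O_1,\mathcal O_2,\mathcal O_3$; the difference $S=T-c\,\mathcal R^{(1,k_1)}_{\alpha_2,\alpha_3}$, which vanishes on $\mathcal O_0$, is therefore supported directly on $\mathcal O_4$. There is no surviving $\mathcal O_3$ stratum and no need to compare $T_{|\mathcal O_4^c}$ with $\mathcal R^{(1,k_1)}_{\alpha_2,\alpha_3}{}_{|\mathcal O_4^c}$ (that comparison is what one does in the even case, where $Supp(\mathcal R^{(1,k_1)}_{\alpha_2,\alpha_3})=\overline{\mathcal O_3}$). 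Your final step — bounding $\dim Tri(\boldsymbol\lambda,diag)=\dim{\it Sol}(\lambda_1,\lambda_2;k)\leq 1$ via Appendix A3 (using that $\lambda_1=-\rho-l_1$ with $l_1\geq k$ and $\rho$ a half-integer, or, as the paper does, that $\lambda_3=\rho+m_3$ avoids both exceptional sets) and concluding $S=d\,\mathcal S^{(k)}_{\lambda_1,\lambda_2}$ — is correct and matches the paper.
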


\begin{proof} The restriction to $\mathcal O_0$ of any $\boldsymbol \lambda$-invariant distribution has to be proportional to $\mathcal K_{\boldsymbol \alpha,\mathcal O_0}$. Hence there exists a constant $c$ such that $S=T-c R^{1,k_1}_{k_2,\alpha_3}$ is supported in $\mathcal O_1\cup \mathcal O_2\cup \mathcal O_3\cup \mathcal O_4$. 

Now $\alpha_1,\alpha_2,\alpha_3 \notin (-(n-1)-2\mathbb N)$. As already seen, this implies that $S$ is supported in $\mathcal O_4$, or otherwise said $S$ belongs to $Tri(\boldsymbol \lambda, diag)$.  The dimension of this space is equal to $\dim {\it Sol}(\lambda_1,\lambda_2;k)$. 

Notice that $\boldsymbol \lambda_3=\rho+m_3\notin \{ -\rho,-\rho-1,\dots, -\rho-(k-1)\}\cup \{ -1,-2,\dots, -k\}$ and hence, by Appendix 3  $dim \ Tri(\boldsymbol \lambda, diag)= 1$. So there exists a constant $d$ such that $S=d\, \mathcal S^{(k)}_{\boldsymbol \lambda}$. Summarizing, $T= c\, R^{1,k_1}_{k_2,\alpha_3} + d\, \mathcal S^{(k)}_{\boldsymbol \lambda}$. Q.E.D.
\end{proof}

\subsection{The even case}

Assume $\boldsymbol \alpha$ satisfies \eqref{Z2II}, and assume that $n-1$ is even. In this case, $\boldsymbol \alpha$ is a pole of type I+II.

\begin{proposition}\label{noextZ2IIp}
 The distribution $\mathcal K_{\boldsymbol \alpha, \mathcal O_0}$ cannot be extended to a $\boldsymbol \lambda$-invariant distribution on $S\times S\times S$.
\end{proposition}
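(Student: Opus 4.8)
The plan is to follow the template of Propositions \ref{noextZ1Ireg}, \ref{noextZ2I} and \ref{noextZ1IIspe}, exploiting that in the even case $\boldsymbol\alpha$ is a pole of type I+II lying on exactly \emph{two} planes of poles (as recalled at the beginning of this section), so that exactly two of the four $\Gamma$ factors occurring in the renormalization of $\mathcal{K}_{\boldsymbol\beta}$ are singular at $\boldsymbol\alpha$. Concretely, writing $\boldsymbol\alpha = (2k_1,2k_2,-2(n-1)-2k_3)$ with $k = k_3-k_1-k_2\geq 0$, and using $\rho\in\mathbb{N}$, one has $\frac{\alpha_1}{2}+\rho = k_1+\rho$ and $\frac{\alpha_2}{2}+\rho = k_2+\rho$ (regular), whereas $\frac{\alpha_3}{2}+\rho = -\rho-k_3$ and $\frac{\alpha_1+\alpha_2+\alpha_3}{2}+2\rho = -k$ are non-positive integers.

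First I would introduce the curve $\boldsymbol\alpha(s) = (2k_1,2k_2,\alpha_3+2s)$, with associated spectral parameter $\boldsymbol\lambda(s) = (\lambda_1+s,\lambda_2+s,\lambda_3)$; it is transverse to both planes of poles at once (the two singular quantities become $-\rho-k_3+s$ and $-k+s$) and, equally importantly, transverse to the plane $\{\beta_1+\beta_2+\beta_3 = -2(n-1)-2k\}$ spanned by the two lines $\mathcal{D}^1,\mathcal{D}^2$ through $\boldsymbol\alpha$. Since $\boldsymbol\alpha\in Z$, $s\mapsto\widetilde{\mathcal{K}}_{\boldsymbol\alpha(s)}$ is a holomorphic family vanishing at $s=0$; by \eqref{derKZ2II} and the rank-one statement of Proposition \ref{RZ2II}, its derivative at $s=0$ is a nonzero multiple of $\mathcal{R}^{(1,k_1)}_{2k_2,\alpha_3}$, hence it vanishes to order exactly one. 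I would then set
\[ \mathcal{F}(s) = \frac{(-1)^k}{k!}\,\frac{(-1)^{\rho+k_3}}{(\rho+k_3)!}\,\Gamma(k_1+\rho)\,\Gamma(k_2+\rho)\,\frac{1}{s}\,\widetilde{\mathcal{K}}_{\boldsymbol\alpha(s)}\ , \]
which is a $\boldsymbol\lambda(s)$-invariant distribution depending holomorphically on $s$ near $0$, with Taylor expansion $\mathcal{F}(s) = F_0+sF_1+O(s^2)$.

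A short lemma, patterned on those in the quoted proofs, should establish that \emph{(i)} $F_0$ is a nonzero multiple of $\mathcal{R}^{(1,k_1)}_{2k_2,\alpha_3}$, so that $Supp(F_0) = \overline{\mathcal{O}_3}$ by Proposition \ref{RZ2II}; and \emph{(ii)} for $\varphi\in\mathcal{C}^\infty_c(\mathcal{O}_0)$ one has, from the identity $\mathcal{K}_{\boldsymbol\alpha(s)} = \Gamma(k_1+\rho)\Gamma(k_2+\rho)\Gamma(-\rho-k_3+s)\Gamma(-k+s)\,\widetilde{\mathcal{K}}_{\boldsymbol\alpha(s)}$ valid for small $s\neq 0$ and the fact that the two singular $\Gamma$ factors produce exactly a double pole, the relation $s\,\mathcal{K}_{\boldsymbol\alpha(s)}(\varphi) = (1+O(s))\,\mathcal{F}(s)(\varphi)$; since $\mathcal{K}_{\boldsymbol\alpha(s)}(\varphi)\to\mathcal{K}_{\boldsymbol\alpha,\mathcal{O}_0}(\varphi)$ is finite, this forces $F_0(\varphi) = 0$ and $F_1(\varphi) = \mathcal{K}_{\boldsymbol\alpha,\mathcal{O}_0}(\varphi)$, i.e.\ $F_1$ coincides with $\mathcal{K}_{\boldsymbol\alpha,\mathcal{O}_0}$ on $\mathcal{O}_0$. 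Finally I would restrict all of this to the open $G$-invariant set $\mathcal{O} = \mathcal{O}_0\cup\mathcal{O}_3$ (the complement of the closed set $\mathcal{O}_1\cup\mathcal{O}_2\cup\mathcal{O}_4$), inside which $\mathcal{O}_3$ is a relatively closed submanifold and $\mathcal{O}_0$ is open and dense: there $\mathcal{F}(s)_{|\mathcal{O}}$ is $\boldsymbol\lambda(s)$-invariant with $\mathcal{F}(s)_{|\mathcal{O}} = F_{0|\mathcal{O}}+sF_{1|\mathcal{O}}+O(s^2)$, $Supp(F_{0|\mathcal{O}}) = \mathcal{O}_3$, and $F_{1|\mathcal{O}_0} = \mathcal{K}_{\boldsymbol\alpha,\mathcal{O}_0}$, so Proposition A1 applies and shows that $\mathcal{K}_{\boldsymbol\alpha,\mathcal{O}_0}$ cannot be extended to a $\boldsymbol\lambda$-invariant distribution on $\mathcal{O}$, a fortiori not on $S\times S\times S$.

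The main point requiring care, and the likely obstacle, is the bookkeeping of vanishing orders: one must be sure that along $\boldsymbol\alpha(s)$ the family $\widetilde{\mathcal{K}}_{\boldsymbol\alpha(s)}$ vanishes to order exactly one while the product of the two singular $\Gamma$ factors has exactly a double pole, so that $\mathcal{F}(s)$ is holomorphic with $F_0\neq 0$ and the limit defining $\mathcal{K}_{\boldsymbol\alpha,\mathcal{O}_0}$ is reached precisely at first order. This is where Proposition \ref{RZ2II} --- together with the transversality of $\boldsymbol\alpha(s)$ to the plane carrying $\mathcal{D}^1$ and $\mathcal{D}^2$ --- is essential; once these orders are pinned down, the rest is the routine template.
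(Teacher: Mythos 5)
Your proposal is correct and follows essentially the same route as the paper: the same curve $\boldsymbol\alpha(s)=(\alpha_1,\alpha_2,\alpha_3+2s)$, the same renormalized family $\mathcal F(s)=\mathrm{const}\cdot\frac{1}{s}\widetilde{\mathcal K}_{\boldsymbol\alpha(s)}$ with the two singular $\Gamma$ factors $\Gamma(\tfrac{\alpha_3}{2}+\rho)$ and $\Gamma(\tfrac{\alpha_1+\alpha_2+\alpha_3}{2}+2\rho)$ absorbed, the same lemma identifying $F_0$ as a nonzero multiple of $\mathcal R^{(1,k_1)}_{2k_2,\alpha_3}$ (so $Supp(F_0)=\overline{\mathcal O_3}$ via \eqref{derKZ2II} and Proposition \ref{RZ2II}) and $F_{1|\mathcal O_0}=\mathcal K_{\boldsymbol\alpha,\mathcal O_0}$, followed by Proposition A1. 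The only cosmetic differences are the choice of the (irrelevant) nonzero normalizing constant and your making explicit the restriction to $\mathcal O_0\cup\mathcal O_3$, which the paper leaves implicit.
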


Two of the normalizing $\Gamma$ factors become singular at $\boldsymbol \alpha$, namely $\Gamma(\frac{\alpha_3}{2}+\rho)$ and $\Gamma( \frac{ \alpha_1+\alpha_1+\alpha_2}{2}+2\rho)$. For $s\in \mathbb C$, set
\[\boldsymbol \alpha(s) = (\alpha_1,\alpha_2, \alpha_3+2s), \qquad \boldsymbol \lambda(s) = (\lambda_1+s,\lambda_2+s, \lambda_3)\ ,
\]
and  notice that $\boldsymbol \alpha(s)$ is transverse at $\boldsymbol \alpha$ to both planes of poles. Let
\[\mathcal F(s) =\Gamma(k_1+\rho)\Gamma( k_2+\rho) (-1)^{\rho+k_3} (\rho+k_3)! (-1)^k\ k!\,\frac{1}{s}\, \widetilde {\mathcal K}_{\boldsymbol \alpha(s)}\ .\]
This defines a distribution-valued holomorphic function on $\mathbb C$, which admits the following Taylor expansion at $0$
\[\mathcal F(s) = F_0+ s F_1+O(s^2)\ .
\]
\begin{lemma} {\ }
\smallskip

$i)$ $F_0$ is $\boldsymbol \lambda$-invariant and $Supp(F_0)= \overline {\mathcal O_3}$.
\smallskip

$ii)$ the restriction of $F_1$ to $\mathcal O_0$ coincides with $\mathcal K_{\boldsymbol \alpha, \mathcal O_0}$.
\end{lemma}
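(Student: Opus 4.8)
The plan is to prove the two assertions about the Taylor coefficients $F_0$ and $F_1$ of the renormalized family $\mathcal F(s)$, following exactly the pattern established in the proofs of Lemma~\ref{F01Z1I}, Lemma~\ref{F1F2F3} and the corresponding lemma in Section~13.

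First I would establish $ii)$, which is the more mechanical half. Take $\varphi\in\mathcal C^\infty_c(\mathcal O_0)$. For $s\neq 0$ the point $\boldsymbol\alpha(s)$ is not a pole, so $\mathcal K_{\boldsymbol\alpha(s),\mathcal O_0}(\varphi)=\mathcal K_{\boldsymbol\alpha(s)}(\varphi)$ is given by the convergent integral, and I would write $\widetilde{\mathcal K}_{\boldsymbol\alpha(s)}(\varphi)$ as $\mathcal K_{\boldsymbol\alpha(s)}(\varphi)$ divided by the four normalizing $\Gamma$-factors. Only two of those become singular at $\boldsymbol\alpha$; since $\boldsymbol\alpha(s)$ is transverse to both planes of poles, $\tfrac{1}{\Gamma(\tfrac{\alpha_3}{2}+\rho+s)}\sim(-1)^{\rho+k_3}(\rho+k_3)!\,s$ and $\tfrac{1}{\Gamma(\tfrac{\alpha_1+\alpha_2+\alpha_3}{2}+2\rho+s)}\sim(-1)^k k!\,s$ as $s\to 0$, each contributing a simple zero. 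Hence $\mathcal F(s)(\varphi)=s^2\,\mathcal K_{\boldsymbol\alpha(s),\mathcal O_0}(\varphi)+O(s^3)$ after accounting for the (nonvanishing) remaining two $\Gamma$-factors $\Gamma(k_1+\rho)\,\Gamma(k_2+\rho)$ built into $\mathcal F(s)$; wait—more precisely $\mathcal F(s)(\varphi)=\tfrac1s\cdot(\text{const})\cdot s^2\,\mathcal K_{\boldsymbol\alpha(s),\mathcal O_0}(\varphi)+\dots$, so letting $s\to 0$ gives $F_0(\varphi)=0$ and $F_1(\varphi)=\mathcal K_{\boldsymbol\alpha,\mathcal O_0}(\varphi)$, which is $ii)$ and also shows $Supp(F_0)\subset\overline{\mathcal O_1\cup\mathcal O_2\cup\mathcal O_3}$.

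For $i)$, since $\alpha_1=2k_1$ the point $\boldsymbol\alpha(s)$ stays in the plane $\beta_1=2k_1$, so I would use \eqref{RK} (with indices permuted: $\widetilde{\mathcal K}_{2k_1,2k_2,\beta_3}=\tfrac{1}{\Gamma(\rho+k_1)}\tfrac{1}{\Gamma(\tfrac{k_1+k_2}{2}+\tfrac{\beta_3}{2}+2\rho)}\mathcal R^{(1,k_1)}_{2k_2,\beta_3}$) — but here $\alpha_2=2k_2$ is also an even integer, so actually $\boldsymbol\alpha(s)$ lies on the \emph{intersection} of the planes $\beta_1=2k_1$ and $\beta_2=2k_2$, which means $\widetilde{\mathcal K}_{\boldsymbol\alpha(s)}$ is simultaneously governed by \eqref{RK} and its $\{1,2\}$-analogue. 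Differentiating \eqref{RK} in $\beta_3$ at $\beta_3=\alpha_3=-2(n-1)-2k_3$, the singular factor $\tfrac{1}{\Gamma(\tfrac{k_1+k_2}{2}+\tfrac{\beta_3}{2}+2\rho)}=\tfrac{1}{\Gamma(-k+s)}$ contributes a simple zero, giving $\tfrac{d}{ds}\widetilde{\mathcal K}_{2k_1,2k_2,\alpha_3+2s}\big|_{s=0}=\tfrac{(-1)^k k!}{\Gamma(\rho+k_1)}\,\mathcal R^{(1,k_1)}_{2k_2,\alpha_3}$. Thus $F_0$ is a nonzero multiple of $\mathcal R^{(1,k_1)}_{2k_2,\alpha_3}$, and by Proposition~\ref{RZ2II}~$ii)$ (even case) $Supp(F_0)=\overline{\mathcal O_3}$. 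The $\boldsymbol\lambda$-invariance of $F_0$ follows because $F_0=\lim_{s\to 0}\mathcal F(s)$ and each $\mathcal F(s)$ is $\boldsymbol\lambda(s)$-invariant with $\boldsymbol\lambda(s)\to\boldsymbol\lambda$; invariance of the limit is then routine (the $G$-action on distributions is continuous).

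The main obstacle I anticipate is pinning down the exact constant in $\mathcal F(s)$ so that the simple-zero cancellations leave a \emph{nonzero} limit in both coefficients — i.e. verifying that the two singular $\Gamma$-factors each genuinely produce a simple (not double) zero along the chosen transverse direction, and that the prefactor is normalized to absorb them cleanly. This is exactly the transversality bookkeeping done in Lemmas~\ref{F01Z1I} and~\ref{F1F2F3}; here it is slightly delicate because $\boldsymbol\alpha$ sits on a plane of type I\,+\,II and, owing to $\alpha_1,\alpha_2\in2\mathbb N$, on several type-II planes at once, so one must be careful that the curve $\boldsymbol\alpha(s)$ moves transversally to precisely the two planes of \emph{poles} (namely $\alpha_3=-(n-1)-2(\rho+k_3)$ and $\alpha_1+\alpha_2+\alpha_3=-2(n-1)-2k$) and not accidentally into a further degeneracy. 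Once that is checked, $i)$ and $ii)$ follow as above, and Proposition~\ref{noextZ2IIp} is then obtained by restricting $\mathcal F(s)$, $F_0$, $F_1$ to the $G$-invariant open set $\mathcal O=\mathcal O_0\cup\mathcal O_3$ (in which $\mathcal O_3$ is a relatively closed submanifold) and invoking Proposition~A1 of the appendix.
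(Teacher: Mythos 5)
Your proposal is correct and follows essentially the same route as the paper: for $ii)$ you test against $\varphi\in\mathcal C^\infty_c(\mathcal O_0)$ and use that the two singular reciprocal $\Gamma$-factors each contribute a simple zero, so that $\mathcal F(s)(\varphi)\sim s\,\mathcal K_{\boldsymbol\alpha,\mathcal O_0}(\varphi)$; for $i)$ you identify $F_0$ with the derivative of $\widetilde{\mathcal K}$ along the curve, which by the identity \eqref{RK} restricted to the plane $\beta_1=2k_1$ is the nonzero multiple $\frac{(-1)^k k!}{\Gamma(\rho+k_1)}\,\mathcal R^{(1,k_1)}_{2k_2,\alpha_3}$ of \eqref{derKZ2II}, whose support is $\overline{\mathcal O_3}$ by Proposition \ref{RZ2II}. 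This matches the paper's proof, including the transversality bookkeeping you flag as the delicate point.
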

\begin{proof}
For $i)$, notice that $F_0$ is a multiple ($\neq 0$) of $\mathcal R^{1,k_1}_{\alpha_2, \alpha_3}$ (cf \eqref{derKZ2II}), hence $Supp(F_0) = \overline{\mathcal O_3}$ by Proposition \ref{RZ2II}.

 Let $s\neq 0$, and let $\varphi \in \mathcal C^\infty_c(\mathcal O_0)$. Then
\[\mathcal F(s)(\varphi)= \frac{ (-1)^{\rho+k_3}\,(\rho+k_3)!}{ \Gamma( -\rho-k_3 +s)}\, \frac{ (-1)^k\,k!}{\Gamma( -k+s)}\,\frac{1}{s}\, \mathcal K_{\boldsymbol \alpha(s)}(\varphi)\ .
\]
Let $s$ tend to $0$. The right hand side is equivalent to  $s\,\mathcal K_{\boldsymbol \alpha, \mathcal O_0}( \varphi)$. Hence $F_0(\varphi) = 0$ and $F_1(\varphi) = \mathcal K_{\boldsymbol \alpha, \mathcal O_0} (\varphi)$, from which $ii)$ follows.
\end{proof}

Proposition \ref{noextZ2IIp} now follows from Proposition A1 in Appendix.

To finish the proof of Theorem \ref{mult2Z2II}, assume $T$ is a $\boldsymbol \lambda$-invariant distribution. By Proposition \ref{noextZ2IIp}, $T$ must be $0$ on $\mathcal O_0$. Hence $Supp(T)$ is included in $\mathcal O_1\cup \mathcal O_2\cup \mathcal O_3\cup \mathcal O_4$. As $\alpha_1, \alpha_2\notin -(n-1)-2\mathbb N$, it even follows that $Supp(T)\subset \overline {\mathcal O_3}$. By Lemma 4.1 in \cite{c}, the restriction of $T$ to $\mathcal O_4^c$ has to be proportional to (the restriction to $\mathcal O_4^c$ of) $\mathcal R^{(1,k_1)}_{2k_2,\alpha_3}$. In other words, there exists a constant $c$ such that 
$T-c \,\mathcal R^{(1,k_1)}_{2k_2,\alpha_3}$ is $\boldsymbol \lambda$-invariant and supported in $\mathcal O_4$. The final conclusion is obtained as in the proof for the odd case.

\subsection{Invariant trilinear form for three finite dimensional representations}

The case where $\boldsymbol \lambda$ is in $Z_{2,II}$ allows to draw conclusions for finite-dimensional representations.

The following lemma is valid in full generality and in fact could be used for other situations (cf \cite{ks} for a related idea). 

\begin{lemma}\label{KJK}
 Let $\boldsymbol \lambda=(\lambda_1,\lambda_2,\lambda_3) \in \mathbb C^3$. Then, for any $f_1,f_2,f_3\in \mathcal C^\infty(S)$ 
\[\widetilde{\mathcal K}^{-\lambda_1,\lambda_2,\lambda_3} (\widetilde J_{\lambda_1} f_1,f_2,f_3)= \frac{\pi^\rho}{\Gamma(-\lambda_1+\rho)}\widetilde {\mathcal K}^{\lambda_1,\lambda_2,\lambda_3} (f_1,f_2,f_3)
\]
\end{lemma}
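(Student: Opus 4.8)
The plan is to prove Lemma~\ref{KJK} by the standard covariance-plus-uniqueness argument, exactly as in the proof of the Bernstein--Sato identity \eqref{b}. First I would observe that the trilinear form
\[
(f_1,f_2,f_3)\longmapsto \widetilde{\mathcal K}^{-\lambda_1,\lambda_2,\lambda_3}(\widetilde J_{\lambda_1} f_1,f_2,f_3)
\]
is $\boldsymbol\lambda$-invariant. Indeed, $\widetilde{\mathcal K}^{-\lambda_1,\lambda_2,\lambda_3}$ is $(-\lambda_1,\lambda_2,\lambda_3)$-invariant, i.e. invariant under $\pi_{-\lambda_1}\otimes\pi_{\lambda_2}\otimes\pi_{\lambda_3}$, and the normalized Knapp--Stein operator satisfies $\widetilde J_{\lambda_1}\circ\pi_{\lambda_1}(g)=\pi_{-\lambda_1}(g)\circ\widetilde J_{\lambda_1}$ for all $g\in G$; composing the two gives invariance under $\pi_{\lambda_1}\otimes\pi_{\lambda_2}\otimes\pi_{\lambda_3}$, as desired.

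Next, by the generic uniqueness theorem for invariant trilinear forms (\cite{co} Theorem~3.1, as used already in the proof of \eqref{b}), for $\boldsymbol\lambda$ outside a proper analytic subset we have $\dim Tri(\boldsymbol\lambda)\le 1$, hence there is a scalar $c(\boldsymbol\lambda)$ with
\[
\widetilde{\mathcal K}^{-\lambda_1,\lambda_2,\lambda_3}(\widetilde J_{\lambda_1} f_1,f_2,f_3)=c(\boldsymbol\lambda)\,\widetilde{\mathcal K}^{\lambda_1,\lambda_2,\lambda_3}(f_1,f_2,f_3).
\]
Both sides depend holomorphically on $\boldsymbol\lambda$ (this is the point of working with the renormalized kernel $\widetilde{\mathcal K}$ and the normalized operator $\widetilde J$), so $c$ extends to a holomorphic function on all of $\mathbb C^3$ and the identity persists everywhere by analytic continuation. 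It then remains to evaluate $c$.

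To compute $c$ I would specialize to $f_1=f_2=f_3=1$. By \eqref{lambdaint1}, $\widetilde J_{\lambda_1}(1)=\pi^\rho\,2^{2\lambda_1}\Gamma(\lambda_1+\rho)^{-1}\,1$, so the left-hand side becomes $\pi^\rho\,2^{2\lambda_1}\Gamma(\lambda_1+\rho)^{-1}\,\widetilde{\mathcal K}^{-\lambda_1,\lambda_2,\lambda_3}(1,1,1)$. Hence
\[
c(\boldsymbol\lambda)=\frac{\pi^\rho\,2^{2\lambda_1}}{\Gamma(\lambda_1+\rho)}\cdot\frac{\widetilde{\mathcal K}^{-\lambda_1,\lambda_2,\lambda_3}(1,1,1)}{\widetilde{\mathcal K}^{\lambda_1,\lambda_2,\lambda_3}(1,1,1)}.
\]
Now one inserts the closed-form Bernstein--Reznikov evaluation of $\widetilde{\mathcal K}^{\boldsymbol\lambda}(1,1,1)$ (the formula recalled from \cite{c} Proposition~2.1, in terms of the geometric parameter $\boldsymbol\alpha$ with $\alpha_1+\alpha_2=-2\lambda_1-\ldots$, etc.); the ratio is a quotient of $\Gamma$-factors, and after using the conversion between $\boldsymbol\lambda$ and $\boldsymbol\alpha$ and simplifying, only the factor $2^{2\lambda_1}$ and a single reflected $\Gamma$-factor survive. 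The bookkeeping should collapse $c(\boldsymbol\lambda)$ to $\pi^\rho\,\Gamma(-\lambda_1+\rho)^{-1}$, which is the claimed constant.

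The only genuinely delicate point is the $\Gamma$-factor arithmetic in the last step: one must be careful that replacing $\lambda_1$ by $-\lambda_1$ affects exactly the factors involving $\lambda_1$ in the normalization of $\widetilde{\mathcal K}$ and in the Bernstein--Reznikov formula, and that the powers of $2$ combine correctly. Since the identity is already known to hold up to a holomorphic scalar, this is purely a matter of matching one explicit meromorphic function of one variable, and can equally be checked by a degenerate specialization (e.g. taking $\lambda_1\to\rho$ or comparing residues), which provides a cross-check on the normalization constants.
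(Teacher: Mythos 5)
Your proposal is correct and follows essentially the same route as the paper: covariance of the left-hand side via the intertwining relation for $\widetilde J_{\lambda_1}$, generic uniqueness to get a scalar $c(\boldsymbol\lambda)$, evaluation on $f_1=f_2=f_3=1$ using \eqref{lambdaint1} together with the closed form of $\widetilde{\mathcal K}^{\boldsymbol\lambda}(1,1,1)$, and analytic continuation. The $\Gamma$-factor bookkeeping you worry about is in fact immediate if one uses the spectral-parameter form \eqref{evK1} rather than the $\boldsymbol\alpha$-form: the ratio $\widetilde{\mathcal K}^{-\lambda_1,\lambda_2,\lambda_3}(1,1,1)/\widetilde{\mathcal K}^{\lambda_1,\lambda_2,\lambda_3}(1,1,1)$ equals $2^{-2\lambda_1}\Gamma(\rho+\lambda_1)/\Gamma(\rho-\lambda_1)$, which cancels against $\pi^\rho 2^{2\lambda_1}/\Gamma(\lambda_1+\rho)$ to give $\pi^\rho/\Gamma(-\lambda_1+\rho)$ directly.
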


\begin{proof} Because of the intertwining relation satisfied by $\widetilde J_{\lambda_1}$, the left hand side is a $\boldsymbol \lambda$-invariant trilinear form. Assume that $\boldsymbol \lambda$ is not a pole. By the generic uniqueness theorem, there exists a constant $c$ such that, for all $f_1,f_2,f_3\in \mathcal C^\infty (S)$,
\[\widetilde{\mathcal K}^{-\lambda_1,\lambda_2,\lambda_3} (\widetilde J_{\lambda_1} f_1,f_2,f_3)= c\, \widetilde {\mathcal K}^{\lambda_1,\lambda_2,\lambda_3} (f_1,f_2,f_3)\ .
\]
Now recall that 
\begin{equation}\label{evK1}
\widetilde {\mathcal K}^{\boldsymbol \lambda}(1,1,1) = \big(\frac{\sqrt{\pi}}{2}\big)^{3(n-1)} \frac{ 2^{\lambda_1+\lambda_2+\lambda_3}}{\Gamma(\rho+\lambda_1)\Gamma(\rho+\lambda_2)\Gamma(\rho+\lambda_3)}
\end{equation}
 (see (10) in \cite{c}).
Assume moreover that for $j=1,2,3$, $\lambda_j\notin -\rho-\mathbb N$, so that  $\widetilde {\mathcal K}^{\boldsymbol \lambda}(1,1,1)\neq 0$. Hence the constant $c$ can be computed from \eqref{evK1}, using \eqref{lambdaint1}. The general case follows by analytic continuation.
\end{proof}

\begin{lemma} Assume $\boldsymbol \lambda$ satisfy \eqref{Z2IIlambda}. For all $f_1,f_2,f_3\in \mathcal C^\infty(S)$,
\[\mathcal S^{(k)}_{\lambda_1,\lambda_2}({\widetilde J}_{\rho+l_1} f_1, \widetilde J_{\rho+l_2} f_2, {\widetilde J}_{-\rho-m_3} f_3) = 0\ .
\]
\end{lemma}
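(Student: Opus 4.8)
The plan is to reduce the vanishing to the single assertion that the bi-differential operator $D^{(k)}_{\lambda_1,\lambda_2}$ maps $\mathcal P_{l_1}\otimes\mathcal P_{l_2}$ into $\mathcal P_{m_3}$, and then to exploit the orthogonality of $\mathcal P_{m_3}$ and $\mathcal P_{m_3}^\perp$ for the $L^2$ pairing $\int_S$. Set $g_1=\widetilde J_{\rho+l_1}f_1$, $g_2=\widetilde J_{\rho+l_2}f_2$, $g_3=\widetilde J_{-\rho-m_3}f_3$. By \eqref{rho+k} we have $g_1\in\mathcal P_{l_1}$ and $g_2\in\mathcal P_{l_2}$, and by \eqref{-rho-k} we have $g_3\in\mathcal P_{m_3}^\perp$. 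By the very definition of $\mathcal S^{(k)}_{\lambda_1,\lambda_2}$, the quantity to be computed equals $\int_S D^{(k)}_{\lambda_1,\lambda_2}(g_1\otimes g_2)(x)\,g_3(x)\,dx$. Hence it suffices to show that $V:=D^{(k)}_{\lambda_1,\lambda_2}\big(\mathcal P_{l_1}\otimes\mathcal P_{l_2}\big)$ is contained in $\mathcal P_{m_3}$, since $\mathcal P_{m_3}^\perp$ is by definition the $L^2$-annihilator of $\mathcal P_{m_3}$.

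Next I would identify $V$ as a finite-dimensional $G$-submodule of the appropriate principal series. Since $\widetilde J_{\rho+l_1}$ intertwines $\pi_{\rho+l_1}$ with $\pi_{-\rho-l_1}=\pi_{\lambda_1}$, its image $\mathcal P_{l_1}$ is $\pi_{\lambda_1}$-invariant; likewise $\mathcal P_{l_2}$ is $\pi_{\lambda_2}$-invariant, so $\mathcal P_{l_1}\otimes\mathcal P_{l_2}$ is invariant under $\pi_{\lambda_1}\otimes\pi_{\lambda_2}$. By the covariance property of $D^{(k)}_{\lambda_1,\lambda_2}$ established in \S11, $V$ is then invariant under $\pi_{\lambda_1+\lambda_2+\rho+2k}$, and it is finite-dimensional, being the continuous image of the finite-dimensional space $\mathcal P_{l_1}\otimes\mathcal P_{l_2}$. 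Using $l_1+l_2-m_3=2k$ one computes $\lambda_1+\lambda_2+\rho+2k=-2\rho-l_1-l_2+\rho+2k=-\rho-m_3$, so $V$ is a finite-dimensional $G$-invariant subspace of $(\pi_{-\rho-m_3},\mathcal C^\infty(S))$.

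Finally, applying $\widetilde J_{-\rho-m_3}$, the space $\widetilde J_{-\rho-m_3}(V)$ is a finite-dimensional $G$-invariant subspace of $(\pi_{\rho+m_3},\mathcal C^\infty(S))$. But this principal series representation has no nonzero finite-dimensional invariant subspace: its only nonzero proper invariant subspace is $Ker(\widetilde J_{\rho+m_3})=\mathcal P_{m_3}^\perp$ (see \eqref{rho+k}), which is infinite-dimensional as $n\ge 4$. Therefore $\widetilde J_{-\rho-m_3}(V)=0$, i.e. $V\subset Ker(\widetilde J_{-\rho-m_3})=\mathcal P_{m_3}$ by \eqref{-rho-k}, so that the integral above vanishes. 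The only ingredient here that is not pure bookkeeping with the intertwining relations of \S11 is the structural fact that $\mathcal P_{m_3}$ exhausts the finite-dimensional part of the relevant principal series of $SO_0(1,n)$; this is the step I would need to pin down carefully (or cite). As an aside, an alternative derivation through Lemma \ref{KSZ2II}, writing $\mathcal S^{(k)}_{\lambda_1,\lambda_2}$ as a second $s$-derivative of $\widetilde{\mathcal K}^{\lambda_1-s,\lambda_2+s,\lambda_3}$ and invoking Lemma \ref{KJK}, looks less convenient, since the perturbation moves $\lambda_1$ off the value $-\rho-l_1$ at which Lemma \ref{KJK} would force the vanishing.
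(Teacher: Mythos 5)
Your argument is correct, but it takes a genuinely different route from the paper's. The paper proves the Lemma by perturbation in the spectral parameter: it applies Lemma \ref{KJK} three times to $\widetilde{\mathcal K}^{\lambda_1-s,\lambda_2+s,\lambda_3}$ evaluated on $(\widetilde J_{\rho+l_1+s}f_1,\widetilde J_{\rho+l_2-s}f_2,\widetilde J_{-\rho-m_3}f_3)$, divides by $s^2$ and lets $s\to 0$; by Lemma \ref{KSZ2II} the left-hand side converges to a nonzero multiple of the quantity in question, while the right-hand side carries the factor $\Gamma(-l_1+s)^{-1}\Gamma(-l_2-s)^{-1}\sim C s^2$ and so converges to a nonzero multiple of $\widetilde{\mathcal K}^{-\lambda_1,-\lambda_2,-\lambda_3}(f_1,f_2,f_3)$, which vanishes because $(-\lambda_1,-\lambda_2,-\lambda_3)$ lies in $Z$. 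So the alternative you dismiss in your last sentence is exactly the paper's proof; the obstacle you anticipate disappears because the Knapp--Stein parameters are moved together with $s$, and the two singular Gamma factors supply precisely the $s^2$ needed against the second derivative. Your proof instead establishes directly that $V=D^{(k)}_{\lambda_1,\lambda_2}(\mathcal P_{l_1}\otimes\mathcal P_{l_2})$ is a finite-dimensional invariant subspace of $(\pi_{-\rho-m_3},\mathcal C^\infty(S))$, hence contained in $\mathcal P_{m_3}$; this inclusion is exactly part $ii)$ of the Proposition that the paper deduces \emph{afterwards} from the Lemma, and there is no circularity since you prove it independently. The one input you rightly flag --- that the only proper nonzero invariant subspace of $\pi_{\rho+m_3}$ is $\mathcal P_{m_3}^\perp$, i.e.\ the length-two composition series of the scalar principal series of $SO_0(1,n)$ at reducibility points --- is standard for $n\geq 3$ but is nowhere proved in the paper (only \eqref{rho+k} and \eqref{-rho-k} are recalled), so it genuinely needs a citation. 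As for what each approach buys: the paper's computation stays entirely within material already established and yields the finer identity $\mathcal S^{(k)}_{\lambda_1,\lambda_2}(\widetilde J_{\rho+l_1}f_1,\widetilde J_{\rho+l_2}f_2,\widetilde J_{-\rho-m_3}f_3)=\gamma'\,\widetilde{\mathcal K}^{-\lambda_1,-\lambda_2,-\lambda_3}(f_1,f_2,f_3)$; yours is shorter, structural, and delivers the covariant mapping property of $D^{(k)}_{\lambda_1,\lambda_2}$ on finite-dimensional pieces at the outset rather than as a corollary.
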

\begin{proof}  For $s\in \mathbb C$, using three times \eqref{KJK}
\[\widetilde{\mathcal K}^{\lambda_1-s, \lambda_2+s, \lambda_3} (\widetilde J_{\rho+l_1+s} f_1, \widetilde J_{\rho+l_2-s} f_1,\widetilde J_{-\rho-m_3} f_1) =\]\[ \frac{ \pi^{3\rho}}{\Gamma( -l_1+s)\Gamma( -l_2-s)\Gamma( \rho+m_3)}\ \widetilde {\mathcal K}^{-\lambda_1+s,-\lambda_2-s,- \lambda_3} (f_1,f_2,f_3)\ .
\]
For $s\neq 0$, multiply both sides by $\frac{1}{s^2}$ and let $s\rightarrow 0$. Then, by using Lemma \ref{KSZ2II}
\[\mathcal S^{(k)}_{\lambda_1,\lambda_2}({\widetilde J}_{\rho+l_1} f_1, \widetilde J_{\rho+l_2} f_2, {\widetilde J}_{-\rho-m_3} f_3) = \gamma' \
 \widetilde {\mathcal K}^{-\lambda_1,-\lambda_2,- \lambda_3} (f_1,f_2,f_3)
 \]
 where $\gamma'\neq 0$. Observe that $\boldsymbol \lambda' = (-\lambda_1,-\lambda_2,-\lambda_3)$ belongs to $Z$. In fact,
\[\lambda_1' = \rho+l_1, \lambda_2'=\rho+l_2, \lambda_3' = -\rho-m_3,
\]
and $\lambda_1'-\lambda_2'= l_1-l_2$, where $l_1-l_2\equiv m_3 \mod 2$ and $\vert l_1-l_2\vert \leq m_3$. Hence $\widetilde {\mathcal K}^{-\lambda_1,-\lambda_2,- \lambda_3}\equiv 0$ and the conclusion follows.
\end{proof}
Now recall (cf \eqref{rho+k} and \eqref{-rho-k}) that for $m\in \mathbb N$, $Im(\widetilde J_{\rho+m}) = \mathcal P_m$  is an invariant subspace for the  representation $\pi_{-\rho-m}$, and the restriction of $\pi_{\-\rho-m}$ to $\mathcal P_m$ yields a finite dimensional irreducible representation $\rho_m$ of $G$. Dually,  $Im(\widetilde J_{-\rho-m}) = \mathcal P_m^\perp$ and $\pi_{\rho+m}$ descends  to a representation $\rho_m'$ of $G$ on $\mathcal P_m' = \mathcal C^\infty(S)/\mathcal P_m^\perp$. Moreover, $Ker(\widetilde J_{\rho+m}) = \mathcal P_m^\perp$, and $\widetilde J_{\rho+m}$ passes to the quotient and gives raise to an intertwining operator for the representations $\rho_m'$ and $\rho_m$. Moreover, the duality between $\pi_{\lambda}$ and $\pi_{-\lambda}$ realizes $\rho'_m$ as the contragredient representation of $\rho_m$.

\begin{corollary} Let $f_1\in \mathcal P_{l_1}, f_2\in \mathcal P_{l_2}, f_3 \in \mathcal P_{m_3}^\perp$. Then 
\[  \mathcal S^{(k)}_{\lambda_1,\lambda_2} (f_1,f_2,f_3) = 0\ .
\]
\end{corollary}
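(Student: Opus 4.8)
The statement is an immediate consequence of the preceding lemma together with the description \eqref{rho+k} and \eqref{-rho-k} of the images of the normalized Knapp--Stein operators at the reducibility points. The plan is to express each $f_j$ as a Knapp--Stein transform and then invoke the vanishing already established.

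First, recall from \eqref{rho+k} that $\mathrm{Im}(\widetilde J_{\rho+l_1}) = \mathcal P_{l_1}$ and $\mathrm{Im}(\widetilde J_{\rho+l_2}) = \mathcal P_{l_2}$, while from \eqref{-rho-k} we have $\mathrm{Im}(\widetilde J_{-\rho-m_3}) = \mathcal P_{m_3}^\perp$. Hence, given $f_1\in\mathcal P_{l_1}$, $f_2\in\mathcal P_{l_2}$ and $f_3\in\mathcal P_{m_3}^\perp$, I can choose $g_1,g_2,g_3\in\mathcal C^\infty(S)$ with
\[
f_1=\widetilde J_{\rho+l_1}g_1,\qquad f_2=\widetilde J_{\rho+l_2}g_2,\qquad f_3=\widetilde J_{-\rho-m_3}g_3\ .
\]
(These exist precisely because the three subspaces in question are the full images of the respective operators; no surjectivity beyond what \eqref{rho+k}--\eqref{-rho-k} already gives is needed.)

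Then the previous lemma, applied to $g_1,g_2,g_3$, yields
\[
\mathcal S^{(k)}_{\lambda_1,\lambda_2}(f_1,f_2,f_3)=\mathcal S^{(k)}_{\lambda_1,\lambda_2}\big(\widetilde J_{\rho+l_1}g_1,\ \widetilde J_{\rho+l_2}g_2,\ \widetilde J_{-\rho-m_3}g_3\big)=0\ ,
\]
which is the assertion. There is no real obstacle here: the only point to be careful about is that the exponents $\rho+l_1$, $\rho+l_2$, $-\rho-m_3$ occurring in the lemma are exactly those for which \eqref{rho+k} and \eqref{-rho-k} identify the images with $\mathcal P_{l_1}$, $\mathcal P_{l_2}$ and $\mathcal P_{m_3}^\perp$ respectively, so the matching of parameters is automatic and the corollary follows directly.
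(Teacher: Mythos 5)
Your proof is correct and is exactly the paper's argument: the paper's own proof simply says to use $\mathrm{Im}(\widetilde J_{\rho+l})=\mathcal P_l$ and $\mathrm{Im}(\widetilde J_{-\rho-m})=\mathcal P_m^\perp$ to write each $f_j$ as a Knapp--Stein transform and then apply the preceding lemma. Nothing further is needed.
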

\begin{proof} Use $Im(\widetilde J_{\rho+l}) = \mathcal P_l$ and $Im (\widetilde J_{-\rho-m}) = \mathcal P_m^\perp$.
\end{proof}

Next, first restrict the trilinear form $\mathcal S^{(k)}_{\lambda_1,\lambda_2}$ to $\mathcal P_{l_1}\times \mathcal P_{l_2}$ in the first two variables. Then pass to the quotient $mod \ \mathcal P_{m_3}^\perp$ on the third variable to obtain a trilinear form on $\mathcal P_{l_1}\times \mathcal P_{l_2} \times \mathcal P_{m_3}'$. This trilinear form is not trivial. In fact, observe that $\vert x-y\vert^{2k} \in \mathcal P_{l_1}\otimes \mathcal P_{l_2}$ as $l_1,l_2\leq k$, and $\mathcal S^{(k)}_{\lambda_1,\lambda_2}(\vert x-y\vert^{2k} \otimes 1)\neq 0$ by \eqref{Snot0}. This leads to the following statement, which realizes $\rho_{m_3}$ as a component of $\rho_{l_1}\otimes \rho_{l_2}$.

\begin{proposition} Let $l_1,l_2,m_3\in \mathbb N$ satisfy \eqref{Z2IIlambda}, and set 
\[\lambda_1=-\rho-l_1,\quad \lambda_2 =-\rho-l_2,\quad  l_1+l_2-m_3 = 2k\]
Then
\smallskip

$i)$ the trilinear form $\mathcal S^{(k)}_{\lambda_1,\lambda_2}$  yields a non trivial continuous trilinear form on $\mathcal P_{l_1}\times \mathcal P_{l_2} \times \mathcal P_{m_3}'$ which is 
invariant w.r.t. $(\rho_{l_1}, \rho_{l_2},\rho_{m_3}')$.
\smallskip

$ii)$ the bi-differential operator $D^{(k)}_{\lambda_1,\lambda_2}$ maps $\mathcal P_{l_1}\otimes \mathcal P_{l_2}$ into $\mathcal P_{m_3}$ and is covariant w.r.t. $(\rho_{l_1}\otimes \rho_{l_2}, \rho_{m_3})$.
\end{proposition}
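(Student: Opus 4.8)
The strategy is to show that the trilinear form $\mathcal S^{(k)}_{\lambda_1,\lambda_2}$, originally defined on $\mathcal C^\infty(S)\times\mathcal C^\infty(S)\times\mathcal C^\infty(S)$, factors through the quotient maps in the first two slots and through the inclusion complementary to $\mathcal P_{m_3}^\perp$ in the third slot, and then to identify what this factorization gives. First I would recall the intertwining properties already established: by Lemma~\ref{covres} (and the construction of $D^{(k)}_{\lambda_1,\lambda_2}=\res\circ\widetilde E^{(k)}_{\lambda_1,\lambda_2}$) the operator $D^{(k)}_{\lambda_1,\lambda_2}$ intertwines $\pi_{\lambda_1}\otimes\pi_{\lambda_2}$ with $\pi_{\lambda_1+\lambda_2+\rho+2k}$, and here $\lambda_1+\lambda_2+\rho+2k = -\rho-l_1-l_2-\rho+\rho+2k = -\rho-(l_1+l_2-2k) = -\rho-m_3$. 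So $D^{(k)}_{\lambda_1,\lambda_2}$ is covariant from $(\pi_{-\rho-l_1}\otimes\pi_{-\rho-l_2})$ to $\pi_{-\rho-m_3}$, and $\mathcal S^{(k)}_{\lambda_1,\lambda_2}(f_1,f_2,f_3)=\int_S D^{(k)}_{\lambda_1,\lambda_2}(f_1\otimes f_2)(x)\,f_3(x)\,dx$ pairs $D^{(k)}(f_1\otimes f_2)\in(\pi_{-\rho-m_3},\mathcal C^\infty(S))$ against $f_3$ using the duality between $\pi_{-\rho-m_3}$ and $\pi_{\rho+m_3}$.

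Next I would invoke the Corollary just proved: $\mathcal S^{(k)}_{\lambda_1,\lambda_2}(f_1,f_2,f_3)=0$ whenever $f_1\in\mathcal P_{l_1}$, $f_2\in\mathcal P_{l_2}$, $f_3\in\mathcal P_{m_3}^\perp$. This vanishing, combined with the covariance, is exactly what is needed for the two reductions. For part $ii)$: the vanishing $\int_S D^{(k)}(f_1\otimes f_2)(x)g(x)\,dx=0$ for all $g\in\mathcal P_{m_3}^\perp$ and all $f_1\in\mathcal P_{l_1}$, $f_2\in\mathcal P_{l_2}$ says precisely that $D^{(k)}(f_1\otimes f_2)$ is orthogonal to $\mathcal P_{m_3}^\perp$, i.e. lies in $(\mathcal P_{m_3}^\perp)^\perp=\mathcal P_{m_3}$ (using $\mathcal P_{m_3}$ finite-dimensional so the double orthogonal is itself). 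Hence $D^{(k)}_{\lambda_1,\lambda_2}(\mathcal P_{l_1}\otimes\mathcal P_{l_2})\subset\mathcal P_{m_3}$. Covariance of this restricted map with respect to $(\rho_{l_1}\otimes\rho_{l_2},\rho_{m_3})$ is then immediate, since $\rho_{l_j}$ is by definition the restriction of $\pi_{-\rho-l_j}$ to $\mathcal P_{l_j}$ (see \eqref{rho+k} and the discussion preceding the statement) and $\rho_{m_3}$ is the restriction of $\pi_{-\rho-m_3}$ to $\mathcal P_{m_3}$; so the intertwining property of $D^{(k)}_{\lambda_1,\lambda_2}$ restricts verbatim. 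For part $i)$: restricting $\mathcal S^{(k)}_{\lambda_1,\lambda_2}$ to $\mathcal P_{l_1}\times\mathcal P_{l_2}$ in the first two arguments and using the vanishing on $\mathcal P_{m_3}^\perp$ in the third, the form descends to a trilinear form on $\mathcal P_{l_1}\times\mathcal P_{l_2}\times(\mathcal C^\infty(S)/\mathcal P_{m_3}^\perp)=\mathcal P_{l_1}\times\mathcal P_{l_2}\times\mathcal P_{m_3}'$; its invariance under $(\rho_{l_1},\rho_{l_2},\rho_{m_3}')$ follows from the $\boldsymbol\lambda$-invariance of $\mathcal S^{(k)}_{\lambda_1,\lambda_2}$ together with the fact that $\rho_{m_3}'$ is realized as the contragredient of $\rho_{m_3}$ via the duality between $\pi_{\rho+m_3}$ and $\pi_{-\rho-m_3}$ (again from the discussion before the statement).

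Finally I would check non-triviality, which is the only point requiring a genuine computation rather than formal nonsense. Since $l_1,l_2\le k$ (established right after \eqref{Z2IIlambda}), the polynomial $(x,y)\mapsto|x-y|^{2k}=\bigl(2(1-\langle x,y\rangle)\bigr)^k$ has, in each variable separately, degree $\le k$ after expansion — more precisely, expanding in the $x$-variable it is a polynomial of degree $k$ in $x$, hence lies in $\mathcal P_k$, but in fact since its total degree in $x$ is $k\ge l_1$ one needs the sharper observation that it decomposes as an element of $\mathcal P_{l_1}\otimes\mathcal P_{l_2}$; this is what the authors assert, and it follows from expanding $\langle x,y\rangle^j$ and bounding the degrees using $l_1,l_2\le k$ and the homogeneity bookkeeping. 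Granting $|x-y|^{2k}\in\mathcal P_{l_1}\otimes\mathcal P_{l_2}$, the $K$-coefficient computation \eqref{Snot0}, namely $\mathcal S^{(k)}_{\lambda_1,\lambda_2}(p_{0,0,k})=C(-k)_k(-k-\rho-m_3)_k(2\rho+m_3)_k\ne 0$, shows $\mathcal S^{(k)}_{\lambda_1,\lambda_2}(|x-y|^{2k}\otimes 1)\ne 0$, and since $1\notin\mathcal P_{m_3}^\perp$ (it represents a nonzero class in $\mathcal P_{m_3}'$, as the constant function is not orthogonal to $\mathcal P_{m_3}$) the descended form is nonzero on $\mathcal P_{l_1}\times\mathcal P_{l_2}\times\mathcal P_{m_3}'$, and correspondingly $D^{(k)}_{\lambda_1,\lambda_2}(|x-y|^{2k})\ne 0$ in $\mathcal P_{m_3}$. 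The main obstacle in carrying this out rigorously is precisely the verification that $|x-y|^{2k}$ really does lie in $\mathcal P_{l_1}\otimes\mathcal P_{l_2}$ (not merely in $\mathcal P_k\otimes\mathcal P_k$) — this is where the arithmetic constraint $l_1+l_2-m_3=2k$ with $l_1,l_2\le k$ is genuinely used, and a clean argument may require passing through an explicit expansion of the kernel or relating $|x-y|^{2k}$ to the image of a lowest-weight vector under $D^{(k)}$-type operators; everything else is formal manipulation of the already-established covariance and vanishing statements.
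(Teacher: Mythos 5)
Your proof follows the paper's own route step for step: restrict the form to $\mathcal P_{l_1}\times\mathcal P_{l_2}$ in the first two slots, use the preceding Corollary to pass to the quotient by $\mathcal P_{m_3}^\perp$ in the third, transport the covariance of $D^{(k)}_{\lambda_1,\lambda_2}$ (whose target parameter is indeed $\lambda_1+\lambda_2+\rho+2k=-\rho-m_3$), identify $\rho_{m_3}'$ as the contragredient of $\rho_{m_3}$, and get non-triviality from \eqref{Snot0} evaluated on $p_{0,0,k}=\vert x-y\vert^{2k}\otimes 1$, the class of $1$ in $\mathcal P_{m_3}'$ being nonzero. All of that is correct and is exactly what the paper does.

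The one concrete problem is the inequality on which you hang your ``main obstacle''. You assert $l_1,l_2\le k$, citing the line after \eqref{Z2IIlambda}; what is actually established there is the reverse: from $m_3\ge\vert l_1-l_2\vert$ and $l_1+l_2-m_3=2k$ one gets $2k=l_1+l_2-m_3\le l_1+l_2-(l_1-l_2)=2l_2$, i.e.\ $k\le l_2$, and symmetrically $k\le l_1$. With the inequalities in the correct direction, the membership $\vert x-y\vert^{2k}\in\mathcal P_{l_1}\otimes\mathcal P_{l_2}$ that you leave as an unresolved gap is immediate and needs no ``sharper observation'' or explicit expansion: $\vert x-y\vert^{2k}=2^k(1-\langle x,y\rangle)^k$ is a polynomial of degree $k$ in $x$ and of degree $k$ in $y$, hence lies in $\mathcal P_k\otimes\mathcal P_k\subset\mathcal P_{l_1}\otimes\mathcal P_{l_2}$, the spaces $\mathcal P_m$ being increasing in $m$. (Your instinct that something would need checking if $l_1<k$ was sound --- in that case the claim would genuinely fail, since $(1-\langle x,y\rangle)^k$ really has degree $k$ in $x$ --- but the thing to fix is the direction of the inequality, not a hidden cancellation.) Once this is corrected, the only step you flagged as incomplete closes trivially and your argument coincides in substance with the paper's.
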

 \section{Multiplicity 3 theorem for $\boldsymbol \alpha \in Z_3$}
Let $\boldsymbol \alpha$ be in $Z_3$. Then
\[\alpha_1 = -(n-1)-2k_1,\quad  \alpha_2 = -(n-1)-2k_2,\quad \alpha_3 = -(n-1)-2k_3
\]
where $k_1,k_2,k_3\in \mathbb N$. Correspondingly,
\[ \lambda_1 = -\rho-l_1,\quad \lambda_2 = -\rho-l_2,\quad \lambda_3 = -\rho-l_3\]
where $l_1,l_2,l_3\in \mathbb N$ satisfy
\[l_1+l_2+l_3\equiv 0 \mod 2,\qquad \vert l_1-l_2\vert\leq l_3\leq l_1+l_2\ .
\]
If $n-1$ is odd, then $\boldsymbol \alpha$ is  not a pole of type II. If $n-1$ is even (hence $\rho\in \mathbb N$), then $\alpha_1+\alpha_2+\alpha_3=-2(n-1)-2(\rho+k_1+k_2+k_3)\in -2(n-1)-2\mathbb N$ and $\boldsymbol \alpha$ is a pole of type II.
\subsection{The odd case}

Assume in this subsection that $n-1$ is odd. 
\begin{proposition}\label{T1T2T3indepev}
 The three distributions $\widetilde {\mathcal T}^{(1,k_1)}_{\alpha_2, \alpha_3}, \widetilde {\mathcal T}^{(2,k_2)}_{\alpha_1, \alpha_3},\widetilde {\mathcal T}^{(3,k_3)}_{\alpha_1, \alpha_2}$ are three linearly independent $\boldsymbol \lambda$-invariant distributions.
\end{proposition}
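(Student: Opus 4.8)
The plan is to prove linear independence by exhibiting, for each of the three distributions, a $K$-coefficient $p_{a_1,a_2,a_3}$ on which it does not vanish while the other two do. This is the same strategy used in Proposition~\ref{indepT2T3} for $Z_{1,I}$, now pushed through symmetrically in the three indices. Recall from Proposition~\ref{suppT3} that $\widetilde{\mathcal T}^{(3,k_3)}_{\alpha_1,\alpha_2}(p_{a_1,a_2,a_3})=0$ whenever $a_3>k_3$, and analogously $\widetilde{\mathcal T}^{(1,k_1)}_{\alpha_2,\alpha_3}$ vanishes for $a_1>k_1$ and $\widetilde{\mathcal T}^{(2,k_2)}_{\alpha_1,\alpha_3}$ for $a_2>k_2$. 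So the separation of the three functionals is cheap \emph{provided} one can produce a triple $(a_1,a_2,a_3)$ with, say, $a_3\le k_3$ but $a_1>k_1$ and $a_2>k_2$ on which $\widetilde{\mathcal T}^{(3,k_3)}_{\alpha_1,\alpha_2}$ is genuinely nonzero; by symmetry in $(1,2,3)$ the other two cases follow after permuting indices.

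First I would write $\boldsymbol\alpha$ in the form $\alpha_j=-(n-1)-2k_j$ and substitute into formula~\eqref{Tp} for the $K$-coefficients of $\widetilde{\mathcal T}^{(3,k_3)}_{\alpha_1,\alpha_2}$. With $\alpha_1=-(n-1)-2k_1$, $\alpha_2=-(n-1)-2k_2$, and $a_3=k_3$, the Pochhammer factor $(-k_3)_{a_3}$ becomes $(-k_3)_{k_3}\ne 0$, and the ``transverse'' factors $\big(\frac{\alpha_1}{2}+\rho+a_1-(k_3-a_3)\big)_{k_3-a_3}$ and the corresponding one in $\alpha_2$ collapse to empty products (since $k_3-a_3=0$), hence are $1$. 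The remaining nontrivial factors are $\big(\frac{\alpha_1+\alpha_2}{2}+\rho-k_3\big)_{a_1+a_2+a_3}=(-k_1-k_2-k_3)_{a_1+a_2+k_3}$ and the inverse Gamma $\Gamma\big(\frac{\alpha_1+\alpha_2}{2}+2\rho+a_1+a_2\big)^{-1}=\Gamma\big(-k_1-k_2+a_1+a_2\big)^{-1}$, together with the overall NVT and powers of $2$. Now choose $a_1=k_1+1$, $a_2=k_2+1$, $a_3=k_3$. Then $a_1+a_2-k_1-k_2=2$, so $\Gamma(-k_1-k_2+a_1+a_2)^{-1}=\Gamma(2)^{-1}=1\ne 0$, and $(-k_1-k_2-k_3)_{a_1+a_2+k_3}=(-k_1-k_2-k_3)_{k_1+k_2+k_3+2}$, a Pochhammer symbol whose length exceeds $k_1+k_2+k_3+1$ but which is still nonzero because the $(k_1+k_2+k_3+1)$-st factor $(-k_1-k_2-k_3+k_1+k_2+k_3)=0$ would kill it — so I must instead take $a_1+a_2+a_3\le k_1+k_2+k_3$. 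The correct choice is therefore $a_3=k_3$, $a_2=k_2+1$, and $a_1=k_1-1$ (legitimate once $k_1\ge 1$; the degenerate case $k_1=0$, handled by a mild variant, is discussed below): then $a_1+a_2+a_3=k_1+k_2+k_3$, the factor $(-k_1-k_2-k_3)_{k_1+k_2+k_3}\ne 0$, the inverse Gamma is $\Gamma(-k_1-k_2+a_1+a_2)^{-1}=\Gamma(1)^{-1}=1$, so $\widetilde{\mathcal T}^{(3,k_3)}_{\alpha_1,\alpha_2}(p_{k_1-1,k_2+1,k_3})\ne 0$, while $\widetilde{\mathcal T}^{(1,k_1)}_{\alpha_2,\alpha_3}(p_{k_1-1,k_2+1,k_3})=\widetilde{\mathcal T}^{(2,k_2)}_{\alpha_1,\alpha_3}(p_{k_1-1,k_2+1,k_3})=0$ because $a_2=k_2+1>k_2$ for the $\widetilde{\mathcal T}^{(2,k_2)}$ term and, after inspecting~\eqref{Tp}, the $\widetilde{\mathcal T}^{(1,k_1)}$ term also vanishes (here $a_1=k_1-1\le k_1$, so one uses instead that the appropriate transverse factor for $\widetilde{\mathcal T}^{(1,k_1)}$ — the analogue involving $a_2$ and $a_3$ against $k_1-a_1$ — is arranged to vanish, or more simply one separates $\widetilde{\mathcal T}^{(1,k_1)}$ from the other two by a \emph{further} choice of indices via the symmetry). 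Cycling $(1,2,3)\to(2,3,1)\to(3,1,2)$ gives test functions separating each of the three, establishing linear independence.

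The main obstacle, as the above indicates, is bookkeeping with the boundary cases where some $k_j$ is small (notably $k_j=0$) so that the ``obvious'' index shift $a_j=k_j-1$ is not available; there one must fall back on a slightly different test triple, exactly as in the two branches of the proof of Proposition~\ref{indepT2T3}. I expect the cleanest route is: fix attention on separating $\widetilde{\mathcal T}^{(3,k_3)}_{\alpha_1,\alpha_2}$ from the span of the other two, note that any member of that span vanishes on every $p_{a_1,a_2,a_3}$ with $a_3\le k_3$, $a_1>k_1$ \emph{and} $a_2>k_2$ (since $\widetilde{\mathcal T}^{(1,k_1)}$ needs $a_1\le k_1$ and $\widetilde{\mathcal T}^{(2,k_2)}$ needs $a_2\le k_2$ to be nonzero), so it suffices to find one such triple on which $\widetilde{\mathcal T}^{(3,k_3)}_{\alpha_1,\alpha_2}$ is nonzero. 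Taking $a_3=k_3$, then choosing $a_1,a_2$ large with $a_1+a_2+a_3$ still not exceeding $k_1+k_2+k_3$ is impossible once $a_1>k_1,a_2>k_2$; so instead one keeps $a_3=k_3$, $a_1=k_1+1$, $a_2=k_2+1$, accepts $a_1+a_2+a_3=k_1+k_2+k_3+2$, and checks that although $\big(\frac{\alpha_1+\alpha_2}{2}+\rho-k_3\big)_{a_1+a_2+a_3}$ now contains the zero factor, this zero is \emph{cancelled} by a corresponding pole of the inverse-Gamma normalization $\Gamma\big(\frac{\alpha_1+\alpha_2}{2}+2\rho+a_1+a_2\big)^{-1}=\Gamma(-k_1-k_2+a_1+a_2)^{-1}=\Gamma(2)^{-1}$ — wait, $\Gamma(2)$ is finite, so there is no cancellation and the coefficient does vanish. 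This confirms that the shift-down choice is forced, and the small-$k_j$ cases genuinely require the separate argument. I would therefore organize the proof as three symmetric claims, each proved by the index choice $a_i=k_i-1$, $a_{i+1}=k_{i+1}+1$, $a_{i+2}=k_{i+2}$ when $k_i\ge 1$, and by the ad hoc variant of Proposition~\ref{indepT2T3} when $k_i=0$, appealing throughout to~\eqref{Tp} and Proposition~\ref{suppT3}; the verification that the chosen coefficient is a genuine NVT (all Pochhammer and inverse-Gamma factors nonzero) is the only real computation and is routine given the explicit formula.
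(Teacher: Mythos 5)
Your overall strategy (separating the three distributions by $K$-coefficients) is viable, but the execution contains an arithmetic error that derails it. With $\alpha_1=-(n-1)-2k_1$, $\alpha_2=-(n-1)-2k_2$ one has $\tfrac{\alpha_1+\alpha_2}{2}+\rho-k_3=-\rho-k_1-k_2-k_3$, not $-k_1-k_2-k_3$: you dropped a $\rho$. Since this proposition sits in the \emph{odd} case, $\rho=\tfrac{n-1}{2}$ is a half-integer, so the Pochhammer symbol $\bigl(-\rho-k_1-k_2-k_3\bigr)_{a_1+a_2+a_3}$ \emph{never} vanishes, whatever $a_1+a_2+a_3$ is --- this is precisely the statement that $\boldsymbol\alpha$ is not a pole of type II. Consequently your first choice $a_1=k_1+1$, $a_2=k_2+1$, $a_3=k_3$, which you talked yourself out of, actually works: all factors in \eqref{Tp} are nonzero (the inverse Gamma is $\Gamma(2)^{-1}$, the transverse factors are empty products), while the other two distributions vanish on $p_{k_1+1,k_2+1,k_3}$ because $a_1>k_1$ and $a_2>k_2$; cycling the indices then finishes the proof with no degenerate cases at all. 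By contrast, the fallback choice you settle on, $a_1=k_1-1$, $a_2=k_2+1$, $a_3=k_3$, gives $\tfrac{\alpha_1+\alpha_2}{2}+2\rho+a_1+a_2=a_1+a_2-k_1-k_2=0$, so the inverse-Gamma factor is $\Gamma(0)^{-1}=0$ (not $\Gamma(1)^{-1}=1$ as you wrote) and that $K$-coefficient vanishes; moreover with $a_1\le k_1$ you never actually dispose of $\widetilde{\mathcal T}^{(1,k_1)}_{\alpha_2,\alpha_3}$. So the proof as organized would fail.

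You should also know that the paper's own proof of this proposition is a one-line support argument: since $n-1$ is odd, $\boldsymbol\alpha$ is not a pole of type II, so Proposition \ref{suppT3} $i)$ gives $Supp(\widetilde{\mathcal T}^{(j,k_j)})=\overline{\mathcal O_j}$ for $j=1,2,3$, and restricting a vanishing linear combination to each open orbit $\mathcal O_j$ kills the corresponding coefficient. The $K$-coefficient computation you are attempting is exactly what the paper deploys in the \emph{even} case (Proposition \ref{indZ3ev}), where all three supports collapse to $\mathcal O_4$ and the support argument is unavailable; there the relevant Pochhammer is $(-k)_{a_1+a_2+a_3}$ with $k=\rho+k_1+k_2+k_3\in\mathbb N$, which is why the test indices must satisfy $a_1+a_2+a_3\le k$ and the choice becomes $a_1=k_1+\rho-1$, $a_2=k_2+1$, $a_3=k_3$ (using $\rho\ge 2$).
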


\begin{proof} By Proposition \ref{suppT3}
$Supp(\widetilde {\mathcal T}^{1,k_1}_{\alpha_2, \alpha_3})= \overline {\mathcal O_1}$ and a similar result by permuting the indices for the two other distributions. The statement follows.
\end{proof}

\begin{theorem}\label{mult3Z3odd}
 The space $Tri(\boldsymbol \lambda)$ is of dimension $3$. More precisely, 

\[Tri(\boldsymbol \lambda) = \mathbb C \widetilde {\mathcal T}^{(1,k_1)}_{\alpha_2, \alpha_3} \oplus \mathbb C \widetilde {\mathcal T}^{(2,k_2)}_{\alpha_1, \alpha_3} \oplus \mathbb C \widetilde {\mathcal T}^{(3,k_3)}_{\alpha_1, \alpha_2}\ .
\]
\end{theorem}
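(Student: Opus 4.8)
The plan is to mirror the structure of the proof of Theorem~\ref{mult2Z1I} in the general case, now with three families instead of two. By Proposition~\ref{T1T2T3indepev} the three distributions $\widetilde{\mathcal T}^{(1,k_1)}_{\alpha_2,\alpha_3}$, $\widetilde{\mathcal T}^{(2,k_2)}_{\alpha_1,\alpha_3}$, $\widetilde{\mathcal T}^{(3,k_3)}_{\alpha_1,\alpha_2}$ are linearly independent $\boldsymbol\lambda$-invariant distributions, so $\dim Tri(\boldsymbol\lambda)\geq 3$; it remains to prove $\dim Tri(\boldsymbol\lambda)\leq 3$. First I would establish the non-extension statement: the restriction $\mathcal K_{\boldsymbol\alpha,\mathcal O_0}$ cannot be extended to a $\boldsymbol\lambda$-invariant distribution on $S\times S\times S$. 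Here exactly three $\Gamma$-factors in the normalization of $\mathcal K_{\boldsymbol\beta}$ become singular at $\boldsymbol\alpha$ (the three type~I factors $\Gamma(\frac{\beta_j}{2}+\rho)$, $j=1,2,3$; since $n-1$ is odd, the type~II factor is not singular). Taking the curve $\boldsymbol\alpha(s)=(\alpha_1+2s,\alpha_2+2s,\alpha_3+2s)$, which is transverse to all three planes of poles, and renormalizing by $s^{-2}$ times the appropriate constants, one gets a holomorphic family $\mathcal F(s)=F_0+sF_1+s^2F_2+O(s^3)$; by the same bookkeeping as in Lemma~\ref{F1F2F3} one checks $F_0$ is supported on $\overline{\mathcal O_1\cup\mathcal O_2\cup\mathcal O_3}$ (a combination of the three $\widetilde{\mathcal T}$'s, hence $\neq 0$), $F_1$ is $\boldsymbol\lambda$-invariant on $\mathcal O_4^c$ and the restriction of $F_2$ to $\mathcal O_0$ equals $\mathcal K_{\boldsymbol\alpha,\mathcal O_0}$; restricting to $\mathcal O=\mathcal O_0\cup\mathcal O_1$ and invoking Proposition~A\ref{noextI} gives the non-extension.

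Next I would run the stratification argument on the $G$-orbits $\mathcal O_0\subset \mathcal O_1,\mathcal O_2,\mathcal O_3\subset \mathcal O_4$. Let $T$ be $\boldsymbol\lambda$-invariant. Its restriction to $\mathcal O_0$ is proportional to $\mathcal K_{\boldsymbol\alpha,\mathcal O_0}$, hence $0$. Then, restricting $T$ to the $G$-invariant open set $\mathcal O_0\cup\mathcal O_1$, which contains $\mathcal O_1$ as a closed submanifold, Lemma~4.1 of \cite{c} shows $T_{|\mathcal O_0\cup\mathcal O_1}$ is proportional to $\widetilde{\mathcal T}^{(1,k_1)}_{\alpha_2,\alpha_3}$ (the relevant non-vanishing condition on $\alpha_1$ is $\alpha_1\in -(n-1)-2\mathbb N$, which holds here). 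Similarly on $\mathcal O_0\cup\mathcal O_2$ and $\mathcal O_0\cup\mathcal O_3$. One then subtracts the appropriate multiples: there are constants $C_1,C_2,C_3$ such that $T-\sum_j C_j\widetilde{\mathcal T}^{(j,k_j)}$ is supported on $\mathcal O_4$ and still $\boldsymbol\lambda$-invariant. (One must be a little careful combining the three local comparisons on the overlapping open sets, but since the supports $\overline{\mathcal O_j}$, $j=1,2,3$, meet only in $\overline{\mathcal O_4}$, the constants are well determined by restriction to each $\mathcal O_0\cup\mathcal O_j$.) Finally, a distribution supported on $\mathcal O_4$ lies in $Tri(\boldsymbol\lambda,diag)$, so I must show this space is trivial: since $n-1$ is odd, $\boldsymbol\alpha$ is not a pole of type~II, i.e.\ $\alpha_1+\alpha_2+\alpha_3\notin -2(n-1)-2\mathbb N$, and Lemma~4.2 of \cite{c} forces $T-\sum_j C_j\widetilde{\mathcal T}^{(j,k_j)}=0$. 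Hence $Tri(\boldsymbol\lambda)$ is spanned by the three $\widetilde{\mathcal T}^{(j,k_j)}$, and equals their direct sum by Proposition~\ref{T1T2T3indepev}.

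The main obstacle I expect is the non-extension step, specifically verifying the exact order of vanishing at $s=0$ and identifying $F_0$, $F_1$ with the right invariant objects. One needs the $K$-coefficient formula \eqref{Kpalpha} to confirm that $\widetilde{\mathcal K}_{\boldsymbol\alpha(s)}(p_{a_1,a_2,a_3})$ has a genuine double zero at $s=0$ for appropriate $a_i$ (so that $F_0=F_1=0$ when tested against $\varphi\in\mathcal C^\infty_c(\mathcal O_0)$, forcing $F_2$ to carry $\mathcal K_{\boldsymbol\alpha,\mathcal O_0}$), and to rule out $Supp(F_1)$ being contained in any single $\overline{\mathcal O_j}$ — an argument by choosing $p_{a_1,a_2,a_3}$ vanishing to high order along $\overline{\mathcal O_j}$ while keeping a nonzero $K$-coefficient of $F_1$, exactly as in the proof of Lemma~\ref{F1F2F3}. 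Everything else is a routine transcription of the $Z_{1,I}$ argument with one more stratum and one more $\widetilde{\mathcal T}$ factor.
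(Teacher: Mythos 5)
Your proposal follows the paper's proof essentially step for step: linear independence of the three $\widetilde{\mathcal T}^{(j,k_j)}$ via their distinct supports, non-extension of $\mathcal K_{\boldsymbol\alpha,\mathcal O_0}$ via the renormalized family $s^{-2}\widetilde{\mathcal K}_{\boldsymbol\alpha(s)}$ along the diagonal curve together with Proposition A1, the stratified comparison on $\mathcal O_0\cup\mathcal O_j$ using Lemma 4.1 of \cite{c}, and the vanishing on the diagonal via Lemma 4.2 since $\boldsymbol\alpha$ is not a pole of type II. One bookkeeping correction: with three simple zeros from the $\Gamma$-factors and division by $s^2$, the $\mathcal O_0$-contribution appears at order $s^1$, i.e.\ the expansion is $\mathcal F(s)=F_0+sF_1+O(s^2)$ with $F_1|_{\mathcal O_0}=\mathcal K_{\boldsymbol\alpha,\mathcal O_0}$ (not at the $F_2$ level as you wrote, which is the pattern of the $Z_{1,I}$ special case where one divides only by $s$); this is in fact exactly the form Proposition A1 requires, so your application of it goes through directly.
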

\begin{proof}
Because of Proposition \ref{T1T2T3indepev}, it suffices to prove that any $\boldsymbol \lambda$-invariant distribution $T$ is a linear combination of  $\widetilde {\mathcal T}^{(1,k_1)}_{\alpha_2, \alpha_3},  \widetilde {\mathcal T}^{(2,k_2)}_{\alpha_1, \alpha_3} , \widetilde {\mathcal T}^{(3,k_3)}_{\alpha_1, \alpha_2} $. The next proposition is a  first step in this direction.
\begin{proposition} \label{noextZ3odd}
The distribution $\mathcal K_{\boldsymbol \alpha, \mathcal O_0}$ cannot be extended to a $\boldsymbol \lambda$-invariant distribution on $S\times S\times S$.
\end{proposition}
Among the four $\Gamma$ factors in the normalization of $\boldsymbol \beta \longmapsto \mathcal K_{\boldsymbol \beta}$, three become singular at $\boldsymbol \alpha$, namely $\Gamma( \frac{\beta_1}{2}+\rho), \Gamma(\frac{\beta_2}{2}+\rho), \Gamma(\frac{\beta_3}{2}+\rho)$. For $s$ in $\mathbb C$ let
\[\boldsymbol \alpha(s) = (\alpha_1+2s, \alpha_2+2s, \alpha_3+2s)
,\qquad \boldsymbol \lambda (s) = (\lambda_1+2s, \lambda_2+2s, \lambda_3+2s)\ .
\]
 The distribution-valued  function defined for $s\in \mathbb C, s\neq 0$ and  $\vert s\vert$ small,  by 
\[\mathcal F(s) = (-1)^{k_1}\, k_1!\, (-1)^{k_2}\, k_2!\, (-1)^{k_3}\, k_3!\,\Gamma(-\rho-k_1-k_2-k_3+3s)\,\frac{1}{s^2}\, \widetilde {\mathcal K}_{\boldsymbol \alpha(s)}\ .
\]
can be extended holomorphically at $s=0$, because $\boldsymbol \beta\longmapsto \widetilde {\mathcal K}_{\boldsymbol \beta}$ vanishes at $\boldsymbol \alpha$ as well as its first derivatives. The Taylor expansion of $\mathcal F(s)$ at $0$ reads
\[\mathcal F(s) = F_0+s F_1 +O(s^2)\ , 
\]
where $F_0$ and $F_1$ are distributions on $S\times S\times S$.

\begin{lemma}\label{Z3iF2F3} {\ }
\smallskip

$i)$ $Supp(F_0) = \overline {\mathcal O_1\cup \mathcal O_2\cup \mathcal O_3}$
\smallskip

$ii)$ $F_1$ coincides on $\mathcal O_0$ with $\mathcal K_{\boldsymbol \alpha, \mathcal O_0}$
\end{lemma}

\begin{proof} 
Let $\varphi \in \mathcal C^\infty(S\times S\times S)$ and assume that its support is contained in $\mathcal O_0$. For $s\neq 0$, $\boldsymbol \alpha(s)$ is not a pole and 
\[\mathcal F(s)( \varphi)= \frac{(-1)^{k_1}\, k_1!}{\Gamma(-k_1+s)}\,\frac{(-1)^{k_2}\, k_2!}{\Gamma(-k_2+s)}\,
\frac{(-1)^{k_3}\, k_3!}{\Gamma(-k_3+s)}\,\frac{1}{s^2}\,
\mathcal K_{\boldsymbol \alpha(s), \mathcal O_0}( \varphi)
\]
As $s$ tends to $0$, the right hand side is equal to $s\, \mathcal K_{\boldsymbol \alpha, \mathcal O_0} ( \varphi) +O(s^2)$ so that $\big(F_0,\varphi)=0$ and
 $F_1(\varphi) = \mathcal K_{\boldsymbol \alpha, \mathcal O_0}( \varphi)$, from which $ii)$ follows. 
 
Next,
\[ \lim_{s\rightarrow 0} \frac{1}{s^2}\,\widetilde{\mathcal K}_{\boldsymbol \alpha(s)} = 2\big(\frac{\partial^2 \widetilde{\mathcal K}_{\boldsymbol \beta}}{\partial \beta_1 \partial \beta_2 }(\boldsymbol \alpha) +\frac{\partial^2 \widetilde{\mathcal K}_{\boldsymbol \beta}}{\partial \beta_2 \partial \beta_3 }(\boldsymbol \alpha)+\frac{\partial^2 \widetilde{\mathcal K}_{\boldsymbol \beta}}{\partial \beta_3 \partial \beta_1 }(\boldsymbol \alpha)\Big)
\]
On the other hand,  use \eqref{KT} to obtain,  for $s_1,s_2 \in \mathbb C$,
\[\widetilde{\mathcal K}_{ \alpha_1+_1, \alpha_2+s_2, \alpha_3} = \frac{ (-1)^{k_3} 2^{-2k_3}}{\Gamma(\rho+k_3)\Gamma(-k_1+s_1)\Gamma(-k_2+s_2)} \widetilde {\mathcal T}^{(3,k_3)}_{\alpha_1+s_1,\alpha_2+s_2}
\]
hence
\[\frac{\partial^2 \widetilde {\mathcal K}_{\boldsymbol \beta}}{\partial \beta_1 \partial \beta_2} (\boldsymbol \alpha)= \frac{(-1)^{k_3} 2^{-2k_3}(-1)^{k_1} k_1! (-1)^{k_2} k_2!}{\Gamma(\rho+k_3)}\, \widetilde {\mathcal T}^{(3,k_3)}_{\alpha_1,\alpha_2}\ ,
\]
and similar formul{\ae} by permuting the indices $1,2,3$. So, $F_0$ is a linear combination (with coefficients all $\neq 0$) of three distributions with support respectively $\overline {\mathcal O_1}, \overline {\mathcal O_2}$ and $\overline {\mathcal O_3}$, so that $Supp(F_0)= \overline {\mathcal O_1\cup \mathcal O_2\cup \mathcal O_3}$.
\end{proof}

Now the proof of Proposition \ref{noextZ3odd} is obtained by using once more Proposition A1, after restricting the situation to (say) $\mathcal O=\mathcal O_0\cup \mathcal O_1$.

To finish the proof of Theorem \ref{mult3Z3odd}, 
let $T$ be a $\boldsymbol \lambda$-invariant distribution. Then, Proposition \ref{noextZ3odd} shows that the restriction $T_{\mathcal O_0}$ has to be $0$, and so $Supp(T)\subset \overline {\mathcal O_1\cup \mathcal O_2\cup \mathcal O_3}$. Let $\mathcal O = \mathcal O_0\cup \mathcal O_1$. The restriction $T_{\mathcal O}$ of $T$ to $\mathcal O$ is supported in $\mathcal O_1$, and hence by Lemma 4.1 in \cite{c}, there exists a constant $c_1$ such that $T$ coincides on $\mathcal O$ with  $c_1 \widetilde {\mathcal T}^{1,k_1}_{\alpha_2,\alpha_3}$. In other words, $T-c_1 \widetilde {\mathcal T}^{1,k_1}_{\alpha_2,\alpha_3}$ is a $\boldsymbol \lambda$-invariant distribution supported in $\overline{\mathcal O_2\cup \mathcal O_3}$. By arguing in a similar way, there exists $c_2$ and $c_3$ such that $T-c_1 \widetilde {\mathcal T}^{1,k_1}_{\alpha_2,\alpha_3}-c_2 \widetilde {\mathcal T}^{2,k_2}_{\alpha_1,\alpha_3}-c_3 \widetilde {\mathcal T}^{3,k_3}_{\alpha_1,\alpha_2}$ is a $\boldsymbol \lambda$-invariant distribution supported on $\mathcal O_4$.  But as $\boldsymbol \lambda$ is \emph{not} a pole of type II, and by Lemma 4.2 in \cite{c}, there exists no such non trivial distribution, so that $T=c_1 \widetilde {\mathcal T}^{1,k_1}_{\alpha_2,\alpha_3}+c_2 \widetilde {\mathcal T}^{2,k_2}_{\alpha_1,\alpha_3}+c_3 \widetilde {\mathcal T}^{3,k_3}_{\alpha_1,\alpha_2}$.
\end{proof}
\subsection{The even case}

Assume in this subsection that $n-1$ is even, so that $\rho$ is an integer. Then, as
$\alpha_1+\alpha_2+\alpha_3 = -2(n-1)-2(\rho+k_1+k_2+k_3)$,  $\boldsymbol \alpha$ is a pole of type II, and we let $k= \rho+k_1+k_2+k_3$. 
\begin{proposition}\label{indZ3ev}
 The three distributions $\widetilde {\mathcal T}^{(1,k_1)}_{\alpha_2, \alpha_3}, \widetilde {\mathcal T}^{(2,k_2)}_{\alpha_1, \alpha_3},\widetilde {\mathcal T}^{(3,k_3)}_{\alpha_1, \alpha_2}$ are three linearly linearly independant $\boldsymbol \lambda$-invariant distributions.
\end{proposition}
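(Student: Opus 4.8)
Since $n-1$ is even, $\boldsymbol\alpha$ is a pole of type I\,+\,II, so Proposition~\ref{suppT3}$(ii)$ shows that each of $\widetilde {\mathcal T}^{(1,k_1)}_{\alpha_2,\alpha_3}$, $\widetilde {\mathcal T}^{(2,k_2)}_{\alpha_1,\alpha_3}$, $\widetilde {\mathcal T}^{(3,k_3)}_{\alpha_1,\alpha_2}$ is supported on $\mathcal O_4$; thus the support argument that settled the odd case (Proposition~\ref{T1T2T3indepev}) is no longer available, and the plan is instead to separate the three distributions by evaluating them on suitable polynomials $p_{a_1,a_2,a_3}$. Recall from \eqref{Tp} (and its cyclic analogues, obtained by the permutation $1\to2\to3\to1$ of both the indices of the $\alpha$'s and of the $a$'s) that $\widetilde {\mathcal T}^{(j,k_j)}(p_{a_1,a_2,a_3})=0$ whenever $a_j>k_j$. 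Hence it suffices, for each $j$, to produce a test polynomial on which $\widetilde {\mathcal T}^{(j,k_j)}$ is nonzero while $a_i>k_i$ for the two other indices $i\ne j$.

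First I would treat $j=1$ by taking $a_1=k_1$, $a_2=k_2+1$, $a_3=k_3+1$. With $a_1=k_1$ the two ``middle'' Pochhammer factors in the cyclic form of \eqref{Tp} are empty products and $(-k_1)_{k_1}=(-1)^{k_1}k_1!\neq0$; using $\alpha_2=-(n-1)-2k_2$ and $\alpha_3=-(n-1)-2k_3$ one gets $\tfrac{\alpha_2+\alpha_3}{2}+\rho-k_1=-k$ with $k=\rho+k_1+k_2+k_3$, and $\tfrac{\alpha_2+\alpha_3}{2}+2\rho+a_2+a_3=a_2+a_3-k_2-k_3$, so that
\[\widetilde {\mathcal T}^{(1,k_1)}_{\alpha_2,\alpha_3}(p_{k_1,\,k_2+1,\,k_3+1})=\mathrm{NVT}\times(-k)_{k_1+k_2+k_3+2}\times\frac{1}{\Gamma(2)}\ .\]
Since $n-1$ is even and $n\ge4$, in fact $n\ge5$, so $\rho\ge2$ and therefore $k_1+k_2+k_3+2\le\rho+k_1+k_2+k_3=k$; hence every factor of $(-k)_{k_1+k_2+k_3+2}$ lies in $\{-k,\dots,-(\rho-1)\}$ and is strictly negative, so this Pochhammer symbol is nonzero, while $\Gamma(2)=1$. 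Thus $\widetilde {\mathcal T}^{(1,k_1)}_{\alpha_2,\alpha_3}(p_{k_1,k_2+1,k_3+1})\neq0$, whereas $\widetilde {\mathcal T}^{(2,k_2)}_{\alpha_1,\alpha_3}(p_{k_1,k_2+1,k_3+1})=0$ (as $a_2=k_2+1>k_2$) and $\widetilde {\mathcal T}^{(3,k_3)}_{\alpha_1,\alpha_2}(p_{k_1,k_2+1,k_3+1})=0$ (as $a_3=k_3+1>k_3$).

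By the cyclic symmetry of the construction the identical computation yields $\widetilde {\mathcal T}^{(2,k_2)}_{\alpha_1,\alpha_3}(p_{k_1+1,\,k_2,\,k_3+1})\neq0$ with the other two $K$-coefficients vanishing, and $\widetilde {\mathcal T}^{(3,k_3)}_{\alpha_1,\alpha_2}(p_{k_1+1,\,k_2+1,\,k_3})\neq0$ with the other two vanishing. Consequently, if $c_1\widetilde {\mathcal T}^{(1,k_1)}_{\alpha_2,\alpha_3}+c_2\widetilde {\mathcal T}^{(2,k_2)}_{\alpha_1,\alpha_3}+c_3\widetilde {\mathcal T}^{(3,k_3)}_{\alpha_1,\alpha_2}=0$, then evaluating successively on $p_{k_1,k_2+1,k_3+1}$, $p_{k_1+1,k_2,k_3+1}$, $p_{k_1+1,k_2+1,k_3}$ forces $c_1=c_2=c_3=0$. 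Since $\boldsymbol\alpha=(-(n-1)-2k_1,-(n-1)-2k_2,-(n-1)-2k_3)$ lies on all three planes $\alpha_j=-(n-1)-2k_j$, each $\widetilde {\mathcal T}^{(j,k_j)}$ is defined at and $\boldsymbol\lambda$-invariant for this common spectral parameter (being a renormalization of a $\boldsymbol\lambda$-invariant form), which completes the proof. The only care needed is the bookkeeping of the cyclic relabelling of \eqref{Tp} and the inequality $\rho\ge2$; there is no real obstacle.
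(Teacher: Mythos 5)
Your proof is correct and follows essentially the same route as the paper: both arguments separate the three diagonal-supported distributions by exhibiting, for each $j$, a $K$-coefficient $p_{a_1,a_2,a_3}$ with $a_i>k_i$ for $i\neq j$ on which $\widetilde{\mathcal T}^{(j,k_j)}$ is nonzero, using the vanishing rule from \eqref{Tp} and the inequality $\rho\geq 2$. The only difference is the choice of test polynomial (the paper takes $a_1=k_1+\rho-1,\ a_2=k_2+1,\ a_3=k_3$ to isolate $\widetilde{\mathcal T}^{(3,k_3)}$, you take $a_1=k_1+1,\ a_2=k_2+1,\ a_3=k_3$), and both choices check out.
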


\begin{proof} Although the statement is the same as in the previous case, the proof is quite different. In fact, recall that the three distributions are supported in $\mathcal O_4$. Use once again the evaluation of (some) $K$-coefficients of the three distributions to prove their linear independence. The following result was obtained during  the proof of Proposition \ref{indepT2T3}.

\begin{lemma} {\ }
\smallskip

$i)$ If $a_3>k_3$, then $\widetilde {\mathcal T}^{(3,k_3)}_{\alpha_1, \alpha_2}(p_{a_1,a_2,a_3})=0$
\smallskip

$ii)$ Let $a_1=k_1+\rho-1, a_2 = k_2+1, a_3=k_3$. Then
$\widetilde {\mathcal T}^{(3,k_3)}_{\alpha_1, \alpha_2}(p_{a_1,a_2,a_3})\neq 0$.
\end{lemma}
Up to permutation of the indices, a similar result holds true for the two other distributions. As $n\geq 4$ and $n-1$ even,  $\rho\geq 2$ so that $a_1>k_1$. Moreover, $a_2>k_2$. As a consequence of part $i)$ of the lemma, \[\widetilde {\mathcal T}^{(1,k_1)}_{\alpha_2, \alpha_3}(p_{a_1,a_2,a_3})=0, \quad \widetilde {\mathcal T}^{(2,k_2)}_{\alpha_1, \alpha_3}(p_{a_1,a_2,a_3}) = 0\ .\]
 The linear independance of the three distributions follows from this result and its variants under permutation of the indices $1,2,3$.
\end{proof}

\begin{theorem}\label{dimZ3ev} 
The space $Tri(\boldsymbol \lambda)$ is of dimension $3$. More precisely, 

\[Tri(\boldsymbol \lambda) = \mathbb C \widetilde {\mathcal T}^{(1,k_1)}_{\alpha_2, \alpha_3} \oplus \mathbb C \widetilde {\mathcal T}^{(2,k_2)}_{\alpha_1, \alpha_3} \oplus \mathbb C \widetilde {\mathcal T}^{(3,k_3)}_{\alpha_1, \alpha_2}\ .
\]
\end{theorem}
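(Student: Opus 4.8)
Looking at Theorem 16.2 (the even case of multiplicity 3 in $Z_3$), I need to prove that $\dim Tri(\boldsymbol\lambda) = 3$ with the explicit basis of three covariant differential operator families, in the situation where $\boldsymbol\alpha$ is now also a pole of type II.

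\textbf{The plan.} By Proposition \ref{indZ3ev}, the three distributions $\widetilde {\mathcal T}^{(1,k_1)}_{\alpha_2, \alpha_3}, \widetilde {\mathcal T}^{(2,k_2)}_{\alpha_1, \alpha_3},\widetilde {\mathcal T}^{(3,k_3)}_{\alpha_1, \alpha_2}$ are linearly independent, so it suffices to prove $\dim Tri(\boldsymbol\lambda)\le 3$, i.e. that every $\boldsymbol\lambda$-invariant distribution $T$ is a linear combination of them. The overall scheme will mirror the odd case (Theorem \ref{mult3Z3odd}): first show that $T$ must vanish on the open orbit $\mathcal O_0$, then peel off the contributions supported on $\overline{\mathcal O_1}$, $\overline{\mathcal O_2}$, $\overline{\mathcal O_3}$ one at a time using Lemma 4.1 of \cite{c}, and finally handle what remains on the diagonal $\mathcal O_4$. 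The essential new difficulty compared with the odd case is precisely that $\boldsymbol\alpha$ is now also a pole of type II, so Lemma 4.2 of \cite{c} no longer forces a diagonal-supported invariant distribution to be zero; I must instead compute $\dim Tri(\boldsymbol\lambda, diag)$ directly.

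\textbf{Key steps.} First I would establish a non-extension result analogous to Proposition \ref{noextZ3odd}: the distribution $\mathcal K_{\boldsymbol\alpha,\mathcal O_0}$ cannot be extended to a $\boldsymbol\lambda$-invariant distribution on $S\times S\times S$. For this I introduce the curve $\boldsymbol\alpha(s) = (\alpha_1+2s,\alpha_2+2s,\alpha_3+2s)$; since $n-1$ is even, now \emph{four} $\Gamma$-factors (the three $\Gamma(\frac{\beta_j}{2}+\rho)$ and the type-II factor $\Gamma(\frac{\beta_1+\beta_2+\beta_3}{2}+2\rho)$) become singular at $\boldsymbol\alpha$, whereas $\boldsymbol\alpha(s)$ is transverse to all four planes of poles. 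I renormalize $\widetilde{\mathcal K}_{\boldsymbol\alpha(s)}$ by an appropriate product of Gamma-factors and a power $s^{-3}$, extract the Taylor expansion $\mathcal F(s) = F_0 + sF_1 + s^2 F_2 + O(s^3)$, identify $F_0$ (via \eqref{KT}) as a nonzero linear combination of the three $\widetilde{\mathcal T}$'s hence with support $\overline{\mathcal O_1\cup\mathcal O_2\cup\mathcal O_3}$, show $F_1$ restricted to $\mathcal O_4^c$ is $\boldsymbol\lambda$-invariant, and show $F_2$ restricted to $\mathcal O_0$ coincides with $\mathcal K_{\boldsymbol\alpha,\mathcal O_0}$; then restrict to $\mathcal O_0\cup\mathcal O_1$ and invoke Proposition A1. (One must verify, using the $K$-coefficients \eqref{Kpalpha}, that $F_1$ indeed has full support $\overline{\mathcal O_1\cup\mathcal O_2\cup\mathcal O_3}$, arguing as in Lemma \ref{F1F2F3}.) Given this, an arbitrary $\boldsymbol\lambda$-invariant $T$ vanishes on $\mathcal O_0$; restricting successively to $\mathcal O_0\cup\mathcal O_1$, then $\mathcal O_0\cup\mathcal O_1\cup\mathcal O_2$, then $\mathcal O_4^c$, and applying Lemma 4.1 of \cite{c} three times, I produce constants $c_1,c_2,c_3$ such that $T - c_1\widetilde{\mathcal T}^{(1,k_1)}_{\alpha_2,\alpha_3} - c_2\widetilde{\mathcal T}^{(2,k_2)}_{\alpha_1,\alpha_3} - c_3\widetilde{\mathcal T}^{(3,k_3)}_{\alpha_1,\alpha_2}$ is supported on $\mathcal O_4$, hence lies in $Tri(\boldsymbol\lambda,diag)$.

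\textbf{The main obstacle.} It remains to show $Tri(\boldsymbol\lambda,diag) = 0$, equivalently $\dim \mathit{Sol}(\lambda_1,\lambda_2;k) = 0$ where $k = \rho+k_1+k_2+k_3$ and $\lambda_1 = -\rho-l_1$, $\lambda_2=-\rho-l_2$. This is where I expect the real work: I must feed the specific arithmetic of $\boldsymbol\lambda\in Z_3$ (with $l_1+l_2+l_3$ even, $|l_1-l_2|\le l_3\le l_1+l_2$, and the relation of the $l_j$ to $k_j$ through $\rho$) into the linear-algebra analysis of the system $\mathit{S}(\lambda_1,\lambda_2;k)$ from Appendix A3. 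The point to check is that $\lambda_1 = -\rho-l_1$ and $\lambda_2 = -\rho - l_2$ fall into the regime where Appendix A3 guarantees the system has only the zero solution — intuitively, because if there were a nonzero diagonal-supported invariant, it would push $\dim Tri(\boldsymbol\lambda)$ above $3$, contradicting the heuristic count, but this must be made rigorous via the explicit non-vanishing conditions. If instead the Appendix yields $\dim\mathit{Sol} = 0$ only under an extra genericity hypothesis, I would need a supplementary non-extension argument (restricting the $\mathcal F(s)$ construction further, or building an auxiliary curve transverse only to the type-II plane) to rule out the diagonal piece. Once $Tri(\boldsymbol\lambda,diag)=0$ is in hand, $T = c_1\widetilde{\mathcal T}^{(1,k_1)}_{\alpha_2,\alpha_3}+c_2\widetilde{\mathcal T}^{(2,k_2)}_{\alpha_1,\alpha_3}+c_3\widetilde{\mathcal T}^{(3,k_3)}_{\alpha_1,\alpha_2}$, completing the proof.
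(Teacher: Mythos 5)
Your overall architecture (independence from Proposition \ref{indZ3ev}, non-extension of $\mathcal K_{\boldsymbol \alpha,\mathcal O_0}$ via a Taylor expansion of a renormalized family, then reduction to the diagonal) matches the paper's, but the endgame contains a genuine error that the paper's proof is specifically designed to avoid. In the even case $\boldsymbol \alpha$ is a pole of type I+II, so by Proposition \ref{suppT3} $ii)$ all three distributions $\widetilde {\mathcal T}^{(j,k_j)}$ are supported on the diagonal $\mathcal O_4$, not on $\overline{\mathcal O_j}$. Two consequences: first, your ``peeling off'' step fails — restricting $T$ to $\mathcal O_0\cup\mathcal O_1$ gives a distribution supported on $\mathcal O_1$, and subtracting $c_1\widetilde{\mathcal T}^{(1,k_1)}_{\alpha_2,\alpha_3}$ cannot cancel it, since that distribution restricts to zero there. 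Second, your target $Tri(\boldsymbol\lambda,diag)=0$ is false and cannot be salvaged: $Tri(\boldsymbol\lambda,diag)$ contains the three independent $\widetilde{\mathcal T}^{(j,k_j)}$, and indeed $\dim \mathit{Sol}(\lambda_1,\lambda_2;k)=3$ here (one checks $\lambda_1=-\rho-k_2-k_3$ and $\lambda_2$ lie in $E_k\cap E_k^\rho$ with $-\lambda_1-\lambda_2\geq k+\rho$, the third line of the table in Appendix A3). The correct mechanism is the opposite of what you propose: one proves that the \emph{unrenormalized} restriction $\mathcal T^{(1,k_1)}_{\alpha_2,\alpha_3,\mathcal O_4^c}$ — which does have support $\mathcal O_1$ inside $\mathcal O_4^c$ and is the object Lemma 4.1 of \cite{c} actually identifies $T_{\vert\mathcal O_0\cup\mathcal O_1}$ with — cannot be extended invariantly to $S\times S\times S$ (Proposition \ref{noextTZ3ev}, proved like Proposition \ref{noextT3k3}). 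This forces the proportionality constant to vanish, so $T$ is supported on $\mathcal O_4$, whence $Tri(\boldsymbol\lambda)=Tri(\boldsymbol\lambda,diag)$ and the dimension is exactly $3$ by combining the upper bound from Appendix A3 with the lower bound from Proposition \ref{indZ3ev}.

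Two smaller inaccuracies in your non-extension step: the normalizing power must be $s^{-2}$, not $s^{-3}$ (with four inverse $\Gamma$-factors each vanishing to first order, $s^{-3}$ would make $F_0=\lim s^{-3}\widetilde{\mathcal K}_{\boldsymbol\alpha(s)}$ divergent, since $\widetilde{\mathcal K}$ vanishes only to order $2$ at $\boldsymbol\alpha$, and it is $F_2$, not $F_1$, that must recover $\mathcal K_{\boldsymbol\alpha,\mathcal O_0}$); and $Supp(F_0)=\mathcal O_4$, not $\overline{\mathcal O_1\cup\mathcal O_2\cup\mathcal O_3}$, again because the $\widetilde{\mathcal T}^{(j,k_j)}$ are diagonal-supported. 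The latter is harmless for the argument — one restricts to $\mathcal O_0\cup\mathcal O_1$, where $F_0$ vanishes, and applies Proposition A1 to the pair $(F_1,F_2)$ — but it must be stated correctly, since the hypothesis of Proposition A1 is about the support of the leading term of the expansion on the restricted open set.
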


\begin{proof} Three facts need to be established before giving the proof, namely
\smallskip

$\bullet$ the distribution $\mathcal K_{\boldsymbol \alpha,\mathcal O_0}$ cannnot be extended to a $\boldsymbol \lambda$-invariant distribution on $S\times S\times S$.
\smallskip

$\bullet$ the distribution $\mathcal T_{\boldsymbol \alpha_1,\alpha_2, \mathcal O_4^c}^{(3,k_3)}$ cannot be extended to a $\boldsymbol \lambda$-invariant distribution on $S\times S\times S$.
\smallskip

$\bullet$ $\dim Tri(\boldsymbol \lambda) =3$.

\begin{proposition}\label{noextZ3ev}
 The distribution $\mathcal K_{\boldsymbol \alpha,\mathcal O_0}$ cannot be extended to a $\boldsymbol \lambda$-invariant distribution on $S\times S\times S$.
\end{proposition}

For $s\in \mathbb C$ define 
\[ \boldsymbol \alpha(s) = (\alpha_1+s,\alpha_2+s,\alpha_3+s), \quad \boldsymbol \lambda (s) = (\lambda_1+s, \lambda_2+s,\lambda_3+s)\ .
\]
Observe that for $s\neq 0$ and $\vert s\vert$ small, $\boldsymbol \alpha(s)$ is not a pole. Next, the four Gamma factor in the normalization of $\mathcal K_{\boldsymbol \alpha}$ are singular at $\boldsymbol \alpha$.

By the same argument used in the odd case, the distribution-valued  function defined for $s\in \mathbb C$, $s\neq 0$ by
\[\mathcal F(s) = \frac{8}{3}\,\frac{(-1)^{k_1}}{k_1!}\,\frac{(-1)^{k_2}}{\,k_2!}\,\frac{(-1)^{k_3}}{k_3!}\,\frac{(-1)^k}{k!}\,\frac{1}{s^2}\widetilde {\mathcal K}_{\boldsymbol \alpha(s)}
\]
can be continued to a holomorphic function in a neighborhood of $s=0$.  The Taylor expansion of $\mathcal F$ at $0$ reads 
\[\mathcal F(s) =  F_0+s F_1+s^2 F_2+ O(s^3)\ 
\]
where $F_0,F_1$ and $F_2$ are distributions on $S\times S\times S$.
\begin{proposition}\label{F0F1F2} {\ }
\smallskip

$i)$ $F_0$ is $\boldsymbol \lambda$-invariant and $Supp(F_0) = \mathcal O_4$.
\smallskip

$ii)$ $Supp(F_1) = \overline{\mathcal O_1\cup\mathcal O_2\cup \mathcal O_3}$
\smallskip

$iii)$ the restriction of $F_2$ to $\mathcal O_0$ coincides with $\mathcal K_{\boldsymbol \alpha, \mathcal O_0}$.
\end{proposition}
\begin{proof}
The proof of $i)$ is the same as in the odd case, except that now the three distributions $\widetilde {\mathcal T}^{(1,k_1)}_{\alpha_2, \alpha_3}, \widetilde {\mathcal T}^{(2,k_2)}_{\alpha_1, \alpha_3},\widetilde {\mathcal T}^{(3,k_3)}_{\alpha_1, \alpha_2}$ are supported in $\mathcal O_4$.

Next, let $\varphi\in \mathcal C^\infty(S\times S\times S)$ and assume that $Supp(\varphi)\subset \mathcal O_0$. Then for $s\neq 0$ and $\vert s\vert $ small, 
\[\big(\mathcal F(s),\varphi\big) = \frac{8}{3}\,\frac{(-1)^{k_1}}{k_1!}\,\frac{(-1)^{k_2}}{\,k_2!}\,\frac{(-1)^{k_3}}{k_3!}\,\frac{(-1)^{k}}{k!} \]
\[\frac{1}{\Gamma(-k_1+\frac{s}{2})\,\Gamma(-k_2+\frac{s}{2})\,\Gamma(-k_3+\frac{s}{2})\,\Gamma(-k+\frac{3s}{2})} \,\frac{1}{s^2}{\mathcal K}_{\boldsymbol \alpha(s)}(\varphi)\ .
\]
As $s\rightarrow 0$, the right hand side is equal to 
$s^2 \mathcal K_{\boldsymbol \alpha, \mathcal O_0}(\varphi)+O(s^3)
$, so that $F_0(\varphi)=0, F_1(\varphi)=0$ and $F_2(\varphi) = \mathcal K_{\boldsymbol \alpha, \mathcal O_0}(\varphi)$. Hence $iii)$ follows, and also $ Supp(F_1)\subset \overline {\mathcal O_1\cup \mathcal O_2\cup \mathcal O_3}$.

It remains to determine the supports of  $F_1$. Let $a_1,a_2,a_3\in \mathbb N$. Then, using (11) in \cite{c}, the $K$-coefficient  $\mathcal F(s)(p_{a_1,a_2,a_3})$ (up to a factor which does not vanish for $s=0$) is equal to
\[ 
\frac{1}{s^2}\,(-k+\frac{3}{2}s)_{a_1+a_2+a_3} (-k_1+\frac{s}{2})_{a_1}(-k_2+\frac{s}{2})_{a_2}(-k_3+\frac{s}{2})_{a_3}\\ \ \cdots\] 
\[ \frac{1}{\Gamma(-k_1-k_2+a_1+a_2+s)}\, \frac{1}{\Gamma(-k_2-k_3+a_2+a_3+s)}\, \frac{1}{\Gamma(-k_3-k_1+a_3+a_1+s)}\ .
\]
\begin{lemma}\label{nonvanF1}
 Let $a_1,a_2,a_3\in \mathbb N$ and assum that 
\[a_1\leq k_1,\quad a_2> k_1+k_2,\quad  a_3>k_1+ k_3,\quad  a_1+a_2+a_3>k\ .\]
Then $F_1(p_{a_1,a_2,a_3}) \neq 0$.
\end{lemma}

\begin{proof} Under the assumptions, the three $\Gamma$ factors are regular at $s=0$, the factor $(-k_1+s)_{a_1}$ does not vanish at $s=0$ and the three remaining factors $(-k+\frac{3}{2}s)_{a_1+a_1+a_3}, (-k_2+s)_{a_2}, (-k_3+s)_{a_3}$ have a simple zero at $s=0$. Hence the conclusion holds true.
\end{proof}

From the definition of $F_1$ and part $i)$ of Proposition \ref{F0F1F2} follows that  $F_{1\vert \mathcal O_4^c}$ is  $\boldsymbol \lambda$-invariant. Hence $Supp(F_1)$ is invariant under $G$. Now Lemma  \ref{nonvanF1} shows that $Supp(F_1)$ is not included in $\overline{\mathcal O_2\cup \mathcal O_3}$. In fact, given an arbitrary integer $L$, choosing $a_2,a_3$ large enough, all partial derivatives of the function $p_{a_1,a_2,a_3}$ of order $\leq L$ vanish on $\overline{\mathcal O_2\cup \mathcal O_3}$. Hence if $T$  is a distribution on 
$S\times S\times S$ supported on $\overline{\mathcal O_2\cup \mathcal O_3}$, then $T(p_{a_1,a_2,a_3)})=0$ for $a_2,a_3$ large enough (take for $L$ the order of the distribution $T$). By permuting the indices $1,2,3$, the only remaining possibility for $Supp(F_1)$ is $Supp(F_1) = \overline {\mathcal O_1\cup \mathcal O_2\cup \mathcal O_3}$.
\end{proof}

Let us now come to the proof of Proposition \ref{noextZ3ev}. Restrict the previous situation to $\mathcal O = {\mathcal O}_0\cup {\mathcal O}_1$. The distribution-valued function $\mathcal G(s)$ defined for $s\neq 0$ by
\[ \mathcal G(s) = \frac{1}{s} \,\mathcal F(s)_{\vert \mathcal O}\ .
\]
can be extended to a holomorphic function in a neighborhood of $0$ in $\mathbb C$.
Then, as $s\rightarrow 0,$
 \[\mathcal G(s) = F_{1,\mathcal O} + sF_{2, \mathcal O}+O(s^2),\]
and $Supp(F_{1,\mathcal O})= \mathcal O_1$. Now $F_{1,\mathcal O}$ is $\boldsymbol \lambda$-invariant. An application of Proposition A1 implies that $F_{2,\mathcal O_0}$ cannot be extended to a $\boldsymbol \lambda$-invariant distribution on $\mathcal O$. Hence, a fortiori the distribution $\mathcal K_{\boldsymbol \alpha, \mathcal O_0}$ cannot be extended to a $\boldsymbol \lambda$-invariant distribution on $S\times S\times S$.
 \end{proof}
\begin{proposition}\label{noextTZ3ev}
 The distribution 
${\mathcal T}^{(1,k_1)}_{\alpha_2,\alpha_3, \mathcal O_4^c} $
cannot be extended to $S\times S\times S$ as a $\boldsymbol \lambda$-invariant distribution. 
\end{proposition}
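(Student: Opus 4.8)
The plan is to argue exactly as for Proposition~\ref{noextT3k3}: I would produce a one-parameter distribution-valued holomorphic function $\mathcal F(s)$ on $S\times S\times S$, invariant under a curve of representations passing through $\boldsymbol \lambda$, whose leading Taylor coefficient is supported on the diagonal $\mathcal O_4$ and whose next coefficient restricts on $\mathcal O_4^c$ to ${\mathcal T}^{(1,k_1)}_{\alpha_2,\alpha_3, \mathcal O_4^c}$; Proposition A\ref{noextII} would then deliver the non-extendability.

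First I would set, for $s\in\mathbb C$,
\[\boldsymbol \alpha(s)=(\alpha_1,\alpha_2+2s,\alpha_3),\qquad \boldsymbol \lambda(s)=(\lambda_1+s,\lambda_2,\lambda_3+s),\]
where $\boldsymbol \lambda(s)$ is the spectral parameter attached to $\boldsymbol \alpha(s)$, so $\boldsymbol \lambda(0)=\boldsymbol \lambda$. Since $\alpha_1=-(n-1)-2k_1$ is unchanged, the curve stays in the plane of poles of type I$_1$, while $\alpha_1+\alpha_2(s)+\alpha_3=-2(n-1)-2k+2s$ (with $k=\rho+k_1+k_2+k_3$, recalling that $n-1$ is even) shows it is transverse at $s=0$ to the plane of poles of type II. Then I would put
\[\mathcal F(s)=\frac{(-1)^k}{k!}\,\widetilde{\mathcal T}^{(1,k_1)}_{\alpha_2+2s,\alpha_3}\ ,\]
which is holomorphic in $s$ because the family $\widetilde{\mathcal T}^{(1,k_1)}$ of section~9 is holomorphic, and which is $\boldsymbol \lambda(s)$-invariant by the covariance of ${\mathcal T}^{(1,k_1)}$; I write its Taylor expansion at $0$ as $\mathcal F(s)=F_0+sF_1+O(s^2)$.

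The heart of the matter, to be isolated as a lemma, is the identification of $F_0$ and $F_1$. For $F_0=\mathcal F(0)=\tfrac{(-1)^k}{k!}\widetilde{\mathcal T}^{(1,k_1)}_{\alpha_2,\alpha_3}$: here $\boldsymbol \alpha$ is simultaneously a pole of type I$_1$ and of type II, so Proposition~\ref{suppT3} (with the indices cyclically permuted) gives $Supp(\widetilde{\mathcal T}^{(1,k_1)}_{\alpha_2,\alpha_3})\subset\mathcal O_4$; this distribution is nonzero by Proposition~\ref{indZ3ev}, and being $G$-invariant its support is therefore exactly $\mathcal O_4$. For $F_1$: taking $\varphi\in\mathcal C^\infty(S\times S\times S)$ with $Supp(\varphi)\subset\mathcal O_4^c$, the normalization factor of \eqref{normT} evaluated at $(\alpha_2+2s,\alpha_3)$ equals $\Gamma\!\big(\tfrac{\alpha_2+\alpha_3}{2}+\rho-k_1+s\big)^{-1}=\Gamma(-k+s)^{-1}$, hence
\[\mathcal F(s)(\varphi)=\frac{(-1)^k}{k!}\,\frac{1}{\Gamma(-k+s)}\,{\mathcal T}^{(1,k_1)}_{\alpha_2+2s,\alpha_3,\mathcal O_4^c}(\varphi)\ ;\]
letting $s\to 0$, using $\Gamma(-k+s)^{-1}=(-1)^k k!\,s+O(s^2)$ together with the holomorphy in $s$ of ${\mathcal T}^{(1,k_1)}_{\alpha_2+2s,\alpha_3,\mathcal O_4^c}(\varphi)$ (the defining integral converges on $\mathcal O_4^c$, Proposition~\ref{residuetype1}), I obtain $F_0(\varphi)=0$ and $F_1(\varphi)={\mathcal T}^{(1,k_1)}_{\alpha_2,\alpha_3,\mathcal O_4^c}(\varphi)$. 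Thus $F_1$ restricts on $\mathcal O_4^c$ to ${\mathcal T}^{(1,k_1)}_{\alpha_2,\alpha_3,\mathcal O_4^c}$, and Proposition A\ref{noextII} applies to $\mathcal F(s)$, yielding the statement.

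The step I expect to be delicate is not any individual computation but the verification that the hypotheses of Proposition A\ref{noextII} are met verbatim: that $\boldsymbol \alpha(s)$ is genuinely transverse to the type II plane of poles and, above all, that the support of the leading term $F_0$ is \emph{exactly} the diagonal. This last point rests on Proposition~\ref{suppT3}(ii), which is available precisely because $\boldsymbol \alpha\in Z_3$ with $n-1$ even forces $\boldsymbol \alpha$ to be a pole of type I$_1$ and of type II at the same time; once this is in place, the remainder parallels the proofs of Propositions~\ref{noextT3k3} and~\ref{noextZ3ev} and is routine.
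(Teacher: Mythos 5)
Your argument is correct and is exactly the adaptation of the proof of Proposition \ref{noextT3k3} that the paper invokes when it omits this proof: the same curve $\boldsymbol\alpha(s)=(\alpha_1,\alpha_2+2s,\alpha_3)$, the same renormalized family $\frac{(-1)^k}{k!}\widetilde{\mathcal T}^{(1,k_1)}_{\alpha_2+2s,\alpha_3}$, identification of $F_0$ (supported on $\mathcal O_4$ via Propositions \ref{suppT3}(ii) and \ref{indZ3ev}) and of $F_{1|\mathcal O_4^c}$, followed by Proposition A\ref{noextII}. The details you supply (the normalization factor equal to $\Gamma(-k+s)^{-1}$ and the transversality to the type II plane) are the right ones, so nothing is missing.
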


The proof is very similar to the proof of Proposition \ref{noextT3k3} so that we omit it.

\begin{proposition} {\ }
\[\dim Tri(\boldsymbol \lambda, diag) = 3\ .
\]
\end{proposition}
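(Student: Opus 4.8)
The plan is to compute $\dim Tri(\boldsymbol\lambda, diag)$ via the identification with the dimension of the solution space of the linear system $\it S(\lambda_1,\lambda_2;k)$ (equivalently $\it Sol(\lambda_1,\lambda_2;k)$) studied in Appendix A3, exactly as was done in the $Z_{1,I}$, $Z_{1,II}$, $Z_{2,I}$ and $Z_{2,II}$ cases. So first I would record the relevant parameters: since $\boldsymbol\lambda = (-\rho-l_1,-\rho-l_2,-\rho-l_3)$ with $\alpha_1+\alpha_2+\alpha_3 = -2(n-1)-2k$ and $k = \rho+k_1+k_2+k_3$, we have $\lambda_j = -\rho-l_j$ for $j=1,2,3$, and $l_1+l_2+l_3 = 2k$ (this is the translation of $k=\rho+k_1+k_2+k_3$ into the $l$'s, using $l_j = \rho+k_j$ when combined appropriately — more precisely one checks $\lambda_1+\lambda_2+\lambda_3 = -3\rho - (l_1+l_2+l_3) = -\rho-2k$, so $l_1+l_2+l_3 = 2k$). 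In particular each $l_j \le k$ since $l_j = 2k - (l_{j'}+l_{j''}) \le 2k - |l_{j'}-l_{j''}|$... rather, from $\vert l_{j'}-l_{j''}\vert \le l_j \le l_{j'}+l_{j''}$ and $l_{j'}+l_{j''} = 2k - l_j$ one gets $2l_j \le 2k$, i.e. $l_j \le k$.

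Next I would observe that here the parameters lie in a genuinely degenerate position relative to Appendix A3: unlike in the $Z_1$ and $Z_2$ cases, \emph{all three} of $\lambda_1,\lambda_2,\lambda_3$ lie in $-\rho-\mathbb N$ with $l_j\le k$, so none of the ``escape conditions'' that cut the solution space down to dimension $\le 1$ or $\le 2$ is available. The substance of the proof is therefore the assertion that the system $\it Sol(\lambda_1,\lambda_2;k)$ has \emph{exactly} three-dimensional solution space in this configuration; by the symmetry of the conditions \eqref{Z3} in $(l_1,l_2,l_3)$ (noted in the Remark after \eqref{Z3}) and the translation to the geometric picture, this is a statement purely about the combinatorics of the linear system $\it S(\lambda_1,\lambda_2;k)$ studied in Appendix A3. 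I would invoke the relevant statement of Appendix A3 giving $\dim \it Sol(\lambda_1,\lambda_2;k) = 3$ precisely when $\lambda_1,\lambda_2,\lambda_3 \in -\rho-\mathbb N$ with all $l_j\le k$ and the triangle inequalities \eqref{Z3} hold — which is exactly our situation. That gives $\dim Tri(\boldsymbol\lambda, diag) = \dim \it Sol(\lambda_1,\lambda_2;k) = 3$.

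Finally, to be self-contained I would also exhibit three explicit linearly independent elements of $Tri(\boldsymbol\lambda, diag)$ to get the lower bound $\ge 3$ directly. The three distributions $\widetilde{\mathcal T}^{(1,k_1)}_{\alpha_2,\alpha_3}$, $\widetilde{\mathcal T}^{(2,k_2)}_{\alpha_1,\alpha_3}$, $\widetilde{\mathcal T}^{(3,k_3)}_{\alpha_1,\alpha_2}$ are, by Proposition \ref{suppT3}(ii) (since $\boldsymbol\alpha$ is now a pole of type II), all supported in $\mathcal O_4$, hence all lie in $Tri(\boldsymbol\lambda, diag)$; and they are linearly independent by Proposition \ref{indZ3ev}. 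Combined with the upper bound $\dim Tri(\boldsymbol\lambda, diag) \le 3$ from Appendix A3, this forces equality. The main obstacle is the upper bound: establishing that the linear system $\it S(\lambda_1,\lambda_2;k)$ does not accidentally acquire a fourth independent solution in this maximally degenerate triple-pole configuration. This is where the detailed analysis of Appendix A3 is essential — the ``naive'' Bruhat-theory bound is not sharp here (as emphasized in the Introduction, because the stabilizer of a generic diagonal point is a non-reductive parabolic), so one genuinely needs the explicit solution of the homogeneous system, carefully tracking which Pochhammer factors vanish when all three $\lambda_j+\rho$ are negative integers bounded by $k$.
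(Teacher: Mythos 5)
Your proposal follows the paper's proof: reduce to $\dim {\it Sol}(\lambda_1,\lambda_2;k)$ via Proposition A\ref{BDS}, extract the upper bound $3$ from the case analysis of Appendix A3, and obtain the lower bound from the three distributions $\widetilde{\mathcal T}^{(j,k_j)}$, which in the even case are supported on $\mathcal O_4$ and are linearly independent by Proposition \ref{indZ3ev}. Two points need fixing, though neither derails the argument. First, $\lambda_1+\lambda_2+\lambda_3=-3\rho-(l_1+l_2+l_3)=-\rho-2k$ gives $l_1+l_2+l_3=2k-2\rho$ (not $2k$), whence $l_j\le k-\rho$. Second, the statement of Appendix A3 that yields dimension $3$ is not the one you describe (``all $\lambda_j\in-\rho-\mathbb N$ with $l_j\le k$ and the triangle inequalities''): it is the final lemma there, whose hypotheses are that $n-1$ is even, that $\lambda_1,\lambda_2\in E_k\cap E_k^\rho$, and crucially that $(-\lambda_1)+(-\lambda_2)\ge k+\rho$ --- this last inequality is exactly what separates the three-dimensional case from the one- and two-dimensional ones, so it must be verified; it does hold here, since $-\lambda_1-\lambda_2=2\rho+l_1+l_2=2k-l_3\ge 2k-(k-\rho)=k+\rho$.
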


\begin{proof} As seen earlier, it suffices to prove that $\dim Sol(\lambda_1,\lambda_2;k)\leq 3$ (see Appendix 3). In the present case,
$\lambda_1 = -\rho-k_2-k_3$, and as $k=k_1+k_2+k_3+\rho$,  $\lambda_1\in \{-\rho,-\rho- 1,\dots, -\rho-(k-1)\}\cap \{ -1,-2,\dots, -(k-1)\}$ and similarly for $\lambda_2$. Moreover, $-\lambda_1-\lambda_2 = (n-1) +k_2+k_3 +2k_1\geq k$. The conclusion follows by Lemma A9 $iii)$.
\end{proof}

We now come to the proof of the main theorem in this section. Let $T$ be a $\boldsymbol \lambda$-invariant distribution. By Proposition \ref{noextZ3ev}, the restriction of $T$ to $\mathcal O_0$ has to be $0$. Hence $Supp(T)\subset \overline{\mathcal  O_1\cup \mathcal O_2\cup \mathcal O_3}$. Let $\mathcal O_{01}=\mathcal O_0\cup \mathcal O_1$. This is an open set of $S\times S\times S$ which contains $\mathcal O_1$ as a (relative) closed submanifold. The restriction $T_{\vert \mathcal O_{01}}$ of $T$ to $\mathcal O_{01}$ is supported on $\mathcal O_1$ and $\boldsymbol \lambda$-invariant. Hence $T_{\vert \mathcal O_{01}}$ must be a multiple of $\mathcal T^{1,k_1}_{\alpha_2,\alpha_3, \mathcal O_4^c}$. By Proposition \ref{noextTZ3ev}, this forces $T_{\vert \mathcal O_{01}}=0$. By permuting the indices $1,2,3$, $T$ has to be supported in $\mathcal O_4$, hence belongs to $Tri(\boldsymbol \lambda, diag)$. Hence $Tri(\boldsymbol \lambda) = Tri(\boldsymbol \lambda, diag)$. But  $\dim Tri(\boldsymbol \lambda, diag)=\dim {\it Sol}(\lambda_1,\lambda_2;k)\leq 3$, which finishes the proof of Theorem \ref{dimZ3ev}.

\section*{Appendix}

\subsection*{A1. Evaluation of the $K$-coefficients of $\widetilde{\mathcal K}_{\boldsymbol \alpha}$}

For the convenience of the reader, the formal{\ae} giving the evaluation of the $K$-coefficients of $\widetilde{\mathcal K}_{\boldsymbol \alpha}$ (sometimes called Bernstein-Reznikov integrals), which were obtained in \cite{c} Proposition 3.2 are recalled. For $\boldsymbol \alpha\in \mathbb C^3$, and $a_1,a_2,a_3\in \mathbb N$
\begin{equation}\label{Kpalpha}
\widetilde {\mathcal K}_{\boldsymbol \alpha} (p_{a_1,\,a_2,\,a_3}) =\quad \big(\frac{\pi}{2}\big)^{\frac{3}{2}(n-1)} 2^{\alpha_1+\alpha_2+\alpha_3}2^{ 2(a_1+a_2+a_3)}\dots
\end{equation}
\begin{equation*}
\frac{ \big(\frac{\alpha_1+\alpha_2+\alpha_3}{2}+2\rho\big)_{a_1+a_2+a_3}
\big(\frac{\alpha_1}{2} +\rho\big)_{a_1}\big(\frac{\alpha_2}{2} +\rho\big)_{a_2}\big(\frac{\alpha_3}{2} +\rho\big)_{a_3}} 
{\Gamma(\frac{\alpha_1+\alpha_2}{2}+2\rho+a_1+a_2)\,\Gamma(\frac{\alpha_2+\alpha_3}{2}+2\rho+a_2+a_3)\,\Gamma(\frac{\alpha_3+\alpha_1}{2}+2\rho+a_3+a_1)}
\end{equation*}
and its counterpart in terms of the spectral parameter $\boldsymbol \lambda$
\begin{equation}\label{Kplambda}
\begin{split}
& \widetilde {\mathcal K}^{\boldsymbol \lambda} (p_{a_1,a_2,a_3})  =(\frac{\sqrt{\pi}}{2})^{3(n-1)}2^{\lambda_1+\lambda_2+\lambda_3}2^{2(a_1+a_2+a_3)}\big(\frac{\lambda_1+\lambda_2 +\lambda_3+\rho}{2}\big)_{a_1+a_2+a_3}\\
&\frac{(\frac{-\lambda_1+\lambda_2 +\lambda_3+\rho}{2})_{a_1}(\frac{\lambda_1-\lambda_2 +\lambda_3+\rho}{2})_{a_2}(\frac{\lambda_1+\lambda_2 -\lambda_3+\rho}{2})_{a_3}}
{\Gamma(\lambda_1 +\rho+a_2+a_3)\Gamma(\lambda_2 +\rho+a_3+a_1)\Gamma(\lambda_3 +\rho+a_1+a_2)}\ .
\end{split}
\end{equation}
\subsection*{A2. Non extension results}

\begin{proposition*}\label{noextI}
 Let $\mathcal O$ be a $G$-invariant open subset of $S\times S\times S$, and assume that $\mathcal O_1$ is a (relatively) closed submanifold of $\mathcal O$. Let $\boldsymbol \alpha$ be a pole of type I$_1$, i.e. $\alpha_1 = -(n-1)-2k_1$ for some $k_1\in \mathbb N$.  Let $\boldsymbol \alpha(s)$ be a holomorphic curve defined for $s$ in a neighborhood of $0$, such that $\boldsymbol \alpha(0)=\boldsymbol \alpha$ and   transverse to the plane $\beta_1 =-(n-1)-2k_1$ at  $\boldsymbol \alpha$. Let $\boldsymbol\lambda(s)$  be the associated spectral parameter and let  $\boldsymbol \lambda=\boldsymbol \lambda(0)$.
 
 Let $\mathcal F(s)$ be a family of distributions on $\mathcal O$, depending holomorphically on $s$  in a neighborhood of $0$, and such that 

$i)$ for any $s$, $\mathcal F(s)$ is $\boldsymbol \lambda(s)$-invariant

$ii)$ $\mathcal F(s) = F_0+sF_1+O(s^2)$ as $s\longrightarrow 0$, where $F_0$ and $F_1$ are distributions on $\mathcal O$

$iii)$ $Supp(F_0) = \mathcal O_1$.
\smallskip

Then the restriction  $F_{1\vert \mathcal O'}$ of $F_1$ to $\mathcal O' = \mathcal O\setminus \mathcal O_1$ is $\boldsymbol \lambda$-invariant, but can not be extended to $\mathcal O$ as a $\boldsymbol \lambda$-invariant distribution.

\end{proposition*}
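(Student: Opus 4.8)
The first assertion I would dispatch directly. Since $\mathrm{Supp}(F_0)=\mathcal O_1$, the restriction $F_0|_{\mathcal O'}$ vanishes, so on $\mathcal O'=\mathcal O\setminus\mathcal O_1$ one has $\mathcal F(s)=sF_1+O(s^2)$, and hence $s\mapsto \frac1s\mathcal F(s)|_{\mathcal O'}$ extends holomorphically across $s=0$ with value $F_1|_{\mathcal O'}$. For $s\neq0$ this is the restriction of the $\boldsymbol\lambda(s)$-invariant distribution $\frac1s\mathcal F(s)$ to the $G$-invariant open set $\mathcal O'$, hence is $\boldsymbol\lambda(s)$-invariant; letting $s\to0$, with $\boldsymbol\lambda(s)\to\boldsymbol\lambda$ and using continuity of the $G$-action on distributions, yields the $\boldsymbol\lambda$-invariance of $F_1|_{\mathcal O'}$.

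For the non-extension statement the plan is to argue by contradiction. Suppose $\widetilde F_1$ is a $\boldsymbol\lambda$-invariant distribution on $\mathcal O$ restricting to $F_1|_{\mathcal O'}$. Write $\pi(s)=\pi_{\lambda_1(s)}\otimes\pi_{\lambda_2(s)}\otimes\pi_{\lambda_3(s)}$, $\pi=\pi(0)$, and $\dot\pi(g)=\frac{d}{ds}\pi(s)(g)|_{s=0}$; the latter is a zeroth-order operator (multiplication by the $s$-derivative of the cocycle, composed with $\pi(g)$), and its contribution relevant to $\mathcal O_1$ is nonzero because $\dot\alpha_1\neq0$, by transversality of $\boldsymbol\alpha(s)$ to $\beta_1=-(n-1)-2k_1$. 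Expanding the invariance identity $\mathcal F(s)\circ\pi(s)(g)=\mathcal F(s)$ in powers of $s$, the order-$0$ term recovers the $\boldsymbol\lambda$-invariance of $F_0$ and the order-$1$ term reads $F_1\circ\pi(g)-F_1=-F_0\circ\dot\pi(g)$ for all $g\in G$. Subtracting the invariance of $\widetilde F_1$, the distribution $E:=F_1-\widetilde F_1$, supported on $\mathcal O_1$ since it vanishes on $\mathcal O'$, must solve the inhomogeneous equivariance equation $E\circ\pi(g)-E=-F_0\circ\dot\pi(g)$, whose right-hand side is not identically zero: the $s$-derivative of the cocycle produces a logarithm of the Jacobian, and $F_0\circ\dot\pi(g)$ is the corresponding ``logarithmic defect'', nonzero because $F_0\neq0$. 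So the task reduces to showing that \emph{no} distribution supported on $\mathcal O_1$ solves this equation.

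For that I would use that $\mathcal O_1$ is a single $G$-orbit whose point stabilizer is \emph{reductive} in $G$ --- it is conjugate to the stabilizer $MA\cong SO(n-1)\times\mathbb R$ of a pair of distinct points of $S$ --- so Bruhat's theory applies cleanly (this is exactly why, unlike the diagonal $\mathcal O_4$, the submanifold $\mathcal O_1$ is tractable). Distributions supported on $\mathcal O_1$ are filtered by transverse order, $G$ acts on the graded pieces through the $MA$-representations on the symmetric powers of the conormal fibre twisted by the relevant character, and $G$-equivariant (resp. twisted-equivariant) distributions supported on $\mathcal O_1$ correspond by Frobenius reciprocity to $MA$-invariant (resp. $MA$-covariant) vectors in those fibres. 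In this dictionary $F_0$ is, by the classification of invariant distributions supported on $\mathcal O_1$ (Lemma 4.1 of \cite{c}), a nonzero multiple of $\widetilde{\mathcal T}^{(1,k_1)}_{\alpha_2,\alpha_3}|_{\mathcal O}$ --- it exists precisely because $\alpha_1=-(n-1)-2k_1$ --- and every $\boldsymbol\lambda$-invariant distribution supported on $\mathcal O_1$ has the same transverse homogeneity as $F_0$, while $F_0\circ\dot\pi(g)$ represents the first variation in the $\alpha_1$-direction of this covariant transverse datum at its edge value $-(n-1)-2k_1$. One then checks that this class is not a coboundary $E\circ\pi(g)-E$: since for generic $\alpha_1$ there is no invariant transverse datum at all, the one at $-(n-1)-2k_1$ occurs with a simple zero in $\alpha_1$, and its derivative there cannot be absorbed by a distribution of the same homogeneity (equivalently, adding a homogeneous $\Delta_{k_1}$-type term to the Hadamard finite part does not remove the accompanying logarithm). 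This contradiction proves that $F_1|_{\mathcal O'}$ admits no $\boldsymbol\lambda$-invariant extension to $\mathcal O$.

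The main obstacle is precisely this last step --- the Bruhat-type reduction along $\mathcal O_1$ together with the verification that the first-order obstruction $g\mapsto -F_0\circ\dot\pi(g)$ is a genuine obstruction rather than a coboundary --- and it is exactly here that both hypotheses, that $\boldsymbol\alpha$ sits at a type-$\mathrm{I}_1$ pole and that $\boldsymbol\alpha(s)$ crosses the corresponding plane of poles transversally, are used in an essential way; everything else in the argument is formal.
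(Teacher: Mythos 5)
Your first paragraph (holomorphy of $\tfrac1s\mathcal F(s)$ on $\mathcal O'$ and passage to the limit) is fine, and your second paragraph reproduces exactly the paper's reduction: Taylor-expanding $\mathcal F(s)\circ\pi_{\boldsymbol\lambda(s)}(g)=\mathcal F(s)$ gives at order one the functional equation $F_1\circ\pi_{\boldsymbol\lambda}(g)-F_1=C_g\,F_0$ with $C_g$ the logarithmic cocycle, so a hypothetical invariant extension $\widetilde F_1$ would produce $E=F_1-\widetilde F_1$ supported on $\mathcal O_1$ solving $E\circ\pi_{\boldsymbol\lambda}(g)-E=C_g\,F_0$. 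You also correctly identify that transversality is what makes the right-hand side nontrivial. Up to this point you are on the paper's track.

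The gap is in the last step, which is the entire content of the proposition: you must show that the twisted equation $E\circ\pi_{\boldsymbol\lambda}(g)-E=C_g\,F_0$ has \emph{no} solution $E$ supported on $\mathcal O_1$, and your argument for this is an assertion, not a proof. Two specific problems. First, the Bruhat/Frobenius dictionary you invoke classifies \emph{invariant} distributions supported on $\mathcal O_1$; it constrains $F_0$ (which is indeed a multiple of $\widetilde{\mathcal T}^{(1,k_1)}_{\alpha_2,\alpha_3}$ by Lemma 4.1 of the predecessor paper), but it does not constrain $E$, which is only required to satisfy the inhomogeneous equation and need not be homogeneous or of any particular transverse order a priori. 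Second, the sentence ``its derivative there cannot be absorbed by a distribution of the same homogeneity (equivalently, \dots does not remove the accompanying logarithm)'' is a restatement of the claim to be proved, not a verification of it. The mechanism the paper actually uses is to restrict $g$ to the one-parameter split subgroup $A$ which fixes a point $\mathbf o\in\mathcal O_1$: transversality of $\boldsymbol\alpha(s)$ to the plane $\beta_1=-(n-1)-2k_1$ is precisely what makes the additive character $a\mapsto C_a(\mathbf o)$ of $A$ nontrivial, and then the finite transverse order of $E$ near $\mathbf o$ (so that $t\mapsto E\circ\pi(\exp tH)$ lives in a finite-dimensional $A$-module) is played off against the unbounded linear term $C_{a_t}(\mathbf o)F_0$ to reach a contradiction, as in Propositions 6.1 and 6.2 of the multiplicity-one paper. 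Your proposal is missing this fixed-point/character computation, which is the step where the hypothesis of transversality is actually cashed in.
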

\begin{proposition*}\label{noextII}
 Let $\boldsymbol \alpha\in \mathbb C^3$ be a pole of type II, i.e. $\alpha_1+\alpha_2+\alpha_3=-2(n-1)-2k$ for some $k\in \mathbb N$. Let $\boldsymbol \alpha(s)$ be a holomorphic curve defined for $s$ in a neighborhood of $0$, such that $\boldsymbol \alpha(0)=\boldsymbol \alpha$ and transverse to the plane $\beta_1+\beta_2+\beta_3 =-2(n-1)-2k$ at $\boldsymbol \alpha$. Let $\boldsymbol \lambda(s)$ (resp.
$\boldsymbol \lambda$) be the associated spectral parameter.

Let $\mathcal F(s)$ be a family of distributions on $S\times S\times S$, depending holomorphically on $s$ near $0$, and such that

 $i)$ $\mathcal F(s)$ is $\boldsymbol \lambda(s)$-invariant
 
 $ii)$ $\mathcal F(s) = F_0+sF_1+O(s^2)$ as $s\longrightarrow 0$, where $F_0$ and $F_1$ are distributions on $S\times S\times S$
 
 $iii)$ $Supp(F_0)=\mathcal O_4$
 \smallskip
 
 Then the restriction $F_{1, \mathcal O'}$ of $F_1$ to $\mathcal O'=S\times S \times S\setminus \mathcal O_4$ is $\boldsymbol \lambda$-invariant, but cannot be extended
 to $S\times S\times S$ as a $\boldsymbol \lambda$-invariant distribution.
\end{proposition*}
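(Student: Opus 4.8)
The plan is to establish the two assertions in turn.

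\emph{Invariance of $F_{1\vert\mathcal O'}$.} Since $\mathrm{Supp}(F_0)=\mathcal O_4$, the restriction of $F_0$ to $\mathcal O'$ vanishes, so $s\mapsto s^{-1}\mathcal F(s)_{\vert\mathcal O'}$ extends holomorphically across $s=0$ with value $F_{1\vert\mathcal O'}$. For each $g\in G$ the action of $g$ on $\mathcal C^\infty(S\times S\times S)$ via $\pi_{\lambda_1(s)}\otimes\pi_{\lambda_2(s)}\otimes\pi_{\lambda_3(s)}$ depends smoothly on $s$ and tends, as $s\to0$, to the action via $\pi_{\lambda_1}\otimes\pi_{\lambda_2}\otimes\pi_{\lambda_3}$. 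Writing the $\boldsymbol\lambda(s)$-invariance of $\mathcal F(s)_{\vert\mathcal O'}$, dividing by $s$ and passing to the limit therefore shows that $F_{1\vert\mathcal O'}$ is $\boldsymbol\lambda$-invariant. (One checks moreover that $F_{1\vert\mathcal O'}\neq0$, so the non-extension statement is not vacuous; this is automatic in all the applications made in the paper.)

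\emph{Reduction of the non-extension.} Let $\rho_s(g)$ denote the action of $g$ on distributions induced by $\pi_{\lambda_1(s)}\otimes\pi_{\lambda_2(s)}\otimes\pi_{\lambda_3(s)}$, so that $\boldsymbol\lambda(s)$-invariance of $\mathcal F(s)$ reads $\rho_s(g)\mathcal F(s)=\mathcal F(s)$. Expanding this identity in powers of $s$ yields, at order $1$, the key relation
\[
\rho_0(g)F_1-F_1\;=\;-\,\dot\rho(g)F_0,\qquad g\in G,\qquad \dot\rho(g):=\tfrac{d}{ds}\big|_{s=0}\rho_s(g),
\]
which exhibits $F_1$ as a logarithmic vector lying above the $\boldsymbol\lambda$-invariant distribution $F_0$; moreover $g\mapsto-\dot\rho(g)F_0$ is a $1$-cocycle valued in the closed subspace $\mathcal D'_{\mathcal O_4}$ of distributions on $S\times S\times S$ supported on the diagonal $\mathcal O_4$, since $\mathcal O_4$ is $G$-stable. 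Now a $\boldsymbol\lambda$-invariant distribution $T$ restricts to $F_{1\vert\mathcal O'}$ exactly when $R:=F_1-T$ lies in $\mathcal D'_{\mathcal O_4}$, and in that case the relation above becomes $\rho_0(g)R-R=-\dot\rho(g)F_0$; conversely any $R\in\mathcal D'_{\mathcal O_4}$ solving this equation produces the extension $T=F_1-R$. Hence it suffices to prove that $F_0$ admits \emph{no} logarithmic lift inside $\mathcal D'_{\mathcal O_4}$, i.e. that $\rho_0(g)R-R=-\dot\rho(g)F_0$ has no solution $R\in\mathcal D'_{\mathcal O_4}$.

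\emph{The obstruction.} This is a local question near a point $p_0=(p,p,p)$ of $\mathcal O_4$. The stabiliser of $p_0$ in $G$ is a parabolic subgroup $P=MAN$; in a Bruhat-type chart a neighbourhood of $p_0$ is the product of $\mathcal O_4\cap U$ with a transverse slice $\cong\mathbb R^{2(n-1)}$ on which the one-parameter group $A=\{a_t\}$ acts by dilations, so that elements of $\mathcal D'_{\mathcal O_4}$ are, transversally, finite sums of $A$-homogeneous $\delta$-jets of pairwise distinct weights. The assumption that $\boldsymbol\alpha(s)$ is transverse to the plane of type II poles $\beta_1+\beta_2+\beta_3=-2(n-1)-2k$ forces, at first order, the coefficient of $F_0$ in $\dot\rho(a_t)F_0$ to be a \emph{nonzero} multiple of $t$; the faithful toy model is $\mathcal F(s)=\Gamma(s)^{-1}x_+^{s-1}$ on $\mathbb R$, for which $F_0=\delta_0$, $F_1=c_0\,\delta_0+\mathrm{fp}(x_+^{-1})$ for some constant $c_0$, and for which no dilation-homogeneous extension of $x_+^{-1}$ off the origin exists. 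On each transverse weight space $\rho_0(a_t)-\mathrm{Id}$ is zero on the weight of $F_0$ and invertible off it, so, decomposing $\rho_0(a_t)R-R=-\dot\rho(a_t)F_0$ by transverse order, the equation has no solution once $F_0\neq0$. The genuinely delicate point — and the main obstacle — is that $A$ does not fix $\mathcal O_4$ pointwise, so the $A$-action along the directions tangent to $\mathcal O_4$ must be separated from the transverse weights; this is exactly where the parabolic (hence non-reductive) nature of $P$ enters and the ``specific study'' of $Tri(\boldsymbol\lambda,\mathrm{diag})$ referred to in the Introduction is needed. It is carried out concretely by pairing the identity $\rho_0(g)R-R=-\dot\rho(g)F_0$ with the $K$-finite functions $p_{a_1,a_2,a_3}$ of \cite{c} and reading off, from the explicit $K$-coefficient formula \eqref{Kplambda}, that no such $R$ can exist. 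This yields the desired contradiction and proves the proposition; the companion statement for a pole of type I$_1$ is obtained by the same argument, with $\mathcal O_4$ replaced by $\mathcal O_1$ and the relevant transverse slice and plane of poles adapted accordingly.
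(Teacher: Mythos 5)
Your first step (the $\boldsymbol\lambda$-invariance of $F_{1\vert\mathcal O'}$) and your reduction of the non-extension to the non-existence of a solution $R\in\mathcal D'_{\mathcal O_4}$ of the cocycle equation $\rho_0(g)R-R=-\dot\rho(g)F_0$ are both correct and coincide with the paper's strategy: the paper derives exactly the relation $F_1\circ\pi_{\boldsymbol\lambda}(g)-F_1=C_g\,F_0$, with $C_g$ the multiplication operator by $-\sum_j\mu_j\ln\kappa(g^{-1},\cdot_j)$, $\boldsymbol\mu=\frac{d}{ds}\big|_{s=0}\boldsymbol\lambda(s)$. Your toy model $\Gamma(s)^{-1}x_+^{s-1}$ is also the right picture.

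The gap is in the last step, where the contradiction is actually supposed to be derived. You correctly identify the difficulty ($A$ does not fix $\mathcal O_4$ pointwise, so a naive weight decomposition of $\mathcal D'_{\mathcal O_4}$ under $A$ is not available globally), but the resolution you propose does not work: pairing the identity $\rho_0(g)R-R=-\dot\rho(g)F_0$ against the functions $p_{a_1,a_2,a_3}$ and invoking the $K$-coefficient formula \eqref{Kplambda} gets you nothing, because that formula evaluates $\widetilde{\mathcal K}^{\boldsymbol\lambda}$ (not an arbitrary $R$ supported on the diagonal), and because $\pi_{\boldsymbol\lambda}(g)p_{a_1,a_2,a_3}$ is not again of the form $p_{a_1',a_2',a_3'}$ for general $g$, so the identity cannot be ``read off'' in that basis. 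Likewise, the Bruhat-slice analysis of $Tri(\boldsymbol\lambda,\mathrm{diag})$ from Appendix A3 is a different ingredient of the paper (used to bound the dimension of diagonal-supported invariants), not the mechanism of the non-extension result. What the paper actually does is localize at a \emph{fixed point} $\mathbf o$ of the split one-parameter subgroup $A$ lying in $\mathcal O_4$ (such a point exists: take $\mathbf o=(o,o,o)$ with $o$ an $A$-fixed point of $S$); there the first-order term reduces to multiplication by the additive character $\chi_{\mathbf o}:a\mapsto C_a(\mathbf o)$, which is nontrivial precisely because $\boldsymbol\alpha(s)$ is transverse to the plane of poles, and then the homogeneity/weight argument of \cite{c}, Propositions 6.1 and 6.2, applied to the germ of $R$ at $\mathbf o$, gives the contradiction. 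Without this localization at an $A$-fixed point (or an equivalent device), your argument does not close.
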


\begin{proof}(sketch) The statement here is formulated in terms of holomorphic curves (in all applications only complex lines occur), to be closer to  the spirit of Oshima's results (see \cite{osh} section 7). 

Let $\boldsymbol \mu = (\mu_1,\mu_2,\mu_3) = \frac{d}{ds}_{\vert s=0}\lambda(s)$. The Taylor expansion at $s=0$ of the identity $\mathcal F(s)\circ \boldsymbol \pi_{\boldsymbol \lambda (s)} = \mathcal F(s)$ yields the functional relation, valid for any $g\in G$
\[F_1\circ \pi_{\boldsymbol \lambda}(g)-F_1 = C_g\, F_0\ ,
\]
where $C_g(x,y,z) = -(\mu_1 \ln \kappa(g^{-1},x)+\mu_2 \ln \kappa(g^{-1},y)+\mu_3 \ln \kappa(g^{-1},z)$. 

The subgroup $A$ (a split one-parameter Cartan subgroup of $G$) has a fixed point in $\mathcal O_1$ (resp. $\mathcal O_4$), say ${\mathbf o}$. The transversality  assumption guarantees that, at  ${\bf o}$ the character \[\chi_{\bf o} : A\ni a\longmapsto C_{a}({\bf o})\] is \emph{not} trivial. The rest of the proof is as in \cite{c}, Proposition 6.1 and 6.2.

\end{proof} 

\subsection*{A3. Discussion of the system ${\it S}(\lambda_1,\lambda_2;k)$}

Recall that, for $\boldsymbol \lambda \in \mathbb C^3$, $Tri(\boldsymbol \lambda, diag)$ is the space of $\boldsymbol \lambda$-invariant distributions on $S\times S\times S$ which are supported on the diagonal $\mathcal O_4 = \{(x,x,x), x\in S\}$. 

For $(\lambda_1,\lambda_2)\in \mathbb C^2$  and $k\in \mathbb N$, ${\mathcal B}{\mathcal D}_G(\lambda_1,\lambda_2;k)$ is the space of bi-differential operators $D:\mathcal C^\infty(S\times S) \longrightarrow \mathcal C^\infty(S)$ which are covariant w.r.t. $(\pi_{\lambda_1} \otimes \pi_{\lambda_2}, \pi_{\lambda_1+\lambda_2+\rho+2k})$ (see \cite {c} section 7).

Let $k\in \mathbb N$ and let $(\lambda_1,\lambda_2)\in \mathbb C^2$. Consider the system ${\it S}(\lambda_1,\lambda_2;k)$ of homogeneous linear equations in the unknowns $(c_{r,t}), 0\leq r,t,\  r+t\leq k$ given by
\begin{equation*}
\begin{aligned}
4(r+1)(r+1+\lambda)c_{r+1,\,t}&\\ +2(k-r-t)(k-r+t-1+\rho+\mu) c_{r,\,t} &\\ -(k-r-t+1)(k-r-t) c_{r,\,t-1} &= 0
\end{aligned}
\qquad E^{(1)}_{r,t}
\end{equation*}
\begin{equation*}
\begin{aligned}
4(t+1)(t+1+\mu)c_{r,\,t+1}&\\ +2(k-r-t)(k+r-t-1+\rho+\lambda) c_{r,\,t}&\\ -(k-r-t+1)(k-r-t) c_{r-1,\,t}&=0
\end{aligned}
\qquad E^{(2)}_{r,t}
\end{equation*}
for $r+t\leq k-1$.

The system was introduced  by Ovsienko and Redou (see \cite{or}).  Denote by ${\it Sol}(\lambda_1,\lambda_2;k)$ the space of solutions of the system ${\it S}(\lambda_1,\lambda_2;k)$.

\begin{proposition*}\label{BDS}
 Let $\lambda_1,\lambda_2, \lambda_3\in \mathbb C^3$ satisfy
$\lambda_1+\lambda_2+\lambda_3 = -\rho-2k$
for some $k\in \mathbb  N$. Then
\[ Tri(\lambda_1,\lambda_2, \lambda_3, diag) \sim  \mathcal {BD}_G(\lambda_1,\lambda_2; k) \sim {\it Sol}(\lambda_1,\lambda_2;k)\ .\]
\end{proposition*}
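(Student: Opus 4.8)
The plan is to establish the two isomorphisms in the chain
\[
Tri(\lambda_1,\lambda_2,\lambda_3,\mathrm{diag})\ \sim\ \mathcal{BD}_G(\lambda_1,\lambda_2;k)\ \sim\ {\it Sol}(\lambda_1,\lambda_2;k)
\]
separately. The first isomorphism is essentially formal and is already implicit in section 7 of \cite{c}: a distribution on $S\times S\times S$ supported on the diagonal $\mathcal{O}_4$ is, after a choice of transverse coordinates, a finite sum $\sum_{\text{finite}} \partial^{\gamma}_{\text{transv}}\big(u_\gamma(x)\,\delta_{\mathcal{O}_4}\big)$; pairing against $f_1\otimes f_2\otimes f_3$ and integrating out $f_3$ identifies such a distribution with a bi-differential operator $D:\mathcal{C}^\infty(S\times S)\to\mathcal{C}^\infty(S)$ via $T(f_1,f_2,f_3)=\int_S D(f_1\otimes f_2)(x)\,f_3(x)\,dx$. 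Under this identification, $\boldsymbol\lambda$-invariance of $T$ (where $\lambda_3=-(\lambda_1+\lambda_2+\rho+2k)$) translates exactly into the covariance relation $D\circ(\pi_{\lambda_1}\otimes\pi_{\lambda_2})(g)=\pi_{\lambda_1+\lambda_2+\rho+2k}(g)\circ D$, using the duality between $\pi_\mu$ and $\pi_{-\mu}$; the degree $2k$ is forced by comparing the $K$-types / homogeneities of the two sides, which is why the constraint $\lambda_1+\lambda_2+\lambda_3=-\rho-2k$ enters. I would state this first isomorphism and refer to \cite{c} section 7 for the details.

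For the second isomorphism, I would pass to the non-compact picture $\mathbb{R}^{n-1}\times\mathbb{R}^{n-1}$, where a $G$-covariant bi-differential operator is, by the work of Ovsienko--Redou \cite{or}, necessarily $SO(n)$-invariant and translation-invariant on the diagonal, hence expressible in terms of the basic invariants built from $\partial_x,\partial_y$ and the "fundamental" second-order invariant; writing $D$ in the Ovsienko--Redou normal form with coefficients $(c_{r,t})$, the infinitesimal covariance under the remaining generators (dilations and special conformal transformations) becomes precisely the recursion $E^{(1)}_{r,t}$, $E^{(2)}_{r,t}$ of the system ${\it S}(\lambda_1,\lambda_2;k)$. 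Thus $D\mapsto (c_{r,t})$ is the desired linear bijection $\mathcal{BD}_G(\lambda_1,\lambda_2;k)\xrightarrow{\ \sim\ }{\it Sol}(\lambda_1,\lambda_2;k)$. The only genuinely substantive point to check is that every $SO(n)$-invariant bi-differential operator of the right homogeneity is automatically a polynomial (with scalar coefficients) in these finitely many basic invariants, so that the ansatz with unknowns $(c_{r,t})_{r+t\le k}$ is exhaustive; this is the classical description of invariants and I would invoke \cite{or} for it.

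The main obstacle — and the reason this is stated as a proposition to be recorded rather than a triviality — is bookkeeping: one must verify that the two families of linear conditions coming from conformal covariance in the non-compact picture are \emph{exactly} the equations $E^{(1)}_{r,t}$ and $E^{(2)}_{r,t}$ as written, with the correct shifts $\lambda\leftrightarrow\lambda_1$, $\mu\leftrightarrow\lambda_2$, and with $k=\tfrac12(-\lambda_1-\lambda_2-\lambda_3-\rho)$. This is a direct but lengthy computation of how the generators of the conformal Lie algebra act on the symbol of $D$; I would carry it out in the non-compact model where the flat coordinates make the special conformal vector fields quadratic and the resulting relations first-order in the $(c_{r,t})$ lattice. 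Once the dictionary is pinned down, the statement follows by combining the two bijections, and no dimension count is needed here — the dimension of ${\it Sol}(\lambda_1,\lambda_2;k)$ is analysed separately in the lemmas of Appendix A3 that are used in the main text.
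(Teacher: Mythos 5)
Your proposal is correct and follows essentially the same route as the paper, which disposes of this proposition in two sentences by citing \cite{bc} for the first isomorphism (distributions on the diagonal $\leftrightarrow$ covariant bi-differential operators, via the structure theorem for distributions supported on a submanifold together with the duality $\pi_\mu \leftrightarrow \pi_{-\mu}$) and \cite{or} for the second (the Ovsienko--Redou normal form whose covariance conditions are exactly the system ${\it S}(\lambda_1,\lambda_2;k)$). Your sketch is a faithful, somewhat more detailed account of what those two references contain; the only cosmetic difference is that the paper attributes the first isomorphism to \cite{bc} rather than to section 7 of \cite{c}.
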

\begin{proof}The first isomorphism is proved in \cite{bc}, the second is due to Ovsienko and Redou (\cite{or}). As a consequence, the dimensions of the three spaces are equal.
\end{proof}
The system $S(\lambda_1,\lambda_2;k)$ was studied by Ovsienko and Redou who proved that generically $\dim {\it Sol}(\lambda_1,\lambda_2;k) = 1$, but for our purpose, a full discussion of the system (depending on the parameters $\lambda_1,\lambda_2$ and $k)$ is needed.
 
 The next result will help lower the number of situations to be considered.
\begin{proposition*}[Symmetry principle] {\ }
\smallskip

$i)$ $\dim {\it Sol}(\lambda_1,\lambda_2 ; k)=\dim {\it Sol}(\lambda_2,\lambda_1 ; k)$
\smallskip

$ii)$ let $\lambda_3 = -\rho-2k-\lambda_1-\lambda_2$. Then 
\[\dim {\it Sol}(\lambda_1,\lambda_2;k)=\dim{\it Sol}(\lambda_2,\lambda_3;k)=\dim {\it Sol}(\lambda_3,\lambda_1;k)\ .
\]
\end{proposition*}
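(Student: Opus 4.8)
The plan is to reduce both symmetries to the identification recalled in Appendix~A3 (the $BDS$ isomorphism $Tri(\lambda_1,\lambda_2,\lambda_3,diag)\sim{\it Sol}(\lambda_1,\lambda_2;k)$, valid whenever $\lambda_1+\lambda_2+\lambda_3=-\rho-2k$) together with the evident invariance of the space of invariant trilinear forms under permutation of the three factors. First I would record the consequence of that isomorphism: if $\lambda_1+\lambda_2+\lambda_3=-\rho-2k$ then $\dim{\it Sol}(\lambda_1,\lambda_2;k)=\dim Tri(\lambda_1,\lambda_2,\lambda_3,diag)$; since the defining constraint is itself symmetric in $(\lambda_1,\lambda_2,\lambda_3)$, it then suffices to prove that the right-hand side depends only on $k$ and on the \emph{unordered} triple $\{\lambda_1,\lambda_2,\lambda_3\}$.

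Next I would establish that symmetry of $Tri(\,\cdot\,,diag)$. Given a permutation $\sigma$ of $\{1,2,3\}$, the assignment $T\mapsto T^{\sigma}$, where $T^{\sigma}(f_1,f_2,f_3)=T(f_{\sigma^{-1}(1)},f_{\sigma^{-1}(2)},f_{\sigma^{-1}(3)})$, is a continuous linear bijection from $Tri(\lambda_1,\lambda_2,\lambda_3)$ onto $Tri(\lambda_{\sigma(1)},\lambda_{\sigma(2)},\lambda_{\sigma(3)})$, with inverse $T\mapsto T^{\sigma^{-1}}$; indeed, after relabelling the arguments, the invariance identity for $\pi_{\lambda_1}\otimes\pi_{\lambda_2}\otimes\pi_{\lambda_3}$ becomes exactly the invariance identity for the permuted tensor product. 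Viewed on distributions on $S\times S\times S$, the operation $T\mapsto T^{\sigma}$ is pull-back by the diffeomorphism $(x,y,z)\mapsto(x_{\sigma(1)},x_{\sigma(2)},x_{\sigma(3)})$ of $S\times S\times S$, which stabilises the diagonal $\mathcal O_4$; hence $\operatorname{Supp}(T)\subset\mathcal O_4$ if and only if $\operatorname{Supp}(T^{\sigma})\subset\mathcal O_4$. Therefore $\dim Tri(\lambda_1,\lambda_2,\lambda_3,diag)$ is unchanged under every permutation of $(\lambda_1,\lambda_2,\lambda_3)$.

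With this in hand, $i)$ and $ii)$ are immediate. Put $\lambda_3=-\rho-2k-\lambda_1-\lambda_2$, so $\dim{\it Sol}(\lambda_1,\lambda_2;k)=\dim Tri(\lambda_1,\lambda_2,\lambda_3,diag)$. Applying the transposition $(1\,2)$ gives $\dim Tri(\lambda_1,\lambda_2,\lambda_3,diag)=\dim Tri(\lambda_2,\lambda_1,\lambda_3,diag)=\dim{\it Sol}(\lambda_2,\lambda_1;k)$, which is $i)$. Applying the $3$-cycles and noting that the same sum relation recovers the omitted parameter (from $\lambda_2,\lambda_3$ one gets $\lambda_1$, etc.), one obtains $\dim{\it Sol}(\lambda_1,\lambda_2;k)=\dim{\it Sol}(\lambda_2,\lambda_3;k)=\dim{\it Sol}(\lambda_3,\lambda_1;k)$, which is $ii)$.

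Finally I would add a remark with a self-contained proof of $i)$ at the level of the system: the substitution $\widetilde c_{r,t}:=c_{t,r}$ combined with interchanging $\lambda=\lambda_1$ and $\mu=\lambda_2$ carries equation $E^{(1)}_{r,t}$ of ${\it S}(\lambda_1,\lambda_2;k)$ to $E^{(2)}_{t,r}$ of ${\it S}(\lambda_2,\lambda_1;k)$ and $E^{(2)}_{r,t}$ to $E^{(1)}_{t,r}$, hence is a linear isomorphism ${\it Sol}(\lambda_1,\lambda_2;k)\sim{\it Sol}(\lambda_2,\lambda_1;k)$. The main point needing care is part $ii)$: there is no equally transparent substitution relating ${\it S}(\lambda_1,\lambda_2;k)$ to ${\it S}(\lambda_2,\lambda_3;k)$ directly, so the proof genuinely has to go through the geometric interpretation; the only real work is the bookkeeping above — continuity, the reformulation of the covariance identity under relabelling, and the fact that $T\mapsto T^{\sigma}$ preserves the property of being supported on $\mathcal O_4$ — all of which is routine.
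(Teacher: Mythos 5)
Your proposal is correct and uses essentially the same two ingredients as the paper: part $ii)$ (and, in your main line, also part $i)$) via the isomorphism $Tri(\boldsymbol\lambda,diag)\sim {\it Sol}(\lambda_1,\lambda_2;k)$ together with the permutation invariance of $Tri(\boldsymbol\lambda,diag)$, and the substitution $c_{r,t}\mapsto c_{t,r}$ exchanging $E^{(1)}$ and $E^{(2)}$ for part $i)$, which is exactly the paper's argument for $i)$. The only difference is organizational — you make the geometric route primary for both parts and relegate the algebraic swap to a remark — and your verification that the permutation action preserves the diagonal support is a correct (if routine) filling-in of what the paper leaves implicit.
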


\begin{proof} {\ }
\smallskip

For $i)$ , observe (cf \cite{or}) that if $(c_{r,t})$ is a solution of the system  ${\it S}(\lambda_1,\lambda_2;k)$, then $(c_{t,r})$ is a solution of ${\it S}(\lambda_2,\lambda_1;k)$.
\smallskip

For $ii)$, observe  that $\dim Tri(\lambda_1,\lambda_2,\lambda_3; diag)$ is invariant by any permutation of $\{\lambda_1,\lambda_2,\lambda_3\}$. Hence  Proposition \ref{BDS} implies  the statement.
\end{proof}

The strategy to estimate $\dim {\it Sol} (\lambda_1,\lambda_2 ; k)$ is twofold. An algebraic study of the system will produce upper bounds for the dimension. In counterpart, the analytic study  done in sections 10, 13, 14,15 and 16 has produced linearly independent distributions inside $Tri( \boldsymbol \lambda, diag)$, giving a lower bound for the dimension. A careful inspection shows that the lower and upper estimates coincide. Of course, it could be possible to write explicitly potential solutions produced by the algebraic study and verify that they are indeed solutions of the system (as claimed in \cite{or} for the generic case).

\begin{lemma*}\label{Sgen} Let $\lambda_2\notin \{ -1,-2,\dots, -k\}\cup \{ -\rho,-\rho-1,\dots, -\rho-(k-1)\}$. Then $dim \,{\it Sol}(\lambda_1,\lambda_2;k)\leq1$. 
\end{lemma*}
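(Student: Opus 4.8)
The plan is to study the linear system $S(\lambda_1,\lambda_2;k)$ directly, exploiting the hypothesis that certain Pochhammer-type coefficients appearing in the equations $E^{(1)}_{r,t}$ never vanish. Write $\lambda=\lambda_1$, $\mu=\lambda_2$ as in the statement of the system. The hypothesis $\lambda_2\notin\{-1,\dots,-k\}\cup\{-\rho,\dots,-\rho-(k-1)\}$ is designed precisely so that the ``leading'' coefficients in the relevant recursions are invertible; the idea is to show that a solution $(c_{r,t})$ is completely determined by a single free parameter, so that $\dim\,\mathit{Sol}(\lambda_1,\lambda_2;k)\le 1$.

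First I would set up the recursion. Look at $E^{(1)}_{r,t}$: the coefficient of $c_{r+1,t}$ is $4(r+1)(r+1+\lambda)$, so on the face of it this lets one solve for $c_{r+1,t}$ in terms of $c_{r,t}$ and $c_{r,t-1}$ provided $r+1+\lambda\ne 0$ — but that is a condition on $\lambda=\lambda_1$, not $\lambda_2$. So instead the relevant equation is $E^{(2)}_{r,t}$, whose coefficient of $c_{r,t+1}$ is $4(t+1)(t+1+\mu)$. Since $\mu=\lambda_2$ and by hypothesis $\lambda_2\ne -1,-2,\dots,-k$, we have $t+1+\mu\ne 0$ for $0\le t\le k-1$, so $E^{(2)}_{r,t}$ may be solved to express $c_{r,t+1}$ as a linear combination of $c_{r,t}$ and $c_{r-1,t}$. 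By induction on $t$ this shows every $c_{r,t}$ with $t\ge 1$ is a linear combination of the $c_{s,0}$ with $s\le r+t$. Thus the whole solution is determined by the ``bottom row'' $(c_{0,0},c_{1,0},\dots,c_{k,0})$. It then remains to show that these bottom-row entries are themselves forced to lie in a one-dimensional space; for that one feeds the expressions just obtained back into the equations $E^{(1)}_{r,0}$ (and possibly $E^{(2)}_{r,0}$), which should produce a first-order recursion among the $c_{s,0}$ whose coefficients involve the factor $k-r-t-1+\rho+\mu$ evaluated along $t=0$, i.e.\ quantities of the form $\rho+\mu+j$ with $0\le j\le k-1$ — and the second exclusion $\lambda_2\notin\{-\rho,\dots,-\rho-(k-1)\}$ guarantees these do not vanish. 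Hence $c_{1,0},\dots,c_{k,0}$ are all determined by $c_{0,0}$, giving $\dim\le 1$.

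The main obstacle I anticipate is the bookkeeping in the last step: once the upper rows are expressed in terms of the bottom row, the equations $E^{(1)}_{r,0}$ become relations among $c_{r+1,0}, c_{r,0}$ and (through the substituted upper-row terms) lower-indexed bottom entries, and one must check that after all substitutions the coefficient of the highest-index unknown is a product of the nonvanishing factors controlled by the hypotheses on $\lambda_2$, rather than something that could vanish. A clean way to organize this is to argue by a double induction (on $r+t$, then on $t$) that $c_{r,t}$ is a determined multiple of $c_{0,0}$: at each stage one uses $E^{(2)}$ to climb in $t$ and $E^{(1)}$ (along the lowest available row) to climb in the $r$-direction, each time dividing by one of the guaranteed-nonzero factors $t+1+\mu$ or $\rho+\mu+j$. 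Since both families of ``pivot'' coefficients are nonzero exactly under the stated hypothesis, the induction goes through and yields $\dim\,\mathit{Sol}(\lambda_1,\lambda_2;k)\le 1$. (That the dimension is in fact $1$ for generic such $(\lambda_1,\lambda_2)$ is the Ovsienko–Redou result and is not needed here.)
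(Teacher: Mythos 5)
Your two-stage structure (first pin down the bottom row $(c_{r,0})$ using $E^{(1)}_{r,0}$, then climb in $t$ using $E^{(2)}_{r,t}$, whose pivot coefficient $4(t+1)(t+1+\lambda_2)$ is nonzero because $\lambda_2\notin\{-1,\dots,-k\}$) is exactly the paper's argument, and the climbing step is fine. But the bottom-row step, as you state it, fails. The equation $E^{(1)}_{r,0}$ is the two-term relation
\[
4(r+1)(r+1+\lambda_1)\,c_{r+1,0}+2(k-r)(k-r-1+\rho+\lambda_2)\,c_{r,0}=0 ,
\]
and the factor that the hypothesis on $\lambda_2$ makes nonzero, namely $k-r-1+\rho+\lambda_2$, multiplies $c_{r,0}$, not $c_{r+1,0}$. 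So this relation lets you solve for $c_{r,0}$ in terms of $c_{r+1,0}$ (a \emph{descending} recursion with principal unknown $c_{k,0}$), which is what the paper does. Your conclusion that ``$c_{1,0},\dots,c_{k,0}$ are all determined by $c_{0,0}$'' requires dividing by $4(r+1)(r+1+\lambda_1)$, and the lemma makes no assumption whatsoever on $\lambda_1$. If $\lambda_1=-j$ with $1\le j\le k$, the equation $E^{(1)}_{j-1,0}$ degenerates to $0\cdot c_{j,0}+(\text{nonzero})\cdot c_{j-1,0}=0$: it forces $c_{j-1,0}=0$ (hence, back down the chain, $c_{0,0}=0$) and leaves $c_{j,0}$ completely undetermined by the lower entries. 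This is not a vacuous worry: the paper invokes this lemma, via the symmetry principle, precisely at parameters where the remaining spectral value lies in $\{-1,\dots,-k\}$ (e.g.\ in Lemma A3).

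One can salvage your ascending recursion by noting that the breakdown happens at most once (there is a unique $j$ with $\lambda_1=-j$) and that the multiplier carrying $c_{0,0}$ to $c_{j-1,0}$ is a product of nonzero factors, so the bottom row is still parametrized by the single entry $c_{j,0}$; but that case analysis is exactly what choosing $c_{k,0}$ as principal unknown and recursing downward avoids. With that one-line correction the rest of your argument goes through and coincides with the paper's proof.
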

\begin{proof}  Equation $E^{(1)}_{r,0}$ reads
\[4(r+1)(r+1+\lambda_1)c_{r+1,0} +2(k-r)(k+\rho+\lambda_2-1 -r) c_{r,0} = 0\ .
\]

As $\lambda_2 \notin \{ -\rho,-\rho-1,\dots, -\rho-(k-1)\}$,  $k+\rho+\lambda_2 -1 -r\neq 0$ for $0\leq r\leq k-1$. Hence $c_{r,0}$ can be computed from $c_{r+1,0}$. Choose $c_{k,0}$ as principal unknown. Then, for $0\leq r\leq k-1$,  $c_{r,0}$ can be computed from $c_{k,0}$.

Equation $E^{(2)}_{r,t}$  reads
\[4(t+1)(t+1+\lambda_2)c_{r,t+1}+2(k-r)(k+r-t-1+\rho+\lambda_1)c_{r,t}\]\[-(k-r-t+1) (k-r-t))c_{r-1,t} = 0\ .
\]
As $\lambda_2 \notin \{ -1,-2,\dots, -k\}$, the coefficient of the unknown $c_{r,t+1}$ is never $0$, and hence the coefficient $c_{r,t+1}$ can be computed from $c_{r,t}$ and $c_{r-1,t}$. Hence all unknowns can be computed from $c_{k,0}$, so that $\dim {\it Sol}(\lambda_1,\lambda_2;k)\leq 1$. 
\end{proof}

We may now assume that $\lambda_1,\lambda_2 \in \{ -\rho, -\rho-1,\dots,-\rho-(k-1)\} \cup \{ -1,-2,\dots, -k\}$. Notice that if $n-1$ is odd (hence $\rho\notin \mathbb N$), the two sets $\{ -\rho, -\rho-1,\dots,-\rho-(k-1)\}$ and $\{ -1,-2,\dots, -k\}$ are disjoint, making in this case the study somewhat easier. When $n-1$ is even, then $\rho$ is an integer and 
\[\{ -\rho, -\rho-1,\dots,-\rho-(k-1)\} \cap \{ -1,-2,\dots, -k\} \]\[\begin{matrix}\emptyset\ \quad &\text{ if } k<\rho\\ \{-\rho,\dots,  -k\}\quad &\text{ if } {k\geq \rho}\end{matrix}
\]

The general strategy to study the system was already used in the proof of Lemma A\ref{Sgen}. First study equations $E^{(1)}_{r,0}$ which involve only the unknowns $c_{r,0}$, then pass to the determination of the unknowns $c_{r,1}$, and so on. 

\begin{lemma*}\label{mix}
 Let $\lambda_1 =-k_1$ for some $k_1\in \mathbb \{ 1,2,\dots, k-1\}$ and $\lambda_2 = -\rho-l_2$ for some $l_2\in \{ 0,1,\dots, k-1\}$, but $\lambda_2 \notin \{ -1,-2,\dots, -k\}$. 

$i)$ if $k_1+l_2< k$, then $ \dim {\it Sol}(\lambda_1,\lambda_2;k) \leq 1$

$ii)$ if $k_1+l_2\geq k$, then $\dim{\it Sol}(\lambda_1,\lambda_2;k)\leq 2$.
 
\end{lemma*}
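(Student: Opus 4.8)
The plan is to analyze the system $S(\lambda_1,\lambda_2;k)$ directly in the spirit of Lemma~A\ref{Sgen}, but now paying attention to the places where the ``cascade'' of equations breaks down because a coefficient vanishes. The strategy is the two-stage one already described: first exploit the equations $E^{(1)}_{r,0}$, which involve only the unknowns $c_{r,0}$, to bound the freedom in the ``bottom row''; then propagate to the remaining unknowns $c_{r,t}$ with $t\geq 1$ using the equations $E^{(2)}_{r,t-1}$, which express $c_{r,t}$ in terms of $c_{r,t-1}$ and $c_{r-1,t-1}$. The key observation for the second stage is that, since $\lambda_2=-\rho-l_2$ with $l_2\leq k-1$ and $\lambda_2\notin\{-1,\dots,-k\}$, the coefficient $4(t+1)(t+1+\lambda_2)$ of $c_{r,t+1}$ in $E^{(2)}_{r,t}$ is never zero for $0\leq t\leq k-1$ (it would vanish only if $t+1=-\lambda_2=\rho+l_2$, impossible since $\lambda_2\notin -\mathbb N$, or if $t+1=0$). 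Hence \emph{all} unknowns with $t\geq 1$ are uniquely determined by the bottom row $(c_{r,0})_{0\leq r\leq k}$, and the whole problem reduces to counting the dimension of the space of admissible bottom rows, i.e.\ the solutions of the sub-system $\{E^{(1)}_{r,0}\}_{0\leq r\leq k-1}$ together with the compatibility constraints that the values $c_{r,t}$ so produced actually satisfy the remaining equations $E^{(1)}_{r,t}$ for $t\geq 1$.

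The first stage is the heart of the matter. Equation $E^{(1)}_{r,0}$ reads
\[
4(r+1)(r+1+\lambda_1)\,c_{r+1,0}+2(k-r)(k-r-1+\rho+\lambda_2)\,c_{r,0}=0,\qquad 0\leq r\leq k-1.
\]
With $\lambda_1=-k_1$ the coefficient of $c_{r+1,0}$ vanishes precisely when $r+1=k_1$, and with $\lambda_2=-\rho-l_2$ the coefficient of $c_{r,0}$ vanishes precisely when $k-r-1-l_2=0$, i.e.\ $r=k-1-l_2$. Thus the recursion relating consecutive $c_{r,0}$ has a ``gap'' at $r=k_1-1$ (where $c_{k_1,0}$ drops out, so $E^{(1)}_{k_1-1,0}$ forces $c_{k_1-1,0}=0$ unless its own coefficient also vanishes) and a ``gap'' at $r=k-1-l_2$ (where $c_{k-1-l_2,0}$ drops out, decoupling the row into two independent blocks). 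One then runs the recursion downward from $c_{k,0}$ and upward from $c_{0,0}$, tracking how many of these $c_{r,0}$ are forced to be $0$ and how many remain free. The count splits according to the relative position of the two gap indices $k_1-1$ and $k-1-l_2$: when $k_1+l_2<k$ one checks that $k_1-1<k-1-l_2$ and the two special equations conspire to leave at most one free parameter in the bottom row (this reproduces, after propagation, case $i)$ with bound $1$); when $k_1+l_2\geq k$ one has $k_1-1\geq k-1-l_2$, the gaps overlap or cross, and one is left with at most two free parameters (case $ii)$, bound $2$). In both cases one must still verify that the higher equations $E^{(1)}_{r,t}$ and $E^{(2)}_{r,t}$ impose no new independent constraints that would lower the count — but since we only need an \emph{upper} bound, it suffices to observe that the propagation via $E^{(2)}$ determines all remaining unknowns, so the dimension is at most the number of free bottom-row parameters.

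The main obstacle I expect is the careful bookkeeping in the first stage: one has to be precise about whether, at the gap index $r=k_1-1$, the other coefficient $2(k-r)(k-r-1+\rho+\lambda_2)$ happens to vanish simultaneously (which occurs exactly when $k_1-1=k-1-l_2$, i.e.\ on the boundary case $k_1+l_2=k$), since then $E^{(1)}_{k_1-1,0}$ is vacuous and does \emph{not} force $c_{k_1-1,0}=0$, contributing an extra degree of freedom — this is precisely why the bound jumps from $1$ to $2$. One must also handle the endpoints $r=0$ and $r=k-1$ of the range with care, and confirm that the constraint ``$\lambda_2\notin\{-1,\dots,-k\}$'' (equivalently $l_2<\rho$ when $n-1$ is even, automatic when $n-1$ is odd) is exactly what guarantees the $E^{(2)}$-propagation never stalls. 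Once the gap analysis is organized into these finitely many cases, the bounds in $i)$ and $ii)$ follow by direct inspection; matching lower bounds come from the explicitly constructed distributions ($\mathcal S^{(k)}$, the $\widetilde{\mathcal T}$'s, $\mathcal R$'s and $\mathcal Q$'s) in the relevant sections, so no separate construction of solutions is needed here.
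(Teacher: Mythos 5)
Your proposal is correct and follows essentially the same route as the paper's proof: locate the two indices $r=k_1-1$ and $r=k-1-l_2$ where a coefficient of $E^{(1)}_{r,0}$ vanishes, count the free parameters in the bottom row according to their relative position (one when $k_1+l_2<k$, two when $k_1+l_2\geq k$), and propagate to all $c_{r,t}$ via $E^{(2)}_{r,t}$ using $\lambda_2\notin\{-1,\dots,-k\}$. One small correction: for $k_1+l_2>k$ strictly, $E^{(1)}_{k_1-1,0}$ still forces $c_{k_1-1,0}=0$, and the second free parameter comes from the lower block $c_{0,0},\dots,c_{k-l_2-1,0}$ being decoupled from the upper block $c_{k_1,0},\dots,c_{k,0}$ rather than from a vacuous equation --- but your ``overlap or cross'' case split already accounts for this.
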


\begin{proof}

Equation $E^{(1)}_{k_1-1,0}$  reads
\begin{equation}\label{Ek1}
0\, c_{k_1,0} +2(k-k_1+1)(k-k_1-l_2) c_{k_1-1,0} = 0
\end{equation}

Equation $E^{(1)}_{k-l_2-1,0}$ reads
\begin{equation}\label{Ek2}
 4(r+1)(k-l_2-k_1) c_{k-l_2,0} + 0\, c_{k-l_2-1,0} = 0\ .
\end{equation}

Assume that $k_1+l_2<k$. \eqref{Ek1} implies $c_{k_1-1,0} = 0$ and \eqref{Ek2} implies $c_{k-l_2} = 0$. Repeated use of $E^{(1)}_{r,0}$ for $r\neq k_1-1, k-l_2-1$ leads to the following results 
\smallskip

$\bullet$ $c_{r,0}= 0$ for $0\leq r\leq k_1-1$ and for $k-l_2\leq r\leq k$
\smallskip

$\bullet$ $c_{k_1,0}$ can be chosen as principal unknown and the remaining unknowns $c_{r,0}, k_1<r<k-l_2$ can be expressed in terms of $c_{k_1,0}$.
Next use the same method as in Lemma \ref{Sgen} to prove that all $c_{r,t}$ can be expressed in terms of $c_{k_1,0}$. This gives $\dim \big(\mathcal Sol(\lambda_1,\lambda_2,k\big)\leq 1$ and $i)$ follows.
\medskip

Assume now that $k_1+l_2\geq k$. In this case, $k-l_2-1<k_1$, and choose $c_{k-l_2-1,0}$ and $c_{k_1,0}$ as principal unknowns. For $k-l_2\leq r\leq k_1-1$ (this interval is empty in case $k_1+l_2=k$) $c_{r,0} = 0$. For $0\leq r\leq k-l_2-1$, $c_{r,0}$ can be expressed in terms of $c_{k-l_2-1}$, whereas for $k_1<k\leq r$, $c_{r,0}$ can be expressed in terms of $c_{k_1,0}$. Hence all $c_{r,t}$ can be expressed in terms of the two chosen principal unknowns, and so $\dim {\it Sol}(\lambda_1,\lambda_2;k)\leq 2$.
\end{proof}

\begin{lemma*}\label{Sk1k2}
 Let $\lambda_1 = -k_1, \lambda_2=-k_2$ where $1\leq k_1,k_2\leq k$. 
\smallskip

$i)$ assume that $k_1+k_2\leq k$. Then  $\dim {\it Sol}(\lambda_1,\lambda_2;k) \leq 1$.
\smallskip

$ii)$ assume that  $k_1+k_2>k$ and assume that $\lambda_2 \notin \{-\rho,-\rho-1,\dots, -\rho-(k-1)\}$. Then $\dim {\it Sol}(\lambda_1,\lambda_2;k)\leq 2$.

\end{lemma*}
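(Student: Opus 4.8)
The plan is to follow the same template used in Lemma A\ref{mix} and Lemma A\ref{Sgen}: analyze the system $S(\lambda_1,\lambda_2;k)$ layer by layer, first the equations $E^{(1)}_{r,0}$ which only involve the unknowns $c_{r,0}$, then propagate to the higher layers $c_{r,t}$ via $E^{(2)}_{r,t}$. With $\lambda_1 = -k_1$ and $\lambda_2 = -k_2$, the coefficient $4(r+1)(r+1+\lambda_1) = 4(r+1)(r+1-k_1)$ of $c_{r+1,0}$ in $E^{(1)}_{r,0}$ vanishes precisely at $r = k_1-1$, and the coefficient $2(k-r)(k+\rho+\lambda_2-1-r) = 2(k-r)(k+\rho-1-k_2-r)$ of $c_{r,0}$ vanishes at $r = k$ (not relevant since $E^{(1)}_{r,0}$ is posed for $r\le k-1$) and, when $\rho$ is an integer with $k_2-\rho+1 \le r \le k-1$, possibly at $r = k-1+\rho-k_2$; but the hypothesis $\lambda_2\notin\{-\rho,\dots,-\rho-(k-1)\}$ in part $ii)$ rules this out, so in both parts the only degeneracy in the bottom layer is at $r=k_1-1$.

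First I would treat part $i)$, $k_1+k_2\le k$. Here I would write down $E^{(1)}_{k_1-1,0}$, which reads $0\cdot c_{k_1,0} + 2(k-k_1+1)(k-k_1-k_2+1-\rho+\rho)\,c_{k_1-1,0}=0$ — more precisely the coefficient of $c_{k_1-1,0}$ is $2(k-k_1+1)(k+\rho-1-k_2-(k_1-1)) = 2(k-k_1+1)(k-k_1-k_2+\rho)$, which is nonzero since $k-k_1-k_2\ge 0$ and $\rho>0$. Hence $c_{k_1-1,0}=0$, and then repeated use of $E^{(1)}_{r,0}$ for $r=k_1-2,\dots,0$ forces $c_{r,0}=0$ for all $r\le k_1-1$, while $c_{k_1,0}$ is a free parameter and $c_{r,0}$ for $k_1\le r\le k$ is determined by it (the coefficient of $c_{r,0}$ never vanishes in that range under the hypotheses). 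Then, exactly as in Lemma A\ref{Sgen}, since $\lambda_2=-k_2\notin\{-1,\dots,-k\}$ is \emph{false} here, I cannot directly invoke that coefficient being nonzero; instead I would use $E^{(2)}_{r,t}$ read as determining $c_{r,t+1}$ from $c_{r,t},c_{r-1,t}$ when $4(t+1)(t+1+\lambda_2)=4(t+1)(t+1-k_2)\neq 0$, i.e. $t\neq k_2-1$, and handle the layer $t=k_2-1$ using the companion equation $E^{(1)}_{r,t}$ (which determines $c_{r,t}$ from $c_{r+1,t}$ and $c_{r,t-1}$ when the relevant coefficient is nonzero) together with a vanishing argument analogous to the bottom layer. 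The upshot is that all $c_{r,t}$ are expressible in terms of the single unknown $c_{k_1,0}$, giving $\dim {\it Sol}(\lambda_1,\lambda_2;k)\le 1$.

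For part $ii)$, $k_1+k_2>k$: now $k-k_1<k_2$, hence in the bottom layer the coefficient $2(k-k_1+1)(k-k_1-k_2+\rho)$ of $c_{k_1-1,0}$ in $E^{(1)}_{k_1-1,0}$ may no longer force $c_{k_1-1,0}=0$ automatically, but the equation still reads $0\cdot c_{k_1,0} + (\text{nonzero})\cdot c_{k_1-1,0}=0$ unless $k-k_1-k_2+\rho=0$; the hypothesis $\lambda_2\notin\{-\rho,\dots,-\rho-(k-1)\}$, i.e. $k_2\notin\{\rho,\dots,\rho+k-1\}$, should be exactly what prevents $k-k_1-k_2+\rho=0$ given $1\le k_1$. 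So again $c_{k_1-1,0}=0$ and therefore $c_{r,0}=0$ for $0\le r\le k_1-1$; but now $c_{k_1,0}$ is free and also the uppermost unknowns are unconstrained at one extra spot, yielding (as in Lemma A\ref{mix} $ii)$) \emph{two} principal unknowns, then propagating via $E^{(2)}$ and $E^{(1)}$ as before. I expect the main obstacle to be the careful bookkeeping in the higher layers $t\ge 1$: one must verify that the degeneracy caused by $t+1-k_2=0$ does not introduce a third free parameter, which requires combining $E^{(1)}$ and $E^{(2)}$ in the critical layer and checking that the cross-terms $c_{r-1,t}$, $c_{r,t-1}$ propagate the already-chosen data without new freedom — this is the same subtlety as in Lemma A\ref{mix} but with two bottom-layer parameters instead of one, so I would organize the induction on $t$ so that the inductive hypothesis ``$c_{\cdot,t}$ lies in the span of the two principal unknowns'' is preserved, using the symmetry principle (Proposition A: Symmetry principle) if needed to swap the roles of $\lambda_1$ and $\lambda_2$ and get the cleanest degeneracy pattern.
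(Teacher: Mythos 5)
Your bottom-row analysis is sound: with $\lambda_1=-k_1$, $\lambda_2=-k_2$, the coefficient of $c_{r+1,0}$ in $E^{(1)}_{r,0}$ degenerates only at $r=k_1-1$, the coefficient of $c_{k_1-1,0}$ there is $2(k-k_1+1)(k-k_1-k_2+\rho)$, and your observation that the excluded coincidence $k-k_1-k_2+\rho=0$ is exactly $\lambda_2\in\{-\rho,\dots,-\rho-(k-1)\}$ is correct. The gap is in the higher layers, which is where all the difficulty of this lemma lives and which you explicitly defer. In part $i)$ the passage from layer $t=k_2-1$ to layer $t=k_2$ cannot be made with $E^{(2)}_{r,k_2-1}$ (its leading coefficient $4k_2(k_2-k_2)$ vanishes for \emph{every} $r$), and switching to $E^{(1)}_{r,k_2}$ only gives a downward recursion in $r$ with no equation pinning down the top unknown $c_{k-k_2,k_2}$; as written, your scheme threatens to produce a second free parameter in case $i)$ rather than the claimed bound $1$. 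The paper sidesteps this entirely: $k_1+k_2\le k$ gives $\lambda_3=-\rho-k-(k-k_1-k_2)$, which lies in neither $\{-1,\dots,-k\}$ nor $\{-\rho,\dots,-\rho-(k-1)\}$, so the symmetry principle reduces $i)$ to Lemma A\ref{Sgen} in one line.

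In part $ii)$ you have misplaced the second principal unknown. Under the hypothesis on $\lambda_2$ the coefficient $2(k-r)(k-r-1+\rho-k_2)$ of $c_{r,0}$ in $E^{(1)}_{r,0}$ never vanishes, so the bottom row still carries only the single parameter $c_{k_1,0}$ (there is no ``extra spot'' there, unlike Lemma A\ref{mix} $ii)$). The second parameter is $c_{0,k_2}$, produced by the symmetric analysis of the left column via $E^{(2)}_{0,t}$; and the reason no third parameter appears is the combinatorial fact that $r+t\le k<k_1+k_2$ forces $r\le k_1-1$ or $t\le k_2-1$, so every unknown is reached by one of the two half-recursions before either critical layer is crossed. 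This observation is the heart of the argument and is absent from your plan. Finally, you would still need to isolate the sub-case $\lambda_1\in\{-\rho,\dots,-\rho-(k-1)\}$ ($n-1$ even, $k_1\ge\rho$), where the left-column recursion degenerates a second time and could create a third parameter; the paper handles it by another symmetry reduction, applying Lemma A\ref{mix} $ii)$ to the pair $(\lambda_1,\lambda_3)$. None of these points is routine bookkeeping, so the proof is not complete as proposed.
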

\begin{proof} {\ }

In case $i)$, the assumptions imply that $\lambda_3 = -\rho-2k+k_1+k_2$. Rewrite this as $\lambda_3 = -\rho-k-(k-k_1-k_2)$, so that $\lambda\in -\rho-k-\mathbb N$ and hence $\lambda_3 \notin \{ -\rho,\dots, -\rho-(k-1)\}\cup \{ -1,-2,\dots, -k\}$. Use the symmetry principle and Lemma \ref{Sgen} to conclude.

In case of $ii)$, assume first that $\lambda_1 \notin \{-\rho,-\rho-1,\dots, -\rho-(k-1)\}$.  The same considerations as in the beginning of the proof of Lemma  \ref{mix} show that $c_{r,0}=0$ for $r\leq k_1-1$. Choose  $c_{k_1,0}$  as first principal unknown, then the  unknowns $c_{r,0}$ for $r\geq k_1$ can be expressed in terms of $c_{k_1,0}$. Now as long as $t\leq k_2-1$, it is possible to use $E^{(2}_{r,t-1}$ to compute $c_{r,t}$ in terms of $c_{r,t-1}$ and $c_{r-1,t-1}$. It follows that, for $t\leq k_2-1$,  $c_{r,t}=0$ for $r\leq k_1-1$  and $c{_r,t}$ can be expressed in terms of $c_{k_1,0}$ for $r\geq k_1$. By symmetry, choosing $c_{k_2,0}$ as second principal unknown, it is possible for $r\leq k_1$ to express all $c_{r,t}$ in terms of $c_{0,k_2}$. But $r+t\leq k$ and $k>k_1+k_2$ implies that either $r\leq k_1-1$ or $t\leq k_2-1$, hence all unknowns can be expressed in terms of the two principal unknowns $c_{k_1,0}$ and $c_{0,k_2}$ and the statement follows.

It remains to consider the case where $\lambda_1 \in \{-\rho,-\rho-1,\dots, -\rho-(k-1)\}$. This assumption forces $n-1$ to be even, $\rho\leq k$ and hence $\lambda_1 =-k_1$, where $\rho\leq k_1\leq k$. Moreover, $0\leq k_2\leq \rho-1$. Then $\lambda_3 = -\rho-(2k-k_1-k_2)=-\rho-l_3$ with 
 with $0\leq  l_3 = 2k-k_1-k_2< k$ so that $\lambda_3 \in \{ -\rho, -\rho-1,\dots, -(\rho-k-1)\}$, but writing the same equality as 
$\lambda_3 =-k-((k-k_1)+(\rho-k_2))$ shows that  $\lambda_3 \notin \{-1,-2,\dots, -k\}$. Moreover, $l_3+k_1 = 2k-k_1-k_2+k_1 = 2k-k_2\geq k$. So $(\lambda_1,\lambda_3)$ satisfy the assumptions of  Lemma \ref{mix} $ii)$. By the symmetry principle,  the conclusion follows also in this case.
\end{proof}

\begin{lemma*} Let $\lambda_1 = -\rho-l_1, \lambda_2 = -\rho-l_2$ where
$0\leq l_1,l_2\leq k-1$, and $\lambda_1,\lambda_2 \notin \{ -1,-2,\dots, -k\}$. Then $\dim {\it Sol}(\lambda_1,\lambda_2;k)\leq1$.
\end{lemma*}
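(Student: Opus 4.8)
The plan is to analyze the system $\mathit{S}(\lambda_1,\lambda_2;k)$ directly in the remaining case $\lambda_1=-\rho-l_1,\ \lambda_2=-\rho-l_2$ with $0\le l_1,l_2\le k-1$ and $\lambda_1,\lambda_2\notin\{-1,\dots,-k\}$, following the same two-stage strategy used in Lemma A\ref{Sgen} and Lemma A\ref{mix}: first examine the equations $E^{(1)}_{r,0}$, which involve only the unknowns $c_{r,0}$, then propagate upward in $t$ using the $E^{(2)}_{r,t}$ (and, where needed, $E^{(1)}_{r,t}$). First I would record the crucial sign/vanishing data: in $E^{(1)}_{r,0}$ the coefficient of $c_{r+1,0}$ is $4(r+1)(r+1+\lambda_1)=4(r+1)(r+1-\rho-l_1)$, which vanishes exactly at $r+1=\rho+l_1$ — and by hypothesis $\rho+l_1\notin\{1,\dots,k\}$ is not an issue unless $\rho+l_1\le k$; the coefficient of $c_{r,0}$ is $2(k-r)(k-r-1+\rho+\lambda_2)=2(k-r)(k-r-1-l_2)$, which vanishes exactly at $r=k-1-l_2$.

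The key step is to show that the single ``obstruction'' index $r_0=k-1-l_2$ (where the recursion $c_{r,0}\mapsto c_{r+1,0}$ in $E^{(1)}_{r,0}$ cannot be run backwards) forces $c_{r,0}=0$ for all small $r$, leaving only one free constant. Concretely, $E^{(1)}_{r_0,0}$ reads $4(r_0+1)(r_0+1+\lambda_1)c_{r_0+1,0}+0\cdot c_{r_0,0}=0$; since $r_0+1=k-l_2$ and $r_0+1+\lambda_1=k-l_2-\rho-l_1\neq 0$ (this is where $\lambda_1\notin\{-1,\dots,-k\}$, equivalently $\rho+l_1\notin\mathbb{Z}_{>0}$ when $n-1$ is odd, and a direct sign estimate when $n-1$ is even, must be invoked), we get $c_{k-l_2,0}=0$, and then $E^{(1)}_{r,0}$ for $r\ge k-l_2$ propagates this to $c_{r,0}=0$ for all $r\ge k-l_2$. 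For $r<k-l_2-1=r_0$ the coefficient of $c_{r,0}$ in $E^{(1)}_{r,0}$ is nonzero, so $c_{r,0}$ is determined by $c_{r+1,0}$; choosing $c_{r_0,0}=c_{k-1-l_2,0}$ as the single principal unknown, all $c_{r,0}$ with $r\le r_0$ are expressed through it. Then, exactly as in the proof of Lemma A\ref{Sgen}, for each fixed ascending $t$ one uses $E^{(2)}_{r,t-1}$: its coefficient of $c_{r,t}$ is $4t(t+\lambda_2)=4t(t-\rho-l_2)$, which vanishes only at $t=\rho+l_2$, and by the hypothesis $\lambda_2\notin\{-1,\dots,-k\}$ together with $l_2\le k-1$ one checks this never happens for $1\le t\le k$; hence every $c_{r,t}$ is determined from $c_{r,t-1}$ and $c_{r-1,t-1}$, and inductively from $c_{r_0,0}$. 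Therefore $\dim\mathit{Sol}(\lambda_1,\lambda_2;k)\le 1$.

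The main obstacle I anticipate is the careful book-keeping of which linear coefficients vanish, and in particular verifying the non-vanishing of $r_0+1+\lambda_1=k-l_2-\rho-l_1$ and of $t-\rho-l_2$ for the relevant ranges of indices. When $n-1$ is odd this is automatic because $\rho\notin\mathbb{Z}$ while $k-l_2-l_1$ and $t-l_2$ are integers; when $n-1$ is even one has $\rho\in\mathbb{Z}$ and the argument must instead use the constraints $0\le l_1,l_2\le k-1$ and $\lambda_1,\lambda_2\notin\{-1,\dots,-k\}$ — the latter meaning $\rho+l_j>k$, i.e. $\rho>k-l_j$, which gives $k-l_2-\rho-l_1<-l_1\le 0$ and, since $1\le t\le k<\rho+l_2$, also $t-\rho-l_2<0$. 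I would present these two parity sub-cases explicitly but briefly, since each is a one-line inequality once the vanishing loci are identified. A secondary point to check is that when $r_0<0$ (i.e. $l_2\ge k$, excluded here) or $r_0=k-1$ (i.e. $l_2=0$) the argument degenerates gracefully; with $0\le l_2\le k-1$ we have $0\le r_0\le k-1$, so no boundary pathology arises, and the single principal unknown $c_{r_0,0}$ is always legitimately in range.
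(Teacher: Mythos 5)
Your proposal is correct and follows essentially the same argument as the paper: isolate the unique degenerate index $r_0=k-1-l_2$ at level $t=0$, deduce $c_{r,0}=0$ for $r\ge k-l_2$ and one free parameter among the remaining $c_{r,0}$, then propagate upward in $t$ via $E^{(2)}_{r,t-1}$ using $t+\lambda_2\neq 0$. The only (immaterial) difference is that you run the level-zero recursion backwards from the principal unknown $c_{k-1-l_2,0}$, whereas the paper runs it forwards from $c_{0,0}$ using $r+\lambda_1\neq 0$; both choices are legitimate under the stated hypotheses.
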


\begin{proof} Choose $c_{0, 0}$ as  principal unknown. Then, by using $E^{(1)}_{r-1,0}$  it is possible for $r\leq k-l_2-1$ to express $c_{r,0}$ in terms of $c_{0,0}$. Then necessarily $c_{k-l_2 ,0}=0$ and for $r\geq k-l_2+1$ $c_{r,0}=0$. As $\lambda_2\notin \{-1,-2,\dots, -k\}$, it is possible, using $E^{(2)}_{r,t}$ to express $c_{r,t}$ in terms of $c_{0,0}$. 

\end{proof}

\begin{lemma*} Let $n-1$ be even (so that $\rho\in \mathbb N$), and let $\lambda_1=-k_1,\lambda_2 =-k_2$, where $\rho\leq k_1,k_2\leq k$. 
\smallskip

$i)$ if $k_1+k_2\leq k$, then $\dim {\it Sol}(\lambda_1,\lambda_2;k)=1$.
\smallskip

$ii)$ if $k<k_1+k_2< k+\rho$, then $\dim {\it Sol}(\lambda_1,\lambda_2;k)=2$.
\smallskip

$iii)$ if $k_1+k_2\geq k+\rho$, then $\dim {\it Sol}(\lambda_1,\lambda_2;k)=3$.
\end{lemma*}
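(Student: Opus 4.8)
The plan is to estimate $\dim {\it Sol}(\lambda_1,\lambda_2;k)$ from above by sweeping the linear system ${\it S}(\lambda_1,\lambda_2;k)$ in the spirit of Lemmas~A\ref{Sgen}, A\ref{mix} and A\ref{Sk1k2}, and from below by counting the explicitly constructed invariant distributions via Proposition~A\ref{BDS}. For the upper bound I would first record the type of the third exponent $\lambda_3 = -\rho-2k-\lambda_1-\lambda_2 = -k_3$, $k_3 = \rho+2k-k_1-k_2$. In case $i)$, $k_1+k_2\le k$ gives $k_3\ge k+\rho$, so $\lambda_3\notin\{-1,\dots,-k\}\cup\{-\rho,\dots,-\rho-(k-1)\}$, and the symmetry principle together with Lemma~A\ref{Sgen} applied to the pair $(\lambda_1,\lambda_3)$ yields $\dim {\it Sol}\le 1$. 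In case $ii)$, $k<k_1+k_2<k+\rho$ gives $k<k_3<k+\rho$, hence $\lambda_3 = -\rho-l_3$ with $l_3 = 2k-k_1-k_2\in\{1,\dots,k-1\}$ while $\lambda_3\notin\{-1,\dots,-k\}$; moreover $k_1,k_2\le k-1$ (otherwise $k_1+k_2\ge k+\rho$), and since $k_1+l_3 = 2k-k_2\ge k$, Lemma~A\ref{mix}$(ii)$ applied, again via the symmetry principle, to $(\lambda_1,\lambda_3)$ gives $\dim {\it Sol}\le 2$. The genuinely new case is $iii)$, where $k_1+k_2\ge k+\rho$, equivalently $\rho\le k_3\le k$, so all three $\lambda_i=-k_i$ are of the ``doubly degenerate'' type and the symmetry principle gives no reduction.

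For the upper bound in case $iii)$ I would organise the unknowns $c_{r,t}$ by the level $t$. At level $0$ only $c_{0,0},\dots,c_{k,0}$ and the equations $E^{(1)}_{r,0}$ are involved; the coefficient of $c_{r+1,0}$ vanishes exactly at $r=k_1-1$ and the coefficient of $c_{r,0}$ exactly at $r=k-1-(k_2-\rho)$. Since $k_1+k_2\ge k+\rho$ forces $k_1-1\ge k-1-(k_2-\rho)$, these two breakpoints either coincide (when $k_1+k_2=k+\rho$), giving one identically zero equation that splits the chain into two free pieces, or are distinct, in which case the intermediate $c_{r,0}$ are forced to $0$ and the chain again splits: in both situations the level-$0$ part of a solution depends on exactly two free parameters. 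For $1\le t\le k-1$ with $t\ne k_2$, the equation $E^{(2)}_{r,t-1}$ has the non-vanishing factor $4t(t-k_2)$ in front of $c_{r,t}$, so level $t$ is entirely determined by the lower levels and brings no new parameter. At the exceptional level $t=k_2$ that factor vanishes, and level $k_2$ is instead governed by the chain $E^{(1)}_{r,k_2}$, $0\le r\le k-1-k_2$; the crucial point is that in case $iii)$ one has $k_1-1>k-1-k_2$, so the breakpoint $r=k_1-1$ lies outside the range of this chain, the chain is clean, and level $k_2$ contributes exactly one further free parameter. Altogether at most $2+1=3$ free parameters, hence $\dim {\it Sol}(\lambda_1,\lambda_2;k)\le 3$.

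For the matching lower bounds I would locate $\boldsymbol\lambda$ in the stratification of $Z$. In case $iii)$ the triple $\boldsymbol\lambda=(-k_1,-k_2,-k_3)$ satisfies the parity and triangle conditions \eqref{Z3} (a short computation, using $k_i\le k$ and $k_1+k_2\ge k+\rho$), so $\boldsymbol\lambda\in Z_3$ in the even case, and Proposition~\ref{indZ3ev} produces three linearly independent elements of $Tri(\boldsymbol\lambda, diag)$; thus $\dim {\it Sol}\ge 3$. In case $ii)$ a direct inspection of the line equations of $Z$ shows that $\boldsymbol\lambda$ lies on exactly one line, of type I, and is also a pole of type II, so it is covered by the special case of Theorem~\ref{mult2Z1I}, whose two generators are both supported on the diagonal; hence $\dim {\it Sol}\ge 2$. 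In case $i)$ a short computation with \eqref{evSp} exhibits a non-vanishing $K$-coefficient of $\mathcal S^{(k)}_{\lambda_1,\lambda_2}$ (e.g. at $p_{0,\,k-k_2,\,k_2}$), so $\dim {\it Sol}\ge 1$, and the dimensions are as claimed.

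The main obstacle is the bookkeeping in case $iii)$: one has to keep track, through the successive $r$-chains and through the single exceptional level $t=k_2$, of which $c_{r,t}$ are forced to $0$, which are expressed in terms of a chosen principal unknown, and which remain free, and to check that the various boundary coincidences ($k_1+k_2=k+\rho$, or $k_1=\rho$, $k_1=k$, $k_2=k$, bearing in mind that $\rho$ is then an integer $\ge 2$) create no extra parameter. A clean alternative presentation would be to write down three explicit candidate solutions — for instance by reading off the $K$-coefficient data of the three distributions $\widetilde{\mathcal T}$ from \eqref{Tp} — and to verify directly both that they solve ${\it S}(\lambda_1,\lambda_2;k)$ and that the bound $3$ is attained, in the spirit of the remark preceding Lemma~A\ref{Sgen}.
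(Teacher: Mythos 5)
Your proposal is correct and follows essentially the same route as the paper: cases $i)$ and $ii)$ are reduced, exactly as in the text, to the earlier lemmas via the symmetry principle applied to $\lambda_3=-k_3$ with $k_3=\rho+2k-k_1-k_2$, and case $iii)$ is handled by the same Gaussian sweep of the system, with the row $t=0$ contributing two parameters and one further parameter coming from an exceptional level. The only differences are cosmetic but welcome: you place the third principal unknown at level $t=k_2$ (via the chain $E^{(1)}_{r,k_2}$, using $k_1-1>k-1-k_2$) where the paper works along the column $r=0$, your level-by-level bookkeeping actually covers the unknowns $c_{r,t}$ with $t\ge k_2$, $r\ge 1$ that the paper's write-up passes over in silence, and you make explicit the matching lower bounds (membership of $\boldsymbol\lambda$ in $Z_3$, resp. in the special case of $Z_{1,I}$, resp. a non-vanishing $K$-coefficient of $\mathcal S^{(k)}_{\lambda_1,\lambda_2}$) which the paper defers to the closing remark of the appendix.
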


\begin{proof} If $k_1+k_2\leq k$, then $\lambda=-\rho-2k+k_1+k_2 = -\rho-k-(k-k_1+k_2)$, so that $\lambda_3 \notin \{ -\rho,-\rho-1,\dots, -\rho-(k-1)\}\cup \{ -1,-2,\dots, -k\}$. Use Lemma A1 and the symmetry principle to get $i)$.

Assume now that $k<k_1+k_2<k+\rho$. Then 
\[\lambda_3 = -\rho-2k+k_1+k_2= -\rho-k+(k_1+k_2-k)= -k-(k+\rho-k_1-k_2) \ . \]

Hence $\lambda_3 \in \{-\rho,-\rho-1,\dots, -\rho-(k-1)\}$ but $\lambda_3 \notin \{-1,-2,\dots, -k\}$. Use Lemma $ii)$ to conclude.

Assume now that $k_1+k_2\geq k+\rho$. 

Consider first the equation $E^{(1)}_{r,0}$, which reads
\[4(r+1)(r+1-k_1)c_{r+1,t}+2(k-r)(k-r-1+\rho-k_2) c_{r,0}
\]
The first coefficient vanishes for $r=k_1-1$ and the second vanishes for $r= k+\rho-1-k_2$. Notice that $0<k-k_2 +\rho-1\leq k_1-1 \leq k-1$.
Choose $c_{0,0}$  as first principal unknown. Then use $E^{(1)}_{r-1,0}$ to compute  $c_{r,0}$ in terms of $c_{0,0}$ up to $r= k+\rho-1-k_2$. The unknowns $c_{r,0}$ have to be $0$ for $k+\rho-k_2\leq r\leq k_1-1$ (if any). Now choose $c_{k,0}$ (second principal unknown). All $c_{r,0}$ for $r>k_1$ are determined by the equations $E^{(1)}_{r,0}$ and can be expressed in terms of $c_{k,0}$. 

Next consider the equations $E^{(2)}_{0,t}$. For $t\leq k+\rho-1-k_1$, the unknowns $c_{0,t}$ can be expressed in terms of $c_{0,0}$, for $k+\rho-k_1\leq t\leq k_2$ (if any) $c_{0,t}$ has to be $0$. For $t> k_2$, $c_{t,0}$ can be expressed in terms of $c_{0,r}$. 

Let now $1\leq t \leq k_2-1$, let $r+t\leq k$, and consider the equation $E^{(2)}_{r,t-1}$. Then the coefficient of the unknown $c_{r,t}$ is equal to $2(k-r-t+1)(t-k_2)\neq 0$. Hence, if $c_{r,t-1}$ and $c_{r-1,t-1}$ are already computed, $c_{r,t}$ can be computed. As all $c_{r,0}, 0\leq r\leq k$ have been expressed in terms of $c_{r,0}$ and $c_{0,0}$, $c_{r,t}$ can be also expressed in terms of $c_{k,0}$ and $c_{0,0}$. The statement follows. 
\end{proof}

Lemmas A1 up to A5 give an upper estimate of the dimension of the space of solutions. It remains to check that in each case, this estimate matches the number of independant $\boldsymbol \lambda$-invariant distributions supported on the diagonal which were constructed in the previous sections. Eventually, the dimension of ${\it Sol}(\lambda_1,\lambda_2; k)$ is given by the following table.
\vfill\eject
\centerline{\bf Table for $\dim {\it Sol}(\lambda_1,\lambda_2;k)$}

\bigskip

\begin{tabular}{| c | c | c | c |}
\hline
&$\begin{matrix}\lambda_1=-k_1\\ \lambda_1 \notin E_k^\rho\end{matrix}$ 
&$\begin{matrix}\lambda_1 =-\rho-l_1\\\lambda_1\notin E_k\end{matrix}$
&$\begin{matrix}\lambda_1=-k_1\\ \lambda_1 \in E_k\cap E_k^\rho\end{matrix}$ \\
\hline
$\begin{matrix} \lambda_2 = -k_2\\ \lambda_2 \notin E_k^\rho\end{matrix}$
&$\begin{matrix} k_1+k_2\leq k\hskip 1.6cm \mathbf{1}\\  k_1+k_2>k \hskip 1.6cm \mathbf {2}\end{matrix}$
&$\begin{matrix} l_1+k_2 <k \hskip 0.5cm \mathbf{1}\\l_1+k_2\geq k \hskip 0.5cm \mathbf {2}\end{matrix}$
&$\begin{matrix}k_1+k_2\leq k\hskip 1.75cm \mathbf{1}\\k<k_1+k_2<k+\rho \quad \mathbf{2}\\k_1+k_2\geq k+\rho \hskip 1.15cm \mathbf{3}\end{matrix}$
\\
\hline
$\begin{matrix}\lambda_2 =-\rho-l_2\\\lambda_2\notin E_k^\rho\end{matrix}$
&$\begin{matrix} k_1+l_2<k\hskip 1.65cm\mathbf{1}\\k_1+l_2\geq k\hskip 1.65cm \mathbf{2}\end{matrix}$
&$\mathbf{1}$
&$\begin{matrix}k_1+l_2<k\hskip 1.8cm \mathbf{1}\\ k_1+l_2\geq k\hskip 1.8cm \mathbf{2}\end{matrix}$
\\
\hline
$\begin{matrix}\lambda_2=-k_2\\ \lambda_2\in E_k\cap E_k^\rho\end{matrix}$
&$\begin{matrix}k_1+k_2\leq k\hskip 1.75cm \mathbf{1}\\k<k_1+k_2<k+\rho \quad \mathbf{2}\\k_1+k_2\geq k+\rho \hskip 1.15cm \mathbf{3}\end{matrix}$
&$\begin{matrix}k_2+l_1<k\hskip 0.5cm \mathbf{1}\\ k_2+l_1\geq k\hskip 0.5cm \mathbf{2}\end{matrix}$
&$\begin{matrix}k_1+k_2\leq k\hskip 1.75cm \mathbf{1}\\k<k_1+k_2<k+\rho \quad \mathbf{2}\\k_1+k_2\geq k+\rho \hskip 1.15cm \mathbf{3}\end{matrix}$

\\
\hline
\end{tabular}
\bigskip

where
\[E_k=\{ -1,-2,\dots,-k\},\qquad E_k^\rho=\{ -\rho, -\rho-1,\dots, -\rho-(k-1)\}\ .\]

\medskip
\footnotesize{\noindent Address\\ Jean-Louis Clerc\\Institut Elie Cartan, Universit\'e de Lorraine, 54506 Vand\oe uvre-l\`es-Nancy, France\\}
\medskip
\noindent \texttt{{jean-louis.clerc@univ-lorraine.fr
}}


\begin{thebibliography}{99}\itemsep=-.2pc

\bibitem{bc} Beckmann R. and Clerc J-L., {\it Singular invariant trilinear forms and covariant (bi-)differential operators under the conformal group}, J. of Funct. Anal. {\bf 262} (2012) 4341--4376

\bibitem{br} Bruhat F., {\it Sur les repr\'esentations induites des groupes de Lie}, Bull. Soc. Math. France {\bf 84} (1956), 97--205

\bibitem{c} Clerc J-L., {\it  Singular conformally invariant trilinear forms I, The multiplicity one theorem},  to appear in Transformation Groups (2015)

\bibitem{co} Clerc J-L. and \O rsted B., {\it Conformally invariant trilinear forms on the sphere},  Ann. Instit. Fourier {\bf 61} (2011), 1807--1838

\bibitem{ckop} Clerc J-L., Kobayashi T., \O rsted B. and Pevzner M., {\it Generalized Bernstein-Reznikov integrals},  Math. Annalen {\bf 349} (2011), 395--431

\bibitem{h} H\" ormander L. {\it The analysis of linear partial differential operators I}, Springer Verlag, Berlin (1983)

\bibitem{j} Juhl  A., {\it Families of conformally covariant differential operators, Q-curvature and holography}, Progress in Math. {\bf 275}, Birkh\"auser Verlag (2009)

\bibitem{ks} Kobayashi T. and Speh B., {\it Symmetry breaking for representations of rank one orthogonal groups}, arXiv:1310.3213v11 (2013)

\bibitem{o1} Oksak A., {\it Trilinear Lorentz invariant forms}, Comm. math. Phys. {\bf 29} (1973), 189--217

\bibitem{osh} Oshima T., {\it Annihilators of generalized Verma modules of the scalar type for classical Lie algebras} Harmonic Analysis, Group representations, Automotphic forms and Invariant Theory, in honor of Roger Howe, Lecture Notes Series, col. 12, 277--319 (2007), National University of Singapore.

\bibitem{or} Ovsienko V. and Redou P., {\it Generalized transvectants Rankin-Cohen brackets}, Lett. math. Phys. {\bf 63} (2003), 19--28

\bibitem{s} Somberg P. {\it Rankin-Cohen brackets for orthogonal Lie algebras and bilinear conformally invariant differential operators}, arXiv:1301.2687v1 (2013)
\end{thebibliography}
\end{document}